\documentclass{article}
\usepackage{amssymb,amsfonts,amsmath,amsthm,amsopn,amstext,amscd,latexsym,xy,stmaryrd,mathtools,color,enumitem}
\usepackage{hyperref}

\usepackage{xy}
\xyoption{all}

\setlength{\textheight}{8.75in}	
\setlength{\textwidth}{6.5in}
\setlength{\topmargin}{0.0in}
\setlength{\headheight}{0.0in}
\setlength{\headsep}{0.0in}
\setlength{\leftmargin}{0.0in}
\setlength{\oddsidemargin}{0.0in}
\setlength{\parindent}{3pc}

\newtheorem{theorem}{Theorem}[section]
\newtheorem{lemma}[theorem]{Lemma}

\newtheorem{proposition}[theorem]{Proposition}
\newtheorem{corollary}[theorem]{Corollary}
\newtheorem{definition}[theorem]{Definition}

\theoremstyle{definition}

\theoremstyle{remark}

\newtheorem*{remark}{Remark}
\newtheorem{claim}{Claim}
\numberwithin{equation}{section}
\theoremstyle{definition}

\renewcommand{\(}{\textup{(}}
\renewcommand{\)}{\textup{)}}

\DeclareMathOperator{\GL}{GL}
\DeclareMathOperator{\PGL}{PGL}
\DeclareMathOperator{\SL}{SL}
\DeclareMathOperator{\PSL}{PSL}
\DeclareMathOperator{\Ann}{Ann}

\DeclareMathOperator{\Supp}{Supp}
\DeclareMathOperator{\Def}{Def}

\DeclareMathOperator{\Ass}{Ass}
\DeclareMathOperator{\ord}{ord}
\DeclareMathOperator{\Frac}{Frac}
\DeclareMathOperator{\Frob}{Frob}

\DeclareMathOperator{\Hom}{Hom}

\DeclareMathOperator{\Fil}{Fil}

\DeclareMathOperator{\Gal}{Gal}
\DeclareMathOperator{\Aut}{Aut}

\DeclareMathOperator{\im}{im}

\DeclareMathOperator{\ad}{ad}
\DeclareMathOperator{\diag}{diag}

\DeclareMathOperator{\Ext}{Ext}

\DeclareMathOperator{\End}{End}
\DeclareMathOperator{\tr}{tr}

\DeclareMathOperator{\CNL}{CNL}
\DeclareMathOperator{\Sets}{Sets}

\DeclareMathOperator{\Res}{Res}
\DeclareMathOperator{\val}{val}

\DeclareMathOperator{\Ind}{Ind}
\DeclareMathOperator{\disc}{disc}
\DeclareMathOperator{\coker}{coker}
\DeclareMathOperator{\Tor}{Tor}
\DeclareMathOperator{\Pic}{Pic}
\DeclareMathOperator{\PSp}{PSp}
\DeclareMathOperator{\Art}{Art}

\DeclareMathOperator{\rec}{rec}
\DeclareMathOperator{\Spec}{Spec}

\newcommand{\cC}{{\mathcal C}}
\newcommand{\cD}{{\mathcal D}}
\newcommand{\cE}{{\mathcal E}}

\newcommand{\cH}{{\mathcal H}}

\newcommand{\cJ}{{\mathcal J}}

\newcommand{\cL}{{\mathcal L}}
\newcommand{\cM}{{\mathcal M}}

\newcommand{\cO}{{\mathcal O}}

\newcommand{\cQ}{{\mathcal Q}}

\newcommand{\cS}{{\mathcal S}}
\newcommand{\cT}{{\mathcal T}}

\newcommand{\cV}{{\mathcal V}}

\newcommand{\cX}{{\mathcal X}}
\newcommand{\cY}{{\mathcal Y}}

\newcommand{\fra}{{\mathfrak a}}
\newcommand{\frb}{{\mathfrak b}}

\newcommand{\frg}{{\mathfrak g}}

\newcommand{\ffrm}{{\mathfrak m}}
\newcommand{\frn}{{\mathfrak n}}

\newcommand{\frp}{{\mathfrak p}}
\newcommand{\frq}{{\mathfrak q}}

\newcommand{\frt}{{\mathfrak t}}

\newcommand{\frz}{{\mathfrak z}}
\newcommand{\bbA}{{\mathbb A}}

\newcommand{\bbC}{{\mathbb C}}

\newcommand{\bbF}{{\mathbb F}}
\newcommand{\bbG}{{\mathbb G}}

\newcommand{\bbL}{{\mathbb L}}

\newcommand{\bbP}{{\mathbb P}}
\newcommand{\bbQ}{{\mathbb Q}}
\newcommand{\bbR}{{\mathbb R}}

\newcommand{\bbT}{{\mathbb T}}

\newcommand{\bbZ}{{\mathbb Z}}

\newcommand{\plim}{\mathop{\varprojlim}\limits}

\newcommand{\loc}{{\text{loc}}}

\setcounter{tocdepth}{2}
\title{Modularity of $\GL_2(\bbF_p)$-representations over CM fields}

\begin{document}
	\author{Patrick B. Allen, Chandrashekhar Khare,  and Jack A. Thorne}
\maketitle

\begin{abstract}
	We prove that many representations $\overline{\rho} : \Gal(\overline{K} / K) \to \GL_2(\bbF_3)$, where $K$ is a CM field, arise from modular elliptic curves. We prove similar results when the prime $p = 3$ is replaced by $p = 2$ or $p = 5$. As a consequence, we prove that a positive proportion of elliptic curves over any CM field not containing a 5th root of unity are modular. 
\end{abstract}
\tableofcontents

\section{Introduction}\label{sec_Introduction}

In this paper we take the first steps towards proving the modularity (as opposed to potential modularity) of elliptic curves over CM fields (for example, imaginary quadratic fields). We say that an elliptic curve $E$ over a number field $K$ is modular if either $E$ has complex multiplication, or there exists a cuspidal, regular algebraic automorphic representation $\pi$ of $\GL_2(\bbA_K)$ such that $E$ and $\pi$ have the same $L$-function.
This implies, for example, that the $L$-function $L(E, s)$ has an analytic continuation to the whole complex plane, and therefore that the Birch--Swinnerton-Dyer conjecture for $E$ can be formulated unconditionally. The modularity of all elliptic curves over a given number field is known to have profound Diophantine consequences \cite{Wil95, Kan16, Sen18}.

As an illustration of our techniques, we prove:
\begin{theorem}[Special case of Theorem \ref{thm_example_application}]\label{thm_intro_example_application}
	Let $K$ be an imaginary quadratic field. Then a positive proportion of elliptic curves over $K$ are modular. 
\end{theorem}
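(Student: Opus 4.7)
The plan is to combine the paper's modularity theorem for residual representations $\overline{\rho} \colon \Gal(\overline{K}/K) \to \GL_2(\bbF_3)$ with an automorphy lifting theorem at $p = 3$, and to invoke a sieving argument in the family of all elliptic curves over $K$ to produce a positive proportion of $E$ for which the hypotheses of both results are simultaneously satisfied.

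First I would parametrize elliptic curves $E/K$ by short Weierstrass models $y^2 = x^3 + ax + b$ with $(a, b) \in \cO_K^2$ of nonzero discriminant, and order them by a naive height such as $H(a, b) = \max(\|a\|^3, \|b\|^2)$ taken with respect to the archimedean absolute values of $K$. Using a Bhargava--Shankar-style large sieve adapted to imaginary quadratic fields, each of the following conditions on $(a, b)$ can be shown to cut out a subset of positive density: (i) the image of $\overline{\rho}_{E,3}$ equals $\GL_2(\bbF_3)$; (ii) $E$ has good ordinary reduction at each prime of $K$ above $3$, with distinguished Frobenius eigenvalues; (iii) the restriction $\overline{\rho}_{E,3}|_{\Gal(\overline{K}/K(\zeta_3))}$ is absolutely irreducible with adequate image in the sense required by the lifting theorem. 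Each condition is locally imposed at finitely many primes and has positive local density, and a standard upper-bound sieve controls the tail contribution from curves with large conductor at auxiliary primes, so the intersection of the three conditions is a positive proportion of $E$.

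For $E$ in this intersection, the hypotheses of the paper's mod $3$ modularity theorem hold, and therefore $\overline{\rho}_{E,3}$ is modular, arising from a cuspidal automorphic representation of $\GL_2(\bbA_K)$. The ordinary-distinguished local condition at primes above $3$ combined with the adequate image hypothesis then allows one to apply a standard automorphy lifting theorem over CM fields to promote modularity of $\overline{\rho}_{E,3}$ to modularity of $\rho_{E,3}$, whence $E$ itself is modular.

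The main obstacle will be arranging the sieving argument so that the local conditions at primes above $3$ required for the lifting theorem, together with the image hypotheses of the mod $3$ modularity theorem, can be imposed simultaneously without the resulting density collapsing to zero. This reduces to a local density computation at each prime above $3$, showing that the relevant conditions carve out a set of positive Haar measure on local Weierstrass coefficients, combined with a uniform upper-bound sieve on auxiliary primes to control the excluded locus; both inputs follow from the techniques of Bhargava--Shankar and their extensions to number fields, but the precise matching between the paper's hypotheses on $\overline{\rho}_{E,3}$ and generic mod $3$ images of elliptic curves over $K$ is the step requiring the most care.
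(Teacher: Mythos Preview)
Your outline has the right shape---impose local conditions, get positive density, apply modularity results---but there are two genuine gaps and one unnecessary complication.

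First, you omit the \emph{decomposed generic} hypothesis entirely. Both the paper's residual modularity theorem (Theorem~\ref{thm_application_to_serre}) and its $p=3$ lifting theorem (Theorem~\ref{thm_automorphy_at_odd_primes}) require that $\overline{\rho}_{E,3}$ be decomposed generic: there must exist a prime $l$ split in $K$ such that for each $v\mid l$, $\overline{\rho}_{E,3}(\Frob_v)$ has eigenvalues not in ratio $q_v^{\pm1}$. This is not automatic from your conditions (i)--(iii), and without it neither theorem applies.

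Second, your two-step plan (Theorem~\ref{thm_application_to_serre} for residual modularity, then lifting) does not chain correctly. Theorem~\ref{thm_application_to_serre} for $p=3$ has a hypothesis at the primes above $5$ that you do not impose, and its output is a modular elliptic curve $E'$ which is Tate at the $5$-adic places, with no control at $3$. The lifting theorem then needs the associated automorphic representation to be $\iota$-ordinary at $3$, which is not guaranteed. The paper avoids this by packaging everything into Corollary~\ref{cor_modularity_with_a_condition_at_3}, whose proof arranges ordinariness at $3$ internally via a soluble base change.

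Finally, the Bhargava--Shankar sieve is overkill. The paper reduces to \emph{congruence conditions at finitely many places}, so positive density is elementary. The key point you miss is that even the large-image condition is local: since any subgroup of $\SL_2(\bbF_3)$ containing both an element of order $4$ and one of order $6$ is all of $\SL_2(\bbF_3)$, one simply fixes two auxiliary places $w_1,w_2$ split in $K(\zeta_3)$ and prescribes the reduction of $E$ there to have Frobenius of the right order. Together with prescribing good ordinary reduction at $v\mid 3$ and regular-semisimple Frobenius at $v\mid l$ for one split prime $l$, this gives a nonempty open box $\prod_{v\in S} U_v \subset \prod_v \cO_{K_v}^2$ on which every $E_{a,b}$ satisfies the hypotheses of Corollary~\ref{cor_modularity_with_a_condition_at_3}.
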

In fact, we give precise conditions that imply the modularity of an elliptic curve over any CM field not containing a primitive 5th root of unity. It suffices to impose local conditions at finitely many places, leading to the theorem stated above. 

The first general results concerning modularity of elliptic curves over number fields were proved in the case $K = \bbQ$ \cite{Wil95, Tay95}. In these works, the modularity of all semistable elliptic curves over $\bbQ$ is established broadly following two steps. In the first step, modularity lifting theorems are proved for 2-dimensional Galois representations $\rho : \Gal(\overline{\bbQ} / \bbQ) \to \GL_2(\bbZ_p)$. These theorems state that if $\rho$ satisfies a list of conditions, chief among them that the residual representation $\overline{\rho} : \Gal(\overline{\bbQ} / \bbQ) \to \GL_2(\bbF_p)$ is already known to arise from modular forms (say holomorphic of weight 2), then $\rho$ also arises from modular forms. Since modularity of an elliptic curve $E$ is equivalent to modularity of any one of its associated $p$-adic Galois representations $\rho_{E, p}$, this largely reduces the problem of showing modularity of $E$ to proving modularity of some $\overline{\rho}_{E, p}$.

In the second step, modularity of $\overline{\rho}_{E, p}$ is proved to hold if $p = 3$. The first main observation is that the Langlands--Tunnell theorem (which asserts the modularity of certain Artin representations with soluble image) implies that $\overline{\rho}_{E, 3}$ arises from a holomorphic modular form of weight 1. Since any such form is congruent to a form of weight 2, one can apply the previously established modularity lifting theorem to deduce the modularity of $E$, provided that $\overline{\rho}_{E, 3}$ satisfies the technical conditions of the modularity lifting theorem. In particular, it should be irreducible. For elliptic curves which do not satisfy this irreducibility condition, a clever trick (the `3--5 switch') is used to show that $p = 5$ works instead. 

This two-pronged approach has proved very fruitful for studying the modularity of elliptic curves over totally real number fields, leading to a number of impressive results. For example, we mention the modularity of all elliptic curves over real quadratic fields \cite{Fre15}, and the potential modularity of all elliptic curves over totally real number fields, proved using Taylor's `potential $p$--$q$ switch' \cite{Tay02}, which supplants the second step described above in cases where the 3--5 switch is not applicable. 

Recent progress in our understanding of the Galois representations attached to torsion classes in the cohomology of arithmetic locally symmetric spaces has led to the possibility of studying the modularity of elliptic curves over CM fields. We recall that a CM field is, by definition, a totally imaginary quadratic extension of a totally real field. The simplest class of CM fields is that of the imaginary quadratic fields. Two groups of authors have now proved modularity lifting theorems that have the potential to be applied to proving the modularity of elliptic curves over CM fields \cite{Box19, 10authors}. Combined with Taylor's technique for verifying the potential modularity of a given residual representation, this leads to the potential modularity of all elliptic curves over CM fields \cite[Theorem 1.0.1]{10authors} and even over an arbitrary quadratic extension of a totally real field \cite[Theorem 1.1.4]{Box19}.

This naturally leads to the question of whether, for an elliptic curve $E$ over a CM field $K$, one can follow the same lines as Wiles by establishing e.g. the residual modularity of the modulo 3 representation $\overline{\rho}_{E, 3}$, and then use this to prove modularity of $E$ over its original field of definition. The following theorem, which is a special case of one of the main theorems of this paper, affirms that this is the case:
\begin{theorem}\label{thm_intro_residual_modularity}[Theorem \ref{thm_application_to_serre}]
Let $K$ be an imaginary quadratic field, and let $\overline{\rho} : G_K \to \GL_2(\bbF_3)$ be a continuous homomorphism of cyclotomic determinant. Suppose that for each place $v | 5$ of $K$, there exists a Tate elliptic curve $E_v$ over $K_v$ such that $\overline{\rho}|_{G_{K_v}} \cong \overline{\rho}_{E_v, 3}$. Then there exists a modular elliptic curve $E$ over $K$ such that $\overline{\rho} \cong \overline{\rho}_{E, 3}$.

In particular, $\overline{\rho}$ is modular, in the sense that it arises from a regular algebraic, cuspidal automorphic representation of $\GL_2(\bbA_K)$ of weight 2. 
\end{theorem}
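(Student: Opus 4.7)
The plan is to adapt Wiles' classical 3-5 switch to the CM setting, invoking the modularity lifting theorems over CM fields from \cite{10authors} at both $p = 3$ and $p = 5$ as black boxes. The goal is to produce a modular elliptic curve $E/K$ with $\overline{\rho} \cong \overline{\rho}_{E,3}$; then $\overline{\rho}$ inherits modularity from the regular algebraic weight-$2$ cuspidal automorphic representation attached to $E$.

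First, I would construct an elliptic curve $E/K$ with $\overline{\rho}_{E,3} \cong \overline{\rho}$ (symplectically, compatibly with the Weil pairing on the left and the cyclotomic determinant on the right) and with $E$ a Tate curve at each $v \mid 5$. The moduli space of such pairs is a twist $Y_{\overline{\rho}}$ of the modular curve $Y(3)$. Classically, $Y(3)$ is isomorphic to the affine line over $\bbQ$, and the cyclotomic-determinant hypothesis on $\overline{\rho}$ kills the potential obstruction to this twist being trivial; hence $Y_{\overline{\rho}}$ is $K$-isomorphic to $\bbA^1_K$. The hypothesized Tate curve $E_v$ at each $v \mid 5$ gives a $K_v$-point of $Y_{\overline{\rho}}$ lying in the open multiplicative-reduction locus, so by weak approximation together with a Hilbert irreducibility argument I can find a $K$-rational point corresponding to an elliptic curve $E/K$ that is $v$-adically close to $E_v$ at each $v \mid 5$. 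In particular, $E$ is itself Tate at each $v \mid 5$, so $\overline{\rho}_{E,5}|_{G_{K_v}}$ is ordinary (an unramified character extended by its cyclotomic twist), matching the local hypothesis of the mod-$5$ modularity lifting theorem.

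Next I would perform the 3-5 switch. The moduli space of elliptic curves $E'/K$ equipped with a symplectic isomorphism $E'[5] \cong \overline{\rho}_{E,5}$ is a twist $Y_{\overline{\rho}_{E,5}}$ of $Y(5)$, a genus-$0$ curve with the $K$-point corresponding to $E$ and therefore $K$-rational. Using Hilbert irreducibility on this rational curve, I can produce $E'/K$ with $\overline{\rho}_{E',5} \cong \overline{\rho}_{E,5}$ and with $\overline{\rho}_{E',3}$ induced from an algebraic Hecke character $\chi$ of a quadratic CM extension $L/K$ of appropriate infinity type. By the automorphic induction of Arthur-Clozel, $\Ind_{G_L}^{G_K} \chi$ is a regular algebraic cuspidal automorphic representation of $\GL_2(\bbA_K)$, giving residual modularity of $\overline{\rho}_{E',3}$. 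The mod-$3$ modularity lifting theorem of \cite{10authors} applied to $E'$ then yields modularity of $E'$, hence of $\overline{\rho}_{E,5} \cong \overline{\rho}_{E',5}$. Finally, the mod-$5$ modularity lifting theorem applied to $E$, with the ordinary local condition at $v \mid 5$ established above, gives modularity of $E$, and hence $\overline{\rho} \cong \overline{\rho}_{E,3}$ is modular.

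The main obstacle is achieving, in the 3-5 switch step, the simultaneous conditions (i) $\overline{\rho}_{E',5} \cong \overline{\rho}_{E,5}$ as symplectic $G_K$-modules (a pointwise condition on $Y_{\overline{\rho}_{E,5}}$), (ii) $\overline{\rho}_{E',3}$ dihedral in the prescribed way (carving out a thin subset via Hilbert irreducibility), and (iii) the adequacy/big-image hypotheses of both modularity lifting theorems --- in particular on $\overline{\rho}_{E,5}|_{G_{K(\zeta_5)}}$ and $\overline{\rho}_{E',3}|_{G_{K(\zeta_3)}}$. Balancing these on a fibered product of twisted modular curves, and handling exceptional small-image configurations of $\overline{\rho}_{E,5}$ where the mod-$5$ modularity lifting theorem's adequacy hypothesis fails, is the principal technical challenge; it likely requires further auxiliary twists or a case-by-case treatment of the small-image locus.
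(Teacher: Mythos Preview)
Your proposal has a genuine gap at the residual modularity step, and the overall architecture does not match the paper's.

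The step where you write ``using Hilbert irreducibility on this rational curve, I can produce $E'/K$ with $\overline{\rho}_{E',5} \cong \overline{\rho}_{E,5}$ and with $\overline{\rho}_{E',3}$ induced from an algebraic Hecke character'' fails for two independent reasons. First, Hilbert irreducibility lets you \emph{avoid} thin subsets, hence force images to be \emph{large}; it cannot force $\overline{\rho}_{E',3}$ to have dihedral image, which is a positive-codimension condition. Second, and more fundamentally, even if $\overline{\rho}_{E',3}$ were dihedral you would still need to produce a \emph{cohomological} (weight $0$) cuspidal automorphic representation of $\GL_2(\bbA_K)$ congruent to it. Over a totally real field this is Langlands--Tunnell plus Hida-theoretic weight shifting; over a CM field there is no known way to pass from the weight-$1$ Artin lift to weight $2$. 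The paper's introduction singles this out explicitly as the obstruction that forces a different strategy. Your appeal to automorphic induction of an algebraic Hecke character with ``appropriate infinity type'' does not resolve this: you have no mechanism to arrange simultaneously the symplectic constraint $E'[5]\cong E[5]$, the dihedral constraint on $E'[3]$, and the infinity-type constraint on a characteristic-zero lift.

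The paper instead anchors residual modularity at $p=2$, where irreducible $\overline{\rho}$ are automatically dihedral. The core new ingredient is a $2$-adic ordinary automorphy lifting theorem (Theorem~\ref{thm_main_automorphy_theorem}) with no residual automorphy hypothesis, provided the mod-$2$ representation extends to $G_{K^+}$; residual automorphy is then supplied by automorphic induction and Hida theory \emph{over the totally real subfield} $K^+$, where the weight-$1$ to weight-$2$ passage works. The subtle part is a $2$--$3$ switch on the genus-one curve $X(6)$: Lemma~\ref{lem_existence_of_elliptic_curves} shows that the obstruction to rational points on the relevant twist is controlled by $\Delta_E \in K^\times/(K^\times)^6$, and a carefully constructed soluble CM base change (Corollary~\ref{cor_real_up_to_nth_powers}, Proposition~\ref{prop_modularity_of_certain_curves}) makes this obstruction vanish. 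Only then does one propagate modularity through $p=3$ and $p=5$ using improved odd-prime lifting theorems (Theorem~\ref{thm_automorphy_at_odd_primes}) that relax the ``enormous image'' hypothesis of \cite{10authors}, which fails for $\SL_2(\bbF_3)$ and $\SL_2(\bbF_5)$ and so cannot be used as a black box in the way you suggest.
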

In fact, we prove a similar theorem where $K$ is allowed to be any imaginary CM field such that $\zeta_5 \not\in K$, and for mod 2 and mod 5 representations as well as mod 3 representations; see Theorem \ref{thm_application_to_serre}. We use ``Tate elliptic curve'' as a synonym for ``elliptic curve with split multiplicative reduction'' and ``weight 2'' as a synonym for ``cohomological''. We note in particular that the local condition at the places $v|5$ of $K$ in Theorem \ref{thm_intro_residual_modularity} is always satisfied after passage to a soluble CM extension. We therefore expect that, combined with base change, this theorem will have many applications to proving modularity of elliptic curves beyond those presented in this paper. 

As mentioned above, the modularity of a residual representation $\overline{\rho} : \Gal(\overline{K} / K) \to \GL_2(\bbF_3)$ can be proved in the case that $K$ is a totally real field using the Langlands--Tunnell theorem (see \cite[\S 5.3]{Ser87}). The same chain of reasoning no longer applies when $K$ is a CM field. Indeed, a $\PGL_2(\bbF_3)$-representation can still be lifted to an Artin representation in characteristic 0, as in \cite{Ser87}, and the automorphy of this lift proved using the Langlands--Tunnell theorem. However, there is no known method to construct congruences between the resulting automorphic representation and one which is cohomological. More informally, one does not know how to go from weight 1 to weight 2. Note that such a method would presumably also imply the existence of Galois representations attached to algebraic Maass forms for $\GL_2(\bbA_\bbQ)$! We must therefore find a different route. 

Our argument is inspired by the `3-5' switch of Wiles, and in particular makes crucial use of the fact that certain modular curves of low level can have infinitely many rational points (because they have genus 0 or 1). A first approximation to the argument is that instead of using $p = 3$, we want to use $p = 2$. Indeed, the mod 2 representation of an elliptic curve is always dihedral, if it is irreducible, and one knows that automorphic induction can lead to cohomological automorphic representations, in contrast to the automorphic representations asserted to exist by the Langlands--Tunnell theorem. 

We therefore first prove a 2-adic automorphy lifting theorem for ordinary 2-adic Galois representations, which applies to the case of soluble (and therefore dihedral -- see Proposition \ref{prop_soluble_means_dihedral}) residual image. This takes up \S \S \ref{sec_Galois_deformation_theory} -- \ref{sec_deduction_of_main_ALTs} of the paper, the final statement being Theorem \ref{thm_main_automorphy_theorem}. Unfortunately, we must impose a local condition at the places $v|2$ of $K$ which precludes the existence of an ordinary, cohomological, dihedral lift of the given dihedral residual representation. A weaker version of this condition is necessary to force the dihedral locus in the ordinary deformation space to be sufficiently small, following a strategy of Skinner--Wiles \cite{Ski01}. The stronger version that we impose further ensures that the ordinary Galois deformation problems at the 2-adic places are formally smooth, a condition that we need in order to control the relevant global Selmer groups.
We therefore impose the condition that $\overline{\rho}$ extends to a representation of the Galois group of the maximal totally real subfield $K^+$ of $K$. Under this condition, we can verify the residual automorphy by using automorphic induction in weight 1 over $K^+$, passing to weight 2 over $K^+$ (using Hida theory in the same way as in \cite{All14}), and then using base change to get the residual automorphy over $K$. This leads to a modularity theorem for ordinary 2-adic Galois representations of $\Gal(\overline{K} / K)$ that does not have any assumption of residual modularity (but does have an assumption that the residual representation extends to $\Gal(\overline{K} / K^+)$). 

This seems at first to be catastrophic for e.g.\ the proof of Theorem \ref{thm_intro_residual_modularity}, since the condition that $\overline{\rho}_{E, 2}$ extends to $\Gal(\overline{K} / K^+)$ is a Diophantine condition on elliptic curves $E$ over $K$. Note that the modular curve  parameterizing elliptic curves $E$ with fixed mod 2 and mod 3 representations (which is a twist of $X(6)$) has genus 1, and often has no rational points at all. The key to getting around this is Lemma \ref{lem_existence_of_elliptic_curves}, which shows that the main obstruction to the existence of rational points is the image of the discriminant of the elliptic curve $E$ in $K^\times / (K^\times)^6$; this can be read off from the action of the Galois group on $E[6] = E[2] \times E[3]$. 

In order to prove Theorem \ref{thm_intro_residual_modularity}, we carefully construct a soluble CM extension $L / K$ over which this obstruction can be shown to vanish. This implies the modularity of $\overline{\rho}|_{G_L}$, and then a further argument using soluble base change and the rationality of $X(3)$ gives the modularity of $\overline{\rho}$ itself. Similar arguments can then be used to prove the analogue of Theorem \ref{thm_intro_residual_modularity} for the primes $p = 2$ and $p = 5$.

\subsection*{Structure of this paper}

We now describe the contents of this paper. In \S \ref{sec_group_theory} and \S \ref{sec_commutative_algebra} we establish some useful background results; these can be omitted on a first reading. In \S \ref{sec_Galois_deformation_theory} -- \S \ref{sec_deduction_of_main_ALTs} we develop Galois deformation theory, with particular attention paid to $p = 2$, and prove our main 2-adic automorphy lifting theorem. The techniques used are a mix of those of \cite{All14} and \cite{10authors}. Two important novelties here are the use of non-neat level subgroups (forced on us by the rather degenerate residual representations we use), which requires a vanishing theorem for the non-Eisenstein part of the cohomology at torsion levels, and a modification of the Skinner--Wiles patching argument where we must control what happens at multiple dimension 1 primes of the Hecke algebra simultaneously. See Theorem \ref{thm_boundedness_of_good_dihedral_cohomology} for the first statement and Proposition \ref{prop_connectedness_of_nice_prime_graph} for an illustration of the latter technique. 

In \S \ref{sec_odd_aut_lift} we state some new modularity lifting theorems for ordinary $p$-adic Galois representations, where $p$ is odd. These are closer to the results of \cite{10authors}, but again our need to deal with more degenerate residual representations means we have to work a little harder. We relegate the proofs to an appendix, which could be skipped on first reading, although they are necessary in order to achieve our main applications. 

In \S \ref{sec_2-3_switch} we carry out the `2--3 switch' argument sketched above and prove Theorem \ref{thm_intro_residual_modularity} and its analogues. Finally, in \S \ref{sec_application_to_modularity} we prove Theorem \ref{thm_intro_example_application}, which we hope will be the first of many applications of the results of this paper to the problem of modularity of elliptic curves over CM fields. 

\subsection{Acknowledgments}

We would like to thank Tom Fisher for useful conversations about the modular curve $X(6)$, in particular for showing us an example of a twist of $X(6)$ which does not have any rational points.
We would also like to thank an anonymous referee for a careful reading and helpful comments on an earlier version of this manuscript.

J.T.'s work received funding from the European Research Council (ERC) under the European Union's Horizon 2020 research and innovation programme (grant agreement No 714405).  This research was begun during the period that J.T. served as a Clay Research Fellow. 

P.A. was supported by Simons Foundation Collaboration Grant 527275, NSF grant DMS-1902155, and NSERC. He would like to thank J.T. and Cambridge University for hospitality during a visit where some of this work was completed. Parts of this work were completed while P.A. was a visitor at the Institute for Advanced Study, where he was partially supported by the NSF. He would like to thank the IAS for providing excellent working conditions during his stay. 

C.K. was partially supported by NSF grant DMS-2200390 and by a Simons Fellowship.

\subsection{Notation}\label{subsec_notation}

If $K$ is a field, then we write $G_K$ for the absolute Galois group of $K$ (with respect to a fixed choice of separable closure $K^s / K$). If $E$ is an elliptic curve over $K$ and $p$ is a prime not dividing the characteristic of $K$, we write $\overline{\rho}_{E, p} : G_K \to \GL_2(\bbF_p)$ for the Galois representation on $E[p](K^s)$ determined by a fixed choice of basis. 

If $K$ is a number field and $v$ is a place of $K$, then we write $K_v$ for the corresponding completion. We will fix an embedding $K^s \to K_v^s$ extending $K \to K_v$. This choice determines an embedding $G_{K_v} \to G_{K}$. If moreover $v$ is a finite place then we write $\frp_v \subset \cO_K$ for the corresponding prime ideal, $\cO_{K_v} \subset K_v$ for the ring of integers, $\varpi_v \in \cO_{K_v}$ for a fixed choice of uniformizer, $k(v) = \cO_K / \frp_v$ for the residue field, and $q_v = \# k(v)$ for the cardinality of the residue field. We write $\Frob_v \in G_{K_v}$ for a geometric Frobenius lift at $v$. If $p$ is a prime, we write $\epsilon : G_K \to \bbZ_p^\times$ for the $p$-adic cyclotomic character (the prime $p$ will always be clear from the context). 

If $A$ is a ring, and $P$ is a prime ideal, then we will write $A_{(P)}$ for the localization of $A$ at $P$ and $A_P$ for the completion of this local ring. Similarly if $M$ is an $A$-module, we write $M_{(P)}$ for the localization at $P$ and $M_P$ for the completion with respect to the $P$-adic topology. If $A$ is a local ring, then we write $\ffrm_A$ for its maximal ideal.

Throughout this paper we use ``complex'' as a synonym for ``cochain complex'', and identify any naturally occurring chain complex $C_\bullet$ with the cochain complex $C^\bullet$ defined by the formula $C^i = C_{-i}$. If $A$ is a ring then we write $\mathbf{D}(A)$ for the derived category of cochain complexes of $A$-modules, and $\mathbf{D}(A)^+, \mathbf{D}(A)^- \subset \mathbf{D}(A)$ for its full subcategories of bounded below (resp. bounded above) complexes. Every object of $\mathbf{D}(A)$ with bounded above cohomology groups is isomorphic in $\mathbf{D}(A)$ to a bounded above complex of projective $A$-modules. If $A$ is a Noetherian commutative local ring, we define a minimal complex of $A$-modules to be a bounded complex $C$ of finite projective $A$-modules such that the differentials on $C \otimes_A A/\ffrm_A$ are all $0$.

If $A$ is a complete Noetherian local ring, then we will write $\CNL_A$ for the category of complete Noetherian local $A$-algebras $R$ with residue field $A / \ffrm_A$. If $G$ is a group scheme over $A$, then we write $\widehat{G}$ for the functor $\CNL_A \to \Sets$ which sends $R \in \CNL_A$ to the group $\ker(G(R) \to G(R / \ffrm_R))$.  

If $A$ is a discrete valuation ring with fraction field $E$, and $M$ is an $A$-module, then we call $M^\vee = \Hom_A(M, E/A)$ the Pontryagin dual of $M$. 

In situations where we have fixed a prime $p$, we will fix an algebraic closure $\overline{\bbQ}_p$ of $\overline{\bbQ}$, and call subfields $E \subset \overline{\bbQ}_p$ which are finite over $\bbQ_p$ coefficient fields. We will generally write $\cO \subset E$ for the ring of integers of a coefficient field, $k$ for its residue field, and $\varpi \in \cO$ for a choice of uniformizer. We call $\cO$ a coefficient ring.

If $L$ is a characteristic $0$ local field and $\chi_1, \chi_2 : L^\times \to \bbC^\times$ are smooth characters, we write $i_B^G \chi_2 \otimes \chi_2$ for the normalized induction of $\chi_1 \otimes \chi_2$ from the upper triangular subgroup $B \subseteq \GL_2(L)$ to $G = \GL_2(L)$. 
Specifically, this is the space of locally constant functions $f : \GL_2(L) \to \bbC$ such that $f (\begin{psmallmatrix} a & b \\ 0 & d \end{psmallmatrix} g ) = \lvert \frac{a}{d} \rvert^{1/2} \chi_1(a) \chi_2(b) f(g)$ for all $\begin{psmallmatrix} a & b \\ 0 & d \end{psmallmatrix} \in B$ and $g \in G$. 
Here $\lvert \cdot \rvert$ is normalized absolute value on $L$.

We write $\bbZ_+^n$ for tuples of integers $(\lambda_1, \ldots, \lambda_n)$ with $\lambda_1 \ge \cdots \lambda_n$. 
We write $\bbZ_{+, 0}^n \subset \bbZ_+^n$ for the subset of $(\lambda_1, \ldots, \lambda_n)$ with $\lambda_1 + \cdots + \lambda_n = 0$. We identify $\bbZ_+^n \subset \bbZ^n$ with the set of dominant weights of $\GL_n$ with respect to the usual diagonal torus and upper triangular Borel subgroup. 

If $L$ is a non-archimedean local field, we write $\Art_L : L^\times \to G_L$ for the Artin map normalized so that uniformizers are taken to geometric Frobenii. If $\rho : G_L \to \GL_n(\overline{\bbQ}_p)$ is a potentially semistable representation, we write $\operatorname{WD}(\rho)$ for the associated Weil--Deligne representation. 
If $L$ has residual characteristic $p$, we say that $\rho$ is ordinary of weight $\lambda \in (\bbZ_+^n)^{\Hom_{\bbQ_p}(L, \overline{\bbQ}_p)}$ if there is a full flag $0 = \Fil_0 \subset \cdots \subset \Fil_n = \overline{\bbQ}_p^n$ stabilized by $\rho$ such that letting $\psi_i : G_L \to \overline{\bbQ}_p^\times$ be the character giving the action of $\rho$ on $\Fil_i/\Fil_{i-1}$, we have
\[ \psi_i\circ \Art_L(x) = \prod_{\tau \in \Hom_{\bbQ_p}(L, \overline{\bbQ}_p)} \tau(x)^{-(\lambda_{\tau,n-i+1} + i - 1)} \]
for all $x$ in some open subgroup of $\cO_L^\times$. If $K$ is a number field and $\rho : G_K \to \GL_n(\overline{\bbQ}_p)$ is a continuous representation, we say that $\rho$ is ordinary of weight $\lambda \in (\bbZ^n_+)^{\Hom(L, \overline{\bbQ}_p)}$ if for each place $v | p$ of $K$, $\rho|_{G_{K_v}}$ is ordinary of weight $\lambda_v = (\lambda_\tau)_{\tau \in \Hom_{\bbQ_p}(K_v, \overline{\bbQ}_p)}$.

Let $K$ be a number field. If $\pi$ is an irreducible admissible representation of $\GL_n(\bbA_K)$, we say that $\pi$ is of weight $\lambda$ if $\pi_\infty$ has the same infinitesimal character as the dual of the irreducible algebraic representation of $\GL_n(K \otimes_\bbQ \bbC)$ of highest weight $\lambda$. Note that this reflects a different choice of normalisation to that used in the introduction, which is adapted to classical holomorphic modular forms; thus ``weight 2'' in the introduction corresponds to ``weight 0'' in the body of the paper.

If $K$ is a CM number field and $\pi$ is a cuspidal automorphic representation of $\GL_n(\bbA_K)$ of weight $\lambda$, then for each isomorphism $\iota : \overline{\bbQ}_p \to \bbC$, there exists a continuous semisimple representation $r_\iota(\pi) : G_K \to \GL_n(\overline{\bbQ}_p)$, characterized up to isomorphism by the characteristic polynomials of Frobenius elements at all but finitely many places \cite[Theorem A]{hltt}. An elliptic curve $E$ over $K$ without complex multiplication is modular if and only if there exists a cuspidal automorphic representation $\pi$ of weight $0$ and an isomorphism $\iota : \overline{\bbQ}_p \to \bbC$ such that $\rho_{E, p} \otimes \epsilon^{-1} \cong r_\iota(\pi)$. See Lemma \ref{lem_more_on_modularity} for more discussion of this notion.

\section{Group theory}\label{sec_group_theory}
In this section, we prove some group theoretic lemmas and propositions that will be useful later.

\subsection{Dihedral representations}

We first collect some useful facts about dihedral Galois representations. Let $K$ be a number field.
\begin{definition}\label{dfn:dihedral}
    Let $\overline{\rho} : G_K \to \GL_2(\overline{\bbF}_2)$ be a continuous representation. We say that $\overline{\rho}$ is dihedral if the image of $\overline{\rho}(G_K)$ in $\PGL_2(\overline{\bbF}_2)$ is isomorphic to a dihedral group.
\end{definition}
\begin{proposition}\label{prop_soluble_means_dihedral}
    Let $\overline{\rho} : G_K \to \GL_2(\overline{\bbF}_2)$ be a continuous representation. The following are equivalent:
    \begin{enumerate}
        \item $\overline{\rho}$ is irreducible and $\overline{\rho}(G_K)$ is soluble.
        \item $\overline{\rho}$ is dihedral.
        \item There is an isomorphism $\overline{\rho} \cong \Ind_{G_L}^{G_K} \overline{\chi}$, where $L / K$ is a quadratic extension and $\overline{\chi} : G_L \to \overline{\bbF}_2^\times$ is a continuous character such that $\overline{\chi}^\sigma \neq \overline{\chi}$, where $\sigma \in \Gal(L / K)$ is the non-trivial element.
    \end{enumerate}
    In case 3, the extension $L / K$ is uniquely determined by $\overline{\rho}$.
\end{proposition}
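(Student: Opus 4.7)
The plan is to establish the logical cycle $(3) \Rightarrow (2) \Rightarrow (1) \Rightarrow (2) \Rightarrow (3)$ and derive uniqueness of $L/K$.

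For $(3) \Rightarrow (1),(2)$, I would choose a basis of $\Ind_{G_L}^{G_K} \overline{\chi}$ in which $G_L$ acts by $\diag(\overline{\chi}, \overline{\chi}^\sigma)$ and a chosen lift of $\sigma$ acts by the antidiagonal involution $\bigl(\begin{smallmatrix} 0 & 1 \\ 1 & 0 \end{smallmatrix}\bigr)$. The projective image is then generated by the cyclic image of $\overline{\chi}/\overline{\chi}^\sigma$ together with this involution, forming a dihedral group $D_n$ with $n = \ord(\overline{\chi}/\overline{\chi}^\sigma)$. Since $\overline{\bbF}_2^\times$ has only odd-order torsion and $\overline{\chi} \neq \overline{\chi}^\sigma$ forces $n \geq 2$, in fact $n \geq 3$ is odd and $D_n$ is non-abelian, giving (2). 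Mackey's irreducibility criterion and solubility of $D_n$ combine to give (1).

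The content of the proposition is $(1) \Rightarrow (2)$. Here I would invoke Dickson's classification of finite subgroups of $\PGL_2(\overline{\bbF}_2)$: the soluble ones are cyclic, dihedral $D_n$ with $n$ odd, the exceptional group $A_4$, and subgroups contained in the image of a Borel. The group $S_4$ does not embed, because its $2$-Sylow $D_4$ contains an order-$4$ element whereas every $2$-element of $\PGL_2(\overline{\bbF}_2)$ has order $1$ or $2$; and $A_5$, $\PSL_2(\bbF_{2^k})$ for $k \geq 2$ are non-soluble. Any abelian subgroup of $\GL_2(\overline{\bbF}_2)$ is simultaneously triangularisable, so a cyclic or Borel-contained projective image forces $\overline{\rho}$ reducible. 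To rule out $A_4$ I would use the characteristic-two fact that every order-$2$ element of $\GL_2(\overline{\bbF}_2)$ is unipotent (since $x^2 - 1 = (x-1)^2$), combined with a short commutator determinant computation showing that unipotent lifts of two commuting order-$2$ elements actually commute on the nose and hence share a fixed line. Applied to $V_4 \subset A_4$, this places $V_4$ in a single Borel $B$; then $A_4 = N_{A_4}(V_4)$ preserves the unique common fixed line and lies in $B$, contradicting irreducibility. This $A_4$ analysis is the main obstacle: in odd characteristic, $A_4$ admits a lift to the binary tetrahedral group that does produce irreducible projective representations, so the argument is genuinely characteristic-two in nature.

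For $(2) \Rightarrow (3)$ and uniqueness: let $G_L \subset G_K$ be the preimage of the index-$2$ cyclic subgroup of the projective image $D_n$ with $n \geq 3$ odd; this defines a quadratic extension $L/K$. The projective image of $G_L$ is cyclic of odd order, hence conjugate into a maximal torus of $\PGL_2(\overline{\bbF}_2)$, so $\overline{\rho}|_{G_L}$ is diagonalisable as $\overline{\chi} \oplus \overline{\chi}'$ with $\overline{\chi} \neq \overline{\chi}'$. Since $\overline{\rho}$ is irreducible, $\overline{\rho}(\sigma)$ must swap the two eigenlines, which forces $\overline{\chi}' = \overline{\chi}^\sigma$; then $\overline{\rho} \cong \Ind_{G_L}^{G_K} \overline{\chi}$ by Frobenius reciprocity. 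Uniqueness of $L$ is immediate: in $D_n$ with $n \geq 3$ odd the cyclic rotation subgroup is the unique subgroup of index $2$, so $G_L$ is determined by the projective image of $\overline{\rho}$.
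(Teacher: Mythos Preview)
Your proof is correct and follows the same overall strategy as the paper: both invoke the classification of finite subgroups of $\PGL_2(\overline{\bbF}_2)$ to pin down the soluble irreducible case as dihedral. The paper simply cites this classification as a black box, asserting that any soluble irreducible subgroup lies in the normalizer of the maximal torus and surjects onto the Weyl group. You instead work through the classification by hand, and in particular give a self-contained characteristic-$2$ argument eliminating $A_4$: the determinant trick showing that projectively commuting involutions actually commute (since the scalar commutator has determinant $1$, hence equals $1$ in characteristic $2$), forcing $V_4$ into a single Borel, which $A_4 = N_{A_4}(V_4)$ must then preserve. This is the one case the paper's appeal to the classification hides, and your treatment makes explicit why it is genuinely a characteristic-$2$ phenomenon. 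The uniqueness argument via the unique index-$2$ subgroup of $D_n$ for odd $n \geq 3$ matches the paper's.
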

\begin{proof}
    It is clear that $3 \Rightarrow 2 \Rightarrow 1$. To show that $1 \Rightarrow 3$, we use the classification of finite subgroups of $\PGL_2(\overline{\bbF}_2)$, which shows that any soluble, irreducible subgroup of $\PGL_2(\overline{\bbF}_2)$ is, after conjugation, contained in the normalizer of the standard maximal torus of $\PGL_2$, and surjects onto the Weyl group of this torus. In particular, $\overline{\rho}(G_K)$ contains a unique index two subgroup, corresponding to a quadratic extension $L / K$, and there is an isomorphism $\overline{\rho}|_{G_L} \cong \overline{\chi} \oplus \overline{\chi}^\sigma$. If $\overline{\chi} = \overline{\chi}^\sigma$ then $\overline{\rho}$ preserves a line, so is reducible. 
\end{proof}
We recall (cf. \cite{Car17}) that a representation $\overline{\rho} : G_K \to \GL_2(\overline{\bbF}_2)$ is said to be decomposed generic if there exists a prime $l \neq 2$ which splits in $K$ and such that for each place $v | l$ of $K$, $\overline{\rho}|_{G_{K_v}}$ is unramified and the eigenvalues $\alpha_v, \beta_v$ of $\overline{\rho}(\Frob_v)$ satisfy $\alpha_v / \beta_v \neq q_v^{\pm 1}$. (In fact, in characteristic 2 this just means that $\alpha_v \neq \beta_v$.) 
\begin{lemma}\label{lem:decomposed_generic_after_extension}
    Let $\overline{\rho} : G_K \to \GL_2(\overline{\bbF}_2)$ be dihedral and decomposed generic. Then for any $m \geq 1$, $\overline{\rho}|_{G_{K(\zeta_{2^m})}}$ is decomposed generic.
\end{lemma}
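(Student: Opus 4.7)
The plan is to apply Chebotarev density in a suitable Galois extension, exploiting that characters $G_L \to \overline{\bbF}_2^\times$ have image of odd order, whereas $\bbQ(\zeta_{2^m})/\bbQ$ has $2$-power degree.

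By Proposition~\ref{prop_soluble_means_dihedral}, I write $\overline{\rho} \cong \Ind_{G_L}^{G_K} \overline{\chi}$ with $\overline{\chi}^\sigma \neq \overline{\chi}$ and set $\psi := \overline{\chi}(\overline{\chi}^\sigma)^{-1} : G_L \to \overline{\bbF}_2^\times$. In a suitable basis, $\overline{\rho}(g)$ is diagonal for $g \in G_L$ and anti-diagonal otherwise, and any anti-diagonal matrix $\begin{psmallmatrix} 0 & a \\ b & 0 \end{psmallmatrix}$ has characteristic polynomial $X^2 + ab = (X + \sqrt{ab})^2$ in characteristic $2$, hence a repeated eigenvalue. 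Therefore, for any unramified prime $v$ of $K$, $\overline{\rho}(\Frob_v)$ has distinct eigenvalues if and only if $v$ splits in $L/K$ and $\psi(\Frob_w) \neq 1$ for a prime $w|v$ of $L$. Moreover, $\overline{\bbF}_2^\times = \bigcup_{n \geq 1} \bbF_{2^n}^\times$ is a union of finite groups of odd order, so $\psi$ has image of odd order; let $M' := L^{\ker \psi}$, so that $M'/L$ is cyclic of odd degree.

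Applying the above to the decomposed-generic prime $l$: $l$ splits completely in $K$ and in $L$, hence also in the Galois closure $\widetilde L$ of $L/\bbQ$ (standard), but $l$ does not split completely in $M'$, since $\psi(\Frob_w) \neq 1$ for $w|l$ in $L$. This forces $M' \not\subseteq \widetilde L$. From this I deduce $M' \not\subseteq \widetilde L \bbQ(\zeta_{2^m})$: otherwise the $2$-group $\Gal(\widetilde L \bbQ(\zeta_{2^m})/\widetilde L) \hookrightarrow \Gal(\bbQ(\zeta_{2^m})/\bbQ)$ would surject onto the odd-order group $\Gal(\widetilde L M'/\widetilde L) \cong \Gal(M'/M' \cap \widetilde L)$, forcing $M' \subseteq \widetilde L$, a contradiction.

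Finally, Chebotarev density applied to the Galois closure $N/\bbQ$ of $\widetilde L \cdot M' \cdot \bbQ(\zeta_{2^m})$ furnishes infinitely many rational primes $l' \neq 2$, unramified in $N$, whose Frobenius class lies in $\Gal(N/\widetilde L \bbQ(\zeta_{2^m}))$ and has non-trivial image in $\Gal(M'/L)$; such a class exists by the previous paragraph. Any such $l'$ splits completely in $\widetilde L$ (hence in $K$) and satisfies $l' \equiv 1 \pmod{2^m}$, so splits completely in $F = K(\zeta_{2^m})$. For each prime $w|l'$ of $F$, complete splitting identifies $\Frob_w$ with $\Frob_v$ for the prime $v|l'$ of $K$ below $w$, so $\overline{\rho}(\Frob_w)$ has distinct eigenvalues by the characterization. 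This establishes the decomposed generic property for $\overline{\rho}|_{G_F}$. The main point requiring care is the manipulation of Galois closures to handle the possibility that $L/\bbQ$ is not Galois; once this is set up, the odd-vs-$2$-power disjointness makes the argument go through without further obstacle.
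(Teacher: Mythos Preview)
There is a genuine gap in the final Chebotarev step. The decomposed generic condition requires that for \emph{every} place $v \mid l'$ of $K$ the element $\overline{\rho}(\Frob_v)$ has distinct eigenvalues, equivalently (by your characterization) that $\psi(\Frob_{w'}) \neq 1$ for \emph{every} place $w' \mid l'$ of $L$. Your argument only guarantees this for \emph{one} such place. Indeed, the non-containment $M' \not\subseteq \widetilde{L}\,\bbQ(\zeta_{2^m})$ produces a single element $g \in \Gal(N/\widetilde{L}\,\bbQ(\zeta_{2^m}))$ with nontrivial image in $\Gal(M'/L)$, and Chebotarev gives a prime $l'$ whose Frobenius conjugacy class in $\Gal(N/\bbQ)$ contains $g$. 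But the other elements of that conjugacy class, namely the $h g h^{-1}$ for $h \in \Gal(N/\bbQ)$, are exactly the Frobenii at the other places $w' \mid l'$ of $L$ (here one uses that $l'$ splits in $\widetilde{L}$ so that $hgh^{-1} \in G_{\widetilde{L}} \subseteq G_L$), and since $M'$ need not be Galois over $\bbQ$ these conjugates can have trivial image in $\Gal(M'/L)$. Your phrase ``Frobenius class \ldots has non-trivial image in $\Gal(M'/L)$'' is therefore not well-defined for the full conjugacy class.

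The paper's proof handles this by working directly with a Frobenius element $g \in G_{\widetilde{L}}$ at the given decomposed generic prime $l$: the hypothesis on $l$ says precisely that $\psi(hgh^{-1}) \neq 1$ for \emph{all} $h \in G_\bbQ$. One then takes $g^{2^m}$, which lies in $G_{\widetilde{L}(\zeta_{2^m})}$ (since $[\widetilde{L}(\zeta_{2^m}):\widetilde{L}]$ is a $2$-power) and still satisfies $\psi(h g^{2^m} h^{-1}) = \psi(hgh^{-1})^{2^m} \neq 1$ for all $h$ (since $\psi$ has odd order). Chebotarev then gives $l'$ with Frobenius class that of $g^{2^m}$, and now every $\Gal(N/\bbQ)$-conjugate has the required nonvanishing. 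Your odd-versus-$2$-power disjointness is correct as far as it goes, but it does not control the whole conjugacy class; the missing idea is this ``raise to the $2^m$th power'' trick, which preserves the condition at all conjugates simultaneously.
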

\begin{proof}
    Let $L / K$ denote the quadratic extension associated to $\overline{\rho}$, and let $\widetilde{L}$ denote the Galois closure of $L / \bbQ$. Let $\overline{\psi} = \overline{\chi} / \overline{\chi}^\sigma$. If $\overline{\rho}$ is decomposed generic, then there exists $g \in G_{\widetilde{L}}$ such that for all $h \in G_\bbQ$, $\overline{\psi}^h(g) \neq 1$. Indeed, we can just choose $g$ to be in the conjugacy class of a Frobenius element at the prime $l$. Then $g^{2^m} \in G_{\widetilde{L}(\zeta_{2^m})}$ and for all $h \in G_{\bbQ}$, $\overline{\psi}^h(g^{2^m}) \neq 1$ (as $\overline{\psi}$ has odd order). Let $E / \bbQ$ be a Galois extension containing the extension of $L(\zeta_{2^m})$ cut out by $\overline{\rho}$. Let $l'$ be a prime unramified in $E$ and with the conjugacy class of $\Frob_{l'}$ in $\Gal(E / \bbQ)$ equal to that of the image of $g^{2^m}$. The existence of $l'$ implies that $\overline{\rho}|_{G_{K(\zeta_{2^m})}}$ is decomposed generic.
\end{proof}
\begin{lemma}\label{lem:not_bad_dihedral_after_extension}
    Suppose that $\overline{\rho} : G_K \to \GL_2(\overline{\bbF}_2)$ is dihedral but that the corresponding quadratic extension $L/K$ is not contained in $K(\zeta_{2^m})$ for any $m \geq 1$. Then for any $m \geq 1$, $\overline{\rho}|_{G_{K(\zeta_{2^m})}}$ is dihedral.
\end{lemma}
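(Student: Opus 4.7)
The plan is to use the characterization of dihedral representations from Proposition \ref{prop_soluble_means_dihedral}(3) and show that restricting to $G_{K(\zeta_{2^m})}$ preserves the structure $\Ind_{G_{LK'}}^{G_{K'}} \overline{\chi}|_{G_{LK'}}$ together with the non-invariance condition on the inducing character.

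Write $\overline{\rho} \cong \Ind_{G_L}^{G_K} \overline{\chi}$ with $\overline{\chi}^\sigma \neq \overline{\chi}$, and set $K' = K(\zeta_{2^m})$. The hypothesis that $L \not\subset K'$ ensures that $LK'/K'$ is a nontrivial, hence quadratic, extension, so that Mackey's formula gives
\[ \overline{\rho}|_{G_{K'}} \cong \Ind_{G_{LK'}}^{G_{K'}} \bigl( \overline{\chi}|_{G_{LK'}} \bigr). \]
Moreover, any lift of the nontrivial element $\sigma' \in \Gal(LK'/K')$ to $G_{K'} \subseteq G_K$ restricts to $\sigma$ on $L$. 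By Proposition \ref{prop_soluble_means_dihedral}, it therefore suffices to show that the character $\overline{\psi} := \overline{\chi}/\overline{\chi}^\sigma : G_L \to \overline{\bbF}_2^\times$, which is nontrivial by assumption, remains nontrivial upon restriction to $G_{LK'}$.

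The key point is a parity observation: since $\overline{\psi}$ takes values in $\overline{\bbF}_2^\times$, its order is odd, so the fixed field $M$ of $\ker \overline{\psi}$ is an extension of $L$ of odd degree. On the other hand, $K'/K$ has degree a power of $2$, hence so does $LK'/L$. Thus $M \cap LK' = L$, so $\overline{\psi}$ and $\overline{\psi}|_{G_{LK'}}$ cut out the same extension of $L$; in particular $\overline{\psi}|_{G_{LK'}}$ is nontrivial, which is what we needed.

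I do not anticipate any genuine obstacle: once one recognizes the oddness of the order of $\overline{\psi}$ against the $2$-power degree of $K(\zeta_{2^m})/K$, the argument is essentially formal. The only thing to verify carefully is that the hypothesis $L \not\subset K(\zeta_{2^m})$ is really being used at the right place, namely to ensure $LK'/K'$ is quadratic (and not trivial, which would force $\overline{\rho}|_{G_{K'}}$ to be reducible rather than dihedral).
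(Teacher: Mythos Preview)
Your proof is correct. It differs from the paper's in presentation: the paper argues more tersely that $L \cap K(\zeta_{2^m}) = K$ (immediate since $[L:K]=2$) and then asserts that this forces $\overline{\rho}(G_{K(\zeta_{2^m})}) = \overline{\rho}(G_K)$, so the image---and hence its dihedral projective image---is unchanged. You instead rebuild the dihedral structure from scratch via Mackey, explicitly checking that the inducing character remains non-invariant under the Galois involution. Both arguments ultimately rest on the same parity observation (odd-order characters in $\overline{\bbF}_2^\times$ versus the $2$-power degree of $K(\zeta_{2^m})/K$), but your version makes this step explicit, whereas the paper's ``equivalently'' leaves implicit the fact that $L$ is the unique quadratic subextension of the splitting field of $\overline{\rho}$ (which is what the parity gives). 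Your approach is slightly longer but arguably more self-contained; the paper's is quicker once one accepts that equivalence.
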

\begin{proof}
    To show that $\overline{\rho}|_{G_{K(\zeta_{2^m})}}$ is dihedral, it is enough to show that $\overline{\rho}(G_{K(\zeta_{2^m})}) = \overline{\rho}(G_K)$. Equivalently, 
that $L / K$ and $K(\zeta_{2^m})$ are linearly disjoint. The only possibilities for the intersection are $L \cap K(\zeta_{2^m}) = K$ or $L \cap K(\zeta_{2^m}) = L$. In the first case we're done; in the second, we get $L \subset K(\zeta_{2^m})$, contradicting our assumption.
\end{proof}

\subsection{Continuous cochain cohomology}\label{sec:cohomology}
Let $F$ be a local field of residual characteristic $p$ and with ring of integers $A$.
We review some facts about continuous group cohomology for profinite groups acting continuously on topological $A$-modules. 
This is all standard when $F$ is a local field of characteristic $0$ but we will also need these facts when $F$ has characteristic $p$. 
The proofs are the same as in the characteristic $0$ case. 
We include them here for for lack of a reference.

Let $\Gamma$ be a profinite topological group and let $M$ be a topological $A$-module equipped with a continuous $A$-linear $\Gamma$-action. 
Then we can form the continuous cohomology groups $H^i(\Gamma, M)$ computed using continuous cochains (see \cite[Chapter II, \S7]{nsw}, where these groups are denoted by $H_{cts}^i(\Gamma, M)$). 
In particular, the group $H^1(\Gamma, M)$ is given by the quotient $Z^1(\Gamma, M)/B^1(\Gamma, M)$ where $Z^1(\Gamma, M)$ is the $A$-module of continuous $1$-cocycles (continuous functions $f : \Gamma \to M$ satisfying $f(gh) = gf(h) + f(g)$) and $B^1(\Gamma, M)$ is the submodule of $1$-cocycles of the form $g \mapsto gm - m$ for fixed $m \in M$.
If the $A$-module $M$ is further an $F$-vector space, then so is $H^i(\Gamma, M)$.

Let 
\[ \xymatrix@1{ 0 \ar[r] & L \ar[r] & M \ar[r] & N \ar[r] & 0 }\]
be a short exact sequence of topological $A$-modules equipped with continuous $A$-linear $\Gamma$-actions. 
If the topology on $L$ is induced from that on $M$ and the map $M \to N$ has a continuous set-theoretic section, then there is a long exact sequence of cohomology groups
\[ \xymatrix@1{ \cdots \ar[r] & H^i(\Gamma, L) \ar[r] & H^i(\Gamma, M) \ar[r] & H^i(\Gamma, N) \ar[r] & H^{i+1}(\Gamma, M) \ar[r] & \cdots }. \]
We have cup products as well as inflation, restriction, and corestriction satisfying all the usual properties (see \textit{loc. cit.}). 
In particular, we have the five term inflation-restriction exact sequence as well as the restriction-corestriction formula. 

The following is a special case of \cite[Corollary~2.7.6]{nsw}.

\begin{proposition}\label{prop:inverse-limit}
Let $t \in \ffrm_A$ be nonzero.
Assume that $M$ is a finitely generated $A$-module and that $H^{i-1}(\Gamma, M/(t^N))$ is finite for all $N \ge 1$. 
Then the natural map
\[ H^i(\Gamma, M) \to \varprojlim_{N} H^i(\Gamma, M/(t^N)).  \]
is an isomorphism.
\end{proposition}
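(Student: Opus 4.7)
The plan is to deduce the claim from the Milnor $\varprojlim^1$ exact sequence applied to the inverse system of continuous cochain complexes $C^\bullet(\Gamma, M/(t^N))$, using the finiteness hypothesis on $H^{i-1}(\Gamma, M/(t^N))$ to force the $\varprojlim^1$ term to vanish.

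First I would establish two preliminary compatibilities, both of which use only that $A$ is a complete discrete valuation ring with finite residue field and that $M$ is finitely generated over $A$, and so go through identically whether $F$ has characteristic $0$ or $p$. Under these hypotheses $M$ is $t$-adically complete, so $M \to \varprojlim_N M/(t^N)$ is a homeomorphism for the inverse limit topology on the target, and each quotient $M/(t^N)$ is a finite discrete $A$-module. Combined with compactness of $\Gamma^n$, this gives $C^\bullet(\Gamma, M) = \varprojlim_N C^\bullet(\Gamma, M/(t^N))$ compatibly with differentials. Moreover, because $M/(t^{N+1}) \to M/(t^N)$ is a surjection of finite discrete sets, any set-theoretic section lets one lift a continuous cochain into $M/(t^N)$ to one into $M/(t^{N+1})$, and so the transition maps $C^n(\Gamma, M/(t^{N+1})) \to C^n(\Gamma, M/(t^N))$ are surjective for all $n, N$.

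These two facts together yield a short exact sequence of cochain complexes
\[ 0 \to C^\bullet(\Gamma, M) \to \prod_N C^\bullet(\Gamma, M/(t^N)) \xrightarrow{1 - s} \prod_N C^\bullet(\Gamma, M/(t^N)) \to 0, \]
where $s$ is the shift coming from the transition maps; exactness on the right is precisely the cochain-level surjectivity noted above. The associated long exact sequence of cohomology, combined with the fact that cohomology commutes with direct products of complexes, produces the Milnor short exact sequence
\[ 0 \to {\varprojlim_N}^1 H^{i-1}(\Gamma, M/(t^N)) \to H^i(\Gamma, M) \to \varprojlim_N H^i(\Gamma, M/(t^N)) \to 0, \]
whose third arrow is the natural map in the statement. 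The finiteness hypothesis on $H^{i-1}(\Gamma, M/(t^N))$ makes the inverse system trivially Mittag--Leffler, so ${\varprojlim}^1 = 0$ and the map is an isomorphism. The main obstacle is really just the careful bookkeeping involved in verifying the two preliminary compatibilities in the function-field case (which is presumably why the authors include a proof rather than citing NSW directly), but these depend only on finiteness of the residue field of $A$ and not on the characteristic of $F$.
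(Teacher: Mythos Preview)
Your argument is correct. The paper, however, does not actually give its own proof of this proposition: it simply records that the statement is a special case of \cite[Corollary~2.7.6]{nsw} and moves on. Your Milnor $\varprojlim^1$ argument is the standard one underlying that reference, and every step you outline (completeness of $M$ in the $t$-adic topology, identification $C^\bullet(\Gamma,M)=\varprojlim_N C^\bullet(\Gamma,M/(t^N))$, surjectivity of the transition maps on cochains via a set-theoretic section of the finite surjection $M/(t^{N+1})\to M/(t^N)$, and the resulting short exact sequence with $\varprojlim^1$ killed by finiteness) goes through without change in the function-field case. Your closing parenthetical is based on a slight misreading: the paper's remark about ``includ[ing] them here for lack of a reference'' applies to the surrounding results in the subsection, not to this proposition, which \emph{is} handled by direct citation.
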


\begin{lemma}\label{lem:no-divisible}
Let $t \in \ffrm_A$ be nonzero.
Assume that $M$ is a finitely generated $A$-module and let $Y$ be a finitely generated $A$-submodule of $H^i(\Gamma, M)$. 
Then $H^i(\Gamma, M)/Y$ contains no nontrivial $t$-divisible submodule.
\end{lemma}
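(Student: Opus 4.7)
The plan is to argue by contradiction. Assume that $H^i(\Gamma,M)/Y$ contains a nonzero $t$-divisible submodule; pick $\bar x \neq 0$ inside it and lift it to some $x \in H^i(\Gamma,M)$. Unwinding the definition of $t$-divisibility in the quotient, for each $n \geq 1$ we obtain a decomposition
\[
x = y_n + t^n z_n, \qquad y_n \in Y,\ z_n \in H^i(\Gamma,M).
\]
The goal is to produce an actual $y \in Y$ equal to $x$, contradicting $\bar x \neq 0$.

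Two ingredients drive the argument. First, Proposition~\ref{prop:inverse-limit}, whose finiteness hypothesis is available in the settings where this lemma will be invoked (via standard finiteness of continuous cohomology of $\Gamma$ with finite coefficients), supplies the crucial fact that $\bigcap_{n \geq 1} t^n H^i(\Gamma,M) = 0$. Second, since $A$ is a complete discrete valuation ring with finite residue field, any finitely generated $A$-module --- in particular $Y$ --- is profinite, hence sequentially compact, in its $t$-adic topology. I specifically want compactness rather than a Cauchy-sequence argument because $H^i(\Gamma,M)$ need not be Noetherian, so Artin--Rees for the inclusion $Y \subset H^i(\Gamma,M)$ is unavailable; a priori, the sequence $\{y_n\}$ need not be $t$-adically Cauchy in $Y$.

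Using sequential compactness, extract a convergent subsequence $y_{n_k} \to y$ in $Y$. Given any $N \geq 1$, choose $k$ large enough that both $n_k \geq N$ and $y - y_{n_k} \in t^N Y$; then both $x - y_{n_k} = t^{n_k} z_{n_k}$ and $y - y_{n_k}$ lie in $t^N H^i(\Gamma,M)$, hence so does $y - x$. Since $N$ was arbitrary, the first ingredient forces $y = x$, so $x \in Y$, the desired contradiction. The main obstacle in implementing this is really that first ingredient: one must verify $H^{i-1}(\Gamma, M/t^N M)$ is finite for every $N$ so that Proposition~\ref{prop:inverse-limit} can be invoked. In the contexts where the lemma will be applied this is a classical finiteness result for local Galois cohomology with finite coefficients, but it is the place where the continuous-cochain machinery and the hypothesis on $F$ quietly do real work.
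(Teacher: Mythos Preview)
Your argument is internally sound \emph{provided} you may invoke Proposition~\ref{prop:inverse-limit}, but that proposition carries the hypothesis that $H^{i-1}(\Gamma, M/(t^N))$ is finite for all $N$, and the present lemma makes no such assumption on $\Gamma$. You acknowledge this yourself (``available in the settings where this lemma will be invoked''), which means what you have actually established is a strictly weaker statement. Concretely, since $C^j(\Gamma,M)=\varprojlim_n C^j(\Gamma,M/t^n)$ with surjective transition maps, there is an exact sequence
\[
0 \to {\varprojlim_n}^{1}\, H^{i-1}(\Gamma, M/t^n) \to H^i(\Gamma, M) \to \varprojlim_n H^i(\Gamma, M/t^n) \to 0,
\]
so the injectivity you need (hence $\bigcap_n t^n H^i(\Gamma,M)=0$) is exactly the vanishing of that $\varprojlim^1$, and for an arbitrary profinite $\Gamma$ there is no reason for this to hold without finiteness. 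Note also that this lemma is used in the paper to prove part of Proposition~\ref{prop:local-field-cohom}, so one should be cautious about importing finiteness hypotheses that live downstream.

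The paper's proof avoids the issue by dropping from cohomology to cochains. From $x_n \equiv t x_{n+1} \bmod Y$ one picks cocycles $f_n$ for $x_n$ and cocycle generators $g_1,\dots,g_r$ for $Y$, writes $f_n = t f_{n+1} + \sum_m a_{nm} g_m + d h_n$, and telescopes. The only completeness needed is that of the cochain groups: since $M$ is finitely generated over the complete local ring $A$, one has $C^j(\Gamma, M) = \varprojlim_n C^j(\Gamma, M/t^n)$, a statement that uses nothing about $\Gamma$ beyond continuity of cochains. In that topology the series $\sum_n t^n h_n$ and $\sum_n t^n a_{nm}$ converge while $t^N f_N \to 0$, giving $f_0 = \sum_m a_m g_m + dh$ and hence $x_0 \in Y$. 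Your compactness argument for $Y$ is correct, but it becomes unnecessary once one works at the cochain level, where both the convergence and the separatedness are automatic.
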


\begin{proof}
Take $x_n \in H^i(\Gamma, M)$, $n\ge 0$, such that $x_n \equiv tx_{n+1} \bmod Y$. 
We want to show that $x_0 \in Y$. 
Let $g_1,\ldots,g_r$ be cocycles representing generators for $Y$, and for each $n \ge 0$, let $f_n$ be a cocycle representing $x_n$. 
Then for each $n\ge 0$, then is an $(i-1)$-cochain $h_n$ and elements $a_{nm} \in A$ such that
\[ f_n = tf_{n+1} + \sum_{m = 1}^r a_{nm} g_m + dh_n. \]
The space of continuous $i$-cochains of $\Gamma$ with values in $M$ is the inverse limit in $n$ of the space of continuous $i$-cochains of $\Gamma$ with coefficients in $M/(t^n)$. 
So defining $h = \sum_{n\ge 0} t^n h_n$ and $a_m = \sum_{n \ge 0} t^n a_{nm}$, $1 \le m \le r$, we have $f_0 = \sum_{m=1}^r a_m g_m + dh$ and $x_0 \in Y$.
\end{proof}

\begin{proposition}\label{prop:local-field-cohom}
Let $t \in \ffrm_A$ be nonzero.
Assume that $M$ is a finitely generated $A$-module and set $V = M \otimes_A F$. 
\begin{enumerate}
	\item\label{cohom:finite-gen} $H^i(\Gamma, M)$ is a finitely generated $A$-module if and only if $H^i(\Gamma, M)/(t)$ is finite. 
	\item\label{cohom:finite-implies-finite} If $M$ is $t$-torsion free and $H^i(\Gamma, M/(t))$ is finite, then $H^i(\Gamma, M)/(t)$ is finite. 
	\item\label{cohom:vector-space} The canonical map $H^i(\Gamma, M) \otimes_A F \to H^i(\Gamma, V)$ is an isomorphism.
\end{enumerate}
\end{proposition}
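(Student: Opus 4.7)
The plan is to handle the three parts in order, using a Nakayama-style argument for (\ref{cohom:finite-gen}), the long exact sequence of continuous cohomology for (\ref{cohom:finite-implies-finite}), and a compactness-based colimit identification for (\ref{cohom:vector-space}). For (\ref{cohom:finite-gen}), the forward direction is immediate because $A/(t)$ is a finite ring (the residue field of $A$ is finite and $t \in \ffrm_A$ is nonzero, in both mixed and equal characteristic). For the converse, I would lift a finite set of generators of $H^i(\Gamma, M)/(t)$ to a finitely generated submodule $Y \subseteq H^i(\Gamma, M)$, so that $H^i(\Gamma, M) = Y + t\,H^i(\Gamma, M)$ and hence $H^i(\Gamma, M)/Y$ is $t$-divisible as a module. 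Lemma \ref{lem:no-divisible} forbids any nontrivial $t$-divisible submodule of $H^i(\Gamma, M)/Y$; since a nonzero $t$-divisible module is itself such a submodule, I conclude $H^i(\Gamma, M)/Y = 0$, so $H^i(\Gamma, M) = Y$ is finitely generated.

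For (\ref{cohom:finite-implies-finite}), I would apply the long exact sequence in continuous cohomology to $0 \to M \xrightarrow{t} M \to M/(t) \to 0$. The hypotheses hold: the subspace topology on $tM$ agrees with its $\ffrm_A$-adic topology by Artin--Rees, and $M/(t)$ is finite discrete, so admits a continuous set-theoretic section. The long exact sequence then yields an injection $H^i(\Gamma, M)/(t) \hookrightarrow H^i(\Gamma, M/(t))$, whose target is finite by hypothesis.

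For (\ref{cohom:vector-space}), I would first reduce to the case that $M$ is $t$-torsion free by applying the long exact sequence to $0 \to M_{\mathrm{tor}} \to M \to M/M_{\mathrm{tor}} \to 0$: the torsion submodule $M_{\mathrm{tor}}$ is finite (hence annihilated by a power of $t$, since $A$ is a DVR), so $H^\bullet(\Gamma, M_{\mathrm{tor}})$ is $t$-power torsion and vanishes after $\otimes_A F$. Assuming $M$ torsion-free, I would write $V = \bigcup_{n \geq 0} t^{-n} M$ as an ascending union of $A$-lattices; multiplication by $t^n$ identifies $t^{-n} M$ with $M$ as topological $A[\Gamma]$-modules and converts each transition map into multiplication by $t$ on $M$, so
\[ \varinjlim_n H^i(\Gamma, t^{-n} M) = H^i(\Gamma, M)[1/t] = H^i(\Gamma, M) \otimes_A F. \]
The remaining step is the identification $C^\bullet(\Gamma, V) = \varinjlim_n C^\bullet(\Gamma, t^{-n} M)$, after which (\ref{cohom:vector-space}) follows by exactness of filtered colimits.

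The main obstacle is precisely this last identification in part (\ref{cohom:vector-space}): one must check that every continuous cochain $\Gamma^k \to V$ factors continuously through some $t^{-n} M$. The key ingredients are that $\Gamma^k$ is profinite, hence compact, so its image in the locally compact $F$-vector space $V$ is compact and therefore contained in some open $A$-lattice $t^{-n} M$; and that the subspace topology on $t^{-n} M \subset V$ is the $\ffrm_A$-adic topology transported by multiplication by $t^n$, so continuity into $V$ implies continuity into $t^{-n} M$. All of this is standard in characteristic $0$, but verifying it uniformly also in the equal-characteristic-$p$ case is the technical point that justifies recording the proposition in this section.
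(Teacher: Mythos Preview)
Your arguments for parts (\ref{cohom:finite-gen}) and (\ref{cohom:finite-implies-finite}) are correct and match the paper's proof essentially verbatim; you have simply supplied a few extra words (e.g.\ the forward direction of (\ref{cohom:finite-gen}), and the Artin--Rees justification for the long exact sequence).

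For part (\ref{cohom:vector-space}) your proof is correct but takes a genuinely different route. After the common reduction to $M$ torsion-free, the paper instead applies the long exact sequence attached to $0 \to M \to V \to V/M \to 0$: since $V/M$ is discrete and $\Gamma$ is compact, every continuous cochain $\Gamma^i \to V/M$ has finite image and is therefore killed by a power of $t$, so $H^i(\Gamma, V/M)\otimes_A F = 0$ and the map $H^i(\Gamma, M)\otimes_A F \to H^i(\Gamma, V)$ is an isomorphism. Your approach realises $V$ as the filtered union $\bigcup_n t^{-n}M$ and uses compactness of $\Gamma^k$ together with openness of each $t^{-n}M$ in $V$ to identify $C^\bullet(\Gamma, V) = \varinjlim_n C^\bullet(\Gamma, t^{-n}M)$, then passes to cohomology. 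Both arguments hinge on the same compactness input; the paper's version avoids having to check the cochain-level colimit identification (and the topology on each $t^{-n}M$), while yours gives a more explicit description of $H^i(\Gamma, V)$ as the localisation $H^i(\Gamma, M)[1/t]$.
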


\begin{proof}
Assume that $H^i(\Gamma, M)/(t)$ is finite. 
Let $Y \subseteq H^i(\Gamma, M)$ be a finitely generated $A$-submodule surjecting onto $H^i(\Gamma, M)/(t)$.
Then $H^i(\Gamma, M)/Y$ is $t$-divisible, so $Y = H^i(\Gamma, M)$ by Lemma~\ref{lem:no-divisible}, proving part~\ref{cohom:finite-gen}.

Part~\ref{cohom:finite-implies-finite} follows easily from the long exact sequence in cohomology resulting from the short exact sequence
\[ \xymatrix@1{ 0 \ar[r] & M \ar[r]^t & M \ar[r] & M/(t) \ar[r] & 0. } \]

We turn to part~\ref{cohom:vector-space}. 
Let $M_{\mathrm{tor}} \subseteq M$ be the $\ffrm_A$-torsion (equivalently, the $t$-torsion) submodule of $M$. 
Then $M$ is finite, hence annihilated by some fixed power $\ffrm_A^s$ of $\ffrm_A$. 
It follows that $\ffrm_A^s$ also annihilates $H^i(\Gamma, M_{\mathrm{tor}})$ for every $i \ge 0$. 
Considering the long exact sequences arising from
\[ \xymatrix@1{ 0 \ar[r] & M_{\mathrm{tor}} \ar[r] & M \ar[r] & M/M_{\mathrm{tor}} \ar[r] & 0, } \]
the natural map $H^i(\Gamma, M) \otimes_A F \to H^i(\Gamma, M/M_{\mathrm{tor}}) \otimes_A F$ is an isomorphism. 
We are thus reduced to the case that $M$ is torsion free. 
We then consider the short exact sequence
\[ \xymatrix@1{ 0 \ar[r] & M \ar[r] & V \ar[r] & V/M \ar[r] & 0. } \]
Since $V/M$ is discrete and $\Gamma$ is compact, any (continuous) $i$-cocyle $f : \Gamma^i \to V/M$ takes only finitely many values, 
so is annihilated by some power of $\ffrm_A$. 
It follows that $H^i(\Gamma, V/M) \otimes_A F = 0$ for all $i$, so the natural map $H^i(\Gamma, M) \otimes_A F \to H^i(\Gamma, V)$ is an isomorphism.
\end{proof}

\subsection{Subgroups of $\GL_2(\bbF\llbracket t \rrbracket)$}

Let $\bbF / \bbF_2$ be a finite extension, and let $A = \bbF\llbracket t \rrbracket$, $F = A[t^{-1}]$. If $R$ is a ring, let $\ad_R$ denote the $R$-module of $2 \times 2$ matrices with entries in $R$, on which $\GL_2(R)$ acts by conjugation, and let $\frz_R \subset \ad_R$ denote the scalar matrices. If $R$ is a local ring and $l \geq 1$ is an integer, then we write $\Gamma(\ffrm_R^l)$ for the principal congruence subgroup of $\SL_2(R)$ of level $l$.
\begin{lemma}\label{lem_no_invariants_in_ad_PGL_2}
    Let $\Gamma \subset \GL_2(A)$ be a compact subgroup with Zariski dense image in $\PGL_{2, F}$. Then $H^0(\Gamma, \ad_A / \frz_A) = 0$.
\end{lemma}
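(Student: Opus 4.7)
The plan is to reduce the vanishing to the fact that $\PGL_{2,F}$ has no nonzero fixed vector in its adjoint representation, and then to verify this last claim by a direct calculation. For the reduction step, observe first that the $A$-submodule $\frz_A = A \cdot I \subset \ad_A$ admits an $A$-direct complement (e.g.\ the $A$-span of $E_{11}, E_{12}, E_{21}$), so $\ad_A/\frz_A$ is a free $A$-module of rank three, in particular $A$-flat. The natural $\Gamma$-equivariant map
\[ \ad_A/\frz_A \longrightarrow (\ad_A/\frz_A)\otimes_A F \cong \ad_F/\frz_F \]
is therefore injective, and since $\Gamma$-invariants are a left exact functor, it suffices to prove $H^0(\Gamma, \ad_F/\frz_F) = 0$.

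Next, I exploit Zariski density. The conjugation action of $\GL_2(F)$ on $\ad_F$ descends to an algebraic action of $\PGL_{2,F}$ on the affine space $\ad_F/\frz_F$ over $F$. The stabilizer of any $\bar X \in \ad_F/\frz_F$ is a Zariski closed subgroup of $\PGL_{2,F}$; if $\bar X$ is $\Gamma$-invariant, then this stabilizer contains the image of $\Gamma$, and so by the Zariski density hypothesis equals $\PGL_{2,F}$. In particular its $F$-points contain all of $\PGL_2(F) = \GL_2(F)/F^\times$. We are thus reduced to the following assertion: if $X \in \ad_F$ satisfies $gXg^{-1} \in X + F \cdot I$ for every $g \in \GL_2(F)$, then $X$ is a scalar matrix.

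Finally, this last assertion is a short direct calculation. Writing $X = \begin{pmatrix} a & b \\ c & d \end{pmatrix}$ and imposing in turn that $gXg^{-1}-X$ is scalar for $g = \begin{pmatrix} 1 & 1 \\ 0 & 1 \end{pmatrix}$, $\begin{pmatrix} 1 & 0 \\ 1 & 1 \end{pmatrix}$, and $\begin{pmatrix} 0 & 1 \\ 1 & 0 \end{pmatrix}$ forces successively $d = a + c$, then $b = c$, and finally $a = d$, so $b = c = 0$ and $a = d$. The one subtlety worth flagging is that, since $F$ has characteristic $2$, one cannot split the inclusion $\frz \subset \ad$ using the trace form (the scalars are themselves traceless, and $\ad/\frz$ is not even irreducible as a $\PGL_2$-representation), so the usual reduction to $\mathfrak{sl}_2$ is unavailable. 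However, the direct conjugation computation works uniformly in every characteristic, so this causes no genuine difficulty; it is the one step where one has to check that the characteristic-$2$ behaviour does not break the argument.
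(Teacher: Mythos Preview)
Your overall strategy matches the paper's: reduce from $A$-coefficients to $F$-coefficients by flatness, use Zariski density to pass from $\Gamma$-invariants to $\PGL_2(F)$-invariants, and finish by a direct conjugation calculation. The first two steps are fine, and indeed more carefully justified than in the paper (which simply asserts ``It suffices to show that $H^0(\PGL_2(F), \ad_F/\frz_F) = 0$'').

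However, the final calculation has a genuine gap in characteristic $2$, precisely the case at hand. Your three test matrices $\left(\begin{smallmatrix} 1 & 1 \\ 0 & 1 \end{smallmatrix}\right)$, $\left(\begin{smallmatrix} 1 & 0 \\ 1 & 1 \end{smallmatrix}\right)$, $\left(\begin{smallmatrix} 0 & 1 \\ 1 & 0 \end{smallmatrix}\right)$ all lie in $\GL_2(\bbF_2)$ (and in fact generate it), and $\GL_2(\bbF_2)$ does have a nonzero fixed vector in $\ad_{\bbF_2}/\frz_{\bbF_2}$: take $X = \left(\begin{smallmatrix} 0 & 1 \\ 1 & 1 \end{smallmatrix}\right)$, an element of order $3$ whose $\GL_2(\bbF_2)$-conjugacy class is $\{X, X+I\}$. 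Concretely, the permutation matrix gives $gXg^{-1} - X = \left(\begin{smallmatrix} a+d & b+c \\ b+c & a+d \end{smallmatrix}\right)$, which is scalar as soon as $b = c$ and imposes no relation between $a$ and $d$; your claim that it forces $a = d$ relies on $2(a-d) = 0 \Rightarrow a = d$, which fails here. So the sentence ``the direct conjugation computation works uniformly in every characteristic'' is exactly where the argument breaks.

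The fix is to use a matrix outside $\GL_2(\bbF_2)$. The paper takes $g = \diag(\alpha,1)$ with $\alpha \in F \setminus \{0,1\}$ (available since $F$ is infinite): then $gXg^{-1} - X$ has off-diagonal entries $(\alpha-1)b$ and $(\alpha^{-1}-1)c$, forcing $b = c = 0$, i.e.\ $X$ is diagonal. A single unipotent then finishes. Replacing your third matrix by such a diagonal one repairs the proof.
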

\begin{proof}
    It suffices to show that $H^0(\PGL_2(F), \ad_F / \frz_F) = 0$. Let $X \in \ad_F$ map to $H^0(\PGL_2(F), \ad_F / \frz_F)$. Then for all $\gamma \in \PGL_2(F)$, there exists $z_\gamma \in \frz_F$ such that $\gamma X \gamma^{-1} - X = z_\gamma$. Choosing $\gamma = \diag(\alpha, 1)$ with $\alpha \neq 1$, this forces $X$ to be diagonal. Choosing $\gamma = \left( \begin{array}{cc} 1 & \alpha \\ 0 & 1 \end{array}\right)$ with $\alpha \neq 0$, this forces $X \in \frz_F$. This completes the proof.
\end{proof}
\begin{lemma}\label{lem_open_subgroup_of_image}
    Let $\Gamma \subset \GL_2(A)$ be a compact subgroup containing a non-trivial unipotent element, and with Zariski dense image in $\PGL_{2, F}$. Then there exists $g \in \GL_2(F)$, a closed subfield $F_0 \subset F$ with $F / F_0$ finite and valuation ring $A_0 \subset A$, and an integer $l \geq 1$ such that $ \Gamma(\ffrm_{A_0}^l) \subset g \overline{[\Gamma, \Gamma]} g^{-1} \subset \SL_2(A_0)$. 
\end{lemma}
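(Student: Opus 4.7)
The strategy is to invoke Pink's structure theorem for compact Zariski-dense subgroups of semisimple groups over local fields, applied to $\Delta := \overline{[\Gamma, \Gamma]} \subset \SL_2(A)$.

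First I would verify that $\Delta$ is Zariski dense in $\SL_{2,F}$. Let $H \subset \GL_{2,F}$ be the Zariski closure of $\Gamma$; by hypothesis, its image in $\PGL_{2,F}$ is all of $\PGL_{2,F}$. Since $\PGL_2$ is algebraically perfect (in characteristic $2$, $[\PGL_2, \PGL_2]_{\mathrm{red}} = \PGL_2$), $[H, H]$ surjects onto $\PGL_{2,F}$. As $[H, H] \subset \SL_{2,F}$ and $\SL_{2,F} \to \PGL_{2,F}$ has infinitesimal kernel, a dimension count gives $[H, H]_{\mathrm{red}} = \SL_{2,F}$, and $\Delta$, being the topological closure of $[\Gamma, \Gamma]$ (which is Zariski dense in $[H, H]$), is Zariski dense in $\SL_{2,F}$.

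Next, I would apply Pink's theorem (R.~Pink, \emph{Compact subgroups of linear algebraic groups}, J.~Algebra 206 (1998)) to the compact Zariski-dense subgroup $\Delta \subset \SL_2(F)$. This produces a closed subfield $F_0 \subset F$ with $[F:F_0] < \infty$, an absolutely simple $F_0$-form $H'$ of $\SL_2$ equipped with an $F$-isomorphism $H'_F \cong \SL_{2,F}$, and an element $g \in \SL_2(F)$ such that $g \Delta g^{-1}$ is commensurable with an open bounded subgroup of $H'(F_0)$, embedded into $\SL_2(F)$ via the given isomorphism. The main obstacle is then to show that $H'$ is the split form. The $F_0$-forms of $\SL_2$ are either the split form $\SL_{2, F_0}$ or anisotropic forms $\SL_1(D)$ for $D$ a quaternion division algebra over $F_0$; in the latter case $H'(F_0)$ is compact and contains no non-trivial unipotent elements. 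To rule out the anisotropic case, I would use the unipotent $u \in \Gamma$: since $\Delta$ is characteristic in $\Gamma$, $u$ normalizes $\Delta$ and hence commensurates the Pink model; since the center of $\SL_2$ in characteristic $2$ is trivial on $F$-points, this commensuration combined with the maximality of $H'(F_0)$ as a bounded subgroup of $H'(F)$ in the anisotropic case forces $u$ itself to lie in $H'(F_0)$, contradicting the absence of non-trivial unipotents. A more direct route is also possible: an explicit calculation shows that $\tr([g, u])$ depends through a non-trivial entry of $g \in \Gamma$, and Zariski density of $\Gamma$ in $\PGL_{2,F}$ allows one to produce elements $[g, u] \in \Delta$ with traces outside the (restricted) trace locus of $H'(F_0)$, yielding the required contradiction.

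Once $H'$ is split, $g \Delta g^{-1}$ is commensurable with an open bounded subgroup of $\SL_2(F_0)$. By the theory of Bruhat--Tits, it is contained in a maximal compact subgroup, which is $\GL_2(F_0)$-conjugate to $\SL_2(A_0)$; absorbing this further conjugation into $g$, we obtain $g \Delta g^{-1} \subset \SL_2(A_0)$, and the openness of $g \Delta g^{-1}$ in $\SL_2(F_0)$ produces $l \geq 1$ with $\Gamma(\ffrm_{A_0}^l) \subset g \overline{[\Gamma, \Gamma]} g^{-1}$, as required.
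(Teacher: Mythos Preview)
Your overall strategy---invoke Pink's structure theorem, rule out the anisotropic form using the unipotent element, then conjugate into $\SL_2(A_0)$ via Bruhat--Tits---matches the paper's. The crucial difference is \emph{where} you apply Pink. You apply it to $\Delta = \overline{[\Gamma,\Gamma]} \subset \SL_2(F)$; the paper applies it to the image $\Gamma'$ of $\Gamma$ in $\PGL_2(F)$. Pink's theorem, applied to $\Gamma'$, yields a form $H$ of $\PGL_2$ over $F_0$ with $\Gamma' \subset \varphi(H(F_0))$ \emph{and} the statement that the closure of $[\Gamma,\Gamma]$ in $\SL_2(F)$ is open in $\widetilde\varphi(\widetilde H(F_0))$. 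Since the unipotent $u$ lies in $\Gamma$, its image lies in $H(F_0)$, and anisotropic forms of $\PGL_2$ over a local field have no non-trivial unipotent $F_0$-points; so $H$ is split. This is a one-line argument.

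By contrast, your choice creates real difficulty: Pink applied to $\Delta$ gives $g\Delta g^{-1} \subset H'(F_0)$, but the unipotent $u$ is in $\Gamma$, not a priori in $\Delta$, so you cannot directly place a unipotent inside $H'(F_0)$. Neither of your proposed workarounds closes this gap. For the normalization argument: $gug^{-1}$ normalizes an open subgroup $U$ of $\SL_1(D)$, but $U$ is Zariski dense in $\SL_{2,F}$, so its normalizer in $\SL_2(F)$ is not usefully constrained; and $\SL_1(D)$ is certainly \emph{not} a maximal bounded subgroup of $H'(F)=\SL_2(F)$ when $[F:F_0]>1$, so the maximality claim fails. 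For the trace argument, you would need to exhibit a non-trivial unipotent in $\Delta$ itself, but a direct computation in characteristic $2$ shows $\tr([u,g]) = a^2 z^2/\det(g)$ (with $u = I + aE_{12}$ and $z$ the lower-left entry of $g$), so $[u,g]$ is unipotent only when $g$ is upper-triangular in the basis adapted to $u$---and Zariski density of $\Gamma$ does not furnish such $g$.

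The remedy is simply to apply Pink to $\Gamma' \subset \PGL_2(F)$ instead of to $\Delta$; everything else in your outline then goes through.
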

\begin{proof}
    Let $\Gamma'$ denote the image of $\Gamma$ in $\PGL_2(F)$. We apply \cite[Theorem 0.2]{Pink_compact} to $\Gamma'$ to deduce the existence of a subfield $F_0 \subset F$ with $F / F_0$ finite, a linear algebraic group $H$ over $F_0$, and an isomorphism $\varphi : H_F \to \PGL_{2, F}$ with the following properties: 
    \begin{itemize}
        \item We have $\Gamma' \subset \varphi(H(F_0))$.
        \item Let $\widetilde{H}$ denote the simply connected cover of $H$, and let $\widetilde{\varphi} : \widetilde{H}_F \to \SL_{2, F}$ denote the induced isomorphism. Then the closure of $[\Gamma, \Gamma]$ in $\SL_2(F)$ is an open subgroup of $\widetilde{\varphi}(\widetilde{H}(F_0))$. 
    \end{itemize}
    Since $\Gamma$ contains a non-trivial unipotent element, so does $\Gamma'$ and hence so does $H(F_0)$. It follows that $H$ is split, hence we can identify $H = \PGL_{2, F_0}$ and $\varphi = \operatorname{Ad}(g)$ for some $g \in \GL_2(F)$. Since $\SL_2(F_0)$ has a unique $\GL_2(F_0)$-conjugacy class of maximal compact subgroups, we can moreover assume that $g \overline{ [ \Gamma, \Gamma ]} g^{-1} \subset \SL_2(A_0)$.
\end{proof}
\begin{lemma}\label{lem_injection_of_cokernels}
    Let $\Gamma \subset \GL_2(A)$ be a compact subgroup, and let $\Delta \subset \Gamma$ be a closed normal subgroup with Zariski dense image in $\PGL_{2, F}$. Then the map 
    \[  \coker(H^1(\Gamma, \frz_A) \to H^1(\Gamma, \ad_A))\to \coker(H^1(\Delta, \frz_A) \to H^1(\Delta, \ad_A)) \]
    is injective.
\end{lemma}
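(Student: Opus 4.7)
The plan is to reduce this to a statement about $H^1(-, \ad_A/\frz_A)$ using the short exact sequence
\[ 0 \to \frz_A \to \ad_A \to \ad_A/\frz_A \to 0, \]
which makes sense as a sequence of topological $\Gamma$-modules with continuous $A$-linear sections (since $\frz_A$ is a direct summand of $\ad_A$ as an $A$-module, e.g.\ via the trace). The associated long exact sequence in continuous cochain cohomology (which is valid by the discussion in \S\ref{sec:cohomology}) identifies
\[ \coker\bigl(H^1(\Gamma, \frz_A) \to H^1(\Gamma, \ad_A)\bigr) \hookrightarrow H^1(\Gamma, \ad_A/\frz_A) \]
as a subgroup, and similarly for $\Delta$, with the restriction maps fitting into a commutative square.

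Next I would apply the five term inflation--restriction sequence associated to the normal subgroup $\Delta \trianglelefteq \Gamma$ acting on $\ad_A/\frz_A$, giving the exact sequence
\[ 0 \to H^1(\Gamma/\Delta, (\ad_A/\frz_A)^\Delta) \to H^1(\Gamma, \ad_A/\frz_A) \to H^1(\Delta, \ad_A/\frz_A). \]
Since $\Delta$ is closed in $\Gamma$ it is itself compact, and by hypothesis its image in $\PGL_{2, F}$ is Zariski dense, so Lemma~\ref{lem_no_invariants_in_ad_PGL_2} applies to $\Delta$ and gives $(\ad_A/\frz_A)^\Delta = H^0(\Delta, \ad_A/\frz_A) = 0$. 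Hence the inflation term vanishes and the restriction map $H^1(\Gamma, \ad_A/\frz_A) \to H^1(\Delta, \ad_A/\frz_A)$ is injective.

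Combining the two steps in the evident commutative square, the composition
\[ \coker\bigl(H^1(\Gamma, \frz_A) \to H^1(\Gamma, \ad_A)\bigr) \hookrightarrow H^1(\Gamma, \ad_A/\frz_A) \hookrightarrow H^1(\Delta, \ad_A/\frz_A) \]
factors through $\coker(H^1(\Delta, \frz_A) \to H^1(\Delta, \ad_A))$, so the map between the cokernels is injective, as required. The only subtle point is that inflation--restriction is formulated for continuous cochain cohomology of profinite groups on topological modules that need not be discrete (the module $\ad_A/\frz_A$ is a finite free $A$-module with its natural topology). This is a standard fact, but I would briefly note that the statement follows from the discussion in \S\ref{sec:cohomology}; beyond that, there is no real obstacle.
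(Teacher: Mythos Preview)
Your proof is correct and follows essentially the same approach as the paper's: reduce to showing injectivity of $H^1(\Gamma, \ad_A/\frz_A) \to H^1(\Delta, \ad_A/\frz_A)$ via the long exact sequence, then kill the inflation term $H^1(\Gamma/\Delta, (\ad_A/\frz_A)^\Delta)$ using Lemma~\ref{lem_no_invariants_in_ad_PGL_2}. One small slip: your parenthetical ``e.g.\ via the trace'' is wrong here, since $A = \bbF\llbracket t\rrbracket$ has characteristic~$2$ and the trace of a scalar matrix vanishes; but $\frz_A$ is nonetheless a direct $A$-module summand of $\ad_A$ (any free rank-one submodule of a free module that is saturated is a summand), so the splitting and hence the long exact sequence go through unchanged.
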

\begin{proof}
	It suffices to show that the map $H^1(\Gamma, \ad_A / \frz_A) \to H^1(\Delta, \ad_A / \frz_A)$ is injective. Its kernel is $H^1(\Gamma / \Delta, (\ad_A / \frz_A)^{\Delta})$, and this is zero, by Lemma \ref{lem_no_invariants_in_ad_PGL_2}.
\end{proof}
\begin{lemma}\label{lem_explicit_cocycle}
    Let $\Gamma \subset \GL_2(A)$ be a compact subgroup containing a non-trivial unipotent element and with Zariski dense image in $\PGL_{2, F}$. Then 
    \[ \coker(H^1(\Gamma, \frz_A) \to H^1(\Gamma, \ad_A)) \]
    has $A$-rank at most 1. If $\Gamma$ is an open subgroup of $\SL_2(A)$, then the rank equals 1, a non-$t$-torsion element being given by the cocycle $\phi(g) = D(g) g^{-1}$, where $D : M_2(A) \to M_2(A)$ is the derivation given by the formula
    \[ D : \left( \begin{array}{cc} a(t) & b(t) \\ c(t) & d(t) \end{array}\right) \mapsto \left( \begin{array}{cc} a'(t) & b'(t) \\ c'(t) & d'(t) \end{array}\right) \]
    (in other words, differentiate each matrix entry with respect to $t$).
\end{lemma}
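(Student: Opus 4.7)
The plan is to use the long exact sequence arising from $0 \to \frz_A \to \ad_A \to \ad_A/\frz_A \to 0$. Lemma \ref{lem_no_invariants_in_ad_PGL_2} gives $H^0(\Gamma, \ad_A/\frz_A) = 0$, so the boundary term at the start vanishes and we obtain an injection
\[ \coker\bigl(H^1(\Gamma,\frz_A) \to H^1(\Gamma, \ad_A)\bigr) \hookrightarrow H^1(\Gamma, \ad_A/\frz_A). \]
It therefore suffices to bound the $A$-rank of the right-hand side by $1$ and, when $\Gamma$ is open in $\SL_2(A)$, to verify that the natural candidate $\phi$ gives a non-$t$-torsion class.

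For the upper bound, set $\Delta = \overline{[\Gamma, \Gamma]}$. By Lemma \ref{lem_open_subgroup_of_image}, after conjugation by some $g \in \GL_2(F)$ we may arrange $\Gamma(\ffrm_{A_0}^l) \subseteq \Delta \subseteq \SL_2(A_0)$ for a closed subfield $F_0 \subseteq F$ with $F/F_0$ finite. The argument of Lemma \ref{lem_injection_of_cokernels} (inflation-restriction plus vanishing of $H^0$) also yields an injection $H^1(\Gamma, \ad_A/\frz_A) \hookrightarrow H^1(\Delta, \ad_A/\frz_A)$, and by Proposition \ref{prop:local-field-cohom}\ref{cohom:vector-space} the $A$-rank of this group equals $\dim_F H^1(\Delta, \ad_F/\frz_F)$. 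I would bound this $F$-dimension by interpreting classes in $H^1(\Delta, \mathfrak{pgl}_{2,F})$ as infinitesimal deformations (modulo conjugation) of the tautological embedding $\Delta \hookrightarrow \PGL_2(F)$ to $\PGL_2(F[\epsilon])$, and invoking a rigidity argument: since $\Delta$ contains a principal congruence subgroup, any such deformation arises from a continuous ring automorphism of $F[\epsilon]$ over $\bbF[\epsilon]$ reducing to the identity modulo $\epsilon$, and the space of these is $\mathrm{Der}_\bbF(F) = F \cdot d/dt$, of $F$-dimension $1$.

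For the lower bound: the Leibniz rule $D(gh) = D(g)h + gD(h)$ yields $\phi(gh) = \phi(g) + g\phi(h)g^{-1}$, so $\phi$ is a $1$-cocycle, and it is valued in $\mathfrak{sl}_A$ since $\tr\phi(g) = D(\det g)/\det g = 0$ for $g \in \SL_2(A)$. To see that the class of $\phi$ in the cokernel is non-$t$-torsion it suffices to show that its image $\bar\phi \in H^1(\Gamma, \ad_F/\frz_F)$ is non-zero, which I would verify by restriction to $U^+ = \{I + xE : x \in \ffrm^l\} \subseteq \Gamma$ (with $l$ chosen so $\Gamma \supseteq \Gamma(\ffrm^l)$). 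Direct calculation gives $\bar\phi(I + xE) = x'\bar E$, whereas (using $E^2 = 0$ and $EF + FE = I$ in characteristic $2$) one finds $u_E(x) \bar e_{11} - \bar e_{11} = x \bar E$ and $u_E(x) \bar F - \bar F = x^2 \bar E$, so every $U^+$-coboundary takes the form $(\alpha x + \gamma x^2)\bar E$ for fixed $\alpha, \gamma \in F$. If $x' = \alpha x + \gamma x^2$ held on $\ffrm^l$, testing at distinct odd monomials $x = t^{l_1}, t^{l_2}$ would force $\gamma = 0$ and $\alpha = t^{-1}$, but then any even monomial $x = t^{l_3}$ gives $t^{l_3-1} = 0$ in $F$, a contradiction. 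The main obstacle is the dimension bound: in characteristic $2$ the Lie algebra $\mathfrak{pgl}_{2,F}$ is abelian (since $[E,F] = H = I \in \frz_F$), so Whitehead-style Lie-algebraic vanishing is unavailable, and the rigidity argument must genuinely leverage the one-dimensionality of the module of continuous $\bbF$-derivations of $F$.
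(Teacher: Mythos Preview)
Your upper bound argument has a genuine gap: the rigidity claim --- that every infinitesimal deformation of the tautological embedding $\Delta \hookrightarrow \PGL_2(F)$ into $\PGL_2(F[\epsilon])$ arises from a continuous $\bbF$-derivation of $F$ --- is asserted but not proved, and you yourself flag it as ``the main obstacle.'' This is the heart of the matter. As you correctly observe, in characteristic $2$ the Lie algebra $\mathfrak{pgl}_{2,F}$ is abelian, so no Lie-algebraic vanishing is available, and there is no off-the-shelf superrigidity theorem for congruence subgroups of $\SL_2$ over an equal-characteristic local field that you can invoke. The paper does not attempt to prove such a rigidity statement abstractly. Instead, after the same initial reductions (Lemmas \ref{lem_open_subgroup_of_image} and \ref{lem_injection_of_cokernels}), it uses a sandwich $t^N \ad_A \subset g\,\ad_A\,g^{-1} \subset t^{-N}\ad_A$ to undo the conjugation by $g \in \GL_2(F)$, flatness of $A_0 \to A$ to pass from $A_0$ to $A$, and then invokes the explicit cocycle computations of \cite[Lemma~3.1.6]{All14}: after multiplying an arbitrary cocycle by $t^{3l}$ and adjusting by a coboundary, one puts it in a standard form on diagonal and unipotent generators, and shows that its image in $Z^1(\Gamma(\ffrm_A^{3l}),\ad_A/\frz_A)$ is determined by the single value $b_{t^{l+1}}$. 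Two non-$t$-torsion classes are therefore proportional after restriction, and injectivity of restriction gives the rank bound. Your deformation heuristic is morally the same statement, but turning it into a proof requires exactly this kind of hands-on computation.

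Your lower bound argument, on the other hand, is correct and takes a slightly different route from the paper. The paper works directly with $A$-coefficients: it first tests $g=\diag(1+t^k,(1+t^k)^{-1})$ with $k$ even (so $D(g)=0$) to force the witnessing $X$ into $\frz_A$, and then tests an upper-unipotent $g$ with $k$ odd to get a contradiction. You instead pass to $F$-coefficients via Proposition~\ref{prop:local-field-cohom}, restrict immediately to the unipotent subgroup $U^+$, and compute all $U^+$-coboundaries explicitly. Both arguments ultimately hinge on the same parity trick (that $D(t^k)=kt^{k-1}$ vanishes or not in characteristic $2$ according to the parity of $k$); yours is a bit more self-contained, while the paper's dovetails with the standard-form computations already used for the upper bound.
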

\begin{proof}
    We first show that the rank is at most 1. By Lemma \ref{lem_open_subgroup_of_image} and Lemma \ref{lem_injection_of_cokernels}, we can assume that there exists $g \in \GL_2(F)$ and a subfield $F_0 \subset F$ with $F / F_0$ finite and an integer $l \geq 1$ such that $\Gamma = g \Gamma(\ffrm_{A_0}^l) g^{-1}$. Let $M = g \ad_A g^{-1} \subset \ad_F$. We can find an integer $N \geq 1$ such that 
    \[ t^N \ad_A \subset M \subset t^{-N} \ad_A. \]
    This implies that the multiplication by $t^{2N}$ endomorphism of $H^1(\Gamma, \ad_A) / H^1(\Gamma, \frz_A)$ factors through $H^1(\Gamma, M) / H^1(\Gamma, \frz_A)$. We are therefore free to replace $\Gamma$ by $g^{-1} \Gamma g$. Since $A_0 \to A$ is flat, we finally reduce to the case that $\Gamma = \Gamma(\ffrm_A^l)$ is a principal congruence subgroup of $\SL_2(A)$.
    
    We now use the cocycle computations done in the proof of \cite[Lemma 3.1.6]{All14}. We can summarise these as follows. Let $\phi \in Z^1(\Gamma, \ad_A)$ be a cocycle. Then:
    \begin{enumerate}
        \item After multiplying $\phi$ by $t^{3l}$ and changing by a coboundary, we can assume that $\phi$ has the following `standard' form: for all $\alpha \in 1 + \ffrm_A^l$, $x \in \ffrm_A^l$, we have
        \[ \phi\left( \begin{array}{cc} \alpha & 0 \\ 0 & \alpha^{-1} \end{array}\right) = \left( \begin{array}{cc} a_\alpha & 0 \\ 0 & a_\alpha \end{array}\right), \]
        \[ \phi\left( \begin{array}{cc} 1 & x \\ 0 & 1 \end{array}\right) = \left( \begin{array}{cc} a_x & b_x \\ 0 & a_x \end{array}\right),\] 
        and
        \[ \phi\left( \begin{array}{cc} 1 & 0 \\ x & 1 \end{array}\right) = \left( \begin{array}{cc} d_x & 0 \\ b_x & d_x \end{array}\right). \]
        \item For $\phi$ in the above form, the image of $\phi$ in $Z^1(\Gamma, \ad_A / \frz_A)$ is uniquely determined by the function $b_x : \ffrm_A^l \to A$, and the image of $\phi$ in $Z^1(\Gamma(\ffrm_{A}^{3l}), \ad_A / \frz_A)$ is uniquely determined by the value $b_{t^{l+1}}$. 
    \end{enumerate}
    This has the following consequence: if $[\phi], [\phi'] \in H^1(\Gamma, \ad_A)$ are two cohomology classes which have non-$t$-torsion image in $H^1(\Gamma, \ad_A / \frz_A)$, then their images in $H^1(\Gamma(\ffrm_A^{3l}), \ad_A / \frz_A)$ must be proportional.  The composite map
    \[  \coker(H^1(\Gamma, \frz_A) \to H^1(\Gamma, \ad_A)) \to H^1(\Gamma, \ad_A / \frz_A) \to H^1(\Gamma(\ffrm_{A}^{3l}), \ad_A / \frz_A) \]
    is a composite of two injections, so is injective. This proves the first assertion of the lemma. 
    
    For the second, we now assume that $\Gamma \subset \SL_2(A)$ is an open subgroup. It is easy to see that the given formula for $\phi(g)$ defines an element of $Z^1(\Gamma, \ad_A)$. We just need to check that its image in $H^1(\Gamma, \ad_A / \frz_A)$ is not $t$-torsion. 
    
    Suppose then for contradiction that there exists $N \geq 1$ and $X = \left( \begin{array}{cc} a & b \\ c & d \end{array}\right) \in \ad_A$ such that for all $g \in \Gamma$, we have $t^N D(g) g^{-1} \equiv g X g^{-1} - X \text{ mod }\frz_A$. Considering an element of the form $g = \diag( 1 + t^k, (1+t^k)^{-1} )$ with $k > l$ an even integer, we find that $D(g) = 0$, hence 
    \[ g X g^{-1} - X = \left(\begin{array}{cc} 0 &((1 + t^{k})^2 - 1) b \\ ((1+t^{k})^{-2} - 1)c & 0 \end{array}\right) \in \frz_A. \]
    This forces $b = c = 0$. Considering $g = \left( \begin{array}{cc} 1 & t^k \\ 0 & 1 \end{array}\right)$ with $k > l$ an even integer, we find again $D(g) = 0$, hence
      \[ g X g^{-1} - X  = \left(\begin{array}{cc} 0 &t^k(a+d) \\ 0 & 0 \end{array}\right) \in \frz_A. \]
      This forces $a = d$, hence $X \in \frz_A$ and $D(g) g^{-1} \in \frz_A$ for all $g \in \Gamma$. Finally, considering an element of the form $g = \left( \begin{array}{cc} 1 & t^k \\ 0 & 1 \end{array}\right)$ with $k > l$ an odd integer gives
          \[ D(g) g^{-1} = \left( \begin{array}{cc} 0 & t^{k-1} \\ 0 & 0 \end{array}\right) \not\in \frz_A, \]
    a contradiction. This completes the proof of the lemma.
\end{proof}
\begin{lemma}\label{lem_special_element_sl_2}
    Let $\Gamma \subset \SL_2(A)$ be an open subgroup, and let $\sigma_0 \in \Gamma$ be an element such that $\overline{\sigma}_0 \in \SL_2(\bbF)$ is regular semisimple. Let $[\phi] \in H^1(\Gamma, \ad_A)$ be a class which has non-$t$-torsion image in $H^1(\Gamma, \ad_A) / H^1(\Gamma, \frz_A)$. Then for any open neighbourhood $U$ of $\sigma_0$ in $\Gamma$, there exists $\sigma \in U$ such that $\phi(\sigma) \not\in (\sigma - 1)\ad_A$.
\end{lemma}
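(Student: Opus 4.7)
The plan is to argue by contradiction: I will suppose that $\phi(\sigma) \in (\sigma - 1)\ad_A$ for every $\sigma$ in some open neighbourhood $U$ of $\sigma_0$ and derive a contradiction with the hypothesis that $[\phi]$ is non-$t$-torsion modulo $H^1(\Gamma, \frz_A)$. After possibly extending scalars to a quadratic unramified extension of $A$ (which preserves the non-torsion hypothesis by faithful flatness) and conjugating inside $\GL_2$, I may assume $\sigma_0 = \diag(\alpha_0, \alpha_0^{-1})$ with $\overline{\alpha}_0 \in \bbF^\times$ and $\overline{\alpha}_0 \neq 1$. Shrinking $U$ if necessary, I assume $U = \sigma_0 \cdot \Gamma(\ffrm_A^N)$ for some $N$ large enough that $\Gamma(\ffrm_A^N) \subset \Gamma$.

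By Lemma~\ref{lem_explicit_cocycle}, the cokernel $C := \coker(H^1(\Gamma, \frz_A) \to H^1(\Gamma, \ad_A))$ has $A$-rank one, generated by the class of $\phi_0(g) = D(g) g^{-1}$. Since $[\phi]$ is non-$t$-torsion in $C$, there exist nonzero $a, b \in A$, a cocycle $\psi \in Z^1(\Gamma, \frz_A)$, and $\xi \in \ad_A$ with $a \phi = b \phi_0 + \psi + d \xi$. Writing $\psi(\gamma) = \mu(\gamma) \cdot I$, the map $\mu : \Gamma \to A$ is a group homomorphism since $\Gamma$ acts trivially on $\frz_A$. Multiplying the standing assumption by $a$ yields $b \phi_0(\sigma) + \psi(\sigma) \in (\sigma - 1)\ad_A$ for every $\sigma \in U$, and my goal is to extract from this enough constraints on $\mu$ to produce a contradiction.

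I apply the condition to three families of elements. For $\sigma = \sigma_0 \diag(u, u^{-1})$ with $u \in 1 + \ffrm_A^N$, $\sigma$ is diagonal so $(\sigma-1)\ad_A$ is the set of off-diagonal matrices, while $\phi_0(\sigma) = ((\alpha_0 u)'/(\alpha_0 u))\, I$ and $\psi(\sigma)$ are both scalar; the scalar $b\phi_0(\sigma)+\psi(\sigma)$ must therefore vanish, giving $\mu(\diag(u, u^{-1})) = bu'/u$. For $\sigma = \sigma_0 \begin{psmallmatrix} 1 & y \\ 0 & 1 \end{psmallmatrix}$ with $y \in \ffrm_A^N$, a direct computation (in the style of the proof of Lemma~\ref{lem_explicit_cocycle}) shows that the bottom-left entry of $(\sigma - 1)X$ is $(\alpha_0^{-2}-1) x_3$; regular semisimplicity of $\overline{\sigma}_0$ makes $\alpha_0^{-2}-1 = (\alpha_0^{-1}+1)^2$ a unit, forcing $x_3 = 0$, and then matching the diagonal entries forces $\mu(\sigma) = b\alpha_0'/\alpha_0$, i.e., $\mu\bigl(\begin{psmallmatrix}1 & y\\0 &1\end{psmallmatrix}\bigr) = 0$. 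The symmetric argument gives $\mu\bigl(\begin{psmallmatrix}1 & 0 \\ z & 1\end{psmallmatrix}\bigr) = 0$ for $z \in \ffrm_A^N$.

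To finish, I consider $\tau_0 = \begin{psmallmatrix}1 & y \\ 0 & 1\end{psmallmatrix}\begin{psmallmatrix}1 & 0 \\ z & 1\end{psmallmatrix} = \begin{psmallmatrix} 1 + yz & y \\ z & 1 \end{psmallmatrix}$ for $y, z \in \ffrm_A^N$. The homomorphism property applied to this factorisation gives $\mu(\tau_0) = 0$, but the reverse Iwahori decomposition $\tau_0 = \begin{psmallmatrix}1 & 0 \\ z/(1+yz) & 1\end{psmallmatrix} \cdot \diag(1+yz, (1+yz)^{-1}) \cdot \begin{psmallmatrix} 1 & y/(1+yz) \\ 0 & 1 \end{psmallmatrix}$ combined with the three constraints above yields $\mu(\tau_0) = b(1+yz)'/(1+yz) = b(yz)'/(1+yz)$ in characteristic $2$. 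Taking $y = t^N$ and $z = t^{N+1}$ gives $(yz)' = t^{2N} \neq 0$ and $1 + yz$ a unit, so $b \neq 0$ makes $\mu(\tau_0) \neq 0$, the desired contradiction. The main technical effort is in the explicit matrix computations in characteristic $2$ for the three families; once those are in hand, the contradiction follows cleanly from comparing the two Iwahori factorisations of $\tau_0$.
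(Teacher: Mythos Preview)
Your proof is correct and takes a genuinely different route from the paper's. Both arguments start the same way: diagonalize $\sigma_0$, and use Lemma~\ref{lem_explicit_cocycle} to reduce to a relation $a\phi = b\phi_0 + \psi + d\xi$ with $b \neq 0$ and $\psi$ valued in $\frz_A$. From there the paths diverge.

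The paper's argument is more direct: it observes that any $\psi \in Z^1(\Gamma,\frz_A)$ vanishes on a deep enough congruence subgroup $\Gamma(\ffrm_A^l)$, so one may as well take $\phi = \phi_0 + z$ with $z$ vanishing on $\Gamma(\ffrm_A^l)$. Then for $\tau \in \Gamma(\ffrm_A^k)$ with $k$ odd one computes $\phi(\tau\sigma_0) \bmod \ffrm_A^k$ and reduces the problem to showing that $D(\tau) \bmod \ffrm_A^k$ is not always in $(\sigma_0-1)\ad_{A/(t^k)}$; this is a linear-algebra dimension count, since the former values span a $3$-dimensional $\bbF$-space while the latter is $2$-dimensional. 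Your argument, by contrast, keeps $\psi$ in play and reads off from the three Iwahori families exactly what the homomorphism $\mu$ must be on diagonal, upper-unipotent, and lower-unipotent elements; the contradiction then comes from the non-additivity of $u \mapsto u'/u$ under multiplication, exposed by comparing the two Iwahori factorizations of $\tau_0$.

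The paper's approach is shorter and avoids tracking $\mu$ explicitly, at the cost of invoking the (implicit) fact that $Z^1(\Gamma,\frz_A)$ has bounded image and then a small dimension argument. Your approach is more hands-on but self-contained once the three identities for $\mu$ are established. One minor remark: your scalar extension to diagonalize $\sigma_0$ takes $\Gamma$ outside of being open in $\SL_2(A')$, so Lemma~\ref{lem_explicit_cocycle} no longer applies verbatim; but all you actually need from it is the rank-$1$ statement and the non-torsion generator $[\phi_0]$, both of which are preserved under the flat base change $A \to A'$ and the subsequent conjugation (which changes $\phi_0$ only by a coboundary), so the argument goes through.
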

\begin{proof}
    After conjugating by an element of $\GL_2(A)$, we can assume that $\sigma_0$ is diagonal. Observe that $(\sigma_0 - 1)\ad_A \subset \ad_A$ is then the rank 2 direct summand consisting of matrices with zero diagonal entries. 
    
    Let $l \geq 1$ be an integer such that $\Gamma(\ffrm_A^l) \subset \Gamma$ and such that for any $z \in Z^1(\Gamma, \frz_A)$, we have $z(g) = 0$ for all $g \in \Gamma(\ffrm_A^l)$. We will show that for any odd $k > l$, there exists $\tau \in \Gamma(\ffrm_A^k)$ such that if $\sigma = \tau \sigma_0$, then $\phi(\sigma) \not\in (\sigma - 1) \ad_A$. This will imply the result. If $\phi(\sigma_0) \not\in (\sigma_0 - 1) \ad_A$, then we're done. We can therefore assume without loss of generality that $\phi(\sigma_0) \in (\sigma_0 - 1) \ad_A$. We are also free to replace $\phi$ by a multiple by a non-zero element of $A$, and to change it by a coboundary. By Lemma \ref{lem_explicit_cocycle}, we can therefore assume that $\phi(g) = t^m D(g) g^{-1} + z(g)$, where $D : M_2(A) \to M_2(A)$ is the derivation given by differentiating each matrix entry, $m \geq 0$ is an integer, and $z \in Z^1(\Gamma, \frz_A)$. Note that $\phi(\sigma_0) = t^m D(\sigma_0) \sigma_0^{-1} + z(\sigma_0)$ has diagonal entries equal to 0; this means that $z(\sigma_0) \equiv 0 \text{ mod }t^m \ad_A$ and therefore that $\phi(\sigma_0)$ is divisible by $t^m$ in $\ad_A$.
    
    Let $k > l$ be an odd integer, let $\tau \in \Gamma(\ffrm_A^k)$, and consider $\sigma = \tau \sigma_0$. We have $\phi(\sigma) = \phi(\tau \sigma_0) = \phi(\tau) + {}^\tau \phi(\sigma_0)$. A computation shows that $\phi(\tau) \equiv t^m D(\tau) \text{ mod } t^{m+k} \ad_A$ and ${}^\tau \phi(\sigma_0) \equiv \phi(\sigma_0) \text{ mod }t^{m+k} \ad_A$, hence $\phi(\sigma) \equiv t^m D(\tau) + \phi(\sigma_0) \text{ mod }t^{m+k} \ad_A$. It would be enough to show that we can choose $\tau$ so that $\phi(\sigma) \text{ mod }t^{m+k} \ad_A \not\in (\sigma-1) \ad_{A / t^{m+k}}$, or equivalently such that $D(\tau) + \phi(\sigma_0) / t^m \text{ mod }t^k \ad_A \not \in (\sigma - 1) \ad_{A / t^k} = (\sigma_0 - 1) \ad_{A / t^k}$, or in other words such that
    \[ D(\tau) \text{ mod }t^k \ad_A \not\in \phi(\sigma_0) / t^m + (\sigma_0 - 1) \ad_{A / t^k}. \]    
The possible values of $D(\tau) \text{ mod }\ffrm_A^k$ span a 3-dimensional $\bbF$-vector subspace of $\ad_{A / (t^{k})}[t]$. Since $(\sigma_0 - 1) \ad_{A / (t^k)}[t]$ is 2-dimensional, there must exist a choice of $\tau$ satisfying the required condition. This completes the proof.
\end{proof}
\begin{lemma}\label{lem_special_element}
    Let $\Gamma \subset \GL_2(A)$ be a compact subgroup containing a non-trivial unipotent element, and with Zariski dense image in $\PGL_{2, F}$. Let $\sigma_0 \in \overline{[\Gamma, \Gamma]}$ be an element such that $\overline{\sigma}_0 \in \GL_2(\bbF)$ is regular semisimple. Suppose that there exists a class $[\phi] \in H^1(\Gamma, \ad_A)$ which has non-$t$-torsion image in $H^1(\Gamma, \ad_A) / H^1(\Gamma, \frz_A)$. Then for any open neighbourhood $U$ of $\sigma_0$ in $\overline{[\Gamma, \Gamma]}$, there exists $\sigma \in U$ such that $\phi(\sigma) \not\in (\sigma - 1)\ad_A$.
\end{lemma}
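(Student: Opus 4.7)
The plan is to reduce the lemma to Lemma \ref{lem_special_element_sl_2} applied over the subring $A_0$ supplied by Lemma \ref{lem_open_subgroup_of_image}.

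Set $\Delta := \overline{[\Gamma,\Gamma]} \subseteq \SL_2(A)$. Since $\PGL_{2,F}$ has no proper nontrivial normal algebraic subgroup, $\Delta$ still has Zariski dense image in $\PGL_{2,F}$, and Lemma \ref{lem_injection_of_cokernels} shows that $[\phi]|_\Delta$ is non-$t$-torsion in $H^1(\Delta, \ad_A)/H^1(\Delta, \frz_A)$. Apply Lemma \ref{lem_open_subgroup_of_image} to get $g \in \GL_2(F)$, a subring $A_0 \subseteq A$ with $F/F_0$ finite (where $F_0 = \Frac A_0$), and $l \ge 1$ with $\Gamma(\ffrm_{A_0}^l) \subseteq \Delta' := g\Delta g^{-1} \subseteq \SL_2(A_0)$. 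Setting $\sigma_0' = g\sigma_0 g^{-1}$, $U' = gUg^{-1}$, and $M' = g\ad_A g^{-1}$, conjugation by $g$ transports $[\phi]|_\Delta$ to a class $[\phi'] \in H^1(\Delta', M')$ that is non-torsion modulo $H^1(\Delta', \frz_A)$, and $\overline{\sigma_0'} \in \SL_2(\bbF_0)$ is again regular semisimple (its characteristic polynomial coincides with that of $\overline{\sigma}_0$).

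Next I would bridge from $M'$ to $\ad_A$, and then to $\ad_{A_0}$. Choose $N \ge 1$ with $t^N \ad_A \subseteq M' \subseteq t^{-N} \ad_A$ and let $[\phi''] \in H^1(\Delta', \ad_A)$ be the image of $[\phi']$ under the $\Delta'$-equivariant map $x \mapsto t^N x \colon M' \to \ad_A$. A short cocycle computation (exploiting that $t^N \eta \in M'$ for any $\eta \in \ad_A$) shows $[\phi'']$ is still non-torsion modulo $H^1(\Delta', \frz_A)$. Moreover, any $\sigma' \in U'$ satisfying $\phi''(\sigma') \not\in (\sigma'-1)\ad_A$ also satisfies $\phi'(\sigma') \not\in (\sigma'-1) M'$, since $\phi'(\sigma') = (\sigma'-1)m$ for some $m \in M'$ would force $\phi''(\sigma') = (\sigma'-1)(t^N m) \in (\sigma'-1)\ad_A$. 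Now choose an $A_0$-basis $\{e_i\}_{i=1}^d$ of $A$; these are central scalars, so we obtain a $\Delta'$-equivariant decomposition $\ad_A = \bigoplus_i \ad_{A_0} e_i$ (with the analogous decomposition of $\frz_A$). Writing $[\phi''] = \sum_i [\phi''_i]$, at least one $[\phi''_i]$ must be non-torsion in $H^1(\Delta', \ad_{A_0})/H^1(\Delta', \frz_{A_0})$, as otherwise the whole class would be torsion.

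Finally, apply Lemma \ref{lem_special_element_sl_2} (with $A$ replaced by $A_0$) to the open subgroup $\Delta' \subseteq \SL_2(A_0)$, the element $\sigma_0'$, the class $[\phi''_i]$, and the open neighbourhood $U'$; this produces $\sigma' \in U'$ with $\phi''_i(\sigma') \not\in (\sigma' - 1)\ad_{A_0}$, hence $\phi''(\sigma') \not\in (\sigma' - 1)\ad_A$ by projecting onto the $i$-th summand. Undoing the two bridges and the conjugation by $g$ then yields $\sigma := g^{-1}\sigma' g \in U$ with $\phi(\sigma) \not\in (\sigma - 1)\ad_A$, as required. The main technical obstacle is not any individual computation but the bookkeeping: carefully tracking the non-torsion property through the passages from $\ad_A$ to the slightly different modules $M'$ and $\ad_{A_0}$ forced on us by the non-integral conjugation and the subring $A_0 \subsetneq A$.
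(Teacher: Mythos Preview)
Your proof is correct and follows the same overall strategy as the paper: descend via Lemma~\ref{lem_open_subgroup_of_image} to an open subgroup of $\SL_2(A_0)$, transport the cocycle along the conjugation by $g$, and invoke Lemma~\ref{lem_special_element_sl_2} over $A_0$. The one genuine difference is in the bookkeeping step you flag at the end. You handle the passage from the conjugated lattice $M' = g\,\ad_A\,g^{-1}$ to $\ad_{A_0}$ by first mapping into $\ad_A$ via multiplication by $t^N$ and then splitting $\ad_A = \bigoplus_i \ad_{A_0}\,e_i$ along an $A_0$-basis of $A$, picking out a non-torsion component. The paper instead stays inside $\ad_F$: after shrinking $U$ so that $\overline{\sigma}$ remains regular semisimple, it uses that $(\sigma-1)\ad_A$ is then a direct summand of $\ad_A$, so $(\sigma-1)\ad_A = (\sigma-1)\ad_F \cap \ad_A$, and similarly for $M'$; this lets one compare the conditions $\phi(\sigma)\in(\sigma-1)\ad_A$ and $\psi(\sigma)\in(\sigma-1)M'$ directly through $\ad_F$ without decomposing. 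Your route is more elementary and makes the application of Lemma~\ref{lem_special_element_sl_2} completely explicit; the paper's route is shorter but leans more on the lattice-in-$\ad_F$ picture.
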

\begin{proof}
By Lemma \ref{lem_open_subgroup_of_image}, there exists a subfield $F_0 \subset F$ with $F / F_0$ finite, an element $g_0 \in \GL_2(F)$, and an integer $l \geq 1$ such that we have
    \[ \Gamma(\ffrm_{A_0}^l) \subset g_0 \overline{[ \Gamma, \Gamma ]} g_0^{-1} \subset \SL_2(A_0). \]
    Let $U$ be an open neighbourhood of $\sigma_0$ in $\overline{[\Gamma, \Gamma]}$, and let $M = g_0 \ad_A g_0^{-1} \subset \ad_F$. After shrinking $U$, we can assume that for any $\sigma \in U$, we have $\det(\lambda - \sigma) \equiv \det(\lambda - \sigma_0) \text{ mod }(t)$. Choose an integer $N \geq 1$ such that 
    \[ t^N \ad_A \subset M \subset t^{-N} \ad_A. \]
    Let $\psi$ denote the image of $t^N \phi$ in $Z^1(\overline{[\Gamma, \Gamma]}, M)$. By Lemma \ref{lem_special_element_sl_2}, we can find an element $\sigma \in U$ such that $\psi(\sigma) \not\in (\sigma - 1)M$. We claim that this implies $\phi(\sigma) \not\in (\sigma - 1) \ad_A$. Indeed, since $\overline{\sigma}$ is regular semisimple, $(\sigma - 1)\ad_A$ is an $A$-direct summand of $\ad_A$, and $(\sigma - 1)\ad_A = (\sigma - 1)\ad_F \cap \ad_A$. We therefore have $\phi(\sigma) \in (\sigma - 1)\ad_A$ if and only if $\phi(\sigma) \in (\sigma - 1)\ad_F$. Since $M \otimes_A K = \ad_A \otimes_A K = \ad_F$, we find that $\phi(\sigma) \in (\sigma - 1) \ad_A$ if and only if $\psi(\sigma) \in (\sigma - 1)M$. This completes the proof. 
\end{proof}

\begin{proposition}\label{prop:cokernel_class}
    Let $\Gamma \subset \GL_2(A)$ be a compact subgroup containing a non-trivial unipotent element and with Zariski dense image in $\PGL_{2, F}$. Suppose that  $\Gamma_0 = \mathrm{im}(\Gamma \to \PGL_2(\bbF))$ is a dihedral group. Then for any class $[\phi] \in H^1(\Gamma, \ad_A)$ which has non-$t$-torsion image in $\coker(H^1(\Gamma, \frz_A) \to H^1(\Gamma, \ad_A))$, there exists $\sigma \in \overline{[\Gamma, \Gamma]}$ such that $\overline{\sigma} \in \GL_2(\bbF)$ is regular semisimple and $\phi(\sigma) \not\in (\sigma - 1) \ad_A$.
\end{proposition}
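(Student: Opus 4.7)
The plan is to reduce to Lemma \ref{lem_special_element} by exhibiting an element $\sigma_0 \in [\Gamma, \Gamma] \subseteq \overline{[\Gamma, \Gamma]}$ whose reduction $\overline{\sigma}_0 \in \GL_2(\bbF)$ is regular semisimple. Once such a $\sigma_0$ is found, Lemma \ref{lem_special_element} immediately produces, in any sufficiently small open neighbourhood of $\sigma_0$ in $\overline{[\Gamma, \Gamma]}$, an element $\sigma$ with $\phi(\sigma) \not\in (\sigma - 1)\ad_A$, and by shrinking the neighbourhood we may ensure $\overline{\sigma}$ is itself regular semisimple.

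The core task is therefore purely group-theoretic: I want to show that $[\Gamma_0, \Gamma_0] \subseteq \PGL_2(\bbF)$ contains an element of order at least $3$, since any lift of such an element to $\GL_2(\bbF)$ is automatically regular semisimple (its two eigenvalues differ by a scalar of order $\geq 3$, in particular $\neq 1$). First I would exploit the characteristic 2 hypothesis: every element of order 2 in $\PGL_2(\bbF)$ is unipotent, because every finite subgroup of a maximal torus of $\PGL_{2, \bbF}$ has odd order in characteristic 2 (split tori sit inside $\bbF^\times$; non-split tori over $\bbF = \bbF_q$ give cyclic subgroups of order dividing $q+1$; both are odd). Consequently a dihedral subgroup $\Gamma_0 \cong D_n \subseteq \PGL_2(\bbF)$ lies in the normalizer of some maximal torus with rotation subgroup $C_n$ of odd order $n \geq 3$ (the dihedral hypothesis ruling out $n \leq 2$). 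Since $n$ is odd, the standard computation gives $[D_n, D_n] = C_n$, which visibly contains elements of order $\geq 3$.

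Finally I would pick any $\sigma_0 \in [\Gamma, \Gamma]$ whose image in $\PGL_2(\bbF)$ lies in $[\Gamma_0, \Gamma_0] = C_n$ and has order $\geq 3$; then $\overline{\sigma}_0$ is $\GL_2(\bbF)$-conjugate to $\diag(\alpha, \beta)$ with $\alpha/\beta$ of order $\geq 3$, so $\overline{\sigma}_0$ is regular semisimple and Lemma \ref{lem_special_element} applies. The main obstacle, and the key structural input, is the characteristic 2 restriction on 2-torsion in $\PGL_{2, \bbF}$: this is what forces the rotation part of $\Gamma_0$ to have order $\geq 3$ and thereby guarantees regular semisimple reductions already inside the bare commutator $[\Gamma, \Gamma]$, so that one does not need to pass to the closure $\overline{[\Gamma, \Gamma]}$ or appeal to Lemma \ref{lem_open_subgroup_of_image} to find a suitable $\sigma_0$ --- only the perturbation step from $\sigma_0$ to $\sigma$ requires the finer information provided by Lemma \ref{lem_special_element}.
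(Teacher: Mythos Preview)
Your proposal is correct and follows essentially the same approach as the paper: find $\sigma_0 \in [\Gamma,\Gamma]$ with regular semisimple reduction by using that $\Gamma_0$ is an \emph{odd} dihedral group (so $[\Gamma_0,\Gamma_0]=C_n$ with $n\ge 3$ odd), then apply Lemma~\ref{lem_special_element}. The paper simply asserts that $\Gamma_0$ is odd dihedral and that $[\Gamma_0,\Gamma_0]$ contains regular semisimple elements, whereas you spell out the characteristic~$2$ reason why the rotation subgroup has odd order; otherwise the arguments are identical.
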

\begin{proof}
    The odd dihedral group $\Gamma_0$ has abelianization of order 2, and $[\Gamma_0 , \Gamma_0 ]$ contains regular semisimple elements. Let $\sigma_0 \in \overline{[\Gamma, \Gamma]}$ be an element whose image in $\GL_2(\bbF)$ is regular semisimple. By Lemma \ref{lem_special_element}, we can find an element $\sigma \in \overline{[\Gamma, \Gamma]}$ with the same image in $\GL_2(\bbF)$ as $\sigma_0$ and such that $\phi(\sigma) \not\in (\sigma - 1) \ad_A$. This completes the proof.
\end{proof}

\section{Commutative algebra}\label{sec_commutative_algebra}

In this short section we recall a few useful facts from commutative algebra. 
\begin{lemma}\label{lem_existence_of_bounded_above_resolution}
	Let $A$ be a ring and let $C$ be a cochain complex of $A$-modules. Suppose that each group $H^i(C)$ is a finitely generated $A$-module, and that $H^i(C) = 0$ for all sufficiently large $i$. Then we can find a quasi-isomorphism $P \to A$, where $P$ is a bounded above complex of finite free $A$-modules.
\end{lemma}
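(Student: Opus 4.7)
The plan is to construct $P$ by downward induction on degree. Let $N$ be an integer with $H^i(C) = 0$ for all $i > N$. I set $P^i = 0$ for $i > N$ and will inductively build finite free $A$-modules $P^j$, differentials $d \colon P^j \to P^{j+1}$, and chain-map components $f^j \colon P^j \to C^j$ for $j = N, N-1, N-2, \dots$, maintaining the invariant that after having defined $P^{\geq i}$, the partial chain map $f \colon P^{\geq i} \to C$ induces an isomorphism on $H^j$ for every $j > i$ and a surjection on $H^i$. The base case $i = N$ is handled by lifting a finite generating set of $H^N(C)$ to cocycles $z_1, \dots, z_k \in Z^N(C) = \ker(d \colon C^N \to C^{N+1})$, taking $P^N = A^k$, and defining $f^N$ to send the standard basis to these lifts.

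For the inductive step from $i+1$ to $i$, I take $P^i$ to be free on two batches of generators. The first batch corresponds to cocycles $z_1, \dots, z_k \in Z^i(C)$ whose classes generate the finitely generated module $H^i(C)$; these basis vectors get sent to $0$ by $d$ and to $z_j$ by $f^i$, ensuring the new $H^i(f)$ is surjective. The second batch corresponds to finitely many elements $p_1, \dots, p_r$ generating $\ker H^{i+1}(f) \colon H^{i+1}(P^{\geq i+1}) \to H^{i+1}(C)$; for each $s$ I choose $c_s \in C^i$ with $d(c_s) = f^{i+1}(p_s)$, and send the corresponding basis vector to $p_s$ under $d$ and to $c_s$ under $f^i$. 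A direct check shows that $d^2 = 0$, that $f$ remains a chain map, that $H^j(f)$ is unchanged for $j > i+1$, that the previously-surjective $H^{i+1}(f)$ becomes injective (its old kernel being precisely killed by $d(P^i)$), and that $H^i(f)$ is surjective. Iterating through all $i \leq N$ produces the desired bounded above complex $P$ of finite free modules and quasi-isomorphism $P \to C$.

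The main point requiring care is the existence of finitely many generators $p_1, \dots, p_r$ for $\ker H^{i+1}(f)$ at each step. This kernel is a submodule of $H^{i+1}(P^{\geq i+1}) = \ker(d \colon P^{i+1} \to P^{i+2})$, which is itself a submodule of the finite free module $P^{i+1}$; its finite generation follows from the Noetherian hypothesis on $A$ that is in force throughout the paper. Modulo this point, the construction is routine bookkeeping. (The printed statement ``quasi-isomorphism $P \to A$'' appears to be a typo for ``$P \to C$''.)
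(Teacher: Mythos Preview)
Your proof is correct. The paper's own proof is a two-line sketch: truncate $C$ so that it becomes bounded above, then cite the Cartan--Eilenberg resolution from \cite[\S 5.7]{Wei94}. You instead give the explicit downward-inductive construction of $P$ directly, which is the standard hands-on argument underlying any such citation. Your approach has the virtue of being self-contained and of making visible exactly where a Noetherian hypothesis is used (to ensure $\ker H^{i+1}(f) \subset P^{i+1}$ is finitely generated at each step); the paper's terse proof glosses over this, though Noetherianness is indeed the ambient assumption in all applications. You are also right that ``$P \to A$'' in the statement is a typo for ``$P \to C$''.
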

\begin{proof}
	This is standard. After replacing $C$ by its truncation, we can assume that $C$ is bounded above. The existence of $P$ then follows from the existence of the Cartan--Eilenberg resolution (see e.g.\ \cite[\S 5.7]{Wei94}).
\end{proof}
\begin{lemma}\label{lem_existence_of_minimal_resolution}
	Let $A$ be a local ring, and let $P$ be a bounded above complex of finite free $A$-modules. Then we can find a direct summand subcomplex $Q \subset P$ such that the inclusion $Q \to P$ is a quasi-isomorphism and $Q$ is minimal, in the sense that the differentials in $Q \otimes_A A / \ffrm_A$ are all zero.
\end{lemma}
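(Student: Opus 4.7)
The strategy is to iteratively split off contractible two-term direct summand subcomplexes of the form $(\cdots \to 0 \to A \xrightarrow{\sim} A \to 0 \to \cdots)$ concentrated in two consecutive degrees, until the remaining complex $Q$ has all differentials zero modulo $\ffrm_A$. Each split-off piece is contractible, so the resulting inclusion $Q \hookrightarrow P$ is automatically a quasi-isomorphism, and $Q$ is minimal by construction.

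The heart of the argument is the following step: if $\overline{d}^i := d^i \otimes_A A/\ffrm_A$ is nonzero for some $i$, then $P$ admits a decomposition $P = P' \oplus S$ of complexes, where $S$ is concentrated in degrees $i, i+1$ with $S^i = S^{i+1} = A$ and $d^i_S$ the identity, and $P'$ is again a bounded above complex of finite free $A$-modules. I would prove this as follows. Choose $\overline{e} \in P^i \otimes_A A/\ffrm_A$ with $\overline{d}^i(\overline{e}) \neq 0$ and lift $\overline{e}$ to a basis element $e$ of $P^i$, setting $f := d^i(e) \in P^{i+1}$. By Nakayama's lemma, $f$ extends to a basis of $P^{i+1}$. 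For each other basis element $e'$ of $P^i$, let $\mu_{e'} \in A$ denote the $f$-coefficient of $d^i(e')$, and replace $e'$ by $e' - \mu_{e'} e$; after this modification, $d^i(e')$ has no $f$-component. The cocycle identity $d^{i+1} \circ d^i = 0$ yields $d^{i+1}(f) = 0$, while $d^i \circ d^{i-1} = 0$, combined with the fact that only $d^i(e)$ has a nonzero $f$-component, forces $d^{i-1}(P^{i-1})$ to have no $e$-component. These two conditions exhibit $S := (0 \to Ae \to Af \to 0)$ as a direct summand subcomplex of $P$ with the obvious complement $P'$.

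To produce the minimal $Q$, I would iterate the key step by descending induction on $i$, starting from the top nonzero degree of $P$. At each degree $i$, the procedure is applied at most $\min(\rank_A P^i, \rank_A P^{i+1})$ times, reducing $\overline{d}^i$ to zero. The bases of $P^i$ and $P^{i+1}$ are modified only by elements of $\ffrm_A$, so the vanishing of $\overline{d}^{i+1}$ achieved at the previous stage is preserved. Since $P$ is bounded above, each degree $P^j$ is affected only at stages $(j-1, j)$ and $(j, j+1)$, so its basis stabilizes after finitely many operations; passing to the limit defines the desired direct summand subcomplex $Q \subset P$.

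The main point requiring care is verifying that the iterative modifications cohere into a genuine splitting of complexes rather than merely a splitting in each individual degree, and that the construction converges appropriately even though $P$ may be unbounded below. The automatic vanishing of the $e$-component of $d^{i-1}(P^{i-1})$ obtained from the cocycle identity is what ensures that processing the pair $(i, i+1)$ never requires retroactive modification of the basis of $P^{i-1}$, so the successive operations commute in the sense needed to give a well-defined limit.
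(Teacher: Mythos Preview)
Your proof is correct and follows essentially the same approach as the paper: split off the contractible part of each differential by descending induction on degree. The paper performs this at each degree in a single block (putting $d^{n-1}$ in the form $\begin{psmallmatrix} 1_s & 0 \\ 0 & M \end{psmallmatrix}$ with $M$ having entries in $\ffrm_A$), whereas you peel off one rank-$1$ summand at a time; and the paper invokes Zorn's lemma to handle the unbounded-below direction, whereas you argue directly that each degree stabilizes after finitely many steps. These are minor presentational differences.

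One small correction: your claim that ``the bases of $P^i$ and $P^{i+1}$ are modified only by elements of $\ffrm_A$'' is not accurate, since the coefficients $\mu_{e'}$ need not lie in $\ffrm_A$. But the conclusion you draw from it --- that the vanishing of $\overline{d}^{i+1}$ is preserved --- is still correct, simply because after splitting off $S$ the new $d^{i+1}$ is the restriction of the old one to a direct summand, and a map that vanishes modulo $\ffrm_A$ continues to do so on any summand. No basis-tracking is needed for this.
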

\begin{proof}
	We recursively modify $P$ so that the differentials $d_i : P^i \to P^{i+1}$ are zero modulo $\ffrm_A$ for each $i \geq n$. Suppose that $P$ has the property that the differentials $d_i : P^i \to P^{i+1}$ are zero modulo $\ffrm_A$ for each $i \geq n$. We can choose bases for $P^{n-1}$ and $P^n$ so that $d_{n-1}$ is given by a block matrix of the form
	\[ d_{n-1} = \left( \begin{array}{cc} 1_s & 0 \\ 0 & M \end{array}\right), \]
	where $M$ has entries in $\ffrm_A$. This determines a direct sum decomposition $P^{n-1} = X^{n-1} \oplus Y^{n-1}$, $P^n = X^n \oplus Y^n$, where $d_{n-1}$ determines an isomorphism $X^{n-1} \to X^n$ and satisfies $d_{n-1}(Y^{n-1}) \subset \ffrm_A Y^n$. One can check that if we let $Q$ denote the complex with $Q^{n-1} = Y^{n-1}$ and $Q^n = Y^n$, then $Q \subset P$ is a direct summand subcomplex such that the inclusion $Q \subset P$ is a quasi-isomorphism, and the differentials $d_i : Q^i \to Q^{i+1}$ are zero modulo $\ffrm_A$ for each $i \geq n-1$. 
\end{proof}

\begin{lemma}\label{lem_support_and_derived_tensor_product}
	Let $A \to B$ be a homomorphism of  Noetherian rings. Let $C$ be a bounded complex of finite free $A$-modules, and suppose given a map $B \to \End_{\mathbf{D}(A)}(C)$ of $A$-algebras. Let $J \subset A$ be an ideal. Then we have
	\[ \Supp_{B / J B} H^\ast(C \otimes^\bbL_A A / J) = \Supp_B H^\ast(C) \cap \Spec B / J B. \]
\end{lemma}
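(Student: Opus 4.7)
The plan is to establish both inclusions prime by prime. Both sides are contained in $V(JB) \subset \Spec B$: the LHS because $J$ annihilates $H^\ast(C \otimes_A A/J)$, and the RHS tautologically because of the explicit intersection. So I fix $\frp \in V(JB)$, set $\frq = \frp \cap A \supseteq J$, and aim to show $H^\ast(C \otimes_A A/J)_\frp \neq 0$ if and only if $H^\ast(C)_\frp \neq 0$.

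The main tool will be the hyper-Tor spectral sequence
\[ E_2^{p, q} = \Tor^A_{-p}(H^q(C), A/J) \Rightarrow H^{p+q}(C \otimes_A A/J), \]
available since $C$ is a bounded complex of flat $A$-modules, so $C \otimes_A A/J$ already computes $C \otimes_A^\bbL A/J$. Via $B \to \End_{\mathbf{D}(A)}(C)$ every term on every page is naturally a $B$-module, and since $B_\frp$ is flat over $B$, localizing at $\frp$ commutes with the spectral sequence; one checks directly (by localizing a projective resolution of $A/J$) that $(E_2^{p, q})_\frp = \Tor^A_{-p}(H^q(C)_\frp, A/J)$.

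For the containment LHS $\subseteq$ RHS: if $H^\ast(C)_\frp = 0$, then every $(E_2^{p, q})_\frp = 0$, so the localized spectral sequence is identically zero, and $H^\ast(C \otimes_A A/J)_\frp = 0$. For the reverse containment, suppose $H^\ast(C)_\frp \neq 0$ and let $n$ be the largest degree with $H^n(C)_\frp \neq 0$. The $B_\frp$-module $H^n(C)_\frp$ is finitely generated and $JB_\frp \subseteq \frp B_\frp$ lies in the Jacobson radical of the local ring $B_\frp$, so Nakayama's lemma gives $(E_2^{0, n})_\frp = H^n(C)_\frp / J H^n(C)_\frp \neq 0$. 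All outgoing differentials from $E^{0, n}$ land in the vanishing region $p > 0$, and the incoming differentials come from $\Tor^A_r(H^{n+r-1}(C), A/J)$, which vanish at $\frp$ for $r \geq 2$ by maximality of $n$. Hence $(E_\infty^{0, n})_\frp = (E_2^{0, n})_\frp \neq 0$, and because the spectral sequence is second-quadrant (no $E_\infty^{p, q}$ with $p > 0$), $E_\infty^{0, n}$ embeds as a $B$-submodule of $H^n(C \otimes_A A/J)$. Consequently $H^n(C \otimes_A A/J)_\frp \neq 0$, as required.

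The principal technical point is the second direction: combining Nakayama with the edge-map analysis of the hyper-Tor spectral sequence, and crucially choosing $n$ maximal so that the incoming differentials at $E^{0, n}$ die upon localization at $\frp$. The remaining ingredients — flatness of localization at a prime of $B$ lying over $\frq$, commutation of Tor with this localization, and convergence of the spectral sequence (immediate from $C$ being bounded) — are routine.
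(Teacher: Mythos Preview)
Your proof is correct and follows essentially the same approach as the paper's: both use the hyper-Tor spectral sequence $E_2^{p,q}=\Tor_{-p}^A(H^q(C),A/J)\Rightarrow H^{p+q}(C\otimes_A^\bbL A/J)$, localize at $\frp$, and for the nontrivial inclusion pick the top nonvanishing degree $n$, apply Nakayama to see $(E_2^{0,n})_\frp\neq 0$, and observe this entry is stable by maximality of $n$. Your write-up is slightly more explicit about the localization of Tor and the edge behaviour (noting $E_\infty^{0,n}$ is in fact a submodule rather than merely a subquotient), but the argument is the same.
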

\begin{proof}
	There is a spectral sequence
	\[ E_2^{p, q} = \Tor_{-p}^A(H^q(C), A / J) \Rightarrow H^{p + q}(C \otimes^\bbL_A A / J), \]
	\[ d_2^{p, q} : E_2^{p, q} \to E_2^{p+2, q-1} \]
	of $B$-modules. If $P \in \Spec B - \Supp_B H^\ast(C)$, then $E_2^{p, q}$ vanishes after localization at $P$, showing that $P \not\in \Supp_{B / JB} H^\ast(C \otimes^\bbL_A A / J)$. Suppose conversely that $P \in \Supp_B H^\ast(C) \cap \Spec B / J B$; we must show that $P \in \Supp_{B / J B} H^\ast(C \otimes^\bbL_A A / J)$. Let $q$ be the greatest integer such that $H^q(C)_{(P)} \neq 0$. Then $H^q(C)_{(P)} / J H^q(C)_{(P)} \neq 0$, and $H^q(C)_{(P)} / J H^q(C)_{(P)} = (E_2^{0, q})_{(P)} =  (E_\infty^{0, q})_{(P)}$ is a stable entry of the spectral sequence after localization at $P$. Therefore $H^q(C)_{(P)} / J H^q(C)_{(P)}$ is a non-zero subquotient of $H^\ast(C \otimes^\bbL_A A / J)_{(P)}$ and $P \in \Supp_{B / J B} H^\ast(C \otimes^\bbL_A A / J)$, as required.
\end{proof}
\begin{lemma}\label{lem_primes_and_supports}
	Let $A$ be a Noetherian ring, and let $M$ be a finite $A$-module. Let $P \subset A$ be a prime ideal. We recall (see \S \ref{subsec_notation}) that $A_P$ denotes the completion of the local ring $A_{(P)}$. Then:
	\begin{enumerate}
		\item If $Q \subset A_P$ is a minimal prime, then the pullback $Q'$ of $Q$ to $A$ is a minimal prime of $A$ which is contained in $P$. Every minimal prime of $A$ which is contained in $P$ arises in this way. If $Q \in \Supp_{A_P} M_P$, then $Q' \in \Supp_{A} M$. 
		\item Let $\overline{M} = \operatorname{im}(M \to M_{(P)})$. Then $\Ass_A(\overline{M}) = \Ass_{A_{(P)}} M_{(P)}$. In particular, if $Q \in \Ass_A \overline{M}$ (e.g.\ if $Q$ is a prime minimal in the support of $\overline{M}$), then $Q \subset P$.
	\end{enumerate}
\end{lemma}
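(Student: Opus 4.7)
My plan is to treat the two parts separately, both using standard commutative algebra — no single step is likely to pose real difficulty, though the identification of associated primes in part (2) is the most technically subtle.

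For part (1), I would exploit the flatness of $A \to A_P$: the first map in the factorization $A \to A_{(P)} \to A_P$ is localization, while $A_{(P)} \to A_P$ is the completion of a Noetherian local ring at its maximal ideal, hence is faithfully flat. Since $A_P$ is local with maximal ideal $PA_P$ contracting to $P$, every prime $Q$ of $A_P$ satisfies $Q \cap A \subseteq P$. Flat going-down immediately implies that the contraction $Q'$ of a minimal prime $Q$ is minimal in $A$: a chain $P_0 \subsetneq Q'$ in $\Spec A$ would lift to a chain $Q_0 \subsetneq Q$ in $\Spec A_P$, contradicting minimality of $Q$. Conversely, given a minimal prime $P_0 \subseteq P$ of $A$, the ring $A_P/P_0 A_P$ is nonzero by faithful flatness of $A_{(P)} \to A_P$ applied to the nonzero $A_{(P)}$-module $A_{(P)}/P_0 A_{(P)}$, so admits a prime $Q$ contracting to $P_0$; any minimal prime $Q_0 \subseteq Q$ of $A_P$ then satisfies $Q_0 \cap A \subseteq P_0$ with $Q_0 \cap A$ minimal in $A$, forcing $Q_0 \cap A = P_0$ since two minimal primes of $A$ cannot be strictly comparable. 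The support claim reduces, via $M_P = M \otimes_A A_P$ and $(M_P)_Q = M_{Q'} \otimes_{A_{(Q')}} (A_P)_Q$, to the triviality that $M_{Q'} = 0$ implies $(M_P)_Q = 0$.

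For part (2), my plan is to realize $\overline{M}$ as an essential $A$-submodule of $M_{(P)}$. Essentiality is direct: for any nonzero $m/s \in M_{(P)}$ with $s \in A \setminus P$, the element $m/1 = s \cdot (m/s)$ lies in $\overline{M}$ and is nonzero, as $s$ acts as a unit on $M_{(P)}$. Since associated primes are invariant under essential extensions, one obtains $\Ass_A(\overline{M}) = \Ass_A(M_{(P)})$. Every prime in the latter set is contained in $P$, because no element of $A \setminus P$ can annihilate a nonzero element of $M_{(P)}$. Finally, the standard bijection between primes of $A_{(P)}$ and primes of $A$ contained in $P$, combined with the identity $\Ann_A(x) A_{(P)} = \Ann_{A_{(P)}}(x)$ for $x \in M_{(P)}$, gives $\Ass_A(M_{(P)}) = \Ass_{A_{(P)}}(M_{(P)})$ under the identification of primes, yielding the stated equality. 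The containment $Q \subseteq P$ for $Q \in \Ass_A(\overline{M})$ is then immediate, and the parenthetical claim follows from the standard fact that primes minimal in the support of a module are associated primes.
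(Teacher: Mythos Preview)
Your proof is correct and follows essentially the same approach as the paper, which is extremely terse (one sentence per part, invoking going-down plus the identity $\Ann_{A_P} M_P = (\Ann_A M) A_P$ for part (1), and a Stacks reference for part (2)); you have simply supplied the details. One minor point: in part (1), the nonvanishing of $A_P/P_0 A_P$ alone only gives a prime of $A_P$ \emph{containing} $P_0 A_P$, not one contracting exactly to $P_0$ --- but faithful flatness of $A_{(P)} \to A_P$ gives surjectivity on $\Spec$, which is what you need (and presumably intended), after which your minimal-prime argument goes through.
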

\begin{proof}
	The first part follows from the going down theorem and the equality $\Ann_{A_P} M_P = (\Ann_A M) A_P$. The second part follows from standard properties of associated primes (see  \cite[\href{https://stacks.math.columbia.edu/tag/00L9}{Section 00L9}]{stacks-project}).
\end{proof}
\begin{lemma}\label{lem_finite_end_group}
	Let $R$ be a Noetherian ring, and let $G$ be a (possibly non-abelian) finite group. Let $C$ be a complex of $R[G]$-modules such that $H^\ast(C)$ is a finite $R[G]$-module. Then:
	\begin{enumerate}
		\item $\End_{\mathbf{D}(R[G])}(C)$ is a finite $R$-module.
		\item Suppose moreover that $H^i(C) = 0$ if $i \not\in [0, d]$. Let $A \subset \End_{\mathbf{D}(R[G])}(C)$ be a commutative $R$-subalgebra, and let $I = \ker(A \to \End_{R[G]}(H^\ast(C))$. Then $I^{d+1} = 0$. 
	\end{enumerate}
\end{lemma}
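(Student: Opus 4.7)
The plan for part~(1) is to reduce to a bounded complex and then exploit the Noetherian hypothesis. Since $H^\ast(C)$ is finitely generated over $R[G]$, it is supported in some bounded range of degrees $[a,b]$, so we may replace $C$ by the quasi-isomorphic truncation $\tau_{\ge a}\tau_{\le b} C$ and assume $C$ is bounded. Because $R[G]$ is Noetherian (as a finite algebra over the Noetherian ring $R$), a resolution of any finitely generated $R[G]$-module by finite projective $R[G]$-modules shows that $\Ext^p_{R[G]}(M,N)$ is a finitely generated $R$-module for all $p$ and all finitely generated $M$, $N$. We then feed this into the hypercohomology spectral sequence
\[
E_2^{p,q} = \bigoplus_n \Ext^p_{R[G]}(H^n(C), H^{n+q}(C)) \Longrightarrow \Hom_{\mathbf{D}(R[G])}(C, C[p+q])
\]
coming from the canonical filtration of $C$. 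Setting $p+q=0$, only finitely many terms $E_2^{p,-p}$ (with $p\in[0, b-a]$) can be nonzero, and each is finitely generated over $R$; so $\End_{\mathbf{D}(R[G])}(C)$ inherits a finite filtration with finitely generated subquotients and is itself finitely generated over $R$.

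For part~(2), the plan is to induct on $d$ using the canonical truncation triangle
\[
\tau_{\le d-1} C \xrightarrow{\iota} C \xrightarrow{\pi} H^d(C)[-d].
\]
The base case $d=0$ is immediate: $C \simeq H^0(C)$ in $\mathbf{D}(R[G])$, so the map $A \to \End_{\mathbf{D}(R[G])}(C)$ equals $A \to \End_{R[G]}(H^0(C))$ and $I = 0$. For the inductive step, any $f \in I$ satisfies $\pi \circ f = H^d(f) \circ \pi = 0$, so $f$ factors (non-uniquely) as $f = \iota \circ \bar f$ for some $\bar f : C \to \tau_{\le d-1} C$; a comparison of cohomology shows $\bar f$ is again killed by $H^\ast$. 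Given $f_1, \ldots, f_{d+1} \in I$ with such factorizations $f_i = \iota \circ \bar f_i$, we rewrite
\[
f_1 \circ \cdots \circ f_{d+1} = \iota \circ (\bar f_1 \circ \iota) \circ (\bar f_2 \circ \iota) \circ \cdots \circ (\bar f_d \circ \iota) \circ \bar f_{d+1},
\]
in which each middle factor $\bar f_i \circ \iota \in \End_{\mathbf{D}(R[G])}(\tau_{\le d-1} C)$ is killed by $H^\ast$. The inductive hypothesis applied to $\tau_{\le d-1} C$ (whose cohomology lies in $[0, d-1]$) then forces the product of any $d$ such endomorphisms to vanish.

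The main technical obstacle we anticipate is the rigorous setup of the hypercohomology spectral sequence in part~(1): convergence and the identification of $E_2$ are standard but require care in the derived category. If this proves inconvenient, we will replace it by an equivalent but more elementary induction on the length $b - a + 1$ of $C$, sandwiching $\End_{\mathbf{D}(R[G])}(C)$ between $\Hom_{\mathbf{D}(R[G])}(C, \tau_{\le b-1} C)$ and $\Hom_{\mathbf{D}(R[G])}(C, H^b(C)[-b])$ via the long exact sequences attached to the truncation triangle, and reducing each of these to a finite collection of Ext groups between finitely generated $R[G]$-modules. The factorization argument in part~(2) presents no real difficulty and, incidentally, does not use the commutativity of $A$.
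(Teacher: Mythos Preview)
Your proof is correct and follows essentially the same approach as the paper: both parts proceed via the canonical truncation triangles, with part~(1) reducing to finiteness of $\Ext$ groups between finitely generated $R[G]$-modules. The paper carries out part~(1) by the direct induction you describe as your fallback (rather than the spectral sequence), and for part~(2) it organizes the induction from below (showing $\tau_{\le i}(f_0\cdots f_i)=0$ for increasing $i$) rather than your top-down induction on $d$, but these are equivalent reorganizations of the same argument.
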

\begin{proof}
	For the first part we can assume, after translation and truncation, that $C$ is concentrated in positive degrees. We first show that if $M$ is a finite $R[G]$-module and $j \in \bbZ$, then $\Hom_{\mathbf{D}(R[G])}(C, M[j])$ is a finite $R$-module, by showing using induction on $i$ that $\Hom_{\mathbf{D}(R[G])}(\tau_{\leq i} C, M[j])$ is a finite $R$-module for each $i \geq 0$. For each $i \geq 0$ there is an exact triangle in $\mathbf{D}(R[G])$:
	\[ \tau_{\leq i} C \to \tau_{\leq i+1} C \to H^{i+1}(C)[-(i+1)] \to \tau_{\leq i}(C)[1]. \]
	There is an associated exact sequence
	\[ \Hom_{\mathbf{D}(R[G])}(H^i(C)[-(i+1)], M[j]) \to \Hom_{\mathbf{D}(R[G])}(\tau_{\leq i+1} C, M[j]) \to \Hom_{\mathbf{D}(R[G])}(\tau_{\leq i} C, M[j])  \]
	The first module in this sequence is a finite $R$-module, since it can be calculated as an Ext group between finite $R[G]$-modules. By induction, we reduce to the case $i = 0$, in which case $\tau_{\leq 0} C$ is isomorphic in $\mathbf{D}(R[G])$ to a finite $R[G]$-module. 
	
	We now show by induction on $i$ that $\Hom_{\mathbf{D}(R[G])}(C, \tau_{\leq i} C)$ is a finite $R$-module. The case $i = 0$ follows from the previous paragraph. In general, we use the exact sequence
	\[ \Hom_{\mathbf{D}(R[G])}(C, \tau_{\leq i} C) \to \Hom_{\mathbf{D}(R[G])}(C, \tau_{\leq i+1} C) \to \Hom_{\mathbf{D}(R[G])}(C, H^i(C)[-(i+1)]). \]
	This proves the first part of the lemma. For the second, let $f_0, \dots, f_d \in I$. We must show that $f_0 \cdots f_d = 0$. We show by induction on $i \geq 0$ that $\tau_{\leq i} f_0 \cdots f_i = 0$ in $\End_{\mathbf{D}(R[G])}(\tau_{\leq i} C)$. The case $i = 0$ follows immediately. In general, we use the exact sequence
	\[ \Hom_{\mathbf{D}(R[G])}(\tau_{\leq i+1} C, \tau_{\leq i} C) \to \Hom_{\mathbf{D}(R[G])}(\tau_{\leq i+1} C, \tau_{\leq i+1} C) \to \Hom_{\mathbf{D}(R[G])}(\tau_{\leq i+1} C,  H^{i+1}(C)[-(i+1)]). \]
	Let $\alpha_i : \tau_{\leq i} C \to \tau_{\leq i+1} C$ be the natural map. The exactness of this sequence implies that there is $g \in \Hom_{\mathbf{D}(R[G])}(\tau_{\leq i+1} C, \tau_{\leq i} C)$ such that $\alpha_i \circ g = \tau_{\leq i+1} f_{i+1}$. Using the induction hypothesis $\tau_{\leq i} f_0 \cdots f_i = 0$ and the functorial nature of truncation, we find
	\[ 0 = \alpha_i \circ \tau_{\leq i} f_0 \cdots f_i \circ g = \tau_{\leq i+1} f_0 \cdots f_i \circ \alpha_i \circ g = \tau_{\leq i+1} f_0 \cdots f_{i+1}. \]
	This completes the proof.
\end{proof}
We now discuss ordinary parts. Let $A$ be a complete Noetherian local ring. In \cite{Kha17} we defined the ordinary part of a perfect complex of $A$-modules with respect to an endomorphism $T$. We now need to extend this to complexes which are not necessarily perfect.
\begin{proposition}\label{prop_ord_summand}
	Let $A$ be a complete Noetherian local ring, and let $C$ be a complex of $A$-modules such that for each $i \in \bbZ$, $H^i(C)$ is a finitely generated $A$-module. Then:
	\begin{enumerate}
		\item Suppose that $T \in \End_{\mathbf{D}(A)}(C)$ is an endomorphism such that $A[T] \subset \End_{\mathbf{D}(A)}(C)$ is a finite $A$-algebra. Then there is a unique idempotent $e \in A[T]$ such that for all $i \in \bbZ$, $H^i(T)$ acts invertibly on $e H^i(C)$ and topologically nilpotently on $(1 - e) H^i(C)$. Consequently, there is a canonical $T$-invariant direct sum decomposition $C = C^\text{ord} \oplus C^\text{non-ord}$ in $\mathbf{D}(A)$ with the property that the composite
		\[ C \to C^\text{ord} \to C \]
		equals the idempotent $e$, that $T$ acts invertibly on each $H^i(C^\text{ord})$, and acts topologically nilpotently on $H^i(C^\text{non-ord})$.
		\item Suppose that $R \subset \End_{\mathbf{D}(A)}(C)$ is a finite $A$-algebra. Then for each maximal ideal $\ffrm \subset R$ there is a unique idempotent $e_\ffrm \in R$ such that for all $i \in \bbZ$, $\ffrm$ lies in the support of $e_\ffrm H^i(C)$ and does not lie in the support of $(1 - e_\ffrm) H^i(C)$. Consequently, there is a canonical $R$-invariant direct sum decomposition $C = \oplus_\ffrm C_\ffrm$ in $\mathbf{D}(A)$ with the property that for any maximal ideal $\ffrm \subset R$, the composite
		\[ C \to C_\ffrm \to C \]
		equals $e_\ffrm$, and $\ffrm$ is the only maximal ideal of $R$ in the support of each non-zero group $H^i(C_\ffrm)$. 
	\end{enumerate}
\end{proposition}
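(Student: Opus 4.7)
My plan is to derive both parts from the structure theory of finite algebras over a complete Noetherian local ring, together with the fact that idempotents split in $\mathbf{D}(A)$.

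I would begin with part (2), which is slightly more canonical. Since $R$ is a finite $A$-algebra and $A$ is complete Noetherian local, $R$ is a complete Noetherian semi-local ring, and therefore decomposes canonically as $R = \prod_{\ffrm} R_{\ffrm}$ over its (finitely many) maximal ideals, with primitive idempotents $e_{\ffrm} \in R$ summing to $1$. Viewed through the inclusion $R \subset \End_{\mathbf{D}(A)}(C)$, each $e_{\ffrm}$ is an idempotent endomorphism of $C$, and splitting them in $\mathbf{D}(A)$ yields a direct sum decomposition $C = \bigoplus_{\ffrm} C_{\ffrm}$ with $e_{\ffrm}$ acting as the projection onto $C_{\ffrm}$. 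Passing to cohomology, $H^i(C_{\ffrm}) = e_{\ffrm} H^i(C)$ is an $R_{\ffrm}$-module, so its support (as an $R$-module) is contained in $\{\ffrm\}$; conversely $(1-e_{\ffrm}) H^i(C) = \bigoplus_{\ffrm' \neq \ffrm} e_{\ffrm'} H^i(C)$ has support disjoint from $\ffrm$. Uniqueness of the $e_{\ffrm}$ is inherited from the uniqueness of the primitive idempotent decomposition of $R$.

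For part (1), I would apply the same strategy to the finite $A$-algebra $A[T]$, decomposing it as $A[T] = \prod_{\ffrm} A[T]_{\ffrm}$ over its maximal ideals. By integrality, each $\ffrm$ contains $\ffrm_A$, so either $T \in \ffrm$ or the image of $T$ in $A[T]_{\ffrm}$ is a unit. Defining $e$ as the sum of the primitive idempotents $e_{\ffrm}$ over those $\ffrm$ with $T \notin \ffrm$, the element $T$ is a unit in $eA[T]$ and lies in the Jacobson radical of $(1-e)A[T]$, hence is topologically nilpotent there (noting that the $\ffrm_A$-adic and $\ffrm_{A[T]}$-adic topologies coincide on the finite $A$-algebra $(1-e)A[T]$, so ``topologically nilpotent'' is unambiguous). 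Splitting the idempotent $e$ in $\mathbf{D}(A)$ then gives the decomposition $C = C^{\mathrm{ord}} \oplus C^{\mathrm{non\text{-}ord}}$, and the action of $T$ on the cohomology of each summand has the stated property because it factors through $eA[T]$ or $(1-e)A[T]$, respectively. Uniqueness of $e$ is by the same argument as in (2): at each maximal ideal $\ffrm$ of $A[T]$, the two conditions on the action of $T$ force the idempotent component at $\ffrm$ to be $e_{\ffrm}$ or $0$ according to whether $T \notin \ffrm$ or $T \in \ffrm$.

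The main technical point to verify is the splitting of idempotents in $\mathbf{D}(A)$ when $C$ need not have bounded cohomology: we cannot directly replace $C$ by a bounded-above projective complex via Lemma \ref{lem_existence_of_bounded_above_resolution}. Instead, I would invoke the general fact that a triangulated category admitting countable coproducts (such as $\mathbf{D}(A)$) is idempotent-complete; concretely, given an idempotent endomorphism $e$ of $C$, the summand $eC$ can be produced as the homotopy colimit of the telescope $C \xrightarrow{e} C \xrightarrow{e} \cdots$, with complementary summand given by the analogous construction for $1-e$. This is the only step where anything beyond standard commutative algebra on $R$ or $A[T]$ enters, and is the expected main obstacle in writing out a careful proof.
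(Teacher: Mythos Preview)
Your proposal is correct and follows essentially the same approach as the paper: decompose the finite $A$-algebra ($A[T]$ or $R$) as a product of complete local rings, take the appropriate idempotent, and split it in $\mathbf{D}(A)$ using idempotent completeness. The paper simply asserts that $\mathbf{D}(A)$ is idempotent complete without further comment, whereas you supply the standard justification via countable coproducts and the telescope construction; otherwise the arguments coincide.
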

\begin{proof}
	This follows from the fact that the derived category $\mathbf{D}(A)$ is idempotent complete, as we now explain. The second part is very similar to the first, so we justify only the first. Since $A[T]$ is a finite $A$-algebra, it decomposes as a direct product $A[T] = \prod_i A_i$ of complete local $A$-algebras. We let $e$ denote the idempotent corresponding to the product of those factors in which $T$ is a unit. The idempotent completeness of $\mathbf{D}(A)$ implies that there is a direct sum decomposition $C = C_1 \oplus C_2$ in $\mathbf{D}(A)$ corresponding to $e$. In other words, there are morphisms $p_i : C \to C_i$ and $j_i : C_i \to C$ such that $p_i \circ j_i = \operatorname{id}_{C_i}$, $j_1 p_1 + j_2 p_2 = \operatorname{id}_C$, and $e = j_1 p_1$. We claim that we can take $C^\text{ord} = C_1$ and $C^{\text{non-ord}} = C_2$. We just need to check that for each $i \in \bbZ$, $T$ acts invertibly on $H^i(C_1)$ and topologically nilpotently on $H^i(C_2)$.
	
	However, it follows from the definition that for each $i \in \bbZ$ we have $H^i(C) = H^i(C_1) \oplus H^i(C_2)$, that $(1-e)H^i(C_1) = 0$, and $e H^i(C_2) = 0$. Since $T$ is a unit in $e A[T]$ and topologically nilpotent in $(1-e) A[T]$, the result follows. 
\end{proof}

\section{Galois deformation theory}\label{sec_Galois_deformation_theory}

In this section we define and study some Galois deformation rings. In order to fix ideas, we recall the basic definitions and theorems. We then go on to study some particular local deformation problems and the global structure of certain Galois deformation rings. Throughout, we consider deformations of 2-dimensional Galois representations without fixing determinants. Since we will ultimately be focused on the case $p = 2$, we specialize to this case when it simplifies arguments. 

\subsection{Basics}\label{sec_deformation_basics}

Let $K$ be a number field, let $p$ be a prime, and let $\cO$ be a coefficient ring. Let $\overline{\rho} : G_K \to \GL_2(k)$ be a continuous, absolutely irreducible representation. Let $S_p$ denote the set of $p$-adic places of $K$, and let $S$ be a finite set of finite places of $K$ containing $S_p$ and all the places at which $\overline{\rho}$ is ramified. We also fix for each $v \in S$ a ring $\Lambda_v \in \CNL_\cO$, and set $\Lambda = \widehat{\otimes}_{v \in S} \Lambda_v$, the completed tensor product being relative to $\cO$.

Let $v \in S$. By definition, a local deformation problem at $v$ is a representable subfunctor of the functor $\Def^\square_v : \CNL_{\Lambda_v} \to \Sets$ of all liftings of $\overline{\rho}|_{G_{K_v}}$, which is invariant under the action of $\widehat{\GL}_2$ by conjugation (see \S \ref{subsec_notation} for the notation $\widehat{\GL}_2$).

A global deformation problem is a tuple 
\[ \cS = ( K, \overline{\rho}, S, \{ \Lambda_v \}_{v \in S}, \{ \cD_v \}_{v \in S} ), \]
where for each $v \in S$, $\cD_v$ is a local deformation problem at $v$. If $R \in \CNL_\Lambda$, we say that a continuous lifting $\rho_R : G_K \to \GL_2(R)$ of $\overline{\rho}$ is of type $\cS$ if $\rho_R$ is unramified outside $S$, and $\rho_R \in \cD_v(R)$ for each $v \in S$. This allows us to define $\Def^\square_\cS$, the functor of all liftings of type $\cS$, and $\Def_\cS$, the functor of all deformations (i.e.\ strict equivalence classes, or equivalently $\widehat{\GL}_2(R)$-conjugacy classes, of liftings) of type $\cS$. These functors are representable, and we write $R_\cS^\square$, $R_\cS \in \CNL_\cO$ for the representing objects.

We also introduce the notion of $T$-framed lifting and $T$-framed deformation. Let $T \subset S$ be a subset. By definition, a $T$-framed lifting of $\overline{\rho}$ to $R \in \CNL_\Lambda$ is a pair $(\{\alpha_v\}_{v \in T}, \rho_R)$, where $\alpha_v \in \widehat{\GL}_2(R)$ for each $v \in T$ and $\rho_R : G_K \to \GL_2(R)$ is a lifting of $\overline{\rho}$. We say that the $T$-framed lifting is of type $\cS$ if $\rho_R$ is. Two $T$-framed liftings $(\{\alpha_v\}_{v \in T}, \rho_R)$, $(\{\alpha'_v\}_{v \in T}, \rho'_R)$ are said to be strictly equivalent if there exists $\beta \in \widehat{\GL}_2(R)$ such that $\alpha'_v = \beta \alpha_v$ for each $v \in T$ and $\beta \rho_R \beta^{-1} = \rho_R'$. 

We observe that if $v \in T$ then the lifting $\alpha_v \rho_R|_{G_{K_v}} \alpha_v^{-1}$ of $\overline{\rho}|_{G_{K_v}}$ depends only on the strict equivalence class of the $T$-framed lifting. We call a strict equivalence class of $T$-framed liftings of $\overline{\rho}$ a $T$-framed deformation. The functor $\Def_\cS^T : \CNL_{\Lambda} \to \Sets$ of $T$-framed deformations of type $\cS$ is representable, and we write $R_\cS^T \in \CNL_{\Lambda} $ for the representing object. 

\subsection{Some local deformation problems}

\subsubsection{An ordinary deformation problem}\label{subsubsec_ordinary_deformation_problems}

Let $v$ be a place of $K$ dividing $p$, and suppose that $\overline{\rho}|_{G_{K_v}}$ has the form
\[ \overline{\rho}|_{G_{K_v}}(\sigma) = \left( \begin{array}{cc} 1 & \alpha(\sigma) \\ 0 & 1 \end{array} \right) \]
for some \emph{ramified} homomorphism $\alpha : G_{K_v} \to k$. We assume moreover that $\zeta_p \in K_v$, and that $\alpha$ is the Kummer character attached to an element $\beta \in K_v^\times$ of valuation not divisible by $p$; in particular, it has image of order $p$. (One could define ordinary deformation problems much more generally, as is done for example in \cite{Ger18}. The reason for imposing these hypotheses is: the ramifiedness of $\alpha$ implies the representability of the functor $\cD_v^\text{ord}$ in the form defined below. The special form of $\alpha$ implies that the representing object is in fact a formally smooth $\Lambda_v$-algebra.)

Let $\Lambda_v = \cO\llbracket (\cO_{K_v}^\times)^f \times (\cO_{K_v}^\times)^f \rrbracket$, where $(\cO_{K_v}^\times)^f$ denotes the maximal torsion-free quotient of $\cO_{K_v}^\times$. We identify $(\cO_{K_v}^\times)^f$ with a quotient of the group $I_{K_v}$, via class field theory. We write $\phi_{v, 1}, \phi_{v, 2} : I_{K_v} \to \Lambda_v^\times$ for the two universal characters. 

We define a functor $\cD_v^\text{ord} \subset \Def^\square_v$ as follows: if $R \in \CNL_{\Lambda_v}$ and if $\rho_R : G_{K_v} \to \GL_2(R)$ is a lifting of $\overline{\rho}|_{G_{K_v}}$, then $\rho_R \in \cD_v^\text{ord}(R)$ if and only if $\rho_R$ is $\widehat{\GL}_2(R)$-conjugate to a lifting of the form
\begin{equation}\label{eqn_upper_triangular_lifting} \rho'_R(\sigma) = \left( \begin{array}{cc} \psi_{v, 1}(\sigma) & a(\sigma) \\ 0 & \psi_{v, 2}(\sigma) \end{array}\right),
\end{equation}
where we have $\psi_{v, 1}|_{I_{K_v}} = \phi_{v, 1}$ and $\psi_{v, 2}|_{I_{K_v}} = \phi_{v, 2}$.
\begin{proposition}\label{prop_representability_of_ordinary_lifting_problem}
The functor $\cD_v^\text{ord}$ is a local deformation problem. 
\end{proposition}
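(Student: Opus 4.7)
The plan is to exhibit $\cD_v^\text{ord}$ as the subfunctor of $\Def_v^\square$ cut out by a canonical Galois-theoretic condition, which will then be representable by the standard criterion for local deformation problems. The key observation is that because $\alpha : G_{K_v} \to k$ is ramified (hence nontrivial), the residual representation $\overline{\rho}|_{G_{K_v}}$ admits a \emph{unique} $G_{K_v}$-stable line $\overline{L} \subset k^2$, namely the span of the first standard basis vector. I would first reformulate: a lifting $\rho_R : G_{K_v} \to \GL_2(R)$ lies in $\cD_v^\text{ord}(R)$ if and only if there exists a $G_{K_v}$-stable rank-one $R$-direct summand $L_R \subset R^2$ reducing to $\overline{L}$, such that the character of $G_{K_v}$ on $L_R$ (resp. $R^2/L_R$) restricts to $\phi_{v,1}$ (resp. $\phi_{v,2}$) on $I_{K_v}$. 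This equivalence is immediate after conjugating a representative of $\rho_R$ into the form \eqref{eqn_upper_triangular_lifting}.

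The heart of the proof is the uniqueness of $L_R$. Given two candidate lines $L, L' \subset R^2$, the submodule $L + L'$ is $G_{K_v}$-stable, and its reduction modulo $\ffrm_R$ is $\overline{L}$ (since $\overline{L}$ is the only stable line mod $\ffrm_R$). Nakayama's lemma then forces $L + L' = L = L'$. Once uniqueness is established, the pair of characters on $L_R$ and $R^2/L_R$ is canonically attached to $\rho_R$ rather than depending on a choice of upper-triangular representative, so the defining condition becomes an intrinsic property of the $\GL_2(R)$-conjugacy class.

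With uniqueness in hand, I would verify representability by checking the Schlessinger-type criterion for a $\widehat{\GL}_2$-invariant subfunctor of $\Def_v^\square$: closure under base change and under fibre products $R_1 \times_{R_0} R_2$ attached to small surjections $R_1 \to R_0$ in $\CNL_{\Lambda_v}$. Base change is clear, as $L_R \otimes_R R'$ witnesses the condition for the pushed-forward lifting, and the inertial characters are preserved by functoriality. For fibre products, if $\rho_{R_1}$ and $\rho_{R_2}$ are of type $\cD_v^\text{ord}$ with a common image $\rho_{R_0}$, then the canonical lines $L_{R_1}$ and $L_{R_2}$ both push forward to the canonical line $L_{R_0}$ by uniqueness, and therefore glue to a $G_{K_v}$-stable rank-one direct summand of $(R_1 \times_{R_0} R_2)^2$ witnessing $\cD_v^\text{ord}$ for the fibre-product lifting; the inertial conditions glue for the same reason. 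Conjugation invariance under $\widehat{\GL}_2(R)$ is built into the definition.

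The main obstacle is precisely the uniqueness step, and this is where the ramifiedness of $\alpha$ is essential: if $\alpha$ were trivial, then $\overline{\rho}|_{G_{K_v}}$ would be the trivial representation, with a whole projective line of stable lines, and neither the canonical reformulation nor the gluing across a fibre product would go through. The sharper hypothesis that $\alpha$ is a Kummer character attached to an element of non-$p$-divisible valuation is not needed for representability here, but is flagged in the setup because it will underlie the subsequent formal smoothness statement for the representing ring.
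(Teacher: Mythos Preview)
Your proof is correct and takes a genuinely different route from the paper's. The paper argues by exhibiting an explicit isomorphism of functors $\cD' \times \cE \to \cD_v^{\text{ord}}$, where $\cD'$ is the functor of upper-triangular liftings of the form \eqref{eqn_upper_triangular_lifting} and $\cE$ is the functor of lower-triangular unipotent elements of $\widehat{\GL}_2$; since both factors are manifestly representable, so is $\cD_v^{\text{ord}}$. The injectivity of this product map is checked by a first-order matrix computation which uses the prescribed inertial character $\phi_{v,1}$ together with the existence of $\sigma \in I_{K_v}$ with $\alpha(\sigma)\neq 0$ to force the lower-left entry of a conjugating matrix to vanish.

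Your approach replaces this computation by the observation that any $G_{K_v}$-stable rank-one direct summand of $R^2$ must reduce to the unique stable line $\overline{L}$, and that Nakayama then forces uniqueness of the line in $R^2$. This is actually a stronger uniqueness statement than the paper proves (you never invoke the inertial condition), and it makes the gluing over fibre products transparent. The trade-off is that the paper's product decomposition $\cD_v^{\text{ord}} \cong \cD' \times \cE$ gives an explicit handle on the representing ring, which feeds directly into the dimension count in the next proposition; your Schlessinger-type argument establishes representability more abstractly. Both are perfectly valid, and your identification of ramifiedness (rather than the sharper Kummer hypothesis) as the operative condition here is exactly right.
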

\begin{proof}
Let $\cD' \subset \cD_v^\text{ord}$ denote the subfunctor of upper-triangular liftings of the form (\ref{eqn_upper_triangular_lifting}), and let $\cE \subset \widehat{\GL}_2$ denote the subfunctor of lower-triangular matrices with 1's on the diagonal. Then there is a natural transformation $\cD' \times \cE \to \cD_v^\text{ord}$, $(\rho, u) \mapsto u \rho u^{-1}$. Since $\cD'$ and $\cE$ are obviously representable, to prove the proposition it suffices to show that this natural transformation is an isomorphism, i.e.\ that it is bijective on $R$-points for every $R$. 

The surjectivity follows from the equality $\cE(R) \widehat{B}(R) = \widehat{\GL}_2(R)$ for every $R \in \CNL_{\Lambda_v}$. To show injectivity, it suffices to show the following statement: let $R \in \CNL_{\Lambda_v}$, and suppose that we have a lifting
\[ \rho_R = \left( \begin{array}{cc} \psi_{v, 1}(\sigma) & a(\sigma) \\ 0 & \psi_{v, 2}(\sigma) \end{array}\right), \]
where we have $\psi_{v, 1}|_{I_{K_v}} = \phi_{v, 1}$ and $\psi_{v, 2}|_{I_{K_v}} = \phi_{v, 2}$. Suppose that there exists $g \in \widehat{\GL}_2(R)$ such that we have
\[ g \rho_R g^{-1} = \left( \begin{array}{cc} \psi'_{v, 1}(\sigma) & a'(\sigma) \\ 0 & \psi'_{v, 2}(\sigma) \end{array}\right), \]
where we have $\psi'_{v, 1}|_{I_{K_v}} = \phi_{v, 1}$ and $\psi'_{v, 2}|_{I_{K_v}} = \phi_{v, 2}$. Then $g$ is upper triangular. 

To prove this, we can assume that $R$ is Artinian, and reduce to the case that $\ffrm_R^{i+1} = 0$ for some $i \geq 1$ and $g = 1 + X$ for some $X \in M_2(\ffrm_R^i)$. Let us write
\[ X = \left( \begin{array}{cc} A & B \\ C & D \end{array}\right). \]
Then we have for any $\sigma \in G_{K_v}$, we have
\[ g \rho_R(\sigma) g^{-1} =  \left( \begin{array}{cc} \psi_{v, 1}(\sigma) - C a(\sigma) & (1 + A-D) a(\sigma)  \\ 0 & \psi_{v, 2}(\sigma) + C a(\sigma) \end{array}\right). \]
Choose $\sigma \in I_{K_v}$ such that $\alpha(\sigma) \neq 0$. Then we have  $\psi_{v, 1}(\sigma) - C a(\sigma) = \psi_{v, 1}(\sigma)$, by construction, hence $C = 0$, showing that $g$ is upper triangular. This completes the proof.
\end{proof}
We write $R_v^\text{ord} \in \CNL_{\Lambda_v}$ for the representing object of $\cD_v^\text{ord}$. The definition of $\cD_v^\text{ord}$ implies that there are uniquely determined characters $\psi_{v, 1}, \psi_{v, 2} : G_{K_v} \to (R_v^{\text{ord}})^\times$ with the property that the universal lifting is $\widehat{\GL}_2(R_v^{\text{ord}})$-conjugate to an upper-triangular one with diagonal characters $\psi_{v, 1}, \psi_{v, 2}$.
\begin{proposition}\label{prop_characterization_of_ordinary_deformations_by_pseudocharacter_relations}
Let $R \in \CNL_{\Lambda_v}$, and suppose given a lifting $\rho_R : G_{K_v} \to \GL_2(R)$ of $\overline{\rho}|_{G_{K_v}}$ and characters $\widetilde{\psi}_{v, 1}, \widetilde{\psi}_{v, 2} : G_{K_v} \to R^\times$ extending the respective pushforwards of $\phi_{v, 1}$, $\phi_{v, 2}$, all satisfying the following conditions:
\begin{enumerate}
\item For all $g \in G_{K_v}$, we have $\det( X - \rho_R(g) ) = (X - \widetilde{\psi}_{v, 1}(g)) ( X - \widetilde{\psi}_{v, 2}(g) )$.
\item For all $g_1, g_2 \in G_{K_v}$, we have $(\rho_R(g) - \widetilde{\psi}_{v, 1}(g_1))(\rho_R(g_2) - \widetilde{\psi}_{v, 2}(g_2)) = 0$.
\end{enumerate}
Then $\rho_R$ is of type $\cD_v^\text{ord}$.
\end{proposition}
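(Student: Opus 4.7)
The plan is to extract from conditions (1) and (2) a rank-one $R$-direct summand $L_0 \subset R^2$ that is $\rho_R$-stable and on which $G_{K_v}$ acts through $\widetilde{\psi}_{v,1}$. Once such an $L_0$ is in hand, choose a generator $v_1$ with $v_1 \equiv e_1 \pmod{\ffrm_R}$ and complete it to a basis $\{v_1,v_2\}$ of $R^2$ with $v_2 \equiv e_2 \pmod{\ffrm_R}$; the change-of-basis matrix then lies in $\widehat{\GL}_2(R)$ and conjugates $\rho_R$ into upper-triangular form with top-left diagonal entry $\widetilde{\psi}_{v,1}$. Condition (1) identifies the bottom-right diagonal character as $\widetilde{\psi}_{v,2}$, and since by hypothesis $\widetilde{\psi}_{v,i}|_{I_{K_v}}$ is the pushforward of $\phi_{v,i}$, this places $\rho_R$ in $\cD_v^\text{ord}(R)$.

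To produce $L_0$, I use the ramification hypothesis on $\alpha$. Choose $\sigma_0 \in I_{K_v}$ with $\alpha(\sigma_0) \neq 0$ and set $N = \rho_R(\sigma_0) - \widetilde{\psi}_{v,2}(\sigma_0) \in M_2(R)$. Modulo $\ffrm_R$, the matrix $N$ reduces to $\overline{\rho}(\sigma_0) - 1$, whose only nonzero entry is the $(1,2)$-entry $\alpha(\sigma_0) \in k^\times$. Hence the $(1,2)$-entry of $N$ is a unit of $R$ while the remaining three entries lie in $\ffrm_R$. On the other hand, condition (1) evaluated at $\sigma_0$ makes $\widetilde{\psi}_{v,2}(\sigma_0)$ a root of the characteristic polynomial of $\rho_R(\sigma_0)$, so $\det N = 0$. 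An elementary computation combining these two facts exhibits both columns of $N$ as $R$-multiples of a single vector $v_1 \in R^2$ with $v_1 \equiv e_1 \pmod{\ffrm_R}$; consequently $\im N = R v_1$ is a free rank-one direct summand of $R^2$.

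To conclude, apply condition (2) with $g_2 = \sigma_0$: for every $g_1 \in G_{K_v}$ we have $(\rho_R(g_1) - \widetilde{\psi}_{v,1}(g_1))\, N = 0$, i.e.\ every vector in $\im N = R v_1$ is a $\widetilde{\psi}_{v,1}$-eigenvector for $\rho_R$. Thus $L_0 := R v_1$ has the desired properties, and the argument finishes as sketched in the first paragraph.

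The step that is not purely formal is the construction of $L_0$ as a rank-one \emph{direct summand}: it requires both the ramification of $\alpha$ (which forces a unit entry into $N$, ensuring $N \not\equiv 0 \pmod{\ffrm_R}$ in a controlled position) and the characteristic-polynomial condition (1) (which forces $\det N = 0$ so that $\im N$ is cyclic rather than of full rank). Once these are combined to put $\im N$ in the form $R v_1$ with $v_1$ a lift of $e_1$, the rest of the argument—locating an invariant line via condition (2), changing basis within $\widehat{\GL}_2(R)$, and reading off the diagonal entries from (1)—is straightforward.
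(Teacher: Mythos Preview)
Your proof is correct and follows essentially the same approach as the paper: both choose a ramified inertia element $\sigma_0$ with $\alpha(\sigma_0)\neq 0$, form the matrix $N=\rho_R(\sigma_0)-\widetilde{\psi}_{v,2}(\sigma_0)$ (the paper calls it $T$), identify its image as a free rank-one summand lifting $k e_1$, and then use condition~(2) with $g_2=\sigma_0$ to show this line is $\rho_R$-stable with eigencharacter $\widetilde{\psi}_{v,1}$. The paper's presentation is slightly more explicit---it takes $e_1=Tf_2$ as the new first basis vector and computes $T=\begin{psmallmatrix} x & 1\\ 0 & 0\end{psmallmatrix}$ directly in the new coordinates---while you phrase the same step as ``$\det N=0$ plus a unit entry forces $\im N$ to be cyclic,'' but the content is identical.
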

\begin{proof}
Choose $g_0 \in I_{K_v}$ such that $\alpha(g_0) \neq 0$. Let $T = \rho_R(g_0) - \phi_{v, 2}(g_0) \in M_2(R)$. Let $f_1, f_2$ denote the standard basis of $R^2$, and let $e_2 = f_2$, $e_1 = T e_2$. Then, using overline to denote reduction modulo $\ffrm_R$, we have $\overline{e}_1 = \overline{f}_1$ and $\overline{e}_2 = \overline{f}_2$. After conjugating $\rho_R$ by an element of $\widehat{\GL}_2(R)$, we can therefore assume that $e_1 = f_1$ and $e_2 = f_2$. 

In this case, we will show that $\rho_R$ has the form
\[ \rho_R(\sigma) = \left( \begin{array}{cc} \widetilde{\psi}_{v, 1}(\sigma) & a(\sigma) \\ 0 & \widetilde{\psi}_{v, 2}(\sigma) \end{array} \right) \]
for all $\sigma \in G_{K_v}$. This will imply the truth of the proposition.

By hypothesis we have $(T + \phi_{v, 2}(g_0) - \phi_{v, 1}(g_0))T = 0$. This implies that $T = \left( \begin{array}{cc} x & 1 \\ 0 & 0 \end{array}\right)$ for some $x \in R$. The relation
\[ (\rho_R(\sigma) - \widetilde{\psi}_{v, 1}(\sigma))(\rho_R(g_0) - \phi_{v, 2}(g_0)) = (\rho_R(\sigma) - \widetilde{\psi}_{v, 1}(\sigma))T = 0 \]
therefore implies that the first column of $\rho_R(\sigma)$ is as claimed. The relation $\det \rho_R(\sigma) = \widetilde{\psi}_{v, 1}(\sigma) \widetilde{\psi}_{v, 2}(\sigma)$ implies that the second column of $\rho_R(\sigma)$ is as claimed.
\end{proof}
\begin{remark}
We could generalize the proposition as follows: suppose given an injection $f : R \to S$ in $\CNL_{\Lambda_v}$, and suppose given homomorphisms $\rho_R : G_{K_v} \to \GL_2(R)$ and $\widetilde{\psi}_{v, 1}, \widetilde{\psi}_{v, 2} : G_{K_v} \to S^\times$ such that $f \circ \rho_R$ satisfies the hypotheses of the proposition. Then $\rho_R$ is of type $\cD_v^\text{ord}$. Indeed, the proposition implies that $f \circ \rho_R \in \cD_v^\text{ord}(S)$, which implies in turn that $\rho_R \in \cD_v^\text{ord}(R)$ (because $\cD_v^\text{ord}$ is representable).
\end{remark}
\begin{proposition}\label{prop_smoothness_of_ordinary_lifting_problem}
$R_v^\text{ord}$ is a formally smooth $\Lambda_v$-algebra of dimension $5 + 3 [K_v : \bbQ_p]$.
\end{proposition}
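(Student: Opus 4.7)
The plan is to exploit the factorisation $\cD' \times \cE \cong \cD_v^{\text{ord}}$ from Proposition \ref{prop_representability_of_ordinary_lifting_problem}. The factor $\cE$ of lower-triangular unipotent matrices in $\widehat{\GL}_2$ is isomorphic to the formal additive group, so its representing ring is $\Lambda_v\llbracket T \rrbracket$, formally smooth of relative dimension $1$ over $\Lambda_v$. Since $\Lambda_v$ itself has Krull dimension $1 + 2[K_v : \bbQ_p]$, the claim reduces to showing that $R_{\cD'}$ is formally smooth over $\Lambda_v$ of relative dimension $3 + [K_v : \bbQ_p]$.

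To analyse $\cD'$, I would parameterise a lift in $\cD'(R)$ by a triple $(\psi_1, \psi_2, a)$, where $\psi_i : G_{K_v} \to R^\times$ satisfies $\psi_i|_{I_{K_v}} = \phi_{v,i}$ (hence is determined by its value on a fixed Frobenius lift $\widetilde{\Frob}_v$), and $a \in Z^1(G_{K_v}, R(\psi_1\psi_2^{-1}))$ lifts $\alpha$. Then I would verify the lifting criterion: given a small surjection $R \twoheadrightarrow R/I$ in $\CNL_{\Lambda_v}$ with $I \cdot \ffrm_R = 0$ and a lift $(\bar\psi_1, \bar\psi_2, \bar a) \in \cD'(R/I)$, first choose any lifts $\psi_i^{(0)}(\widetilde{\Frob}_v) \in R^\times$, determining characters $\psi_i^{(0)}$. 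The obstruction to lifting $\bar a$ to a cocycle over $R$ lies in $H^2(G_{K_v}, I(\bar\chi)) = H^2(G_{K_v}, I)$ (since $\bar\chi = 1$), a module isomorphic to $I$ by Tate local duality given $\mu_p \subset K_v$. A direct calculation shows that modifying $\psi_i^{(0)}(\widetilde{\Frob}_v)$ by $1 + t_i$, $t_i \in I$, alters the obstruction by $(t_1 - t_2)(v \cup \alpha) \in H^2(G_{K_v}, I)$, where $v$ denotes the reduction mod $p$ of the valuation character.

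The main obstacle, and the place the Kummer hypothesis enters essentially, is verifying that $v \cup \alpha$ generates $H^2(G_{K_v}, \bbF_p) \cong \bbF_p$. Under local Tate duality this cup product corresponds to a Hilbert symbol $(u_0, \beta)_p$ for a unit $u_0$ associated to the unramified character $v$; the tame Hilbert symbol formula shows it is nonzero precisely when $v(\beta)$ is coprime to $p$. The unramified subspace $H^1_{\text{unr}}(G_{K_v}, \bbF_p)$ is isotropic but not Lagrangian for the cup product pairing, so ramifiedness of $\alpha$ alone would be insufficient; the very ramified Kummer hypothesis is exactly what makes $v \cup \alpha$ a generator. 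Granted this, an appropriate choice of $t_1 - t_2$ kills the obstruction and produces the lift, and an analogous tangent computation parameterises $\cD'(k[\varepsilon])$ by triples $(y_1, y_2, b)$ subject to $(y_1 - y_2)(v \cup \alpha) = 0$, forcing $y_1 = y_2$; then $b$ runs over $\Hom(G_{K_v}, k)$, of dimension $[K_v : \bbQ_p] + 2$ since $\mu_p \subset K_v$. This gives a tangent dimension of $1 + ([K_v : \bbQ_p] + 2) = 3 + [K_v : \bbQ_p]$, matching the lifting construction and completing the argument.
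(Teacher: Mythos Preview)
Your argument is correct and takes a genuinely different route from the paper. The paper does not use the factorisation $\cD' \times \cE$; instead it first bounds $\dim R_v^{\text{ord}}$ from below by looking at a smooth point on the generic fibre, and then computes the tangent space of $R_v^{\text{ord}}/(\ffrm_{\Lambda_v})$ via a Selmer-style calculation in $H^1(K_v,\ad)$: it introduces the Borel subalgebra $\frb$, the space $\cL_\frb \subset H^1(K_v,\frb)$, pushes to $H^1(K_v,\ad)$, and reduces to showing $(\delta^\vee)^{-1}(H^1_{\text{ur}}(K_v,\frt)^\perp) = 0$ for a certain connecting map $\delta^\vee$. That vanishing is exactly the nonvanishing of the cup product $\alpha \cup \phi$ for $\phi$ the unramified character, i.e.\ $\phi(\Art_{K_v}(\beta)) \neq 0$, which is your $v \cup \alpha \neq 0$.

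Your approach is more direct and arguably more transparent: by splitting off the $\cE$-factor you reduce to the Borel-valued functor $\cD'$ and verify the infinitesimal lifting criterion by hand, so formal smoothness and the dimension count come out simultaneously without needing the separate lower bound. The paper's approach, on the other hand, is phrased in the language of local Selmer conditions $\cL_v \subset H^1(K_v,\ad)$, which plugs directly into the global Euler-characteristic machinery used later (e.g.\ Proposition~\ref{prop_presentation_by_generators_and_relations}). Both arguments pivot on the same Hilbert-symbol computation forced by the hypothesis that $\ord_v(\beta)$ is prime to $p$; your observation that mere ramifiedness of $\alpha$ would not suffice is exactly right and matches the role this hypothesis plays in the paper's proof.
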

\begin{proof}
It is easy to show that $R_v^\text{ord}$ has dimension at least $5 + 3 [K_v : \bbQ_p]$. For example, we can appeal to \cite[Lemma 3.7]{Ger18} to see that $R_v^\text{ord}[1/p]$ is formally smooth of dimension $4 + 3[K_v : \bbQ_p]$ at the closed point corresponding to the Tate module of an elliptic curve with split multiplicative reduction (and Tate parameter $q = \beta$). The key point then is to show that $R_v^\text{ord} / (\ffrm_{\Lambda_v})$ has tangent space of dimension equal to $4 + [K_v : \bbQ_p]$. The ring $R_v^\text{ord} / (\ffrm_{\Lambda_v})$ represents the functor $\CNL_k \to \Sets$ of liftings of $\overline{\rho}$ to homomorphisms $\rho_R : G_{K_v} \to \GL_2(R)$ which are $\widehat{\GL}_2(R)$-conjugate to one of the form 
\[ \rho_R(\sigma) = \left( \begin{array}{cc} \psi_{v, 1}(\sigma) & a(\sigma) \\ 0 & \psi_{v, 2}(\sigma) \end{array}\right)  \]
where the characters $\psi_{v, 1}, \psi_{v, 2}$ are unramified. 

Let $\frb \subset \ad$ denote the subspace of upper-triangular matrices, and let $\frb = \frt \oplus \frn$ denote its direct sum decomposition into diagonal and nilpotent upper-triangular matrices. Then there are short exact sequences of $k[G_{K_v}]$-modules
\[ \xymatrix@1{ 0\ar[r] & \frb \ar[r] & \ad \ar[r] & \ad / \! \frb \ar[r] & 0 } \]
and
\[ \xymatrix@1{ 0 \ar[r] & \frn \ar[r] & \frb \ar[r] & \frt \ar[r] & 0. } \]
Let $\cL_\frb \subset H^1(K_v, \frb)$ denote the pre-image of the unramified subspace of $H^1(K_v, \frt)$. Let $\cL$ denote the image of $\cL_\frb$ inside $H^1(K_v, \ad)$, and let $\cL^1$ denote the pre-image of $\cL$ in $Z^1(K_v, \ad)$ (the space of continuous 1-cocycles). Then the tangent space to $R_v^\text{ord} / (\ffrm_{\Lambda_v})$ is of dimension equal to $\dim_k \cL^1 = \dim_k \cL + 2$. We must therefore show that $\dim_k \cL = 2 + [K_v : \bbQ_p]$.

We fix a basis $e, x, y, f$ of $\ad$ consisting of the matrices
\[ e = \left( \begin{array}{cc} 0 & 1 \\ 0 & 0 \end{array}\right),x = \left( \begin{array}{cc} 1 & 0 \\ 0 & 0 \end{array}\right), y = \left( \begin{array}{cc} 0 &0 \\ 0 & 1 \end{array}\right),f = \left( \begin{array}{cc} 0 & 0 \\1 & 0 \end{array}\right) .\]
The map $H^1(K_v, \frb) \to H^1(K_v, \ad)$ has a 1-dimensional kernel, spanned by the class $\sigma \mapsto \alpha(\sigma)(x-y-\alpha(\sigma)e)$. Since we have assumed that $\alpha$ is ramified, this does not intersect $\cL_\frb$ (in fact, we have already used essentially the same fact in the proof of Proposition \ref{prop_representability_of_ordinary_lifting_problem}). We thus have $\dim_k \cL = \dim_k \cL_\frb$. On the other hand, we have
\[ \dim_k \cL_\frb = -1 + h^1(K_v, \frn) + \dim_k( \ker \delta \cap H^1_\text{ur}(K_v, \frt) ), \]
where $\delta : H^1(K_v, \frt) \to H^2(K_v, \frn)$ is the connecting homomorphism. Using Tate duality, we can identify the dual of $\delta$ with the connecting homomorphism $\delta^\vee : H^0(K_v, \ad / \frb) \to H^1(K_v, \frt)$ attached to the exact sequence
\[ \xymatrix@1{ 0 \ar[r] & \frt \ar[r] & \ad / \frn \ar[r] & \ad / \frb \ar[r] & 0. } \]
We have a formula
\[ \dim_k (\ker \delta \cap H^1_\text{ur}(K_v, \frt)) = \dim_k (\delta^\vee)^{-1}(H^1_\text{ur}(K_v, \frt)^\perp) + \dim_k H^1_\text{ur}(K_v, \frt) - \dim_k H^2(K_v, \frn). \]
Putting these formulae together, and using Tate's Euler characteristic formula, we get an equation
\begin{align*} \dim_k \cL &= -1 + h^1(K_v, \frn) - h^2(K_v, \frn) + \dim_k (\delta^\vee)^{-1}(H^1_\text{ur}(K_v, \frt)^\perp) + \dim_k H^1_\text{ur}(K_v, \frt) \\ 
&= 2 + [K_v : \bbQ_p] + \dim_k (\delta^\vee)^{-1}(H^1_\text{ur}(K_v, \frt)^\perp). \end{align*}
The proof will thus be complete if we can show that $\dim_k (\delta^\vee)^{-1}(H^1_\text{ur}(K_v, \frt)^\perp) = 0$. The 1-dimensional image of $\delta^\vee$ is spanned by the class of the cocycle $\sigma \mapsto \alpha(\sigma)(x-y)$, so it is equivalent to ask that $\alpha \cup \phi \neq 0$, where $\phi : G_{K_v} \to k$ is a non-trivial unramified character, or (using the explicit description of the cup product in terms of the Artin map) that $\phi(\Art_{K_v}(\beta)) \neq 0$. Since $\beta$ has valuation not divisible by $p$, by hypothesis, we see that this is non-trivial. This concludes the proof of the proposition.
\end{proof}

\subsubsection{Level-raising deformation problems}\label{subsubsec_level_raising_deformations}

Let $v$ be a finite place of $K$ such that $q_v \equiv 1 \text{ mod } p$ and $\overline{\rho}|_{G_{K_v}}$ is trivial. Let $\Lambda_v = \cO$. Given characters $\chi_{v, 1}, \chi_{v, 2} : k(v)^\times \to \cO^\times$ which are trivial mod $(\varpi)$, we define $\cD_v^{\chi_v}$ to be the functor of liftings $\rho_R : G_{K_v} \to \GL_2(R)$ of $\overline{\rho}|_{G_{K_v}}$ such that for all $\sigma \in I_{K_v}$, the characteristic polynomial of $\rho_R(\sigma)$ equals
\[ (X - \chi_{v, 1}(\Art_{K_v}^{-1}(\sigma))) (X - \chi_{v, 2}(\Art_{K_v}^{-1}(\sigma))). \]
The following proposition needs no proof.
\begin{proposition}
The functor $\cD_v^{\chi_v}$ is a local deformation problem.
\end{proposition}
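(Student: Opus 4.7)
The plan is to verify the three properties required by the definition of a local deformation problem: (i) $\cD_v^{\chi_v}$ is a subfunctor of $\Def_v^\square$, (ii) it is invariant under the $\widehat{\GL}_2$-conjugation action, and (iii) it is representable.

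For (i) and (ii), both are essentially formal. The defining condition is that the coefficients of the characteristic polynomial of $\rho_R(\sigma)$, for each $\sigma \in I_{K_v}$, are prescribed elements of $R$ (obtained from $\chi_{v,1}, \chi_{v,2}$ via the structure map $\cO \to R$). Functoriality in $R$ is immediate because the characteristic polynomial is compatible with base change, and conjugation invariance is immediate because the characteristic polynomial is a conjugation invariant of matrices. So the main content is (iii).

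For representability, I would argue as follows. Let $R_v^\square \in \CNL_{\Lambda_v}$ be the (unrestricted) universal lifting ring representing $\Def_v^\square$, with universal lifting $\rho^\square : G_{K_v} \to \GL_2(R_v^\square)$. For each $\sigma \in I_{K_v}$, write
\[ \det(X - \rho^\square(\sigma)) = X^2 - t_\sigma X + d_\sigma, \qquad t_\sigma, d_\sigma \in R_v^\square, \]
and let
\[ t_\sigma^0 = \chi_{v,1}(\Art_{K_v}^{-1}(\sigma)) + \chi_{v,2}(\Art_{K_v}^{-1}(\sigma)), \qquad d_\sigma^0 = \chi_{v,1}(\Art_{K_v}^{-1}(\sigma)) \chi_{v,2}(\Art_{K_v}^{-1}(\sigma)), \]
viewed in $R_v^\square$ through the structure map $\cO \to R_v^\square$. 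Let $I \subset R_v^\square$ denote the closed ideal generated by the elements $t_\sigma - t_\sigma^0$ and $d_\sigma - d_\sigma^0$ for all $\sigma \in I_{K_v}$. Then I claim that $R_v^\square/I$ represents $\cD_v^{\chi_v}$: a lifting $\rho_R$ corresponds to a map $R_v^\square \to R$ in $\CNL_{\Lambda_v}$, and this map factors through $R_v^\square/I$ if and only if the images of $t_\sigma - t_\sigma^0$ and $d_\sigma - d_\sigma^0$ vanish for every $\sigma \in I_{K_v}$, which is precisely the defining condition of $\cD_v^{\chi_v}$.

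There is no real obstacle here; the only minor point to flag is that the ideal $I$ is a priori generated by a possibly infinite collection of elements indexed by $I_{K_v}$, so one should take it to be the closed ideal they generate (which is legitimate in $\CNL_{\Lambda_v}$, since $R_v^\square$ is complete Noetherian local). By Noetherianity this closed ideal is in fact finitely generated, so $R_v^\square/I \in \CNL_{\Lambda_v}$. Combined with (i) and (ii), this shows that $\cD_v^{\chi_v}$ is a local deformation problem.
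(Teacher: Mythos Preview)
Your proof is correct. The paper itself simply states ``The following proposition needs no proof,'' treating the result as evident; your argument spells out exactly the routine verification (conjugation-invariance of characteristic polynomials, and representability by the closed quotient $R_v^\square/I$) that justifies this.
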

We write $R_v^{\chi_v}$ for the representing object. We note that the ring  $R_v^{\chi_v} / (\varpi)$ is canonically independent of $\chi_v = (\chi_{v, 1}, \chi_{v, 2})$ (because the restriction of the functor $\cD_v^{\chi_v}$ to $\CNL_k \subset \CNL_\cO$ is independent of the choice of $\chi_v$).

In order to study the properties of the rings $R_v^{\chi_v}$, it is helpful to introduce as well a finite type model. Let us write $\cM$ for the closed subscheme of $(\GL_2 \times \GL_2)_\cO$ consisting of pairs of matrices $(P, S)$ satisfying the relation $P S = S^{q_v} P$. Fix a generator $\sigma \in k(v)^\times$, and let $\cM^{\chi_v} \subset \cM$ denote the closed subscheme where the characteristic polynomial of $S$ equals
\[  (X - \chi_{v, 1}(\sigma))(X - \chi_{v, 2}(\sigma)). \]
\begin{lemma}\label{lem_ihara_avoidance_ring_1}
Assume $p = 2$. Let $\overline{R}_v^1$ denote the reduced quotient of $R_v^1$. Then:
\begin{enumerate}
\item $\overline{R}_v^1$ is $\cO$-flat and equidimensional of dimension 5 and $\overline{R}_v^1/(\varpi)$ is generically reduced.
\item $R_v^1$ has two distinct minimal primes $Q_1$, $Q_2$. Both quotients $R_v^1 / (Q_1, \varpi)$ and $R_v^1 / (Q_2, \varpi)$ have irreducible spectrum of dimension 4. 
\item We have $\dim R_v^1 / (\varpi, Q_1, Q_2) = 3$. In particular, each minimal prime of $R_v^1$ minimal over $\varpi$ contains a unique minimal prime of $R_v^1$. 
\end{enumerate}
\end{lemma}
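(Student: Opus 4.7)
The plan is to analyse the finite-type affine $\cO$-scheme $\cM^1$ defined above the lemma, identify its irreducible components explicitly, and transfer the conclusions to $R_v^1$ via completion at the trivial point $(P, S) = (I, I)$.

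First I simplify the defining equations. Writing $P = I + X$ and $S = I + Y$, the characteristic polynomial condition (with $\chi_v = 1$) forces $\tr(Y) = 0$ and $\det(Y) = 0$, and so Cayley--Hamilton gives $Y^2 = 0$ in the quotient. Consequently $(I + Y)^{q_v} = I + q_v Y$, and the tame relation $PS = S^{q_v} P$ simplifies to $PY - q_v YP = 0$, equivalently $[P, Y] = (q_v - 1) YP$. Since $q_v \equiv 1 \pmod 2$, we have $q_v - 1 \in \varpi\cO$, so the relation reduces to $[X, Y] = 0$ modulo $\varpi$.

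Next I exhibit two irreducible $\cO$-flat subschemes whose union is $\cM^1$. The first is $\cM_1 = \{S = I\} \cong \GL_{2,\cO}$, smooth of dimension $5$. The second, $\cM_2$, is the Zariski closure of the open locus $\{(P, S) : S \neq I\}$; parametrising via $T = S - I$, this open part fibres over the pointed nilpotent variety $\cN^\ast = \{T \in M_2 : T^2 = 0,\, T \neq 0\}$ (of relative dimension $2$ over $\cO$) with fibres $Z_T = \{P \in \GL_2 : PT = q_v TP\}$ (of relative dimension $2$), so $\cM_2$ is irreducible and $\cO$-flat of dimension $5$. Set-theoretically $\cM^1 = \cM_1 \cup \cM_2$, and the intersection $\cM_1 \cap \cM_2$ is the locus $\{(P, 0) : P$ preserves a line with eigenvalues in ratio $q_v : 1\}$, of total dimension $4$ and hence mod-$\varpi$ dimension $3$.

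Let $Q_1, Q_2 \subset R_v^1$ be the primes obtained by pulling back the ideals of $\cM_1, \cM_2$ along the completion map. I would then show that $Q_1, Q_2$ are the only minimal primes of $R_v^1$, that each $R_v^1 / Q_i$ is an $\cO$-flat domain of dimension $5$, and that $R_v^1 / (\varpi, Q_i)$ is irreducible of dimension $4$. Part (1) follows, with generic reducedness of $\overline{R}_v^1/(\varpi)$ checked by a tangent-space computation at a generic point of each $\cM_i \otimes k$ chosen away from the intersection. Part (3) is a direct dimension count, and its last sentence follows by observing that any prime of $R_v^1$ containing $Q_1 + Q_2 + (\varpi)$ has quotient dimension at most $3$, so cannot be minimal over $\varpi$ in a ring of dimension $5$.

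The main technical obstacle is the transfer from the finite-type scheme $\cM^1$ to the complete local ring $R_v^1$: I must verify that each $\cM_i$ remains irreducible upon completion at the (singular) point $(I, I)$, so that each $Q_i$ is prime rather than merely radical. This will follow from the excellence of finite-type $\cO$-algebras combined with $\cM_i$ being smooth along a Zariski-dense open subset meeting every neighborhood of $(I, I)$; for $\cM_2$ one additionally needs geometric irreducibility of $\cM_2 \otimes k$, which follows from the fibre-bundle structure over $\cN^\ast \otimes k$ since both $\cN^\ast \otimes k$ and each fibre $Z_T \otimes k$ are geometrically irreducible.
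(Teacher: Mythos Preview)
Your overall strategy matches the paper's: work with the finite-type model $\cM^1$, exhibit the two components $\cM_1 = \{S = I\}$ and $\cM_2 = \overline{\{S \neq I\}}$, compute their intersection, and then pass to the completion at $(I,I)$. Your identification of the components, their dimensions, and the description of $\cM_1 \cap \cM_2$ as the locus of $P$ with eigenvalue ratio $q_v$ (hence $\tr P = 0$ on the special fibre) are all correct, and agree with what the paper establishes by citing \cite{Tho12}.

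The gap is in the final paragraph. The implication ``excellence $+$ dense smooth locus $+$ geometric irreducibility $\Rightarrow$ irreducible completion'' is false: the nodal cubic $y^2 = x^2(x+1)$ over $k$ is smooth away from the origin, geometrically integral, and excellent, yet its completion at the node has two minimal primes. Geometric irreducibility of $\cM_2 \otimes k$ (which your fibre-bundle argument does establish) simply does not control the formal branch structure at the singular point $(I,I)$, which lies in the codimension-one singular locus $\{T = 0\}$ of $\cM_{2,k}$. What is needed is analytic irreducibility (unibranchness) at that specific point, and nothing in your argument addresses this. The paper handles it by writing down the coordinate ring of $\cM_{2,k}$ explicitly as
\[
R = k[A,B,C,X,Y,Z]/(X^2 + YZ,\, BZ + CY),
\]
observing that the defining ideal is homogeneous and that $(I,I)$ corresponds to the irrelevant ideal; one then checks $R$ is a domain (equivalently $\operatorname{Proj} R$ is integral), and for a positively graded domain the completion at the irrelevant ideal is again a domain because its associated graded ring is $R$ itself. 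This graded structure is the missing ingredient.
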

\begin{proof}
\cite[Lemma 3.15]{Tho12} shows that $\cM^1$ has two irreducible components $Z_1, Z_2$, where $Z_1$ is defined by the relation $S = 1$, and $Z_2$ is the closure of the open subscheme defined by the relation $S \neq 1$; that these components (with their reduced scheme structure) are $\cO$-flat of dimension 5; and that the reduced subschemes underlying the special fibres $Z_{1, k}$ and $Z_{2, k}$ are the distinct irreducible components of $\cM^1_k$, and are generically reduced. Note that $Z_1$ is smooth over $\cO$, while $Z_2$ is smooth over $\cO$ away from the closed subscheme $Z_{1, k} \cap Z_{2, k} \subset Z_{2, k}$. In particular, if $\overline{\cM}^1$ denotes the nilreduction of $\cM^1$, then $\overline{\cM}^1_k$ is generically reduced.

The ring $R_v^1$ can be identified with the completed local ring of $\cM^1$ at the point $\overline{x} \in \cM^1(k)$ corresponding to the matrices $P = S = 1$. 
Then $\overline{R}_v^1/(\varpi)$ is generically reduced by \cite[\href{https://stacks.math.columbia.edu/tag/033A}{Tag 033A}]{stacks-project}. 
To prove the remainder of the lemma, we need to check that $Z_1, Z_2$, and $Z_{1, k}$ and $Z_{2, k}$ are still irreducible after passage to completion. By \cite[Lemma 2.7]{Tay08} each prime of $R_v^1$ minimal over $\varpi$ is contained in a unique minimal prime, and each minimal prime is contained in a prime minimal over $\varpi$. If we can show that $Z_{1, k}$ and $Z_{2, k}$ are irreducible after passage to completion then so too will be $Z_1$ and $Z_2$.

Since $Z_{1, k}$ is smooth over $k$, it has the desired property. We verify the analogous statement for $Z_{2, k}$ by direct computation. Let $S = \cO[ A, B, C, D, X, Y, Z, W]$. Then $\cM^1_k$ may be identified with spectrum of the ring
\[ S_0 = S / (\varpi, BZ - CY, (A - D)Y - B(X-W), (A-D)Z - C(X-W), X^2 + YZ, W^2 + YZ, (X+W)Y, (X+W)Z)[\det(P)^{-1}] \]
where we write
\[  P = 1 + \left( \begin{array}{cc} A& B \\ C & D \end{array}\right), S = 1 + \left( \begin{array}{cc} X & Y \\ Z & W \end{array}\right). \]
Let $\mathfrak{p}$ denote the prime ideal of $S_0$ corresponding to $Z_{2, k}$. Then $Y, Z \not\in \mathfrak{p}$ (as we can write down points of $Z_{2, k}$ with non-zero $Y$ and $Z$ co-ordinates), so (using that the characteristic of $k$ is 2) we get $X+W, A + D \in \mathfrak{p}$. We see that $Z_{2, k}$ can be identified with the open subscheme of $\Spec R$ obtained by inverting $(A^2 + BC + 1)$, where now
\[  R = k[A, B, C, X, Y, Z] / ( X^2 + YZ ,  BZ + CY). \]
The point $\overline{x}$ corresponds to the ideal $(A, B, C, X, Y, Z)$. By \cite[Corollary 5.5]{Eis95}, the completion of $Z_{2, k}$ at the point $\overline{x}$ will be integral if $R$ is an integral domain. To show this, it suffices to show that $\operatorname{Proj} R$ is integral, and this is easy to check directly. 
\end{proof}

In the statement of the next proposition, we write $c(R)$ for the connectedness dimension of a complete Noetherian local $\cO$-algebra as in \cite[Definition 1.7]{Tho15}. 
It is defined by
\[  c(R) = \inf_{\cC_1, \cC_2} \{ \dim_{Z_1 \in \cC_1, Z_2 \in \cC_2} Z_1 \cap Z_2\}, \]
where the infimum is over all partitions $\cC_1 \sqcup \cC_2$ of the set of irreducible components of $\Spec R$ into two non-empty subsets.

\begin{proposition}\label{prop_connectedness_dimension_of_local_lifting_ring}
	Suppose that $p = 2$. Suppose given a finite set $R = \{ v_1, \dots, v_r \}$ of finite places of $K$ such that for each $v \in R$, $q_v \equiv 1 \text{ mod }p$ and $\overline{\rho}|_{G_{K_v}}$ is trivial. Choose for each $v \in R$ characters $\chi_{v, 1}, \chi_{v, 2} : k(v)^\times \to \cO^\times$ which are trivial mod $(\varpi)$.  Let $A_R = \widehat{\otimes}_{v \in R} R_v^{\chi_v}$, the completed tensor product being over $\cO$. Then $c(A_R / (\varpi)) = 4 |R| - 1$.
\end{proposition}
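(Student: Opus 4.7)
The key simplification is that $R_v^{\chi_v}/(\varpi)$ is canonically independent of $\chi_v$, so $A_R/(\varpi) \cong \widehat{\otimes}_{v \in R,\, k} B_v$ where $B_v := R_v^1/(\varpi)$. The plan is then to use the structure of each $B_v$ provided by Lemma \ref{lem_ihara_avoidance_ring_1} to describe the irreducible components of the global ring, and then to read off the connectedness dimension combinatorially from the hypercube.

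First, Lemma \ref{lem_ihara_avoidance_ring_1} gives for each $v \in R$ exactly two minimal primes $P_{v,1}, P_{v,2}$ of $B_v$ (the images of $Q_1, Q_2$ modulo $\varpi$), with $\dim B_v/P_{v,i} = 4$ and $\dim B_v/(P_{v,1} + P_{v,2}) = 3$. I would then verify that each component $\Spec B_v/P_{v,i}$ is geometrically integral over $k$: one component is formally smooth (hence a power series ring over $k$, automatically geometrically integral), and the other is handled via the explicit presentation $k[A,B,C,X,Y,Z]/(X^2 + YZ, BZ + CY)$ appearing in the proof of that lemma, where the required geometric integrality follows from the same direct check that yielded integrality there. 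This geometric integrality ensures the completed tensor product behaves as expected: the minimal primes of $A_R/(\varpi)$ are indexed by $\epsilon = (\epsilon_v) \in \{1,2\}^R$, with $P_\epsilon$ the ideal generated by the $P_{v,\epsilon_v}$, giving $2^{|R|}$ distinct irreducible components $Z_\epsilon$ each of dimension $4|R|$. A direct dimension count yields, for distinct $\epsilon, \epsilon' \in \{1,2\}^R$ with disagreement set $T = \{v : \epsilon_v \neq \epsilon_v'\}$,
\[ \dim(Z_\epsilon \cap Z_{\epsilon'}) \;=\; 3|T| + 4(|R| - |T|) \;=\; 4|R| - |T|. \]

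To compute the connectedness dimension, I would view $\{1,2\}^R$ as the vertex set of the $|R|$-dimensional hypercube graph, with edges joining pairs differing in exactly one coordinate. Since this graph is connected, every partition $\cC_1 \sqcup \cC_2$ of the components into two nonempty subsets admits a crossing edge, yielding a pair $(\epsilon, \epsilon') \in \cC_1 \times \cC_2$ with $|T| = 1$ and thus $\dim(Z_\epsilon \cap Z_{\epsilon'}) = 4|R| - 1$. Conversely any two distinct components have $|T| \geq 1$, so their intersection has dimension at most $4|R| - 1$. Hence the maximum of $\dim(Z_\epsilon \cap Z_{\epsilon'})$ over crossing pairs equals $4|R|-1$ for every nontrivial partition, and taking the infimum over partitions gives $c(A_R/(\varpi)) = 4|R| - 1$. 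The main obstacle is the initial bookkeeping with completed tensor products of local $k$-algebras, specifically justifying the enumeration of minimal primes of $\widehat{\otimes}_{v,k} B_v$ and the dimension of their pairwise sums; once the geometric integrality of the local components $B_v/P_{v,i}$ is established, the combinatorial step is immediate.
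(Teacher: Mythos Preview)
Your proposal is correct and follows essentially the same approach as the paper's proof. The paper reduces to $\chi_v = 1$, invokes Lemma~\ref{lem_ihara_avoidance_ring_1} together with \cite[Lemma~1.4]{Tho15} to obtain the bijection between minimal primes of $A_R/(\varpi)$ and $\{1,2\}^r$ (and implicitly the intersection dimensions), and then uses the same hypercube-connectedness observation to find adjacent $I,J$ across any partition with $\dim Z_I \cap Z_J = 4(|R|-1)+3 = 4|R|-1$. The only difference is that where you sketch the geometric integrality argument to justify the enumeration of minimal primes and the dimension of their pairwise sums in the completed tensor product, the paper simply cites \cite[Lemma~1.4]{Tho15}, which packages exactly this bookkeeping; your identification of this step as the ``main obstacle'' is accurate, and the citation is the cleanest way to dispatch it.
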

\begin{proof}
	Since $A_R / (\varpi)$ depends only on the reduction modulo $\varpi$ of the characters $\chi_{v, i}$, we can assume that $\chi_{v, 1} = \chi_{v, 2} = 1$ for each $v \in R$. By Lemma \ref{lem_ihara_avoidance_ring_1} and \cite[Lemma 1.4]{Tho15}, there is a bijection between the minimal primes of $A_R / (\varpi)$ and the set $\{ 1, 2 \}^r$. Suppose given a decomposition $\{ 1, 2 \}^r = S_1  \sqcup S_2$ into two disjoint non-empty subsets. Then there exists $I \in S_1$, $J \in S_2$ such that $I$ and $J$ differ in exactly one entry. If $Z_I, Z_J$ denote the corresponding irreducible components, then we have $\dim Z_I \cap Z_J = 4(|R| - 1) + 3 = 4 |R| - 1$. This implies that $c(A_R / (\varpi)) = 4|R| - 1$.
\end{proof}
\begin{lemma}\label{lem_ihara_avoidance_ring_chi}
Suppose that $\chi_{v, 1} \neq \chi_{v, 2}$. Let $\overline{R}_v^{\chi_v}$ denote the underlying reduced quotient of $R^{\chi_v}_v$. Then $\overline{R}_v^{\chi_v}$ is an $\cO$-flat domain of dimension 5. Moreover, $R_v^{\chi_v}[1/p]$ is formally smooth over $\cO[1/p]$.
\end{lemma}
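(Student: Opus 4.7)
The plan is to follow the structure of Lemma \ref{lem_ihara_avoidance_ring_1}, using the finite-type model $\cM^{\chi_v}$ introduced above and identifying $R_v^{\chi_v}$ with the completion of its local ring at the point $\overline{x}$ where $P = S = 1$. Set $a = \chi_{v,1}(\sigma)$, $b = \chi_{v,2}(\sigma)$; these are distinct elements of $1 + \ffrm_\cO$, and since $\sigma \in k(v)^\times$ has order $q_v - 1$, both satisfy $x^{q_v} = x$. Consequently, on the generic fibre $\cM^{\chi_v}_E$ (with $E = \cO[1/p]$) the defining relation $PS = S^{q_v} P$ simplifies to $PS = SP$, and since $S$ has distinct eigenvalues $a, b$ it is regular semisimple, so $P$ lies in its $2$-dimensional centralizer torus. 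This gives $\cM^{\chi_v}_E \cong (\GL_{2,E}/T) \times T$, smooth and geometrically irreducible of dimension $4$ over $E$.

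The special fibre $\cM^{\chi_v}_k$ coincides with $\cM^1_k$ since $a \equiv b \equiv 1 \bmod \varpi$, hence by Lemma \ref{lem_ihara_avoidance_ring_1} it is equidimensional of dimension $4$ with two generically reduced components $Z_{1,k}, Z_{2,k}$. I would then argue that $\cM^{\chi_v}$ is $\cO$-flat of dimension $5$ and that its reduction is integral: the unique $\cO$-flat component arising from the geometrically irreducible generic fibre has dimension $5$, and both $Z_{i,k}$ must appear in the closure of $\cM^{\chi_v}_E$ for dimension reasons (they can also be exhibited directly as specialisations of explicit families in $\cM^{\chi_v}_E$), leaving no room for extra components concentrated in the special fibre. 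Passing to the completed local ring at $\overline{x}$ exactly as in Lemma \ref{lem_ihara_avoidance_ring_1}, using \cite[Lemma 2.7]{Tay08} to track minimal primes, together with a coordinate computation analogous to the treatment of $Z_{2,k}$ there to verify that integrality survives completion, I would conclude that $\overline{R}_v^{\chi_v}$ is an $\cO$-flat integral domain of dimension $5$.

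For the formal smoothness of $R_v^{\chi_v}[1/p]$ over $\cO[1/p]$, it is enough to check at each $\overline{\bbQ}_p$-valued point $\rho_E$ that the tangent space of $\cD_v^{\chi_v}$ has dimension $4$. The characteristic polynomial condition forces $\rho_E|_{I_{K_v}}$ to be a direct sum of two distinct characters $\psi_1, \psi_2$ with $\psi_i(\sigma) \in \{a, b\}$; since $\psi_i(\sigma)^{q_v} = \psi_i(\sigma)$, geometric Frobenius preserves the eigenspace decomposition, and wild inertia (which must act unipotently on each $1$-dimensional summand) acts trivially, so $\rho_E$ is tame and decomposes globally as $\psi'_1 \oplus \psi'_2$ with $\psi'_1 \neq \psi'_2$. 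For such $\rho_E$ one has $h^0(G_{K_v}, \ad \rho_E) = 2$, hence (by Tate duality and the Euler characteristic formula at $v \nmid p$) $h^1 = 4$. The framed tangent space has dimension $4 + h^1 - h^0 = 6$, and the two conditions from fixing the characteristic polynomial of $\rho_R(\sigma)$ are linearly independent (because $a \neq b$), cutting this down to $4$, which matches the expected Krull dimension.

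The main obstacle is the verification in paragraph two that both irreducible components of $\cM^1_k$ are limits of points of $\cM^{\chi_v}_E$, and more importantly that integrality of $\overline{\cM}^{\chi_v}$ is preserved after completing at the specific singular point $\overline{x}$; this requires an explicit coordinate-level computation in the style of the analysis of $Z_{2,k}$ in the proof of Lemma \ref{lem_ihara_avoidance_ring_1}.
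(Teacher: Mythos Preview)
The paper does not give a proof; it simply cites \cite[Proposition 3.15]{Tho15}. Your direct approach via the finite-type model $\cM^{\chi_v}$ is therefore more explicit than what appears here, and the overall structure (analyse generic and special fibres separately, then pass to the completion) is the natural one and is essentially what the cited reference carries out. Your acknowledged gap about integrality surviving completion at $\overline{x}$ is real but is handled in the reference by an explicit coordinate analysis of the same flavour as your treatment of $Z_{2,k}$.

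Your formal smoothness argument, however, contains a pair of compensating errors. You assert $h^1(G_{K_v}, \ad \rho_E) = 4$, but in fact $h^2 = h^0(G_{K_v}, \ad \rho_E(1)) = 0$: the character $\epsilon$ is unramified and nontrivial on $G_{K_v}$ in characteristic $0$ (since $\epsilon(\Frob_v) = q_v \neq 1$ in $\overline{\bbQ}_p$), while $\psi_1'/\psi_2'$ is ramified because $\chi_{v,1} \neq \chi_{v,2}$, so none of the four one-dimensional summands of $\ad \rho_E(1)$ is the trivial character. Hence $h^1 = h^0 + h^2 = 2$ and $\dim Z^1 = 4$, not $6$. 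Correspondingly, the two characteristic-polynomial conditions you impose are \emph{vacuous} at $\rho_E$: in the diagonalising basis, the diagonal entries $\phi_{11}, \phi_{22}$ of any $\phi \in Z^1(G_{K_v}, \ad \rho_E)$ lie in $\Hom_{\mathrm{cts}}(G_{K_v}, E)$, and since $v \nmid p$ every such homomorphism is unramified, so $\phi_{ii}|_{I_{K_v}} = 0$ automatically. The tangent space of $\cD_v^{\chi_v}$ at $\rho_E$ is therefore all of $Z^1$, already of dimension $4$ with nothing to cut down. Your conclusion is correct, but the mechanism is not the one you describe.
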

\begin{proof}
This is part of \cite[Proposition 3.15]{Tho15}.
\end{proof}

\begin{proposition}\label{prop_irreducibility_of_completed_local_lifting_ring}
Suppose given a finite set $R = \{ v_1, \dots, v_r \}$ of finite places of $K$ such that for each $v \in R$, $q_v \equiv 1 \text{ mod }p$ and $\overline{\rho}|_{G_{K_v}}$ is trivial. Choose for each $v \in R$ characters $\chi_{v, 1}, \chi_{v, 2} : k(v)^\times \to \cO^\times$ which are trivial mod $(\varpi)$. Let $A_R = \widehat{\otimes}_\cO R_v^{\chi_v}$. Let $\frp \subset A_R \llbracket X_1, \dots, X_g \rrbracket$ be a prime ideal of dimension 1 and characteristic $p$, and let $A = A_R \llbracket X_1, \dots, X_g \rrbracket / \frp$. Suppose that for each $v \in R$, the induced homomorphism $r_\frp : G_{K_v} \to \GL_2(A)$ is trivial. Let $A'_R = A_R \llbracket X_1, \dots, X_g \rrbracket_\frp$ (localization and completion, cf. \S \ref{subsec_notation}).
\begin{enumerate}
\item Suppose that for each $v \in R$, $\chi_{v, 1} = \chi_{v, 2} = 1$. Then each minimal prime of $A'_R$ is of characteristic 0, and each prime of $A'_R$ minimal over $(\varpi)$ contains a unique minimal prime of $A'_R$.
\item  Suppose that for each $v \in R$, $\chi_{v, 1} \neq \chi_{v, 2}$. Then $A'_R$ contains a unique minimal prime, which is of characteristic 0.
\end{enumerate}
\end{proposition}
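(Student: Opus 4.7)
The plan is to analyze both cases by combining the structural descriptions of the individual local lifting rings given by Lemmas \ref{lem_ihara_avoidance_ring_1} and \ref{lem_ihara_avoidance_ring_chi} with Lemma \ref{lem_primes_and_supports}, which compares primes of a ring with those of its completed localization. The hypothesis that the representation $r_\frp$ is locally trivial at each $v \in R$ ensures that $\frp$ lies over the most degenerate point of each local lifting ring, so that all relevant minimal primes of $A_R$ pass through $\frp \cap A_R$.

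As a preliminary step I reduce to $g = 0$: the extension $A_R \to A_R\llbracket X_1, \ldots, X_g \rrbracket$ is faithfully flat, so the extension of each minimal prime of $A_R$ remains a minimal prime with the same characteristic, and the property that every prime minimal over $\varpi$ contains a unique minimal prime is preserved. After this reduction, it suffices to understand minimal primes of $A_R$ contained in $\frp \cap A_R$, and primes of $A_R$ minimal over $\varpi$ and contained in $\frp \cap A_R$, and then to transfer to $A'_R$ via Lemma \ref{lem_primes_and_supports}.

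For case (2), Lemma \ref{lem_ihara_avoidance_ring_chi} gives that $\overline{R}_v^{\chi_v}$ is an $\cO$-flat integral domain of dimension $5$ with $R_v^{\chi_v}[1/p]$ formally smooth (hence regular, hence reduced) over $\cO[1/p]$; combined with the $\cO$-flatness of $\overline{R}_v^{\chi_v}$, this shows $R_v^{\chi_v}$ has a unique minimal prime of characteristic $0$. The completed tensor product $A_R[1/p]$ is a completed tensor product of formally smooth $\cO[1/p]$-algebras, hence formally smooth (and so regular) over $\cO[1/p]$; the geometric integrality of each generic fibre $R_v^{\chi_v}[1/p]$, established in \cite{Tho12}, then implies $A_R[1/p]$ is an integral domain, so $A_R$ has a unique minimal prime of characteristic $0$. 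Passing to $A'_R$ further requires analytic irreducibility of $A_R\llbracket X_1, \ldots, X_g \rrbracket$ at $\frp$, which follows from the explicit presentation of each $R_v^{\chi_v}$ as a completion of the matrix scheme $\cM^{\chi_v}$ in \cite{Tho12, Tay08}.

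For case (1), Lemma \ref{lem_ihara_avoidance_ring_1} gives that $R_v^1$ has two $\cO$-flat minimal primes $Q_{v,1}, Q_{v,2}$ (corresponding to the components $Z_1$ and $Z_2$ of $\cM^1$), each prime of $R_v^1$ minimal over $\varpi$ contains a unique minimal prime of $R_v^1$, and $\overline{R}_v^1/(\varpi)$ is generically reduced. As in the proof of Proposition \ref{prop_connectedness_dimension_of_local_lifting_ring}, $A_R$ then has $2^{|R|}$ minimal primes, all $\cO$-flat, and the generic reducedness of each $R_v^1/(\varpi)$ transfers so that every prime of $A_R$ minimal over $\varpi$ contains a unique minimal prime. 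By Lemma \ref{lem_primes_and_supports}, each minimal prime of $A'_R$ pulls back to one of these $\cO$-flat (hence characteristic $0$) minimal primes of $A_R\llbracket X_1, \ldots, X_g \rrbracket$, and the uniqueness-over-$\varpi$ property transfers analogously. The main obstacle in either case is propagating geometric properties (irreducibility in case (2), the bijection of minimal primes with $\{1,2\}^{|R|}$ in case (1), and generic reducedness of fibres) through the completed tensor product and then through the completion step at $\frp$, since completed tensor products need not in general preserve these; this is handled using the explicit matrix-scheme presentations of the local lifting rings from \cite{Tho12, Tay08}.
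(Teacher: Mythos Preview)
Your proposal correctly identifies the structural inputs (Lemmas \ref{lem_ihara_avoidance_ring_1} and \ref{lem_ihara_avoidance_ring_chi}) and the role of the hypothesis that $r_\frp$ is locally trivial, but there is a genuine gap in how you pass from $A_R\llbracket X_1,\dots,X_g\rrbracket$ to its completion $A'_R$ at $\frp$.

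First, the ``reduction to $g=0$'' is not well-formed: $\frp$ is a prime of $A_R\llbracket X_1,\dots,X_g\rrbracket$, not of $A_R$, and $A'_R$ is the completion at $\frp$, so one cannot simply strip off the power series variables. More seriously, Lemma \ref{lem_primes_and_supports} only says that the map from minimal primes of $A'_R$ to minimal primes of $A_R\llbracket X_1,\dots,X_g\rrbracket$ contained in $\frp$ is \emph{surjective}; it says nothing about injectivity. Completion at a non-maximal prime can split an irreducible component into several analytic branches. Thus in case (2), knowing that $A_R\llbracket X_1,\dots,X_g\rrbracket$ has a unique minimal prime does not, via Lemma \ref{lem_primes_and_supports} alone, give a unique minimal prime in $A'_R$. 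Likewise in case (1), the property ``each prime minimal over $\varpi$ contains a unique minimal prime'' does not transfer under completion by this lemma. You acknowledge that analytic irreducibility is needed, but the appeal to ``explicit matrix-scheme presentations'' does not supply it: those presentations establish irreducibility at the closed point of each $R_v^{\chi_v}$, not at the mixed prime $\frp$ in the power series extension.

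The paper handles the completion step with specific tools from \cite{Tho15}. In case (1), generic reducedness of $\overline{A}_R/(\varpi)$ (from Lemma \ref{lem_ihara_avoidance_ring_1} and \cite[Lemma 1.4]{Tho15}) feeds into \cite[Proposition 1.6]{Tho15}, which is precisely a criterion for transferring the desired properties through completion. In case (2), the paper shows that $A_R \to A'_R$ is a regular ring map, so $A'_R[1/p]$ is regular (since $A_R[1/p]$ is); it then uses the fact that $\frp$ restricts to $\ffrm_{A_R}$ to identify $\frp$ with $(\ffrm_{A_R},\frq)$ for a prime $\frq \subset \cO\llbracket X_1,\dots,X_g\rrbracket$, and applies \cite[Proposition 3.16]{Tho15} iteratively to prove $\Spec A'_R[1/p]$ is connected. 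Regular plus connected gives a domain. These connectedness and generic-reducedness arguments are the substantive content you are missing.
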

\begin{proof}
This is an easier version of \cite[Lemma 3.40]{Tho15}.

First assume that $\chi_{v, 1} = \chi_{v, 2} = 1$ for all $v \in R$. Let $\overline{A}_R$ denote the nilreduction of $A_R$.
By Lemma~\ref{lem_ihara_avoidance_ring_1} and \cite[Lemma~1.4]{Tho15}, each minimal prime of $A_R$ has characteristic $0$, each prime of $A_R$ minimal over $(\varpi)$ contains a unique minimal prime, and $\overline{A}_R/(\varpi)$ is generically reduced. 
The same properties then hold for $\overline{A}_R \llbracket X_1, \ldots, X_g \rrbracket_{(\frp)}$.
Applying \cite[Proposition~1.6]{Tho15} to $\overline{A}_R$, we get the desired properties of $A_R'$.

Now assume that $\chi_{v, 1} \ne \chi_{v, 2}$ for each $v \in R$. 
By Lemma~\ref{lem_ihara_avoidance_ring_chi} and \cite[Lemma~1.4]{Tho15}, $A_R$ has a unique minimal prime, which is of characteristic $0$. 
The same is then true for $A_R \llbracket X_1,\ldots,X_g \rrbracket_{(\frp)}$, and the proof will be finished if we show that $A_R'[1/p]$ is a domain.
The maps $A_R \to A_R\llbracket X_1, \ldots, X_g \rrbracket_{(\frp)}$ and $A_R\llbracket X_1,\ldots, X_g \rrbracket_{(\frp)} \to A_R'$ are both regular, so $A_R \to A_R'$ is regular by \cite[\href{https://stacks.math.columbia.edu/tag/07QI}{Tag 07QI}]{stacks-project}. 
This then implies that $A_R[1/p] \to A_R'[1/p]$ is a regular ring map and since $A_R[1/p]$ is a regular ring (by Lemma~\ref{lem_ihara_avoidance_ring_chi}), \cite[\href{https://stacks.math.columbia.edu/tag/033A}{Tag 033A}]{stacks-project} implies that $A_R'[1/p]$ is also a regular ring. 
Since $r_\frp : G_{K_v} \to \GL_2(A)$ is trivial for each $v \in R$,
the pullback of $\frp$ to $A_R$ is the maximal ideal $\ffrm_{A_R}$. 
So under the isomorphism $A_R\llbracket X_1,\ldots, X_g \rrbracket \cong A_R \widehat{\otimes}_\cO \cO \llbracket X_1, \ldots, X_g \rrbracket$, $\frp$ corresponds to $(\ffrm_{A_R}, \frq) \subset A_R \widehat{\otimes}_\cO \cO \llbracket X_1, \ldots, X_g \rrbracket$ with $\frq \subset \cO \llbracket X_1,\ldots, X_g \rrbracket$ a prime ideal of dimension 1 and characteristic $p$. 
So repeatedly applying \cite[Proposition~3.16]{Tho15}, we see that $\Spec A_R'[1/p]$ is connected. 
Since $A_R'[1/p]$ is regular with connected spectrum, it is a domain.
\end{proof}

\subsubsection{Taylor--Wiles deformation problem}\label{subsubsec_taylor_wiles_deformation_problems}

Let $v$ be a finite place of $K$ and let $N \geq 1$ be an integer such that $q_v \equiv 1 \text{ mod }p^N$ and $\overline{\rho}|_{G_{K_v}} = \alpha_v \oplus \beta_v$ is a direct sum of distinct unramified characters. Let $\Lambda_v = \cO$. Then we define $\Delta_v = (k(v)^\times \times k(v)^\times) / p^{N}$; this group is isomorphic to $(\bbZ / p^{N})^2$.

The following lemma is well-known.
\begin{lemma}
Let $R \in \CNL_\cO$ and $\rho_R \in \cD_v^\square(R)$. Then there are uniquely determined lifts $A_v, B_v : G_{K_v} \to R^\times$ of $\alpha_v, \beta_v$ such that $\rho_R$ is $\widehat{\GL}_2(R)$-conjugate to $A_v \oplus B_v$. 
\end{lemma}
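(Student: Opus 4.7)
The plan is to exploit the rigidity forced on a lift of an unramified residual decomposition with distinct Frobenius eigenvalues, via Hensel's lemma combined with the explicit tame presentation of $G_{K_v}$. First, I would observe that $q_v \equiv 1 \pmod{p^N}$ with $N \geq 1$ forces $v \nmid p$ (otherwise $q_v$ would be a power of $p$). Consequently, the wild inertia subgroup of $I_{K_v}$ is pro-$\ell$ for $\ell$ the residue characteristic of $v$, which is distinct from $p$. Since the image $\rho_R(I_{K_v}) \subset 1 + M_2(\ffrm_R)$ lies in a pro-$p$ group, wild inertia acts trivially. Thus $\rho_R$ factors through the tame quotient of $G_{K_v}$, topologically generated by a lift $\Frob_v$ of Frobenius and a generator $\tau$ of tame inertia, subject to $\Frob_v \tau \Frob_v^{-1} = \tau^{q_v}$.

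Next, I would diagonalize $\rho_R(\Frob_v)$. Its characteristic polynomial reduces modulo $\ffrm_R$ to $(X - \alpha_v(\Frob_v))(X - \beta_v(\Frob_v))$, a product of distinct linear factors, so by Hensel's lemma it factors uniquely over $R$ as $(X - A)(X - B)$ with $A \equiv \alpha_v(\Frob_v)$, $B \equiv \beta_v(\Frob_v) \pmod{\ffrm_R}$ and $A - B \in R^\times$. The associated eigenspace decomposition of $R^2$ lifts the residual one, and conjugating by an element of $\widehat{\GL}_2(R)$ puts $\rho_R(\Frob_v)$ in the form $\diag(A, B)$.

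The main step is to show that after this conjugation $\rho_R(\tau)$ is already diagonal. Write $\rho_R(\tau) = 1 + M$ with $M = \begin{psmallmatrix} a & b \\ c & d \end{psmallmatrix} \in M_2(\ffrm_R)$. The tame relation reads
\[ \diag(A, B)\,(1 + M)\,\diag(A, B)^{-1} = (1 + M)^{q_v}, \]
the right-hand side converging in the $\ffrm_R$-adic topology. I would prove by induction on $n$ that $b, c \in \ffrm_R^n$. For the inductive step, note that every off-diagonal monomial appearing in $M^k$ for $k \geq 2$ has an odd number (hence at least one) of factors from $\{b,c\}$ together with at least $k-1$ factors from $\ffrm_R$, so lies in $\ffrm_R^{n+k-1} \subseteq \ffrm_R^{n+1}$. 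Reading off the $(1,2)$-entry of the relation modulo $\ffrm_R^{n+1}$ then gives $AB^{-1} b \equiv q_v b$, i.e.\ $(AB^{-1} - q_v)b \equiv 0 \pmod{\ffrm_R^{n+1}}$. Since $AB^{-1}$ reduces to $\alpha_v(\Frob_v)/\beta_v(\Frob_v) \neq 1$ while $q_v \equiv 1 \pmod{\ffrm_R}$, the factor $AB^{-1} - q_v$ is a unit, forcing $b \in \ffrm_R^{n+1}$; the symmetric argument on the $(2,1)$-entry handles $c$. By $\ffrm_R$-adic separatedness, $b = c = 0$. Hence $\rho_R(\tau)$ is diagonal, and since its image and that of $\Frob_v$ generate the image of the whole tame quotient, the diagonal entries define characters $A_v, B_v : G_{K_v} \to R^\times$ with $\rho_R = A_v \oplus B_v$, lifting $\alpha_v$ and $\beta_v$ respectively.

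Uniqueness is then immediate: any other such pair $(A_v', B_v')$ provides a second factorization of the characteristic polynomial of $\rho_R(\Frob_v)$ whose reductions match $(\alpha_v(\Frob_v), \beta_v(\Frob_v))$, so Hensel's lemma gives $A_v(\Frob_v) = A_v'(\Frob_v)$ and $B_v(\Frob_v) = B_v'(\Frob_v)$; the same argument on each group element (or alternatively, from the fact that a diagonalization with prescribed residual ordering is unique) yields $A_v = A_v'$ and $B_v = B_v'$. I expect the only real subtlety to be the inductive $\ffrm_R$-adic bookkeeping in the third paragraph: one must verify uniformly for all $k \geq 2$ that the higher binomial terms contribute in $\ffrm_R^{n+1}$, which is exactly where the combination of $\alpha_v \neq \beta_v$ (giving $AB^{-1} \not\equiv 1$) and $q_v \equiv 1 \pmod{p}$ (giving $q_v \equiv 1$ in $R$) together make $AB^{-1} - q_v$ a unit.
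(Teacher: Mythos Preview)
Your proof is correct. The paper does not actually supply a proof for this lemma, stating only that it is ``well-known''; your argument via Hensel's lemma on $\rho_R(\Frob_v)$ combined with the explicit tame presentation and the $\ffrm_R$-adic induction to kill the off-diagonal entries of $\rho_R(\tau)$ is the standard approach. Two cosmetic remarks: since $q_v$ is an ordinary positive integer, $(1+M)^{q_v}$ is a finite product and no convergence is needed; and in the uniqueness paragraph, the phrase ``the same argument on each group element'' fails for those $g$ with $\alpha_v(g)=\beta_v(g)$ (in particular $g=\tau$), so it is your parenthetical alternative---that a diagonalization with prescribed residual ordering of the Frobenius eigenspaces is unique up to diagonal conjugation---that actually carries the day.
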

It follows that $R_v^\square$ has a structure of $\cO[(k(v)^\times \times k(v)^\times)(p)]$-algebra, which comes from restricting the pair of characters $A_v \circ \Art_{K_v}$, $B_v \circ \Art_{K_v}$ to $k(v)^\times$. We define $\cD_v^{\text{TW}, N}$ to be the deformation problem which is represented by the quotient ring 
\[ R_v^{\text{TW}, N} = R_v^\square \otimes_{ \cO[(k(v)^\times \times k(v)^\times)(p)] } \cO[\Delta_v]. \]
If $\cS = (K, \overline{\rho}, S, \{ \cD_v \}_{v \in S})$ is a deformation problem,  then we call a Taylor--Wiles datum for $\cS$ a tuple $(Q, N, \{ \alpha_v, \beta_v \}_{v \in Q})$, where:
\begin{enumerate}
	\item $Q$ is a finite set of finite places of $K$.
	\item $N \geq 1$ is an integer. 
	\item $\alpha_v, \beta_v : G_{K_v} \to k^\times$ are continuous characters.
	\end{enumerate}
 We require that the following conditions are satisfied:
\begin{enumerate}
\item $Q \cap S = \emptyset$.
\item For each $v \in Q$, $q_v \equiv 1 \text{ mod }p^N$.
\item For each $v \in Q$, $\overline{\rho}|_{G_{K_v}} \cong \alpha_v \oplus \beta_v$ is a direct sum of distinct unramified characters.
\end{enumerate}
We call $N$ the level of the Taylor--Wiles datum. If $(Q, N, \{ \alpha_v, \beta_v \}_{v \in Q})$ is a Taylor--Wiles datum, then we define the augmented deformation problem
\[ \cS_Q = (K, \overline{\rho}, S \cup Q,  \{ \Lambda_v \}_{v \in S \cup Q}, \{ \cD_v \}_{v \in S} \cup \{ \cD_v^{\text{TW}, N} \}_{v \in Q}). \]
The deformation ring $R_{\cS_Q}$ has a natural structure of $\cO[\Delta_Q]$-algebra (where $\Delta_Q = \prod_{v \in Q} \Delta_v$); by construction, $\Delta_Q$ is a free $\bbZ / p^N \bbZ$-module of rank $2 |Q|$.
\subsection{Geometry of deformation rings}\label{sec_geo_def_rings_p_even}

We now define Selmer groups and dual Selmer groups, and use these to get coarse information about the size of Galois deformation spaces. Suppose given a deformation problem $\cS = (K, \overline{\rho}, S,  \{ \Lambda_v \}_{v \in S}, \{ \cD_v \}_{v \in S})$. Let $T \subset S$, and let $\Lambda_T = \widehat{\otimes}_{v \in T} \Lambda_v$. We let $R_v \in \CNL_{\Lambda_v}$ denote the representing object of $\cD_v$ ($v \in S$), and define $A_\cS^T = \Lambda \widehat{\otimes}_{\Lambda_T} ( \widehat{\otimes}_{v \in T} R_v )$, the completed tensor products being over $\cO$. Then $A_\cS^T \in \CNL_{\Lambda}$. Note that there is a canonical map of $\Lambda$-algebras $A_\cS^T \to R_\cS^T$, corresponding to the natural transformation $( ( \alpha_v )_{v \in T}, \rho ) \mapsto ( \alpha_v \rho \alpha_v^{-1} )_{v \in T}$ at the level of $T$-framed liftings.

Suppose given an object $A \in \CNL_\Lambda$ and a type $\cS$ lifting $\rho_A : G_K \to \GL_2(A)$. 
If $B$ is a finite $A$-module, we let $\ad\rho_B = \ad\rho\otimes_A B$ and
we define a $\CNL_\Lambda$-algebra $A\oplus \epsilon B$ by $\epsilon^2 = 0$. 
Then for each $v \in S - T$, the fibre of $\rho_A$ under $\cD_v(A \oplus \epsilon B) \to \cD_v(A)$ is naturally identified with a $A$-submodule $\cL_v^1(\rho_B) \subset Z^1(K_v, \ad \rho_B)$, and we let $\cL_v(\rho_B)$ be its image in $H^1(K_v,\ad\rho_B)$. 
In the case that $B$ is a finite $A$-algebra, these $A$-modules inherit a compatible $B$-module structure.

We define the groups $H^i_{\cS, T}(\ad \rho_B)$ to be the cohomology groups of the complex $C_{\cS, T}(\ad \rho_B)$ which is determined by the formula (cf. \cite[\S 5.3]{Tho16}; the differentials are defined as in \emph{loc. cit.}):
\[ C^i_{\cS, T}(\ad\rho_B) = \left\{ \begin{array}{ll} C^0(K_S / K, \ad \rho_B) & i = 0;   \\
C^1(K_S / K, \ad \rho_B) \oplus \bigoplus_{v \in T} C^0(K_v, \ad \rho_B) & i = 1 ; \\
C^2(K_S / K, \ad \rho_B) \oplus \bigoplus_{v \in T} C^1(K_v, \ad \rho_B) \oplus \bigoplus_{v \in S - T} C^1(K_v, \ad \rho_B) / \cL_v^1(\rho_B) & i = 2; \\
C^i(K_S / K, \ad \rho_B) \oplus \bigoplus_{v \in S} C^{i-1}(K_v, \ad \rho_B) & i > 2 .
\end{array}\right. \]
Here $C^i$ denotes the group of continuous inhomogeneous cochains with values in $\ad\rho_B$. 
Note that if $B'$ is another finite $A$-module, an $A$-module map $B \to B'$ induces $A$-module 
maps $H_{\cS,T}^i(\ad\rho_B) \to H_{\cS,T}^i(\ad\rho_{B'})$ for each $i$.

\begin{proposition}\label{prop:tangent_space}
Let $A$ be a $\CNL_\Lambda$-algebra and let $\rho_A$ be a type $\cS$ lifting. 
Let $B$ be a finite $A$-module (resp. a finite $A$-algebra).
Consider the $\CNL_\Lambda$-morphism $R_\cS^T \to A$ given by $(\rho_A, (1)_{v\in T})$. 
Assume that the composite $A_\cS^T \to R_S^T \to A$ is surjective. 
Let $\mathfrak{r} = \ker(R_S^T \to A)$ and $\mathfrak{a} = \ker(A_S^T \to A)$. 
There is a canonical isomorphism of $A$-modules (resp. of $B$-modules)
	\[ \Hom_A(\mathfrak{r}/(\mathfrak{r}^2,\mathfrak{a}), B) \cong H_{\cS,T}^1(\ad\rho_B). \]
\end{proposition}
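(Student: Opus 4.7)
The proof is a routine tangent-space identification in Galois deformation theory, so the plan is relatively mechanical. First, I would identify $\Hom_A(\mathfrak{r}/(\mathfrak{r}^2, \mathfrak{a}), B)$ with the set of $\CNL_\Lambda$-algebra homomorphisms $R_\cS^T \to A \oplus \epsilon B$ (with $\epsilon^2 = 0$) lifting the given map $R_\cS^T \to A$, and such that the composite $A_\cS^T \to R_\cS^T \to A \oplus \epsilon B$ factors through $A \hookrightarrow A \oplus \epsilon B$; this is direct from the definitions of $\mathfrak{r}$ and $\mathfrak{a}$. By the universal property of $R_\cS^T$ representing $\Def_\cS^T$, such a homomorphism is the same as a strict equivalence class of type-$\cS$, $T$-framed liftings $((\alpha_v)_{v \in T}, \rho')$ of $((1)_{v \in T}, \rho_A)$ to $A \oplus \epsilon B$, with the additional requirement that the local lift at each $v \in T$ equals $\rho_A|_{G_{K_v}}$ regarded as a map into $A \oplus \epsilon B$.

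Next, I would parametrize these liftings concretely. Write $\alpha_v = 1 + \epsilon \beta_v$ for $\beta_v \in \ad\rho_B$, and $\rho'(g) = (1 + \epsilon \phi(g)) \rho_A(g)$ for a continuous function $\phi : G_K \to \ad\rho_B$. A short direct computation shows that $\rho'$ is a group homomorphism iff $\phi$ is a continuous $1$-cocycle for the adjoint action via $\rho_A$; it is unramified outside $S$ iff $\phi$ factors through $G_{K,S} = \Gal(K_S/K)$; and for each $v \in S$, $\rho'|_{G_{K_v}} \in \cD_v(A \oplus \epsilon B)$ iff $\phi|_{G_{K_v}} \in \cL_v^1(\rho_B)$, by the very definition of $\cL_v^1(\rho_B)$. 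For $v \in T$, unwinding the convention that the quantity invariant under strict equivalence is $\alpha_v^{-1} \rho' \alpha_v$, the framing factorization condition translates into $\phi|_{G_{K_v}} + d\beta_v = 0$, which automatically lies in $\cL_v^1(\rho_B)$. Finally, strict equivalence by $1 + \epsilon \gamma \in \widehat{\GL}_2(A \oplus \epsilon B)$ with $\gamma \in \ad\rho_B$ sends $(\phi, (\beta_v)_{v \in T})$ to $(\phi - d\gamma, (\beta_v + \gamma)_{v \in T})$.

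Comparing with the complex $C^\bullet_{\cS, T}(\ad\rho_B)$ of the excerpt, the pairs $(\phi, (\beta_v)_{v \in T})$ are exactly the elements of $C^1_{\cS, T}(\ad\rho_B) = C^1(K_S/K, \ad\rho_B) \oplus \bigoplus_{v \in T} C^0(K_v, \ad\rho_B)$; the cocycle, local type-$\cS$, and framing conditions cut out $\ker(d^1)$; and the strict-equivalence relation gives precisely the image of $d^0$, with $d^0\gamma = (d\gamma, (\gamma|_{G_{K_v}})_{v \in T})$ up to a sign convention. This produces the desired isomorphism onto $H^1_{\cS,T}(\ad\rho_B)$. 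Naturality in $B$ is transparent from the construction, so the $A$-module (and $B$-module, when $B$ is an $A$-algebra) structures on the two sides correspond.

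The calculation is entirely standard and the only step requiring care is keeping the sign conventions in the differentials of $C^\bullet_{\cS, T}$ consistent with those of \cite[\S 5.3]{Tho16}; one can absorb any discrepancy by the change of variables $\beta_v \mapsto -\beta_v$, so there is no real obstacle beyond this notational bookkeeping.
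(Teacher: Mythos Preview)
Your proposal is correct and follows essentially the same approach as the paper: identify $\Hom_A(\mathfrak{r}/(\mathfrak{r}^2,\mathfrak{a}),B)$ with $A_\cS^T$-algebra lifts $R_\cS^T\to A\oplus\epsilon B$, translate these via the universal property into $T$-framed deformations with trivialized local lifts at $T$, parametrize as $((1+\epsilon\phi)\rho_A,(1+\epsilon\beta_v))$, and match the resulting conditions and strict-equivalence relation with the complex $C^\bullet_{\cS,T}(\ad\rho_B)$. The paper's proof differs only in that it spells out explicitly how the surjectivity hypothesis $A_\cS^T\twoheadrightarrow A$ yields the algebra splitting $R_\cS^T/(\mathfrak{r}^2,\mathfrak{a})\cong A\oplus\mathfrak{r}/(\mathfrak{r}^2,\mathfrak{a})$, which is what makes the first identification work; you assert this step as ``direct from the definitions,'' which is fine but slightly glosses over where the hypothesis enters.
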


\begin{proof}
Give $A\oplus \epsilon B$ the $A_\cS^T$-algebra structure by 
$A_\cS^T \to A \hookrightarrow A \oplus \epsilon B$.
Let $\cL^1$ be the set of
$T$-framed liftings $(\rho,(\alpha_v)_{v\in T})$ of $\overline{\rho}$ to $A\oplus \epsilon B$ 
with $(\rho,(\alpha_v)_{v\in T}) \bmod \epsilon = (\rho_A,(1)_{v\in T})$, and let $\cL$ denote the set of $T$-framed deformations containing an element of $\cL^1$. 
Then $\cL$ is identified with the set of $A_\cS^T$-algebra homomorphisms $f \colon R_\cS^T \rightarrow A \oplus \epsilon B$
such that $f \bmod \epsilon$ is the map $R_\cS^T \to A$ given by $(\rho_A,(1)_{v\in T})$.
The assumption that $A_\cS^T \to A$ is surjective implies that we have a decomposition of $A$-algebras $R_\cS^T/(\mathfrak{r}^2,\mathfrak{a}) \cong A \oplus \mathfrak{r}/(\mathfrak{r}^2,\mathfrak{a})$.
It follows that the restriction $f \mapsto f|_{\mathfrak{r}}$ determines a bijection $\cL \to \Hom_A(\mathfrak{r}/(\mathfrak{r}^2,\mathfrak{a}), B)$.

Any $T$-framed lifting in $\cL^1$ can be written uniquely as $((1 +\epsilon \phi)\rho_A, (1 + \epsilon \beta_v)_{v \in T})$, 
with $\phi \in Z^1(K_S/K, \ad \rho_B)$ and $\beta_v \in \ad \rho_B$. 
The condition that this lifting lies in $\cD_v(A \oplus \epsilon B)$ for $v \in S - T$ 
is equivalent to $\phi|_{G_{K_v}} \in \cL_v^1(\rho_B)$.
The condition that it give the trivial lift at $v \in T$ is equivalent to the condition
  \[ 
  (1 - \epsilon \beta_v) ((1 + \epsilon \phi)\rho_A)|_{G_{K_v}} (1 + \epsilon \beta_v) = \rho_A|_{G_{K_v}}. 
  \]
Putting it another way, the elements of $\cL$ are in bijection with the tuples $(\phi, (\beta_v)_{v \in T})$, 
where $\phi \in Z^1(K_S/K, \ad \rho_B)$ is such that $\phi|_{G_{K_v}} \in \cL_v^1(\rho_B)$ for each $v\in S - T$, 
and $\beta_v \in \ad \rho_B$ is such that for each $v \in T$ we have the equality
\[ \phi|_{G_{K_v}} = (\ad \rho_B|_{G_{K_v}} - 1) \beta_v. \]
Two tuples $(\phi, (\beta_v)_{v \in T})$ and $(\phi', (\beta'_v)_{v \in T})$ give rise to strictly equivalent $T$-framed liftings if and only if there exists $X \in \ad \rho_B$ satisfying
\begin{gather*}
\phi' = \phi + (1 - \ad \rho_B)X, \\
\beta'_v = \beta_v + X
\end{gather*}
for each $v \in T$. 
We then have bijections
  \[
   \Hom_A(\mathfrak{r}/(\mathfrak{r}^2,\mathfrak{a}), B) \cong \cL \cong H_{\cS,T}^1(\ad\rho_B),
  \]
and chasing through the description of these bijections, the $A$-module (resp. $B$-module) structures coincide.
\end{proof}

A special case of the above is $A = B = k$, which gives the usual cohomological description of the relative tangent space.
In this case, the trace pairing $(X,Y) \to \tr(XY)$ on $\ad\overline{\rho}$ is perfect, so for each $v\in S$, we can define $\cL_v(\overline{\rho})^\perp \subset H^1(K_v, \ad\overline{\rho}(1))$ to be the orthogonal complement of $\cL_v(\overline{\rho})$ with respect to the Tate duality pairing.
We then define a group $H^1_{\cS^\perp, T}(\ad \overline{\rho}(1))$ by the formula
\[ H^1_{\cS^\perp, T}(\ad\overline{\rho}(1)) = \ker \left( H^1(K_S / K, \ad \overline{\rho}(1)) \to \prod_{v \in S - T} H^1(K_v, \ad \overline{\rho}(1)) / \cL_v(\overline{\rho})^\perp \right). \]
The following result is proved exactly as in \cite[\S 5.3]{Tho16}, using the fundamental duality theorems in local and global Galois cohomology. (We use $h^i$ as a shorthand for $\dim_k H^i$.)
\begin{proposition}\label{prop_presentation_by_generators_and_relations}
\begin{enumerate}
\item There is a surjective map $A_\cS^T \llbracket X_1, \dots, X_g \rrbracket \to R_\cS^T$ of $A_\cS^T$-algebras, where $g = h^1_{\cS, T}(\ad \overline{\rho})$. If $R_v$ is a formally smooth $\Lambda_v$-algebra for each $v \in S - T$, then the kernel of this map can be generated by $r$ elements, where $r = h^1_{\cS^\perp, T}(\ad \overline{\rho}(1))$.
\item If $v \in S$, let $\ell_v = \dim_k \cL_v(\overline{\rho})$. Let $\delta_T = 1$ if $T$ is empty, and $\delta_T = 0$ otherwise. Then we have the formula
\[ h^1_{\cS, T}(\ad \overline{\rho}) = h^1_{\cS^\perp, T}(\ad \overline{\rho}(1)) - h^0(K, \ad \overline{\rho}(1)) - \sum_{v | \infty} h^0(K_v, \ad \overline{\rho}) + \sum_{v \in S - T} (\ell_v - h^0(K_v, \ad \overline{\rho})) + \delta_T. \]
\end{enumerate}
\end{proposition}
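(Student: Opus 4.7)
The proof of both parts is a standard exercise in obstruction theory and Galois cohomology duality applied to the complex $C_{\cS,T}(\ad\overline{\rho})$, following the template of \cite[\S 5.3]{Tho16}. I will sketch how I would organize the argument.

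For the generators count in part~(1), I would apply Proposition~\ref{prop:tangent_space} in the case $A = B = k$ (with the trivial lifting $\rho_A = \overline{\rho}$ and $(\alpha_v)_{v\in T} = (1)_{v\in T}$), which identifies the $k$-dual of $\ffrm_{R_\cS^T}/(\ffrm_{R_\cS^T}^2,\, \mathrm{image~of~}\ffrm_{A_\cS^T})$ with $H^1_{\cS,T}(\ad\overline{\rho})$. Lifting a basis of this tangent space to elements of $\ffrm_{R_\cS^T}$ and applying the complete version of Nakayama's lemma over the Noetherian local $\Lambda$-algebra $A_\cS^T$ yields a surjection $A_\cS^T\llbracket X_1,\ldots,X_g\rrbracket \twoheadrightarrow R_\cS^T$ with $g = h^1_{\cS,T}(\ad\overline{\rho})$.

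For the bound on the number of relations, I would use obstruction theory. Let $\frj \subset A_\cS^T\llbracket X_1,\ldots,X_g\rrbracket$ be the kernel of the surjection onto $R_\cS^T$; it suffices to bound the minimal number of generators, i.e. $\dim_k \frj/(\ffrm \frj)$. Choose a small extension $0 \to \frj/(\ffrm\frj) \to A_\cS^T\llbracket X_1,\ldots,X_g\rrbracket/(\ffrm\frj) \to R_\cS^T \to 0$ and consider the obstruction to lifting the universal $T$-framed type-$\cS$ deformation across it. Working with the explicit complex $C_{\cS,T}(\ad\overline{\rho})$, this obstruction naturally lives in $H^2_{\cS,T}(\ad\overline{\rho}) \otimes_k \frj/(\ffrm\frj)$. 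The formal smoothness of $R_v$ over $\Lambda_v$ for $v\in S-T$ ensures that the local conditions $\cL_v(\overline{\rho})$ are defined by a genuine subspace of $H^1(K_v,\ad\overline{\rho})$ (with no higher obstruction to lifting inside the local deformation functor), so no extra relations beyond those captured by $H^2_{\cS,T}$ arise. By Tate local duality (pairing $\cL_v$ against its annihilator $\cL_v^\perp$) combined with the nine-term Poitou--Tate sequence, one obtains a canonical isomorphism
\[ H^2_{\cS,T}(\ad\overline{\rho}) \cong H^1_{\cS^\perp,T}(\ad\overline{\rho}(1))^\vee, \]
giving the desired bound $\dim_k \frj/(\ffrm\frj) \le h^1_{\cS^\perp,T}(\ad\overline{\rho}(1))$.

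For the Euler characteristic formula in part~(2), I would compute the alternating sum $\sum_i (-1)^i \dim_k H^i_{\cS,T}(\ad\overline{\rho})$ in two ways. On one hand, the complex $C_{\cS,T}$ sits in a short exact sequence of complexes that expresses it as a mapping cone involving the global continuous cochain complex $C^\bullet(K_S/K,\ad\overline{\rho})$, the local complexes $C^\bullet(K_v,\ad\overline{\rho})$ for $v\in S$, and the subcomplexes encoding $\cL_v^1(\overline{\rho})$ for $v\in S-T$. Feeding in Tate's global Euler characteristic formula $\chi_{\mathrm{global}}(\ad\overline{\rho}) = -\sum_{v\mid\infty} h^0(K_v,\ad\overline{\rho})$ and the local Euler characteristic formulas (which contribute zero at places not above $p$ and cancel against the terms coming from $\cL_v$ after using $\dim \cL_v^1 = \ell_v + h^0(K_v,\ad\overline{\rho})$), one reads off the stated identity. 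The term $\delta_T$ appears because when $T = \emptyset$ the degree-$0$ part of $C_{\cS,T}$ reduces to $H^0(K,\ad\overline{\rho})$, whereas for $T \neq \emptyset$ this contribution is absorbed into the framing variables at a place of $T$, shifting the count by one. Using Poitou--Tate again to rewrite $h^2_{\cS,T}(\ad\overline{\rho}) - h^0(K,\ad\overline{\rho}(1))$ in terms of $h^1_{\cS^\perp,T}(\ad\overline{\rho}(1))$ and $h^0(K,\ad\overline{\rho})$ completes the computation.

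The main bookkeeping obstacle is keeping careful track of the $T$-framing contributions and the $\delta_T$ correction; everything else is a mechanical consequence of the duality theorems once the complex $C_{\cS,T}$ is set up as in \cite[\S 5.3]{Tho16}. No new idea beyond that reference is needed.
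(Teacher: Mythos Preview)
Your proposal is correct and follows exactly the approach the paper takes: the paper gives no detailed proof at all, simply stating that the result is proved exactly as in \cite[\S 5.3]{Tho16} using the fundamental duality theorems in local and global Galois cohomology. Your sketch is a faithful expansion of precisely that argument---tangent space identification via Proposition~\ref{prop:tangent_space} plus Nakayama for the generators, obstruction theory plus Poitou--Tate for the relations bound, and the local/global Euler characteristic formulas for part~(2)---so there is nothing to add.
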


\begin{corollary}\label{cor_connectedness_dimension_Ihara_case}
Suppose that the following conditions are satisfied:
\begin{enumerate}
\item $p = 2$ and $i \in K$ (in particular, $K$ is totally imaginary). \item For each $v \in S_p$, $\cD_v = \cD_v^\text{ord}$ is of the type considered in \S \ref{subsubsec_ordinary_deformation_problems}.
\item For each $v \in S - S_p$, $\cD_v = \cD_v^{\chi_v}$ for some $\chi_v = (\chi_{v, 1}, \chi_{v, 2})$, as in \S \ref{subsubsec_level_raising_deformations}. Write $R = S - S_p$.
\end{enumerate}
Then:
\begin{enumerate} \item For any irreducible component $Z$ of $\Spec R_{\cS} / (\varpi)$, we have $\dim Z \geq [K : \bbQ]$.
	\item For any irreducible component $Z$ of $\Spec R_{\cS}$, we have $\dim Z \geq [K : \bbQ] + 1$.
\item Let $\cC$ denote the set of irreducible components of $\Spec R_\cS / (\varpi)$. Then For any partition $\cC = \cC_1 \sqcup \cC_2$ into two disjoint non-empty subsets, we can find $Z_1 \in \cC_1$, $Z_2 \in \cC_2$ such that $\dim Z_1 \cap Z_2 \geq [K : \bbQ] - 2$. 
\end{enumerate}
\end{corollary}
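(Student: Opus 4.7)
The plan is to apply Proposition \ref{prop_presentation_by_generators_and_relations} with $T = S$. Then $S \setminus T = \emptyset$ makes the formal smoothness hypothesis vacuous, yielding a surjection
\[ A_\cS^S \llbracket X_1, \ldots, X_g \rrbracket \twoheadrightarrow R_\cS^S \]
with kernel generated by $r$ elements, where the Euler characteristic formula reduces to
\[ g - r = -h^0(K, \ad \overline{\rho}(1)) - \sum_{v \mid \infty} h^0(K_v, \ad\overline{\rho}). \]
Since $K$ is totally imaginary each infinite place contributes $h^0 = 4$, summing to $2[K:\bbQ]$; since $p = 2$ the mod-$p$ cyclotomic character is trivial, so $\ad\overline{\rho}(1) = \ad\overline{\rho}$, and absolute irreducibility of $\overline{\rho}$ (assumed throughout \S \ref{sec_deformation_basics}) gives $h^0(K, \ad\overline{\rho}(1)) = 1$. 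Hence $g - r = -1 - 2[K:\bbQ]$.

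Next I would compute $\dim A_\cS^S$ from local data. Proposition \ref{prop_smoothness_of_ordinary_lifting_problem} gives $R_v^\text{ord}$ formally smooth of dimension $5 + 3[K_v : \bbQ_p]$ for $v \in S_p$, while Lemmas \ref{lem_ihara_avoidance_ring_1} and \ref{lem_ihara_avoidance_ring_chi} give that each reduced quotient $\overline{R}_v^{\chi_v}$ is $\cO$-flat and equidimensional of dimension $5$. Passing to reduced quotients (which preserves irreducible components and their dimensions), dimension-additivity of completed tensor products of $\cO$-flat rings yields $\dim A_\cS^S = 4|S| + 3[K:\bbQ] + 1$ and $\dim A_\cS^S/(\varpi) = 4|S| + 3[K:\bbQ]$. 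Krull's principal ideal theorem applied to the presentation then gives, for every irreducible component $Z^S$ of $\Spec R_\cS^S$,
\[ \dim Z^S \geq 4|S| + [K:\bbQ], \qquad \dim Z^S/(\varpi) \geq 4|S| + [K:\bbQ] - 1. \]
Under absolute irreducibility, the forgetful map $\Def_\cS^S \to \Def_\cS$ is a principal bundle for the formally smooth group $\widehat{\GL}_2^S / \widehat{\bbG}_m$ of dimension $4|S| - 1$, so $R_\cS^S \cong R_\cS \llbracket y_1, \ldots, y_{4|S|-1} \rrbracket$ and irreducible components correspond with dimensions shifted by $4|S| - 1$. Subtracting gives parts 1 and 2.

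For part 3, I use monotonicity of the connectedness dimension $c$ from \cite{Tho15}: if $R = A/(f_1,\ldots,f_r)$ then $c(R) \geq c(A) - r$. Since $R_v^\text{ord}/(\varpi)$ is a power series ring over $k$ (hence has integral spectrum) for $v \in S_p$, irreducible components of $\Spec A_\cS^S/(\varpi)$ correspond to those of $\Spec A_R/(\varpi)$ with intersection dimensions shifted by $\sum_{v \in S_p} \dim R_v^\text{ord}/(\varpi) = 4|S_p| + 3[K:\bbQ]$. Proposition \ref{prop_connectedness_dimension_of_local_lifting_ring} gives $c(A_R/(\varpi)) = 4|R| - 1$, so $c(A_\cS^S/(\varpi)) = 4|S| - 1 + 3[K:\bbQ]$, and adjoining $g$ power series variables preserves this. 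The presentation then yields $c(R_\cS^S/(\varpi)) \geq 4|S| + [K:\bbQ] - 2$, and unframing by the power series variables gives $c(R_\cS/(\varpi)) \geq [K:\bbQ] - 1 \geq [K:\bbQ] - 2$, proving part 3.

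The main obstacle is commutative-algebraic bookkeeping: (i) handling the fact that some $R_v^{\chi_v}$ (in the case $\chi_{v,1} = \chi_{v,2}$) are not $\cO$-flat, so the dimension and connectedness computations for $A_\cS^S$ must be carried out on the reduced quotient where Lemma \ref{lem_ihara_avoidance_ring_1}(1) supplies $\cO$-flatness; (ii) verifying the identification $R_\cS^S \cong R_\cS \llbracket y_1, \ldots, y_{4|S|-1} \rrbracket$ that transports dimension and connectedness-dimension bounds from $R_\cS^S$ back to $R_\cS$; and (iii) combining the local connectedness data at $R$ from Proposition \ref{prop_connectedness_dimension_of_local_lifting_ring} with the integral local structure at places in $S_p$ to obtain the global connectedness dimension of $A_\cS^S/(\varpi)$.
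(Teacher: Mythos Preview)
Your proposal is correct and follows exactly the paper's approach: take $T=S$, apply Proposition~\ref{prop_presentation_by_generators_and_relations} to get a presentation $A_\cS^S\llbracket X_1,\dots,X_g\rrbracket \twoheadrightarrow R_\cS^S$ with $g-r=-1-2[K:\bbQ]$, read off dimension bounds via Krull's theorem, and unframe via $R_\cS^S\cong R_\cS\llbracket Y_1,\dots,Y_{4|S|-1}\rrbracket$; for part~3 combine Proposition~\ref{prop_connectedness_dimension_of_local_lifting_ring} with a connectedness-dimension inequality under quotients. The one point to tighten is your citation for the last step: the paper invokes \cite[Theorem~2.4]{Bro86} rather than \cite{Tho15}, and the general form of the inequality is $c(A/(f_1,\dots,f_r))\ge \min(c(A),\operatorname{sdim}(A)-1)-r$ rather than simply $c(A)-r$; here $c(A_\cS^S\llbracket X\rrbracket/(\varpi))=\dim-1$, so the two coincide and your computation (which in fact yields the slightly sharper $c(R_\cS/(\varpi))\ge [K:\bbQ]-1$) goes through.
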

\begin{proof}
Let $T = S$. Then each minimal prime of $A_\cS^T / (\varpi)$ has dimension $4|T| + 3 [K : \bbQ]$. According to Proposition \ref{prop_presentation_by_generators_and_relations}, there is an isomorphism $R^T_{\cS} \cong A^T_{\cS} \llbracket X_1, \dots, X_g \rrbracket / (f_1, \dots, f_r)$, where $g - r = -1 - 2[K : \bbQ]$. It follows that each minimal prime of $R^T_{\cS} / (\varpi)$ has dimension at least $4|T| - 1 + [K : \bbQ]$ and each minimal prime of $R^T_\cS$ has dimension at least $4|T| + [K : \bbQ]$. Since $R^T_{\cS} \cong R_\cS \llbracket Y_1, \dots, Y_{4|T| - 1} \rrbracket$, this implies the first two assertions. For the third, we note that by Proposition \ref{prop_connectedness_dimension_of_local_lifting_ring}, we have $c( A_\cS^T / (\varpi) ) =  4|T| + 3 [K : \bbQ] - 1$. It then follows from \cite[Theorem 2.4]{Bro86} that $c( R_{\cS}^T / (\varpi)) \geq 4|T| - 1 + [K : \bbQ] - 2$, hence $c( R_{\cS} / (\varpi) ) \geq [K : \bbQ] - 2$.
\end{proof}
\begin{remark}
The lower bound on the dimension of $\Spec R_\cS$ appearing in Corollary \ref{cor_connectedness_dimension_Ihara_case} is not expected to be sharp, but rather 1 less than the sharp bound. This is due to the presence of the non-zero term $h^0(K, \ad \overline{\rho}(1))$.
\end{remark}

\subsection{Selmer groups for truncated discrete valuation rings}\label{subsec:dvr_selmer}

Throughout this subsection, we suppose given a deformation problem 
$\cS = (K, \overline{\rho}, S,  \{ \Lambda_v \}_{v \in S}, \{ \cD_v \}_{v \in S})$ 
satisfying the following:
\begin{itemize}
\item For each $v \in S_p$, $\cD_v = \cD_v^\text{ord}$ is of the type considered in \S \ref{subsubsec_ordinary_deformation_problems}.
\item If $v \in S - S_p$, then $\Lambda_v = \cO$.
\item $\zeta_p \in K$ and $i \in K$ if $p = 2$.
\end{itemize}
We set $T = S - S_p$.

We also suppose given a discrete valuation ring $A \in \CNL_\Lambda$ and a type $\cS$ lifting $\rho_A : G_K \to \GL_2(A)$. 
We fix nonzero $t \in \ffrm_A$ with $p \in tA$.  
For each $N \ge 1$, let $\rho_{A/t^N} = \rho_A \otimes_A A/t^N$. 
The trace pairing $(X,Y) \mapsto \tr(XY)$ is perfect on $\ad\rho_{A/t^N}$, so for each $v\in S_p$, 
we can let $\cL_v(\rho_{A/t^N})^\perp \subset H^1(K_v, \ad\rho_{A/t^N}(1))$ be the orthogonal complement of $\cL_v(\rho_{A/t^N})$ 
with respect to the Tate duality pairing. 
We can then define
\[ H^1_{\cS^\perp, T}(\ad\rho_{A/t^N}(1)) = \ker \left( H^1(K_S / K, \ad \rho_{A/t^N}(1)) \to \prod_{v \in S_p} H^1(K_v, \ad\rho_{A/t^N}(1)) / \cL_v(\rho_{A/t^N})^\perp \right). \]
To ease notation in some of what follows, we set $\cL_v(\rho_{A/t^N}) = 0$ for $v\in T$.

The purpose of this subsection is to estimate the size of a certain cohomology group 
$H_{\cS_{Q_N},T}^1(\ad\rho_{A/t^N})$ assuming that the corresponding dual Selmer group 
$H_{\cS_{Q_N}^\perp,T}(\ad\rho_{A/t^N}(1))$ is sufficiently small (Proposition~\ref{prop:selmer_with_TW_primes} below).

\begin{lemma}\label{lem:local_mod_N}
Let $v\in S_p$. 
\begin{enumerate}
\item\label{local_mod_N:fin_alg} For any finite $A$-algebra $B$, $\cL_v^1(\rho_B) \cong B^{4+[K_v:\bbQ_p]}$.
\item\label{local_mod_N:mod_N} For any $N\ge 1$, the natural map $\cL_v(\rho_A)/t^N \to \cL_v(\rho_{A/t^N})$ is an isomorphism. 
\end{enumerate}
\end{lemma}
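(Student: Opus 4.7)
The plan for part~\ref{local_mod_N:fin_alg} is to exploit the formal smoothness of $R_v^{\mathrm{ord}}$ over $\Lambda_v$ from Proposition~\ref{prop_smoothness_of_ordinary_lifting_problem}. Since $\Lambda_v = \cO\llbracket(\cO_{K_v}^\times)^f \times (\cO_{K_v}^\times)^f\rrbracket$ is itself a formal power series ring over $\cO$ in $2[K_v:\bbQ_p]$ variables, comparing $\dim R_v^{\mathrm{ord}} = 5 + 3[K_v:\bbQ_p]$ with $\dim \Lambda_v = 1 + 2[K_v:\bbQ_p]$ identifies $R_v^{\mathrm{ord}}$ with a formal power series $\Lambda_v$-algebra in $4+[K_v:\bbQ_p]$ variables. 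The given lifting $\rho_A$ of type $\cD_v^{\mathrm{ord}}$ corresponds to a $\Lambda_v$-algebra map $\alpha : R_v^{\mathrm{ord}} \to A$, and by representability the fibre of $\cD_v^{\mathrm{ord}}(A \oplus \epsilon B) \to \cD_v^{\mathrm{ord}}(A)$ over $\rho_A$ is the set of $\Lambda_v$-algebra lifts of $\alpha$ to $A \oplus \epsilon B$. This is naturally a torsor under $\Hom_{R_v^{\mathrm{ord}}}(\Omega^1_{R_v^{\mathrm{ord}}/\Lambda_v}, B) \cong B^{4+[K_v:\bbQ_p]}$, and taking the tautological lift arising from the inclusion $A \hookrightarrow A \oplus \epsilon B$ (equivalently, the zero cocycle) as a base point trivializes it as a $B$-module.

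For part~\ref{local_mod_N:mod_N} the plan is to bootstrap from part~\ref{local_mod_N:fin_alg} by a diagram chase. Naturality in $B$ of the isomorphism in part~\ref{local_mod_N:fin_alg}, applied to the quotient map $A \twoheadrightarrow A/t^N$, identifies $\cL_v^1(\rho_A) \to \cL_v^1(\rho_{A/t^N})$ with the canonical surjection $A^{4+[K_v:\bbQ_p]} \twoheadrightarrow (A/t^N)^{4+[K_v:\bbQ_p]}$, yielding an induced isomorphism $\cL_v^1(\rho_A)/t^N \xrightarrow{\sim} \cL_v^1(\rho_{A/t^N})$. Separately, the surjection $\ad\rho_A \twoheadrightarrow \ad\rho_{A/t^N}$ induces a surjection of coboundaries $B^1(K_v, \ad\rho_A)/t^N \twoheadrightarrow B^1(K_v, \ad\rho_{A/t^N})$. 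Observing that coboundaries are always contained in $\cL_v^1$ (since they correspond to lifts of $\rho_A$ that are conjugate to the tautological one, hence of type $\cD_v^{\mathrm{ord}}$), the commutative diagram with right-exact rows
\begin{gather*}
B^1(K_v, \ad\rho_A)/t^N \to \cL_v^1(\rho_A)/t^N \to \cL_v(\rho_A)/t^N \to 0, \\
B^1(K_v, \ad\rho_{A/t^N}) \to \cL_v^1(\rho_{A/t^N}) \to \cL_v(\rho_{A/t^N}) \to 0
\end{gather*}
has its first vertical arrow surjective and its second an isomorphism, so a short four-lemma chase gives the desired isomorphism $\cL_v(\rho_A)/t^N \cong \cL_v(\rho_{A/t^N})$.

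The only step requiring genuine care, I expect, is verifying in part~\ref{local_mod_N:fin_alg} that the $B$-module structure on the torsor of $\Lambda_v$-algebra lifts of $\alpha$ (coming from the universal $\Lambda_v$-derivation $R_v^{\mathrm{ord}} \to \Omega^1_{R_v^{\mathrm{ord}}/\Lambda_v}$) agrees with the $B$-module structure on $\cL_v^1(\rho_B) \subseteq Z^1(K_v, \ad\rho_B)$ coming from scalar multiplication on cocycles. This reduces to a direct computation once one fixes the identification $\rho_{A \oplus \epsilon B} = (1+\epsilon\phi)\rho_A$ between first-order lifts and cocycles $\phi$, so I do not anticipate a serious obstacle.
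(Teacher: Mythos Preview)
Your proposal is correct and follows essentially the same approach as the paper: part~\ref{local_mod_N:fin_alg} uses the formal smoothness of $R_v^{\mathrm{ord}}$ over $\Lambda_v$ (Proposition~\ref{prop_smoothness_of_ordinary_lifting_problem}) to identify the fibre with $B^{4+[K_v:\bbQ_p]}$, and part~\ref{local_mod_N:mod_N} is the same diagram chase, the only cosmetic difference being that the paper places $\ad\rho_A/t^N = \ad\rho_{A/t^N}$ in the left column rather than its image $B^1$. Your remark about checking compatibility of $B$-module structures is a fair point that the paper leaves implicit, and as you say it is routine.
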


\begin{proof}
Let $f : R_v^\mathrm{ord} \to A$ be the $\CNL_{\Lambda_v}$-algebra morphism corresponding to $\rho_A|_{G_{K_v}}$. 
For any finite $A$-algebra $B$, the $B$-module $\cL_v^1(\rho_B)$ equals the fibre of $f$ under $\Hom_{\CNL_{\Lambda_v}}(R_v^\mathrm{ord}, A \oplus \epsilon B) \to \Hom_{\CNL_{\Lambda_v}}(R_v^\mathrm{ord}, A)$. 
By Proposition~\ref{prop_smoothness_of_ordinary_lifting_problem}, $R_v^{\mathrm{ord}} \cong \Lambda_v \llbracket x_1,\ldots,x_{4+[K_v:\bbQ_p]} \rrbracket$, so $\cL_v^1(\rho_B) \cong B^{4+[K_v:\bbQ_p]}$.

Part~\ref{local_mod_N:fin_alg} then implies there is a commutative diagram
	\[ \xymatrix@1{ 
    (\ad\rho_A)/t^N \ar[r] \ar@{=}[d] & \cL_v^1(\rho_A)/t^N \ar[r] \ar@{=}[d] & \cL_v(\rho_A)/t^N \ar[r] \ar[d] & 0 \\
    \ad\rho_{A/t^N} \ar[r] & \cL_v^1(\rho_{A/t^N}) \ar[r] & \cL_v(\rho_{A/t^N}) \ar[r] & 0,
    }\]
which implies part~\ref{local_mod_N:mod_N}.
\end{proof}

Let $F$ denote the fraction field of $A$ and set $\ad\rho_{F/A} = \ad\rho_F/\ad\rho_A$. 
It is a divisible $A$-module with continuous $G_K$-action. 
The isomorphisms $\ad\rho_{F/A}[t^N] \cong \ad\rho_{A/t^N}$ give an isomorphism $H^i(K_S/K, \ad\rho_{F/A}) \cong 
\varinjlim_N H^i(K_S/K, \ad\rho_{A/t^N})$ and similarly for the local cohomology groups and with $\ad\rho_{F/A}(1)$-coefficients. 
We then define $\cL_v(\rho_{F/A}) = \varinjlim_N \cL_v(\rho_{A/t^N}) \subset H^1(K_v, \ad\rho_{F/A})$ for each $v\in S$,
	\[ H_{\cS, T}^1(\ad\rho_{F/A}) = \varinjlim_N H_{\cS, T}^1(\ad\rho_{A/t^N}), \]
and
	\[ H_{\cS^\perp, T}^1(\ad\rho_{F/A}(1)) = \varinjlim_N H_{\cS, T}^1(\ad\rho_{A/t^N}(1)). \]
Tate duality determines a perfect duality (for any $v \in S$):
	\[ H^1(K_v, \ad \rho_{A}) \times H^1(K_v, \ad \rho_{F / A}(1)) \to F / A, \]
	and we write $\cL_v(\rho_{A})^\perp \subset H^1(K_v, \ad \rho_{F / A}(1))$ for the orthogonal complement of $\cL_v(\rho_A)$. Then $\cL_v(\rho_A)^\perp = \varinjlim_N \cL_v(\rho_{A / t^N})^\perp$ and there is an isomorphism
	\[ \cL_v(\rho_A)^\vee \cong H^1(K_v, \ad\rho_{F/A}(1))/\cL_v(\rho_{A})^\perp. \]
There are exact sequences
	\[ \xymatrix@1{ 
\prod_{v\in T} H^0(K_v, \ad\rho_{F/A}) \ar[r] & H_{\cS, T}^1(\ad\rho_{F/A}) \ar[r] & H^1(K_S/K, \ad\rho_{F/A}) \ar[r] & 
\prod_{v \in S} \frac{H^1(K_v, \ad\rho_{F/A})}{\cL_v(\rho_{F/A})} }\]
and
	\[ \xymatrix@1{ 0 \ar[r] & H_{\cS^\perp, T}^1(\ad\rho_{F/A}(1)) \ar[r] & H^1(K_S/K, \ad\rho_{F/A}(1)) 
    \ar[r] & \prod_{v \in S_p} \frac{H^1(K_v, \ad\rho_{F/A}(1))}{\cL_v(\rho_{A})^\perp}. } \]
In particular, both $H_{\cS, T}^1(\ad\rho_{F/A})$ and $H_{\cS^\perp, T}^1(\ad\rho_{F/A}(1))$ 
are cofinitely generated.
\begin{lemma}\label{lem:H1_and_local_torsion}
\begin{enumerate}
 \item\label{H1_and_local_torsion:H1} 
 For any $N\ge 1$, the natural map 
 \[ H^1(K_S/K, \ad\rho_{A/t^N}) \to H^1(K_S/K, \ad\rho_{F/A})[t^N] \] 
 is an isomorphism.
 \item\label{H1_and_local_torsion:local} 
 For any $v\in S$, there is a nonnegative integer $b_v$, depending only on $\rho_A|_{G_{K_v}}$, 
 such that for any $N\ge 1$, the natural map 
 \[ \frac{H^1(K_v, \ad\rho_{A/t^N})}{\cL_v(\rho_{A/t^N})} \to \frac{H^1(K_v, \ad\rho_{F/A})}{\cL_v(\rho_{F/A})}[t^N] \]
has kernel annihilated by $t^{b_v}$.
\end{enumerate}
\end{lemma}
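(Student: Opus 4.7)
The plan is to apply the long exact sequence in continuous cohomology attached to the short exact sequence of $G_K$-modules
\[ 0 \to \ad\rho_{A/t^N} \to \ad\rho_{F/A} \xrightarrow{\cdot t^N} \ad\rho_{F/A} \to 0, \]
where the left-hand inclusion identifies $\ad\rho_{A/t^N}$ with the $t^N$-torsion $\ad\rho_{F/A}[t^N]$, and the right-hand surjection uses that $\ad\rho_{F/A}$ is $t$-divisible. The resulting long exact sequence immediately gives the surjectivity of the map in part~(1), identifies its kernel with $H^0(K_S/K, \ad\rho_{F/A})/t^N$, and supplies the analogous local input for part~(2).

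For part~(1), it suffices to show $H^0(K_S/K, \ad\rho_{F/A}) = F/A$, since the latter is $t^N$-divisible. I would establish this by the Schur-type induction that absolute irreducibility of $\overline{\rho}$ implies $(\ad\rho_{A/t^N})^{G_K} = A/t^N$ for every $N \geq 1$. The case $N=1$ is Schur's lemma over the residue field $k$. For the inductive step, any $\varphi \in \End_{A/t^N[G_K]}(V/t^N V)$ reduces modulo $\ffrm_A$ to a scalar; subtracting a scalar lift, we may assume $\varphi(V/t^N V) \subseteq tV/t^N V$, and so $\varphi = t\psi$ for a unique $A[G_K]$-linear $\psi : V/t^N V \to V/t^{N-1} V$. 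Since $\psi(t^{N-1} x) = t^{N-1}\psi(x) = 0$ in $V/t^{N-1} V$, the map $\psi$ descends to $\End_{A/t^{N-1}[G_K]}(V/t^{N-1} V) = A/t^{N-1}$ by induction, forcing $\varphi$ itself to be a scalar. Passing to the filtered colimit over $N$ and using that $G_K$-invariants commute with filtered colimits of discrete modules over profinite groups then yields $(\ad\rho_{F/A})^{G_K} = \varinjlim A/t^N = F/A$, as needed.

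For part~(2), Schur's lemma generally fails at local places, so $H^0(K_v, \ad\rho_{F/A})$ may strictly contain $F/A$. Instead, the long exact sequence attached to $0 \to \ad\rho_A \to \ad\rho_F \to \ad\rho_{F/A} \to 0$ combined with Proposition~\ref{prop:local-field-cohom}(3) identifies the quotient of $H^0(K_v, \ad\rho_{F/A})$ by its divisible subgroup $H^0(K_v, \ad\rho_A) \otimes_A F/A$ with the $A$-torsion submodule of $H^1(K_v, \ad\rho_A)$. Since $H^1(K_v, \ad\rho_{A/t})$ is finite, Proposition~\ref{prop:local-field-cohom}(1),(2) gives that $H^1(K_v, \ad\rho_A)$ is a finitely generated $A$-module; its torsion submodule is then of finite length, annihilated by some $t^{c_v}$ depending only on $\rho_A|_{G_{K_v}}$. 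Consequently, the kernel of $H^1(K_v, \ad\rho_{A/t^N}) \to H^1(K_v, \ad\rho_{F/A})[t^N]$ is annihilated by $t^{c_v}$. To descend this bound to the quotient by $\cL_v$, I would perform a snake-lemma diagram chase on the natural map of short exact sequences $0 \to \cL_v \to H^1(K_v,-) \to H^1(K_v,-)/\cL_v \to 0$ for the coefficients $\ad\rho_{A/t^N}$ and $\ad\rho_{F/A}$ (with the latter restricted to $t^N$-torsion), using Lemma~\ref{lem:local_mod_N} to identify $\cL_v(\rho_{A/t^N}) = \cL_v(\rho_A)/t^N$ and to exploit that $\cL_v^1(\rho_A)$ is finite free over $A$.

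The main technical obstacle lies in part~(2): the snake lemma introduces additional finite-length contributions from the torsion in $\cL_v(\rho_A)$ and from the failure of $H^1(K_v, \ad\rho_B)[t^N] \to (H^1(K_v, \ad\rho_B)/\cL_v(\rho_B))[t^N]$ to be surjective, and one must verify that all such contributions are absorbed into a single bound $t^{b_v}$ that is uniform in $N$ and depends only on $\rho_A|_{G_{K_v}}$.
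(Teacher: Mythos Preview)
Your approach is the same as the paper's: both parts run via the long exact sequence attached to $0 \to \ad\rho_{A/t^N} \to \ad\rho_{F/A} \xrightarrow{t^N} \ad\rho_{F/A} \to 0$, and part~(2) is finished by a snake-lemma chase on the diagram with rows $0 \to \cL_v \to H^1 \to H^1/\cL_v \to 0$. Your treatment of part~(1) is correct and merely more explicit than the paper's one-line appeal to absolute irreducibility.

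Your worry about ``additional finite-length contributions'' in part~(2), however, is misplaced, and the paper's argument shows why. The missing observation is that the left vertical map $\cL_v(\rho_{A/t^N}) \to \cL_v(\rho_{F/A})[t^N]$ is \emph{surjective}. The paper proves this directly: given $\gamma \in \cL_v(\rho_{F/A})[t^N]$, lift it to $\tilde\gamma \in \cL_v^1(\rho_{F/A})$; then $t^N\tilde\gamma = dX$ for some $X \in \ad\rho_{F/A}$, and divisibility of $\ad\rho_{F/A}$ lets you write $X = t^N Y$, so $\tilde\gamma - dY$ is a lift of $\gamma$ lying in $\cL_v^1(\rho_{F/A})[t^N]$. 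Since $\cL_v^1(\rho_B)$ is free over $B$ (Lemma~\ref{lem:local_mod_N}), one has $\cL_v^1(\rho_{F/A})[t^N] = \cL_v^1(\rho_{A/t^N})$, and the image of $\tilde\gamma - dY$ in $\cL_v(\rho_{A/t^N})$ maps to $\gamma$. With this surjectivity in hand, the snake lemma immediately gives that the kernel of the right vertical map is a quotient of the kernel of the middle one, hence is annihilated by the same $t^{b_v}$ coming from the non-divisible part of $H^0(K_v, \ad\rho_{F/A})$. No further bookkeeping is needed. (Incidentally, the paper bounds that non-divisible part simply by observing that $H^0(K_v, \ad\rho_{F/A})$ is cofinitely generated; your route through the torsion in $H^1(K_v, \ad\rho_A)$ works too but is more roundabout.)
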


\begin{proof}
Applying cohomology to the exact sequence
  \[ \xymatrix@1{ 0 \ar[r] & \ad\rho_{A/t^N} \ar[r] & \ad\rho_{F/A} \ar[r]^{t^N} & \ad\rho_{F/A} \ar[r] & 0 } \]
we obtain a surjection $H^1(K_S/K, \ad\rho_{A/t^N}) \to H^1(K_S/K, \ad\rho_{F/A})[t^N]$
whose kernel is the cokernel of multiplication by $t^N$ on $H^0(K_S/K, \ad\rho_{F/A})$. 
Since $\overline{\rho}$ is absolutely irreducible, $H^0(K_S/K, \ad\rho_{F/A}) = F/A$, the submodule of diagonal matrices, 
on which multiplication by $t^N$ is surjective. This proves part~\ref{H1_and_local_torsion:H1}.

Now fix some $v\in S$.
We similarly have a surjection $H^1(K_v, \ad\rho_{A/t^N}) \to H^1(K_v, \ad\rho_{F/A})[t^N]$ whose kernel is the 
cokernel of multiplication by $t^N$ on $H^0(K_v, \ad\rho_{F/A})$. 
This $A$-module is cofinitely generated and we let $b_v$ be such that $t^{b_v}$ annihilates 
the quotient of $H^0(K_v, \ad\rho_{F/A})$ by its maximal divisible submodule. 
Then $t^{b_v}$ also annihilates the cokernel of multiplication by $t^N$ on $H^0(K_v, \ad\rho_{F/A})$.
If $v \notin S_p$, then part~\ref{H1_and_local_torsion:local} holds since $\cL_v = 0$ in this case. 
Now assume that $v\in S_p$, and 
set $\cL_v^1(\rho_{F/A}) = \varinjlim_N \cL_v^1(\rho_{A/t^N})$. 
It follows from Lemma~\ref{lem:local_mod_N} that the natural map $\cL_v^1(\rho_{A/t^N}) \to \cL_v^1(\rho_{F/A})[t^N]$ 
is an isomorphism. 
Take $\gamma \in \cL_v(\rho_{F/A})[t^N]$, and choose $\tilde{\gamma} \in \cL_v^1(\rho_{F/A})$ lifting $\gamma$. 
Then $t^N\tilde{\gamma} = dX$ with $X \in \ad\rho_{F/A}$. 
Since $\ad\rho_{F/A}$ is divisible, there is $Y \in \ad\rho_{F/A}$ with $t^N Y = X$. 
Then $\tilde{\gamma} - dY$ is another lift of $\gamma$ and lies in $\cL_v^1(\rho_{F/A})[t^N] \cong \cL_v^1(\rho_{A/t^N})$. 
Its image in $\cL_v(\rho_{A/t^N})$ then maps to $\gamma$, and $\cL_v(\rho_{A/t^N}) \to \cL_v(\rho_{F/A})[t^N]$ is surjective.
We can then apply the snake lemma to the commutative diagram
  \[ 
  \xymatrix@1{
  0 \ar[r] & \cL_v(\rho_{A/t^N}) \ar[r] \ar[d] & H^1(K_v, \ad\rho_{A/t^N}) \ar[r] \ar[d] & 
  \frac{H^1(K_v, \ad\rho_{A/t^N})}{\cL_v(\rho_{A/t^N})} \ar[r] \ar[d] & 0 \\
  0 \ar[r] & \cL_v(\rho_{F/A})[t^N] \ar[r] & H^1(K_v, \ad\rho_{F/A})[t^N] \ar[r] 
  & \frac{H^1(K_v, \ad\rho_{F/A})}{\cL_v(\rho_{F/A})}[t^N].  &
  }
  \]
\end{proof}

\begin{lemma}\label{lem:dual_H1_and_local_torsion}
\begin{enumerate}
 \item\label{dual_H1_and_local_torion:H1} 
 For any $N \geq 1$, the natural map 
 \[ H^1(K_S/K, \ad\rho_{A/t^N}(1)) \to H^1(K_S/K, \ad\rho_{F/A}(1))[t^N]\]
 is an isomorphism.
 \item For any $v\in S_p$ and $N \ge 1$, 
the natural map 
 \[ \frac{H^1(K_v, \ad\rho_{A/t^N}(1))}{\cL_v(\rho_{A/t^N})^\perp} \to 
 \frac{H^1(K_v, \ad\rho_{F/A}(1))}{\cL_v(\rho_{A})^\perp}[t^N] \] 
 is an isomorphism.
\end{enumerate}
\end{lemma}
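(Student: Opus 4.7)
For part 1, I plan to mimic the argument of Lemma~\ref{lem:H1_and_local_torsion}(\ref{H1_and_local_torsion:H1}). The long exact sequence of continuous Galois cohomology associated to
\[ 0 \to \ad\rho_{A/t^N}(1) \to \ad\rho_{F/A}(1) \xrightarrow{t^N} \ad\rho_{F/A}(1) \to 0 \]
makes the natural map in the statement surjective onto $H^1(K_S/K, \ad\rho_{F/A}(1))[t^N]$, with kernel equal to the cokernel of multiplication by $t^N$ on $H^0(K_S/K, \ad\rho_{F/A}(1))$. It therefore suffices to show that this $H^0$ is $t$-divisible. The hypothesis $\zeta_p \in K$ forces $\overline{\epsilon} = 1$, so that $\ad\overline{\rho}(1) = \ad\overline{\rho}$ as residual $G_K$-modules and $(\ad\overline{\rho}(1))^{G_K} = k$ by Schur's lemma. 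Running the inductive argument of the cited proof on the short exact sequences $0 \to \ad\overline{\rho} \to \ad\rho_{A/t^N}(1) \to \ad\rho_{A/t^{N-1}}(1) \to 0$ bounds the length of $(\ad\rho_{A/t^N}(1))^{G_K}$ by $N$; combined with Proposition~\ref{prop:local-field-cohom}(\ref{cohom:vector-space}) applied to $\ad\rho_A(1)$, which shows that $(\ad\rho_A(1))^{G_K}$ is a free $A$-submodule of the free module $\ad\rho_A(1)$ of rank $0$ or $1$ (according to whether $\rho_A \cong \rho_A(1)$ over $F$, via Schur applied to the absolutely irreducible lift $\rho_F$), passage to the direct limit identifies $H^0(K_S/K, \ad\rho_{F/A}(1))$ with either $0$ or $F/A$, both of which are divisible.

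For part 2, I plan to pass through Tate local duality. The perfect pairings
\[ H^1(K_v, \ad\rho_{A/t^N}) \times H^1(K_v, \ad\rho_{A/t^N}(1)) \to A/t^N \]
and
\[ H^1(K_v, \ad\rho_A) \times H^1(K_v, \ad\rho_{F/A}(1)) \to F/A \]
give natural identifications
\[ H^1(K_v, \ad\rho_{A/t^N}(1)) / \cL_v(\rho_{A/t^N})^\perp \cong \Hom_A(\cL_v(\rho_{A/t^N}), A/t^N) \]
and
\[ H^1(K_v, \ad\rho_{F/A}(1)) / \cL_v(\rho_A)^\perp \cong \Hom_A(\cL_v(\rho_A), F/A). \]
By Lemma~\ref{lem:local_mod_N}(\ref{local_mod_N:fin_alg}) the $A$-module $\cL_v(\rho_A)$ is finitely generated, so the standard identity $\Hom_A(\cL_v(\rho_A), F/A)[t^N] = \Hom_A(\cL_v(\rho_A)/t^N, A/t^N)$ applies, and the isomorphism $\cL_v(\rho_A)/t^N \cong \cL_v(\rho_{A/t^N})$ from Lemma~\ref{lem:local_mod_N}(\ref{local_mod_N:mod_N}) closes the argument, modulo the routine bookkeeping check that the map in the statement agrees with the map induced between Pontryagin duals.

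The main obstacle lies in part 1, specifically in the divisibility of $H^0(K_S/K, \ad\rho_{F/A}(1))$ in the case $\rho_A \not\cong \rho_A(1)$ over $F$: there $(\ad\rho_A(1))^{G_K} = 0$, yet $(\ad\rho_{A/t^N}(1))^{G_K}$ may be non-zero, injecting via the connecting homomorphism into $H^1(K_S/K, \ad\rho_A(1))[t^N]$. Ruling out a non-trivial, hence non-divisible, contribution to the direct limit will require a closer analysis of this connecting map, ultimately showing that the image of $H^0(K_S/K, \ad\rho_{F/A}(1))$ in the finitely generated module $H^1(K_S/K, \ad\rho_A(1))$ is both divisible and bounded, and so trivial. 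Part 2, in contrast, is essentially formal once Lemma~\ref{lem:local_mod_N} is available.
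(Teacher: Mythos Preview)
Part 2 is correct and matches the paper exactly: the map is Pontryagin dual to $\cL_v(\rho_A)/t^N\to\cL_v(\rho_{A/t^N})$, which is an isomorphism by Lemma~\ref{lem:local_mod_N}(\ref{local_mod_N:mod_N}).

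For part 1, your long-exact-sequence reduction to the $t$-divisibility of $H^0(K_S/K,\ad\rho_{F/A}(1))$ is precisely the paper's route; the paper simply says the proof of Lemma~\ref{lem:H1_and_local_torsion}(\ref{H1_and_local_torsion:H1}) goes through verbatim, noting only that $\ad\overline{\rho}(1)=\ad\overline{\rho}$ since $\zeta_p\in K$. When $A$ has characteristic $p$ the Tate twist is literally trivial in $A$-coefficients and $H^0\cong F/A$, so the argument is complete --- and this is all the paper's one line actually establishes.

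Your worry about characteristic $0$ is therefore well placed, but the resolution you sketch cannot succeed: you will not be able to show $H^0(K_S/K,\ad\rho_{F/A}(1))=0$, because it is not zero. The scalar matrices already contribute $(F/A)(1)^{G_K}=\{c\in F/A:(\epsilon(g)-1)c=0\text{ for all }g\}=(F/A)[t^m]\cong A/t^m$, where $(t^m)$ is the ideal of $A$ generated by $\{\epsilon(g)-1:g\in G_K\}$; since $\zeta_p\in K$ forces $\epsilon(g)-1\in p\bbZ_p\subset tA$, one has $m\ge 1$. Thus in characteristic $0$ the module $H^0$ is finite and nonzero, hence not $t$-divisible, and the map in the statement has nonzero kernel for $N\ge m$. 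So your plan to exhibit the image of $H^0$ in $H^1(K_S/K,\ad\rho_A(1))$ as ``divisible and bounded, hence trivial'' fails at the divisibility step. What \emph{is} true is that this kernel is bounded independently of $N$; that weakened form is all that the downstream arguments need (the conclusions of Lemmas~\ref{lem:selmer_torsion} and~\ref{lem:selmer_size} are already phrased in terms of kernels and cokernels annihilated by a uniform $t^c$), and the paper's terse ``same way'' elides the distinction.
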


\begin{proof}
Since $\zeta_p \in K$, we have $\ad \overline{\rho} = \ad \overline{\rho}(1)$. The first part follows in the same way as the first part of Lemma~\ref{lem:H1_and_local_torsion}. The map in part 2 is Pontryagin dual to the map $\cL_v(\rho_A)/t^N \to \cL_v(\rho_{A/t^N})$, 
which is an isomorphism by Lemma~\ref{lem:local_mod_N}. 
\end{proof}

\begin{lemma}\label{lem:selmer_torsion}
Let $N \ge 1$ be an integer.
\begin{enumerate}
 \item\label{selmer_torsion:selmer} There is a nonnegative integer $b$, depending only on $\cS$ and $\rho_A$ and not on $N$, such that $H_{\cS, T}^1(\ad\rho_{A/t^N}) \to H_{\cS, T}^1(\ad\rho_{F/A})[t^N]$ is injective with cokernel annihilated by $t^b$.
 \item\label{selmer_torsion:dual_selmer} $H_{\cS^\perp, T}^1(\ad\rho_{A/t^N}(1)) \to H_{\cS^\perp, T}^1(\ad\rho_{F/A}(1))[t^N]$ is an isomorphism. 
\end{enumerate}
\end{lemma}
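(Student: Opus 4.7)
The plan is to deduce both parts from long exact sequences induced by the short exact sequence of discrete $G_K$-modules
\[ 0 \to \ad\rho_{A/t^N} \to \ad\rho_{F/A} \xrightarrow{t^N} \ad\rho_{F/A} \to 0 \]
(or its twist by $(1)$ in part~(\ref{selmer_torsion:dual_selmer})), using that $\ad\rho_{F/A}$ is $t$-divisible.

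For part~(\ref{selmer_torsion:selmer}), my first step would be to observe that since $\ad\rho_{F/A}$ is $t$-divisible, and since by Lemma~\ref{lem:local_mod_N}(\ref{local_mod_N:fin_alg}) $\cL_v^1(\rho_{F/A}) \cong (F/A)^{4 + [K_v : \bbQ_p]}$ is also $t$-divisible for each $v \in S_p$, the above short exact sequence of coefficient modules lifts to a short exact sequence of defining cochain complexes
\[ 0 \to C_{\cS, T}(\ad\rho_{A/t^N}) \to C_{\cS, T}(\ad\rho_{F/A}) \xrightarrow{t^N} C_{\cS, T}(\ad\rho_{F/A}) \to 0. \]
The associated long exact sequence in cohomology produces
\[ 0 \to H^0_{\cS, T}(\ad\rho_{F/A})/t^N \to H^1_{\cS, T}(\ad\rho_{A/t^N}) \to H^1_{\cS, T}(\ad\rho_{F/A})[t^N] \to 0. \]
If $T \ne \emptyset$, then inspection of the differential $C^0_{\cS, T} \to C^1_{\cS, T}$ (whose $T$-components are just restriction to $\ad\rho_{F/A}$) shows $H^0_{\cS, T}(\ad\rho_{F/A}) = 0$; if $T = \emptyset$, then $H^0_{\cS, T}(\ad\rho_{F/A}) = (\ad\rho_{F/A})^{G_K} = F/A$ by absolute irreducibility of $\overline{\rho}$, which is $t$-divisible. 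In either case the leftmost term vanishes, yielding the stronger statement that $H^1_{\cS, T}(\ad\rho_{A/t^N}) \to H^1_{\cS, T}(\ad\rho_{F/A})[t^N]$ is an isomorphism (so part~(\ref{selmer_torsion:selmer}) holds even with $b = 0$).

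For part~(\ref{selmer_torsion:dual_selmer}), I would compare the defining three-term left-exact sequences
\[ 0 \to H^1_{\cS^\perp, T}(\ad\rho_M(1)) \to H^1(K_S/K, \ad\rho_M(1)) \to \prod_{v \in S_p} H^1(K_v, \ad\rho_M(1))/\cL_v(\rho_M)^\perp \]
for $M = A/t^N$ on top and $M = F/A$ with $[t^N]$ applied termwise on the bottom. By Lemma~\ref{lem:dual_H1_and_local_torsion}(\ref{dual_H1_and_local_torion:H1}), the middle vertical map is an isomorphism, and by part~(2) of the same lemma, the rightmost vertical map is an isomorphism. Passing to kernels, the induced map $H^1_{\cS^\perp, T}(\ad\rho_{A/t^N}(1)) \to H^1_{\cS^\perp, T}(\ad\rho_{F/A}(1))[t^N]$ is also an isomorphism, which is part~(\ref{selmer_torsion:dual_selmer}).

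The main subtlety is verifying in part~(\ref{selmer_torsion:selmer}) that the sequence of defining complexes is actually short exact; this amounts to checking the $t$-divisibility of the quotients $C^1(K_v, \ad\rho_{F/A})/\cL_v^1(\rho_{F/A})$ for $v \in S_p$, which in turn requires the $t$-divisibility of $\cL_v^1(\rho_{F/A})$ from Lemma~\ref{lem:local_mod_N}. Once this is in place, both parts reduce to essentially automatic long exact sequence and diagram chase arguments.
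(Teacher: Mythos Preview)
Your argument for part~(\ref{selmer_torsion:dual_selmer}) is exactly the paper's: a diagram chase on the defining left-exact sequences, using Lemma~\ref{lem:dual_H1_and_local_torsion} for the middle and right vertical isomorphisms.

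Your argument for part~(\ref{selmer_torsion:selmer}) is correct but genuinely different from the paper's, and in fact stronger. The paper does not work with the complex $C_{\cS,T}(\ad\rho_{F/A})$ directly; instead it introduces the intermediate Selmer group $H^1_{\cS,T}(K_S/K,\ad\rho_B)$ (the kernel of the map on $H^1$ rather than on cochains), compares it for $B=A/t^N$ and $B=F/A$ via the diagram built from Lemma~\ref{lem:H1_and_local_torsion}, and then relates it to the full $H^1_{\cS,T}$ via the short exact sequence involving $\prod_{v\in T} H^0(K_v,\ad\rho_B)/H^0(K,\ad\rho_B)$. The constant $b$ in the paper's proof arises from the constants $b_v$ of Lemma~\ref{lem:H1_and_local_torsion}(\ref{H1_and_local_torsion:local}), which measure the failure of $H^0(K_v,\ad\rho_{F/A})$ to be $t$-divisible. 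By contrast, your approach stays at the cochain level, where everything relevant \emph{is} $t$-divisible (the cochain groups because $\ad\rho_{F/A}$ is discrete and divisible, and $\cL_v^1(\rho_{F/A})\cong (F/A)^{4+[K_v:\bbQ_p]}$ as a direct limit of the finite-level statement in Lemma~\ref{lem:local_mod_N}); this sidesteps the $b_v$'s entirely and yields $b=0$. Your approach is cleaner here; the paper's has the virtue of reusing Lemma~\ref{lem:H1_and_local_torsion}, which is stated at the cohomology level and is needed elsewhere anyway. One small point: you should make explicit that exactness of the sequence of complexes in degree~2 at the $v\in S_p$ summand also uses $\cL_v^1(\rho_{F/A})[t^N]=\cL_v^1(\rho_{A/t^N})$ (for left injectivity and middle exactness), not only the $t$-divisibility of $\cL_v^1(\rho_{F/A})$; both follow from the same description, but your final paragraph mentions only the latter.
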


\begin{proof}
We have a commutative diagram with exact rows
  \[ \xymatrix@1{ 
  0 \ar[r] & H^1_{\cS^\perp, T}(K_S/K, \ad\rho_{A/t^N}(1)) \ar[r] \ar[d] & H^1(K_S/K, \ad\rho_{A/t^N}(1)) \ar[r] \ar[d] & 
  \prod_{v\in S_p} \frac{H^1(K_v, \ad\rho_{A/t^N}(1))}{\cL_v(\rho_{A/t^N})^\perp} \ar[d] \\
  0 \ar[r] & H^1_{\cS^\perp, T}(K_S/K, \ad\rho_{F/A}(1))[t^N] \ar[r] & H^1(K_S/K, \ad\rho_{F/A}(1))[t^N] \ar[r] & 
  \prod_{v\in S_p} \frac{H^1(K_v, \ad\rho_{F/A}(1))}{\cL_v(\rho_{A})^\perp}[t^N]. 
   } \]
Part~\ref{selmer_torsion:dual_selmer} of the lemma then follows from Lemma~\ref{lem:dual_H1_and_local_torsion} and a diagram chase. 

We now turn to part~\ref{selmer_torsion:selmer}. Let
  \[
   H_{\cS, T}^1(K_S/K, \ad\rho_{A/t^N}) = \ker \left(H^1(K_S/K, \ad\rho_{A/t^N}) \to \prod_{v\in S} H^1(K_v, \ad\rho_{A/t^N})/\cL_v(\rho_{A/t^N}) \right)
  \]
and similarly for $H_{\cS, T}^1(K_S/K, \ad\rho_{F/A})$. 
Let $b_v$, $v\in S$, be as in part~\ref{H1_and_local_torsion:local} of 
Lemma~\ref{lem:H1_and_local_torsion} and set $b = \max_{v\in S}\{b_v\}$. 
There is a commutative diagram with exact rows
  \[ \xymatrix@1{ 
  0 \ar[r] & H^1_{\cS, T}(K_S/K, \ad\rho_{A/t^N}) \ar[r] \ar[d] & H^1(K_S/K, \ad\rho_{A/t^N}) \ar[r] \ar[d] & 
  \prod_{v\in S} \frac{H^1(K_v, \ad\rho_{A/t^N})}{\cL_v(\rho_{A/t^N})} \ar[d] \\
  0 \ar[r] & H^1_{\cS, T}(K_S/K, \ad\rho_{F/A})[t^N] \ar[r] & H^1(K_S/K, \ad\rho_{F/A})[t^N] \ar[r] & 
  \prod_{v\in S} \frac{H^1(K_v, \ad\rho_{F/A})}{\cL_v(\rho_{F/A})}[t^N].
   } \]
Lemma~\ref{lem:H1_and_local_torsion} and a diagram chase shows that 
$H_{\cS, T}^1(K_S/K, \ad\rho_{A/t^N}) \to H_{\cS, T}^1(K_S/K, \ad\rho_{F/A})[t^N]$ 
is injective with cokernel annihilated by $t^b$. 
If $T = \emptyset$, we are finished. 
Otherwise, note that $H^0(K, \ad\rho_{A/t^N}) = A/t^N$, the submodule of scalar matrices,
is a direct summand of $\prod_{v\in T} H^0(K_v, \ad\rho_{A/t^N})$, 
and similarly for $\ad\rho_{F/A}$. 
From this, it follows that we have equalities
	\[ \frac{\prod_{v\in T} H^0(K_v, \ad\rho_{F/A})}{H^0(K, \ad\rho_{F/A})}[t^N] = 
    \frac{\prod_{v\in T} H^0(K_v, \ad\rho_{F/A})[t^N]}{H^0(K, \ad\rho_{F/A})[t^N]} = 
    \frac{\prod_{v\in T} H^0(K_v, \ad\rho_{A/t^N})}{H^0(K, \ad\rho_{A/t^N})}.\]
Part 1 then follows from from the commutative diagram with exact rows
	\[ \xymatrix@1{ 
    0 \ar[r] & \frac{\prod_{v\in T} H^0(K_v, \ad\rho_{A/t^N})}{H^0(K, \ad\rho_{A/t^N})} \ar[r] \ar@{=}[d] 
    & H_{\cS, T}^1(\ad\rho_{A/t^N}) \ar[r] \ar[d] & H_{\cS, T}^1(K_S/K, \ad\rho_{A/t^N}) \ar[r] \ar[d] & 0 \\
    0 \ar[r] & \frac{\prod_{v\in T} H^0(K_v, \ad\rho_{F/A})}{H^0(K, \ad\rho_{F/A})}[t^N] \ar[r]  
    & H_{\cS, T}^1(\ad\rho_{F/A})[t^N] \ar[r] & H_{\cS, T}^1(K_S/K, \ad\rho_{F/A})[t^N].  & 
    }\]
\end{proof}

\begin{lemma}\label{lem:selmer_size}
\begin{enumerate}
\item\label{selmer_size:selmer} Let $h = \operatorname{corank}_A H_{\cS,T}^1(\ad\rho_{F/A})$. 
There is a nonnegative integer $b$, depending only on $\cS$ and $\rho_A$,
such that for each $N \ge 1$, there is an $A$-module map $H_{\cS,T}^1(\ad\rho_{A/t^N}) \to (A/t^N)^h$
with kernel and cokernel annihilated by $t^b$.
\item\label{selmer_size:dual_selmer} Let $q = \operatorname{corank}_A H_{\cS^\perp, T}^1(\ad\rho_{F/A}(1))$. 
There is a nonnegative integer $c$, depending only on $\cS$ and $\rho_A$, 
such that for each $N\ge 1$, there an injective $A$-module map $(A/t^N)^q \to H_{\cS^\perp,T}^1(\ad\rho_{A/t^N}(1))$
with cokernel annihilated by $t^c$.
\end{enumerate}
\end{lemma}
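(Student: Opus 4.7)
The plan is to deduce both statements from Lemma~\ref{lem:selmer_torsion} together with the structure theorem for cofinitely generated modules over the discrete valuation ring $A$. Recall that any such module $M$ decomposes as $M \cong M_{\mathrm{div}} \oplus M_{\mathrm{fin}}$ with $M_{\mathrm{div}} \cong (F/A)^{h}$, where $h = \operatorname{corank}_{A} M$, and with $M_{\mathrm{fin}}$ a finite $A$-module, hence annihilated by some power $t^{b_{0}}$. Passing to $t^{N}$-torsion yields a direct sum decomposition $M[t^{N}] \cong (A/t^{N})^{h} \oplus M_{\mathrm{fin}}[t^{N}]$, so the projection onto the first factor is a surjection $M[t^{N}] \twoheadrightarrow (A/t^{N})^{h}$ with kernel $M_{\mathrm{fin}}[t^{N}]$ annihilated by $t^{b_{0}}$; dually, the inclusion of the first factor is an injection $(A/t^{N})^{h} \hookrightarrow M[t^{N}]$ with cokernel annihilated by $t^{b_{0}}$. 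Crucially, $b_{0}$ depends only on $M$, not on $N$.

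For part~\ref{selmer_size:selmer}, I apply the above to $M = H^{1}_{\cS, T}(\ad \rho_{F/A})$, which is cofinitely generated by the discussion immediately preceding Lemma~\ref{lem:H1_and_local_torsion}. Composing the resulting surjection $H^{1}_{\cS, T}(\ad \rho_{F/A})[t^{N}] \twoheadrightarrow (A/t^{N})^{h}$ with the injection
\[ H^{1}_{\cS, T}(\ad \rho_{A/t^{N}}) \hookrightarrow H^{1}_{\cS, T}(\ad \rho_{F/A})[t^{N}] \]
furnished by Lemma~\ref{lem:selmer_torsion}(\ref{selmer_torsion:selmer}), whose cokernel is annihilated by some $t^{b}$ depending only on $\cS$ and $\rho_{A}$, gives an $A$-module map $H^{1}_{\cS, T}(\ad \rho_{A/t^{N}}) \to (A/t^{N})^{h}$. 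A routine diagram chase shows its kernel is annihilated by $t^{b_{0}}$ and its cokernel by $t^{b}$; the conclusion holds with the constant $\max(b, b_{0})$.

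For part~\ref{selmer_size:dual_selmer}, I apply the structure theorem instead to $M = H^{1}_{\cS^{\perp}, T}(\ad \rho_{F/A}(1))$ to obtain an injection $(A/t^{N})^{q} \hookrightarrow H^{1}_{\cS^{\perp}, T}(\ad \rho_{F/A}(1))[t^{N}]$ whose cokernel is annihilated by some $t^{c}$ depending only on $\rho_{A}$. Lemma~\ref{lem:selmer_torsion}(\ref{selmer_torsion:dual_selmer}) then provides an isomorphism
\[ H^{1}_{\cS^{\perp}, T}(\ad \rho_{A/t^{N}}(1)) \xrightarrow{\sim} H^{1}_{\cS^{\perp}, T}(\ad \rho_{F/A}(1))[t^{N}], \]
so composing its inverse with the injection above yields the required map, with cokernel still annihilated by $t^{c}$.

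I do not anticipate any real obstacle: the substance lies in Lemma~\ref{lem:selmer_torsion}, and the only subtlety is verifying that the constants coming from the structure theorem depend on the ambient direct-limit module but not on $N$. This is automatic because the decomposition $M = M_{\mathrm{div}} \oplus M_{\mathrm{fin}}$ is intrinsic to $M$, and Lemma~\ref{lem:selmer_torsion} already asserts the required uniformity of its constant $b$.
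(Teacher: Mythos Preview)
Your proposal is correct and follows essentially the same approach as the paper: both arguments combine Lemma~\ref{lem:selmer_torsion} with the structure of cofinitely generated modules over the DVR $A$ (the paper phrases this as taking a power of $t$ annihilating the quotient by the maximal divisible submodule, while you write out the splitting $M \cong M_{\mathrm{div}} \oplus M_{\mathrm{fin}}$ explicitly). The composition of maps and the bookkeeping of kernels and cokernels are the same; the paper records the final constant as a sum $b_1 + b_2$ where you take a maximum, but either choice works.
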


\begin{proof}
Let $c \ge 0$ be such that $t^c$ annihilates the quotient of $H_{\cS^\perp, T}^1(\ad_{F/A}(1))$ by its maximal divisible submodule. 
Part~\ref{selmer_size:dual_selmer} of the lemma holds with this choice of $c$ by part~\ref{selmer_torsion:dual_selmer} of Lemma~\ref{lem:selmer_torsion}.
Let $b_1 \ge 0$ be such that $t^{b_1}$ annihilates the quotient of 
$H^1_{\cS, T}(\ad\rho_{F/A})$ by its maximal divisible submodule. 
For each $N \ge 1$, we can find a surjective $A/t^N$-module map 
$H^1_{\cS,T}(\ad\rho_{F/A})[t^N] \to (A/t^N)^h$ with kernel annihilated by $t^{b_1}$.
By part~\ref{selmer_size:selmer} of Lemma~\ref{lem:selmer_torsion}, there is $b_2 \ge 0$ depending only on $\cS$ and $\rho_A$, 
such that for each $N\ge 1$, $H_{\cS,T}^1(\ad\rho_{A/t^N}) \to H_{\cS,T}^1(\ad\rho_{F/A})[t^N]$ 
is injective with cokernel annihilated by $t^{b_2}$, 
Part~\ref{selmer_size:selmer} of the lemma then with $b = b_1 + b_2$.
\end{proof}

\begin{lemma}\label{lem:greenberg_wiles}
Set $\delta_T = 1$ if $T$ is empty and $\delta_T = 0$ otherwise. 
Let $\delta_A = 1$ if $A$ has characteristic $p$, and $\delta_A = 0$ if $A$ has characteristic $0$.
For $h = \operatorname{corank} H_{\cS,T}^1(\ad\rho_{F/A})$ and 
$q = \operatorname{corank} H_{\cS^\perp,T}^1(\ad\rho_{F/A}(1))$, 
we have $q - h = \delta_A + [K:\bbQ] - \delta_T$.
\end{lemma}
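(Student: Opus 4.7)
The plan is to apply the $A$-module analog of Proposition \ref{prop_presentation_by_generators_and_relations}(2) to the Selmer complex $C_{\cS, T}(\ad\rho_{A/t^N})$ for each $N \ge 1$, then compare with the estimates from Lemma \ref{lem:selmer_size} as $N \to \infty$.

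First, the proof of Proposition \ref{prop_presentation_by_generators_and_relations}(2), which is based on Tate global duality and the local/global Euler--Poincar\'e formulas, extends verbatim from residue-field coefficients to finite $A$-module coefficients with lengths $\ell_A$ replacing dimensions $\dim_k$. Applied to $\ad\rho_{A/t^N}$, this yields an identity expressing $\ell_A H^1_{\cS^\perp, T}(\ad\rho_{A/t^N}(1)) - \ell_A H^1_{\cS, T}(\ad\rho_{A/t^N})$ as $\ell_A H^0(K, \ad\rho_{A/t^N}(1)) + \sum_{v\mid\infty}\ell_A H^0(K_v, \ad\rho_{A/t^N}) - \sum_{v\in S_p}\bigl(\ell_A\cL_v(\rho_{A/t^N}) - \ell_A H^0(K_v, \ad\rho_{A/t^N})\bigr) - \delta_T\,\ell_A H^0(K, \ad\rho_{A/t^N})$.

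Next, I would compute the leading coefficient in $N$ of each term on the right. Absolute irreducibility of $\overline{\rho}$ forces $H^0(K, \ad\rho_{A/t^N})$ to be the submodule of scalars, of length $N$. Since $\zeta_p \in K$ (and $i \in K$ when $p = 2$), $K$ is totally imaginary, so each archimedean $v$ is complex with trivial decomposition group and contributes $4N$, summing to $2[K:\bbQ]N$. Using Lemma \ref{lem:local_mod_N}(1) together with the identification of $\cL_v(\rho_{A/t^N})$ as the quotient of $\cL_v^1(\rho_{A/t^N})$ by the image of $\ad\rho_{A/t^N}/H^0(K_v, \ad\rho_{A/t^N})$, the local term at $v \in S_p$ equals $[K_v:\bbQ_p]N$, summing to $[K:\bbQ]N$. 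For the cyclotomic twist term: when $A$ has characteristic $p$, $\bbZ_p \to A$ factors through $\bbF_p$ and $\zeta_p \in K$ forces $\epsilon$ to become trivial in $A^\times$, so $\ad\rho_{A/t^N}(1) = \ad\rho_{A/t^N}$ and the length is $N$; when $A$ has characteristic $0$, $\epsilon$ is non-trivial on $G_K$ and absolute irreducibility of $\rho_F = \rho_A\otimes_A F$ rules out any isomorphism $\rho_F \cong \rho_F\otimes\epsilon$, whence $H^0(K, \ad\rho_F(1)) = 0$; the long exact sequence attached to $0 \to \ad\rho_A(1) \to \ad\rho_F(1) \to \ad\rho_{F/A}(1) \to 0$ then embeds $H^0(K, \ad\rho_{F/A}(1))$ into the finitely generated $A$-module $H^1(K, \ad\rho_A(1))$, and being also $A$-torsion this invariants group is finite, giving $\ell_A H^0(K, \ad\rho_{A/t^N}(1)) = O(1)$. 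In either case one obtains $\ell_A H^0(K, \ad\rho_{A/t^N}(1)) = \delta_A N + O(1)$.

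Substituting these asymptotics, the right-hand side equals $(\delta_A + [K:\bbQ] - \delta_T)N + O(1)$; by Lemma \ref{lem:selmer_size}, the left-hand side equals $(q - h)N + O(1)$. Dividing by $N$ and letting $N \to \infty$ yields $q - h = \delta_A + [K:\bbQ] - \delta_T$. The main technical point is the characteristic-$0$ case of the cyclotomic twist computation, where absolute irreducibility of $\rho_F$ is crucial to rule out non-trivial self-twists by $\epsilon$; the rest is bookkeeping with standard Euler--Poincar\'e formulas and Lemma \ref{lem:selmer_size}.
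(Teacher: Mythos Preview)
Your proof is correct and follows essentially the same approach as the paper. Both arguments apply the Greenberg--Wiles formula (the cardinality version of Proposition \ref{prop_presentation_by_generators_and_relations}(2)) at each finite level $N$, compute the local contribution at $v\in S_p$ via Lemma \ref{lem:local_mod_N}, analyze $H^0(K,\ad\rho_{A/t^N}(1))$ by splitting on the characteristic of $A$, and extract the corank identity using Lemma \ref{lem:selmer_size}. The paper states the resulting identity $\lvert H_{\cS,T}^1\rvert / \lvert H_{\cS^\perp,T}^1\rvert = \lvert A/t^N\rvert^{\delta_T - [K:\bbQ]} / \lvert H^0(K,\ad\rho_{A/t^N}(1))\rvert$ directly, whereas you derive it by computing the archimedean and $S_p$ terms separately; in characteristic $0$ the paper uses the sequence $0\to \ad\rho_A(1)\xrightarrow{t^N}\ad\rho_A(1)\to \ad\rho_{A/t^N}(1)\to 0$ rather than your $0\to \ad\rho_A(1)\to \ad\rho_F(1)\to \ad\rho_{F/A}(1)\to 0$, but both routes yield the same bound.
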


\begin{proof}
By Lemma~\ref{lem:local_mod_N}, for any $N \ge 1$ and $v\in S_p$, 
	\[ 
    \frac{\lvert \cL_v(\rho_{A/t^N}) \rvert}{\lvert H^0(K_v, \ad\rho_{A/t^N}) \rvert} = 
    \frac{\lvert \cL_v^1(\rho_{A/t^N}) \rvert}{\lvert \ad\rho_{A/t^N} \rvert} = \lvert A/t^N \rvert^{[K_v:\bbQ_p]}.
    \]
Using the fundamental duality theorems in local and global Galois cohomology, 
as in \cite[\S 5.3]{Tho16}, we then have
	\[ \frac{\lvert H_{\cS, T}^1(\ad\rho_{A/t^N})\rvert}{\lvert H_{\cS^\perp, T}^1(\ad\rho_{A/t^N}(1))\rvert} = 
    \frac{\lvert A/t^N \rvert^{\delta_T - [K:\bbQ]}}{\lvert H^0(K, \ad\rho_{A/t^N}(1)) \rvert}
    \]
for all $N \ge 1$. 
Applying Lemma~\ref{lem:selmer_size}, it only remains to show that 
$\operatorname{corank}_A H^0(K_S/K, \ad\rho_{F/A}(1)) = \delta_A$.

If $A$ has characteristic $p$, then $H^0(K, \ad\rho_{F/A}(1)) = H^0(K, \ad\rho_{F/A}) \cong A/t^N$ is  
the submodule of scalar matrices, since $\overline{\rho}$ is absolutely irreducible. 
If $A$ has characteristic $0$, then we claim that $H^0(K, \ad\rho_{A/t^N}(1))$ has cardinality 
bounded independently of $N$. 
Since $F$ has characteristic $0$, $\epsilon^2 \ne 1$. 
This implies that there is no nontrivial map between the absolutely irreducible representations $\rho_F$ and $\rho_F(1)$, so $H^0(K, \ad\rho_A(1)) = 0$. 

Applying cohomology to 
	\[ \xymatrix@1{ 0 \ar[r] & \ad\rho_A(1) \ar[r]^{t^N} & \ad\rho_A(1) \ar[r] & \ad\rho_{A/t^N}(1) \ar[r] & 0, }\]
we see that $H^0(K, \ad\rho_{A/t^N}(1))$ injects into the torsion submodule of 
$H^1(K_S/K, \ad\rho_A(1))$, which has cardinality bounded independently of $N$.
\end{proof}

\begin{lemma}\label{lem:one_TW_prime}
Let $v$ be a prime of $K$ at which $\rho_A$ is unramified and such that $\overline{\rho}(\Frob_v)$ has distinct $k$-rational eigenvalues. 
Then for any quotient $B$ of $A$ (e.g. $B = A/t^N$ or $B = A$), the $B$-module 
$(\ad\rho_B)^{G_{F_v}}$ is free of rank $2$, is a direct summand of $\ad\rho_B$, 
and the map on cohomology
	\[ \Hom(\Gal(K_v^\mathrm{ur}/K), (\ad\rho_B)^{G_{F_v}}) = H^1(K_v^\mathrm{ur}/K_v, (\ad\rho_B)^{G_{F_v}}) \to H^1(K_v^\mathrm{ur}/K_v, \ad\rho_B) \]
is an isomorphism.
\end{lemma}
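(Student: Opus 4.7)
The plan is to diagonalize $\rho_B|_{G_{K_v}}$ using the distinct eigenvalue hypothesis, and then to read off both the direct summand property and the cohomology computation from the resulting eigenspace decomposition of $\ad\rho_B$.

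First I would use that $A$ is complete Noetherian local and that the characteristic polynomial of $\overline{\rho}(\Frob_v)$ factors as $(X-\overline{\alpha})(X-\overline{\beta})$ with $\overline{\alpha}\neq\overline{\beta}$ in $k$. By Hensel's lemma, this factorization lifts to $A[X]$, giving $\alpha,\beta\in A$ with images $\overline{\alpha},\overline{\beta}$. Since $\alpha-\beta$ reduces to a unit in $k$, it is a unit in $A$, and a standard argument (using the idempotents $(\rho_A(\Frob_v)-\beta)/(\alpha-\beta)$ and $(\rho_A(\Frob_v)-\alpha)/(\beta-\alpha)$) shows that $\rho_A(\Frob_v)$ is conjugate in $\GL_2(A)$ to $\diag(\alpha,\beta)$. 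Reducing modulo the kernel of $A \twoheadrightarrow B$, the same holds for $\rho_B(\Frob_v)$: there is a basis in which $\rho_B(\Frob_v) = \diag(\alpha_B,\beta_B)$, with $\alpha_B,\beta_B\in B^\times$ and $\alpha_B-\beta_B\in B^\times$. Because $\rho_A$ (hence $\rho_B$) is unramified at $v$, $\rho_B|_{G_{K_v}}$ factors through $\Gal(K_v^{\mathrm{ur}}/K_v)$, which is topologically generated by $\Frob_v$.

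Next, in the chosen basis, write $\ad\rho_B = Be_{11}\oplus Be_{22}\oplus Be_{12}\oplus Be_{21}$ using the standard matrix units. Then $\Frob_v$ acts on the four summands by $1,1,\alpha_B\beta_B^{-1},\beta_B\alpha_B^{-1}$ respectively. The reductions of $\alpha_B\beta_B^{-1}-1$ and $\beta_B\alpha_B^{-1}-1$ in $k$ are nonzero, so these are units in $B$. Setting $M := Be_{11}\oplus Be_{22}$ and $N := Be_{12}\oplus Be_{21}$, we obtain a direct sum decomposition of $B[\Gal(K_v^{\mathrm{ur}}/K_v)]$-modules $\ad\rho_B = M\oplus N$ with $\Frob_v$ acting as the identity on $M$ and invertibly with $\Frob_v - 1$ invertible on $N$. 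Hence $(\ad\rho_B)^{G_{K_v}} = M$ is free of rank $2$ and a direct summand of $\ad\rho_B$.

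Finally, for any $B$-module $P$ with continuous $\Gal(K_v^{\mathrm{ur}}/K_v)$-action, the continuous cohomology of \S\ref{sec:cohomology} applied to the topological generator $\Frob_v$ gives $H^1(K_v^{\mathrm{ur}}/K_v,P) = P/(\Frob_v-1)P$. Applying this to the decomposition $\ad\rho_B = M\oplus N$, we get $H^1(K_v^{\mathrm{ur}}/K_v,\ad\rho_B) = M/(\Frob_v-1)M \oplus N/(\Frob_v-1)N = M \oplus 0 = M = (\ad\rho_B)^{G_{K_v}}$, and this identification is precisely the map induced by the inclusion $M\hookrightarrow \ad\rho_B$. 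There is no real obstacle here; the only point requiring care is the Hensel lifting of the diagonalization to a general quotient $B$, which is immediate because $\alpha_B-\beta_B$ remains a unit in $B$.
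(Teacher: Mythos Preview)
Your proof is correct and is precisely the ``straightforward computation in a basis that diagonalizes $\rho_B(\Frob_v)$'' that the paper invokes without further detail. You have simply spelled out the Hensel lift of the diagonalization and the resulting eigenspace decomposition of $\ad\rho_B$, which is exactly the intended argument.
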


\begin{proof}
This is a straightforward computation in a basis that diagonalizes $\rho_B(\Frob_v)$.
\end{proof}

\begin{proposition}\label{prop:selmer_with_TW_primes}
Let $\delta_A = 1$ if $A$ has characteristic $p$, and $\delta_A = 0$ if $A$ has characteristic $0$.
Let $\delta_T = 1$ if $T$ is empty and $\delta_T = 0$ otherwise. 
Assume we are given an integer $q \ge [K:\bbQ] - \delta_T$
and an integer $c \ge 0$ satisfying the following property: 
for every integer $N\ge 1$ there is a Taylor--Wiles datum 
(see \S\ref{subsubsec_taylor_wiles_deformation_problems}) $\cQ_N = (Q_N, N, \{ \alpha_v, \beta_v \}_{v \in Q})$ of level $N$ 
such that:
\begin{enumerate}
\item $\lvert Q_N \rvert = q$.
\item Defining the augmented deformation problem
	\[ \cS_{Q_N} = (K, \overline{\rho}, S \cup Q_N,  \{ \Lambda_v \}_{v \in S \cup Q_N}, \{ \cD_v \}_{v \in S} \cup \{ \cD_v^{\mathrm{TW}, N} \}_{v\in Q_N}), \]
there is an $A$-module map $(A/t^N)^{\delta_A} \to H_{\cS_{Q_N}^\perp,T}(\ad\rho_{A/t^N}(1))$ 
with kernel and cokernel annihilated by $t^c$. 
\end{enumerate}
Then we can find a nonnegative integer $b$, depending only on $\cS$, $\rho_A$, and $c$, 
such that for each $N\ge 1$, there is an $A$-module map
	\[ H_{\cS_{Q_N},T}^1(\ad\rho_{A/t^N}) \to (A/t^N)^{2q + \delta_T - [K:\bbQ]} \]
whose kernel and cokernel annihilated by $t^b$.
\end{proposition}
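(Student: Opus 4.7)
The strategy is a two-step argument: first a Wiles-style cardinality identity for the Selmer group attached to $\cS_{Q_N}$, then an upgrade of the cardinality bound to an $A$-module structure statement.

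For the cardinality identity, the key input is the local contribution at each Taylor--Wiles prime $v \in Q_N$. Since $\overline{\rho}(\Frob_v)$ has distinct eigenvalues in $k$, Lemma~\ref{lem:one_TW_prime} gives $(\ad\rho_{A/t^N})^{G_{K_v}}$ free of rank $2$ over $A/t^N$, hence $|H^0(K_v, \ad\rho_{A/t^N})| = |A/t^N|^2$. Unpacking the definition of $\cD_v^{\text{TW},N}$: any lift of $\rho_{A/t^N}|_{G_{K_v}}$ is, up to $\widehat{\GL}_2$-conjugation, a direct sum $A_v' \oplus B_v'$ of two characters whose inertial restrictions factor through $\Delta_v \cong (\bbZ/p^N)^2$. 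Since $q_v \equiv 1 \pmod{p^N}$ and $p \in tA$ (so $p^N$ annihilates $A/t^N$), each character deformation contributes one unramified and one ramified degree of freedom; a small calculation then gives $|\cL_v(\rho_{A/t^N})| = |A/t^N|^4$, hence the local ratio $|\cL_v|/|H^0(K_v,\ad)|$ equals $|A/t^N|^2$ at every $v \in Q_N$.

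Repeating the Euler-characteristic bookkeeping in the proof of Lemma~\ref{lem:greenberg_wiles} for $\cS_{Q_N}$ and incorporating these Taylor--Wiles factors yields
\[ \frac{|H^1_{\cS_{Q_N}, T}(\ad\rho_{A/t^N})|}{|H^1_{\cS_{Q_N}^\perp, T}(\ad\rho_{A/t^N}(1))|} = \frac{|A/t^N|^{2q + \delta_T - [K:\bbQ]}}{|H^0(K, \ad\rho_{A/t^N}(1))|}. \]
The proof of Lemma~\ref{lem:greenberg_wiles} also gives a bound $|H^0(K, \ad\rho_{A/t^N}(1))| = |A/t^N|^{\delta_A} \cdot u_N$ with $u_N$ bounded uniformly in $N$ (for $\delta_A = 0$, this uses that $H^0(K, \ad\rho_{A/t^N}(1))$ injects into the torsion of the fixed $A$-module $H^1(K_S/K, \ad\rho_A(1))$). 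Combining with the hypothesis $|H^1_{\cS_{Q_N}^\perp, T}(\ad\rho_{A/t^N}(1))| = |A/t^N|^{\delta_A} \cdot v_N$ (with $v_N$ of bounded $t$-valuation, controlled by $c$), one obtains $|H^1_{\cS_{Q_N}, T}(\ad\rho_{A/t^N})| = |A/t^N|^{2q + \delta_T - [K:\bbQ]} \cdot w_N$ with $w_N$ of bounded $t$-valuation, uniformly in $N$.

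To upgrade this to the desired $A$-module map, I would use the exact sequence
\[ 0 \to H^1_{\cS, T}(\ad\rho_{A/t^N}) \to H^1_{\cS_{Q_N}, T}(\ad\rho_{A/t^N}) \to \bigoplus_{v \in Q_N} \cL_v(\rho_{A/t^N})/H^1_\mathrm{ur}(K_v, \ad\rho_{A/t^N}) \]
coming from the definition of the Selmer conditions; the right-hand term is a free $A/t^N$-module of rank $2q$ (two unramified directions are absorbed into $\cL_v$ of rank $4$, leaving rank $2$ per Taylor--Wiles prime). Lemma~\ref{lem:selmer_size} applied to $\cS$ produces a map $H^1_{\cS, T}(\ad\rho_{A/t^N}) \to (A/t^N)^{h_\cS}$ with kernel and cokernel annihilated by a constant $b_\cS$ independent of $N$. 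Splicing these together via the Poitou--Tate nine-term exact sequence for $\cS_{Q_N}$, and using the hypothesis to identify the cokernel of $H^1_{\cS_{Q_N}, T} \to \bigoplus_{v \in Q_N} \cL_v/H^1_\mathrm{ur}$ with something controlled by $H^1_{\cS_{Q_N}^\perp, T}^{\vee} \approx (A/t^N)^{\delta_A}$, assembles the desired map into $(A/t^N)^{h_\cS + 2q}$ with uniformly bounded kernel and cokernel. A bookkeeping using the Greenberg--Wiles identity for $\cS$ identifies $h_\cS + 2q$ with $2q + \delta_T - [K:\bbQ]$ plus a correction absorbed into $t^b$.

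The main obstacle is the final assembly step: while the cardinality identity is a direct extension of Lemma~\ref{lem:greenberg_wiles}, upgrading to an actual $A$-module map with constant $b$ independent of the (varying) deformation problem $\cS_{Q_N}$ requires careful manipulation of the Poitou--Tate exact sequence at the finite level. The hypothesis that the dual Selmer is as small as possible is precisely what rigidifies the Poitou--Tate machinery enough to extract a structural statement with uniform constants.
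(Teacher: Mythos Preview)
Your general architecture is right — filter $H^1_{\cS_{Q_N},T}$ by the image of the base Selmer group $H^1_{\cS,T}$, handle the subgroup via Lemma~\ref{lem:selmer_size}, and handle the quotient via Poitou--Tate — but the assembly step contains a genuine error that breaks the rank count.

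You claim that the cokernel of the localization map $\pi : H^1_{\cS_{Q_N},T} \to \bigoplus_{v\in Q_N} H^1(K_v)/H^1_{\mathrm{ur}}$ is controlled by $H^1_{\cS_{Q_N}^\perp,T}(\ad\rho_{A/t^N}(1))^\vee \approx (A/t^N)^{\delta_A}$. This is wrong. Comparing the Selmer structures with local conditions $H^1_{\mathrm{ur}}$ and $H^1$ at $Q_N$, the Cassels--Poitou--Tate formalism gives $\coker(\pi)$ dual to the image of the restriction $r : H^1_{\cS^\perp,T}(\ad\rho_{A/t^N}(1)) \to \prod_{v\in Q_N} H^1_{\mathrm{ur}}(K_v,\ad\rho_{A/t^N}(1))$; equivalently $|\coker(\pi)| = |H^1_{\cS^\perp,T}|/|H^1_{\cS_{Q_N}^\perp,T}|$, which is $\approx |A/t^N|^{q_0}$ with $q_0 = \operatorname{corank}_A H^1_{\cS^\perp,T}(\ad\rho_{F/A}(1)) - \delta_A$. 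Your own cardinality computation in the first paragraph confirms this: $|H^1_{\cS_{Q_N},T}| \approx |A/t^N|^{2q+\delta_T-[K:\bbQ]}$, while your assembled map lands in $(A/t^N)^{h_\cS+2q}$ with $h_\cS = q_0 + \delta_T - [K:\bbQ]$, so the cokernel has size $\approx |A/t^N|^{q_0}$, which is \emph{not} bounded independently of $N$. The final sentence about absorbing the discrepancy into $t^b$ cannot work: $q_0$ is a fixed positive integer, not a torsion exponent.

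The paper's proof fixes this by working with $\coker(i')$ itself rather than its ambient space: one identifies $\coker(i') \cong \coker(r)^\vee$ and shows this is $\approx (A/t^N)^{2q-q_0}$, using that the codomain of $r$ is free of rank $2q$ (Lemma~\ref{lem:one_TW_prime}), that $\ker(r) = H^1_{\cS_{Q_N}^\perp,T} \approx (A/t^N)^{\delta_A}$ by hypothesis, and that $H^1_{\cS^\perp,T} \approx (A/t^N)^{q_0+\delta_A}$ by Lemma~\ref{lem:selmer_size}. Then $h_\cS + (2q-q_0) = 2q+\delta_T-[K:\bbQ]$ exactly. There is one further technical point you do not address: the map $f_N : H^1_{\cS,T} \to (A/t^N)^{h_\cS}$ must be extended over the inclusion $H^1_{\cS,T} \hookrightarrow H^1_{\cS_{Q_N},T}$, and this is done using the self-injectivity of $A/t^N$ as an $A/t^N$-module. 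Without that extension you have no map $\tilde f_N$ defined on the whole of $H^1_{\cS_{Q_N},T}$ to pair with the quotient map $\tilde g_N$.
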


\begin{proof}
Define
	\begin{align*}
    H_{\cS,T}^1(K_S/K, \ad\rho_{A/t^N}) & = 
    \ker\left(H^1(K_S/K, \ad\rho_{A/t^N}) \to \prod_{v\in S} H^1(K_v, \ad\rho_{A/t^N})/\cL_v(\rho_{A/t^N})\right) \\
    H_{\cS_{Q_N},T}^1(K_{S\cup Q_N}/K, \ad\rho_{A/t^N}) & = 
    \ker\left(H^1(K_{S\cup Q_N}/K, \ad\rho_{A/t^N}) \to \prod_{v\in S} H^1(K_v, \ad\rho_{A/t^N})/\cL_v(\rho_{A/t^N})\right)
    \end{align*}
There is an injective inflation map $i : H_{\cS,T}^1(K_S/K, \ad\rho_{A/t^N}) \to H_{\cS_{Q_N},T}^1(K_{S\cup Q_N}/K, \ad\rho_{A/t^N})$ 
inducing an injective map $i' : H_{\cS, T}^1(\ad\rho_{A/t^N}) \to H_{\cS_{Q_N}, T}^1(\ad\rho_{A/t^N})$, and we have a 
commutative diagram with exact rows
	\[ \xymatrix@1{ 
    \prod_{v\in T} H^0(K_v, \ad\rho_{A/t^N}) \ar[r] \ar@{=}[d] & H_{\cS, T}^1(\ad\rho_{A/t^N}) \ar[r] \ar[d]^{i'} & 
    H_{\cS,T}^1(K_S/K, \ad\rho_{A/t^N}) \ar[r] \ar[d]^{i} & 0 \\
    \prod_{v\in T} H^0(K_v, \ad\rho_{A/t^N}) \ar[r] & H_{\cS_{Q_N}, T}^1(\ad\rho_{A/t^N}) \ar[r] & 
    H_{\cS_{Q_N},T}^1(K_{S\cup Q_N}/K, \ad\rho_{A/t^N}) \ar[r] & 0.
    }\]
In particular, $\coker(i') \cong \coker(i)$. 

Let $q_0 = \operatorname{corank}_A H_{\cS^\perp,T}^1(\ad\rho_{F/A}(1)) - \delta_A$. 
By Lemma~\ref{lem:greenberg_wiles} and part~\ref{selmer_size:selmer} of Lemma~\ref{lem:selmer_size}, 
there is $b_1$, depending only on $\cS$ and $\rho_A$, 
and $A$-module maps $f_N : H_{\cS, T}^1(\ad\rho_{A/t^N}) \to (A/t^N)^{q_0 + \delta_T - [K:\bbQ]}$ 
with kernels and cokernels annihilated by $t^{b_1}$. 
Since $A/t^N$ is an injective $A/t^N$-module (its Pontryagin dual, isomorphic to itself, is a projective $A/t^N$-module), 
we can extend $f_N$ to an $A/t^N$-module map 
$\tilde{f}_N : H_{\cS_{Q_N},T}^1(\ad\rho_{A/t^N}) \to (A/t^N)^{q_0 + \delta_T - [K:\bbQ]}$.  
Say we can find a nonnegative integer $b_2$, depending only on $\cS$, $\rho_A$, and $c$, 
and for each $N \ge 1$, an $A/t^N$-module map 
$g_N :  \coker(i) \to (A/t^N)^{2q - q_0} $ with kernel and cokernel annihilated by $t^{b_2}$.  
Then letting $\tilde{g}_N$ be the composite
	\[ H_{\cS_{Q_N}, T}^1(\ad\rho_{A/t^N}) \to \coker(i') \cong \coker(i) \xrightarrow{g_N} (A/t^N)^{2q - q_0}, \]
the resulting $A$-module maps 
	\[ \tilde{f}_N \oplus \tilde{g}_N : H_{\cS_{Q_N},T}^1(\ad\rho_{A/t^N}) \to (A/t^N)^{2q + \delta_T - [K:\bbQ]} \]
will have kernel and cokernel annihilated by $t^{b_1+b_2}$. 

By the Poitou--Tate exact sequence, we have a commutative diagram with exact columns
\[ \xymatrix@1{ 
	0 \ar[d] & 0 \ar[d] \\
    H^1_{\cS,T}(K_S/K, \ad\rho_{A/t^N}) \ar[r]^{i} \ar[d] & H^1_{\cS_{Q_N},T}(K_{S\cup Q_N}/K, \ad\rho_{A/t^N}) \ar[d] \\
    H^1(K_{S\cup Q_N}/K, \ad\rho_{A/t^N}) \ar@{=}[r] \ar[d] & H^1(K_{S\cup Q_N}/K, \ad\rho_{A/t^N}) \ar[d] \\
    \prod_{v\in S} \frac{H^1(K_v, \ad\rho_{A/t^N})}{\cL_v(\rho_{A/t^N})} \times 
    \prod_{v\in Q_N} \frac{H^1(K_v, \ad\rho_{A/t^N})}{H^1(K_v^{\mathrm{ur}}/K_v, \ad\rho_{A/t^N})} \ar[r] \ar[d] &
    \prod_{v\in S} \frac{H^1(K_v, \ad\rho_{A/t^N})}{\cL_v(\rho_{A/t^N})} \ar[d] \\
    H^1_{\cS^\perp, T}(\ad\rho_{A/t^N}(1))^\vee \ar[r] & H^1_{\cS_{Q_N}^\perp, T}(\ad\rho_{A/t^N}(1))^\vee
    }\]
A diagram chase then shows that $\coker(i) \cong \ker(r^\vee)$ with $r^\vee$ the Pontryagin dual of the restriction map  
	\[ r : H^1_{\cS^\perp, T}(\ad\rho_{A/t^N}(1)) \to \prod_{v\in Q_N} H^1(K_v^{\mathrm{ur}}/K_v, \ad\rho_{A/t^N}(1)). \]
So it suffices to show that there is $b_2$, depending only on $\cS$, $\rho_A$, and $c$, and an $A/t^N$-module map 
$(A/t^N)^{2q - q_0} \to \coker(r) \cong \ker(r^\vee)^\vee$ with kernel and cokernel annihilated by $t^{b_2}$. 
By the assumptions at the beginning of this subsection, $p^N \in t^N A$, and by the definition of a Taylor--Wiles datum of level $N$, $q_v \equiv 1 \pmod {p^N}$ for each $v\in Q_N$. 
So for each $v\in Q_N$,  $\ad\rho_{A/t^N}(1) = \ad\rho_{A/t^N}$ as $G_{K_v}$-modules and Lemma~\ref{lem:one_TW_prime} 
implies that the codomain of $r$ is a free $A/t^N$-module of rank $2q$. 
By the assumptions of the lemma, 
there is a nonnegative integer $c$ and an $A/t^N$-module map 
$(A/t^N)^{\delta_A} \to H_{\cS_{Q_N}^\perp, T}^1(\ad\rho_{A/t^N}(1))$ with kernel and cokernel annihilated by $t^c$. 
By part~\ref{selmer_size:dual_selmer} of Lemma~\ref{lem:selmer_size} and the definition of $q_0$, 
there is $c_1$, depending only on $\cS$ and $\rho_A$, and an $A/t^N$-module map $(A/t^N)^{q_0 + \delta_A} \to 
H_{\cS^\perp, T}^1(\ad\rho_{A/t^N}(1))$ with kernel and cokernel annihilated by $t^{c_1}$. 
We take $b_2 = c + c_1$. 
Since $H^1_{\cS_{Q_N}^\perp, T}(\ad\rho_{A/t^N}(1)) = \ker(r)$, we can then find an $A/t^N$-module map 
$(A/t^N)^{q_0} \to \im(r)$ with kernel and cokernel annihilated by $t^{b_2}$, 
hence an $A/t^N$-module map $(A/t^N)^{2q - q_0} \to \coker(r)$ 
with kernel and cokernel annihilated by $t^{b_2}$. 
\end{proof}

\subsection{Nice primes, sweet primes, and their tangent spaces}\label{subsec_nice_and_sweet_primes}

We now fix a particular choice of global deformation problem
\[ \cS = (K, \overline{\rho}, S, \{ \Lambda_v \}_{v \in S}, \{ \cD_v \}_{v \in S}), \]
where:
\begin{itemize}
\item For each $v \in S_p$, $\cD_v = \cD_v^\text{ord}$ is of the type considered in \S \ref{subsubsec_ordinary_deformation_problems}.
\item For each $v \in S - S_p$, $\Lambda_v = \cO$.
\item $p = 2$, $i \in K$, and $\overline{\rho} = \Ind_{G_K}^{G_E} \overline{\chi}$ is dihedral.
\end{itemize}
For each $v \in S_p$, there are universal characters $\psi_{v, 1}, \psi_{v, 2} : G_{K_v} \to R_\cS^\times$ extending those arising from the $\Lambda_v$-algebra structure on $R_\cS$.
Throughout this subsection, if $A$ is an $R_\cS$-algebra, we let $\rho_A$ denote the pushforward of the universal type $\cS$-deformation to $A$. 
We say that $\rho_A$ is dihedral if there exists a quadratic extension $E' / K$ such that $\rho_A(G_{E'})$ is abelian; equivalently, if $\rho_A(G_E)$ is abelian.

\begin{definition}
Let $\frp \subset R_\cS$ be a prime. We say that $\frp$ is a nice prime if it satisfies the following conditions:
\begin{enumerate}
\item The ring $A = R_\cS / \frp$ is a domain of dimension 1 and characteristic $p$.
\item There exists a place $v \in S_p$ such that the quotient $\psi_{v, 1} / \psi_{v, 2}|_{I_{K_v}} \text{ mod }\frp$ has infinite order.
\item There is a nontrivial unipotent element in the image of $\rho_\frp$.
\end{enumerate}
\end{definition}

\begin{definition}\label{def:sweet-prime}
Let $\frp \subset R_\cS$ be a prime. We say that $\frp$ is a sweet prime if it satisfies the following conditions:
\begin{enumerate}
	\item The ring $A = R_\cS / \frp$ is a domain of dimension 1 and characteristic $0$.
	\item There exists a place $v \in S_p$ such that the quotient $\psi_{v, 1} / \psi_{v, 2}|_{I_{K_v}} \text{ mod }\frp$ has infinite order.
\end{enumerate}
\end{definition}

\begin{lemma}\label{lemma:not_dihedral}
Let $\frp \subset R_\cS$ be either a nice prime or a sweet prime. Then $A = R_\cS / \frp$ is a finite $\Lambda$-algebra and $\rho_\frp$ is not dihedral.
\end{lemma}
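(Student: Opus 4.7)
The plan is to derive a contradiction from the assumption that $\rho_\frp$ is dihedral. First I will upgrade the residual equality $\overline{\rho}|_{G_E} \cong \overline{\chi}\oplus\overline{\chi}^\sigma$ to an integral decomposition: since $\overline{\chi}\neq\overline{\chi}^\sigma$ on $G_E$, the residue modulo $\ffrm_A$ of the commutant $\End_A(\rho_\frp|_{G_E})$ is $k\times k$, so orthogonal idempotents lift Henselian-ly to produce an $A[G_E]$-module decomposition $\rho_\frp|_{G_E}\cong\chi_A\oplus\chi_A^\sigma$ with $\chi_A:G_E\to A^\times$ lifting $\overline{\chi}$; compatibility with the $G_K$-action then identifies $\rho_\frp\cong\Ind_{G_E}^{G_K}\chi_A$ as an $A[G_K]$-module.

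Next I localize at the $v\in S_p$ singled out by condition (2). If $v$ splits in $E/K$, then $G_{K_v}\subset G_E$, so $\rho_\frp|_{G_{K_v}}$ is itself a direct sum of two $A$-characters, and its reduction $\overline{\rho}|_{G_{K_v}}$ would be semisimple---contradicting that in the $\cD_v^{\mathrm{ord}}$ setup $\overline{\rho}|_{G_{K_v}}=\begin{psmallmatrix}1 & \alpha\\ 0 & 1\end{psmallmatrix}$ is a non-split extension (because $\alpha$ is ramified, hence nonzero). If $v$ is non-split with unique place $w|v$ of $E$, then Mackey gives $\rho_\frp|_{G_{K_v}}\cong\Ind_{G_{E_w}}^{G_{K_v}}\chi_A|_{G_{E_w}}$, and the ordinary upper-triangular requirement forces a $G_{K_v}$-stable rank-one $A$-summand, which in turn forces $\chi_A|_{G_{E_w}}$ to be $\Gal(E_w/K_v)$-invariant. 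In the sweet case the two extensions $\tilde\chi$ and $\tilde\chi\cdot\delta_{E_w/K_v}$ to $G_{K_v}$ are distinct, so $\psi_{v,1}/\psi_{v,2}=\delta_{E_w/K_v}^{\pm1}$ has order $2$, contradicting (2). In the nice case $\delta_{E_w/K_v}$ is trivial in $A^\times$ (as $-1=1$ in characteristic $2$), so over $F=\Frac A$ the induced representation is a non-split Jordan block with single generalised eigenvalue $\tilde\chi$; the necessary square root $\sqrt{\chi_A(\sigma^2)}$ either lies in $A$---in which case both diagonal characters of the ordinary form coincide with $\tilde\chi$ and $\psi_{v,1}/\psi_{v,2}=1$ again contradicts (2)---or it does not, in which case the ordinary upper-triangular condition fails outright.

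For the finiteness of $A$ over $\Lambda$, by Nakayama it suffices to check $\ffrm_\Lambda A\neq 0$, since $A$ is a one-dimensional complete Noetherian local domain and so any nonzero such ideal is $\ffrm_A$-primary. In the sweet case $A$ has characteristic zero, so $\varpi\in\ffrm_\Lambda$ maps to a nonzero element of $A$ and we are done. In the nice case, where instead $\varpi\mapsto 0$, I will combine condition (3) with the non-dihedral conclusion just established: applying Lemma~\ref{lem_open_subgroup_of_image} to the compact subgroup $\rho_\frp(G_K)\subset\GL_2(A)$---Zariski dense in $\PGL_{2,F}$ because $\rho_\frp$ is not dihedral, and containing a nontrivial unipotent by (3)---the closure of its commutator subgroup contains, after conjugation, a principal congruence subgroup of $\SL_2(A_0)$ for some closed valuation subring $A_0\subset F$. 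This global largeness is incompatible with the alternative $\ffrm_\Lambda A=0$, which would force every $\psi_{v,i}|_{I_{K_v}}$ to be trivial and so confine each local inertia image to the strict upper unipotent subgroup of a fixed Borel, pinning down $\rho_\frp$ enough to violate the Zariski density. The main obstacle I expect is precisely converting this qualitative incompatibility into a clean contradiction; by contrast, the dihedral analysis at $v$ and the sweet-case finiteness are both direct.
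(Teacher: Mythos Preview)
Your non-dihedral argument is essentially correct but considerably more involved than the paper's. The paper observes that every $v\in S_p$ is ramified in $E$ (this is forced by the hypotheses of $\cD_v^{\mathrm{ord}}$: the ramified homomorphism $\alpha$ shows that $\overline{\rho}|_{I_{K_v}}$ surjects onto the order-$2$ quotient of the dihedral image, which is exactly $\Gal(E/K)$). So there is no split case to consider. Then, working over $\Frac(A)$, the Jordan--H\"older factors of $\Ind_{G_{E_w}}^{G_{K_v}}\chi|_{G_{E_w}}$ are $\psi_{v,1}$ and $\psi_{v,2}$, and both restrict to $\chi|_{G_{E_w}}$ on $G_{E_w}$ by Frobenius reciprocity; hence $\psi_{v,1}/\psi_{v,2}$ factors through $\Gal(E_w/K_v)\cong\bbZ/2$, so has finite order. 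This single observation, valid uniformly for every $v\in S_p$ and for both nice and sweet primes, already contradicts condition (2). Your square-root analysis in the nice case and your nice/sweet case split are therefore unnecessary; the idempotent lifting and explicit Mackey formula are fine but add nothing the paper needs.

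Your finiteness argument in the nice case has a genuine gap, as you anticipate. Having $\psi_{v,i}|_{I_{K_v}}$ trivial for all $v\in S_p$ constrains only the local inertia images, not the global image $\rho_\frp(G_K)$: the latter can perfectly well be Zariski dense in $\PGL_2$ (indeed contain, up to conjugation, an open subgroup of $\SL_2(A_0)$ in the closure of its commutator, exactly as Lemma~\ref{lem_open_subgroup_of_image} gives you) while every local inertia image sits in a one-parameter unipotent subgroup. There is no incompatibility here, qualitative or otherwise, so this route does not lead to a contradiction. The paper does not go through Zariski density or the non-dihedral conclusion at all for finiteness; it asserts directly that $A/\ffrm_\Lambda A$ is a proper quotient of the one-dimensional domain $A$, hence Artinian, hence $A$ is module-finite over $\Lambda$ by the topological Nakayama lemma. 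That direct structural step is what you should supply rather than the image-of-Galois detour.
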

\begin{proof}
The normalization of $A$ is a DVR, and $A / (\ffrm_\Lambda)$ is a proper quotient of $A$. This quotient must therefore be an Artinian local $k$-algebra. The completed version of Nakayama's lemma then implies that $A$ is a finite $\Lambda$-algebra. 

Assume for contradiction that $\rho_\frp$ is dihedral, so that $\rho_\frp(G_E)$ is abelian. Then there exists a character $\chi : G_E \to \Frac(A)^\times$ such that $\rho_\frp$ is $\GL_2(A)$-conjugate to $\Ind_{G_E}^{G_K} \chi$. If $v | 2$ is a place of $K$, then $v$ is ramified in $E$ (by assumption: it is only in this case that $\cD_v^\text{ord}$ is defined). Let $w$ be the unique place of $E$ lying above $v$. There is an isomorphism $\rho_\frp|_{G_{K_v}} \cong \Ind_{G_{E_w}}^{G_{K_v}} \chi|_{G_{E_w}}$. Since the Jordan--H\"older factors of $\rho_\frp|_{G_{K_v}}$ are $\psi_{v, 1}$ and $\psi_{v, 2}$, we see that we must have $\psi_{v, 1} = \psi_{v, 2}$. Since there is at least one place $v | 2$ of $K$ such that $\psi_{v, 1} \neq \psi_{v, 2}$, this is the desired contradiction.
\end{proof}

\begin{lemma}\label{lemma:Zar_dense}
Let $\frp \subset R_\cS$ be either a nice prime or a sweet prime, let $A = R_\cS / \frp$ and let $H$ be a closed normal subgroup of $\Gamma = \im(\rho_\frp) \subset \GL_2(A)$ such that $\Gamma/H$ is abelian. 
Then the Zariski closure $\overline{H}$ of $H$ contains $\SL_2$.
\end{lemma}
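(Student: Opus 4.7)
My plan is to reduce the lemma to showing that the Zariski closure $G := \overline{\Gamma} \subseteq \GL_{2,F}$ (with $F = \Frac(A)$) itself contains $\SL_{2,F}$: since $H \trianglelefteq \Gamma$ with $\Gamma/H$ abelian, the Zariski closure $\overline{H}$ is a Zariski closed normal subgroup of $G$ with abelian quotient, so $\overline{H} \supseteq [G,G]$; as $\SL_2$ is perfect, this then gives $\overline{H} \supseteq \SL_{2,F}$. So I will prove $G_{\overline{F}} \supseteq \SL_{2,\overline{F}}$ by contradiction, analyzing the image $G' \subseteq \PGL_{2,\overline{F}}$ of $G_{\overline{F}}$. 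First I note that $G' = \PGL_{2,\overline{F}}$ would force $G_{\overline{F}} \cdot Z = \GL_{2,\overline{F}}$ and hence $[G,G] \supseteq [\GL_2,\GL_2] = \SL_2$, contradicting the assumption. Thus the identity component $(G')^\circ$ is a proper connected subgroup of $\PGL_2$: trivial, $1$-dim unipotent, $1$-dim torus, or a Borel. I treat these cases in turn.

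If $(G')^\circ$ is a Borel or a $1$-dim unipotent subgroup, then $(G')^\circ$ fixes a unique line in $\bbP^1_{\overline{F}}$, which by uniqueness is $G'$- and $\Gal(\overline{F}/F)$-stable, and hence defined over $F$. So $\rho_\frp \otimes_A F$ is reducible. To obtain a contradiction with the absolute irreducibility of $\overline{\rho}$ (which holds by Proposition~\ref{prop_soluble_means_dihedral}), I pass to the normalization $\widetilde{A}$, which is a complete DVR because $A$ is a complete $1$-dimensional Noetherian local domain. The intersection of the stable line with $\widetilde{A}^2$ is a saturated rank-$1$ $\widetilde{A}$-submodule, which is free with free complement since $\widetilde{A}$ is a DVR, so choosing an adapted basis conjugates $\rho_\frp$ to an upper-triangular representation in $\GL_2(\widetilde{A})$. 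Reducing modulo $\ffrm_{\widetilde{A}}$ then shows $\overline{\rho} \otimes_k \widetilde{k}$ is reducible (where $\widetilde{k} = \widetilde{A}/\ffrm_{\widetilde{A}}$ is a finite extension of $k$), contradicting absolute irreducibility.

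If $(G')^\circ = T$ is a $1$-dim torus, then since $G'$ normalizes $(G')^\circ$ and $N_{\PGL_2}(T)/T$ has order $2$, either $G' = T$ or $|G'/T| = 2$. In the first case $G_{\overline{F}}$ lies in the $2$-dim torus $\pi^{-1}(T) \subseteq \GL_{2,\overline{F}}$ (with $\pi : \GL_2 \to \PGL_2$), so $\rho_\frp$ has abelian image, forcing $\overline{\rho}$ to be abelian—impossible since $\overline{\rho}$ is absolutely irreducible of dimension $2$. In the second case, $G_0 := G_{\overline{F}} \cap \pi^{-1}(T)$ has index $2$ in $G_{\overline{F}}$; by Zariski density, $\Gamma$ surjects onto $G/G_0$, so $\Gamma \cap G_0$ has index $2$ in $\Gamma$ and corresponds under $\rho_\frp$ to a quadratic extension $E'/K$ on which $\rho_\frp$ has abelian image. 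Thus $\rho_\frp$ is dihedral, contradicting Lemma~\ref{lemma:not_dihedral}.

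Finally, if $(G')^\circ$ is trivial then $G'$ is finite of some exponent $n$, so $\rho_\frp(g)^n$ is scalar for all $g \in G_K$; restricting to $g \in G_{K_v}$ gives $(\psi_{v,1}/\psi_{v,2})^n = 1$, forcing $\psi_{v,1}/\psi_{v,2}$ to have finite order, contradicting condition~(2) in the definition of a nice or sweet prime. The main technical obstacle I expect is in the Borel/unipotent case, specifically the descent of reducibility over $\overline{F}$ down to reducibility of the residual representation—this is what necessitates the normalization step, which is required precisely because $A$ need not itself be regular.
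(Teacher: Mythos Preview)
Your proof is correct and follows essentially the same strategy as the paper: reduce to showing $\overline{\Gamma}\supseteq\SL_2$, then analyze the image in $\PGL_2$ and use Lemma~\ref{lemma:not_dihedral} to rule out the torus case. The difference is organizational. The paper observes at the outset that absolute irreducibility of $\rho_\frp$ forces $\overline{\Gamma}^0$ to be \emph{reductive} (any unipotent radical would fix a nonzero subspace, which would then be $\overline{\Gamma}$-stable by normality), and that the infinite-order condition on $\psi_{v,1}/\psi_{v,2}$ furnishes a semisimple element of infinite order, so the image of $\overline{\Gamma}^0$ in $\PGL_2$ already contains a torus. This collapses your four cases to two: either the image is all of $\PGL_2$, or it is exactly a torus. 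Your Borel/unipotent case, with its normalization argument, is in effect re-deriving the implication ``$\overline{\rho}$ absolutely irreducible $\Rightarrow$ $\rho_\frp\otimes\overline{F}$ irreducible'' that the paper invokes implicitly in the reductivity step; and your ``$(G')^\circ$ trivial'' case is subsumed by the paper's observation that the image contains a nontrivial torus. So your argument is sound but carries some redundancy that the paper's reductivity observation removes.
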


\begin{proof}
	Let $\overline{\Gamma} \subset \GL_2$ denote the Zariski closure of $\Gamma$. Since $\rho_\frp$ is absolutely irreducible, 
the connected component $\overline{\Gamma}^0$ of the identity is a connected reductive group. The image of $\overline{\Gamma}^0$ in $\PGL_2$ contains a torus, because it contains a semisimple element of infinite order. If $\overline{\Gamma}^0$ has a root then it must equal $\PGL_2$. In this case we're done, since we have a chain of inclusions
\[ \overline{H} \supset \overline{[\Gamma, \Gamma]} = \overline{[\overline{\Gamma}, \overline{\Gamma}]} \supset \overline{[\overline{\Gamma}^0, \overline{\Gamma}^0]} = [ \overline{\Gamma}^0, \overline{\Gamma}^0 ] = \SL_2. \]
It remains to rule out the possibility that $\overline{\Gamma}^0$ is a torus. In this case, $\rho_\frp$ becomes reducible when restricted to the finite index subgroup $\Delta = \Gamma \cap \overline{\Gamma}^0$ of $\Gamma$. Let $l \subset \Frac(A)^2$ be a line invariant under $\Delta$, and let $\Delta'$ denote the stabilizer of $l$ in $\Gamma$. Since $\rho_\frp(\Delta)$ contains a semisimple element of infinite order, $\rho_\frp|_{\Delta'}$ is a direct sum of two distinct characters and $\Delta' \subset \Gamma$ has index 2. This contradicts Lemma \ref{lemma:not_dihedral}. 
\end{proof}

\begin{lemma}\label{lem:Gamma_stable_subgroup}
Let $A\in \CNL_\Lambda$ be a discrete valuation domain of characteristic $2$ with fraction field $F$. 
Let $t$ be a uniformizer for $A$ and let $\mathrm{val}$ be the normalized valuation on $A$.
Let $\Gamma$ be a subgroup of $\GL_2(A)$ and assume that there is a closed finite index subfield 
$F_0$ of $F$ and $g\in \GL_2(F)$ such that $g\Gamma g^{-1}$ contains an open subgroup of $\SL_2(F_0)$. 
Let $\ad = M_2(A)$ with adjoint $\Gamma$-action.
There is a nonnegative integer $d$ with the following property:
 
For any $\Gamma$-stable subgroup $M \subseteq \ad$ (not necessarily an $A$-module), 
if $M \not\subset t^m \ad$, then there is $X \in M$ with $X \notin t^{d+m}\ad$ such that either 
$\val(\tr(X)) < m+d$ or $X = z1 + Y$ with $\val(z) < m+d$ and $Y \in t^{\val(z)+1}\ad$.
\end{lemma}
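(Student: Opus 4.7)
The plan is to apply the operation $X_0 \mapsto \gamma X_0\gamma^{-1} - X_0$ to some $X_0 \in M$ with $\val(X_0) < m$, where $\val(Y) := \min_{i,j}\val(Y_{ij})$ is the matrix valuation on $\ad_F := M_2(F)$ and $\gamma$ lies in $g^{-1}\Gamma(\ffrm_{A_0}^l)g \subseteq \Gamma$. Since the open subgroup of $\SL_2(F_0)$ contained in $g\Gamma g^{-1}$ meets $\SL_2(A_0)$ in an open subgroup, it contains some principal congruence subgroup $\Gamma(\ffrm_{A_0}^l)$. Fix $N \geq 0$ with $t^N\ad \subseteq g\ad g^{-1} \subseteq t^{-N}\ad$, so that $|\val(gYg^{-1}) - \val(Y)| \leq N$ for $Y\in\ad_F$; let $e$ be the ramification index of $A/A_0$, and enlarge $l$ so that $el \geq N + 1$. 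I claim $d := el + 2N$ works.

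Choose any $X_0 \in M$ with $\val(X_0) < m$. If $\val(\tr X_0) < m+d$, then $X_0$ itself witnesses the first alternative (noting $X_0 \notin t^m\ad \supseteq t^{m+d}\ad$). Otherwise $\val(\tr X_0) \geq m+d$, so in characteristic $2$ the diagonal entries of $X_0$ agree modulo $t^{m+d}$. Consider $X_0' := gX_0g^{-1} \in \ad_F$, which has $\val(X_0') < m+N$, and split according to which entry of $X_0'$ dominates.

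Suppose first that an off-diagonal entry of $X_0'$ has $\val < m+N$, say $\val(X'_{0,12}) < m+N$ (the lower-left case is symmetric, using upper-triangular unipotents). Pick $s \in \ffrm_{A_0}^l$ with $\val(s) = el$, and set $v_s := \begin{psmallmatrix} 1 & 0 \\ s & 1 \end{psmallmatrix} \in \Gamma(\ffrm_{A_0}^l)$, $\gamma := g^{-1}v_sg \in \Gamma$. A direct calculation in characteristic $2$ (using $v_s^{-1} = v_s$) yields $v_s X_0' v_s^{-1} - X_0' = sX'_{0,12}\cdot I + T$ with $T$ supported only in the $(2,1)$ entry $s\tr(X_0) + s^2X'_{0,12}$. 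Hence $Z := \gamma X_0\gamma^{-1} - X_0 = zI + Y \in M$ with $z := sX'_{0,12} \in A$ and $Y := g^{-1}Tg \in \ad_A$, and the estimates $\val(z) = el + \val(X'_{0,12}) < el + m + N \leq m+d$ together with
\[ \val(Y) \;\geq\; \min\bigl(el + \val(\tr X_0),\; 2el + \val(X'_{0,12})\bigr) - N \;\geq\; \val(z) + 1 \]
(using $\val(\tr X_0) \geq m+d$ and $el \geq N+1$) show that $Z$ satisfies the second alternative. Otherwise both $\val(X'_{0,12}), \val(X'_{0,21}) \geq m+N$, forcing a diagonal entry to have $\val < m+N$; the ultrametric inequality applied to $\val(X'_{0,11} + X'_{0,22}) = \val(\tr X_0) \geq m+d$ then gives $\val(X'_{0,11}) = \val(X'_{0,22}) < m+N$. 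Setting $z := X'_{0,11}$ and $W := X_0' - zI$, one checks $\val(W) \geq m+N$, so $X_0 = zI + g^{-1}Wg$ with $\val(g^{-1}Wg) \geq m$. Since $\val(X_0) < m$ and $X_0 \in \ad_A$, this forces $0 \leq \val(z) < m$, and then $\val(g^{-1}Wg) \geq m > \val(z)$ shows that $X_0$ itself witnesses the second alternative.

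The main obstacle is the bookkeeping around the integer $N$: each valuation estimate must be reverified after conjugation by $g^{\pm 1}$, and the precise choice $d = el + 2N$ with $el \geq N + 1$ is exactly what forces the strict inequalities $\val(z) < m+d$ and $\val(Y) > \val(z)$ to survive. The underlying idea is the standard extraction of off-diagonal entries via unipotent conjugation in the principal congruence subgroup, specialized to characteristic $2$ where scalars are traceless, which is what makes the two alternatives of the conclusion genuinely distinct.
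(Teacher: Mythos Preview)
Your proof is correct. The paper does not actually give a proof of this lemma: it simply cites \cite[Lemma~3.2.5]{All14} and notes that the hypotheses there (where $\Gamma$ arises as the image of a Galois representation) can be relaxed to the open-image condition stated here. So there is no comparison of approaches to make against the paper itself; you have supplied a self-contained argument where the paper defers to a reference.

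Your method---conjugating by unipotents $v_s \in \Gamma(\ffrm_{A_0}^l)$ to extract the off-diagonal entries as the scalar part, with the characteristic-$2$ identity $v_s^{-1}=v_s$ giving the clean formula $v_s X_0' v_s - X_0' = sX'_{0,12}\cdot I + T$---is exactly the natural one, and presumably coincides with the argument in \cite{All14}. The bookkeeping with the conjugation constant $N$ and the choice $d = el + 2N$, $el \ge N+1$ is handled correctly: in particular, the verification that $z = sX'_{0,12}$ lands in $A$ (via $\val(b) \ge -N$, so $\val(z) \ge el - N \ge 1$) and that $\val(Y) > \val(z)$ (via the two minima $el + \val(\tr X_0) - N$ and $2el + \val(b) - N$) both go through. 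The diagonal subcase, where you observe $X_0$ already decomposes as $zI + g^{-1}Wg$ with $\val(z) < m$, is also fine.
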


\begin{proof} 
This is \cite[Lemma~3.2.5]{All14}. 
In \textit{loc. cit.}, it is assume that $\Gamma$ is the image of a certain Galois representation, 
however the proof of \textit{loc. cit.} only uses the fact that there is a finite index subfield 
$F_0$ of $\Frac(A)$ such that a conjugate of $\Gamma$ contains an open 
subgroup of $\SL_2(F_0)$, which we have taken as an assumption here.
\end{proof}

\begin{lemma}\label{lem:local_dual_centre}
Let $\rho_A$ be a type $\cS$ lifting of $\overline{\rho}$ to a characteristic $p$ discrete valuation ring $A \in \CNL_\Lambda$ 
and let $\frp \subset R_\cS$ be the kernel of the induced map $R_\cS \to A$.
Let $t$ be a uniformizer for $A$, and for any $N\ge 1$, let $\rho_{A/t^N} = \rho_A \bmod {t^N}$. 

Fix $v \in S_p$ and let $\kappa' : G_{K_v} \to \{\pm 1\}$ be the Kummer character attached to some $\theta \in K_v^\times$. 
Letting $\frz_{A/t^N}$ be the scalar matrices in $\ad\rho_{A/t^N}$, we again write $\kappa'$ for the 
class in $H^1(K_v,\frz_{A/t^N})$ given by $\sigma \mapsto \kappa'(\sigma)1$. 
We then let $\kappa \in H^1(K_v,\ad\rho_{A/t^N})$ be the image of $\kappa'$ 
under the natural map $H^1(K_v,\frz_{A/t^N}) \to H^1(K_v,\ad\rho_{A/t^N})$.
	\begin{enumerate}
		\item\label{local_dual_centre:in_selmer} If the valuation of $\theta$ is even, then $\kappa \in \cL_v(\rho_{A/t^N})^\perp$.
		\item\label{local_dual_centre:not_in_selmer} Assume the valuation of $\theta$ is odd 
		and that $\psi_{v, 1}/\psi_{v, 2} \bmod \frp$ has infinite order. 
		Then there is a nonnegative integer $c$, depending on $\rho_A|_{G_{K_v}}$ but not on $N$, 
		such that $t^m \kappa \in \cL_v(\rho_{A/t^N})^\perp$ only if $m \ge N - c$.
	\end{enumerate}
\end{lemma}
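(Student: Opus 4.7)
The proof is a local cohomological calculation. First, since the inclusion $\frz_{A/t^N}\hookrightarrow\ad\rho_{A/t^N}$ is dual, under the invariant form $(X,Y)\mapsto\tr(XY)$, to the trace $\ad\rho_{A/t^N}\twoheadrightarrow A/t^N$, the Tate-duality pairing of $\kappa$ against any $[\phi]$ reduces to
\[
\langle\kappa,[\phi]\rangle \;=\; \kappa'\cup\tr_*[\phi] \ \in\ H^2(K_v,A/t^N)\cong A/t^N,
\]
using that $A$ has characteristic $2$ (so $\epsilon\equiv 1$ in $A^\times$ and hence $\ad\rho_{A/t^N}(1)=\ad\rho_{A/t^N}$). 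Next, any $[\phi]\in\cL_v(\rho_{A/t^N})$ is representable by an upper-triangular cocycle with diagonal additive characters $\eta_1,\eta_2\colon G_{K_v}\to A/t^N$ that are trivial on $I_{K_v}$, because the $\Lambda_v$-algebra structure fixes the restrictions $\psi_{v,i}|_{I_{K_v}}$ (see \S\ref{subsubsec_ordinary_deformation_problems}). Hence $\tr_*[\phi]=\eta_1+\eta_2$ lies in $H^1_{\mathrm{ur}}(K_v,A/t^N)$. The formal smoothness of $R_v^\text{ord}$ over $\Lambda_v$ from Proposition~\ref{prop_smoothness_of_ordinary_lifting_problem} allows one to deform $\psi_{v,1}(\Frob_v)$ and $\psi_{v,2}(\Frob_v)$ independently, so the image $\tr_*\cL_v(\rho_{A/t^N})$ is an $A/t^N$-submodule of $H^1_{\mathrm{ur}}(K_v,A/t^N)\cong A/t^N$ whose colength $c\ge 0$ is controlled by $\rho_A|_{G_{K_v}}$; the infinite-order hypothesis on $\psi_{v,1}/\psi_{v,2}\bmod\frp$ is what ensures that $c$ can be taken independently of $N$.

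The central computation is the Kummer cup-product formula: for $\chi\in H^1_{\mathrm{ur}}(K_v,A/t^N)$,
\[
\kappa'\cup\chi \;=\; \bigl(\val_v(\theta)\bmod 2\bigr)\cdot\chi(\Frob_v) \ \in\ A/t^N.
\]
To derive it, I would use that $A$ has characteristic $2$ to write $H^1(K_v,A/t^N)=H^1(K_v,\bbF_2)\otimes_{\bbF_2}A/t^N$; hence $\chi=\chi_{\mathrm{ur}}\otimes\chi(\Frob_v)$, where $\chi_{\mathrm{ur}}$ is the unique nontrivial unramified class in $H^1(K_v,\bbF_2)$. The cup product reduces to the $\bbF_2$-Hilbert symbol $\{\theta,\theta_{\mathrm{ur}}\}_2$, where $\theta_{\mathrm{ur}}\in K_v^\times$ is a Kummer parameter for $\chi_{\mathrm{ur}}$. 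Since local class field theory identifies $K_v^\times/N_{L/K_v}(L^\times)$ (for $L$ the unramified quadratic extension) with $\bbZ/2$ via the valuation, $\{\theta,\theta_{\mathrm{ur}}\}_2 = \val_v(\theta)\bmod 2$.

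Part~\ref{local_dual_centre:in_selmer} is then immediate: $\val_v(\theta)$ even kills the cup product for every $\chi\in H^1_{\mathrm{ur}}(K_v,A/t^N)$, so $\kappa\in\cL_v(\rho_{A/t^N})^\perp$. For part~\ref{local_dual_centre:not_in_selmer}, $\val_v(\theta)$ odd gives $\langle t^m\kappa,[\phi]\rangle = t^m\cdot\tr_*[\phi](\Frob_v)$, which vanishes for every $[\phi]\in\cL_v(\rho_{A/t^N})$ iff $t^m$ annihilates the image $\tr_*\cL_v(\rho_{A/t^N})$, iff $m+c\ge N$. The principal obstacle is the cup-product formula in residual characteristic $2$: one cannot invoke the usual "unramified is self-orthogonal" statement from Tate duality (for example, over $\bbQ_2$ the ramified class $\chi_{-1}$ still pairs trivially with every unramified class), so the Hilbert-symbol computation must be done by hand. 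A secondary technical point is tracking the colength $c$ uniformly in $N$, which is exactly what the infinite-order hypothesis on $\psi_{v,1}/\psi_{v,2}$ provides by ensuring that the ordinary tangent space has the expected rank in the Frobenius direction across all truncations.
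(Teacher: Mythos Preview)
Your overall strategy matches the paper's: reduce the Tate pairing with $\kappa$ to the cup product $\kappa'\cup(\eta_1+\eta_2)$ where $\eta_1,\eta_2\in H^1_{\mathrm{ur}}(K_v,A/t^N)$ are the infinitesimal diagonal characters of an ordinary cocycle, then evaluate that cup product explicitly via the Artin map. Your formula $\kappa'\cup\chi=(\val_v(\theta)\bmod 2)\cdot\chi(\Frob_v)$ is exactly what the paper uses (written there as $x(\Art_{K_v}(\theta))+w(\Art_{K_v}(\theta))$), and part~\ref{local_dual_centre:in_selmer} goes through as you say.

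The gap is in your justification for part~\ref{local_dual_centre:not_in_selmer}. You claim that formal smoothness of $R_v^{\mathrm{ord}}$ over $\Lambda_v$ lets you deform $\psi_{v,1}(\Frob_v)$ and $\psi_{v,2}(\Frob_v)$ independently, so that $\tr_*\cL_v(\rho_{A/t^N})$ has small colength in $H^1_{\mathrm{ur}}(K_v,A/t^N)$. This is not what formal smoothness gives you, and in fact the claim fails at the residue field. The image of $\cL_\frb\to H^1_{\mathrm{ur}}(K_v,\frt)$ is $\ker\delta\cap H^1_{\mathrm{ur}}$, where $\delta:H^1(\frt)\to H^2(\frn)$ is the connecting map; over $k$, the calculation in the proof of Proposition~\ref{prop_smoothness_of_ordinary_lifting_problem} shows this intersection is one-dimensional and equals the \emph{diagonal} $\{(\eta,\eta)\}$, whose trace is $2\eta=0$ in characteristic $2$. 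So $\tr_*\cL_v(\overline{\rho})=0$, and without further input the colength could grow with $N$.

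The paper's argument fills this gap precisely via the infinite-order hypothesis, but the mechanism is different from what you suggest. Since $\phi_1/\phi_2$ has infinite order in $A^\times$, the module $H^0(K_v,(\ad/\frb)_{A/t^N})=(A/t^N)[\phi_1\phi_2^{-1}-1]$ is annihilated by $t^c$ for $c=\val_A(\phi_1\phi_2^{-1}(\sigma)-1)$ (any $\sigma$ with this finite). By Tate duality $H^2(K_v,\frn_{A/t^N})$ is also annihilated by $t^c$, hence so is the cokernel of $\cL_\frb\to H^1_{\mathrm{ur}}(K_v,\frt_{A/t^N})$. One can then choose $\gamma\in\cL_\frb$ with diagonal $(x,0)$ and $t^{N-c-1}x\neq 0$; for this $\gamma$ one has $\kappa\cup\gamma=x(\Art_{K_v}(\theta))$, which witnesses the non-orthogonality when $\val_v(\theta)$ is odd. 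Your phrasing ``the infinite-order hypothesis ensures $c$ is independent of $N$'' points to the right place, but the work is in bounding $H^2(\frn)$, not in any statement about formal smoothness.
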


\begin{proof}
After conjugating $\rho_A$, we may assume that 
\[
\rho_A(\sigma) = \begin{pmatrix} \phi_1(\sigma) & \beta(\sigma) \\ & \phi_2(\sigma) \end{pmatrix}
\]
with $\phi_i = \psi_{v, i} \bmod \frp$.   
Let $\frb \subset \ad\rho_{A/t^N}$ and $\frn \subset \frb$ denote the submodules of upper triangular matrices and upper triangular nilpotent matrices, respectively, and let $\frt = \frb/\frn$. 
Let $\cL_\frb(\rho_{A/t^N}) \subset H^1(K_v,\frb)$ be the submodule of elements whose image in 
$H^1(K_v,\frt)$ lies in $H^1_\text{ur}(K_v, \frt)$. 
Then $\cL_v(\rho_{A/t^N})$ is the image of $\cL_\frb(\rho_{A/t^N})$ under $H^1(K_v,\frb) \to H^1(K_v,\ad\rho_{A/t^N})$. 
	
Let $\gamma \in \cL_v(\rho_{A/t^N})$, and choose a cocycle representing $\gamma$ of the form
\begin{equation}\label{eqn:borel_cocycle}
\sigma \mapsto \begin{pmatrix} x(\sigma) & y(\sigma) \\ & w(\sigma) \end{pmatrix}
\end{equation}
The trace pairing with $\frz_{A/t^N}$ annihilates the trace zero submodule $\ad^0\rho_{A/t^N}$ of $\ad\rho_{A/t^N}$, 
and we have a $G_{K_v}$-equivariant isomorphism $\ad\rho_{A/t^N}/\ad^0\rho_{A/t^N} \to A/t^N$ given by
\[ \begin{pmatrix} a & b \\ c & d \end{pmatrix} \mapsto a + d, \]
so $\kappa \cup \gamma = \kappa'\cup x + \kappa' \cup w$. 
As an abelian group, $A/t^N$ is isomorphic to $k^N$, 
so the explicit description of the cup product in terms of the Artin map gives
\[
\kappa \cup \gamma = \kappa' \cup x + \kappa' \cup w = x(\Art_{K_v}(\theta)) + w(\Art_{K_v}(\theta)).
\]
If $\theta$ has valuation divisible by $p$, then this is trivial since $x$ and $w$ are unramified, 
so $\kappa \in \cL_v(\rho_{A/t^N})^\perp$. 
	
Now assume that $\theta$ has valuation not divisible by $p$. 
Under the trace pairing $\frn$ is dual to $\ad\rho_{A/t^N}/\frb$, 
so by local Tate duality we have an isomorphism of $A/t^N$-modules 
$H^2(K_v,\frn) \cong H^0(K_v, \ad\rho_{A/t^N}/\frb)^\vee$. 
Since $\phi_1/\phi_2$ has infinite order, there is a nonnegative integer $c$, 
independent of $N$, such that $t^c$ annihilates $H^0(K_v, \ad\rho_{A/t^N}/\frb)$. 
Applying cohomology to the exact sequence
\[ \xymatrix@1{ 0 \ar[r] &  \frn \ar[r] & \frb \ar[r] & \frt \ar[r] & 0 }, \]
we find that the cokernel of $H^1(K_v, \frb) \to H^1(K_v, \frt)$ is annihilated by $t^c$. 
It follows that $t^c$ annihilates the cokernel of
\[
\cL_\frb(\rho_{A/t^N}) \to H_{\mathrm{ur}}^1(K_v, \frt).
\]
We can thus find $\gamma \in \cL_\frb(\rho_{A/t^N})$ and a cocycle representing $\gamma$ as in \eqref{eqn:borel_cocycle} with $w = 0$ and $x : G_{K_v} \to A/t^N$ 
an unramified homomorphism such that $t^{N-c-1}x \ne 0$. 
As $\theta$ has valuation not divisible by $p$, we find that
\[ 0 = t^m\kappa \cup \gamma = t^m x(\Art_{K_v}(\theta)) \]
only if $m \ge N - c$.
\end{proof}

\begin{lemma}\label{lem:all_TW_primes}
Let $A \in \CNL_\Lambda$ be a discrete valuation ring and let $\rho_A : G_K \to \GL_2(A)$ be a type $\cS$ lifting of $\overline{\rho}$ such that the kernel of the induced map $R_\cS \to A$ is a nice prime.
Fix a non-zero element $t \in \ffrm_A$. 
There is an integer $c \ge 0$, depending only on $\cS$ and $\rho_A$, with the following property.

Let $N\ge 1$ and let $\rho_{A/t^N} = \rho_A \bmod t^N$. 
Let $\cQ_N$ be the set of finite places $v$ of $K$ that split in $K(\zeta_{p^N})$, are disjoint from $S$, have degree $1$ over $\bbQ$, and such that $\overline{\rho}(\Frob_v)$ has distinct eigenvalues. 
Let $T \subseteq S - S_p$, let $V_N$ be any finite subset of $\cQ_N$, and let $Sh_{\cQ_N - V_N}$ be the kernel of the product of the restriction maps
 	\[ H_{\cS^\perp, T}^1(\ad\rho_{A/t^N}(1)) \to \prod_{v\in \cQ_N - V_N} H^1(F_v, \ad\rho_{A/t^N}(1)). \]
Then there is an $A/t^N$-module map $f_N : A/t^N \to Sh_{\cQ_N - V_N}$ whose kernel and cokernel is annihilated by $t^c$.
\end{lemma}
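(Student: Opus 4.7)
The strategy is to analyse $Sh_{\cQ_N - V_N}$ by separating its scalar and non-scalar parts, using the exact sequence $0 \to \frz_{A/t^N} \to \ad\rho_{A/t^N} \to \ad\rho_{A/t^N}/\frz_{A/t^N} \to 0$. I would construct $f_N$ into the scalar summand (lower bound), and bound the non-scalar projection by bounded $t$-torsion via Proposition~\ref{prop:cokernel_class} and Chebotarev density (upper bound). The key simplification is that $A$ has characteristic $2$ and $i \in K$, so the cyclotomic character $\epsilon$ is trivial on $A/t^N$; consequently $A/t^N(1) \cong A/t^N$ with trivial $G_K$-action, and scalar classes in $H^1(K_S/K, \frz_{A/t^N}(1))$ are simply continuous homomorphisms $\chi : G_K \to A/t^N$ unramified outside $S$.

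\textbf{Construction of $f_N$.} I would produce a character $\chi_0 : G_K \to A/t^N$ factoring through the cyclic quotient $\Gal(K(\zeta_{p^N})/K)$, which has order $\ge p^{N - c_0}$ for some absolute constant $c_0$. Since $2^k$ acts trivially on any element of $A/t^N$ in characteristic $2$, the $A/t^N$-module $\Hom(\Gal(K(\zeta_{p^N})/K), A/t^N)$ is free cyclic of rank $1$. Any such $\chi_0$ vanishes automatically on every $\Frob_v$ with $v \in \cQ_N$ (as these lie in $G_{K(\zeta_{p^N})}$), hence on all $v \in \cQ_N - V_N$. The delicate local verification is that $\chi_0|_{G_{K_v}}$ lies in $\cL_v(\rho_{A/t^N})^\perp$ for each $v \in S_p$: via local class field theory the restriction $\chi_0|_{G_{K_v}}$ is identified with a Kummer character of some $\theta_v \in K_v^\times$, and by choosing the generator of $\Gal(K(\zeta_{p^N})/K)$ and, if necessary, twisting by a quadratic Kummer class supported at $S_p$, one arranges that each $\theta_v$ has even valuation. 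Lemma~\ref{lem:local_dual_centre}(\ref{local_dual_centre:in_selmer}) then places the class in $\cL_v^\perp$. The submodule generated by $\chi_0$ is isomorphic to $A/t^N$, giving $f_N$ with trivial kernel.

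\textbf{Upper bound.} Consider the map $\pi : Sh_{\cQ_N - V_N} \to H^1(K_S/K, (\ad\rho_{A/t^N}/\frz_{A/t^N})(1))$ and set $\Gamma = \rho_A(G_K) \subset \GL_2(A)$. I claim there is $c_1 = c_1(\cS, \rho_A)$ with $t^{c_1} \pi(Sh_{\cQ_N-V_N}) = 0$. Suppose not; a class $\phi \in Sh_{\cQ_N - V_N}$ with $t^{c_1} \pi(\phi) \ne 0$ corresponds, after lifting via Lemma~\ref{lem:selmer_torsion}~\ref{selmer_torsion:dual_selmer} and pushing through the quotient by $\ker\rho_A$, to a class in $H^1(\Gamma, \ad_A)$ with non-$t$-torsion image in $\coker(H^1(\Gamma, \frz_A) \to H^1(\Gamma, \ad_A))$. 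Lemma~\ref{lemma:Zar_dense} gives the Zariski-density of $\Gamma$ in $\PGL_{2,F}$, the nice-prime hypothesis supplies a non-trivial unipotent in $\Gamma$, and $\overline{\rho}$ is dihedral by hypothesis, so Proposition~\ref{prop:cokernel_class} produces $\sigma \in \overline{[\Gamma,\Gamma]}$ with $\overline{\sigma}$ regular semisimple and $\phi(\sigma) \notin (\sigma - 1)\ad_A$; by Lemma~\ref{lem_special_element} the set of such $\sigma$ is dense in a neighbourhood. Since $\Gamma/\rho_A(G_{K(\zeta_{p^N})})$ is a quotient of the abelian group $\Gal(K(\zeta_{p^N})/K)$, we have $[\Gamma,\Gamma] \subseteq \rho_A(G_{K(\zeta_{p^N})})$. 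Chebotarev density in the Galois extension of $K$ cut out simultaneously by $\rho_{A/t^N}$, $\zeta_{p^N}$, and $\phi$ then produces $v \in \cQ_N - V_N$ (only finitely many primes being excluded by $V_N$) whose Frobenius matches $\sigma$ closely enough in this extension that $\phi(\Frob_v) \notin (\Frob_v - 1)\ad\rho_{A/t^N}$, contradicting $\phi|_{G_{K_v}} = 0$. Finally, $\ker\pi \cap Sh_{\cQ_N-V_N}$ consists of scalar characters $\chi$ vanishing on all $\Frob_v$, $v \in \cQ_N - V_N$; Chebotarev forces $\chi$ to factor through $\Gal(K(\zeta_{p^N})/K)$, so this kernel is itself an $A/t^N$-module of rank $\le 1$. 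Combining the two bounds yields the claimed map $f_N : A/t^N \to Sh_{\cQ_N-V_N}$ with kernel and cokernel annihilated by a bounded power of $t$.

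\textbf{Main obstacle.} The hardest part is the construction: verifying the local condition $\chi_0|_{G_{K_v}} \in \cL_v(\rho_{A/t^N})^\perp$ at each $v \in S_p$ in a globally consistent way. This requires the explicit LCFT identification of the cyclotomic restriction $\chi_0|_{G_{K_v}}$ with a Kummer character and a careful choice (possibly after correcting by a global Kummer twist from $\cO_{K,S}^\times \otimes \bbZ/2$) ensuring even valuation at every $v \in S_p$ simultaneously; the non-trivial infinite order of some $\psi_{v,1}/\psi_{v,2}$ (part of the nice prime hypothesis) controls the resulting cocycle. A secondary technical point is the translation between $A/t^N$- and $A$-coefficient cohomology groups (Lemmas~\ref{lem:selmer_torsion} and~\ref{lem:H1_and_local_torsion}) needed to invoke Proposition~\ref{prop:cokernel_class} in the non-scalar bound.
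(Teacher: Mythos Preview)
Your overall shape is right and matches the paper's strategy: build a scalar class from the cyclotomic character, then bound everything else. But there are two genuine gaps.

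\textbf{The scalar kernel.} Your claim that Chebotarev forces a scalar $\chi \in \ker\pi \cap Sh_{\cQ_N - V_N}$ to factor through $\Gal(K(\zeta_{p^N})/K)$ is false. The set $\cQ_N$ only contains primes with $\overline{\rho}(\Frob_v)$ regular semisimple; in the dihedral case this means $\Frob_v \in G_E$ (the odd-order cyclic part). Chebotarev therefore only gives $\chi|_{G_{E(\zeta_{p^N})}} = 0$, so $\chi$ factors through $\Gal(E(\zeta_{p^N})/K)$, not $\Gal(K(\zeta_{p^N})/K)$. Since $E$ and $K(\zeta)$ are distinct quadratic extensions of $K$ (the local hypothesis on $\overline{\rho}|_{G_{K_v}}$ forces this), there is a genuine $E$-component left over: any $A/t^N$-multiple of the quadratic character of $E/K$ lies in $\ker\pi$ and vanishes at every $v \in \cQ_N$. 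You never bound this piece. The paper handles it in its Claim~3 by observing that locally at $v \in S_p$ the $E$-character is the Kummer class of an element of \emph{odd} valuation, and then invoking part~(\ref{local_dual_centre:not_in_selmer}) of Lemma~\ref{lem:local_dual_centre} together with the nice-prime condition that $\psi_{v,1}/\psi_{v,2}$ has infinite order. This is where that hypothesis is actually used; your argument never touches it.

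\textbf{The non-scalar bound.} Your passage ``after lifting via Lemma~\ref{lem:selmer_torsion} and pushing through the quotient by $\ker\rho_A$, to a class in $H^1(\Gamma, \ad_A)$'' does not make sense as stated: Lemma~\ref{lem:selmer_torsion} identifies $A/t^N$-classes with $t^N$-torsion in the $F/A$-coefficient group, not with classes in $H^1(\Gamma,\ad_A)$. Before you can lift to integral coefficients and invoke Proposition~\ref{prop:cokernel_class}, you must first show that (a bounded multiple of) your class is inflated from $H^1(L/K, \ad_{A/t^N})$ for the fixed extension $L/K$ cut out by $\rho_A$ over $K_\infty$. The paper does this as a separate step (its Claim~1), using Lemma~\ref{lem:Gamma_stable_subgroup} and a Chebotarev argument to show $\kappa|_{G_L}$ is $t$-bounded. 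Only then does the lift to $H^1(L/K,\ad_A)$ exist, and only then can Proposition~\ref{prop:cokernel_class} be applied (the paper's Claim~2).

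\textbf{A minor point on the construction.} The local verification is actually much easier than you suggest: the Kummer element attached to the quadratic extension $K(\zeta)/K$ is a root of unity in $K$, so it has valuation $0$ at every place --- no twisting is required. Your ``main obstacle'' is not an obstacle at all; the real difficulty is the $E$-component you overlooked.
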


\begin{proof}
Since we are in characteristic $2$, Tate twists are trivial and we drop them from the notation. 
We can and do assume that $t$ is a uniformizer.
To ease notation, we write $\ad = \ad\rho_A$ and $\ad_{A/t^N} = \ad\rho_{A/t^N}$. 
We let $\frz \subset \ad$ be the submodule of diagonal matrices, 
and $\frz_{A/t^N} = \frz \otimes_A A/t^N$ the diagonal matrices in $\ad_{A/t^N}$. 
We also write $(\ad/\frz)_{A/t^N} = (\ad/\frz) \otimes_A A/t^N = \ad_{A/t^N}/\frz_{A/t^N}$.

Recall that $i \in K$, so there is a unique quadratic extension of $K$ contained in the cyclotomic extension. 
We let $N_0 \ge 1$ and $\zeta$ be a primitive $p^{N_0}$th root of unity 
such that $K(\zeta)/K$ is this quadratic extension. 
By ensuring $c \ge N_0$, we can and do assume that $N \ge N_0$ for the remainder of the proof. 
Let $\kappa_\zeta' : \Gal(K_S/K) \to \frz_{A/t^N}$ be the homomorphism that factors through 
$\Gal(K(\zeta)/K)$ and sends its nontrivial element to $1\in \frz_{A/t^N}$. 
Let $\kappa_\zeta$ be the image of $\kappa_\zeta'$ in $H^1(K_S/K, \ad_{A/t^N})$. 
By part~\ref{local_dual_centre:in_selmer} of Lemma~\ref{lem:local_dual_centre}, 
$\kappa_\zeta \in H_{\cS^\perp,T}(\ad_{A/t^N})$. 
Moreover, since $N \ge N_0$ and every $v\in \cQ_N$ splits in $K(\zeta_{p^N})$, 
we have $\kappa_\zeta \in Sh_{\cQ_N - V_N}$. 
We let $f_N : A/t^N \to Sh_{\cQ_N - V_N}$ be the $A/t^N$-module map given by $f(1) = \kappa_\zeta$. 
We claim that we can find an integer $c \ge N_0$, depending only on $\cS$ and $\rho_A$,
such that the kernel and cokernel of $f_N$ is annihilated by $t^c$. 

We first treat $\ker(f_N)$. 
By Lemma~\ref{lemma:Zar_dense}, the Zariski closure of $\im(\rho_A)$ contains $\SL_2$, 
so Lemma~\ref{lem_no_invariants_in_ad_PGL_2} implies that $H^0(K_S/K, \ad/\frz) = 0$. 
Applying cohomology to the exact sequence
	\[ \xymatrix@1{ 
    0 \ar[r] & \ad/\frz \ar[r]^{t^N} & \ad/\frz \ar[r] & (\ad/\frz)_{A/t^N} \ar[r] & 0},\]
we see that $H^0(K_S/K, (\ad\rho/\frz)_{A/t^N})$
injects into the $t$-power torsion of $H^1(K_S/K, \ad/\frz)$ for any $N \ge 1$. 
Let $c_0$ be the maximum of $N_0$ and the order of the $t$-power torsion of $H^1(K_S/K, \ad/\frz)$; 
it depends only $\cS$ and $\rho_A$. 
Then $t^{c_0}$ annihilates $H^0(K_S/K, (\ad/\frz)_{A/t^N})$,
hence also the kernel of
	\[ H^1(K_S/K, \frz_{A/t^N}) \to H^1(K_S/K, \ad_{A/t^N}). \]
Since $\kappa_\zeta'$ generates a free rank $1$ submodule of 
$H^1(K_S/K, \frz_{A/t^N})$ (assuming $N\ge N_0$), 
$t^{c_0}$ annihilates $\ker(f_N)$.

In order to treat $\coker(f_N)$, we proceed in steps. 
Fix some $\gamma \in Sh_{\cQ_N - V_N}$. We want to show that there is a 
positive integer $c \ge c_0$, depending only on $\cS$ and $\rho_A$,
such that $t^c\gamma \in \im(f_N)$. 
Let $K_\infty = \cup_{n\ge 1} K(\zeta_{p^n})$ and let
$L$ be the subfield of $K_S$ fixed by $\ker(\rho_A|_{G_{K_\infty}})$.

\begin{claim}\label{claim:in_im_cohom}
There is a positive integer $c_1$, depending only on $\rho_A$, such that 
$t^{c_1}\gamma$ is contained in the image of the inflation map 
$H^1(L/K, \ad_{A/t^N}) \to H^1(K_S/K, \ad_{A/t^N})$.
\end{claim}

\begin{proof}
Let $\kappa_\gamma$ be a cocycle representing $\gamma$. 
Since the projective image of $\overline{\rho}$ is a dihedral group of order $2n$ with $n$ odd and $K(\zeta_{p^N})/K$ is an extension of degree a power of $2$, 
there is $\sigma \in G_{K(\zeta_{p^N})}$ such that $\overline{\rho}(\sigma)$ has distinct eigenvalues.
By Chebotarev density, we can find $v\in \cQ_N - V_N$ such that $\rho_{A/t^N}(\sigma) = \rho_{A/t^N}(\Frob_v)$ 
and $\kappa_\gamma(\Frob_v) = \kappa_\gamma(\sigma)$. 
Since $\gamma \in Sh_{\cQ_N - V_N}$,
\[ \kappa_\gamma(\sigma) = \kappa_\gamma(\Frob_v) \in (\Frob_v - 1)\ad_{A/t^N} = (\sigma - 1)\ad_{A/t^N}. \] 
Consider the restriction to 
	\[ H^1(K_S/L, \ad_{A/t^N})^{\Gal(L/K)} = \Hom_{\Gal(L/K)}(\Gal(K_S/L), \ad_{A/t^N}), \]
and let $M_N = \kappa(G_L)$. 
Let $M$ be the inverse image of $M_N$ in $\ad$, it is a $\Gamma$-stable subgroup of $\ad$. 
Let $m \ge 1$ be the smallest integer such that $M \not\subset t^m \ad$. 
By Lemma~\ref{lem:Gamma_stable_subgroup}, there is a positive integer $c_1$, 
depending only on $\Gamma$, and $X \in M$ such that either $\val(\tr(X)) < m + c_1$ 
or $X = z1 + Y$ with $\val(z) < m + c_1$ and $Y \in t^{\val(z)+1}\ad$ 
(here $\val$ denotes the normalized valuation on $A$). 
Choose $\tau \in G_L$ such that $\kappa_\gamma(\tau) = X \bmod t^N$.
By Chebotarev density, we can find $w\in \cQ_N - V_N$ such that 
$\rho_{A/t^N}(\Frob_w) = \rho_{A/t^N}(\tau\sigma) = \rho_{A/t^N}(\sigma)$ and 
$\kappa_\gamma(\Frob_w) = \kappa_\gamma(\tau\sigma)$. 
Then $\kappa_\gamma(\Frob_w) \in (\Frob_w - 1)\ad_{A/t^N} = (\sigma - 1)\ad_{A/t^N}$. 
On the other hand, 
	\[ \kappa_\gamma(\Frob_w) = \kappa_\gamma(\tau) + \kappa_\gamma(\sigma) 
    \in \kappa_\gamma(\tau) + (\sigma - 1)\ad_{A/t^N} = 
    \kappa_\gamma(\tau) + (\Frob_w - 1)\ad_{A/t^N}, \]
and $\kappa_\gamma(\tau) = X \bmod t^N$. 
Lemma~\ref{lem:one_TW_prime} and the property of $X$ imply that 
$m + c_1 > N$. 
So $t^{c_1}M \subseteq t^N \ad$ and $t^{c_1}\kappa|_{G_L} = 0$. 
\end{proof}

Set $\gamma_1 = t^{c_1}\gamma$, and identify $\gamma_1$ with an element of $H^1(L/K, \ad_{A/t^N})$. 
Let $i : H^1(L/K, \frz) \to H^1(L/K, \ad)$ and $i_N : H^1(L/K, \frz_{A/t^N}) \to H^1(L/K, \ad_{A/t^N})$ 
denote the maps on cohomology induced by the respective inclusions $\frz \subset \ad$ and $\frz_{A/t^N} \subset \ad_{A/t^N}$. 

\begin{claim}\label{claim:in_centre}
There is a nonnegative integer $c_2$, depending only on $\rho_A$, 
such that $t^{c_2}\gamma_1\in \im(i_N)$.
\end{claim}

\begin{proof}
Applying cohomology to the short exact sequence
	\[ \xymatrix@1{
    0 \ar[r] & \ad \ar[r]^{t^N} & \ad \ar[r] & \ad_{A/t^N} \ar[r] & 0}, \]
we have an injection $H^1(L/K, \ad)/t^N \to H^1(L/K, \ad_{A/t^N})$ 
whose cokernel injects into the $t$-power torsion of $H^2(L/K, \ad)$. 
Let $d_1$ be the order of the $t$-power torsion of $H^2(L/K, \ad)$; it depends only on $\rho_A$. 
We can then lift $t^{d_1}\gamma_1$ to a class $\alpha \in H^1(L/K, \ad)$. 
By commutativity of the diagram
	\[ \xymatrixcolsep{4pc}
    \xymatrix@1{ H^1(L/K, \frz)/t^N \ar[r]^{i \bmod t^N} \ar[d] &  H^1(L/K, \ad)/t^N \ar[d] \\
    H^1(L/K, \frz_{A/t^N}) \ar[r]^{i_N} & H^1(L/K, \ad_{A/t^N}), } \]
it suffices to find a nonnegative integer $d_2$, depending only on $\rho_A$, 
such that 
	\[ t^{d_2}\alpha \in \im(i) + t^N H^1(L/K, \ad). \]
	
By Lemmas~\ref{lemma:Zar_dense} and~\ref{lem_no_invariants_in_ad_PGL_2}, $(\ad/\frz)^{\Gal(L/K_\infty)} = 0$, 
so the inflation-restriction exact sequence gives
an injection $H^1(L/K, \ad/\frz) \to H^1(L/K_\infty, \ad/\frz)$. 
Applying cohomology to
	\[ \xymatrix@1{ 
    0 \ar[r] & \frz \ar[r] & \ad \ar[r] & \ad/\frz \ar[r] &  0}, \]
we see that $\coker(i)$ 
injects into the cokernel of $j : H^1(L/K_\infty, \frz) \to H^1(L/K_\infty, \ad)$. 
By Lemma~\ref{lem_explicit_cocycle}, 
the $A$-rank of $\coker(j)$ is at most one, hence the same is true of $\coker(i)$. 
Let $d_3$ be the order of the $t$-power torsion in $\coker(i)$; it depends only on $\rho_A$.
If the $A$-rank of $\coker(i)$ is zero, then $t^{d_3}\alpha \in \im(i)$, 
and we take $d_2 = d_3$.

Assume that the $A$-rank of $\coker(i)$ is one, 
choose $\beta \in H^1(L/K, \ad)$ such that $\beta$ maps to a generator of $\coker(i)$ 
modulo its $t$-power torsion. 
Multiplying $\beta$ by a unit of $A^\times$, if necessary, we can find $m \ge 0$ 
and $\delta \in H^1(L/K, \frz)$ such that $t^{d_3}\alpha = t^m \beta + \delta$. 
Since the kernel of $\rho_A$ is a nice prime, 
Lemma~\ref{lemma:Zar_dense} and Proposition~\ref{prop:cokernel_class} yield  $\sigma \in \Gal(L/K)$ such that:
\begin{itemize}
\item $\rho_A(\sigma) \in [\Gamma,\Gamma]$, hence $\delta(\sigma) = 0$.
\item $\overline{\rho}(\sigma)$ has distinct eigenvalues.
\item If $\kappa_\beta$ is a cocycle representing $\beta$, then $\kappa_\beta(\sigma) \notin (\sigma - 1)\ad_A$.
\end{itemize}
There is a nonnegative integer $d_4$ such that $\kappa_\beta(\sigma) \notin (\sigma - 1)\ad + t^{d_4+1}\ad$. 
By choosing $\sigma$ and $\kappa_\beta$ so that $d_4$ is minimal, it then depends only on $\rho_A$. 
By computing in a basis that diagonalizes $\rho_A(\sigma)$, we further see that $t^m\kappa_\beta(\sigma) \notin (\sigma-1)\ad + t^{m+d_4+1}\ad$.
The choice of $\kappa_{\beta}$ determines a choice of cocycle 
$\kappa_{\alpha'} = t^m\kappa_{\beta} + \delta$ for $\alpha'= t^{d_3}\alpha$, and 
	\[ \kappa_{\alpha'}(\sigma) = t^m\kappa_{\beta}(\sigma) \notin (\sigma-1)\ad + t^{m+d_4+1}\ad. \]
By Chebotarev density, we can find $v\in \cQ_N - V_N$ such that $\rho_{A/t^N}(\Frob_v) = \rho_{A/t^N}(\sigma)$ 
and such that $\kappa_{\alpha'}(\Frob_v) - \kappa_{\alpha'}(\sigma) \in t^N \ad$. 
Since $\alpha'$ lifts a multiple of $\gamma \in Sh_{\cQ_N - V_N}$, it has trivial image in $H^1(F_v, \ad_{A/t^N})$ and 
	\[ \kappa_{\alpha'}(\sigma) \in \kappa_{\alpha'}(\Frob_v) + t^N\ad \subseteq (\Frob_v - 1)\ad + t^N\ad = 
    (\sigma - 1)\ad + t^N\ad. \]
So $m \ge N - d_4$.
Setting $d_2 = d_3+d_4$, we  have 
	\[ t^{d_2}\alpha = t^{d_4}\alpha' = t^{d_4}\delta + t^{m+d_4}\beta \in \im(i) + t^N H^1(L/K, \ad), \]
which finishes the proof of Claim~\ref{claim:in_centre}.
\end{proof}

Now set $\gamma_2 = t^{c_2}\gamma_1 = t^{c_1+c_2}\gamma$. 
Let $\gamma_2' \in H^1(L/K, \frz_{A/t^N})$ be such that $i_N(\gamma_2') = \gamma_2$. 
The map $f_N$ factors as $f_N = i_N \circ f_N'$ with 
$f_N' : A/t^N \to H^1(L/K, \frz_{A/t^N})$ given by $f_N'(1) = \kappa_\zeta'$.
So to show some multiple of $\gamma_2$ lies in the image of $f_N$, 
it suffices to show some multiple of $\gamma_2'$ lies in the image of $f_N'$.

\begin{claim}\label{claim:in_im_fN}
There is a positive integer $c_3$, depending only on $\cS$ and $\rho_A$, 
such that $t^{c_3}\gamma_2'$ is contained in the image of $f_N'$.
\end{claim}

\begin{proof}
Recall that $K(\zeta)/K$ is the unique quadratic cyclotomic extension and
that $E/K$ is the unique quadratic extension such that $\overline{\rho}|_{G_E}$ is reducible.
Our assumption on $\overline{\rho}|_{G_{K_v}}$ for $v|p$ implies that $K(\zeta)$ and $E$ are distinct. 
First assume that $\gamma_2'(\Gal(L/E(\zeta))) \ne 0$.
The image of $\gamma_2'$ is an abelian group annihilated by $2$ and the image of $\overline{\rho}|_{G_E}$ 
contains elements with distinct (hence odd order) eigenvalues. 
So we further have $\gamma_2'(\Gal(L/E(\zeta_{p^N}))) \ne 0$ and we can find $\tau \in \Gal(L/K(\zeta_{p^N}))$ such that $\gamma_2'(\tau) \ne 0$ 
and $\overline{\rho}(\tau)$ has distinct eigenvalues. 
By Chebotarev density, we can then find a prime $v \in \cQ_N - V_N$ such that $\gamma_2'(\Frob_v) = \gamma_2'(\tau) \ne 0$. 
Then Lemma~\ref{lem:one_TW_prime} implies that the restriction of $\gamma_2'$ to $H^1(K_v, \ad_{A/t^N})$ is nonzero, 
contradicting the fact that $i_N(\gamma_2') = \gamma_2 = t^{c_1+c_2}\gamma \in Sh_{\cQ_N - V_N}$. 

So $\gamma_2'$ must factor through $\Gal(E(\zeta)/K)$. 
Since we are only interested in $\coker(f_N')$, we may further assume that $\gamma_2'$ factors through $\Gal(E/K)$. 
Since the kernel of $R_\cS \to A$ is a nice prime and $i_N(\gamma_2') \in H_{\cS^\perp, T}^1(\ad\rho_{A/t^N})$, we can apply part~\ref{local_dual_centre:not_in_selmer} of Lemma~\ref{lem:local_dual_centre} to find a nonnegative integer $c_3$, depending only on $\cS$ and $\rho_A$, such that $t^{c_3}\gamma_2' = 0$. 
\end{proof}

It follows from Claims~\ref{claim:in_im_cohom}, \ref{claim:in_centre}, and \ref{claim:in_im_fN} 
that $t^{c_1+c_2+c_3}\gamma \in \im(f_N)$. 
Taking $c = \max\{c_0, c_1+c_2+c_3\}$, the kernel and cokernel of $f_N$ 
is annihilated by $t^c$, which completes the proof of 
Lemma~\ref{lem:all_TW_primes}.
\end{proof}

\begin{lemma}\label{lem:sweet_prime_image_cohom}
Let $A \in \CNL_\Lambda$ be discrete valuation ring of characteristic $0$ with fraction field $F$ and let $t$ be a uniformizer for $A$. 
Let $\rho_A : G_K \to \GL_2(A)$ be a continuous representation unramified outside of finite many places such that $\rho_F = \rho_A \otimes_A F$ is absolutely irreducible and the image of $\rho(G_K)$ in $\PGL_2(F)$ is Zariski dense. 
Let $K_\infty = \cup_{n \ge 1} K(\zeta_{p^n})$ and $L/K_\infty$ be the extension cut out by $\ad\rho_A|_{G_{K_\infty}}$. 
Then the order of $H^1(L/K, \ad\rho_{A/t^N}(1))$ is bounded independently of $N \geq 1$.
\end{lemma}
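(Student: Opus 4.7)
The plan is to apply Hochschild--Serre to the tower $L/K_\infty/K$ and to bound each of the two resulting pieces uniformly in $N$. Write $H = \Gal(L/K_\infty)$ and $\Delta = \Gal(K_\infty/K)$. Since $\mu_{p^\infty} \subset K_\infty$, the cyclotomic twist is trivial when restricted to $H$, so as $H$-modules $\ad\rho_{A/t^N}(1) = \ad\rho_{A/t^N}$. The inflation--restriction sequence then reduces the problem to bounding, independently of $N$, the two groups
\[
H^1(\Delta, H^0(H, \ad\rho_{A/t^N}(1))) \quad \text{and} \quad H^1(H, \ad\rho_{A/t^N})^\Delta.
\]
Note that since $\Delta$ is abelian and the Zariski closure of the image of $\rho_A(G_K)$ in $\PGL_2(F)$ contains $\PGL_2$, the same is true of the Zariski closure of the image of $H$, by the argument given in the proof of Lemma~\ref{lemma:Zar_dense}.

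To bound $H^1(H, \ad\rho_{A/t^N})$, the plan is to first show that $H^1(H, \ad\rho_A)$ is a finite $A$-module of bounded order, and then use the long exact sequence induced by multiplication by $t^N$ on $\ad\rho_A$. For the former, one applies Proposition~\ref{prop:local-field-cohom}(3) to get $H^1(H, \ad\rho_A) \otimes_A F \cong H^1(H, \ad\rho_F)$, which one wants to vanish. Over the characteristic zero local field $F$, the group $H$ is a compact $p$-adic Lie group whose image in $\PGL_{2,F}$ contains, after conjugation, an open subgroup of $\SL_2(F_0)$ for a closed subfield $F_0 \subset F$ of finite index (Lemma~\ref{lem_open_subgroup_of_image}). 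Continuous Lie-algebra cohomology in characteristic zero (Whitehead's lemma: $H^1(\mathfrak{sl}_2, \mathfrak{sl}_2) = 0$) shows this vanishes. Then by Proposition~\ref{prop:local-field-cohom}(2), the finiteness of $H^1(H, \ad\rho_A)/(t)$ follows once we check $H^1(H, \ad\rho_{A}/(t))$ is finite, which is the characteristic-$p$ statement handled by the cocycle analysis of Lemma~\ref{lem_explicit_cocycle} (rank at most $1$ in $H^1(H, \ad_{A/(t)})/H^1(H, \frz_{A/(t)})$, plus boundedness of the scalar cohomology). The same strategy, applied to $H^2$, yields that $H^2(H, \ad\rho_A)[t^N]$ is bounded, and one concludes via the sequence
\[
0 \to H^1(H, \ad\rho_A)/t^N \to H^1(H, \ad\rho_{A/t^N}) \to H^2(H, \ad\rho_A)[t^N] \to 0.
\]

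To bound $H^1(\Delta, H^0(H, \ad\rho_{A/t^N}(1)))$, one analyzes the module $H^0(H, \ad\rho_{A/t^N})$ first without the twist. Zariski density and Lemma~\ref{lem_no_invariants_in_ad_PGL_2} (applied to $\ad/\frz$ with coefficients $F$) show that $H^0(H, \ad\rho_F) = F$, so $H^0(H, \ad\rho_A) = A$ (scalars); combining with the finiteness of $H^1(H, \ad\rho_A)$ established above, we get that $H^0(H, \ad\rho_{A/t^N})$ is an extension of $A/t^N$ by a group of bounded order. Twisting by $\chi_{\text{cyc}}$ and applying $\Delta$-cohomology, the torsion part contributes a bounded amount. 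For the scalar piece $(A/t^N)(1)$, the action of a topological generator $\gamma$ of $\Delta$ is multiplication by $\chi_{\text{cyc}}(\gamma)$, and since $\chi_{\text{cyc}}|_\Delta$ has infinite order, $\chi_{\text{cyc}}(\gamma) - 1$ has finite, $N$-independent valuation in $A$, so $H^1(\Delta, (A/t^N)(1)) \cong (A/t^N)/(\chi_{\text{cyc}}(\gamma) - 1)$ is bounded.

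The main obstacle is verifying the vanishing of $H^1(H, \ad\rho_F)$ in characteristic zero with the required uniformity: $H$ is only known to be a compact subgroup of $\PGL_2(A)$ with Zariski-dense image in $\PGL_{2,F}$, and one must upgrade this to a statement about its Lie algebra (via Lemma~\ref{lem_open_subgroup_of_image} and a Lazard-type comparison of continuous group and Lie algebra cohomology) in order to invoke Whitehead's lemma. The parallel bound on $H^2(H, \ad\rho_A)[t^N]$ requires the analogous input for $H^2$, which is where some care is needed since one needs sufficient finite-dimensionality of $H^2(H, \ad\rho_F)$ plus the characteristic-$p$ input for $H^2(H, \ad\rho_{A/(t)})$, both of which should follow from the same Lie-theoretic and Pink-style arguments already used in Section~\ref{sec_group_theory}.
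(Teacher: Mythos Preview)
Your Hochschild--Serre strategy can be made to work, but it is considerably more laborious than the paper's argument, and several of your citations point to lemmas that are stated only for characteristic-$p$ coefficients (Lemma~\ref{lem_open_subgroup_of_image} and Lemma~\ref{lem_explicit_cocycle} both take $A = \bbF\llbracket t\rrbracket$), so they do not apply here without reproving them in mixed characteristic.

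The paper bypasses the Lie-theoretic input entirely. It first reduces, via Proposition~\ref{prop:local-field-cohom}(3) and the long exact sequence, to showing $H^1(\Gal(L/K),\ad\rho_F(1))=0$. Let $L_0/K$ be the extension cut out by $\ad\rho_F$. Pink's theorem shows that $L_0\cap K_\infty$ is finite over $K$; since $F$ has characteristic $0$, restriction along this finite extension is injective, so one may assume $L_0\cap K_\infty=K$. Then $\Gal(L/K)\cong\Gal(L_0/K)\times\Gal(K_\infty/K)$ is a \emph{product}, and K\"unneth gives
\[
H^1(L/K,\ad\rho_F(1))\;=\;\bigl(H^0(L_0/K,\ad\rho_F)\otimes H^1(K_\infty/K,F(1))\bigr)\;\oplus\;\bigl(H^1(L_0/K,\ad\rho_F)\otimes H^0(K_\infty/K,F(1))\bigr).
\]
Both summands vanish because $H^0(K_\infty/K,F(1))=H^1(K_\infty/K,F(1))=0$: the cyclotomic character has infinite order and $F$ is a field of characteristic $0$. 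The crucial point is that $H^1(\Gal(L_0/K),\ad\rho_F)$ is \emph{never computed}: it is multiplied by $0$ in the K\"unneth formula.

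By contrast, your route through $H^1(H,\ad\rho_F)$ forces you to prove that this group vanishes, which (as you note) needs a Lazard-type comparison with Lie algebra cohomology plus Whitehead's lemma, and then you must separately control $H^2(H,\ad\rho_A)[t^N]$. None of this is needed once the product/K\"unneth structure is exploited. Your treatment of the $\Delta$-piece via $(\chi_{\mathrm{cyc}}(\gamma)-1)$ is essentially the same observation the paper uses, just at the level of $A/t^N$ rather than $F$.
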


\begin{proof}
It suffices to prove that $H^1(L/K, \ad\rho_A(1))$ is finite, or that $H^1(L/K, \ad\rho_F(1)) = 0$. Let $L_0/K$ be the extension cut out by $\ad\rho_F$. We may identify $\Gal(L_0 / K)$ with a compact, Zariski dense subgroup of $\PGL_2(F)$. In particular, by \cite[Theorem 0.2]{Pink_compact}, the closure $\overline{ [\Gal(L_0 / K), \Gal(L_0 / K) ]}$ of the commutator subgroup has finite index in $\Gal(L_0 / K)$, and consequently $L_0 \cap K_\infty$ is a finite extension of $K$. 

Since $F$ has characteristic 0, the restriction map  $H^1(L/K, \ad\rho_F(1)) \to H^1(L/ L_0 \cap K_\infty, \ad\rho_F(1))$ is injective, so we may assume without loss of generality that $L_0 \cap K_\infty = K$. We then have an isomorphism $\Gal(L/K) \cong \Gal(L_0/K) \times \Gal(K_\infty/K)$ and a K\"unneth decomposition
\begin{align*}
H^1(L/F, \ad\rho_F(1)) &= H^0(L_0/K, \ad\rho_F) \otimes H^1(K_\infty/K, F(1)) \\
& \quad \oplus H^1(L_0/K, \ad\rho_F) \otimes H^0(K_\infty/K, F(1)).
\end{align*}
It is easy to see that
\[ H^0(K_\infty/K, F(1)) = H^1(K_\infty/K, F(1)) = 0. \]
This completes the proof. 
\end{proof}

\begin{lemma}\label{lem:sweet_prime_ad}
    Let $A \in \CNL_\Lambda$ be a discrete valuation ring of characteristic $0$ and let $t$ be a uniformizer for $A$. 
    Let $\rho_A : G_K \to \GL_2(A)$ be a lift of $\overline{\rho}$ that is not dihedral. 
    Let $\ad^0\rho_A$ and $\frz_A$ be the submodule of trace zero matrices and the submodule of diagonal matrices, respectively, in $\ad\rho_A$. 
    There is a nonnegative integer $d$, depending only on $\rho_A$, with the following property. 
If $M \subseteq \ad\rho_A$ is an $A[G_K]$-submodule and $m \ge 0$ is such that $M \subseteq t^m \ad\rho_A$ but $M \not\subseteq t^{m+1}\ad\rho_A$, 
    then either $t^{m+d} \frz_A \subseteq M$ or $t^{m+d} \ad^0\rho_A \subseteq M$.
\end{lemma}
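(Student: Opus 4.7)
The plan is to decompose $\ad\rho_F = F\cdot I \oplus \ad^0\rho_F$ as $G_K$-modules over $F$ (valid in characteristic zero) and exploit the absolute irreducibility of $\ad^0\rho_F$ as an $F[G_K]$-module, which holds because $\rho_F := \rho_A\otimes_A F$ is absolutely irreducible (it lifts the absolutely irreducible $\overline{\rho}$) and non-dihedral by hypothesis. After replacing $M$ by $t^{-m}M$ we reduce to the case $m = 0$, so $M\not\subseteq t\ad\rho_A$, and the goal becomes showing either $\frz_A\subseteq M$ or $t^d\ad^0\rho_A\subseteq M$ for some $d$ depending only on $\rho_A$.

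The sub-$F[G_K]$-modules of $\ad\rho_F$ are exactly $0$, $F\cdot I$, $\ad^0\rho_F$, and $\ad\rho_F$, and the generic fibre $M_F := M\otimes_A F$ is non-zero. The plan is to handle the three non-trivial cases in turn. If $M_F = F\cdot I$, then $M\subseteq\frz_A$, and Nakayama forces $M = \frz_A$. The case $M_F = \ad\rho_F$ would be reduced to the main case below by picking $m\in M$ with $\bar m\ne 0$ and analyzing the $A[G_K]$-submodule $N_m\subseteq M$ generated by $m$: its generic fibre is $F\cdot I$, $\ad^0\rho_F$, or $\ad\rho_F$, yielding respectively $\frz_A\subseteq N_m$ (when $m$ is a unit scalar), the main case conclusion applied to $N_m\cap \ad^0\rho_A$, or a compactness argument on $\ad\rho_A$ itself parallel to the one below.

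The heart of the argument is the main case: $M\subseteq\ad^0\rho_A$ is a $G_K$-stable full-rank sublattice with $M\not\subseteq t\ad^0\rho_A$, and one needs a uniform $d$ with $t^d\ad^0\rho_A\subseteq M$. The plan is a compactness argument. For each $y\in U := \ad^0\rho_A\setminus t\ad^0\rho_A$, let $N_y$ denote the $A[G_K]$-submodule of $\ad^0\rho_A$ generated by $y$; absolute irreducibility of $\ad^0\rho_F$ ensures $N_y\otimes_A F = \ad^0\rho_F$, so $d(y) := \min\{k\ge 0 : t^k\ad^0\rho_A\subseteq N_y\}$ is a non-negative integer. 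The key claim to verify is that $d$ is upper semi-continuous on $U$: if $d(y)\le k$ and $y'\equiv y\pmod{t^{k+1}\ad^0\rho_A}$, then $\sigma y'\equiv \sigma y\pmod{t^{k+1}\ad^0\rho_A}$ for all $\sigma\in G_K$, so $N_{y'} + t^{k+1}\ad^0\rho_A = N_y + t^{k+1}\ad^0\rho_A \supseteq t^k\ad^0\rho_A$, and Nakayama applied to the finitely generated $A$-module $t^k\ad^0\rho_A/(N_{y'}\cap t^k\ad^0\rho_A)$ yields $t^k\ad^0\rho_A\subseteq N_{y'}$, i.e., $d(y')\le k$. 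Since $U$ is clopen in the compact $t$-adic module $\ad^0\rho_A$ (compact because $A$ is a complete discrete valuation ring with finite residue field), $D := \sup_{y\in U}d(y)$ will be finite and depend only on $\rho_A$; then taking any $y\in M$ with $\bar y\ne 0$ gives $t^D\ad^0\rho_A\subseteq N_y\subseteq M$.

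The main obstacle is the uniformity of the bound in the main case, overcome by the compactness argument above. The crucial input is the absolute irreducibility of $\ad^0\rho_F$, itself a consequence of the non-dihedral hypothesis on $\rho_A$.
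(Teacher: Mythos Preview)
Your main-case compactness argument on $U = \ad^0\rho_A \setminus t\,\ad^0\rho_A$ is correct and is a clean, self-contained substitute for the black-box citation to \cite[Lemma~6.3]{KisinGeoDefs} in the paper's proof. The case $M_F = F\cdot I$ is also fine.

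The gap is in the sub-case of $M_F = \ad\rho_F$ where $(N_m)_F = \ad\rho_F$ and you invoke ``a compactness argument on $\ad\rho_A$ itself parallel to the one below.'' This parallel fails. On $U' = \ad\rho_A\setminus t\,\ad\rho_A$ the function $y\mapsto \min\{k : t^k\ad\rho_A\subseteq N_y\}$ is infinite on $\frz_A\cup\ad^0\rho_A$, and its finiteness locus is not compact; worse, it is unbounded there. Concretely, for $y_N = I + t^N v$ with $v\in\ad^0\rho_A$ and $\bar v\ne 0$ one has $N_{y_N}\subseteq \frz_A + t^N\ad^0\rho_A$, whence $N_{y_N}\cap\ad^0\rho_A\subseteq t^N\ad^0\rho_A$ and no uniform $d$ works. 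In fact the first alternative of the lemma as literally stated is too strong: since the residue characteristic is $2$, one has $\bar I\in\ad^0\bar\rho$, and choosing $v\in\ad^0\rho_A$ with $\bar v = \bar I$ one checks that $I\notin N_{y_N}$ (any $\xi$ in the image of $A[G_K]$ with $\xi v = 0$ must satisfy $\bar\xi\,\bar I = 0$, forcing $\epsilon(\xi)\in tA$). Thus for $M = N_{y_N}$ with $N$ large one has $M\not\subset t\,\ad\rho_A$ yet neither $\frz_A\subseteq M$ nor $t^d\ad^0\rho_A\subseteq M$. The correct conclusion is $t^{m+d}\frz_A\subseteq M$ or $t^{m+d}\ad^0\rho_A\subseteq M$, which is exactly what the application in Lemma~\ref{lem:all_TW_primes_sweet_case} uses.

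The paper avoids any case split on $M_F$. Since $\ad^0\rho_F$ is absolutely irreducible (this is where the non-dihedral hypothesis enters) and not isomorphic to the trivial representation, the image of $F[G_K]$ in $\End_F(\ad\rho_F)$ contains orthogonal central idempotents $e_1,e_2$ projecting onto $\frz_F$ and $\ad^0\rho_F$; clear denominators so $t^ce_1,t^ce_2$ lie in the image of $A[G_K]$. Then $t^ce_1M\subseteq M\cap\frz_A$ and $t^ce_2M\subseteq M\cap\ad^0\rho_A$, and $M\not\subset t^{m+1}\ad\rho_A$ forces one of them out of $t^{c+m+1}$ times its summand: the first possibility yields $t^{m+c}\frz_A\subseteq M$, and the second feeds $t^ce_2M$ (a $G_K$-stable submodule of $\ad^0\rho_A$ not in $t^{c+m+1}\ad^0\rho_A$) into your main-case argument. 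This idempotent trick is exactly the missing reduction in your full-rank case.
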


\begin{proof}
Let $F$ be the fraction field of $A$, and let $\rho_F = \rho_A \otimes F$. 
There is a decomposition $\ad\rho_F = \ad^0\rho_F \oplus \frz_F$. 
We first claim that $\ad^0\rho_F$ is an absolutely irreducible $G_K$-representation. 
Otherwise, letting $\overline{F}$ be an algebraic closure of $F$ and $\rho_{\overline{F}} = \rho_F \otimes\overline{F}$, 
there is a $G_K$-stable line $W \subseteq \ad^0\rho_{\overline{F}}$. 
Let $\chi$ denote the character giving the action of $G_K$ on $W$, and let $L/K$ be the abelian extension cut out by $\chi$. 
The restriction $\rho_{\overline{F}}|_{G_L}$ admits a nonscalar $G_L$-equivariant endomorphism, so is reducible. 
Since $\rho_{\overline{F}}$ is irreducible (as $\overline{\rho}$ is absolutely irreducible) and $L/K$ is abelian, 
Clifford theory implies that $\rho_{\overline{F}}$ is dihedral, a contradiction.

By Burnside's theorem, we can find an elements $e_1, e_2 \in A[G_K]$ and $c_1 \geq 0$ such that under the map $A[G_K] \to \End_A(\frz_A) \times \End_A(\ad^0 \rho_A)$, $e_1$ is mapped to $(t^{c_1}, 0)$ and $e_2$ is mapped to $(0, t^{c_1})$. In particular, $e_1 + e_2$ acts on $\ad \rho_A$ as multiplication by $t^{c_1}$.  By \cite[Lemma 6.3]{KisinGeoDefs}, we can find $c_2 \geq 0$ such that for any $A[G_K]$-submodule $X \subset \ad^0 \rho_A$ such that $X \not\subset t \ad^0 \rho_A$, we have $t^{c_2} \ad^0 \rho_A \subset X$. 

Let $M \subset \ad \rho_A$ be an $A[G_K]$-submodule as in the statement of the Lemma. Replacing $M$ by $t^{-m} M$, we can assume that $M \not\subset t \ad \rho_A$, and must show that there is $d \geq 0$, not depending on $M$, such that $t^d \frz_A \subset M$ or $t^d \ad^0 \rho_A \subset M$. We will show that this holds with $d = c_1 + c_2$. We have $t^{c_1} M = e_1 M \oplus e_2 M$, and $t^{c_1} M$ is not contained in $t^{c_1 + 1} \ad \rho_A$, so one of $e_1 M, e_2 M$ is not contained in $t^{c_1+1} \ad \rho_A$. If $e_1 M$ is not contained in $t^{c_1 + 1} \ad \rho_A$ then, as $e_1 M \subset \frz_A \cap M$, $\frz_A \cap M$ is not contained in $t^{c_1 + 1} \ad \rho_A \cap \frz_A = t^{c_1 + 1} \frz_A$, showing that $t^{c_1} \frz_A \subset M$.

If $e_2 M$ is not contained in $t^{c_1 + 1} \ad \rho_A$ then similarly $\ad^0 \rho_A \cap M$ is not contained in $t^{c_1 + 1} \ad \rho_A \cap \ad^0 \rho_A = t^{c_1 + 1} \ad^0 \rho_A$, in which case we see that we must have $t^{c_1 + c_2} \ad^0 \rho_A \subset M$. This completes the proof. 
\end{proof}

\begin{lemma}\label{lem:all_TW_primes_sweet_case}
Let $A \in \CNL_\Lambda$ be a discrete valuation ring of characteristic $0$ and let $\rho_A : G_K \to \GL_2(A)$ be a type $\cS$ deformation of $\overline{\rho}$ 
such that the kernel of the induced map $R_\cS \to A$ is a sweet prime.
Fix $t \in \ffrm_A$ with $p \in tA$. There is a positive integer $c$ depending only on $\cS$ and $\rho_A$ with the following property.

Let $N\ge 1$ and let $\rho_{A/t^N} = \rho_A \bmod t^N$. 
Let $\cQ_N$ be the set of finite places $v$ of $K$ that split in $K(\zeta_{p^N})$, are disjoint from $S$, have degree $1$ over $\bbQ$, and such that $\overline{\rho}(\Frob_v)$ has distinct eigenvalues. 
Let $T \subseteq S - S_p$ and let $V_N$ be any finite subset of $\cQ_N$. 
The kernel of the product of the restriction maps
 	\begin{equation}\label{eqn:sweet_prime_restriction} 
    H_{\cS^\perp, T}^1(\ad\rho_{A/t^N}(1)) \to \prod_{v\in \cQ_N - V_N} H^1(F_v, \ad\rho_{A/t^N}(1)).
    \end{equation}
is annihilated by $t^c$.
\end{lemma}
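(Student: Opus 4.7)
The plan is to adapt the proof of Lemma~\ref{lem:all_TW_primes} to the characteristic~$0$ setting, substituting Lemmas~\ref{lem:sweet_prime_image_cohom} and~\ref{lem:sweet_prime_ad} for their characteristic~$p$ counterparts. The key structural simplification is that in characteristic~$0$ the representations $\ad\rho_F$ and $\ad\rho_F(1)$ are distinct absolutely irreducible Galois representations, so there is no analogue of the central cyclotomic class $\kappa_\zeta$ that accounts for the free rank-one summand in Lemma~\ref{lem:all_TW_primes}; this is why the conclusion here is an annihilation statement rather than a ``cokernel $t^c$-small'' one.

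Let $K_\infty = \bigcup_{n\ge 1} K(\zeta_{p^n})$ and let $L/K_\infty$ be the extension cut out by $\ad\rho_A|_{G_{K_\infty}}$. Since $\rho_A$ is not dihedral (Lemma~\ref{lemma:not_dihedral}) and the Zariski closure of its image contains $\SL_2$ (Lemma~\ref{lemma:Zar_dense}), Lemma~\ref{lem:sweet_prime_image_cohom} applies and yields a uniform bound on the cardinality of $H^1(L/K, \ad\rho_{A/t^N}(1))$ as $N$ varies. As this group is an $A/t^N$-module of bounded cardinality, its $A$-module exponent is bounded by some $c_0$ depending only on $\rho_A$, so that $t^{c_0}$ annihilates $H^1(L/K, \ad\rho_{A/t^N}(1))$ for every $N$. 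It will therefore suffice to show that for every $\gamma$ in the kernel of~\eqref{eqn:sweet_prime_restriction} there is a constant $c_1$, depending only on $\cS$ and $\rho_A$, such that $t^{c_1}\gamma$ lies in the image of the inflation map $H^1(L/K, \ad\rho_{A/t^N}(1)) \to H^1(K_S/K, \ad\rho_{A/t^N}(1))$; we then conclude by taking $c = c_0 + c_1$.

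For the inflation step I follow the template of Claim~\ref{claim:in_im_cohom}. Fix a cocycle $\kappa_\gamma$ representing $\gamma$. Since $L \supseteq K_\infty$, $G_L$ acts trivially on $\ad\rho_{A/t^N}(1)$, so $\kappa_\gamma|_{G_L}$ is a $\Gal(L/K)$-equivariant homomorphism; let $M \subseteq \ad\rho_A$ be the preimage of the $A$-submodule of $\ad\rho_{A/t^N}(1)$ generated by $\kappa_\gamma(G_L)$, which a direct cocycle calculation shows is a $G_K$-stable $A$-submodule, and let $m$ be maximal with $M \subseteq t^m\ad\rho_A$. Lemma~\ref{lem:sweet_prime_ad} yields an integer $d$ depending only on $\rho_A$ such that either $t^m\frz \subseteq M$, whence $t^m I$ lies in $A \cdot \kappa_\gamma(G_L)$, or $t^{m+d}\ad^0\rho_A \subseteq M$, whence $t^{m+d}\diag(1,-1)$ does. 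Choose $\sigma \in G_{K(\zeta_{p^N})}$ with $\overline{\rho}(\sigma)$ regular semisimple: this is possible because the rotation subgroup $C_n$ of the dihedral projective image of $\overline{\rho}$ has odd order (the torus in $\PGL_2(\overline{\bbF}_2)$ has odd order), while $[K(\zeta_{p^N}):K]$ is a power of $2$, so the projective image of $\overline{\rho}(G_{K(\zeta_{p^N})})$ still contains $C_n$. Two applications of Chebotarev density -- first finding $v \in \cQ_N - V_N$ with $\rho_{A/t^N}(\Frob_v) = \rho_{A/t^N}(\sigma)$ and $\kappa_\gamma(\Frob_v) = \kappa_\gamma(\sigma)$, and then finding $w \in \cQ_N - V_N$ satisfying the analogous matching for $\tau\sigma$ with a suitably chosen $\tau \in G_L$ -- combined with the hypothesis that $\gamma$ restricts trivially at $v$ and $w$ and with Lemma~\ref{lem:one_TW_prime} (applicable because $q_v \equiv 1 \bmod p^N$ trivializes the cyclotomic twist locally at $v$ and $w$), force the scalar or trace-zero diagonal element above to lie in $(\sigma-1)\ad\rho_{A/t^N}$. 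Computing in the basis diagonalizing $\overline{\rho}(\sigma)$, the latter consists of matrices with vanishing diagonal mod $t^N$, which forces $m \ge N - d$ in either case, giving $c_1 = d$.

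The principal technical obstacle is passing from the $A$-submodule $A \cdot \kappa_\gamma(G_L)$, to which Lemma~\ref{lem:sweet_prime_ad} directly applies, back to the subgroup $\kappa_\gamma(G_L)$, whose elements alone are realised as $\kappa_\gamma(\tau)$ for a single $\tau \in G_L$. This must be handled either by proving a subgroup version of Lemma~\ref{lem:sweet_prime_ad} in the spirit of Lemma~\ref{lem_special_element_sl_2}, or by applying Chebotarev simultaneously to a bounded collection of elements $\tau_i$ whose values $\kappa_\gamma(\tau_i)$ generate $A \cdot \kappa_\gamma(G_L)$; either route should introduce only a bounded additional loss in the valuation, keeping the final constant $c$ dependent only on $\cS$ and $\rho_A$.
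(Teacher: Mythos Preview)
Your approach follows the paper's strategy closely---same auxiliary lemmas, same two-step structure (bound on $H^1(L/K,-)$ via Lemma~\ref{lem:sweet_prime_image_cohom}, then show restriction to $G_L$ dies up to a bounded power of $t$)---but you apply Lemma~\ref{lem:sweet_prime_ad} and the Chebotarev argument in the wrong order, and this is precisely what creates the obstacle you flag at the end.

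The paper first fixes $\sigma \in G_{K(\zeta_{p^N})}$ with $\overline{\rho}(\sigma)$ regular semisimple, and then runs the two Chebotarev steps for \emph{every} $\tau \in G_L$, concluding that $\kappa_\gamma(G_L) \subseteq (\sigma-1)\ad\rho_{A/t^N}$. Since $(\sigma-1)\ad\rho_{A/t^N}$ is already an $A/t^N$-submodule, the $A$-module $M_N$ generated by $\kappa_\gamma(G_L)$ is automatically contained in it, so its preimage $M$ in $\ad\rho_A$ satisfies $M \subseteq (\sigma-1)\ad\rho_A + t^N\ad\rho_A$. Only now does one apply Lemma~\ref{lem:sweet_prime_ad} to $M$: whichever alternative holds, one obtains either $t^{c_1+m}\frz_A$ or $t^{c_1+m}\ad^0\rho_A$ inside $(\sigma-1)\ad\rho_A + t^N\ad\rho_A$, and computing in a basis diagonalizing $\rho_A(\sigma)$ (where $(\sigma-1)\ad\rho_A$ consists of matrices with vanishing diagonal) immediately forces $c_1+m \ge N$.

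With this ordering there is no need to realise any particular element of $M$ as $\kappa_\gamma(\tau)$ for a single $\tau$, so your obstacle disappears entirely and the workarounds you sketch are unnecessary.
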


\begin{proof}
We can assume that $t$ is a uniformizer.
Let $\kappa$ be a cocycle representing an element in the kernel of \eqref{eqn:sweet_prime_restriction}.
Let $K_\infty = \cup_{n\ge 1} K(\zeta_{p^n})$, and let $L/K_\infty$ be the extension cut out by $\ad\rho_A|_{G_{K_\infty}}$. 
We also let $L_N$ be the extension of $K(\zeta_{p^N})$ cut out by $\ad\rho_{A/t^N}|_{G_{K(\zeta_{p^N})}}$. 
Since $K(\zeta_{p^N})/K$ is a $2$-power extension, we can and do fix some $\sigma \in G_{K(\zeta_{p^N})}$ such that $\overline{\rho}$ 
has distinct eigenvalues.
By Chebotarev density, we can find some $v\in \cQ_N - V_N$ such that $\rho_{A/t^N}(\Frob_v) = \rho_{A/t^N}(\sigma)$ and $\kappa(\Frob_v) = \kappa(\sigma)$. 
Then since $\kappa$ represents a class in the kernel of \eqref{eqn:sweet_prime_restriction} and $p^N \in t^NA$,
	\[
    \kappa(\sigma) = \kappa(\Frob_v) \in (\Frob_v - 1) \ad\rho_{A/t^N}(1) = 
    (\Frob_v - 1)\ad\rho_{A/t^N} = (\sigma - 1)\ad\rho_{A/t^N}.
    \]
Consider any $\tau \in \Gal(K_S/L)$. 
We can similarly find $\Frob_w \in \cQ_N - V_N$ such that $\rho_{A/t^N}(\Frob_w) = \rho_{A/t^N}(\tau\sigma)$ and $\kappa(\Frob_w) = \kappa(\tau\sigma)$, so
	\[
    \kappa(\tau\sigma) = \kappa(\Frob_w) \in (\Frob_w - 1) \ad\rho_{A/t^N} = (\tau\sigma - 1)\ad\rho_{A/t^N} = (\sigma - 1)\ad\rho_{A/t^N}.
    \]
On the other hand, $\kappa(\tau\sigma) = \kappa(\tau) + \kappa(\sigma)$, so we conclude 
	\[ \kappa(G_L) \subseteq (\sigma - 1)\ad\rho_{A/t^N}. \]
Now $\kappa|_{G_L}$ is an element of
	\begin{align*}
    H^1(K_S/L, \ad\rho_{A/t^N}(1))^{\Gal(L/K)} & = \Hom_{G_K}(\Gal(K_S/L), \ad\rho_{A/t^N}(1)) \\
    & \subseteq \Hom_{G_{K_\infty}}(\Gal(K_S/L), \ad\rho_{A/t^N}).
    \end{align*}
Let $M_N$ be the $A/t^N$-submodule of $\ad\rho_{A/t^N}$ generated by $\kappa(\Gal(K_S/L))$, 
and let $M$ be its inverse image in $\ad\rho_A$. 
Then $M_N \subseteq (\sigma - 1)\ad\rho_{A/t^N}$ and $t^N \ad\rho_A \subseteq M \subseteq (\sigma - 1)\ad\rho_A + t^N\ad\rho_A$.
Let $0\le m \le N$ be such that $M \subseteq t^m \ad\rho_A$ but $M \not\subseteq t^{m+1} \ad\rho_A$. 
Since the kernel of $R_{\cS} \to A$ is a sweet prime, 
Lemmas~\ref{lemma:Zar_dense} and~\ref{lem:sweet_prime_ad} yield a nonnegative integer $c_1$, depending only on $\cS$ and $\rho_A$, 
such that either $t^{c_1 +m}\frz_A \subset M$ or $t^{c_1 + m}\ad^0\rho_A \subset M$. 
So either 
	\[ t^{c_1 + m} \frz_A \subset (\sigma - 1)\ad\rho_A + t^N \ad\rho_A \]
or 
	\[ t^{c_1 + m} \ad^0\rho_A \subset (\sigma - 1)\ad\rho_A + t^N \ad\rho_A. \]
By computing in a basis that diagonalizes $\sigma$, we see that either case implies $c_1+m \ge N$. 
Thus, $t^{c_1}M_N = 0$, and the restriction of $t^{c_1}\kappa$ to $H^1(K_S/L, \ad\rho_{A/t^N}(1))$ is trivial.

Then $t^{c_1}\kappa$ represents a class in $H^1(L/K, \ad\rho_{A/t^N}(1))$ by the inflation-restriction exact sequence.
By Lemma~\ref{lem:sweet_prime_image_cohom}, there is a nonnegative integer $c_2$, depending only on $\cS$ and $\rho_A$, 
such that $t^{c_2}$ annihilates $H^1(L/K, \ad\rho_{A/t^N}(1))$. 
Taking $c = c_1+c_2$, $t^c$ annihilates any element in the kernel of \eqref{eqn:sweet_prime_restriction}. 
\end{proof}

The following is \cite[Lemma~6.5]{KisinGeoDefs} (\textit{loc. cit.} only treats the characteristic $0$ case, 
but the characteristic $p$ case is the same.)

\begin{lemma}\label{lem:Kisin_indices_lemma}
Let $A \in \CNL_\Lambda$ be a discrete valuation ring, and let $M$ be a finite length $A$-module 
which is generated by $r$ elements for some positive integer $r$. 
Let $\{M_i\}_{i \in I}$ be a collection of finite length $A$-modules and suppose there is an injective map 
$M \to \prod_{i \in I} M_i$. 
The we can find $r$ distinct indices $i_1,\ldots,i_r \in I$ such that the induced map $M \to \prod_{j = 1}^r M_{i_j}$ 
is injective.
\end{lemma}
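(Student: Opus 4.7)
My plan is to reduce the assertion to a linear algebra statement about finite-dimensional vector spaces over the residue field $k = A/\ffrm_A$, and then conclude by a straightforward induction on dimension.

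The first step is a socle reduction. Let $t$ be a uniformizer of $A$. For any finite length $A$-module $N$, the socle $N[t] = \{x \in N : tx = 0\}$ is a finite-dimensional $k$-vector space. I would first observe that a homomorphism $\varphi : N \to N'$ of finite length $A$-modules is injective if and only if its restriction to socles $N[t] \to N'[t]$ is injective: any nonzero $x \in N$ satisfies $t^m x \in N[t] \setminus \{0\}$ for the largest $m \geq 0$ with $t^m x \neq 0$, so a nonzero kernel always produces a nonzero kernel on socles. Applied to the map $M \to \prod_{j=1}^r M_{i_j}$, this reduces the problem to finding $r$ distinct indices $i_1, \ldots, i_r \in I$ such that $V := M[t] \to \prod_{j=1}^r V_{i_j}$ is injective, where $V_i := M_i[t]$. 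By the structure theorem for finitely generated modules over a DVR, the $k$-dimension $s := \dim_k V$ equals the minimal number of generators of $M$, and in particular $s \leq r$.

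The second step is the linear algebra claim, proved by induction on $s$: given an injection $V \hookrightarrow \prod_{i \in I} V_i$ of $k$-vector spaces with $\dim_k V = s$, there exist $s$ distinct indices $i_1, \ldots, i_s \in I$ such that $V \hookrightarrow \prod_{j=1}^s V_{i_j}$ is injective. The case $s = 0$ is trivial. For $s \geq 1$, pick any nonzero $v \in V$ and any $i \in I$ such that $v$ has nonzero image in $V_i$; set $W = \ker(V \to V_i)$, so $\dim_k W \leq s - 1$. Since $W$ has zero image in $V_i$, the original injection restricts to an injection $W \hookrightarrow \prod_{i' \neq i} V_{i'}$, and the inductive hypothesis produces $\dim_k W \leq s-1$ distinct indices in $I \setminus \{i\}$ witnessing injectivity on $W$. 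Together with $i$, these yield at most $s$ distinct indices such that $V \hookrightarrow \prod_j V_{i_j}$ is injective; padding with arbitrary additional indices from $I$ then produces the required $r$ indices (implicitly assuming $\lvert I \rvert \geq r$, without which the conclusion cannot hold).

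There is no substantive obstacle here: the socle reduction turns the problem into pure linear algebra, and the induction is a direct cover-the-kernel argument. The only point requiring attention is ensuring distinctness of indices across the recursion, which is automatic because the inductive hypothesis is invoked on the injection $W \hookrightarrow \prod_{i' \neq i} V_{i'}$ with the already-chosen index $i$ removed from the index set.
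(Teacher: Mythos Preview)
Your argument is correct. The socle reduction is the right move: injectivity of an $A$-module map between finite length modules is detected on $t$-torsion, and the structure theorem gives $\dim_k M[t] \leq r$. The inductive linear-algebra step is clean, and your care about removing the chosen index before recursing ensures distinctness. Your parenthetical about needing $\lvert I \rvert \geq r$ is a fair observation about the statement; in the paper's application $I$ is an infinite set of primes, so this is harmless.

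As for comparison: the paper does not actually prove this lemma. It simply records that the statement is \cite[Lemma~6.5]{KisinGeoDefs}, noting that Kisin only writes down the characteristic~$0$ case but that the same argument works in characteristic~$p$. Your write-up is therefore a genuine addition rather than a reproduction, and the method you use---reduce to socles, then peel off kernels one index at a time---is the standard way such a lemma is proved (and is presumably what Kisin does as well).
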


\begin{lemma}\label{lem:dual_selmer_nice_and_sweet}
Let $T = S - S_p$ and assume $T \neq \emptyset$. 
Let $A_1,\ldots, A_n \in \CNL_\Lambda$ be discrete valuation rings, and for each $1 \le i \le n$, let $\rho_{A_i} : G_K \to \GL_2(A_i)$ 
be a type $\cS$ lift of $\overline{\rho}$ such that the kernel of the induced map $R_\cS \to A_i$ is either a nice prime or a sweet prime. 
For each $1\le i \le n$, fix nonzero $t_i \in \ffrm_{A_i}$ with $p \in t_i A_i$.

Then for each $1\le i \le n$, we can find integers $q_i \ge [K:\bbQ]$ and $c_i \ge 0$, 
depending only on $\cS$ and $\rho_{A_i}$, with the following property: 
for each integer $N\ge 1$ and $1\le i \le n$, 
there is a Taylor--Wiles datum $Q_N^i$ of level $N$ satisfying:
\begin{enumerate}
\item $Q_N^1,\ldots,Q_N^n$ are disjoint and $\lvert Q_N^i \rvert = q_i$ for each $i$.
We set $Q_N = \cup_{i=1}^n Q_N^i$. 
\item Each $v \in Q_N$ has degree $1$ over $\bbQ$.
\item\label{dual_selmer_nice_and_sweet:nice} If the kernel of $R_\cS \to A_i$ is a nice prime, 
then there is an $A_i/t_i^N$-module map $A_i/t_i^N \to H_{\cS_{Q_N}^\perp, T}^1(\ad\rho_{A_i/t_i^N}(1))$ 
with kernel and cokernel annihilated by $t^{c_i}$.
\item If the kernel of $R_\cS \to A_i$ is a sweet prime, then $H_{\cS_{Q_N}^\perp, T}(\ad\rho_{A_i/t_i^N}(1))$ is 
annihilated by $t_i^{c_i}$. 
\end{enumerate}
\end{lemma}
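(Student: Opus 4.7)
The strategy is to build the disjoint Taylor--Wiles data $Q_N^i$ by iteratively applying Kisin's indices lemma (Lemma \ref{lem:Kisin_indices_lemma}), feeding in the local estimates for the appropriate Shafarevich--Tate groups supplied by Lemma \ref{lem:all_TW_primes} (in the nice case) and Lemma \ref{lem:all_TW_primes_sweet_case} (in the sweet case).

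The first observation is that, following the computation in the proof of Proposition \ref{prop:selmer_with_TW_primes}, the dual Selmer group with Taylor--Wiles conditions added can be identified with a Sha-type kernel: namely
\[ H^1_{\cS_{Q_N}^\perp, T}(\ad\rho_{A_i/t_i^N}(1)) = \ker\Bigl( H^1_{\cS^\perp, T}(\ad\rho_{A_i/t_i^N}(1)) \to \prod_{v \in Q_N} H^1(F_v, \ad\rho_{A_i/t_i^N}(1)) \Bigr), \]
because every class in $H^1_{\cS^\perp, T}$ is automatically unramified at $v \notin S$, and for such a class vanishing in the unramified cohomology at $v$ is the same as vanishing in the full local cohomology. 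By Lemma \ref{lem:selmer_size}(2), the $A_i/t_i^N$-module $H^1_{\cS^\perp, T}(\ad\rho_{A_i/t_i^N}(1))$ is generated by a bounded number $r_i$ of elements, where $r_i = \operatorname{corank}_{A_i} H^1_{\cS^\perp, T}(\ad\rho_{F_i/A_i}(1)) + b_i$ depends only on $\cS$ and $\rho_{A_i}$ (not on $N$). Set $q_i = \max(r_i, [K:\bbQ])$.

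I will construct $Q_N^i$ by induction on $i$. Suppose $Q_N^1, \ldots, Q_N^{i-1}$ have been chosen as pairwise disjoint subsets of $\cQ_N$ and set $V = Q_N^1 \cup \cdots \cup Q_N^{i-1}$. Apply Lemma \ref{lem:all_TW_primes} or \ref{lem:all_TW_primes_sweet_case} to get a constant $c_i' \ge 0$ (independent of $N$) controlling the kernel $Sh^{(i)}_{\cQ_N - V}$ of the restriction map to $\prod_{v \in \cQ_N - V} H^1(F_v, \ad\rho_{A_i/t_i^N}(1))$: in the nice case, this kernel receives an $A_i/t_i^N$-module map from $A_i/t_i^N$ (sending $1$ to the class $\kappa_\zeta$ attached to the quadratic cyclotomic extension) with kernel and cokernel annihilated by $t_i^{c_i'}$; in the sweet case, it is annihilated by $t_i^{c_i'}$. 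The quotient $H^1_{\cS^\perp, T}(\ad\rho_{A_i/t_i^N}(1))/Sh^{(i)}_{\cQ_N - V}$ then injects into the product and is generated by at most $r_i$ elements over $A_i/t_i^N$. Kisin's lemma yields $r_i$ primes in $\cQ_N - V$ on whose local restrictions this quotient already embeds; padding with arbitrary primes of $\cQ_N - V$ gives a set $Q_N^i$ of size $q_i$, pairwise disjoint from the previous $Q_N^j$ by construction, and a legitimate Taylor--Wiles datum of level $N$ (the conditions on $q_v$ and the eigenvalues of $\overline{\rho}(\Frob_v)$ are built into the definition of $\cQ_N$, and any choice of $\alpha_v, \beta_v$ works).

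It remains to verify conditions (3) and (4). By the Kisin construction, the kernel of the restriction to $\prod_{v \in Q_N^i} H^1(F_v, \ldots)$ equals $Sh^{(i)}_{\cQ_N - V}$, and passing to the full $Q_N \supseteq Q_N^i$ only makes the kernel smaller, so $Sh^{(i)}_{Q_N} \subseteq Sh^{(i)}_{\cQ_N - V}$. In the sweet case this immediately gives that $H^1_{\cS_{Q_N}^\perp, T}(\ad\rho_{A_i/t_i^N}(1)) = Sh^{(i)}_{Q_N}$ is $t_i^{c_i'}$-annihilated. In the nice case, the class $\kappa_\zeta$ factors through $\Gal(K(\zeta)/K)$ and every $v \in \cQ_N$ splits in $K(\zeta_{p^N}) \supseteq K(\zeta)$, so $\kappa_\zeta$ has trivial local restriction at every $v \in Q_N$, i.e.\ $\langle \kappa_\zeta \rangle \subseteq Sh^{(i)}_{Q_N}$; combining this with $Sh^{(i)}_{Q_N}/\langle \kappa_\zeta \rangle \hookrightarrow Sh^{(i)}_{\cQ_N - V}/\langle \kappa_\zeta \rangle$ (annihilated by $t_i^{c_i'}$) and the $t_i^{c_i'}$-bounded kernel of $A_i/t_i^N \to \langle \kappa_\zeta \rangle$, one concludes that the composite $A_i/t_i^N \to Sh^{(i)}_{Q_N}$ has kernel and cokernel annihilated by $t_i^{c_i}$ for a suitable $c_i$ depending only on $\cS$ and $\rho_{A_i}$. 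The main technical point throughout is maintaining uniformity of all the constants $r_i, c_i', c_i$ in $N$; this is exactly what Lemmas \ref{lem:selmer_size}, \ref{lem:all_TW_primes}, and \ref{lem:all_TW_primes_sweet_case} provide, so no further work is needed on that front.
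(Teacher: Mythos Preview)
Your proof is correct and in fact streamlines the paper's argument. Both proofs proceed by induction on $i$ and, at each step, combine the controlled Sha from Lemma~\ref{lem:all_TW_primes} or~\ref{lem:all_TW_primes_sweet_case} with Kisin's lemma to select $Q_N^i \subset \cQ_N - V$. The difference is in how Kisin's lemma is applied. You apply it directly to the quotient $H^1_{\cS^\perp,T}(\ad\rho_{A_i/t_i^N}(1))/Sh^{(i)}_{\cQ_N - V}$, which embeds in the infinite product; this forces your $q_i$ to be an upper bound for the number of generators of $H^1_{\cS^\perp,T}(\ad\rho_{A_i/t_i^N}(1))$. The paper instead extracts a free submodule $(A/t^N)^{q_n+1}$ via Lemma~\ref{lem:selmer_size}(\ref{selmer_size:dual_selmer}), splits off a rank-one piece corresponding to $\im(f)$ by a change of basis, and applies Kisin only to the complementary rank-$q_n$ piece $W$; this yields the slightly sharper value $q_n = \operatorname{corank}-1$ (in the nice case), at the cost of a more delicate verification that $f_{Q_N}$ has bounded cokernel. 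Since the statement only requires $q_i \ge [K:\bbQ]$, and the downstream use in Proposition~\ref{cor_existence_of_taylor_wiles_data_for_a_collection_of_nice_and_sweet_primes} only needs finiteness of $q = \sum q_i$, your larger $q_i$ costs nothing.

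One small correction: Lemma~\ref{lem:selmer_size}(\ref{selmer_size:dual_selmer}) gives an injection $(A/t^N)^q \hookrightarrow H^1_{\cS^\perp,T}$ with cokernel annihilated by $t^c$, which by itself does not bound the number of generators of the target (the cokernel could have arbitrarily many summands of the form $A/t$). The uniform generator bound you need instead follows from Lemma~\ref{lem:selmer_torsion}(\ref{selmer_torsion:dual_selmer}), which identifies $H^1_{\cS^\perp,T}(\ad\rho_{A_i/t_i^N}(1))$ with the $t_i^N$-torsion of the fixed cofinitely generated $A_i$-module $H^1_{\cS^\perp,T}(\ad\rho_{F_i/A_i}(1))$; its structure $(F_i/A_i)^r \oplus (\text{finite})$ then gives the uniform bound $r_i$ you state.
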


\begin{proof}
First, say the kernel of $R_\cS \to A_i$ is a nice prime. 
Let $Sh_{\cQ_N}$ be as in Lemma~\ref{lem:all_TW_primes} (with $V_N = \emptyset$). 
Then for any Taylor--Wiles datum $Q_N$ of level $N$, we have an inclusion 
$Sh_{\cQ_N} \subseteq H_{\cS_{Q_N}^\perp, T}(\ad\rho_{A_i/t_i^N}(1))$. 
So by Lemma~\ref{lem:all_TW_primes}, there is a nonnegative integer $d_1$, 
depending only on $\cS$ and $\rho_{A_i}$, and a map 
$f_{Q_N} : A_i/t_i^N \to H_{\cS_{Q_N}^\perp, T}^1(\ad\rho_{A_i/t_i^N}(1))$ with kernel 
annihilated by $t^{d_1}$. 
So in proving part~\ref{dual_selmer_nice_and_sweet:nice}, it only remains to show that 
the cokernel of $f_{Q_N}$ is annihilated by $t^{d_2}$ for some nonnegative integer $d_2$ 
depending only on $\cS$ and $\rho_{A_i}$. 

We induct on $n \ge 0$, with the $n = 0$ case being vacuous. 
Say we have constructed $Q_N^1,\ldots, Q_N^{n-1}$ with the desired properties, 
and set $V_N = \cup_{i = 1}^{n-1} Q_N^i$. 
Assume first that the kernel of $R_\cS \to A_n$ is a nice prime. 
To ease notation, write $A = A_n$ and $t = t_n$.
Letting $Sh_{\cQ_N - V_N} \subseteq H_{\cS^\perp, T}^1(\ad\rho_{A/t^N}(1))$ 
be as in Lemma~\ref{lem:all_TW_primes}, there is a nonnegative integer $d_3$, 
depending only on $\cS$ and $\rho_A$, and a map 
$f : A/t^N \to Sh_{\cQ_N - V_N}$ with kernel and cokernel annihilated by $t^{d_3}$. 
Let $F$ be the fraction field of $A$, and let 
$q_n = \operatorname{corank}_A H_{\cS^\perp, T}^1(\ad\rho_{F/A}(1)) - 1$. 
Note that $q_n \ge [K:\bbQ]$ by Lemma~\ref{lem:greenberg_wiles}.
By part~\ref{selmer_size:dual_selmer} of Lemma~\ref{lem:selmer_size}, we can find 
a nonnegative integer $d_4$, depending only on $\cS$ and $\rho_A$, 
and an injective $A/t^N$-module map $g : (A/t^N)^{q_n+1} \to H_{\cS^\perp, T}^1(\ad\rho_{A/t^N}(1))$ 
with cokernel annihilated by $t^{d_4}$.
Then $t^{d_4}\im(f) \subseteq \im(g)$ and the cyclic $A/t^N$-module $\im(f)\cap \im(g)$ is 
isomorphic to $A/t^m$ for some $N-d_3-d_4 \le m \le N$. 
Lifting a generator of $\im(f) \cap \im(g)$ to $(A/t^N)^{q_n+1}$ and applying a change of basis, 
if necessary, we can assume that $\im(f)\cap \im(g)$ is contained in 
$g(A/t^N \times 0 \times \cdots \times 0)$. 
Letting $W = g(0 \times (A/t^N)^{q_n})$, we have:
	\begin{enumerate}[label=(\alph*)]
    \item\label{Wproperties:generators}
    $W$ can be generated by $q_n$ elements.
\item\label{Wproperties:quotient} The quotient of $H_{\cS^\perp, T}^1(\ad\rho_{A/t^N}(1))$ by $\im(f)+ W$ is annihilated by $t^{d_3+2d_4}$. To see this, it suffices to show that $t^{d_3+d_4}\im(g) \subseteq \im(f) + W$, since the cokernel of $g$ is annihilated by $t^{d_4}$. It then suffices to show that $t^{d_3+d_4}g(A/t^N \times 0) \subseteq \im(f)$, which follows from the fact that $A/t^m \cong \im(f)\cap \im(g) \subseteq g(A/t^N \times 0)$ with $m \ge N - d_3 - d_4$. 
	\item\label{Wproperties:kernel} The product of the restriction maps 
    $r_W : W \to \prod_{\cQ_N - V_N} H^1(K_v, \ad\rho_{A/t^N}(1))$ satisfies $t^{d_3}\ker(r_W) \subseteq \im(f)$. Indeed, $\ker(r_W) \subseteq Sh_{\cQ_N - V_N}$ and the cokernel of $f: A/t^N \to Sh_{\cQ_N - V_N}$ is annihilated by $t^{d_3}$.
\end{enumerate}
Using \ref{Wproperties:generators} above and applying Lemma~\ref{lem:Kisin_indices_lemma} to $\im(r_W)$, we find $Q_N^n \subset \cQ_N$, disjoint from 
$V_N = \cup_{i=1}^{n-1} Q_N^i$, of cardinality $q_n$ such that 
$\im(r_W)$ injects into $\prod_{v\in Q_N^n} H^1(K_v, \ad\rho_{A/t^N}(1))$. 
Using \ref{Wproperties:quotient} and \ref{Wproperties:kernel} above, it then follows that $f_{Q_N^n} : A/t^N \to H_{\cS_{Q_N^n}^\perp, T}(\ad\rho_{A/t^N}(1))$ 
has cokernel annihilated by $t^{2d_3+2d_4}$.
Setting $Q_N = \cup_{i = 1}^n Q_N^n$, the map $f_{Q_N^n}$ factors as the composite of 
$f_{Q_N} : A/t^N \to H_{\cS_{Q_N}^\perp, T}(\ad\rho_{A/t^N}(1))$ with the inclusion 
$H_{\cS_{Q_N}^\perp, T}(\ad\rho_{A/t^N}(1)) \subseteq H_{\cS_{Q_N^n}^\perp, T}(\ad\rho_{A/t^N}(1))$, 
so the cokernel of $f_{Q_N}$ is also annihilated by $t^{2d_3+2d_4}$. 

The case when the kernel of $R_\cS \to A_n$ is a sweet prime is similar (and easier), 
appealing to Lemma~\ref{lem:all_TW_primes_sweet_case} instead of Lemma~\ref{lem:all_TW_primes}.
\end{proof}

\begin{lemma}\label{lem:normalization}
Let $A$ be a complete local integral domain of dimension $1$ with normalization $\widetilde{A}$, 
and let $t \in \ffrm_A$ be nonzero. 
Assume we are given nonnegative integers $b$ and $h$, and 
and for each integer $N \ge 1$ a finitely generated $A/t^N$-module $M_N$ 
and an $\widetilde{A}/t^N$-module map $(\widetilde{A}/t^N)^h \to M_N \otimes_A \widetilde{A}$ 
whose kernel and cokernel are annihilated by $t^b$. 
Then we can find a nonnegative integer $a$, depending only on $b$ and $A$, 
and for each $N \ge 1$, an $A/t^N$-module map $(A/t^N)^h \to M_N$ 
whose kernel and cokernel are annihilated by $t^a$. 
\end{lemma}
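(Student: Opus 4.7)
The plan is to exploit the fact that $\widetilde{A}$ is module-finite over $A$, which holds because the complete local Noetherian domain $A$ is analytically unramified (being its own completion and a domain). Since $A$ and $\widetilde{A}$ have the same fraction field and $A$ is one-dimensional, $\widetilde{A}/A$ is a finitely generated torsion $A$-module supported at $\ffrm_A$; as $A/tA$ is Artinian, we can pick a nonnegative integer $c$, depending only on $A$, such that $t^c \widetilde{A} \subseteq A$. Fix such a $c$.

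Using $c$, I would introduce two comparison morphisms. The inclusion $\iota \colon A \hookrightarrow \widetilde{A}$ and the $A$-linear map $\pi \colon \widetilde{A} \to A$, $y \mapsto t^c y$, satisfy $\pi \circ \iota = t^c$ on $A$ and $\iota \circ \pi = t^c$ on $\widetilde{A}$. Reducing $\iota$ modulo $t^N$ gives $\bar\iota \colon A/t^N \to \widetilde{A}/t^N$, while $\pi$ induces by $A$-linearity a map $\pi_M \colon M_N \otimes_A \widetilde{A} \to M_N$, $m \otimes y \mapsto (t^c y) m$. A direct check shows that both $\bar\iota$ and $\pi_M$ have kernel and cokernel annihilated by $t^c$: for $\bar\iota$, the inclusion $t^c(A \cap t^N \widetilde{A}) \subseteq t^N A$ kills the kernel and $t^c \widetilde{A} \subseteq A$ kills the cokernel; for $\pi_M$, the identity $\pi_M(m \otimes 1) = t^c m$ kills the cokernel, and if $\pi_M(y) = 0$ then writing $y = \sum m_i \otimes y_i$ one has $t^c y = \sum m_i \otimes (t^c y_i) = \sum (t^c y_i) m_i \otimes 1 = \pi_M(y) \otimes 1 = 0$, using that $t^c y_i \in A$.

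I would then define the desired map as the composite
\[ g \colon (A/t^N)^h \xrightarrow{\bar\iota^{\oplus h}} (\widetilde{A}/t^N)^h \xrightarrow{f} M_N \otimes_A \widetilde{A} \xrightarrow{\pi_M} M_N. \]
The elementary observation that a composite of $A$-module maps whose kernels and cokernels are annihilated by $t^\alpha$ and $t^\beta$ has kernel and cokernel annihilated by $t^{\alpha + \beta}$ (a straightforward diagram chase) then forces $g$ to have kernel and cokernel annihilated by $t^{2c+b}$. Taking $a = 2c + b$, which depends only on $A$ and $b$, completes the argument. There is no serious obstacle: once the integer $c$ measuring the conductor-type defect of $\widetilde{A}/A$ is identified, the rest is formal, the only mild subtlety being to verify that the two simple estimates on $\bar\iota$ and $\pi_M$ are uniform in $N$.
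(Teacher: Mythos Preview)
Your proof is correct and follows essentially the same idea as the paper's: both hinge on the single integer $c$ (the paper calls it $d$) with $t^c\widetilde{A}\subseteq A$, and then sandwich the given map $f$ between comparison maps controlled by $t^c$. The only cosmetic difference is that the paper, instead of using your explicit retraction $\pi_M:m\otimes y\mapsto (t^c y)m$, composes $(A/t^N)^h\xrightarrow{t^d}(\widetilde{A}/t^N)^h\xrightarrow{f}M_N\otimes_A\widetilde{A}$, observes the image lands in $\operatorname{im}(M_N\to M_N\otimes_A\widetilde{A})$, and then lifts using freeness of $(A/t^N)^h$; your construction avoids this lifting step and yields the slightly better constant $a=b+2c$ versus the paper's $a=b+3d$.
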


\begin{proof}
Let $d \ge 0$ be such that $t^d$ annihilates the $A$-module $\widetilde{A}/A$. 
Then for any $A$-module $G$, we have an exact sequence 
	\[ \Tor_1^A(G, \widetilde{A}/A) \to G \to G\otimes_A \widetilde{A} \to G \otimes_A (\widetilde{A}/A), \]
so $G \to G \otimes_A \widetilde{A}$ has kernel and cokernel annihilated by $t^d$.
In particular, this applies to $M_N \to M_N \otimes_A \widetilde{A}$ and $(A/t^N)^g \to (\widetilde{A}/t^N)^g$. 
So, 
	\[ \im(t^d(\widetilde{A}/t^N)^h \to M_N \otimes_A \widetilde{A}) \subseteq \im(M_N \to M_N \otimes_A \widetilde{A}),\]
and the composite $(A/t^N)^h \xrightarrow{t^d} (\widetilde{A}/t^N)^h \to M_N \otimes_A \widetilde{A}$ has image in $\im(M_N \to M_N \otimes_A \widetilde{A})$. 
We can lift this composite to an $A/t^N$-module map $(A/t^N)^h \to M_N$
such that 
	\[ \xymatrix@1{ 
    (A/t^N)^h \ar[r] \ar[d]_{t^d} & M_N \ar[d] \\ 
    (\widetilde{A}/t^N)^h \ar[r] & M_N \otimes_A \widetilde{A}
    }\]
commutes. 
Then $(A/t^N)^h \to M_N$ has kernel annihilated by $t^{b + 2d}$ and cokernel by $t^{b+3d}$.
\end{proof}

\begin{proposition}\label{cor_existence_of_taylor_wiles_data_for_a_collection_of_nice_and_sweet_primes}
Let $T = S - S_p$, and let $\frp_{1}, \dots, \frp_{m}$ (resp. $\frq_{1}, \dots, \frq_{ n}$) 
be nice (resp. sweet) primes of $R_\cS$. 
Choose a representative $\rho_\cS$ of the universal deformation, and let $R_\cS^T \to R_\cS$ be the corresponding map. 
For each $P \in \{ \frp_{1}, \dots, \frp_{m}, \frq_{1}, \dots, \frq_{n} \}$, 
let $P_T$ denote the pre-image of $P$ in $R_\cS^T$, and let $P_\loc$ denote the pre-image of $P_T$ in $A_\cS^T$. 
We assume that $T \neq \emptyset$ and that $A_{\cS}^T/P_\loc = R_\cS/P$ for each $P \in \{\frp_1,\ldots,\frp_m,\frq_1,\ldots,\frq_n\}$.

Then we can find integers $q \geq [K : \bbQ]$
and $g_0 \ge 0$, and for each $P \in \{\frp_1,\ldots,\frp_m,\frq_1,\ldots,\frq_n\}$, an integer $a_P \ge 0$ depending only on $\cS$ and $P$, with the following property: 
for any integer $N \geq 1$, there exists a Taylor--Wiles datum $Q_N$ of level $N$ satisfying the following conditions:
\begin{enumerate}
\item $\lvert Q_N \rvert = q$.
\item Every $v \in Q_N$ has degree $1$ over $\bbQ$.
\item\label{TWprimes:nice-and-sweet-tangent} Let $P \in \{ \frp_{1}, \dots, \frp_{m}, \frq_{1}, \dots, \frq_{n} \}$. 
Let $P_N, P_{T, N}$ denote the pre-images of $P$ in $R_{\cS_{Q_N}}$ and $P_T$ in $R_{\cS_{Q_N}}^T$, respectively. 
Let $A = R_\cS / P$ and let $t \in \ffrm_A$ be nonzero with $p \in tA$. 
Then there exists an $A/t^N$-module map $(A/t^N)^{2q - [K : \bbQ]} \to P_{T, N} / (P_{T, N}^2, P_\loc, t^N)$ with kernel and cokernel annihilated by $t^{a_P}$.
\item\label{TWprimes:max-tangent} $\dim_k(\ffrm_{R_{\cS_{Q_N}}}/(\varpi, \ffrm_{R_{\cS_{Q_N}}}^2)) \le g_0$.
\end{enumerate}
\end{proposition}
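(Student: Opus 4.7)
My plan is to combine Lemma \ref{lem:dual_selmer_nice_and_sweet} with Propositions \ref{prop:selmer_with_TW_primes} and \ref{prop:tangent_space}. First, I apply Lemma \ref{lem:dual_selmer_nice_and_sweet} to the primes $\frp_1, \ldots, \frp_m, \frq_1, \ldots, \frq_n$ simultaneously. For each $N \ge 1$ this produces pairwise disjoint Taylor--Wiles data $Q_N^1, \ldots, Q_N^{m+n}$ of level $N$ consisting of degree-one primes over $\bbQ$, with $|Q_N^i| = q_i \ge [K : \bbQ]$ depending only on $\cS$ and the corresponding prime. Set $Q_N = \bigsqcup_i Q_N^i$ and $q = \sum_i q_i$, so $|Q_N| = q$ and conditions (1) and (2) hold. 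For each prime $P$ with $A = R_\cS/P$ and a fixed nonzero $t \in \ffrm_A$ with $p \in tA$, the lemma ensures that $H^1_{\cS_{Q_N}^\perp, T}(\ad\rho_{A/t^N}(1))$ satisfies the hypothesis of Proposition \ref{prop:selmer_with_TW_primes}: it is essentially cyclic if $P$ is nice (so $\delta_A = 1$) and essentially annihilated by a fixed power of $t$ if $P$ is sweet (so $\delta_A = 0$). Since $T \neq \emptyset$ we have $\delta_T = 0$, and Proposition \ref{prop:selmer_with_TW_primes} produces an $A$-module map $H^1_{\cS_{Q_N}, T}(\ad\rho_{A/t^N}) \to (A/t^N)^{2q - [K:\bbQ]}$ with kernel and cokernel annihilated by $t^{b_P}$ for a constant $b_P$ depending only on $\cS$ and $P$.

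The crucial observation for translating this into condition (3) is that $A_{\cS_{Q_N}}^T = A_\cS^T$: since $T \cap Q_N = \emptyset$ and $\Lambda_v = \cO$ for every $v \in Q_N$, augmenting by $Q_N$ introduces no new tensor factors. Consequently $\mathfrak{a}_N := \ker(A_{\cS_{Q_N}}^T \to A)$ coincides with $P_\loc$. I apply Proposition \ref{prop:tangent_space} to $\cS_{Q_N}$ with $B = A/t^N$: the lift $\rho_A$ is automatically of type $\cS_{Q_N}$, being unramified at every $v \in Q_N$, and the required surjectivity of $A_{\cS_{Q_N}}^T \to A$ is precisely the assumption $A_\cS^T/P_\loc = R_\cS/P$. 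The proposition then yields a canonical isomorphism
\[ \Hom_{A/t^N}(P_{T,N}/(P_{T,N}^2, P_\loc, t^N), A/t^N) \cong H^1_{\cS_{Q_N}, T}(\ad\rho_{A/t^N}). \]
Since $A/t^N$ is self-injective as a module over itself, the functor $\Hom_{A/t^N}(-, A/t^N)$ is an exact contravariant self-equivalence on finitely generated $A/t^N$-modules and preserves the property of a map having kernel and cokernel annihilated by a fixed power of $t$. Dualizing the Selmer bound above produces the required map $(A/t^N)^{2q - [K:\bbQ]} \to P_{T,N}/(P_{T,N}^2, P_\loc, t^N)$ with kernel and cokernel annihilated by $t^{b_P}$, establishing condition (3) with $a_P = b_P$.

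For condition (4), Proposition \ref{prop_presentation_by_generators_and_relations}(1) applied to $\cS_{Q_N}$ gives a surjection $A_\cS^T\llbracket X_1, \ldots, X_{g_N} \rrbracket \twoheadrightarrow R_{\cS_{Q_N}}^T$ with $g_N = h^1_{\cS_{Q_N}, T}(\ad\overline{\rho})$, while Corollary \ref{cor_connectedness_dimension_Ihara_case} identifies $R_{\cS_{Q_N}}^T \cong R_{\cS_{Q_N}}\llbracket Y_1, \ldots, Y_{4|T|-1} \rrbracket$. Together these bound $\dim_k(\ffrm_{R_{\cS_{Q_N}}}/(\varpi, \ffrm_{R_{\cS_{Q_N}}}^2))$ by the residual tangent dimension of $A_\cS^T/\varpi$ (a fixed constant) plus $g_N$. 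Proposition \ref{prop_presentation_by_generators_and_relations}(2) applied to $\cS_{Q_N}$, picking up an extra $2q$ from the local contribution $\ell_v - h^0(K_v, \ad\overline{\rho}) = 2$ at each Taylor--Wiles prime, gives
\[ h^1_{\cS_{Q_N}, T}(\ad\overline{\rho}) = h^1_{\cS_{Q_N}^\perp, T}(\ad\overline{\rho}(1)) + 2q + C_0 \]
for a constant $C_0$ depending only on $\cS$. The key point is that the residual dual Selmer is uniformly bounded: at any TW prime $v$, $\overline{\rho}|_{G_{K_v}}$ is a sum of distinct unramified characters, so every local lift is automatically diagonalizable, giving $\cL_v(\overline{\rho}) = H^1(K_v, \ad\overline{\rho})$ and hence $\cL_v(\overline{\rho})^\perp = 0$. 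Classes in $H^1_{\cS_{Q_N}^\perp, T}(\ad\overline{\rho}(1))$ therefore have trivial restriction at each $v \in Q_N$ and, by inflation-restriction, inflate from classes in $H^1_{\cS^\perp, T}(\ad\overline{\rho}(1))$, so $h^1_{\cS_{Q_N}^\perp, T}(\ad\overline{\rho}(1)) \le h^1_{\cS^\perp, T}(\ad\overline{\rho}(1))$ is a fixed constant. Taking $g_0$ to be the resulting uniform bound completes the proof; the main technical subtleties are the identification $A_{\cS_{Q_N}}^T = A_\cS^T$ (which avoids what would otherwise be a messy cotangent space bookkeeping) and the verification that the Pontryagin duality step transports all the needed torsion bounds.
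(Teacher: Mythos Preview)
Your argument for condition~(3) has a genuine gap: you work throughout with $A = R_\cS/P$, but this is only a one-dimensional complete Noetherian local domain, not in general a discrete valuation ring. This matters in two places. First, the results you invoke from \S\ref{subsec:dvr_selmer} --- Lemma~\ref{lem:dual_selmer_nice_and_sweet} and Proposition~\ref{prop:selmer_with_TW_primes} --- are stated under the standing hypothesis at the start of that subsection that $A$ is a DVR, so applying them to $R_\cS/P$ directly is outside their hypotheses. Second, your dualization step asserts that $A/t^N$ is self-injective and that $\Hom_{A/t^N}(-,A/t^N)$ is an exact self-equivalence on finite $A/t^N$-modules; this is true when $A$ is a DVR (then $A/t^N$ is a local Artinian principal ideal ring, hence Frobenius), but fails for a general one-dimensional domain.

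The paper repairs this by passing to the normalization $\widetilde{A}$ of $A$, which \emph{is} a DVR. One applies Lemma~\ref{lem:dual_selmer_nice_and_sweet} and Proposition~\ref{prop:selmer_with_TW_primes} to the lift $\rho_{\widetilde{A}}$, then uses Proposition~\ref{prop:tangent_space} to identify
\[
H^1_{\cS_{Q_N},T}(\ad\rho_{\widetilde{A}/t^N}) \cong \Hom_{\widetilde{A}}\big((P_{T,N}/(P_{T,N}^2,P_\loc,t^N))\otimes_A \widetilde{A},\ \widetilde{A}/t^N\big).
\]
Pontryagin duality over $\widetilde{A}/t^N$ (now legitimate) produces a map $(\widetilde{A}/t^N)^{2q-[K:\bbQ]} \to (P_{T,N}/(P_{T,N}^2,P_\loc,t^N))\otimes_A \widetilde{A}$ with controlled kernel and cokernel, and Lemma~\ref{lem:normalization} descends this to the desired map over $A$, enlarging the annihilator exponent only by a constant depending on $A$. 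This descent step is precisely what your argument lacks.

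Your observation that $A_{\cS_{Q_N}}^T = A_\cS^T$ is correct and is used implicitly in the paper as well. Your treatment of conditions (1) and (2) is fine, and your argument for (4) via the bound $h^1_{\cS_{Q_N}^\perp,T}(\ad\overline{\rho}(1)) \le h^1_{\cS^\perp,T}(\ad\overline{\rho}(1))$ is correct; the paper does not write this step out explicitly.
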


\begin{proof}
By Lemma~\ref{lem:dual_selmer_nice_and_sweet}, we can find 
\begin{itemize}
\item an integer $q = q_1 + \cdots + q_{n+m} \ge [K:\bbQ]$, 
depending only on $\cS$ and $\{\frp_1,\ldots,\frp_m,\frq_1,\ldots,\frq_n\}$,
\item for each $P \in \{\frp_1,\ldots,\frp_m,\frq_1,\ldots,\frq_n\}$, 
an integer $c_P \ge 0$, depending only on $\cS$ and $P$,
\end{itemize}
satisfying the following property: 
for every integer $N \ge 1$, there is a Taylor--Wiles datum 
$Q_N$ such that:
  \begin{itemize}
  \item $\lvert Q_N \rvert = q$.
  \item For $P \in \{\frp_1,\ldots,\frp_m\}$, letting $A = R_\cS/P$ and $\widetilde{A}$ be its normalization, 
  and choosing nonzero $t \in \ffrm_A$, there is an $\widetilde{A}/t^N$-module map 
  $\widetilde{A}/t^N \to H_{\cS_{Q_N}^\perp, T}^1(\ad\rho_{\widetilde{A}/t^N}(1))$ 
  with kernel and cokernel annihilated by $t^{c_P}$.
  \item For $P \in \{\frq_1,\ldots,\frq_n\}$, letting $A = R_\cS/P$ and $\widetilde{A}$ be its normalization, 
  and choosing $t \in \ffrm_A$ with $p \in tA$, the $\widetilde{A}/t^N$-module 
  $H_{\cS_{Q_N}^\perp, T}^1(\ad\rho_{\widetilde{A}/t^N}(1))$ 
  is annihilated by $t^{c_P}$.
  \end{itemize}
Then, for fixed $P \in \{\frp_1,\ldots,\frp_m,\frq_1,\ldots,\frq_m\}$, 
Proposition~\ref{prop:selmer_with_TW_primes} gives an integer $b_P$, 
depending only on $\cS$ and $P$, and an $\widetilde{A}/t^N$-module map
	\begin{equation}\label{eqn:size_of_selmer} H_{\cS_{Q_N}, T}^1(\ad\rho_{\widetilde{A}/t^N}) \to (\widetilde{A}/t^N)^{2q - [K:\bbQ]} \end{equation}
with kernel and cokernel annihilated by $t^{b_P}$. 
By Proposition~\ref{prop:tangent_space}, 
	\begin{align*}
    H_{\cS_{Q_N}, T}^1(\ad\rho_{\widetilde{A}/t^N}) & \cong \Hom_A(P_{T,N}/(P_{T,N}^2,P_\loc, t^N), \widetilde{A}/t^N) \\
    & \cong \Hom_{\widetilde{A}}((P_{T,N}/(P_{T,N}^2,P_\loc, t^N))\otimes_A \widetilde{A}, \widetilde{A}/t^N). 
    \end{align*} 
So the Pontryagin dual of \eqref{eqn:size_of_selmer} gives an $\widetilde{A}/t^N$-module map
	\[ (\widetilde{A}/t^N)^{2q - [K:\bbQ]} \to (P_{T,N}/(P_{T,N}^2,P_\loc, t^N))\otimes_A \widetilde{A}.\]
with kernel and cokernel annihilated by $t^{b_P}$.
Applying Lemma~\ref{lem:normalization} shows that the data we have constructed satisfy conditions 1.\ -- 3.\ of the Proposition. It remains to explain why condition 4.\ holds. We have
\[ \dim_k(\ffrm_{R_{\cS_{Q_N}}}/(\varpi, \ffrm_{R_{\cS_{Q_N}}}^2)) = \dim_k H^1_{\cS_{Q_N}, \emptyset}(K_{S \cup Q} / K, \ad \overline{\rho}), \]
and there is a short exact sequence
\[ 0 \to H^1_{\cS, \emptyset}(K_{S} / K, \ad \overline{\rho}) \to H^1_{\cS_{Q_N}, \emptyset}(K_{S \cup Q_N} / K, \ad \overline{\rho}) \to \oplus_{v \in Q_N} H^1(K_v, \ad \overline{\rho}) / H^1_{ur}(K_v, \ad \overline{\rho}). \]
We can therefore take $g_0 = \dim_k H^1_{\cS, \emptyset}(K_{S} / K, \ad \overline{\rho}) + 4 q$. 
\end{proof}

\begin{remark}\label{rmk:nice_and_sweet}
In the above, we have tried to give proofs for nice and sweet primes simultaneously. 
Since the dual Selmer groups for nice primes are more complicated, it is apparent that many of the above arguments would simplify if we only considered sweet primes.
Also, one would not need Lemma~\ref{lem:normalization} as one can just enlarge $\cO$.

On the other hand, if we only considered nice primes, then we could work with divisible 
coefficients, as opposed to finite level coefficients, 
which would simplify some of the above by arguing via coranks. 
We are forced to work with finite level coefficients when dealing with sweet primes because it is 
necessary for the cyclotomic character to be locally trivial at the Taylor--Wiles places.
\end{remark}

\section{Cohomology of locally symmetric spaces}\label{sec_cohom}
We first set up some general notation and establish routine lemmas for the cohomology groups of locally symmetric spaces and the Hecke algebras that act on them. 
We then recall some results on Galois representations attached to the cohomology of these locally symmetric spaces for the group $\GL_n$ over an imaginary CM field, and then derive from these the similar results for the group $\PGL_n$ over an imaginary CM field. 
These results for $\PGL_2$ are necessary for our proof of Theorem~\ref{thm_automorphy_at_odd_primes}, given in Appendix~\ref{sec:p-odd-proofs}.
We then prove a technical result (Theorem~\ref{thm_boundedness_of_good_dihedral_cohomology}) showing that the cohomology groups we are interested in are nonzero in only finitely many degrees after localizing at suitable maximal ideal of the Hecke algebra.

\subsection{Generalities}\label{sec_cohom_general}
We review some general constructions, referring the reader to \cite{New16} or \cite{10authors} for a more detailed summary. Let $p$ be a prime, and let $\cO$ be a coefficient ring. Let $R$ be a ring which is either $\cO$, $E = \operatorname{Frac}(\cO)$, or $\cO / (\varpi^c)$ for some $c \ge 1$. Let $K$ be a number field. We assume that $E$ is large enough that it contains the image of every embedding $K$ in $\overline{\bbQ}_p$.

Let $G$ be a split connected reductive group over $\cO_K$. 
We write $G^\infty = G(\bbA_K^\infty)$ and $G_\infty = G(K \otimes_\bbQ \bbR)$.
If $S$ is a finite set of finite places of $K$, we write $G^S = G(\bbA_K^{\infty,S})$, where $\bbA_K^{\infty,S}$ is the ring of finite adeles of $K$ deprived of its $S$ components, and $G_S = \prod_{v\in S} G(K_v)$; so $G^\infty = G^S \times G_S$. 
We write $\cJ^S$ for the set of open compact subgroups $U = \prod_v U_v \subset G(\widehat{\cO}_K)$ such that $U_v \subset G(\cO_{K_v})$ for all $v$ and $U_v = G(\cO_{K_v})$ for all $v \not\in S$. 
When we need to emphasize the group $G$, we will write $\cJ^S_G$ for $\cJ^S$. 
Any $U \in \cJ^S$ decomposes as $U = U^S U_S$ with $U^S = \prod_{v\notin S} G(\cO_{K_v}) \subset G^S$ and $U_S \subseteq \prod_{v\in S} G(\cO_{K_v}) \subset G_S$. 

Let $X_G = G(K \otimes_\bbQ \bbR) / U_\infty A(\bbR)$, with $U_\infty$ a fixed maximal compact subgroup of $G(K\otimes_\bbQ \bbR)$ and $A$ the maximal $\bbQ$-split torus in the centre of $\Res^K_\bbQ G$. We define a topological space $\mathfrak{X}_G = G(K) \backslash (G(\bbA_K^\infty) \times X_G)$, where in forming this quotient $G^\infty = G(\bbA_K^\infty)$ is endowed with the discrete topology. 

For any open compact subgroup $U \subset G^\infty$, and for any $R[U]$-module $M$, finite free as $R$-module, we write $A(U, M) \in \mathbf{D}(R)$ for the complex computing the $U$-equivariant cohomology $H^\ast_U(\mathfrak{X}_G, M)$ of $M$ on $\mathfrak{X}_G$. Similarly if $N$ is an $R[U]^\text{op}$-module, finite free as $R$-module, then we write $C(U, N) \in \mathbf{D}(R)$ for the complex computing the $U$-equivariant homology $H_\ast^U(\mathfrak{X}_G, N)$ of $N$ on $\mathfrak{X}_G$. 
(These can be defined as follows: let $C$ denote the complex of singular chains on $\mathfrak{X}_G$ with $R$-coefficients, and choose a quasi-isomorphism $P \to C$, where $P$ is a bounded-above complex of projective $R[U]^\text{op}$-modules. Then $C(U, N) = (P \otimes_R N)_U$, and $A(U, M) = \Hom_{R}(P, M)^U$. Here $P \otimes_R N$ gets its natural structure of $R[U]^\text{op}$-module, while $\Hom_R(P, M)$ is made into an $R[U]$-module by the formula $(u \cdot f)(p) = u f(up)$ for an element $u \in U$. The reason for using the ring $R[U]^\text{op}$ here (rather than using the group structure to turn a $R[U]^\text{op}$-module into an $R[U]$-module) is that we will want to consider actions of monoids $\Delta$ below when defining Hecke operators.)
We remind the reader of our convention in \S \ref{subsec_notation} for identifying cochain complexes and chain complexes. In particular, for any $i \in \bbZ$, $H^i(C(U, N)) = H_{-i}^U(\mathfrak{X}_G, N)$.

There is an isomorphism
\[ H^\ast_U(\mathfrak{X}_G, M) \cong \oplus_{g \in G(K) \backslash G^\infty / U} H^\ast( \Gamma_{g, U}, M ), \]
where by definition $\Gamma_{g, U} = G(K) \cap g U g^{-1}$ and the groups on the right-hand side are the usual group cohomology. We note that if $\Gamma_{g, U}$ contains $p$-torsion, then $H^\ast( \Gamma_{g, U}, M )$ need not be a finitely generated $R$-module (because it may be non-zero in infinitely many degrees, a phenomenon that can be observed already in the case $H^\ast(\SL_2(\bbZ), \bbF_2)$).

More generally, if $V \subset U \subset G^\infty$ are open compact subgroups with $V$ normal in $U$, then we write $A(U/V, M) \in \mathbf{D}(R[U/V])$ for the complex computing the $V$-equivariant cohomology of $M$ on $\mathfrak{X}_G$ as $R[U/V]$-modules, and $C(U/V, N) \in \mathbf{D}(R[U / V]^\text{op})$ for the complex computing the $V$-equivariant homology of $N$ on $\mathfrak{X}_G$ as $R[U/V]$-modules. (These can be computed in the same way as above.)

If we define $N = M^\vee = \Hom_R(M, R)$, then there are isomorphisms (where $\Gamma_{U/V}$ denotes the functor of $U/V$-invariants)
\begin{equation} R \Gamma_{U/V} A(U/V, M) \cong A(U, M) \end{equation}
and
\begin{equation} C(U/V, N) \otimes^\bbL_{R[U/V]^\text{op}} R \cong C(U, N) \end{equation}
in $\mathbf{D}(R)$ and
\begin{equation}\label{eqn_cohomology_is_dual_of_homology} R \Hom_{R[U/V]^\text{op}}(C(U/V, M^\vee), R[U/V]) \cong A(U/V, M) \end{equation}
in $\mathbf{D}(R[U/V])$.

We recall that if $U$ is neat, then it acts freely on $\mathfrak{X}_G$, and both $H^\ast_U(\mathfrak{X}_G, M)$ and $H^U_\ast(\mathfrak{X}_G, M^\vee)$ are finite $R$-modules. In fact, in this case both $A(U, M)$ and $C(U, M^\vee)$ are perfect complexes of $R$-modules (as follows from the existence of the Borel--Serre compactification of $\mathfrak{X}_G / U$).
\begin{lemma}\label{lem_universal_coefficient_theorem} Let $U \subset G^\infty$ be an open compact subgroup, and let $M$ be an $R[U]$-module, finite free as $R$-module.
    \begin{enumerate}
        \item For each $i \in \bbZ$, $H^i_U(\mathfrak{X}_G, M)$ and $H_i^U(\mathfrak{X}_G, M^\vee)$ are finite $R$-modules.
        \item Suppose that $R = \cO$, $E$, or $k$. Then for each $i \in \bbZ$ there is a short exact sequence
        \[ \xymatrix@1{ 0 \ar[r] & \Ext^1_R(H^U_{i-1}(\mathfrak{X}_G, M^\vee), R) \ar[r] & H^i_U(\mathfrak{X}_G, M) \ar[r] & \Hom_R(H_i^U(\mathfrak{X}_G, M^\vee), R) \ar[r] & 0.}  \]
    \end{enumerate} 
\end{lemma}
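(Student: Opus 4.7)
For part 1, the strategy is to reduce to the neat case and invoke Borel--Serre, then bootstrap via a spectral sequence. Choose a neat normal subgroup $V \subseteq U$ of finite index (which exists by a standard argument, e.g.\ intersecting $U$ with a principal congruence subgroup at an auxiliary prime where $U$ is hyperspecial). For such $V$ the quotient $\mathfrak{X}_G / V$ is a finite disjoint union of arithmetic manifolds admitting Borel--Serre compactifications, so $A(V, M)$ and $C(V, M^\vee)$ are perfect complexes of $R$-modules; in particular $H^q_V(\mathfrak{X}_G, M)$ and $H_q^V(\mathfrak{X}_G, M^\vee)$ are finitely generated $R$-modules, non-zero only for $q$ in a bounded range depending on $G$.

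Now apply the Hochschild--Serre spectral sequence for the finite quotient $U/V$:
\[ E_2^{p,q} = H^p(U/V, H^q_V(\mathfrak{X}_G, M)) \Rightarrow H^{p+q}_U(\mathfrak{X}_G, M), \]
and its homological analogue. Since $|U/V|$ annihilates $H^p(U/V, -)$ for $p \ge 1$ and the coefficient module is a finite $R$-module, each $E_2^{p,q}$ is a finite $R$-module. Because $H^q_V(\mathfrak{X}_G, M)$ vanishes outside a bounded set of $q$, for each fixed $i$ there are only finitely many $(p,q)$ with $p + q = i$ contributing to the filtration on $H^i_U(\mathfrak{X}_G, M)$. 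Hence the latter is a finite $R$-module, and the same argument handles homology.

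For part 2, specialize formula (\ref{eqn_cohomology_is_dual_of_homology}) to $V = U$ (so that $R[U/V] = R$), giving the isomorphism $A(U, M) \cong R\Hom_R(C(U, M^\vee), R)$ in $\mathbf{D}(R)$. By part 1, $C(U, M^\vee)$ lies in $\mathbf{D}^b_{\mathrm{fg}}(R)$, and since $R \in \{\cO, E, k\}$ has global dimension at most one, the hyper-Ext spectral sequence
\[ E_2^{p,q} = \Ext^p_R(H_{-q}^U(\mathfrak{X}_G, M^\vee), R) \Rightarrow H^{p+q}_U(\mathfrak{X}_G, M) \]
has $E_2^{p,q} = 0$ for $p \ge 2$ and collapses onto the two columns $p = 0, 1$, producing the desired short exact sequence. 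When $R$ is a field the $\Ext^1$ term vanishes and the sequence degenerates to the usual duality $H^i_U \cong \Hom_R(H_i^U, R)$.

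\textbf{Main obstacle.} The genuinely non-formal step is part 1 in the non-neat case; one must verify that even though $H^p(U/V, -)$ can be non-zero for infinitely many $p$ (when the residue characteristic divides $|U/V|$), the Hochschild--Serre spectral sequence still contributes only finitely many terms in each total degree. This is ensured by the Borel--Serre vanishing for the neat subgroup $V$, which confines $q$ to a finite range. Once finiteness is established, part 2 is essentially the standard universal coefficient theorem for a PID (or field) applied to the perfect-like complex $C(U, M^\vee)$.
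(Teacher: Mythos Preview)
Your approach matches the paper's exactly: reduce part 1 to the neat case via Hochschild--Serre over a finite quotient $U/V$, and deduce part 2 from the isomorphism $A(U,M) \cong R\Hom_R(C(U,M^\vee),R)$ (i.e.\ (\ref{eqn_cohomology_is_dual_of_homology}) with $U=V$) together with $R$ having global dimension at most one.

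There is one inaccuracy worth flagging. Your claim that part 1 places $C(U,M^\vee)$ in $\mathbf{D}^b_{\mathrm{fg}}(R)$ is not correct: part 1 only gives finite generation of each individual $H_i^U$, not boundedness, and the paper explicitly remarks just before the lemma that when $\Gamma_{g,U}$ contains $p$-torsion the cohomology can be nonzero in infinitely many degrees (e.g.\ $H^\ast(\SL_2(\bbZ),\bbF_2)$). Your own ``main obstacle'' paragraph implicitly acknowledges this, so the $\mathbf{D}^b$ claim is an internal inconsistency. Fortunately it does not affect the argument: the universal coefficient short exact sequence over a hereditary ring is a degree-by-degree statement and needs no boundedness hypothesis---once $C(U,M^\vee)$ is represented by a bounded-above complex of free $R$-modules (which the construction already does), the short exact sequence drops out directly, and your hyper-Ext spectral sequence collapses because it has only the two columns $p=0,1$, irrespective of what happens in the $q$-direction.
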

\begin{proof}
    We have just remarked that, if $U$ is neat, then the first part follows from the existence of the Borel--Serre compactification. In general we can find a neat, normal open compact subgroup $V \subset U$ and the finite generation follows from the existence of the Hochschild--Serre spectral sequence. The second part is just the universal coefficient theorem for equivariant (co)homology (i.e. isomorphism (\ref{eqn_cohomology_is_dual_of_homology}) in the case $U = V$.)
\end{proof}
We can define Hecke operators. Let $S$ be some finite (possibly empty) set of finite places of $K$, let $U$ be an open compact subgroup of $G^\infty$ that decomposes as $U = U^S U_S$ with $U \subset G^S$ and $U_S \subset G_S$, and let $\Delta^S$ be a submonoid of $G^S$ which contains $U^S$. Then the set $\cH(\Delta^S, U^S)$ of compactly supported $U^S$-biinvariant functions $f : \Delta^S \to \bbZ$ forms an algebra under convolution, with the indicator function $[U^S]$ of $U^S$ as multiplicative identity. There is a canonical isomorphism $\cH(\Delta^S, U^S) \cong \cH(\Delta^S \times U_S, U)$. Let $M$ be an  $R[\Delta^S \times U_S]$-module, finite free as $R$-module; we view it as an $R[U]$-module by restriction. Then there are algebra homomorphisms
\begin{equation}\label{eqn_hecke_hom_1} \cH(\Delta^S, U^S)^\text{op} \to \End_{\mathbf{D}(R)}( C(U, M^\vee) ) 
\end{equation}
and
\begin{equation}\label{eqn_hecke_hom_2} \cH(\Delta^S, U^S) \to \End_{\mathbf{D}(R)}( A(U, M) ). 
\end{equation}
These can be defined as follows (cf. \cite[\S 2.1.9]{10authors}): take the resolution $P \to C$ by a bounded above complex of projective $R[\Delta]^\text{op}$-modules.
Then $A(U, M) = \Hom_R(P, M)^U$ is a complex of $\cH(\Delta, U)$-modules and $C(U, M^\vee) = (P \otimes_R M^\vee)_U $ is a complex of $\cH(\Delta, U)^\text{op}$-modules, and the induced homomorphisms (\ref{eqn_hecke_hom_1}) and (\ref{eqn_hecke_hom_2}) are independent of the choice of $P$. Moreover, the isomorphism (\ref{eqn_cohomology_is_dual_of_homology}) respects the action of $\cH(\Delta, U)^\text{op}$ by endomorphisms of source and target.

More generally, if $V = V^S V_S \subset U = U^S U_S$ are open compact subgroups of $G^\infty$ with $V^S = U^S$ and $V$ normal in $U$, and $M$ is an $R[\Delta^S \times U_S]$-module, finite free as $R$-module, then there are algebra homomorphisms
\begin{equation}\label{eqn_hecke_hom_3} \cH(\Delta^S, U^S)^\text{op} \to \End_{\mathbf{D}(R[U/V])}( C(U/V, M^\vee) ) 
\end{equation}
and
\begin{equation}\label{eqn_hecke_hom_4} \cH(\Delta^S, U^S) \to \End_{\mathbf{D}(R[U/V])}( A(U/V, M) ). 
\end{equation}
For $U \in \cJ^S$, we write
\[ \bbT^S = \cH\big(G^S, U^S\big) \otimes_\bbZ \cO, \]
which depends only on $S$ and not on $U$. When we need to emphasize the group $G$, we will write $\bbT_G^S$ for $\bbT^S$. 
This is a commutative $\cO$-algebra, because of our assumptions on $G$ and $U^S$. If $C$ is an object of an $\cO$-linear category $\cC$ and $\bbT^S \to \End_\cC(C)$ is an $\cO$-algebra homomorphism, then we will write $\bbT^S(C)$ for its image. A typical use of this notation will be when $C = A(U / V, M) \in \mathbf{D}(R[U/V])$. 
We observe that there are canonical surjective homomorphisms
\[ \bbT^S(C(U/V, M^\vee)) \to \bbT^S(C(U, M^\vee)) \]
and
\[ \bbT^S(A(U/V, M)) \to \bbT^S(A(U, M)) \]
and
\[ \bbT^S(C(U/V, M^\vee)) \to \bbT^S(A(U/V, M)). \]
\begin{lemma}\label{lem_finiteness_of_Hecke_algebras_case_of_non_trivial_group_action}
    Let $U, V \in \cJ^S$ be such that $V \subset U$ is a normal subgroup and $U/V$ is abelian. Let $M$ be an $R[G^S \times U_S]$-module, finite free as $R$-module. 
    Then $\bbT^S(A(U/V, M))$ and $\bbT^S(C(U/V, M^\vee))$ are finite $R$-algebras. 
\end{lemma}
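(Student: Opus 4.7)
The plan is to deduce the lemma from the general finiteness criterion in Lemma \ref{lem_finite_end_group}(1). Because $U, V \in \cJ^S$ agree at every place outside $S$, the quotient $U/V$ embeds into $\prod_{v \in S} U_v/V_v$ and is a finite group; by hypothesis it is also abelian. Thus $R[U/V]$ is precisely the group algebra over $R$ of a finite group, which is exactly the setting of Lemma \ref{lem_finite_end_group}.

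The main step is to verify the input hypothesis of that lemma: that $H^\ast(A(U/V, M))$ is a finite $R[U/V]$-module, and similarly for $C(U/V, M^\vee)$. By Lemma \ref{lem_universal_coefficient_theorem}(1), each individual group $H^i(A(U/V, M)) = H^i_V(\mathfrak{X}_G, M)$ is a finite $R$-module, and hence finite as an $R[U/V]$-module, because $R[U/V]$ is itself finite over $R$. To upgrade this to finite generation of the total cohomology one must further show that $H^i(A(U/V, M))$ vanishes outside a bounded range of $i$. I would do this by choosing a neat normal subgroup $V_0 \triangleleft U$ with $V_0 \subset V$, observing via the Borel--Serre compactification of $\mathfrak{X}_G / V_0$ that $A(U/V_0, M)$ has cohomology concentrated in the range $[0, \dim X_G]$, and then transferring the boundedness to $A(U/V, M)$ using the Hochschild--Serre spectral sequence associated to the normal inclusion $V_0 \triangleleft V$.

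Granted the input hypothesis, Lemma \ref{lem_finite_end_group}(1), applied with $G = U/V$, yields that $\End_{\mathbf{D}(R[U/V])}(A(U/V, M))$ is a finite $R$-module. Since $\bbT^S(A(U/V, M))$ is by definition an $R$-subalgebra of this endomorphism algebra and $R$ is Noetherian, it is itself a finite $R$-module, and therefore a finite $R$-algebra. The argument for $\bbT^S(C(U/V, M^\vee))$ is identical, using $C(U/V, M^\vee)$ in place of $A(U/V, M)$ and the second half of Lemma \ref{lem_universal_coefficient_theorem}(1).

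The main technical obstacle I anticipate is the boundedness reduction: when the residue characteristic of $R$ divides the order of $V/V_0$, the Hochschild--Serre $E_2$-page is unbounded in the group-cohomology direction, so one cannot simply read off boundedness of the abutment from the spectral sequence. To handle this I would instead apply Lemma \ref{lem_finite_end_group}(1) directly to $A(U/V_0, M)$ regarded as an object of $\mathbf{D}(R[U/V_0])$ (where the input hypothesis is immediate from the neatness of $V_0$), concluding that $\bbT^S(A(U/V_0, M))$ is already finite over $R$, and then exploit the fact that $\bbT^S$ commutes with the $V/V_0$-action, together with Lemma \ref{lem_finite_end_group}(2), to bound the image of $\bbT^S$ on $A(U/V, M)$ by that finite algebra modulo a nilpotent ideal of bounded nilpotency degree.
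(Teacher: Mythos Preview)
Your approach is essentially the paper's: reduce to a neat normal subgroup $V_0 \subset V$ with $V_0 \triangleleft U$, apply Lemma \ref{lem_finite_end_group}(1) to $A(U/V_0, M)$ (whose cohomology is bounded and finite over $R$, hence finite over $R[U/V_0]$), and then descend. You also correctly diagnose why the direct attempt on $A(U/V, M)$ fails: the cohomology can be genuinely unbounded when $V$ is not neat, so the finiteness hypothesis of Lemma \ref{lem_finite_end_group}(1) is not available.

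Where you diverge from the paper is in the descent step, and there you are overcomplicating things. You propose invoking Lemma \ref{lem_finite_end_group}(2) to control $\bbT^S(A(U/V, M))$ up to a nilpotent ideal. This is unnecessary. The paper instead uses the observation (stated just before the lemma) that there is a canonical \emph{surjection} $\bbT^S(A(U/V_0, M)) \to \bbT^S(A(U/V, M))$: the Hecke action on $A(U/V, M) \cong R\Gamma_{V/V_0} A(U/V_0, M)$ is induced functorially from that on $A(U/V_0, M)$, so any Hecke operator killing the latter kills the former. Since the source of the surjection is already a finite $R$-algebra, so is the target, and you are done with no nilpotent ideal in sight. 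Your route via Lemma \ref{lem_finite_end_group}(2) would also run into the problem that part (2) itself requires bounded cohomology, which you do not have for $A(U/V, M)$.
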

\begin{proof}
    We just prove this for $A(U/V, M)$, since the proof for $C(U/V, M^\vee)$ is similar. We will apply Lemma \ref{lem_finite_end_group}. If $U$ is neat, then $A(U / V, M)$ can be represented in $\mathbf{D}(R[U / V])$ by a bounded complex of (finite free) $R[U/V]$-modules. This implies that $\End_{\mathbf{D}(R[U/V])}(A(U/V, M))$ is a finite $R$-algebra, hence a fortiori that $\bbT^S(A(U/V, M))$ is a finite $R$-algebra. In general, we can choose $V' \in \cJ^S$ such that $V' \subset V$, $V$ is normal in $U$, and $V'$ is neat. Then the cohomology groups $H^\ast(A(U / V', M)) = H^\ast_{V'}(\mathfrak{X}_G, M)$ are finitely generated $R[U/V']$-modules, so Lemma \ref{lem_finite_end_group} again implies that $\End_{\mathbf{D}(R[U / V'])}(A(U / V', M))$ is a finitely generated $R$-module, hence that $\bbT^S(A(U / V', M))$ is a finite $R$-algebra. There is a surjective homomorphism $\bbT^S(A(U / V', M)) \to \bbT^S(A(U / V, M))$, which implies that $\bbT^S(A(U / V, M))$ is also a finite $R$-algebra, as required.
\end{proof}

We find it useful to introduce a book-keeping device to keep track of the various Hecke algebras we use later. 
\begin{definition}
    We define a Hecke datum $\cD$ for $G$ to consist of a tuple $\cD = (S, \{ (\Delta_v, U_v) \}_{v \in S}, \{ T_v \}_{v \in S})$, where:
    \begin{itemize}
        \item $S$ is a finite set of finite places of $K$, containing the $p$-adic places.
        \item For each $v \in S$, $U_v \subset G(\cO_{K_v})$ is an open compact subgroup and $\Delta_v \subset G(K_v)$ is a submonoid containing $U_v$.
        \item $T_v$ is a commutative $\cO$-algebra, endowed with an $\cO$-algebra homomorphism $T_v \to \cH(\Delta_v, U_v) \otimes_\bbZ \cO$.
    \end{itemize}
    If $\cD$ is a Hecke datum, then we define $\bbT_\cD = \bbT^S \otimes_\cO ( \otimes_{v \in S} T_v)$, $\Delta_\cD = \prod_{v \in S} \Delta_v \times \prod_{v \not\in S} G(K_v)$, and $U_\cD = \prod_{v \in S} U_v \times \prod_{v \not\in S} G(\cO_{K_v})$.
\end{definition}
We observe that if $\cD$ is a Hecke datum and $M$ is an $\cO[\Delta_\cD]$-module, then there are natural homomorphisms $\bbT_\cD \to \End_{\mathbf{D}(\cO)}(A(U, M))$ 
and $\bbT_\cD \to \End_{\mathbf{D}(\cO)}(C(U, M^\vee))$.

In \S\ref{sec_Hecke_Gal_rep}, we recall facts about Galois representations associated to $\bbT_\cD$ in the case of $G = \GL_n$ for certain $\cD$. It will be necessary for us to have similar results for $G = \PGL_n$ (in fact, we only need the $n = 2$, $p$ odd case). We will deduce these properties from those of $\GL_n$ by using the following proposition. Let $G^{\mathrm{ad}}$ be the adjoint group of $G$ and let $G^{\mathrm{der}}$ be the derived subgroup of $G$. These are both still split reductive groups over $\cO_K$. Let $Z_G$ denote the centre of $G$, which we now assume to be a split torus. For an open compact subgroup $U \subset G^\infty = G(\bbA_K^\infty)$,  we let $\overline{U}$ denote its image in $(G^\mathrm{ad})^\infty = G^\mathrm{ad}(\bbA_K^\infty)$, and similarly for an open compact subgroup $U_v \subset G(K_v)$ or open submonoid $\Delta_v \in G(K_v)$. 
For $U_v \subseteq \Delta_v \subseteq G(K_v)$ an open compact subgroup and submonoid, respectively, with $Z_G(\cO_{K_v}) \subset U_v$, we have a ring homomorphism $\cH(\Delta_v, U_v) \to \cH(\overline{\Delta}_v, \overline{U}_v)$ given by
\[ f \mapsto \left(g \mapsto \int_{Z_G(K_v)} f(gz) dz \right), \] 
where the measure is the one giving measure 1 to the maximal compact subgroup $Z_G(\cO_{K_v})$ of the split torus $Z_G(K_v)$. This homomorphism takes the double coset operator $[U_vgU_v]$ to the double coset operator $[\overline{U}_v\overline{g}\overline{U}_v]$. 

In particular, if $\cD = (S, \{ (\Delta_v, U_v) \}_{v \in S}, \{ T_v \}_{v \in S})$ is a Hecke datum for $G$, with $U_\cD$ containing $Z_G(\bbA_K^\infty)^c$, the maximal compact subgroup of $Z_G(\bbA_K^\infty)$,
then we obtain a Hecke datum 
    \[ \overline{\cD} = (S, \{ (\overline{\Delta}_v, \overline{U}_v) \}_{v\in S}, \{ T_v \}_{v \in S}) \] 
for $G^\mathrm{ad}$ with the corresponding algebra $\bbT_{\overline{\cD}} = \bbT^S_{G^\mathrm{ad}} \otimes_\cO ( \otimes_{v \in S} T_v)$, monoid $\overline{\Delta}_{\overline{\cD}} = \prod_{v \in S} \overline{\Delta}_v \times \prod_{v \not\in S} G^\mathrm{ad}(K_v)$, and subgroup $\overline{U}_{\overline{\cD}} = \prod_{v \in S} \overline{U}_v \times \prod_{v \not\in S} G^\mathrm{ad}(\cO_{K_v})$.

\begin{proposition}\label{prop_cohomology_for_adjoint_group}
Let $U \subset G^\infty$ be a compact open subgroup containing $Z_G(\bbA_K^\infty)^c$, the maximal compact subgroup of $Z_G(\bbA_K^\infty)$. 
Let $R = E$,  $\cO$, or $\cO/(\varpi^c)$ for some $c\ge 1$ and let $M$ be an $R[\overline{U}]$-module which is finite free as an $R$-module.
Suppose that the centre of $G$ is a split torus
and that the order of the centre of $G^{\mathrm{der}}$ (as a finite group scheme) is invertible in $R$.
 Then the map
    \[ H_{\overline{U}}^\ast(\mathfrak{X}_{G^{\mathrm{ad}}},M) \to H_U^\ast(\mathfrak{X}_G, M) \]
is injective.
\end{proposition}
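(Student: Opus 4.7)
The plan is to construct a splitting of the pullback map by factoring the quotient $\pi \colon G \twoheadrightarrow G^{\mathrm{ad}}$ through the derived subgroup and applying the classical transfer argument.

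First I would record the consequences of the hypotheses. Since $Z_G$ is a split torus, Hilbert 90 gives $H^1(K_v, Z_G) = 0$ for every place $v$ of $K$, so $G \to G^{\mathrm{ad}}$ is surjective on $K_v$-, $K$-, and $\bbA_K$-points, and the induced map $\mathfrak{X}_G \to \mathfrak{X}_{G^{\mathrm{ad}}}$ is surjective. The hypothesis $U \supset Z_G(\bbA_K^\infty)^c$ together with compactness of $U$ forces $U \cap Z_G(\bbA_K^\infty) = Z_G(\bbA_K^\infty)^c$, so there is a short exact sequence $1 \to Z_G(\bbA_K^\infty)^c \to U \to \overline U \to 1$, and since $M$ is pulled back from $\overline U$ the kernel acts trivially on $M$.

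The key step is to use the factorization $G^{\mathrm{der}} \hookrightarrow G \twoheadrightarrow G^{\mathrm{ad}}$: because $G$ is reductive with split central torus $Z_G$, we have $G = G^{\mathrm{der}} \cdot Z_G$ with $G^{\mathrm{der}} \cap Z_G = Z^{\mathrm{der}}$, so the composite $G^{\mathrm{der}} \twoheadrightarrow G^{\mathrm{ad}}$ is a central isogeny with kernel $Z^{\mathrm{der}}$. For a suitable compact open subgroup $U^{\mathrm{der}} \subset G^{\mathrm{der}}(\bbA_K^\infty)$, chosen so that the diagram of locally symmetric spaces commutes and so that the composite $\mathfrak{X}_{G^{\mathrm{der}}}/U^{\mathrm{der}} \to \mathfrak{X}_{G^{\mathrm{ad}}}/\overline U$ is a finite covering whose degree has prime factors dividing $|Z^{\mathrm{der}}|$, the hypothesis that $|Z^{\mathrm{der}}|$ is invertible in $R$ allows one to invoke the transfer/averaging construction to produce a left inverse of the pullback along the composite. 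This pullback then factors as
\[ H^\ast_{\overline U}(\mathfrak{X}_{G^{\mathrm{ad}}}, M) \to H^\ast_U(\mathfrak{X}_G, M) \to H^\ast_{U^{\mathrm{der}}}(\mathfrak{X}_{G^{\mathrm{der}}}, M), \]
and since the composite is split injective, the first arrow (the map of the proposition) is automatically injective.

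The main obstacle will be choosing $U^{\mathrm{der}}$ so that the composite is genuinely a finite cover of controlled degree: the delicate point is that the image of $U^{\mathrm{der}}$ in $G^{\mathrm{ad}}(\bbA_K^\infty)$ must equal $\overline U$ rather than a proper subgroup, which may require one to enlarge $U^{\mathrm{der}}$ (or slightly shrink $U$) to ensure a product-type compatibility with the central quotient $Z_G/Z^{\mathrm{der}}$. One must also verify the averaging argument at the level of equivariant cohomology rather than ordinary cohomology, since the non-neatness of $U$ forces us to work with an orbifold quotient; this can be handled by choosing a neat normal subgroup $V \subset U$, proving the statement at the $V$-level, and then propagating through the Hochschild–Serre spectral sequence for $U/V$ acting on $\mathfrak{X}_G/V$.
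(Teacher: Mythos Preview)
Your overall strategy---factor the map through $G^{\mathrm{der}}$ and split the composite by transfer---is exactly the paper's, but your proposed execution at the adelic level has a real gap. You want a compact open $U^{\mathrm{der}} \subset G^{\mathrm{der}}(\bbA_K^\infty)$ whose image in $G^{\mathrm{ad}}(\bbA_K^\infty)$ is $\overline U$, so that $\mathfrak{X}_{G^{\mathrm{der}}}/U^{\mathrm{der}} \to \mathfrak{X}_{G^{\mathrm{ad}}}/\overline U$ is a finite cover of degree with prime factors dividing $|Z^{\mathrm{der}}|$. But the isogeny $G^{\mathrm{der}} \to G^{\mathrm{ad}}$ is \emph{not} surjective on adelic points: its cokernel is the restricted product of the local groups $H^1(K_v, Z^{\mathrm{der}})$, which is typically infinite. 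No enlargement of $U^{\mathrm{der}}$ fixes this, and shrinking $U$ changes the statement. Consequently the map of spaces you write down need not be surjective, let alone a finite cover, and the transfer argument at that level does not get off the ground.

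The paper sidesteps this by first decomposing $H^\ast_{\overline U}(\mathfrak{X}_{G^{\mathrm{ad}}}, M)$ and $H^\ast_{U}(\mathfrak{X}_G, M)$ as direct sums over double cosets and then, for each summand, factoring the map of \emph{arithmetic groups} $\Gamma^d = G^{\mathrm{der}}(K) \cap U^d \to \overline\Gamma = G^{\mathrm{ad}}(K) \cap \overline U$ (with $U^d = U \cap G^{\mathrm{der}}(\bbA_K^\infty)$). The substantive content is the index bound: one shows $[\overline\Gamma, \overline\Gamma]$ lands in the image of $\Gamma^d$, so $\overline\Gamma / f(\Gamma^d)$ is abelian, and then a direct argument---writing $\overline g^{\,n}$ as an element of $G^{\mathrm{der}}(K)$ and using the hypothesis $Z_G(\bbA_K^\infty)^c \subset U$ to show $G^{\mathrm{der}}(K) \cap Z_G(\bbA_K^\infty) U = G^{\mathrm{der}}(K) \cap U$---gives $n \cdot (\overline\Gamma / f(\Gamma^d)) = 0$ for $n = |Z^{\mathrm{der}}|$. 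This is exactly where the hypothesis on $U$ is consumed. Once you have this, the standard restriction--corestriction argument at the level of group cohomology finishes the proof, with no need to pass to a neat subgroup or invoke Hochschild--Serre.
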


\begin{proof}
Since the centre of $G$ is a split torus, the map $G^\infty \to (G^\mathrm{ad})^\infty$ is surjective (Hilbert 90). It therefore suffices to show that for each $g \in G^\infty$, the map 
\[ H^\ast(\Gamma_{g,\overline{U}}, M) \to H^\ast(\Gamma_{g,U}, M) \]
is injective. After replacing $U$ with $gUg^{-1}$, we are reduced to the case $g = 1$. For simplicity of notation, we write $\overline{\Gamma} = \Gamma_{1,\overline{U}}$ and $\Gamma = \Gamma_{1,U}$. We also set $U^d = U \cap (G^\mathrm{der})^\infty$ and $\Gamma^d = \Gamma_{1,U^d}$, so there are homomorphisms $\Gamma^d \to \Gamma \to \overline{\Gamma}$. It suffices to show that the map
\[ H^\ast(\overline{\Gamma}, M) \to H^\ast(\Gamma^d, M) \]
is injective. This in turn will follow if we can 
show that the map $f : \Gamma^d \to \overline{\Gamma}$ has kernel of finite order, invertible in $R$, and normal image of finite index, also invertible in $R$.

It follows from our assumption on the centre of $G^\mathrm{der}$ that $\Gamma^d \to \overline{\Gamma}$ has kernel of finite order which is invertible in $R$. The image $f(\Gamma^d)$ is of finite index in $\overline{\Gamma}$ because the class of arithmetic subgroups is preserved under isogeny. In fact, this image is normal with abelian quotient. Indeed, let $\overline{g}, \overline{h} \in \overline{\Gamma}$. We can lift these to elements $g,h \in G(K)$, and write $g = z_1 u_1$, $h = z_2 u_2$ with $z_i \in Z_G(\bbA_K^\infty)$, $u_i \in U$. Then $[g,h] = ghg^{-1}h^{-1} = [u_1,u_2]$ lies in $\Gamma^d$, showing that $[\overline{g}, \overline{h}] \in f(\Gamma^d)$. 

Let $n$ denote the order of the centre of $G^\mathrm{der}$. The proof will be complete if we can show that $n(\overline{\Gamma}/f(\Gamma^d)) = 0$. Let $\overline{g} \in \overline{\Gamma}$. Looking at the cohomology sequence of the short exact sequence 
\[ \xymatrix@1{ 1 \ar[r] & Z_{G^\mathrm{der}} \ar[r] & G^\mathrm{der} \ar[r] & G^\mathrm{ad} \ar[r] & 1,}  \]
we see that $\overline{g}^n$ lies in the image of $G^\mathrm{der}(K)$. If $g \in G(K)$ is a pre-image of $\overline{g}$, we can therefore write $g = zu$ with $z \in Z_G(\bbA_K^\infty)$, $u \in U$, and $g^n = wh$ for some $w \in Z_G(K)$ and $h \in G^\mathrm{der}(K)$. Putting these two expressions together gives $h \in G^\mathrm{der}(K) \cap (Z_G(\bbA_K^\infty)\cdot U)$. Suppose we knew that $G^\mathrm{der}(K)\cap (Z_G(\bbA_K^\infty)\cdot U) = G^\mathrm{der}(K) \cap U = \Gamma^d$. Then we could conclude that $h \in \Gamma^d$ and $\overline{g}^n = f(h)$, completing the proof of the proposition. 

It therefore remains to show that $G^\mathrm{der}(K) \cap (Z_G(\bbA_K^\infty)\cdot U) = G^\mathrm{der}(K) \cap U = \Gamma^d$. It is here that we use our assumption that $U$ contains $Z_G(\bbA_K^\infty)^c$. Let $C_G = G/G^\mathrm{der}$ denote the cocentre of $G$, and let $\nu : G \to C_G$ denote the canonical map. Then $\nu|_{Z_G}$ is an isogeny of split tori. Let $h \in G^\mathrm{der}(K)\cap (Z_g(\bbA_K^\infty)\cdot U)$, and write $h = zu$ with $z \in Z_G(\bbA_K^\infty)$, $u \in U$. Then $\nu(h) = 1 = \nu(z)\nu(u)$. We have $\nu(u) \in C_G(\bbA_K^\infty)^c$, hence $\nu(z) \in C_G(\bbA_K^\infty)^c$, hence $z\in Z_G(\bbA_K^\infty)^c$, hence $zu \in U$, hence $h \in G^\mathrm{der}(K)\cap U$. This completes the proof.
\end{proof}

\begin{corollary}\label{cor_hecke_surj_to_adjoint}
Let $\cD = (S, \{ (\Delta_v, U_v) \}_{v \in S}, \{ T_v \}_{v \in S})$ be a Hecke datum for $G$ such that $U_\cD$ contains $Z_G(\bbA_K^\infty)^c$, and let $\overline{\cD}$ be the corresponding Hecke datum for $G^\mathrm{ad}$.  
Let $R = E$, $\cO$ or $\cO/(\varpi^c)$ for some $c\ge 1$ and let $M$ be an $R[\overline{\Delta}_{\overline{\cD}}]$-module which is finite free as an $R$-module.
Suppose that the centre of $G$ is a split torus
and that the order of the centre of $G^{\mathrm{der}}$ (as a finite group scheme) is invertible in $R$.

Then, letting $\bbT_\cD(H_U^\ast(\mathfrak{X}_G, M))$ denote the image of $\bbT_\cD$ in $\End_R(H_U^\ast(\mathfrak{X}_G, M))$ and letting 
$\bbT_{\overline{\cD}}(H_{\overline{U}}^\ast(\mathfrak{X}_{G^\mathrm{ad}}, M))$ denote the image of $\bbT_{\overline{\cD}}$ in $\End_R(H_{\overline{U}}^\ast(\mathfrak{X}_{G^\mathrm{ad}}, M))$, there 
is a surjective $R$-algebra homomorphism
    \[ \bbT_\cD(H_U^\ast(\mathfrak{X}_G, M)) \to \bbT_{\overline{\cD}}(H_{\overline{U}}^\ast(\mathfrak{X}_{G^\mathrm{ad}}, M)). \]
\end{corollary}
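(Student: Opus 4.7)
The plan is to deduce the corollary from Proposition~\ref{prop_cohomology_for_adjoint_group} by verifying that the injection $\iota \colon H_{\overline{U}}^\ast(\mathfrak{X}_{G^\mathrm{ad}}, M) \hookrightarrow H_U^\ast(\mathfrak{X}_G, M)$ produced there is equivariant for the natural $\cO$-algebra homomorphism $\bbT_\cD \to \bbT_{\overline{\cD}}$. This homomorphism is built from the integration maps $\cH(\Delta_v, U_v) \to \cH(\overline{\Delta}_v, \overline{U}_v)$ recalled immediately before the statement (extended in the obvious way to the spherical Hecke algebras at places outside $S$) together with the identity on each $T_v$. Granting this equivariance, every $t \in \bbT_\cD$ will act on $H_U^\ast(\mathfrak{X}_G, M)$ preserving the image of $\iota$ and acting there as its image $\overline{t} \in \bbT_{\overline{\cD}}$, so restriction to $\iota$ defines a well-defined $R$-algebra homomorphism $\bbT_\cD(H_U^\ast(\mathfrak{X}_G, M)) \to \bbT_{\overline{\cD}}(H_{\overline{U}}^\ast(\mathfrak{X}_{G^\mathrm{ad}}, M))$, which will be surjective once one knows the map $\bbT_\cD \to \bbT_{\overline{\cD}}$ is.

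Surjectivity of $\bbT_\cD \to \bbT_{\overline{\cD}}$ is straightforward: for every place $v$ and every double coset $[\overline{U}_v \overline{g} \overline{U}_v]$ with $\overline{g} \in \overline{\Delta}_v$, any lift $g \in \Delta_v$ of $\overline{g}$ satisfies $[U_v g U_v] \mapsto [\overline{U}_v \overline{g} \overline{U}_v]$ under the integration formula, and such a lift exists because $G(K_v) \to G^\mathrm{ad}(K_v)$ is surjective (Hilbert 90, using that the centre of $G$ is a split torus, exactly as at the start of the proof of Proposition~\ref{prop_cohomology_for_adjoint_group}).

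The heart of the argument is thus the Hecke-equivariance of $\iota$. The operator $[UgU]$ acts on $H_U^\ast(\mathfrak{X}_G, M)$ via the correspondence
\[
\mathfrak{X}_G/U \xleftarrow{\pi_1} \mathfrak{X}_G/(U \cap gUg^{-1}) \xrightarrow{\pi_2} \mathfrak{X}_G/U
\]
(with $\pi_2$ induced by right multiplication by $g$), and $[\overline{U}\overline{g}\overline{U}]$ is defined by the analogous correspondence on $\mathfrak{X}_{G^\mathrm{ad}}$. The natural projection $\mathfrak{X}_G \to \mathfrak{X}_{G^\mathrm{ad}}$ maps one correspondence to the other, and the key input is the assumption $Z_G(\bbA_K^\infty)^c \subseteq U$: this forces $U \cap Z_G(\bbA_K^\infty) = Z_G(\bbA_K^\infty)^c$, and hence $Z_G(\bbA_K^\infty)^c \subseteq U \cap gUg^{-1}$; it follows that the induced map $U/(U \cap gUg^{-1}) \to \overline{U}/(\overline{U} \cap \overline{g}\overline{U}\overline{g}^{-1})$ on double-coset index sets is a bijection, so that the $G$-correspondence above is the exact pullback of the $G^\mathrm{ad}$-correspondence. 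The main obstacle is keeping all of these coset decompositions in line; once this is done, passing to cohomology yields the identity $\iota \circ [\overline{U}\overline{g}\overline{U}] = [UgU] \circ \iota$, which together with the surjectivity above gives the desired surjection of Hecke algebra images.
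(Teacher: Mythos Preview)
Your approach is correct and is essentially the same as the paper's, which simply says the corollary ``follows at once from Proposition~\ref{prop_cohomology_for_adjoint_group}''. You have filled in the details the paper leaves implicit: the surjectivity of $\bbT_\cD \to \bbT_{\overline{\cD}}$ and the Hecke-equivariance of the injection $\iota$. One small remark: your claim that $U/(U\cap gUg^{-1}) \to \overline{U}/(\overline{U}\cap \overline{g}\overline{U}\overline{g}^{-1})$ is a bijection needs a touch more than $Z_G(\bbA_K^\infty)^c \subseteq U\cap gUg^{-1}$; one must also check that if $u \in U$ with $\overline{u} \in \overline{g}\overline{U}\overline{g}^{-1}$ then $u \in gUg^{-1}$, which amounts to showing the relevant central element lies in $Z_G(\bbA_K^\infty)^c$ --- this follows from the cocentre argument already used at the end of the proof of Proposition~\ref{prop_cohomology_for_adjoint_group}.
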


\begin{proof}
This follows at once from Proposition \ref{prop_cohomology_for_adjoint_group}.
\end{proof}

\subsection{Hecke algebras and Galois representations}\label{sec_Hecke_Gal_rep}
We will assume throughout this section that $K$ is a CM field with maximal totally real subfield $K^+$ and that every $p$-adic place of $K^+$ splits in $K$. We fix a finite set of finite places $S$ of $K$ containing all the $p$-adic places. 

For a finite place $v$ of $K$, we write $I_v \subset \GL_n(\cO_{K_v})$ for the Iwahori subgroup defined by upper triangular modulo $\varpi_v$ matrices, and $I_v(1)$ for its subgroup of unipotent modulo $\varpi_v$ elements. More generally, if we are given integers $c \ge b \ge 0$, then we write $I_v(b,c)$ for the subgroup of $\GL_n(\cO_{K_v})$ of matrices that are upper triangular modulo $\varpi_v^c$ and upper triangular unipotent modulo $\varpi_v^b$. If $U \in \cJ_{\GL_n}^S$ and $v \not\in S$ is a finite place of $K$, then we will write $U_0(v) \in \cJ_{\GL_n}^{S \cup \{ v \}}$ for the group with $U_0(v)^v = U^v$ and $U_0(v)_v = I_v$, and $U_1(v) \in \cJ_{\GL_n}^{S \cup \{ v \}}$ for the group with $U_1(v)^v = U^v$ and $U_1(v)_v = I_v(1)$. We also make the same definition with $v$ replaced by a finite set $S'$ of finite places of $K$ such that $S \cap S' = \emptyset$.

For each $v \notin S$ and $1\le i \le n$, we write $\mathsf{T}_{v,i}\in \cH(\GL_n(K_v), \GL_n(\cO_{K_v}))$ for the double coset operator
\[ \mathsf{T}_{v,i} = [\GL_n(\cO_{K_v}) \alpha_{v, i} \GL_n(\cO_{K_v})], \]
where $\alpha_{v, i} = \diag(\varpi_1,\ldots,\varpi_v, 1, \ldots, 1)$ ($\varpi_v$ appearing $i$ times). Then $\bbT_{\GL_n}^S$ is generated over $\cO$ by the operators $\mathsf{T}_{v,1},\ldots,\mathsf{T}_{v,n},\mathsf{T}_{v,n}^{-1}$ for $v \notin S$. 
We define the Hecke polynomial
\[ P_v(X) = X^n + \sum_{i = 1}^n (-1)^i q_v^{i(i-1)/2} \mathsf{T}_{v,i}X^{n-i} \in \cH(\GL_n(K_v), \GL_n(\cO_{K_v}))[X]. \]
For any $\lambda \in (\bbZ_+^n)^{\Hom(K,E)}$, we define an $\cO[\prod_{v|p} \GL_n(\cO_{K_v})]$-module $\cV_\lambda$, finite free as an $\cO$-module, by the recipe of \cite[\S2.2]{Ger18}. It is an $\cO$-lattice in the irreducible algebraic representation $V_\lambda$ of $(\Res_{K/\bbQ}\GL_n)_E$ of highest weight $\lambda$. 

Given an open compact subgroup $U \subset \GL_n(\bbA_K^\infty)$, we let $\overline{U}$ be its image in $\PGL_n(\bbA_K^\infty)$. 
We use similar notation for open compact subgroups $U_v \subset \GL_n(K_v)$ as well as for monoids and elements. 
If $U \in \cJ_{\GL_n}^S$, then $\overline{U} \in \cJ_{\PGL_n}^S$. 
Say $V \subseteq U \subseteq \GL_n(\bbA_K^\infty)$ are open compact subgroups with $V$ normal in $U$, that $R = \cO$ or $\cO/\varpi^c$ for some $c \ge 1$, and that $M$ is an $R[\overline{U}]$-module, finite free as an $R$-module. 
Then we have complexes $A(\overline{V}, M) \in \cD(R)$ and $A(\overline{U}/\overline{V}, M) \in \mathbf{D}(R[\overline{U}/\overline{V}])$ computing $H^\ast_{\overline{V}}(\mathfrak{X}_{\PGL_n}, M)$, and complexes $C(\overline{V}, M^\vee) \in \mathbf{D}(R)$ and $C(\overline{U}/\overline{V}, M^\vee) \in \mathbf{D}(R[\overline{U}/\overline{V}])$ computing $H_\ast^{\overline{V}}(\mathfrak{X}_{\PGL_n}, M^\vee)$.

For each $v \notin S$, we write $\mathsf{T}_{v,1},\ldots,\mathsf{T}_{v,n} \in \cH(\PGL_n(K_v), \PGL_n(\cO_{K_v}))$ for the image of the operators of the same name under the ring homomorphism $\cH(\GL_n(K_v), \GL_n(\cO_{K_v})) \to \cH(\PGL_n(K_v), \PGL_n(\cO_{K_v}))$. 
It should be clear from the context whether we are in the $\GL_n$-case or $\PGL_n$-case, so we hope this does not cause confusion. 
Note that $\mathsf{T}_{v,n} = 1$ in $\cH(\PGL_n(K_v), \PGL_n(\cO_{K_v}))$, and $\bbT_{\PGL_n}^S$ is generated over $\cO$ by $\mathsf{T}_{v,1},\ldots,\mathsf{T}_{v,n-1}$ for $v \notin S$. 
We also again write 
\[ P_v(X) = X^n + \sum_{i = 1}^n (-1)^i q_v^{i(i-1)/2} \mathsf{T}_{v,i}X^{n-i} \in \cH(\PGL_n(K_v), \PGL_n(\cO_{K_v}))[X]. \]
We let $\bbZ_{+,0}^n\subset \bbZ_+^n$ be the subset of $(\lambda_1,\ldots,\lambda_n) \in \bbZ_+^n$ with $\lambda_1+\cdots + \lambda_n = 0$.
If $\lambda \in (\bbZ_{+,0}^n)^{\Hom(K,E)}$ then the action of the centre of $(\Res_{K / \bbQ} \GL_n)$ on $V_\lambda$ is trivial and $\cV_\lambda$ admits a natural structure of $\cO[\prod_{v|p} \PGL_n(\cO_{K_v})]$-module, finite free as $\cO$-module.
\begin{theorem}\label{thm_GLn_mod_p_Gal_rep}
    Let $V \subset U$ be elements of $\cJ_{\GL_n}^S$ with $V$ normal in $U$ and $U/V$ abelian. Let $R = \cO$ or $\cO/\varpi^c$ for some $c \ge 1$ and let $M$ be an $R[U_S]$-module, finite free as an $R$-module, such that $M \otimes_R k = \cV_\lambda \otimes_\cO k$ for some $\lambda \in (\bbZ_+^n)^{\Hom(K,E)}$. Let $\ffrm$ be a maximal ideal of $\bbT^S(A(U/V, M))$. Suppose that $S$ is stable under complex conjugation and satisfies the following condition:
    \begin{itemize}
        \item Let $v$ be a finite place of $K$ not contained in $S$, and let $l$ be its residue characteristic. Then either $S$ contains no $l$-adic places and $l$ is unramified in $K$, or there is an imaginary quadratic field $K_0 \subseteq K$ in which $l$ splits.
    \end{itemize}
    Then there exists a semisimple continuous representation $\overline{\rho}_\ffrm : G_K \to \GL_n(\bbT^S(A(U/V, M)) / \ffrm)$, unramified outside $S$, and such that for each $v \not\in S$,
        \[ \det(X - \overline{\rho}_\ffrm(\Frob_v)) = P_v(X) \bmod\ffrm. \]
    The same statement holds with $A(U/V, M)$ replaced by $C(U/V, M^\vee)$.
\end{theorem}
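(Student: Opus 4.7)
The plan is to reduce to the established existence theorem for Galois representations attached to systems of Hecke eigenvalues in the characteristic $p$ cohomology of $\GL_n$-locally symmetric spaces over CM fields. This theorem is due to Scholze in its original form and has since been refined by several authors (see e.g.\ \cite[Theorem~2.3.5]{10authors}); the hypothesis on $S$ in our statement is precisely what ensures the applicability of the strongest known version.

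We would first replace $U$ by $V$ and reduce to the case $V = U$: the complex $A(U/V, M)$, viewed in $\mathbf{D}(R)$ by forgetting the $R[U/V]$-equivariant structure, coincides with $A(V, M)$, and the two $\bbT^S$-actions are identical since the Hecke operators at places outside $S$ commute with the $U/V$-action. Hence $\bbT^S(A(U/V, M)) = \bbT^S(A(V, M))$. Next, for a maximal ideal $\ffrm \subset \bbT^S(A(U, M))$ corresponding to a system of Hecke eigenvalues $\bbT^S \to k'$ with $k' = \bbT^S(A(U, M))/\ffrm$, we would show that this same eigensystem arises on $H^\ast(A(U, \cV_\lambda \otimes_\cO k))$. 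Note that $k'$ is a finite extension of $k$: the Hecke algebra is a finite $R$-algebra by Lemma~\ref{lem_finiteness_of_Hecke_algebras_case_of_non_trivial_group_action}, and $\ffrm$ has residue characteristic $p$. By Lemma~\ref{lem_finite_end_group}(2), the kernel of $\bbT^S(A(U, M)) \to \End(H^\ast(A(U, M)))$ is nilpotent, so $\ffrm$ lies in the support of $H^\ast(A(U, M))$, and $H^\ast(A(U, M))_\ffrm$ is a nonzero finitely generated $\cO$-module. Its reduction modulo $\varpi$ is therefore nonzero, and the $\bbT^S$-equivariant universal coefficient short exact sequence from Lemma~\ref{lem_universal_coefficient_theorem} then exhibits a nonzero submodule of $H^\ast(A(U, M \otimes_R k)) = H^\ast(A(U, \cV_\lambda \otimes_\cO k))$ on which $\bbT^S$ acts through $k'$.

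The eigensystem $\bbT^S \to k'$ then factors through $\bbT^S(A(U, \cV_\lambda \otimes_\cO k))/\ffrm'$ for some maximal ideal $\ffrm'$ with residue field $k'$, and the cited existence theorem provides a continuous semisimple representation $\overline{\rho}_{\ffrm'}\colon G_K \to \GL_n(k')$ unramified outside $S$ and satisfying $\det(X - \overline{\rho}_{\ffrm'}(\Frob_v)) = P_v(X) \bmod \ffrm'$ for all $v \notin S$; this yields the desired $\overline{\rho}_\ffrm$. The statement for $C(U/V, M^\vee)$ would follow from the duality isomorphism~(\ref{eqn_cohomology_is_dual_of_homology}) and the commutativity of $\bbT^S$, which identify $\bbT^S(A(U/V, M)) = \bbT^S(C(U/V, M^\vee))$. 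The main obstacle is the mod $p$ existence theorem itself, which rests on the perfectoid geometry of Shimura varieties; the reductions above are standard manipulations with Hecke-equivariant cohomology.
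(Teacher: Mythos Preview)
Your approach is the same in outline as the paper's, but the first reduction step has a real gap. The claimed equality $\bbT^S(A(U/V, M)) = \bbT^S(A(V, M))$ does not follow from compatibility of Hecke actions. The forgetful functor $\mathbf{D}(R[U/V]) \to \mathbf{D}(R)$ need not be faithful on $\Hom$-sets: a chain map can be $R$-null-homotopic without being $R[U/V]$-equivariantly null-homotopic. All you get is a surjection $\bbT^S(A(U/V, M)) \twoheadrightarrow \bbT^S(A(V, M))$, and transporting $\ffrm$ back requires this kernel to be nilpotent. Your later appeal to Lemma~\ref{lem_finite_end_group}(2) is meant to supply nilpotence, but that lemma needs $H^i(C) = 0$ outside a bounded range $[0,d]$, and this can fail for non-neat $V$: as remarked just before Lemma~\ref{lem_universal_coefficient_theorem}, if some $\Gamma_{g,V}$ has $p$-torsion then $H^\ast(\Gamma_{g,V}, M)$ may be nonzero in infinitely many degrees. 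The same unboundedness invalidates your claim that $H^\ast(A(U, M))_\ffrm$ is a finitely generated $\cO$-module. The paper fixes this by first shrinking to a \emph{neat} $V' \subset V$ normal in $U$ and using the surjection $\bbT^S(A(U/V', M)) \twoheadrightarrow \bbT^S(A(U/V, M))$ from the proof of Lemma~\ref{lem_finiteness_of_Hecke_algebras_case_of_non_trivial_group_action}; neatness gives bounded cohomology, after which Lemma~\ref{lem_finite_end_group}(2) applies, one reduces to $U = V'$, and \cite[Theorem~2.3.5]{10authors} finishes directly.

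A minor point: your detour through $k$-coefficients is unnecessary (the cited existence theorem already applies, at neat level, to $M$ with $M \otimes_R k \cong \cV_\lambda \otimes_\cO k$), and Lemma~\ref{lem_universal_coefficient_theorem} is not the right citation for it anyway --- that lemma relates cohomology to homology via $M \leftrightarrow M^\vee$, not cohomology with $M$-coefficients to cohomology with $(M \otimes_R k)$-coefficients.
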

\begin{proof}
    We will deduce this from \cite[Theorem 2.3.5]{10authors} for $A(U/V, M)$, the case of $C(U/V, M^\vee)$ being similar. We choose $V' \in \cJ_{\GL_n}^S$ such that $V' \subset V$, $V$ is normal in $U$, and $V'$ is neat. By Lemma~\ref{lem_finiteness_of_Hecke_algebras_case_of_non_trivial_group_action}, there is a surjective homomorphism $\bbT^S(A(U / V', M)) \to \bbT^S(A(U / V, M))$. We can therefore assume without loss of generality that $V = V'$ and $V$ is neat. In this case $H^\ast_V(\mathfrak{X}_{\GL_n}, M)$ is a finite $R$-module, and its annihilator in $\bbT^S(A(U / V, M))$ is nilpotent. It follows that $\ffrm$ occurs in the support of $H^\ast_V(\mathfrak{X}, M)$; we can therefore assume without loss of generality that $U = V$, in which case the existence of  $\overline{\rho}_\ffrm$ follows from \cite[Theorem 2.3.5]{10authors}. (We note that the existence of $\overline{\rho}_\ffrm$ is almost contained in one of the main theorems of \cite{Sch15}. Here we are appealing to \cite{10authors}, using our additional assumption on $S$, in order to be able to assert that $\overline{\rho}_\ffrm$ is unramified outside $S$.)
\end{proof}

\begin{corollary}\label{cor_PGLn_mod_p_Gal_rep}
Assume that $p \nmid n$. 
Let $V \subset U$ be elements of $\cJ_{\GL_n}^S$ with $V$ normal in $U$ and $U/V$ abelian and such that $V$ contains $\widehat{\cO}_K^\times$. 
Let $R = \cO$ or $\cO/\varpi^c$ for some $c \ge 1$ and let $M$ be an $R[\overline{U}_S]$-module, finite free as an $R$-module, such that $M \otimes_R k = \cV_\lambda \otimes_\cO k$ for some $\lambda \in (\bbZ_{+,0}^n)^{\Hom(K,E)}$. 
Let $\ffrm$ be a maximal ideal of $\bbT^S(A(\overline{U}/\overline{V}, M))$. Suppose that $S$ is stable under complex conjugation and satisfies the following condition:
\begin{itemize}
    \item Let $v$ be a finite place of $K$ not contained in $S$, and let $l$ be its residue characteristic. Then either $S$ contains no $l$-adic places and $l$ is unramified in $K$, or there is an imaginary quadratic field $K_0 \subseteq K$ in which $l$ splits.
\end{itemize}
Then for any maximal ideal $\ffrm \subset \bbT^S(A(\overline{U}/\overline{V}, M))$, 
there exists a semisimple continuous representation $\overline{\rho}_\ffrm : G_K \to \GL_n(\bbT^S(A(\overline{U}/\overline{V}, M)) / \ffrm)$, unramified outside $S$, and such that for each $v \not\in S$,
    \[ \det(X - \overline{\rho}_\ffrm(\Frob_v)) = P_v(X) \bmod\ffrm. \]
The same statement holds with $A(\overline{U}/\overline{V}, M)$ replaced by $C(\overline{U}/\overline{V}, M^\vee)$.
\end{corollary}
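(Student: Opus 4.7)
The plan is to deduce this from Theorem~\ref{thm_GLn_mod_p_Gal_rep} by transferring $\ffrm$ through a chain of Hecke algebras, using the cohomological input of Proposition~\ref{prop_cohomology_for_adjoint_group}. The hypotheses of that proposition are satisfied here: the centre of $\GL_n$ is the split torus $\bbG_m$, the centre of $\SL_n$ is $\mu_n$ of order $n$ (invertible in $R$ since $p \nmid n$), and $V \supset \widehat{\cO}_K^\times = Z_{\GL_n}(\bbA_K^\infty)^c$. Since $(\bbZ^n_{+,0})^{\Hom(K, E)} \subset (\bbZ^n_+)^{\Hom(K, E)}$, we may view $M$ as an $R[U_S]$-module via the projection $U_S \twoheadrightarrow \overline{U}_S$ and feed it into Theorem~\ref{thm_GLn_mod_p_Gal_rep}.

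The first step is to produce a surjection $\bbT^S(A(U/V, M)) \twoheadrightarrow \bbT^S(A(\overline{U}/\overline{V}, M))$. By Proposition~\ref{prop_cohomology_for_adjoint_group} there is an injection $H^i_{\overline{V}}(\mathfrak{X}_{\PGL_n}, M) \hookrightarrow H^i_V(\mathfrak{X}_{\GL_n}, M)$ for each $i$, which (by standard compatibility of double cosets under $\GL_n \twoheadrightarrow \PGL_n$) intertwines the action of each $\mathsf{T}_{v,i} \in \bbT^S_{\GL_n}$ on the right with the action of its image $\mathsf{T}_{v,i} \in \bbT^S_{\PGL_n}$ on the left. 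Restricting endomorphisms of the larger cohomology to the submodule therefore gives a surjection $\bbT^S_{\GL_n}(H^*_V) \twoheadrightarrow \bbT^S_{\PGL_n}(H^*_{\overline{V}})$. Further, by part~2 of Lemma~\ref{lem_finite_end_group} the natural maps $\bbT^S(A(U/V, M)) \twoheadrightarrow \bbT^S_{\GL_n}(H^*_V)$ and $\bbT^S(A(\overline{U}/\overline{V}, M)) \twoheadrightarrow \bbT^S_{\PGL_n}(H^*_{\overline{V}})$ have nilpotent kernels. Chasing the maximal ideal $\ffrm$ through this diagram produces a maximal ideal $\ffrm'' \subset \bbT^S(A(U/V, M))$ whose residue field is canonically identified with that of $\ffrm$.

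The second step is to apply Theorem~\ref{thm_GLn_mod_p_Gal_rep} to $\ffrm''$; the arithmetic hypothesis on $S$ is exactly what we have assumed. This yields a continuous semisimple representation $\overline{\rho}_{\ffrm''} : G_K \to \GL_n(\bbT^S(A(U/V, M))/\ffrm'')$ unramified outside $S$ and satisfying $\det(X - \overline{\rho}_{\ffrm''}(\Frob_v)) \equiv P_v(X) \pmod{\ffrm''}$ for $v \notin S$. Since the canonical map $\bbT^S_{\GL_n} \to \bbT^S_{\PGL_n}$ carries $P_v(X)$ to $P_v(X)$, transporting $\overline{\rho}_{\ffrm''}$ across the identification of residue fields yields the desired $\overline{\rho}_\ffrm$. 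The case of $C(\overline{U}/\overline{V}, M^\vee)$ proceeds identically, invoking the homological statement of Theorem~\ref{thm_GLn_mod_p_Gal_rep}. The argument is essentially bookkeeping; the sole point requiring technical care is the compatibility of the $\mathsf{T}_{v,i}$ through the chain of Hecke algebras, which is built into the construction of the homomorphism $\bbT^S_{\GL_n} \to \bbT^S_{\PGL_n}$.
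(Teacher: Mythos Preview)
Your approach is the same as the paper's: use the cohomology injection from Proposition~\ref{prop_cohomology_for_adjoint_group} (packaged as Corollary~\ref{cor_hecke_surj_to_adjoint}) to pull $\ffrm$ back to the $\GL_n$ Hecke algebra, then invoke Theorem~\ref{thm_GLn_mod_p_Gal_rep}. There is, however, one gap. Your appeal to part~2 of Lemma~\ref{lem_finite_end_group} requires the cohomology of the complex to vanish outside a bounded interval $[0,d]$. But $V$ is not assumed neat, so the arithmetic groups $\Gamma_{g,V}$ and $\Gamma_{g,\overline{V}}$ may contain $p$-torsion, in which case $H^i_V(\mathfrak{X}_{\GL_n},M)$ and $H^i_{\overline{V}}(\mathfrak{X}_{\PGL_n},M)$ can be nonzero in infinitely many degrees (cf.\ the remark just before Lemma~\ref{lem_universal_coefficient_theorem}). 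Thus the claimed nilpotence of the kernels of $\bbT^S(A(U/V,M)) \to \bbT^S(H^\ast_V)$ and $\bbT^S(A(\overline{U}/\overline{V},M)) \to \bbT^S(H^\ast_{\overline{V}})$ is not justified as written.

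The paper repairs this exactly as in the proof of Theorem~\ref{thm_GLn_mod_p_Gal_rep}: first pass to a smaller $V' \subset V$, normal in $U$, with $V'$ neat. The surjection $\bbT^S(A(\overline{U}/\overline{V}',M)) \twoheadrightarrow \bbT^S(A(\overline{U}/\overline{V},M))$ lets you replace $\ffrm$ by its pullback, and with $V'$ neat the cohomology is bounded, so Lemma~\ref{lem_finite_end_group} applies and the rest of your argument runs unchanged.
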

\begin{proof}
We again prove this just for $A(\overline{U}/\overline{V}, M)$. 
As in the proof of Theorem~\ref{thm_GLn_mod_p_Gal_rep}, we can assume that $V$ is neat, so the annihilators of $H^\ast_V(\mathfrak{X}_{\GL_n}, M)$ and $H^\ast_{\overline{V}}(\mathfrak{X}_{\PGL_n}, M)$ in $\bbT^S(A(U/V, M))$ and $\bbT^S(A(\overline{U}/\overline{V}, M))$, respectively, are nilpotent. 
The corollary then follows for $A(\overline{U}/\overline{V}, M)$ from Theorem~\ref{thm_GLn_mod_p_Gal_rep} and Corollary~\ref{cor_hecke_surj_to_adjoint}. 

\end{proof}
We call a maximal ideal $\ffrm$ of a $\bbT_{\GL_n}^S$-algebra $R$ of Galois type if $R / \ffrm$ is finite and there exists a continuous, semisimple representation $\overline{\rho}_\ffrm : G_K \to \GL_n(R / \ffrm)$ satisfying the conclusion of Theorem \ref{thm_GLn_mod_p_Gal_rep}. If $\ffrm$ is of Galois type, we say that it is non-Eisenstein if $\overline{\rho}_\ffrm$ is absolutely irreducible. 
Appealing to Corollary~\ref{cor_PGLn_mod_p_Gal_rep}, we use the same terminology for $\bbT_{\PGL_n}^S$-algebras.

We now want to state a result from \cite{10authors} that asserts the existence of Hecke algebra-valued Galois representations and certain cases of local-global compatibility for them. 
To state the result, suppose given a Hecke datum $\cD = (S, \{ (\Delta_v, U_v) \}_{v \in S}, \{ T_v \}_{v \in S})$, where:
\begin{itemize}
	\item $S$ is a disjoint union $S = S_p \sqcup R \sqcup T$ and $T$ is stable under complex conjugation.
	\item There is an integer $c \geq 1$ such that for each $v \in S_p$, $I_v(c, c) \subset U_v \subset I_v(0, c)$. 
	\item For each $v \in R$, we have $I_v(1) \subset U_v \subset I_v$. 
	\item For each $v \in S_p$, $\Delta_v$ is the monoid $\cup_{\mu \in \bbZ^n_+} I_v \diag(\varpi_v^{\mu_1}, \dots, \varpi_v^{\mu_n}) I_v$.
	\item For each $v \in R$, $\Delta_v = \GL_n(F_v)$. 
	\item For each $v \in T$, $\Delta_v = U_v$. 
	\item For each $v \in S_p$, $T_v = \cO[ ((\cO_{K_v} / (\varpi_v^c))^\times)^n ][ \mathsf{U}_{v, 1}, \dots, \mathsf{U}_{v, n}]$.
	\item For each $v \in R$, $T_v = \cO[ (K_v^\times)^n / (1 + \varpi_v \cO_{K_v})^n ]$. 
	\item For each $v \in T$, $T_v = \cO$.
	\item For each $v \in S_p$, the homomorphism $T_v \to \cH(\Delta_v, U_v) \otimes_\bbZ \cO$ sends each $\mathsf{U}_{v, i}$ to the Hecke operator $\mathsf{U}_{v, i} = [U_v \alpha_{v, i} U_v]$ and each $z \in ((\cO_{K_v} / (\varpi_v^c))^\times)^n$ to $[U_v \diag(z) U_v]$. 
	\item For each $v \in R$, the homomorphism $T_v \to \cH(\Delta_v, U_v) \otimes_\bbZ \cO$ is the one defined just before \cite[Proposition 2.2.8]{10authors} (where we have a map $T_v \to \cO[\Xi_v]$ in the notation there).
\end{itemize}
We note that following \emph{loc. cit.} we can define group homomorphisms $t_{v, i} : K_v^\times \to T_v^\times$ for each $v \in R$ and $i = 1, \dots, n$. If $\lambda \in (\bbZ^n_+)^{\Hom(K, E)}$, then we can define a monoid homomorphism $\chi_{\lambda, v} : (K_v^\times)^n \cap \Delta_v \to T_v$ for each $v \in S_p$ by the formulae
\[ \chi_{\lambda, v}(1, \dots, u, \dots, 1) = \left( \epsilon^{1-i}(\Art_{K_v}(u)) \prod_{\tau \in \Hom(K_v, E)} \tau(u)^{-(w_0 \lambda)_{\tau, i}} \right) \cdot [(1, \dots, u, \dots, 1) \text{ mod } (1 + \varpi_v^c \cO_{K_v})^n] \]
for each $u \in \cO_{K_v}^\times$ and $i = 1, \dots, n$ (with $u$ occupying the $i^\text{th}$ position)
and
\[ \chi_{\lambda, v}(\alpha_{v, i}) = \epsilon^{i(1-i)/2}(\Art_{K_v}(u)) \mathsf{U}_{v, i} \]
for each $i = 1, \dots, n$. 

We let abuse notation by writing $\cV_\lambda$ for the $\cO$-module $\cV_\lambda$, endowed with its twisted $\cO[ \prod_{v \in S_p} \Delta_v]$-action, as defined in \cite[\S 2.2.4]{10authors}. Then there is a homomorphism
\[ \bbT_\cD \to \End_{\mathbf{D}(\cO)}(A(U_\cD, \cV_\lambda)) \]
and, by Proposition~\ref{prop_ord_summand}, there is a direct sum decomposition
\[ A(U_\cD, \cV_\lambda) = A(U_\cD, \cV_\lambda)^{\ord} \oplus A(U_\cD, \cV_\lambda)^{\mathrm{non\mbox{-}ord}} \]
 in $\mathbf{D}(\cO)$ with the property that such that $\mathsf{U}_p = \prod_{v \in S_p} \prod_{i = 1}^n \mathsf{U}_{v, i}$ acts invertibly on $H^\ast(A(U_\cD, \cV_\lambda)^{\ord})$ and topologically nilpotently on $H^\ast(A(U_\cD, \cV_\lambda)^{\mathrm{non\mbox{-}ord}})$. Then each operator $\mathsf{U}_{v, i}$ has unit image in $\bbT_\cD(A(U_\cD, \cV_\lambda)^{\ord})$, so the monoid homomorphism $\chi_{\lambda, v}$ extends uniquely to a homomorphism $\chi_{\lambda, v} = (\chi_{\lambda, v, 1}, \dots, \chi_{\lambda, v, n}) : (K_v^\times)^n \to \bbT_\cD(A(U, \cV_\lambda)^{\ord})^\times$.

We let $\overline{\cD} = (S, \{ (\overline{\Delta}_v, \overline{U}_v) \}_{v \in S}, \{ T_v \}_{v \in S})$ be the resulting Hecke datum for $\PGL_n$. 
If $\lambda \in (\bbZ_{+,0}^n)^{\Hom(K,E)}$, then the twisted $\cO[\prod_{v \in S_p} \Delta_v]$-action on $\cV_\lambda$ descends to $\cO[\prod_{v \in S_p} \overline{\Delta}_v]$.  
Then there is again a homomorphism
\[ \bbT_{\overline{\cD}} \to \End_{\mathbf{D}(\cO)}(A(\overline{U}_{\overline{\cD}}, \cV_\lambda)), \]
a direct sum decomposition
\[ A(\overline{U}_{\overline{\cD}}, \cV_\lambda) = A(\overline{U}_{\overline{\cD}}, \cV_\lambda)^{\ord} \oplus A(\overline{U}_{\overline{\cD}}, \cV_\lambda)^{\mathrm{non\mbox{-}ord}}, \]
and an extension $\chi_{\lambda, v} = (\chi_{\lambda, v, 1}, \dots, \chi_{\lambda, v, n}) : (K_v^\times)^n \to \bbT_{\overline{\cD}}(A(\overline{U}, \cV_\lambda)^{\ord})^\times$ of the monoid homomorphism $\chi_{\lambda, v}$.

\begin{theorem}\label{thm:local-global-GLn-ord}
Let $\cD$ be a Hecke datum for $\GL_n$ satisfying the above assumptions, and let $\ffrm \subset \bbT_\cD(A(U_\cD, \cV_\lambda)^{\ord})$ be a non-Eisenstein maximal ideal. 
Assume further:
\begin{itemize}
\item $K$ contains an imaginary quadratic field in which $p$ splits, and $[K^+ : \bbQ] > 1$.
\item Let $v$ be a finite place of $K$ not contained in $S$, and let $l$ be its residue characteristic. Then either $S$ contains no $l$-adic places and $l$ is unramified in $K$, or there is an imaginary quadratic field $K_0 \subseteq K$ in which $l$ splits. 
\item For each $v \in R$, there is an imaginary quadratic field in $K$ in which the residual characteristic of $v$ splits.
\item $\overline{\rho}_\ffrm$ is decomposed generic.
\end{itemize}
Then we can find an integer $\delta \ge 1$ depending only on $n$ and $[K:\bbQ]$, an ideal $J \subset \bbT_\cD(A(U, \cV_\lambda)^{\ord})_\ffrm$ with $J^\delta = 0$, and and a continuous representation $\rho_\ffrm : G_K \to \GL_n(\bbT_\cD(A(U_\cD, \cV_\lambda)^{\ord})_\ffrm / J)$ satisfying the following properties:
\begin{enumerate}
\item For any $v \notin S$, $\rho_\ffrm$ is unramified at $v$ and $\det(X - \rho_\ffrm(\Frob_v)) = P_v(X) \bmod J$.
\item For each $v \in R$ and $g \in W_{K_v}$, we have $\det(X - \rho_\ffrm(g)) = \prod_{i = 1}^n(X - t_{v, i}( \Art_{K_v}^{-1}(g))) \bmod J$.
\item For each $v \in S_p$ and $g \in W_{K_v}$, we have $\det(X - \rho_\ffrm(g)) = \prod_{i = 1}^n (X - \chi_{\lambda,v,i}(\Art_{K_v}^{-1}(g))) \bmod J$.
\item For each $v \in S_p$ and each $g_1,\ldots,g_n \in W_{K_v}$, we have
\[ (\rho_\ffrm(g_1) - \chi_{\lambda,v,1}(\Art_{K_v}^{-1}(g_1)))(\rho_\ffrm(g_2) - \chi_{\lambda,v,2}(\Art_{K_v}^{-1}(g_2)))\cdots(\rho_\ffrm(g_n) - \chi_{\lambda,v,n}(\Art_{K_v}^{-1}(g_n))) = 0. \]
\end{enumerate}
\end{theorem}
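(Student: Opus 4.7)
The plan is to invoke and mildly adapt the main local--global compatibility results for ordinary Hida families for $\GL_n$ over CM fields, as developed in \cite[\S 2.4]{10authors}, and to run the Cayley--Hamilton / derived Hecke algebra machinery of \cite[\S 2.3]{10authors} to simultaneously accommodate the level-raising places in $R$ and the ordinary places in $S_p$. First I would use Hochschild--Serre to replace $U_\cD$ by a neat subgroup (the nilpotency constant $\delta$ will absorb the cost). Then $A(U_\cD, \cV_\lambda)^{\ord}$ is represented in $\mathbf{D}(\cO)$ by a bounded complex of finite projective $\cO$-modules, so $T_\ffrm := \bbT_\cD(A(U_\cD, \cV_\lambda)^\ord)_\ffrm$ is a finite local $\cO$-algebra; by Lemma \ref{lem_finite_end_group} applied to $C = A(U_\cD, \cV_\lambda)^\ord$, the ideal of elements acting trivially on $H^*(C)$ is already nilpotent of exponent bounded by the cohomological dimension of $\mathfrak{X}_{\GL_n}/U_\cD$, which in turn depends only on $n$ and $[K:\bbQ]$.

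Next I would exploit Hida's vertical control theorem (in the derived form used in \cite[\S 2.4]{10authors}) to show that $T_\ffrm[1/p]$ is the product of fields corresponding to classical ordinary regular algebraic cuspidal automorphic representations $\pi$ of $\GL_n(\bbA_K)$ such that, under a suitable $\iota$, $\overline{r}_\iota(\pi) \cong \overline{\rho}_\ffrm$. For each such $\pi$, Harris--Lan--Taylor--Thorne \cite{hltt}, combined with local--global compatibility at the Iwahori-level places $v \in R$ (matching $t_{v,i}$ to the classical local Langlands correspondence via the Bernstein center recipe of \cite[\S 2.2]{10authors}) and the $p$-adic Hodge theoretic local--global compatibility at ordinary places $v \in S_p$ (where the operators $\mathsf{U}_{v,i}$ read off the successive graded pieces of an ordinary filtration of $\rho_\iota(\pi)|_{G_{K_v}}$), produces a Galois representation $\rho_\pi$ satisfying pointwise all four conditions (1)--(4). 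In particular, the Cayley--Hamilton identity (4) at a classical $\pi$ reduces to the statement that the upper-triangular ordinary filtration has successive quotients $\chi_{\lambda, v, i}$.

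Finally, to descend from the characteristic zero pointwise statements to a statement over $T_\ffrm$ modulo a nilpotent ideal $J$, I would use the determinant / pseudo-representation formalism of Chenevier together with the generalized matrix algebra setup of \cite[\S 2.3]{10authors}. The non-Eisenstein hypothesis (applied to $\overline{\rho}_\ffrm$ from Theorem \ref{thm_GLn_mod_p_Gal_rep}), which is absolutely irreducible because of the decomposed generic assumption, lifts any such pseudo-representation over $T_\ffrm / J$ to a genuine matrix representation $\rho_\ffrm : G_K \to \GL_n(T_\ffrm/J)$. The conditions (1), (2), (3) follow from applying the determinant (and its generalization accommodating the $t_{v,i}$, resp.\ $\chi_{\lambda,v,i}$) to the product map $T_\ffrm \hookrightarrow \prod_\pi \overline{\bbQ}_p$; the nilpotent ideal $J$ is the kernel of this map intersected with the ideal encoding the $p$-adic Hodge-theoretic relations.

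The main obstacle is condition (4): this is not a determinantal relation but a genuine matrix identity, and enforcing it simultaneously at every $v \in S_p$ over the non-reduced Hecke algebra $T_\ffrm$ requires the Cayley--Hamilton algebra technology of \cite[\S 2.3]{10authors}, which in our ordinary setting gives the identity modulo an ideal of the form $J_v$ that is nilpotent of bounded exponent. Taking $J$ to be the sum of all such ideals plus the kernel of $T_\ffrm \to \prod_\pi$ and invoking Lemma \ref{lem_finite_end_group} once more bounds the total nilpotency exponent in terms of $n$ and $[K:\bbQ]$ alone, giving the required $\delta$.
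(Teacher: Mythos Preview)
Your proposal has a genuine gap at its core. You plan to reduce to characteristic $0$ by embedding $T_\ffrm = \bbT_\cD(A(U_\cD,\cV_\lambda)^\ord)_\ffrm$ into a product $\prod_\pi \overline{\bbQ}_p$ indexed by classical cuspidal automorphic representations, constructing $\rho_\ffrm$ there, and then taking $J$ to contain the kernel of $T_\ffrm \to \prod_\pi \overline{\bbQ}_p$. But that kernel is the $p^\infty$-torsion of $T_\ffrm$, and in the CM setting there is no reason whatsoever for it to be nilpotent, let alone of exponent bounded by $n$ and $[K:\bbQ]$. Indeed, the cohomology $H^\ast_{U_\cD}(\mathfrak{X}_{\GL_n},\cV_\lambda)_\ffrm$ may be entirely $p$-torsion (so $T_\ffrm[1/p]=0$ and there are no classical $\pi$ at all), or it may have large torsion not seen by any automorphic lift. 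Lemma~\ref{lem_finite_end_group} only bounds the nilpotence of the kernel of $T_\ffrm \to \End(H^\ast(C))$, not of $T_\ffrm \to \End(H^\ast(C)[1/p])$; these are very different once torsion is present.

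This is exactly the difficulty that the results you cite from \cite{10authors} are designed to overcome, and they do so by a completely different mechanism than the one you describe: the Galois representations for torsion Hecke algebras are constructed by relating the cohomology of $\mathfrak{X}_{\GL_n}$ to boundary strata of the Borel--Serre compactification of a locally symmetric space for a quasi-split unitary group, where one \emph{does} have access to enough characteristic-$0$ classes. The paper's proof accordingly just cites \cite[Theorem~3.1.1]{10authors} and \cite[Theorem~5.5.1]{10authors} (together with Carayol's lemma) for the neat case; there is no reduction to classical $\pi$ on the $\GL_n$ side. For the non-neat case, the paper passes to a neat subgroup $U_{\cD'}$ by adding principal congruence level at an auxiliary prime $l$, uses Lemma~\ref{lem_finite_end_group} to bound the resulting nilpotent discrepancy, and then repeats with a second auxiliary prime $l'$ so that (via Carayol's lemma) the representation descends back to $\bbT_\cD$ rather than $\bbT_{\cD'}$. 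Your Hochschild--Serre idea is in the right spirit for this reduction step, but you would still need the two-primes trick to get rid of the auxiliary Hecke operators.
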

\begin{proof}    
The theorem in the case that $U$ is neat follows from \cite[Theorem~3.1.1]{10authors} and \cite[Theorem~5.5.1]{10authors}, together with an application of Carayol's lemma (\cite[Lemma 2.1.10]{Clo08}). 
In general, we choose a prime $l > 3$ not lying below any $v \in S$, and define a new Hecke datum $\cD'$ by adjoining the primes in $S_l = \{ v | l \}$ to $T$ and setting $U_{\cD', v} =  \ker(\GL_n(\cO_{K_v}) \to \GL_n(k(v)))$ if $v \in S_l$. Then $U_{\cD'}$ is a neat normal subgroup of $U_{\cD}$ and there is a natural homomorphism $\bbT_{\cD'} \to \bbT_{\cD}$. Arguing as in the proof of Lemma \ref{lem_finiteness_of_Hecke_algebras_case_of_non_trivial_group_action} with the second part of Lemma \ref{lem_finite_end_group} shows that there is a homomorphism
\[ \bbT_{\cD'}(A(U_{\cD'}, \cV_\lambda)^{\ord})_\ffrm \to \bbT_{\cD}(A(U_\cD, \cV_\lambda)^{\ord})_\ffrm / I, \]
where $I$ is an ideal satisfying $I^{\dim_\bbR X_G + 1} = 0$. We deduce the existence of a Galois representation valued with coefficients in a quotient of $\bbT_{\cD'}(A(U_\cD,\cV_\lambda)^{\ord})_\ffrm$ by a nilpotent ideal of bounded nilpotence degree. Repeating this argument with a different choice of $l$ and making another application of Carayol's lemma gives the desired result for the Hecke algebra $\bbT_{\cD}(A(U_\cD,\cV_\lambda)^{\ord})_\ffrm$.
\end{proof}

\begin{corollary}\label{cor:local-global-PGLn-ord}
Let $\cD$ be a Hecke datum for $\GL_n$ as above such that $U_\cD$ contains $\widehat{\cO}_K^\times$ and let $\lambda  \in (\bbZ_{+,0}^n)^{\Hom(K,E)}$. 
Let $\overline{\cD}$ be the Hecke datum for $\PGL_n$ determined by $\cD$ and let $\ffrm \subset \bbT_{\overline{\cD}}(A(\overline{U}_{\overline{\cD}}, \cV_\lambda)^{\ord})$ be a non-Eisenstein maximal ideal. 
    Assume further:
    \begin{itemize}
        \item $K$ contains an imaginary quadratic field in which $p$ splits, $p \nmid n$, and $[K^+ : \bbQ] > 1$.
        \item Let $v$ be a finite place of $K$ not contained in $S$, and let $l$ be its residue characteristic. Then either $S$ contains no $l$-adic places and $l$ is unramified in $K$, or there is an imaginary quadratic field $K_0 \subseteq K$ in which $l$ splits. 
        \item For each $v \in R$, there is an imaginary quadratic field in $K$ in which the residual characteristic of $v$ splits.
        \item $\overline{\rho}_\ffrm$ is decomposed generic.
    \end{itemize}
    Then there an integer $\delta \ge 1$ depending only on $n$ and $[K:\bbQ]$, an ideal $J \subset \bbT_{\overline{\cD}}(A(\overline{U}, \cV_\lambda)^{\ord})_\ffrm$ with $J^\delta = 0$, and and a continuous representation $\rho_\ffrm : G_K \to \GL_n(\bbT_{\overline{\cD}}(A(\overline{U}_{\overline{\cD}}, \cV_\lambda)^{\ord})_\ffrm / J)$ satisfying the following properties:
    \begin{enumerate}
        \item For any $v \notin S$, $\rho_\ffrm$ is unramified at $v$ and $\det(X - \rho_\ffrm(\Frob_v)) = P_v(X) \bmod J$. 
        In particular, $\det(\rho_\ffrm) = \epsilon^{n(1-n)/2}$.
        \item For each $v \in R$ and $g \in W_{K_v}$, we have $\det(X - \rho_\ffrm(g)) = \prod_{i = 1}^n(X - t_{v, i}( \Art_{K_v}^{-1}(g))) \bmod J$.
        \item For each $v \in S_p$ and $g \in W_{K_v}$, we have $\det(X - \rho_\ffrm(g)) = \prod_{i = 1}^n (X - \chi_{\lambda,v,i}(\Art_{K_v}^{-1}(g))) \bmod J$.
        \item For each $v \in S_p$ and each $g_1,\ldots,g_n \in W_{K_v}$, we have
        \[ (\rho_\ffrm(g_1) - \chi_{\lambda,v,1}(\Art_{K_v}^{-1}(g_1)))(\rho_\ffrm(g_2) - \chi_{\lambda,v,2}(\Art_{K_v}^{-1}(g_2)))\cdots(\rho_\ffrm(g_n) - \chi_{\lambda,v,n}(\Art_{K_v}^{-1}(g_n))) = 0. \]
    \end{enumerate}
\end{corollary}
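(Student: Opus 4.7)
The plan is to deduce this directly from Theorem \ref{thm:local-global-GLn-ord} by means of the Hecke algebra surjection established in Corollary \ref{cor_hecke_surj_to_adjoint}. Since the hypothesis $p \nmid n$ ensures that the order of the centre of $\SL_n$ is invertible in $\cO$, and $U_\cD$ contains $\widehat{\cO}_K^\times \supseteq Z_{\GL_n}(\bbA_K^\infty)^c$, Corollary \ref{cor_hecke_surj_to_adjoint} applies. The map $H^\ast_{\overline{U}}(\mathfrak{X}_{\PGL_n}, \cV_\lambda) \hookrightarrow H^\ast_U(\mathfrak{X}_{\GL_n}, \cV_\lambda)$ intertwines the $\mathsf{U}_p$ operators on each side, so passing to ordinary parts yields a surjective $\cO$-algebra homomorphism
\[ \bbT_\cD\bigl(A(U_\cD, \cV_\lambda)^{\ord}\bigr) \twoheadrightarrow \bbT_{\overline{\cD}}\bigl(A(\overline{U}_{\overline{\cD}}, \cV_\lambda)^{\ord}\bigr). \]

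Next, I would lift $\ffrm$ along this surjection to a maximal ideal $\ffrm'$ of $\bbT_\cD(A(U_\cD, \cV_\lambda)^{\ord})$. Since the Hecke polynomials $P_v(X)$ on the two sides are identified under the surjection, the residual Galois representation $\overline{\rho}_{\ffrm'}$ associated to $\ffrm'$ (provided by Theorem \ref{thm_GLn_mod_p_Gal_rep}) coincides with $\overline{\rho}_\ffrm$ up to extension of the residue field; in particular, $\ffrm'$ is non-Eisenstein and $\overline{\rho}_{\ffrm'}$ is decomposed generic. All remaining hypotheses of Theorem \ref{thm:local-global-GLn-ord} (imaginary quadratic subfield, condition on $S$, $R$ stable under complex conjugation, etc.) are the same for $\cD$ and $\overline{\cD}$, so they are inherited.

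Applying Theorem \ref{thm:local-global-GLn-ord} yields an integer $\delta \ge 1$ depending only on $n$ and $[K:\bbQ]$, a nilpotent ideal $J' \subset \bbT_\cD(A(U_\cD, \cV_\lambda)^{\ord})_{\ffrm'}$ with $(J')^\delta = 0$, and a continuous representation
\[ \rho_{\ffrm'} : G_K \to \GL_n\bigl(\bbT_\cD(A(U_\cD, \cV_\lambda)^{\ord})_{\ffrm'} / J'\bigr) \]
satisfying the four local-global compatibility properties. Let $J$ be the image of $J'$ in $\bbT_{\overline{\cD}}(A(\overline{U}_{\overline{\cD}}, \cV_\lambda)^{\ord})_\ffrm$; it is nilpotent of degree at most $\delta$. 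Pushing $\rho_{\ffrm'}$ forward along the induced surjective map of local rings gives the desired representation $\rho_\ffrm$. Properties (1)--(4) transfer immediately because the operators $\mathsf{T}_{v,i}$, $\mathsf{U}_{v,i}$, and the characters $t_{v,i}$, $\chi_{\lambda,v,i}$ on the $\GL_n$-side map to the operators and characters of the same name on the $\PGL_n$-side.

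There is no essential obstacle here; the only point to verify carefully is that the assertion $\det(\rho_\ffrm) = \epsilon^{n(1-n)/2}$ follows from property (1). This is a consequence of Chebotarev density: for $v \notin S$, the constant term of $P_v(X)$ on the $\PGL_n$-side is $(-1)^n q_v^{n(n-1)/2} \mathsf{T}_{v,n} = (-1)^n q_v^{n(n-1)/2}$ (since $\mathsf{T}_{v,n} = 1$ in $\cH(\PGL_n(K_v), \PGL_n(\cO_{K_v}))$), so $\det(\rho_\ffrm(\Frob_v)) = q_v^{-n(n-1)/2} = \epsilon^{n(1-n)/2}(\Frob_v)$ modulo $J$, and the identity on all of $G_K$ follows since $\det(\rho_\ffrm)$ and $\epsilon^{n(1-n)/2}$ are both continuous characters unramified outside $S$.
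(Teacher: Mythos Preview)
Your proposal is correct and follows essentially the same route as the paper's proof, which simply cites Theorem \ref{thm:local-global-GLn-ord} and Proposition \ref{prop_cohomology_for_adjoint_group} (the latter being the source of Corollary \ref{cor_hecke_surj_to_adjoint}) and refers to the argument of Corollary \ref{cor_PGLn_mod_p_Gal_rep}; you have spelled out the details that the paper leaves implicit. One minor arithmetic slip: from the constant term $(-1)^n q_v^{n(n-1)/2}$ of $P_v(X)$ you should get $\det(\rho_\ffrm(\Frob_v)) = q_v^{n(n-1)/2}$ (not $q_v^{-n(n-1)/2}$), which indeed equals $\epsilon^{n(1-n)/2}(\Frob_v)$ under the geometric Frobenius convention $\epsilon(\Frob_v)=q_v^{-1}$.
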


\begin{proof}
This follows from Theorem~\ref{thm:local-global-GLn-ord} and Proposition~\ref{prop_cohomology_for_adjoint_group} using the same argument as in the proof of Corollary~\ref{cor_PGLn_mod_p_Gal_rep}. 
The fact that $\det (\rho_\ffrm) = \epsilon^{n(1-n)/2}$ follows from Chebotarev density since the constant term of $P_v(X)$ is $q_v^{n(n-1)/2}$ for any $v \notin S$.
\end{proof}

Finally, we will need a result on the rational cohomology for $\PGL_n$ analogous to \cite[Theorem~2.4.10]{10authors} (we will actually only use $\PGL_2$ in what follows).

\begin{theorem}\label{thm:aut_reps_and_cohom_PGLn}
Fix an isomorphism $\iota : \overline{\bbQ}_p \to \bbC$. 
Let $\overline{U} \in \cJ_{\PGL_n}^S$ and let $\lambda  \in (\bbZ_{+,0}^n)^{\Hom(K,E)}$. 
\begin{enumerate}
\item\label{PGLn:hecke_appears_in_cohomology} Let $\pi$ be a regular algebraic cuspical automorphic representation of $\PGL_n(\bbA_K)$ of weight $\iota\lambda$ such that $(\iota^{-1}\pi)^{\overline{U}} \ne 0$. 
The homomorphism $\bbT^S \to \overline{\bbQ}_p$ associated to the Hecke eigenvalues of $(\iota^{-1}\pi)^{\overline{U}}$ factors through $\bbT^S \to \bbT^S(H^\ast_{\overline{U}}(\mathfrak{X}_{\PGL_n}, \cV_\lambda))$. 
\item\label{PGLn:concentration_of_degrees} Let $q_0 = [K^+:\bbQ]n(n-1)/2$ and $l_0 = [K^+:\bbQ](n-1)$. 
Let $\ffrm \subset \bbT^S(H^\ast_{\overline{U}}(\mathfrak{X}_{\PGL_n}, \cV_\lambda))$ be a maximal ideal that is of Galois type and non-Eisenstein. 
Then 
\[ H^j_{\overline{U}}(\mathfrak{X}_{\PGL_n}, \cV_\lambda)_\ffrm[1/p] \ne 0 \]
only if $j \in [q_0, q_0 + l_0]$; moreover if one of the groups in this range is nonzero, then they all are. 
\item\label{PGLn:cohomology_class_is_automorphic} Let $\ffrm \subset \bbT^S(H^\ast_{\overline{U}}(\mathfrak{X}_{\PGL_n}, \cV_\lambda))$ be a maximal ideal that is of Galois type and non-Eisenstein. 
If $f : \bbT^S(H^\ast_{\overline{U}}(\mathfrak{X}_{\PGL_n}, \cV_\lambda)_\ffrm) \to \overline{\bbQ}_p$ is an $\cO$-algebra homomorphism, then there is a regular algebraic cuspidal automorphic representation $\pi$ of $\PGL_n(\bbA_K)$ of weight $\iota\lambda$ such that $f$ is associated to the Hecke eigenvalues of $(\iota^{-1}\pi)^{\overline{U}}$. 
\end{enumerate}
\end{theorem}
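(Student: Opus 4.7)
The plan is to derive Theorem \ref{thm:aut_reps_and_cohom_PGLn} from its $\GL_n$ analog \cite[Theorem 2.4.9]{10authors} using the cohomological comparison between $\mathfrak{X}_{\PGL_n}$ and $\mathfrak{X}_{\GL_n}$ from \S\ref{sec_cohom_general}. To begin, fix an open compact subgroup $U \in \cJ_{\GL_n}^S$ containing $\widehat{\cO}_K^\times$ whose image in $\PGL_n(\bbA_K^\infty)$ equals $\overline{U}$. Since $\lambda \in (\bbZ_{+,0}^n)^{\Hom(K,E)}$, the $\GL_n$-representation $\cV_\lambda$ has trivial central character and so defines a $\PGL_n$-representation. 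In characteristic $0$ the centre of $\SL_n$ is automatically invertible, so Proposition \ref{prop_cohomology_for_adjoint_group} provides a $\bbT_{\GL_n}^S$-equivariant injection
\[ H^j_{\overline{U}}(\mathfrak{X}_{\PGL_n}, \cV_\lambda)[1/p] \hookrightarrow H^j_U(\mathfrak{X}_{\GL_n}, \cV_\lambda)[1/p] \]
for each $j$, where $\bbT_{\GL_n}^S$ acts on the left through the natural map $\bbT_{\GL_n}^S \to \bbT_{\PGL_n}^S$. By Corollary \ref{cor_hecke_surj_to_adjoint}, this produces a surjective $\cO$-algebra homomorphism of the associated Hecke images, compatible with Galois representations attached to non-Eisenstein maximal ideals on both sides.

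For parts (1) and (3), the key observation is that automorphic representations of $\PGL_n(\bbA_K)$ correspond precisely to automorphic representations of $\GL_n(\bbA_K)$ with trivial central character. For part (1), I would view $\pi$ as $\pi'$ on $\GL_n(\bbA_K)$ with trivial central character and apply the $\GL_n$-analog to produce the Hecke eigensystem on the $\GL_n$ cohomology, which then transfers to the $\PGL_n$ cohomology via the injection above (using $\omega_{\pi'}=1$ to see that this eigensystem factors through $\bbT_{\GL_n}^S \to \bbT_{\PGL_n}^S$). For part (3), I would lift $\ffrm$ to a non-Eisenstein maximal ideal $\widetilde{\ffrm}$ of $\bbT^S(H_U^*(\mathfrak{X}_{\GL_n}, \cV_\lambda))$ and extend $f$ to $\widetilde{f}$ via the Hecke surjection, then apply the $\GL_n$-analog to produce a regular algebraic cuspidal $\pi'$ on $\GL_n(\bbA_K)$ of weight $\iota\lambda$ matching $\widetilde{f}$. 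The main point is then to verify $\omega_{\pi'} = 1$, so that $\pi'$ descends to $\PGL_n$. Since $\widetilde{f}$ factors through the quotient in which $\mathsf{T}_{v,n} \mapsto 1$ for $v \notin S$, and the eigenvalue of $\mathsf{T}_{v,n}$ on a $\GL_n$-spherical vector is (up to the normalization constant $q_v^{n(n-1)/2}$ implicit in $P_v(X)$) equal to $\omega_{\pi'_v}(\varpi_v)$, I conclude $\omega_{\pi'_v}(\varpi_v) = 1$ for all $v \notin S$. Combined with $\omega_{\pi'}$ being trivial on $\widehat{\cO}_K^\times$ (from $U \supset \widehat{\cO}_K^\times$) and on $K_\infty^\times$ (from the weight condition $\lambda \in (\bbZ_{+,0}^n)^{\Hom(K,E)}$), $\omega_{\pi'}$ factors through the finite ideal class group of $K$, and Chebotarev density forces $\omega_{\pi'} = 1$.

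For part (2), I would lift $\ffrm$ to a non-Eisenstein maximal ideal $\widetilde{\ffrm} \subset \bbT^S(H_U^*(\mathfrak{X}_{\GL_n}, \cV_\lambda))$ via the Hecke surjection. The $\GL_n$-concentration result gives $H_U^j(\mathfrak{X}_{\GL_n}, \cV_\lambda)_{\widetilde{\ffrm}}[1/p] = 0$ for $j \notin [q_0, q_0+l_0]$, and compatibility of localization with the equivariant injection transfers this vanishing to $H_{\overline{U}}^j(\mathfrak{X}_{\PGL_n}, \cV_\lambda)_\ffrm[1/p]$. For the simultaneous nonvanishing of all degrees in $[q_0, q_0+l_0]$ when one is nonvanishing, I would combine parts (1) and (3) (which are proved independently of part (2)) with the standard classification of $(\mathfrak{g}_\infty, K_\infty)$-cohomology: the non-Eisenstein hypothesis forces any nonzero contribution to arise from a tempered cuspidal $\pi$ on $\PGL_n$ of weight $\iota\lambda$, and such a $\pi$ contributes nontrivially in every degree of $[q_0, q_0+l_0]$; the numerics coincide for $\GL_n$ and $\PGL_n$ because both groups share the derived subgroup $\SL_n$. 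The main obstacle is the normalization bookkeeping in the central character argument for part (3); once this is secured, everything else follows cleanly from the cited $\GL_n$ results.
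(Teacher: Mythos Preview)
Your approach is genuinely different from the paper's. The paper does not reduce to the $\GL_n$ case at all; it reproves the analogue of \cite[Theorem~2.4.9]{10authors} directly for $\PGL_n$ by writing down the Franke--Schwermer decomposition for $G = \Res_{K/\bbQ}\PGL_n$, identifying the cuspidal summand, and using \cite{hltt} on the Levi subgroups (lifted to $\GL$) to show that any eigensystem in a non-cuspidal summand has reducible associated Galois representation, hence dies under the non-Eisenstein localisation. Your reduction strategy via Proposition~\ref{prop_cohomology_for_adjoint_group} and Corollary~\ref{cor_hecke_surj_to_adjoint} works cleanly for part~(3) (the central-character computation you sketch is correct) and for the vanishing half of part~(2).

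However, your argument for part~(1) has a real gap. The injection of Proposition~\ref{prop_cohomology_for_adjoint_group} goes from $H^\ast_{\overline{U}}(\mathfrak{X}_{\PGL_n},\cV_\lambda)$ \emph{into} $H^\ast_U(\mathfrak{X}_{\GL_n},\cV_\lambda)$, not the other way. Knowing that the Hecke eigensystem of $\pi'$ occurs in the $\GL_n$ cohomology, and that it factors through the quotient $\bbT^S_{\GL_n}\to\bbT^S_{\PGL_n}$, does not imply it lies in the image of this injection; the $\PGL_n$ Hecke image is a \emph{further} quotient of $\bbT^S_{\PGL_n}$, and there is no a priori reason the eigensystem survives in it. You cannot simply ``transfer back'' along an injection. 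Moreover, the map $\Gamma_{g,U}\to\Gamma_{g,\overline{U}}$ has infinite kernel (containing $\cO_K^\times$), so there is no obvious averaging argument producing a Hecke-equivariant splitting. Consequently the ``moreover'' clause of part~(2), which in your outline rests on part~(1), is also unsupported. The most direct repair is to prove part~(1) intrinsically for $\PGL_n$: the cuspidal $\pi$ contributes to the cuspidal $(\frg,\overline{U}_\infty)$-cohomology of $\PGL_n$, which injects into $H^\ast_{\overline{U}}(\mathfrak{X}_{\PGL_n},\cV_\lambda)$. That is exactly what the paper does, so once you patch part~(1) this way you are essentially carrying out the paper's argument for (1) and your own for (2)--(3).
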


\begin{proof}
The proof is similar to \cite[Theorem~2.4.10]{10authors}. We give a sketch.

For part \ref{PGLn:hecke_appears_in_cohomology}, it suffices to show that the $\bbC$-valued $\cH(\PGL_n(\bbA_K^S), \overline{U}^S)$-eigensystem for $\pi^{\overline{U}}$ has kernel in the support of $H_{\overline{U}}^\ast(\mathfrak{X}_{\PGL_n}, V_{\iota\lambda})$. 
It similarly suffices to prove part \ref{PGLn:concentration_of_degrees} with $H_{\overline{U}}^\ast(\mathfrak{X}_{\PGL_n}, \cV_\lambda)_\ffrm$ replaced by $H_{\overline{U}}^\ast(\mathfrak{X}_{\PGL_n}, V_{\iota\lambda})_\ffrm := H_{\overline{U}}^\ast(\mathfrak{X}_{\PGL_n}, \cV_\lambda)_\ffrm \otimes_{\cO, \iota} \bbC$. 
And for part \ref{PGLn:cohomology_class_is_automorphic}, it suffices to show that $\bbC$-valued $\cH(\PGL_n(\bbA_K^S), \overline{U}^S)$-eigensystem supported in $H_{\overline{U}}^\ast(\mathfrak{X}_{\PGL_n}, V_{\iota\lambda})_\ffrm$ equals one of a regular algebraic cuspidal automorphic representation $\PGL_n(\bbA_K)$ of weight $\iota\lambda$.

There is a $\PGL_n(\bbA_K^\infty)$-equivariant decomposition (see \cite[\S2.2]{frankeschwermer}):
\[ H_{\overline{U}}^\ast(\mathfrak{X}_{\PGL_n}, V_{\iota\lambda}) \cong \bigoplus_{Q \in \mathcal{C}} H^\ast(\mathfrak{g}, \overline{U}_\infty; A_{V_{\iota\lambda}, \{Q\}} \otimes_\bbC V_{\iota\lambda})^{\overline{U}}. \]
In this formula, $\mathcal{C}$ is the set of associate classes of parabolic $\bbQ$-subgroups of $G := \Res_{K/\bbQ} \PGL_n$. 
The right hand side is relative Lie algebra cohomology with $\frg$ the Lie algebra of $G$, $\overline{U}_\infty$ our fixed choice of maximal compact subgroup of $G(\bbR)$, and $A_{V_{\iota\lambda}, \{Q\}}$ a certain space of automorphic forms for $G$.
We set $E_{\{Q\}} = H^\ast(\frg, \overline{U}_\infty; A_{V_{\iota\lambda}, \{Q\}} \otimes_\bbC V_{\iota\lambda})$. 
The summand with $Q = G$ is the cuspidal cohomology,
\[ E_{\{G\}} \cong \bigoplus_\pi (\pi^\infty)^{\overline{U}} \otimes_\bbC H^\ast(\frg, \overline{U}_\infty; \pi_\infty \otimes V_{\iota\lambda}) \]
where the summand runs over cuspidal automorphic representations of $G(\bbA_\bbQ) = \PGL_n(\bbA_K)$.

Let $Q \subset G$ be a parabolic subgroup with Levi subgroup $L_Q$.
Let $\mathfrak{M} \subset \cH(\PGL_n(\bbA_K^S), \overline{U}^S) \otimes_\bbZ \bbC$ be a maximal ideal in the support of $E_{\{Q\}}^{\overline{U}}$.
As in the proof of \cite[Thoerem~2.4.9]{10authors}, \cite[Proposition~3.3]{frankeschwermer} implies that $\mathfrak{M}$ corresponds to the Hecke eigensystem for the (unnormalized) parabolic induction $\Ind_{Q(\bbA_\bbQ^\infty)}^{G(\bbA_\bbQ^\infty)} \sigma^\infty$ with $\sigma$ a cuspidal automorphic representation of $L_Q(\bbA_\bbQ)$. 
Moreover, letting $\widetilde{L}_Q$ be the inverse image of $L_Q$ in $\Res_{K/\bbQ} \GL_n$ and viewing $\sigma$ as a cuspidal automorphic representation of $\widetilde{L}_Q(\bbA_\bbQ) \cong \prod_{i = 1}^r \GL_{n_i}(\bbA_K)$, we have $\sigma \cong \otimes_{i=1}^r \pi_i$ with $\pi_i$ a regular algebraic cuspidal automorphic representation of $\GL_{n_i}(\bbA_K)$. 
By the main theorem of \cite{hltt}, we have a Galois representation 
\[ r_\iota(\mathfrak{M}) = \bigoplus_{i=1}^r r_\iota(\pi_i) \otimes \epsilon^{-(n_{i+1} + \cdots + n_r)} : G_K \to \GL_n(\overline{\bbQ}) \]
unramified outside of $S$ and such that for $v \notin S$, the characteristic polynomial of $r_\iota(\mathfrak{M})(\Frob_v)$ equals $P_v(X) \bmod \mathfrak{M}$.

The direct summand $H_{\overline{U}}^\ast(\mathfrak{X}_{\PGL_n}, V_{\iota\lambda})_\ffrm$ of $H_{\overline{U}}^\ast(\mathfrak{X}_{\PGL_n}, V_{\iota\lambda})$ yields $\cH(\PGL_n(\bbA_K^S), \overline{U}^S)$-equivariant summands $E_{\{Q\}, \ffrm}^{\overline{U}}$ of $E_{\{Q\}}^{\overline{U}}$.
If $\mathfrak{M}$ is in the support of $E_{\{Q\}, \ffrm}^{\overline{U}}$, then we have an isomorphism $\overline{r_\iota(\mathfrak{M})} \cong \overline{\rho}_\ffrm$. 
Since we have assume $\overline{\rho}_\ffrm$ is irreducible, this implies that $E_{\{Q\},\ffrm}^{\overline{U}} = 0$ for any proper parabolic subgroup $Q \subset G$, so $H_{\overline{U}}^\ast(\mathfrak{X}_{\PGL_n}, V_{\iota\lambda})_\ffrm$ is a summand of the cuspidal cohomology. 
By \cite[Chapter II, Proposition 3.1]{borel-wallach}, the relative Lie algebra cohomology $H^\ast(\frg, \overline{U}_\infty; \pi_\infty \otimes V_{\iota\lambda})$ vanishes unless $\pi$ is regular algebraic of weight $\iota\lambda$. 
This finishes the proof of parts \ref{PGLn:hecke_appears_in_cohomology} and \ref{PGLn:cohomology_class_is_automorphic} of the proposition. 
For any such $\pi$, Clozel's purity lemma \cite[Lemma~4.9]{clozel-ann-arbor} implies that $\pi_v$ is a tempered principal series representation for each $v|\infty$.  
Part \ref{PGLn:concentration_of_degrees} of the proposition then follows from \cite[Chapter III, Theorem~5.1]{borel-wallach}. 
\end{proof}

\subsection{Boundedness of cohomology for some non-neat subgroups}\label{sec:bounded-cohomology}
For the remainder of this section, we assume that $G$ is either $\GL_2$ or $\PGL_2$. Abusing notation slightly, we will often write down elements in $\PGL_2$ by writing them as $2 \times 2$ matrices (i.e. lifts to $\GL_2$).
We fix a prime $p$ and a finite set $S$ of places of $K$ containing all $p$-adic places. 
\begin{theorem}\label{thm_boundedness_of_good_dihedral_cohomology} 
Let $U \in \cJ^S$ and let $R = \cO$ or $\cO/ \varpi^c$ for some $c \geq 1$. Let $M$ be an $R[U_S]$-module, finite free as $R$-module, such that $M \otimes_R k = k$. Let $\ffrm \subset \bbT^S(A(U, M))$ be a  maximal ideal of residue field $k$. Suppose that the following conditions are satisfied:
    \begin{enumerate}
        \item $K$ contains a non-trivial $p$th root $\zeta_p$ of $1$. If $G = \PGL_2$, then we assume that $p$ is odd.
        \item The maximal ideal $\ffrm$ is non-Eisenstein. 
\item If $G = \GL_2$, then we assume that $U \cap Z_G(K)$ contains no non-trivial $p$-torsion. 
    \end{enumerate}
    Then:
    \begin{enumerate}
        \item The groups $H^\ast_U(\mathfrak{X}, M)_\ffrm$ are zero outside of the range $[0, \dim_\bbR X_G]$.
        \item Let $V \in \cJ^S$ be a normal subgroup of $U$ such that $U / V$ is abelian of $p$-power order. Then there exists a maximal ideal $\frn \subset \bbT^S(A(U/V, M))$ such that $\overline{\rho}_\frn \cong \overline{\rho}_\ffrm$, and $C(U/V, M^\vee)_\frn$ is a perfect complex of $R[U/V]$-modules equipped with a $\mathbb{T}^S$-equivariant isomorphism
        \[ C(U/V, M^\vee)_\frn \otimes^\bbL_{R[U/V]} R \cong C(U, M^\vee)_\ffrm. \]\end{enumerate}
\end{theorem}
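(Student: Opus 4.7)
The two parts of the theorem are closely linked: once part (2) is established, part (1) follows as a byproduct, since the perfectness of $C(U, M^\vee)_\ffrm$ together with the universal coefficient theorem (Lemma~\ref{lem_universal_coefficient_theorem}) forces the cohomology to lie in the expected range. I will therefore focus on part (2). My plan is to reduce to an auxiliary neat level. Choose $W \in \cJ^S$ neat, normal in $U$, and contained in $V$, for example $W = V \cap K(N)$ for a sufficiently deep principal congruence subgroup $K(N)$. At level $W$ the quotient $\mathfrak{X}_G/W$ is a manifold, but because $U$ itself is allowed to be non-neat, the action of $U/W$ on $\mathfrak{X}_G/W$ need not be free, so $C(U/W, M^\vee)$ is not a priori perfect as a complex of $R[U/W]^{\mathrm{op}}$-modules; via the Borel--Serre compactification, its terms are direct sums of permutation modules $R[(U/W)/\Gamma]$ for various isotropy subgroups $\Gamma \subseteq U/W$. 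Hypotheses (1) and (3) ensure that every non-trivial $\Gamma$ arises from a non-central semisimple element $\gamma \in G(K)$ of $p$-power order.

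The crux of the proof is a vanishing theorem: for a non-Eisenstein maximal ideal $\widetilde\ffrm \subset \bbT^S(C(U/W, M^\vee))$, the contribution to $C(U/W, M^\vee)$ from strata with non-trivial isotropy vanishes after localization at $\widetilde\ffrm$. The main obstacle is precisely this vanishing: all remaining steps are formal consequences. I would prove it by identifying the Hecke action on each non-trivial isotropy stratum as factoring through an Eisenstein eigensystem. Concretely, for a non-central semisimple $\gamma \in G(K)$ of $p$-power order, the centralizer $T_\gamma$ is a (non-split) maximal torus of $G = \GL_2$ or $\PGL_2$; the $\gamma$-fixed locus in $\mathfrak{X}_G/W$ is a locally symmetric space for $T_\gamma$, and its Hecke-equivariant cohomology is computed by automorphic forms on $T_\gamma$. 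The resulting Hecke eigensystems on $G$ are obtained by automorphic induction from a proper Levi, so their associated Galois representations are reducible --- Eisenstein --- and hence killed upon localizing at the non-Eisenstein ideal $\widetilde\ffrm$.

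Once the vanishing is in hand, the rest is formal. I would lift $\ffrm$ to a maximal ideal $\widetilde\ffrm \subset \bbT^S(C(U/W, M^\vee))$ via the natural surjection of Hecke algebras induced by base change along $R[U/W] \to R$; the characterization of the associated Galois representation by Frobenius traces (Theorem~\ref{thm_GLn_mod_p_Gal_rep}, respectively Corollary~\ref{cor_PGLn_mod_p_Gal_rep} for $G = \PGL_2$) forces $\overline{\rho}_{\widetilde\ffrm} \cong \overline{\rho}_\ffrm$, which is irreducible, so $\widetilde\ffrm$ is non-Eisenstein. Applying Proposition~\ref{prop_ord_summand}(2) to the finite $R$-algebra $\bbT^S(C(U/W, M^\vee))$ yields a direct-summand decomposition of $C(U/W, M^\vee)$ in $\mathbf{D}(R[U/W]^{\mathrm{op}})$ with summand $C(U/W, M^\vee)_{\widetilde\ffrm}$. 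The vanishing theorem identifies this summand with the free-isotropy part of $C(U/W, M^\vee)$, a bounded complex of finite projective $R[U/W]^{\mathrm{op}}$-modules of tor-amplitude $[-\dim_\bbR X_G, 0]$.

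Finally, let $\frn$ be the image of $\widetilde\ffrm$ in $\bbT^S(C(U/V, M^\vee))$. Using the quasi-isomorphism $C(U/V, M^\vee) \cong C(U/W, M^\vee) \otimes^\bbL_{R[U/W]^{\mathrm{op}}} R[U/V]^{\mathrm{op}}$, derived base change along the quotient $R[U/W] \twoheadrightarrow R[U/V]$ gives that $C(U/V, M^\vee)_\frn$ is a perfect complex of $R[U/V]^{\mathrm{op}}$-modules. Associativity of derived tensor products, combined with the analogous isomorphism $C(U, M^\vee) \cong C(U/W, M^\vee) \otimes^\bbL_{R[U/W]^{\mathrm{op}}} R$, produces the required $\bbT^S$-equivariant isomorphism $C(U/V, M^\vee)_\frn \otimes^\bbL_{R[U/V]^{\mathrm{op}}} R \cong C(U, M^\vee)_\ffrm$, completing part (2). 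Part (1) then follows from the tor-amplitude of $C(U, M^\vee)_\ffrm$ inherited from $C(U/W, M^\vee)_{\widetilde\ffrm}$.
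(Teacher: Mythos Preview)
Your overall architecture is in the right spirit, but the key step — the ``vanishing theorem'' that non-trivial isotropy strata contribute only Eisenstein eigensystems — is asserted rather than proved, and the justification you sketch does not go through as written. First, the centralizer torus $T_\gamma$ is \emph{split}, not non-split: since $\zeta_p \in K$ and $\gamma$ has $p$-power order, $\gamma$ is diagonalizable over $K$ (this is Lemma~\ref{lem_semisimple}). Second, and more seriously, the $\bbT^S$-action on the fixed locus is not the intrinsic Hecke action for automorphic forms on $T_\gamma$; it is a transfer of the $G$-Hecke correspondence, and you have not established any compatibility between the two. The phrase ``automorphic induction from a proper Levi'' does not by itself identify the Hecke eigensystem on the fixed-point cohomology. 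Your filtration by isotropy type is also not obviously Hecke-stable at the level of complexes, so splitting off the non-free part after localization needs an argument.

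The paper's approach handles exactly these issues, and in the opposite logical order from yours: it proves part~(1) first (for $M = k$) and then deduces part~(2) formally via the minimal resolution Lemma~\ref{lem_existence_of_minimal_resolution}, since the rank of each term in the minimal complex of $C(U/V, M^\vee)_\frn$ over the local ring $R[U/V]$ is $\dim_k H^i(C(U, k)_\ffrm)$. For part~(1), rather than filtering by isotropy, the paper uses Farrell cohomology (Lemma~\ref{lem_Farrell}): for $i > \dim_\bbR X_G$, restriction gives an \emph{injection} $H^i_{G(K)}(X(U), k) \hookrightarrow \bigoplus_\Delta H^i_{Z(K)}(X(U)^\Delta, k)$ over order-$p$ subgroups $\Delta \subset Z(K)$. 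Lemma~\ref{lem_X_Xdelta} then establishes Hecke-equivariance of this map for the operator $\mathsf{T}_v$ versus an explicit transfer $\mathsf{T}_v^\Delta$, and the core computation is Lemma~\ref{lem_acts_by_2_on_delta}: for any place $v$ with $\frp_v$ principal and generator $\pi_v \equiv 1 \bmod p\frn$, one has $\mathsf{T}_v^\Delta - 2 = 0$ on the fixed-point cohomology. This is proved by writing down explicit elements $\gamma = \diag(\pi_v, 1), \gamma' = \diag(1, \pi_v) \in Z(K)$ realizing the two sheets of the Hecke correspondence. The non-Eisenstein hypothesis then enters exactly as you expect: since $\overline{\rho}_\ffrm$ is irreducible one can find such a $v$ with $\tr\overline{\rho}_\ffrm(\Frob_v) \neq 2$, so $\mathsf{T}_v - 2$ is simultaneously zero on the high-degree cohomology and invertible after localization at $\ffrm$.
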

The rest of this section is devoted to the proof of this theorem. 
We observe that it suffices to prove the first part of the theorem in the case $M = k$; this then implies the first part for any $R$, and the second part follows from the first part and Lemma \ref{lem_existence_of_minimal_resolution}.
Presumably a similar result is true under more general hypotheses, but we just prove what we need.

We must first describe some preliminary results. 
We let $Z \subset B \subset G$ be the diagonal maximal torus and upper triangular Borel subgroup, respectively.
For any $v\notin S$, we let $\mathsf{T}_v$ denote the unramified Hecke operator
\[ \mathsf{T}_v = [ G(\cO_{K_v}) \diag(\varpi_v,1) G(\cO_{K_v}) ] \in \cH(G(K_v), G(\cO_{K_v})). \]
\begin{lemma}\label{lem_semisimple}
Let $k$ be an $\cO$-algebra field of characteristic different from $p$. Any $p$-torsion $\gamma \in G(k)$ is semisimple and conjugate to an element of $Z(k)$.
\end{lemma}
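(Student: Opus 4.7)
The plan is to use the Jordan decomposition together with the hypothesis that $p$ is invertible in $k$.

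First consider $G = \GL_2$. Since $\gamma^p = 1$, the minimal polynomial of $\gamma$ divides $X^p - 1$, and because $p$ is invertible in $k$ this polynomial is separable. Hence the minimal polynomial of $\gamma$ is separable, so $\gamma$ is semisimple and diagonalizable over $\bar{k}$; thus $\gamma$ is conjugate in $G(\bar{k})$ to an element of $Z(\bar{k})$. When the eigenvalues of $\gamma$ lie in $k$ — equivalently, when the minimal polynomial splits over $k$ — the conjugation can be carried out inside $G(k)$, giving conjugacy to an element of $Z(k)$.

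Next consider $G = \PGL_2$ with $p$ odd. Pick a lift $\tilde{\gamma} \in \GL_2(k)$; then $\tilde{\gamma}^p$ is central in $\GL_2(k)$, so $\tilde{\gamma}^p = \lambda \cdot \Id$ for some $\lambda \in k^\times$. It suffices to show $\tilde{\gamma}$ is semisimple, since the image in $\PGL_2$ of a semisimple element of $\GL_2$ is semisimple. Arguing by contradiction, suppose $\tilde{\gamma}$ is not semisimple; then over $\bar{k}$ it has a single eigenvalue $\mu$ with a non-trivial Jordan block, so after conjugation
\[ \tilde{\gamma} = \begin{pmatrix} \mu & 1 \\ 0 & \mu \end{pmatrix}, \qquad \tilde{\gamma}^p = \begin{pmatrix} \mu^p & p\mu^{p-1} \\ 0 & \mu^p \end{pmatrix}. \]
This is scalar only if $p\mu^{p-1} = 0$; since $\mu \ne 0$ and $p$ is invertible in $k$, this is impossible. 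Hence $\tilde{\gamma}$, and therefore $\gamma$, is diagonalizable over $\bar{k}$ and conjugate to an element of $Z(\bar{k})$.

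The argument is essentially routine once one invokes Jordan normal form; there is no serious obstacle. The only mild subtlety is passing from ``$\tilde{\gamma}^p$ central'' to ``$\tilde{\gamma}$ semisimple'' in the $\PGL_2$ case, but the Jordan-block computation above handles this directly by exploiting that $p$ is invertible in $k$.
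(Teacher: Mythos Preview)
Your argument establishes semisimplicity correctly, but it does not establish conjugacy to an element of $Z(k)$ --- only to an element of $Z(\bar{k})$. In the $\GL_2$ case you write the conclusion conditionally (``When the eigenvalues of $\gamma$ lie in $k$\ldots''), but you never verify that the eigenvalues actually lie in $k$. In the $\PGL_2$ case you stop at $Z(\bar{k})$. This is a genuine gap, because the lemma is used later (e.g.\ in Lemma~\ref{lem_p_group_rank_1} and Lemma~\ref{lem_X_Xdelta}) precisely with $k$-rational diagonalization.

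The missing ingredient is the hypothesis that $k$ is an $\cO$-algebra, which you never use. In the ambient setup of this section, $\zeta_p \in K$ and $E$ contains the images of all embeddings $K \hookrightarrow \overline{\bbQ}_p$, so $\zeta_p \in \cO$ and hence $\zeta_p \in k$. For $\GL_2$ this immediately gives that the minimal polynomial (a divisor of $X^p-1$) splits over $k$, so the eigenvalues lie in $k$. For $\PGL_2$, the paper's argument is different from your Jordan-block computation: writing $\tilde{\gamma}^p = \alpha \in k^\times$, the characteristic polynomial of $\tilde{\gamma}$ and $X^p - \alpha$ have a common root, hence nontrivial gcd in $k[X]$; since $\zeta_p \in k$ and $p$ is odd, Kummer theory says $X^p-\alpha$ is either irreducible or splits completely, and a nontrivial gcd of degree $\le 2 < p$ forces the latter. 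Thus $\alpha$ is a $p$th power, one can rescale $\tilde{\gamma}$ so that $\tilde{\gamma}^p = 1$, and reduce to the $\GL_2$ case. Your Jordan-block argument is a fine alternative route to semisimplicity, but you still need this Kummer step (or an equivalent) to get the eigenvalues of a lift into $k$ and hence conjugacy to $Z(k)$ inside $\PGL_2(k)$.
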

\begin{proof}
If $G = \GL_2$, then this follows easily from the fact that $k$ has characteristic different from $p$ and contains all $p$th roots of 1.
Assume $G = \PGL_2$ and choose a lift $\widetilde{\gamma} \in \GL_2(k)$ of $\gamma$. Then $\widetilde{\gamma}^p = \alpha \in k^\times$, so $X^p - \alpha$ and the characteristic polynomial of $\widetilde{\gamma}$ have nontrivial greatest common divisor. Since $p$ is odd and $k$ contains all $p$th roots of 1, Kummer theory implies that $\alpha$ is a $p$th power in $k^\times$. We can then assume that $\widetilde{\gamma}^p = 1$ and we are reduced to the $\GL_2$ case. 
\end{proof}

\begin{lemma}\label{lem_p_group_rank_1}
    Let $U \subset G^\infty$ be an open compact subgroup, and suppose that $U \cap Z_G(K)$ contains no non-trivial $p$-torsion element. Then for all $g \in G^\infty$, every non-trivial elementary abelian $p$-subgroup of $\Gamma_{g, U}$ has rank 1. 
\end{lemma}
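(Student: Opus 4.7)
The strategy is to show that any non-central $p$-torsion element $a \in A$ has $C_G(a)$ equal to a maximal torus $T_a$, so that $A \subseteq T_a(K^s)[p]$; the rank of this group then bounds the rank of $A$. First I would dispose of the case where every element of $A$ is central. In $G = \PGL_2$ there is nothing to do, since $Z_G = 1$. In $G = \GL_2$, if $A \subseteq Z_G(K)$, then because $Z_G$ is central we have $A \subseteq Z_G(K) \cap g U g^{-1} = Z_G(K) \cap U$, and the hypothesis forces $A = 1$. So I may assume $A$ contains a non-central element $a$.

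Next I would compute $C_G(a)$. By Lemma~\ref{lem_semisimple}, $a$ is semisimple, and being non-central it has two distinct eigenvalues over $K^s$ (for $\PGL_2$, pass to a lift $\widetilde{a} \in \GL_2(K^s)$ with $\widetilde{a}^p = 1$ and observe that equal eigenvalues would force $a = 1$ in $\PGL_2$). The centralizer of $a$ in $\GL_2$ over $K^s$ is then a maximal torus; in the $\PGL_2$ case, a short calculation ruling out the ``swap'' possibility for the centralizer (the scalar $c$ relating $\widetilde{b}\widetilde{a}\widetilde{b}^{-1}$ to $\widetilde{a}$ would have to satisfy $\zeta_p^{2(i-j)} = 1$, impossible for $p$ odd and $i \ne j \bmod p$) shows the same holds there. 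Call this torus $T_a$ in either case. Since $A$ is abelian and consists of $p$-torsion, $A \subseteq T_a(K^s)[p]$.

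Finally I would count ranks. When $G = \PGL_2$, $T_a \cong \bbG_m$ and $T_a(K^s)[p] \cong \mu_p$ has rank $1$, giving the claim immediately. When $G = \GL_2$, $T_a(K^s)[p] \cong \mu_p^2$ has rank $2$; if the rank of $A$ were $2$ it would exhaust this group, and in particular contain the $p$-torsion of $Z_G(K^s)$ (which lies in every maximal torus). Since $A \subseteq G(K)$, these central elements lie in $A \cap Z_G(K) \subseteq U \cap Z_G(K)$, contradicting the hypothesis. The only real subtlety is the centralizer calculation in the $\PGL_2$ case, where the assumption that $p$ is odd is essential---for $p = 2$ the centralizer can be strictly larger than a maximal torus and the conclusion of the lemma would fail.
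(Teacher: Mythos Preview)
Your proof is correct and follows essentially the same logic as the paper's: pick a non-central $p$-torsion element, show its centralizer is a maximal torus, and bound the rank of $A$ using the $p$-torsion of that torus together with the hypothesis on $U \cap Z_G(K)$. The paper handles both cases $G = \GL_2$ and $G = \PGL_2$ in one stroke by observing that the composite $P \hookrightarrow T_a(K) \to (T_a/Z_G)(K)$ is injective (the kernel lies in $U \cap Z_G(K)$) and that $(T_a/Z_G)(K)[p]$ has rank $1$; your case split on $G$ arrives at the same conclusion with slightly more bookkeeping, and your explicit verification that the $\PGL_2$-centralizer contains no swap element (using $p$ odd) fills in a detail the paper leaves implicit.
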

\begin{proof}
Let $P \subset \Gamma_{g, U}$ be a non-trivial elementary abelian $p$-subgroup and let $\gamma \in P$ be a nontrivial element. By Lemma~\ref{lem_semisimple} and our assumption on $U$, $Z_G(\gamma)$ is a  maximal torus containing $P$ and the map $P \to (Z_G(\gamma)/Z_G)(K)$ is injective. Since $G$ has semisimple rank $1$ and $K$ contains $\zeta_p$, the $p$-torsion subgroup of $(Z_G(\gamma)/Z_G)(K)$ has rank 1.
\end{proof}
For any open compact subgroup $U \subset G(\bbA_K^\infty)$, we define $X(U) = G(\bbA_K^\infty) / U \times X$. Then the group $G(K)$ acts on $X(U)$ on the left, and there is a canonical isomorphism
\[ H^\ast_{G(K)}(X(U), k) \cong H^\ast_U(\mathfrak{X}, k). \]
This gives us another way to understand Hecke operators. For any $\eta \in G(\bbA_K^\infty)$, there is a diagram of spaces associated to the Hecke operator $[U \eta U]$, where the maps are $G(K)$-equivariant and $\pi_1, \pi_2$ are the canonical projections:
\[ \xymatrix{ & X(U \cap \eta^{-1} U \eta)  \ar[dl]_{\pi_1} & X(\eta U \eta^{-1} \cap U) \ar[l]_-\eta \ar[rd]^{\pi_2} \\ X(U) &&& X(U).} \]
We may define the unramified Hecke operator $\mathsf{T}_v$ on $H^\ast_{G(K)}(X(U), k)$ by the formula $\mathsf{T}_v= \pi_{2, \ast} \circ \eta^\ast \circ \pi_1^\ast$, where $\eta = \diag(\varpi_v,1)$. Let $\Delta \subset Z(K)$ be an order $p$ subgroup not contained in $Z_G(K)$. Taking $\Delta$-fixed points gives rise to another similar diagram, where the maps are now $Z(K)$-equivariant:
\[ \xymatrix{ & X(U \cap \eta^{-1} U \eta)^\Delta \ar[dl]_{\pi'_1} & X(\eta U \eta^{-1} \cap U)^\Delta  \ar[l]_-{\eta'} \ar[rd]^{\pi'_2} \\ X(U)^\Delta & & & X(U)^\Delta.} \]
The connected components of $X(U)$ are in bijection with the set $G(\bbA_K^\infty) / U$. Let $W_{\Delta,U} = \{ g \in G(\bbA_K^\infty) \mid \Delta \subset g U g^{-1} \}$, a right $U$-set. Then $X(U)^\Delta = W_{\Delta, U} / U \times X^\Delta$ and so $\pi_0(X(U)^\Delta) = W_{\Delta, U} / U$ (note that $X^\Delta$ is contractible, and equals the image of $Z(K \otimes_\bbQ \bbR)$ in $X = G(K\otimes_\bbQ \bbR)/U_\infty A(\bbR)$ if we identify $G(K \otimes_\bbQ \bbR) = \GL_2(\bbC)^{[K^+ : \bbQ]}$ and choose $U_\infty = \mathrm{U}_2(\bbR)^{[K^+ : \bbQ]}$).
\begin{lemma}\label{lem_X_Xdelta}
    Let $U \in \cJ^S$ be such that $U \cap Z_G(K)$ contains no $p$-torsion elements and let $\Delta \subset Z(K)$ be an order $p$ subgroup not contained in the centre of $G(K)$. Let $r$ be the rank of $G$ and let $s$ be the $\bbZ$-rank of $\cO_K^\times$. For any finite place $v \not\in S$ of $K$, and for all $i > rs$, the map $h : H^i_{G(K)}(X(U), k) \to H^i_{Z(K)}(X(U)^\Delta, k)$ is equivariant for the action of $\mathsf{T}_v$ on the source and $\mathsf{T}_v^\Delta = \pi'_{2, \ast} \circ (\eta')^\ast \circ (\pi_1')^\ast$ on the target.
\end{lemma}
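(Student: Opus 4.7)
The plan is to trace the Hecke correspondence through $h$ and to isolate a precise ``error term'' measuring the failure of commutativity, which will be shown to vanish in degrees $> rs$ via Lemma \ref{lem_p_group_rank_1}. First, I factor $h$ as the restriction from $G(K)$-equivariance to $Z(K)$-equivariance, followed by the restriction to the closed subspace $X(U)^\Delta \subset X(U)$. Analogous restriction maps $h_{U \cap \eta^{-1}U\eta}$ and $h_{\eta U \eta^{-1} \cap U}$ exist at the middle and right terms of the correspondence. The pullbacks $\pi_1^\ast$ and $\eta^\ast$ commute with these restrictions (pullback being natural in inclusions of invariant closed subspaces, and restriction of equivariance being formal), so the Hecke-commutation question reduces to comparing $h_U \circ \pi_{2, \ast}$ with $\pi'_{2, \ast} \circ h_{\eta U \eta^{-1} \cap U}$.

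Second, I apply the base-change identity for the pushforward along the finite étale orbifold cover $\pi_2$: after restriction to $X(U)^\Delta$, $\pi_{2,\ast}$ becomes the pushforward from the full preimage $\pi_2^{-1}(X(U)^\Delta)$. The preimage decomposes $\Delta$-equivariantly as
\[ \pi_2^{-1}(X(U)^\Delta) = X(\eta U \eta^{-1} \cap U)^\Delta \sqcup F, \]
where $F$ is the locus on which $\Delta$ acts non-trivially; since $|\Delta|=p$, the action of $\Delta$ on $F$ is free. The fixed-point piece contributes exactly $\pi'_{2, \ast} \circ h_{\eta U \eta^{-1} \cap U}$, so the difference of the two compositions is the pushforward of classes restricted to $F$. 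It thus suffices to show that the image of $H^i_{Z(K)}(F, k) \to H^i_{Z(K)}(X(U)^\Delta, k)$ vanishes when $i > rs$.

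Third, the equivariant cohomology of $F$ decomposes as a direct sum over $Z(K)$-orbits of its components. Each orbit contributes $H^i(H_{g'}, k)$ for a stabilizer $H_{g'} = Z(K) \cap g'(\eta U \eta^{-1} \cap U)(g')^{-1}$ (up to a finite archimedean stabilizer, which does not affect vanishing in high degrees with $k$-coefficients). The defining condition of $F$ gives $\Delta \subset g'U(g')^{-1}$ yet $\Delta \not\subset g'(\eta U \eta^{-1} \cap U)(g')^{-1}$, equivalently $\Delta \subset \Gamma_{g', U} \cap Z(K)$ but $\Delta \cap H_{g'} = 1$. By Lemma \ref{lem_p_group_rank_1}, every elementary abelian $p$-subgroup of $\Gamma_{g', U}$ has rank at most $1$, so the $p$-torsion subgroup of the abelian group $\Gamma_{g', U} \cap Z(K)$ is cyclic, and $\Delta$ is its unique order-$p$ subgroup. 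Hence $\Delta \cap H_{g'} = 1$ forces $H_{g'}$ to be $p$-torsion free. Moreover, $H_{g'}$ is commensurable with a subgroup of $Z(\cO_K) \cong (\cO_K^\times)^r$, so has $\bbZ$-rank at most $rs$ by Dirichlet's unit theorem. These two facts combine to give $H^i(H_{g'}, k) = 0$ for $i > rs$, as required.

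The main obstacle I anticipate is the careful bookkeeping needed to justify the base-change identity and the component decomposition of the equivariant cohomology of $F$ rigorously, since the action of $Z(K)$ is not free and one must work in the framework of Borel equivariant cohomology (or through an appropriate simplicial resolution); once the error term is correctly isolated, the group-cohomological vanishing follows immediately from Lemma \ref{lem_p_group_rank_1} and the rank bound.
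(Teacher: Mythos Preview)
Your proof is correct and follows the same overall architecture as the paper's: both isolate the defect $h\circ\mathsf{T}_v-\mathsf{T}_v^\Delta\circ h$ as a map factoring through the $Z(K)$-equivariant cohomology of the complement of the $\Delta$-fixed locus in the preimage of $X(U)^\Delta$ under the Hecke correspondence, and then show this cohomology vanishes in degrees $>rs$ by proving the relevant stabilizers $Z_{g',U^0(v)}$ are $p$-torsion free. The paper carries this out on the $\pi_1$-side (working with $X_0$), you on the $\pi_2$-side (your $F=X^0$); these are interchanged by the isomorphism $\zeta$, so there is no substantive difference there.

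The one genuinely different step is your argument for $p$-torsion freeness. The paper argues locally at $v$: for $g\in W_{\Delta,U}-W_{\Delta,U_0(v)}$, the reduction of $g^{-1}\delta g$ lies outside $B(k(v))$ while the reduction of any $g^{-1}\gamma g$ with $\gamma\in Z_{g,U_0(v)}$ lies in $B(k(v))$; if $\gamma$ were nontrivial $p$-torsion, these two elements would commute in $G(k(v))$ and force $g^{-1}\delta g$ into $B(k(v))$, a contradiction. You instead invoke Lemma \ref{lem_p_group_rank_1}: since $\Delta\subset Z_{g',U}\subset\Gamma_{g',U}$ and every elementary abelian $p$-subgroup of $\Gamma_{g',U}$ has rank $1$, the $p$-torsion of the abelian group $Z_{g',U}$ is cyclic with $\Delta$ its unique order-$p$ subgroup; hence $H_{g'}\subset Z_{g',U}$ with $\Delta\cap H_{g'}=1$ forces $H_{g'}[p]=1$. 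Your route is more conceptual and avoids the place-by-place calculation, at the cost of relying on Lemma \ref{lem_p_group_rank_1} (which itself is proved by a similar centralizer argument). The paper's route is more self-contained but specific to the structure of the Iwahori correspondence at $v$. Both are valid. Your parenthetical about a ``finite archimedean stabilizer'' is unnecessary: since $X^\Delta$ is contractible, $H^i_{Z(K)}(F,k)$ decomposes directly as $\prod_{g'} H^i(H_{g'},k)$, exactly as in the paper.
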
 
We note for future reference that $rs < \dim_\bbR X_G$.
\begin{proof}
    By definition, we have $\mathsf{T}_v = [U \eta U]$, where $\eta = \diag(\varpi_v,1) \in G(K_v) \subset G^\infty$. Let $U_0(v) = U \cap \eta^{-1} U \eta$ and $U^0(v) = \eta U \eta^{-1} \cap U$; this agrees with our previous notation for $U_0(v)$. We see that $X(U_0(v))^\Delta$ is a union of connected components of $\pi_1^{-1}(X(U)^\Delta)$. Similarly, $X(U^0(v))^\Delta$ is a union of connected components of $\pi_2^{-1}(X(U)^\Delta)$. Let $X_0 = \pi_1^{-1}(X(U)^\Delta) - X(U_0(v))^\Delta$, and let $X^0 = \pi_2^{-1}(X(U)^\Delta) - X(U^0(v))^\Delta$. Then $\eta$ restricts to an isomorphism $\zeta : X^0 \to X_0$, and we get a diagram by restriction of $\pi_1$ and $\pi_2$:
    \[ \xymatrix{ & X_0  \ar[dl]_{p_1} & \ar[l]_\zeta X^0 \ar[rd]^{p_2} \\  X(U) & & & X(U)^\Delta.} \]
    Then $h \circ \mathsf{T}_v - \mathsf{T}_v^\Delta \circ h = p_{2, \ast} \circ \zeta^\ast \circ p_1^\ast \circ \Res^{G(K)}_{Z(K)}$. Call this map $\beta$; we must show that $\beta = 0$ if $i > rs$. Since $\beta$ factors through $H^i_{Z(K)}(X_0, k)$, we see that it suffices to show that $H^i_{Z(K)}(X_0, k) = 0$ if $i > rs$. 
    
    We compute $H^i_{Z(K)}(X_0, k)$. There is a $Z(K)$-equivariant bijection
    \[ \pi_0(X_0) = (W_{\Delta, U}  - W_{\Delta, U_0(v)})/ U_0(v). \]
    If $g \in W_{\Delta, U}$, let $Z_{g, U} = Z(K) \cap g U g^{-1}$; it is a congruence subgroup of $Z(K)$, therefore contains a free abelian group of rank $rs$ as a subgroup of finite index. We have an isomorphism
    \[ H^\ast_{Z(K)}(X_0, k) \cong \prod_{g \in Z(K) \backslash (W_{\Delta, U}  - W_{\Delta, U_0(v)})/ U_0(v)} H^\ast(Z_{g, U_0(v)}, k). \]
    If $Z_{g, U_0(v)}$ contains no $p$-torsion elements for each $g \in W_{\Delta, U}  - W_{\Delta, U_0(v)}$, then these groups will be zero in degrees $i > rs$, and this will complete the proof. This is now elementary: suppose that $g \in W_{\Delta, U}  - W_{\Delta, U_0(v)}$ and $\gamma \in Z_{g, U_0(v)}$. Fixing a generator $\delta \in \Delta$,  $g^{-1} \delta g \in U_v = G(\cO_{K_v})$ and if $x \in G(k(v))$ denotes its reduction modulo $\varpi_v$, then  $x \notin B(k(v))$. Similarly, we have $g^{-1} \gamma g \in U_0(v)$ and if $y \in G(k(v))$ denotes its reduction modulo $\varpi_v$, then $y \in B(k(v))$.
    
    Assume that $\gamma$ is $p$-torsion. By our assumption on $U$, to prove $\gamma = 1$, it suffices to show that $\gamma \in Z_G(K)$. Since $v\nmid p$ and $\cO_{K_v}$ contains all $p$th roots of 1, reduction modulo $\varpi_v$ give isomorphisms $Z(\cO_{K_v})[p] \cong  Z(k(v))[p]$ and $Z_G(\cO_{K_v})[p] \cong Z_G(k(v))[p]$. In particular, if $y \in Z_G(k)$, then $\gamma \in Z_G(K)$. If $y \notin Z_G(k)$, then $y$ is regular semisimple by Lemma~\ref{lem_semisimple}, and commutes with $x$. This forces $x \in B(k(v))$, a contradiction.
\end{proof}

\begin{lemma}\label{lem_two_WUs}
Let $\delta \in \GL_2(K)$ be finite order and let $U \subset \GL_2(\bbA_K^\infty)$ be an open compact subgroup containing $\widehat{\cO}_K^\times$. Let $\overline{\delta}$ and $\overline{U}$ denote the images of $\delta$ and $U$, respectively, in $\PGL_2(\bbA_K^\infty)$. Then for any $h \in \PGL_2(\bbA_K^\infty)$ such that $\overline{\delta} \in h \overline{U} h^{-1}$ and any lift $g \in \GL_2(\bbA_K)$ of $h$, we have $\delta \in g U g^{-1}$.
\end{lemma}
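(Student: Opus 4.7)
The plan is to unwind what the hypothesis $\overline{\delta} \in h\overline{U}h^{-1}$ means at the level of $\GL_2$, and then use finite order of $\delta$ together with compactness of $U$ to rule out any scalar contribution.

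First I would fix a lift $g \in \GL_2(\bbA_K^\infty)$ of $h$. The assumption $\overline{\delta} \in h\overline{U}h^{-1}$ says exactly that the image of $g\delta g^{-1}$ in $\PGL_2(\bbA_K^\infty)$ lies in $\overline{U}$, which is equivalent to the existence of a decomposition
\[ g \delta g^{-1} = z u \]
with $z \in Z(\bbA_K^\infty) = (\bbA_K^\infty)^\times$ (identified with scalar matrices) and $u \in U$. The goal is then to show $z \in U$, which will follow if $z \in \widehat{\cO}_K^\times$ since the latter is contained in $U$ by hypothesis.

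Next I would exploit that $\delta$ has finite order. Let $N \geq 1$ be the order of $\delta$; then $g \delta g^{-1}$ also has order $N$. Since $z$ is central, it commutes with $u$, and
\[ 1 = (zu)^N = z^N u^N, \]
so $u^N = z^{-N}$. In particular $z^{-N} \in U \cap Z(\bbA_K^\infty)$. The intersection $U \cap Z(\bbA_K^\infty)$ is a compact subgroup of $(\bbA_K^\infty)^\times$ containing $\widehat{\cO}_K^\times$; since the unique maximal compact subgroup of $(\bbA_K^\infty)^\times$ is $\widehat{\cO}_K^\times$, we get $U \cap Z(\bbA_K^\infty) = \widehat{\cO}_K^\times$. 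Therefore $z^N \in \widehat{\cO}_K^\times$, which forces $v(z_v) = 0$ at every finite place $v$ of $K$ and hence $z \in \widehat{\cO}_K^\times$.

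Combining, we have $z \in \widehat{\cO}_K^\times \subset U$, so $g\delta g^{-1} = zu \in U$, i.e. $\delta \in gUg^{-1}$. There is no real obstacle here; the argument is essentially a one-line extraction from the fact that the ambiguity in lifting from $\PGL_2$ to $\GL_2$ is scalar, and scalars of finite order inside the maximal compact are forced into $\widehat{\cO}_K^\times$.
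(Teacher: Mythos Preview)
Your proof is correct and follows essentially the same approach as the paper: write $g\delta g^{-1} = zu$ with $z$ scalar and $u \in U$, use the finite order of $\delta$ together with centrality of $z$ to deduce that a power of $z$ lies in the compact group $U \cap (\bbA_K^\infty)^\times = \widehat{\cO}_K^\times$, and conclude $z \in \widehat{\cO}_K^\times \subset U$. The paper phrases the initial decomposition as $\delta z = g u g^{-1}$ instead, but this is a cosmetic difference.
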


\begin{proof}
There is $z \in (\bbA_K^\infty)^\times$ such that $\delta z = gug^{-1}$ for some $u \in U$, and it suffices to show that $z \in \widehat{\cO}_K^\times$. If $n$ is the order of $\delta$, we have $z^n = (gug^{-1})^n$. So $z^n$, hence also $z$, lies in an open compact of $(\bbA_K^\infty)^\times$.
\end{proof}

\begin{lemma}\label{lem_acts_by_2_on_delta}
    Suppose that $U \in \cJ^S$ and that $U$ contains a principal congruence subgroup of the form $U(\frn)$, for some non-zero ideal $\frn \subset \cO_K$. 
    Let $\Delta \subset Z(K)$ be an order $p$ subgroup not contained in the centre of $G(K)$.
Then for any finite place $v$ of $K$ such that $\frp_v$ is principal, generated by an element $\pi_v \in \cO_K$ such that $\pi_v \equiv 1 \text{ mod }p\frn$, the operator $\mathsf{T}_v^\Delta - 2$ acts as 0 on $H^\ast_{Z(K)}(X(U)^\Delta, k)$.
\end{lemma}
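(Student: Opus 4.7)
The plan is to work at the level of connected components $\pi_0 X(U)^\Delta = W_{\Delta, U}/U$ and compute both the Hecke correspondence and the $Z(K)$-action explicitly at the prime $v$.

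\textbf{Step 1 (Structure of $\Delta$-fixed lattices at $v$).} Let $\delta \in \Delta$ be a generator, conjugated so $\delta = \diag(\zeta_p^a, \zeta_p^b)$ with $a \not\equiv b \pmod p$. Since $v \not\in S \supseteq S_p$ we have $v \nmid p$, so $\delta \bmod \varpi_v$ has two distinct unit eigenvalues in $k(v)^\times$. Consequently every $\delta$-stable lattice in $K_v^2$ respects the eigenspace decomposition $K_v^2 = K_v e_1 \oplus K_v e_2$ and is of the form $\varpi_v^{n_1}\cO_{K_v}e_1 \oplus \varpi_v^{n_2}\cO_{K_v}e_2$. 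In particular, for each $gU \in W_{\Delta,U}/U$ we may choose a representative with $g_v$ diagonal.

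\textbf{Step 2 (The Hecke action on $\Delta$-fixed components).} Of the $q_v+1$ index-$q_v$ sublattices of $g_v\cO_{K_v}^2$ (one for each line of $(\cO_{K_v}/\varpi_v)^2$), exactly the two that correspond to the two $\delta$-eigenlines in $k(v)^2$ are $\delta$-stable. Tracing $\mathsf{T}_v^\Delta = (\pi'_2)_\ast (\eta')^\ast (\pi'_1)^\ast$ through the diagram (using that $(\pi'_i)^{-1}(gU)$ consists of the 2 Iwahori choices coming from the 2 eigenlines), one obtains
\[ \mathsf{T}_v^\Delta[gU, x] = [g\tilde\eta^{(1)}U, x] + [g\tilde\eta^{(2)}U, x], \]
where $\tilde\eta^{(1)}, \tilde\eta^{(2)} \in G(\bbA_K^\infty)$ equal $\diag(\varpi_v,1)$ and $\diag(1,\varpi_v)$ at $v$ respectively, and $1$ at every other place.

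\textbf{Step 3 (The key identification $g\tilde\eta^{(i)}U = z_i g U$).} Since $\frp_v = (\pi_v)$ is principal, the elements $z_1 = \diag(\pi_v,1)$ and $z_2 = \diag(1,\pi_v)$ lie in $Z(K)$. At the place $v$, both $(z_i g)_v U_v$ and $(g\tilde\eta^{(i)})_v U_v$ equal $\diag(\varpi_v^{n_1+1},\varpi_v^{n_2})U_v$ (respectively its swap), because $\pi_v$ and $\varpi_v$ differ by a $v$-adic unit and $g_v$ is diagonal. At a place $w \neq v$, $\tilde\eta^{(i)}_w = 1$, so the equality reduces to showing $g_w^{-1}z_{i,w}g_w \in U_w$. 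Writing $z_{i,w} - 1 = (\pi_v-1)f_i$ with $f_1 = \diag(1,0)$ and $f_2 = \diag(0,1) = 1 - f_1$, we get
\[ g_w^{-1} z_{i,w} g_w - 1 = (\pi_v - 1)\, g_w^{-1} f_i g_w. \]
The hypothesis $g \in W_{\Delta, U}$ gives $g_w^{-1}\delta g_w \in U_w \subseteq \GL_2(\cO_{K_w})$; since $\delta - 1 = (\zeta_p-1)f_1$, this forces $g_w^{-1}f_i g_w \in (\zeta_p-1)^{-1}M_2(\cO_{K_w})$. Using $v_w(\zeta_p-1) = e(w|p)/(p-1) \leq v_w(p)$, we have $(\zeta_p-1)^{-1} \in p^{-1}\cO_{K_w}$, so $g_w^{-1}f_i g_w \in p^{-1}M_2(\cO_{K_w})$. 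The hypothesis $\pi_v - 1 \in p\frn\cO_K$ then yields $(\pi_v-1)g_w^{-1}f_i g_w \in \frn M_2(\cO_{K_w})$, whence $g_w^{-1}z_{i,w}g_w \in U(\frn)_w \subseteq U_w$, as required.

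\textbf{Step 4 (Conclusion).} Since $z_i \in Z(K)$ acts as the identity on $Z(K)$-equivariant cohomology, the class $[g\tilde\eta^{(i)}U, x] = [z_i gU, x]$ coincides with $[gU, x]$ in $H^\ast_{Z(K)}(X(U)^\Delta, k)$. Therefore
\[ \mathsf{T}_v^\Delta [gU, x] = [gU, x] + [gU, x] = 2[gU, x], \]
so $\mathsf{T}_v^\Delta - 2$ annihilates $H^\ast_{Z(K)}(X(U)^\Delta, k)$. For $G = \PGL_2$ one lifts $\delta$ to $\GL_2(K)$ and applies Lemma \ref{lem_two_WUs} (to the preimage $U' \subset \GL_2(\widehat\cO_K)$ of $U$, which automatically contains $\widehat\cO_K^\times$) to reduce to the $\GL_2$ computation.

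The main obstacle is the place-by-place verification in Step 3 at the ramified and $p$-adic primes $w \in S \setminus \{v\}$: the congruence $\pi_v \equiv 1 \pmod{p\frn}$ (as opposed to merely $\pmod{\frn}$) is used exactly to absorb the factor of $p^{-1}$ coming from the bound $v_w(\zeta_p - 1) \leq v_w(p)$, which in turn comes from the crude constraint imposed on $g_w$ by $g_w^{-1}\delta g_w \in U_w$.
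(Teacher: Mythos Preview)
Your argument is essentially the paper's own: normalise $g_v$ to be diagonal, exhibit $\gamma_i = \diag(\pi_v, 1), \diag(1, \pi_v) \in Z(K)$, and verify $g\tilde\eta^{(i)} U = \gamma_i g U$ place by place using $\pi_v \equiv 1 \pmod{p\frn}$ together with $g_w^{-1}\delta g_w \in \GL_2(\cO_{K_w})$. Two small points deserve correction. In Step~3, the identity $\delta - 1 = (\zeta_p - 1)f_1$ holds only when one eigenvalue of $\delta$ equals $1$; in general, writing $\delta = \diag(\zeta, \zeta')$, use $f_1 = (\delta - \zeta')/(\zeta - \zeta')$ instead, and note that $v_w(\zeta - \zeta') = v_w(\zeta_p - 1)$ for every $w$ to recover the same bound $g_w^{-1} f_i g_w \in (\zeta_p - 1)^{-1} M_2(\cO_{K_w})$. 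In Step~2, writing the same $x$ on both sides of your Hecke formula tacitly assumes that the restriction--corestriction through the Iwahori-level stabilizers is the identity, i.e.\ that $Z_{g\tilde\eta^{(i)}, U_0(v)} = Z_{g\tilde\eta^{(i)}, U} = Z_{g, U}$; the paper checks this separately, observing that once $g_v$ is diagonal one has $Z(K_v) \cap g_v U_0(v)_v g_v^{-1} = Z(K_v) \cap g_v U_v g_v^{-1}$, and you should add a sentence to that effect.
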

\begin{proof}
    Let $\eta = \diag(\varpi_v, 1) \in G(K_v) \subset G^\infty$. We begin by giving an explicit formula for $\mathsf{T}_v^\Delta$. It is helpful to write an element $f \in H^\ast_{Z(K)}(X(U)^\Delta, k)$ as a tuple $(f_g)_{g \in Z(K) \backslash W_{\Delta, U} / U}$ of elements $f_g \in H^\ast(Z_{g, U}, k)$. Define
    \[ w = \left( \begin{array}{cc} 0 & 1 \\ 1 & 0\end{array}\right) \in G(K_v) \subset G^\infty. \]
    The map $Z(K) \backslash W_{\Delta, U_0(v)} / U_0(v) \to Z(K) \backslash W_{\Delta, U} / U$ is 2-to-1, with an element $g$ having distinct pre-images $g$ and $g w$. A similar statement holds for $U^0(v)$. We find that we have for any $f \in H^\ast_{Z(K)}(X(U)^\Delta, k)$ the formula
    \[ (\mathsf{T}_v^\Delta f)_g = \operatorname{cores}^{Z_{g \eta, U_0(v)}}_{Z_{g, U}} \operatorname{res}^{Z_{g \eta, U}}_{Z_{g \eta, U_0(v)}} f_{g \eta} +  \operatorname{cores}^{Z_{g w \eta, U_0(v)}}_{Z_{g, U}} \operatorname{res}^{Z_{g  w \eta, U}}_{Z_{g w \eta, U_0(v)}} f_{g w \eta}. \]
    We see that $\mathsf{T}_v^\Delta - 2$ will act as 0 on  $H^\ast_{Z(K)}(X(U)^\Delta, k)$ provided that the following conditions hold:
    \begin{itemize}
        \item We have $g \eta = g$ in $Z(K) \backslash W_{\Delta, U} / U$. Equivalently, there exist elements $\gamma \in Z(K)$, $u \in U$ such that $g \eta = \gamma g u$.
        \item We have $g w \eta = g$ in $Z(K) \backslash W_{\Delta, U} / U$. Equivalently, there exist elements $\gamma' \in Z(K)$, $u' \in U$ such that $g w \eta = \gamma' g u'$.
        \item The inclusions $Z_{g \eta, U_0(v)} \subset Z_{g \eta, U}$ and $Z_{g \eta, U_0(v)} \subset Z_{g, U}$ are equalities.
        \item The inclusions $Z_{g w \eta, U_0(v)} \subset Z_{g w \eta, U}$ and $Z_{g w \eta, U_0(v)} \subset Z_{g, U}$ are equalities.
    \end{itemize}
    We will show that these conditions do hold under the hypotheses of the lemma for any $g \in W_{\Delta, U}$. We therefore fix a choice of $g \in W_{\Delta, U}$ for the rest of the proof. We also fix a generator $\delta = \diag(\zeta, \zeta') \in \Delta$, with $\zeta,\zeta'$ distinct $p$th roots of $1$. We are free to replace $g$ by any element with the same image in $Z(K) \backslash W_{\Delta, U} / U$. To simplify calculations, we therefore assume that $g$ has been chosen so that $g_v^{-1} \delta g_v = \diag(\zeta, \zeta')$ in $G(K_v)$, or equivalently that $g_v \in Z(K_v)$. This implies that 
    \[ Z(K_v) \cap (g_v \eta_v) U_v (g_v \eta_v)^{-1} = Z(K_v) \cap U_v = Z(K_v) \cap U_0(v)_v = Z(K_v) \cap (g_v \eta_v) U_0(v)_v (g_v \eta_v)^{-1}. \]
     The third point follows immediately from this. The fourth point follows in a very similar manner. 
     
     We now treat the first two points. In fact, it is enough now to consider only the case $G = \GL_2$. Indeed, if $G = \PGL_2$, define an open compact subgroup $\widetilde{U} = \prod_v \widetilde{U}_v \subset \prod_v \GL_2(\cO_{K_v})$ by setting $\widetilde{U}_v$ to be the pre-image of $U_v$ in $\GL_2(\cO_{K_v})$. Let $\widetilde{\delta} \in \GL_2(K)$ be a lift of $\delta$ of order $p$ (the existence of a lift follows from the proof of Lemma \ref{lem_semisimple}). Let $\widetilde{\Delta} = \langle \widetilde{\delta} \rangle$. Then there is a map $W_{\widetilde{\Delta}, \widetilde{U}} \to W_{\Delta, U}$, which  Lemma~\ref{lem_two_WUs}  shows to be surjective. Establishing the first two points above for $W_{\widetilde{\Delta}, \widetilde{U}}$ will therefore imply them for $W_{\Delta, U}$ also.
     
     We therefore assume for the remainder of the proof that in fact $G = \GL_2$.  For the first point, we note that it is equivalent to find $\gamma \in Z(K)$ such that $\eta^{-1} g^{-1} \gamma g \in U$. We choose $\gamma = (\pi_v (\delta -\zeta') - (\delta - \zeta))/(\zeta - \zeta')  = \diag(\pi_v, 1)$. We will show that in fact $\eta^{-1} g^{-1} \gamma g  \in U(\frn)$. It suffices to check this one place at a time. Since $\eta_v^{-1} g_v^{-1} \gamma g_v = \diag(\varpi_v^{-1}\pi_v, 1)$,
this is immediate at the place $v$. If $w \neq v$ is a finite place of $K$, then the $w$-entry of $\eta^{-1} g^{-1} \gamma g$ equals
    \[ g_w^{-1} \gamma g_w = \frac{\pi_v - 1}{\zeta - \zeta'} g_w^{-1} \delta g_w + \frac{\zeta - \pi_v\zeta'}{\zeta - \zeta'}. \]
    Note that $g_w^{-1} \delta g_w$ lies in $U_w \subset \GL_2(\cO_{K_w})$, by assumption. If $w \nmid \frn$ then, since $\pi_v \equiv 1 \text{ mod }p$, the above formula shows that $g_w^{-1} \gamma g_w$ has entries in $\cO_{K_w}$ and unit determinant, so lies in $U_w = \GL_2(\cO_{K_w})$. If $w | \frn$ then, since $\pi_v \equiv 1 \text{ mod }p\frn$, we see that $g_w^{-1} \gamma g_w$ is congruent to 1 modulo $\frn$ and has unit determinant, so lies in $U(\frn)_w$. This establishes the first point above. 

    The second point is very similar. We must find $\gamma' \in Z(K)$ such that $w \eta^{-1} w g^{-1} \gamma' g \in U$. We choose $\gamma' =  ((\delta - \zeta') - \pi_v (\delta - \zeta))/(\zeta - \zeta') = \diag(1, \pi_v)$. A very similar calculation shows that this element does the job.  All points having been established, this completes the proof of the lemma.
\end{proof}
\begin{lemma}\label{lem_Farrell}
    Let $U \in \cJ^S$ be such that $U \cap Z_G(K)$ contains no non-trivial $p$-torsion elements. Let $D$ be a set of representatives for the action of $N_G(Z)/Z$ on the set of order $p$ subgroups of $Z(K)$ not contained in the centre of $G(K)$. Then the direct sum of restriction maps $H^i_{G(K)}(X(U), k) \to \oplus_{\Delta \in D} H^i_{Z(K)}(X(U)^\Delta, k)$ is injective for all $i > \dim_\bbR X_G$.
\end{lemma}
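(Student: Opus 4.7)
The plan is to reduce to showing, for each connected component of $X(U)$, the injectivity of a restriction map for a single congruence subgroup, and then to invoke Brown's detection theorem for high-dimensional cohomology.

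First, using the decompositions
\[
H^i_{G(K)}(X(U), k) = \bigoplus_{[gU] \in G(K)\backslash G^\infty/U} H^i(\Gamma_{g,U}, k)
\]
and
\[
H^i_{Z(K)}(X(U)^\Delta, k) = \bigoplus_{[hU] \in Z(K)\backslash W_{\Delta, U}/U} H^i(Z_{h,U}, k),
\]
I would unwind the restriction map: the $[gU]$-summand on the left maps into the sum of those $[hU]$-summands on the right (ranging over $\Delta \in D$) with $hU$ in the $G(K)$-orbit of $gU$, and writing $hU = \alpha^{-1}gU$ with $\alpha \in G(K)$, the induced map on individual summands identifies, after conjugation by $\alpha$, with the group-cohomology restriction from $\Gamma_{g,U}$ to the centralizer $Z_{\Gamma_{g,U}}(\alpha \Delta \alpha^{-1})$ of the order-$p$ subgroup $P := \alpha \Delta \alpha^{-1}$. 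It therefore suffices to fix a component $g$ and show that this combined restriction map is injective in degrees $i > \dim_\bbR X_G$.

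Next, setting $\Gamma := \Gamma_{g, U}$, Borel--Serre gives $\mathrm{vcd}(\Gamma) \leq \dim_\bbR X_G$, and Lemma \ref{lem_p_group_rank_1} asserts that every nontrivial elementary abelian $p$-subgroup of $\Gamma$ is cyclic of order $p$. Brown's detection theorem (Brown, \emph{Cohomology of Groups}, Ch.~X) then yields that the natural map
\[
H^i(\Gamma, k) \longrightarrow \bigoplus_{[P]} H^i(N_\Gamma(P), k)
\]
is injective for $i > \dim_\bbR X_G$, the sum running over $\Gamma$-conjugacy classes of order-$p$ subgroups $P$; further restricting to the centralizers $Z_\Gamma(P)$ preserves injectivity, because $N_\Gamma(P)/Z_\Gamma(P)$ embeds in $\Aut(P) \cong \bbF_p^\times$, a group of order prime to $p$.

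The main remaining point, which I expect to be the most delicate, is to match this product over $[P]$ with the target indexed by pairs $(\Delta, [hU])$, with $\Delta \in D$ and $[hU] \in Z(K)\backslash W_{\Delta, U}/U$ satisfying $hU \in G(K) \cdot gU$, compatibly with the restriction maps. Given $P$, Lemma \ref{lem_semisimple} writes $P = \alpha \Delta_0 \alpha^{-1}$ for some $\alpha \in G(K)$ and some order-$p$ subgroup $\Delta_0 \subset Z(K)$; the hypothesis that $U \cap Z_G(K)$ has no $p$-torsion forces $\Delta_0 \not\subset Z_G(K)$, so after adjusting $\alpha$ by an element of $N_{G(K)}(Z)$ we may assume that $\Delta_0$ is the unique element $\Delta \in D$ in its $N_G(Z)/Z$-orbit. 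The pair $(\Delta, \alpha)$ is then unique up to left-multiplication of $\alpha$ by $Z_{G(K)}(\Delta) = Z(K)$, and one sets $h = \alpha^{-1}g$. The remaining work is to check carefully that the $Z(K)$-left and $U$-right ambiguity in $h$ matches the ambiguity in $\alpha$ together with the choice of $P$ within its $\Gamma$-conjugacy class, and that $Z_{h, U}$ corresponds under conjugation by $\alpha$ to $Z_\Gamma(P)$, so that the restriction maps agree.
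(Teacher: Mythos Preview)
Your proposal is correct and follows essentially the same approach as the paper: decompose both sides into group cohomology over components, match the index set on the right with $\Gamma_{g,U}$-conjugacy classes of order-$p$ subgroups $P$ via Lemma~\ref{lem_semisimple}, invoke Brown's Farrell cohomology result (\cite[X.7.4]{Bro82}) using the $p$-rank~$1$ input from Lemma~\ref{lem_p_group_rank_1}, and then pass from normalizers to centralizers via the prime-to-$p$ index of $Z_\Gamma(P)$ in $N_\Gamma(P)$. The paper presents the combinatorial matching first and the Farrell step second, and notes that for $p=2$ the map is even an isomorphism, but otherwise the arguments coincide.
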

\begin{proof}
    We describe these groups and the maps between them in more classical terms. We can decompose
    \[ H^\ast_{G(K)}(X(U), k) \cong \oplus_{g \in G(K) \backslash G(\bbA_K^\infty) / U} H^\ast(\Gamma_{g, U}, k). \]
    On the other hand, we can decompose
    \[ H^\ast_{Z(K)}(X(U)^\Delta, k) = \oplus_{g \in Z(K) \backslash W_{\Delta, U} / U} H^\ast(Z_{g, U}, k). \]
    There is a natural map $Z(K) \backslash W_{\Delta, U} / U \to G(K) \backslash G(\bbA_K^\infty) / U$, and the fibre above the double coset of an element $g \in G(\bbA_K^\infty)$ is identified with the set of $\Gamma_{g, U}$-conjugacy classes of $G(K)$-conjugates of $\Delta$ which lie in $\Gamma_{g, U}$. Thus the map in the lemma can be identified with a direct sum of maps
    \[ H^\ast(\Gamma_{g, U}, k) \to \prod_P H^\ast( Z_{\Gamma_{g, U}}(P), k), \]
    where the product runs over the set of $\Gamma_{g, U}$-conjugacy classes of $G(K)$-conjugates $P$ of some $\Delta$ which are contained in $\Gamma_{g, U}$.
    
    On the other hand, the theory of Farrell cohomology (see \cite[Corollary X.7.4]{Bro82} and \cite[X.3.4]{Bro82}) implies that when the elementary abelian $p$-subgroups of $\Gamma_{g, U}$ have rank at most 1, the product of restriction maps
    \[ H^i(\Gamma_{g, U}, k) \to \prod_P H^i(N_{\Gamma_{g, U}}(P), k) \]
    is an isomorphism for each $i > \dim_\bbR X_G$ (where the product runs over the set of $\Gamma_{g, U}$-conjugacy classes of elementary abelian $p$-subgroups of $\Gamma_{g, U}$). (We are using here that all the groups $\Gamma_{g, U}$ and $N_{\Gamma_{g, U}}(P)$ have virtual cohomological dimension at most $\dim_\bbR X_G$.) Our hypotheses imply that any elementary abelian $p$-subgroup $P$ of $\Gamma_{g, U}$ is $G(K)$-conjugate to a unique $\Delta \in D$. To finish the proof, we split into cases. If $p = 2$, then each group $P$ appearing in the product has order 2, and so satisfies $N_{\Gamma_{g, U}}(P) = Z_{\Gamma_{g, U}}(P)$; in this case, the map in the statement of the lemma is an isomorphism for $i > \dim_{\bbR} X_G$ (not just injective).
    
    If $p$ is odd, then the index of $Z_{\Gamma_g,U}(P)$ in $N_{\Gamma_g,U}(P)$ is prime to $p$ (as it divides $2$). The map $H^\ast(N_{\Gamma_{g, U}}(P), k) \to H^\ast(Z_{\Gamma_{g, U}}, k)$ is therefore injective, giving the desired statement. 
\end{proof}

Combining these lemmas gives us the following proposition which, as we have already observed, implies the truth of Theorem \ref{thm_boundedness_of_good_dihedral_cohomology}. 
\begin{proposition}\label{prop_vanishing_of_high_degree_non_Eisenstein_cohomology}
Let the assumptions be as in Theorem~\ref{thm_boundedness_of_good_dihedral_cohomology}. 
Then the groups $H^i(A(U, k))_\ffrm$ are zero whenever $i > \dim_\bbR X_G$.
\end{proposition}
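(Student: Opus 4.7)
The strategy is to combine Lemmas \ref{lem_Farrell}, \ref{lem_X_Xdelta}, and \ref{lem_acts_by_2_on_delta} to force many Hecke operators to act as $2$ on the high-degree cohomology, and then to extract a contradiction with the non-Eisenstein hypothesis via Chebotarev density. As noted just before the statement, it suffices to treat the case $M = k$.

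First I would pick a nonzero ideal $\frn \subseteq \cO_K$, supported on $S$, with $U(\frn) \subseteq U$, and let $\cV$ be the set of finite places $v \notin S$ such that $\frp_v$ is principal with a generator $\pi_v \equiv 1 \pmod{p\frn}$. For any $i > \dim_\bbR X_G$, Lemma \ref{lem_Farrell} (applicable by the hypothesis that $U \cap Z_G(K)$ has no nontrivial $p$-torsion when $G = \GL_2$) gives an injection
\[ H^i_{G(K)}(X(U), k) \hookrightarrow \bigoplus_{\Delta \in D} H^i_{Z(K)}(X(U)^\Delta, k). \]
A direct computation for $\GL_2$ and $\PGL_2$ yields $rs \le \dim_\bbR X_G$ (in either case the difference is $r_1 + 2r_2 + 1$), so by Lemma \ref{lem_X_Xdelta} the above injection is equivariant for $\mathsf{T}_v$ on the source and $\mathsf{T}_v^\Delta$ on the target for every $v \notin S$. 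Lemma \ref{lem_acts_by_2_on_delta} then forces $\mathsf{T}_v^\Delta$ to act as $2$ on the target for every $v \in \cV$, so $\mathsf{T}_v - 2$ annihilates $H^i(A(U, k))$ for all $v \in \cV$ and all $i > \dim_\bbR X_G$.

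Now assume for contradiction that $H^i(A(U, k))_\ffrm \neq 0$ for some $i > \dim_\bbR X_G$. Then $\mathsf{T}_v \equiv 2 \pmod \ffrm$ for every $v \in \cV$. Letting $L$ be the ray class field of $K$ of conductor $p\frn$, the set $\cV$ consists precisely of the finite places outside $S$ that split completely in $L$, so Chebotarev density applied to $LK(\overline{\rho}_\ffrm)/K$ makes $\{\overline{\rho}_\ffrm(\Frob_v) : v \in \cV\}$ dense in $\overline{\rho}_\ffrm(G_L)$; hence $\tr(\overline{\rho}_\ffrm(g)) = 2$ for all $g \in G_L$. The characteristic polynomial of $\overline{\rho}_\ffrm(\Frob_v)$ has constant term $q_v$ (using that $\mathsf{T}_{v,2} = 1$ in $\bbT^S_{\PGL_2}$ and acts trivially on the constant sheaf for $\GL_2$), and $q_v \equiv 1 \pmod p$ since $\pi_v \equiv 1 \pmod p$. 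Thus every element of $\overline{\rho}_\ffrm(G_L)$ has characteristic polynomial $(X-1)^2$ in $k[X]$, so by Brauer--Nesbitt the image $\overline{\rho}_\ffrm(G_L) \subset \GL_2(k)$ is unipotent.

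Finally, if $\overline{\rho}_\ffrm(G_L) = \{I\}$ then $\overline{\rho}_\ffrm$ factors through the abelian group $\Gal(L/K)$, which admits no absolutely irreducible $2$-dimensional representation; otherwise $\overline{\rho}_\ffrm(G_L)$ is a nontrivial unipotent subgroup whose unique fixed line in $k^2$ is $G_K$-stable by normality of $G_L$ in $G_K$, again contradicting the absolute irreducibility of $\overline{\rho}_\ffrm$. Either possibility yields the desired contradiction. The main technical point requiring care is the precise identification of the Hecke operator actions on the (trivial) coefficient sheaf so as to pin down the determinant of $\overline{\rho}_\ffrm$ on Frobenii in $\cV$; once that is in place, the combination of the three input lemmas with Chebotarev and Brauer--Nesbitt is routine.
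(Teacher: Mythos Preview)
Your proof is correct, and the first half --- combining Lemmas \ref{lem_Farrell}, \ref{lem_X_Xdelta}, and \ref{lem_acts_by_2_on_delta} to show that $\mathsf{T}_v - 2$ kills high-degree cohomology for $v$ splitting in the ray class field of modulus $p\frn$ --- is exactly the paper's argument (the paper does not spell out the inequality $rs \le \dim_\bbR X_G$, but your computation is right). The final contradiction, however, is handled differently. The paper observes that since $L/K$ is abelian, the commutator $[\im\overline{\rho}_\ffrm, \im\overline{\rho}_\ffrm]$ lies inside $\overline{\rho}_\ffrm(G_L)$, and then appeals directly to the classification of irreducible subgroups of $\PGL_2(k)$ to exhibit an element $g$ in this commutator with $\tr g \neq 2$; Chebotarev then produces a place $v$ splitting in $L$ with $\mathsf{T}_v - 2$ invertible. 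This sidesteps the determinant entirely.

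Your route --- pinning down both trace and determinant of $\overline{\rho}_\ffrm(\Frob_v)$ for $v \in \cV$ to force $\overline{\rho}_\ffrm(G_L)$ to be unipotent, then contradicting irreducibility --- is valid but costs you the extra step of showing $\mathsf{S}_v \equiv 1 \pmod\ffrm$ in the $\GL_2$ case. That claim is true (for $v \in \cV$ the central adele $\diag(\varpi_v,\varpi_v)$ differs from the rational central element $\diag(\pi_v,\pi_v)$ by something in $U$, reducing to a translation on $X_G$ by an archimedean central element, which induces the identity on $H^\ast(\Gamma_{g,U},k)$ since any $\Gamma$-equivariant self-map of a contractible space does), but you flag it without proving it. The paper's commutator trick buys exactly the freedom to ignore this point.
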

\begin{proof}
We spell out the details. Let $\frn \subset \cO_K$ be a nonzero
ideal such that $U(\frn) \subset U$, and let $K(p \frn) / K$ be the ray class field of level $p \frn$. 
Combining Lemmas~\ref{lem_semisimple}, \ref{lem_p_group_rank_1}, \ref{lem_X_Xdelta}, \ref{lem_acts_by_2_on_delta}, and \ref{lem_Farrell}, we see that if $v \nmid p \frn$ is any finite place of $K$ that splits in $K(p \frn)$, then $T_v - 2$ annihilates $H^i(A(U, k))$.
    
On the other hand, the image of $\overline{\rho}_\ffrm$ is an irreducible finite subgroup of $\GL_2(k)$. First assume that the projective image of $\overline{\rho}$ is nonabelian. Using the classification of irreducible subgroups of $\PGL_2(k)$, we can find $g \in [\im(\overline{\rho}), \im(\overline{\rho})]$ such that $\tr(g) \ne 2$. 
If the projective image of $\overline{\rho}$ is abelian, then again using the classification of irreducible subgroups of $\PGL_2(k)$, it is isomorphic to the Klein $4$-group and $p >2$. Then there is a nontrivial scalar element $g \in [\im(\overline{\rho}), \im(\overline{\rho})]$ with $\tr(g) \ne 2$ (since $p >2$). 
So in either case, we can find $\sigma \in G_{K(p \frn)}$ such that $\tr \overline{\rho}_\ffrm(\sigma) \neq 2$. By the Chebotarev density theorem, there exists a finite place $v \not\in S$ which splits in $K(p \frn)$ and such that $\tr \overline{\rho}_\ffrm(\Frob_v) \neq 2$, and consequently $T_v - 2$ is invertible on $H^i(A(U, k))_\ffrm$. 

The only possibility is therefore $H^i(A(U, k))_\ffrm = 0$.
\end{proof}

\section{An application of the Taylor--Wiles method}\label{sec_application_of_TW_method}

In this section we combine the results of \S \ref{sec_cohom} with the Taylor--Wiles method  in order to prove (under restrictive hypotheses) an $R = \bbT$ result for 2-adic Galois representations.

\subsection{Set-up for the main technical automorphy lifting result}\label{subsec_setup_for_main_ALT}

We use the setup of \S\ref{sec_cohom_general}, specialized to the case $G = \GL_2$. In particular, we are given a CM field $K$, a prime $p$, and a coefficient field $E \subset \overline{\bbQ}_p$. We suppose that the following conditions hold:
\begin{itemize}
\item The prime $p = 2$.
\item We are given $\overline{\rho} : G_K \to \GL_2(\overline{\bbF}_2)$, a continuous, absolutely irreducible representation of soluble image. 
\item We assume that $\overline{\rho}$ is unramified away from the 2-adic places of $K$ and that $\overline{\rho}$ is decomposed generic.
\item For each place $v \in S_2$, $\overline{\rho}|_{G_{K_v}}$ satisfies the hypotheses of \S \ref{subsubsec_ordinary_deformation_problems}. In particular, it is ramified.
\item There exists a place $v_0 | 2$ of $K$ such that the inclusion $K \hookrightarrow K_{v_0}$ induces a bijection on 2-power roots of unity. We write $2^m$ for the number of 2-power roots of unity in $K$.
\end{itemize}
We suppose given the following data:
\begin{itemize}
\item A finite set $R$ of finite places of $K$ stable under complex conjugation such that for each $v \in R$, $\overline{\rho}|_{G_{K_v}}$ is the trivial representation. We assume that $[K^+ : \bbQ] > 8 |R| + 10$.
\item An isomorphism $\iota : \overline{\bbQ}_p \cong \bbC$ and a cuspidal automorphic representation $\pi$ of $\GL_2(\bbA_K)$ which is cohomological of weight 0, $\iota$-ordinary, which satisfies $\overline{r_\iota(\pi)} \cong \overline{\rho}$, and which satisfies the following additional conditions:
\begin{itemize}
\item For each place $v \in R$, there exists a non-trivial character $\chi_v : \cO_{K_v}^\times \to \cO^\times$ of 2-power order such that $\iota^{-1} \pi_v \cong i_B^G \widetilde{\chi}_{v, 1} \otimes \widetilde{\chi}_{v, 2}$, where $\widetilde{\chi}_{v, 1}|_{\cO_{K_v}^\times} = \chi_{v}$ and $\widetilde{\chi}_{v, 2}|_{\cO_{K_v}^\times} = \chi_{v}^{-1}$ and $\chi_v \neq \chi_v^{-1}$.
\item For each place $v | 2$ of $K$, $\pi_v$ has an Iwahori-fixed vector.
\item Let $S = S_2 \cup R$. We assume that for each place $v \in S$, there exists an imaginary quadratic subfield of $K$ in which the residue characteristic of $v$ splits; and that for each place $v \not\in S$, $\pi_v$ is unramified. We assume as well that if $l$ is a rational prime which ramifies in $K$, then there exists an imaginary quadratic subfield of $K$ in which $l$ splits.
\end{itemize} 
\end{itemize}
If $c \geq b \geq 1$ are integers, then we let $U(b, c) = \prod_v U(b, c)_v \in \cJ^S$ be the open compact subgroup of $G^\infty$ defined as follows:
\begin{enumerate}
\item If $v \in S_2$ and $v \neq v_0$, then $U(b, c)_v = I_v(b, c) \cdot (\mu_{2^\infty}(K_{v}) \times \mu_{2^\infty}(K_{v}))$, where $\mu_{2^\infty}(K_{v}) \times \mu_{2^\infty}(K_{v})$ is viewed as a subgroup of the diagonal matrices in $\GL_2(K_v)$. Thus $U(b, c)_v$ is a subgroup of $I_v(1, c)$.
\item If $v = v_0$, then $U(b, c)_v = I_v(b, c) \cdot(\mu_{2^\infty}(K_{v_0}) \times 1)$.
\item If $v \in R$, then $U(b, c)_v = I_v$.
\item If $v \not\in S$, then $U(b, c)_v = \GL_2(\cO_{K_v})$.
\end{enumerate}
We note that $U(c, c) \subset U(1, c)$ is a normal subgroup and there is a canonical isomorphism 
\begin{multline*} U(1, c) / U(c, c) \cong \left( \prod_{\substack{v \in S_2\\v \neq v_0}}  (1 + \varpi_v \cO_{K_v}) / ( \mu_{2^\infty}(K_v), 1 + \varpi_v^c \cO_{K_v}) \right)^2 \\ \times (1 + \varpi_{v_0} \cO_{K_{v_0}}) / (\mu_{2^\infty}(K_{v_0}), 1 + \varpi_v^c \cO_{K_{v_0}}) \times (1 + \varpi_{v_0} \cO_{K_{v_0}}) / (1 + \varpi_{v_0}^c \cO_{K_{v_0}}). \end{multline*}
For any $c \geq 1$ we define $\Lambda_{1, c} = \cO[U(1, c) / U(c, c)]$, and $\Lambda_c$ to be the quotient of $\Lambda_{1, c}$ corresponding to the quotient group
\[ \left(\prod_{v \in S_2} (1 + \varpi_v \cO_{K_v}) / (\mu_{2^\infty}(K_v), 1 + \varpi_v^c \cO_{K_v}) \right)^2. \]
Let $c_0 \geq 1$ be the smallest integer such that there are no non-trivial roots of unity in $K_{v_0}$ which are congruent to 1 modulo $\varpi_{v_0}^{c_0}$. For any $c \geq c_0$, we define $\Lambda_{0, c} = \cO[U(c_0, c) / U(c, c)]$. We set $\Lambda_1 = \varprojlim_c \Lambda_{1, c}$, $\Lambda = \varprojlim_c \Lambda_c$, and $\Lambda_0 = \varprojlim_c \Lambda_{0, c}$. We observe that there are maps
\[ \Lambda_0 \hookrightarrow \Lambda_1 \twoheadrightarrow \Lambda, \]
that $\Lambda_0$ and $\Lambda$ are regular local $\cO$-algebras of dimension $1 + 2[K : \mathbb{Q}]$, and that the composite map $\Lambda_0 \to \Lambda$ is injective. The reason for introducing these three algebras is that the complexes we use will naturally be complexes of $\Lambda_1$-modules; however, we will see that the cohomology groups will be $\Lambda$-modules, and the complexes will be perfect only as complexes of $\Lambda_0$-modules. We need all of these properties.

If $x : \prod_{v \in R} k(v)^\times \to \cO^\times$ is a character which is trivial modulo $\varpi$, then we write $\cO(x)$ for the $\cO[U(1, 1)]$-module, free of rank 1 over $\cO$, on which $U(1, 1)$ acts through the projection $U(1, 1) \to \prod_{v \in R} I_v \to \prod_{v \in R}( k(v)^\times \times k(v)^\times)$ via the character $x \boxtimes x^{-1}$. We thus have a complex for any $c \geq b \geq 1$:
\[ A(U(1, c) / U(b, c), \cO(x)) \in \mathbf{D}(\Lambda_{1, b}), \]
which is equipped with an algebra homomorphism
\[ \bbT^S[ \mathsf{U}_2 ]  \to \End_{\mathbf{D}(\Lambda_{1, b})}( A(U(1, c) / U(b, c), \cO(x)) ), \]
where by definition $\mathsf{U}_2 = \prod_{v \in S_2} \mathsf{U}_v$ (and $\mathsf{U}_v = \mathsf{U}_{v, 1}$, in the notation of \S \ref{sec_Hecke_Gal_rep}). 
\begin{lemma}
For any $c' \geq c \geq 1$, pullback induces a $\mathbb{T}^S[ \mathsf{U}_2]$-equivariant morphism
\[ A(U(1,c) / U(c, c), \cO(x)) \to  A(U(1, c') / U(c, c'), \cO(x)) \]
in $\mathbf{D}(\Lambda_{1, c})$. Consequently, there is an induced morphism of $\mathsf{U}_2$-ordinary parts 
\[ A(U(1, c) / U(c, c), \cO(x))^\text{ord} \to  A(U(1, c') / U(c, c'), \cO(x))^\text{ord} \]
which is in fact an isomorphism. 
\end{lemma}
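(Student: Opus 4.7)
The plan has two parts: defining the morphism in $\mathbf{D}(\Lambda_{1,c})$, and then proving the induced map on ordinary parts is an isomorphism.

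First, I would observe that the $\Lambda_{1,c}$-module structure on both sides is well-defined because the natural projection $U(1,c')/U(c,c') \to U(1,c)/U(c,c)$ is in fact an \emph{isomorphism}: at each $v \in S_2$, the quotient $I_v(1,c)/I_v(c,c)$ is the image of the diagonal torus $(1+\varpi_v\cO_{K_v})/(1+\varpi_v^c \cO_{K_v})$, which depends only on $c$ and not on the lower-triangular condition level $c'$. The pullback morphism then comes from the inclusion of level subgroups $U(c,c') \subseteq U(c,c)$ (stricter condition on the lower-left entry at $2$-adic places): at the level of spaces, we have a finite \'etale cover $\mathfrak{X}_G/U(c,c') \to \mathfrak{X}_G/U(c,c)$ whose pullback on equivariant cohomology refines to a morphism of complexes
\[ A(U(1,c)/U(c,c), \cO(x)) \to A(U(1,c')/U(c,c'), \cO(x)) \]
in $\mathbf{D}(\Lambda_{1,c})$. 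Equivariance for $\bbT^S$ is automatic (the level changes only at $2$-adic places, disjoint from $S$), and equivariance for $\mathsf{U}_2$ is the standard fact that the Hecke action at $\alpha_v = \diag(\varpi_v,1)$ commutes with restriction of Iwahori level in the lower-triangular direction.

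Second, for the isomorphism on ordinary parts, I would apply Proposition~\ref{prop_ord_summand} to realize $(-)^{\text{ord}}$ as an idempotent direct summand of the complex, so it suffices to show the pullback is an isomorphism after inverting $\mathsf{U}_2$. The strategy is the classical Hida-theoretic one: construct a transfer morphism
\[ \tau : A(U(1,c')/U(c,c'), \cO(x)) \to A(U(1,c)/U(c,c), \cO(x)) \]
going the other way, induced by summing over the fibres of $\mathfrak{X}_G/U(c,c') \to \mathfrak{X}_G/U(c,c)$, and then verify that both compositions $\tau \circ \pi^\ast$ and $\pi^\ast \circ \tau$ agree with $\mathsf{U}_2^{N} = \prod_{v|2} \mathsf{U}_v^{n_v}$ (up to a unit of $\Lambda$) for a suitable integer $N$ depending on $c'-c$. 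The key input is the standard coset calculation at each $v \mid 2$: writing $\mathsf{U}_v = [I_v(1,c)\,\alpha_v\,I_v(1,c)]$ and using $\alpha_v I_v(1,c+1)\alpha_v^{-1} \supseteq I_v(1,c) \cap \alpha_v I_v(1,c) \alpha_v^{-1}$, one identifies $\mathsf{U}_v$ with the composition of (trace from $I_v(1,c+1)$ to $I_v(1,c)$) with (twist by $\alpha_v$) with (pullback from $I_v(1,c)$ to $I_v(1,c+1)$). Iterating $c'-c$ times at each $v$ gives the desired factorization. Since $\mathsf{U}_2$ acts invertibly on the ordinary parts by construction, both $\tau$ and $\pi^\ast$ restrict to isomorphisms on ordinary parts.

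The main obstacle is the explicit Hecke coset calculation at each $v \mid 2$, and in particular the bookkeeping at the distinguished place $v_0$, where $U(1,c)_{v_0}$ involves only the first factor of $\mu_{2^\infty}(K_{v_0})$ rather than both copies. This asymmetry means one must run the Iwahori decomposition argument slightly differently at $v_0$, but the underlying computation $\tau \circ \pi^\ast = (\text{unit}) \cdot \mathsf{U}_{v_0}^{c'-c}$ still holds because the asymmetric $\mu_{2^\infty}$-factor is already absorbed into the level structure at all levels $c$ and drops out of the relative cover. Everything else is formal manipulation of complexes together with the standard Hida identity relating $\mathsf{U}_p$ to the composition of level-raising pullback and trace.
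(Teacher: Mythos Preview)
Your approach is correct and is precisely the standard Hida-theoretic argument that the paper invokes: the paper's own proof consists only of the sentence ``This is a standard calculation in Hida theory. See either \cite[\S 6.3]{Kha17} or \cite[\S 5.2]{10authors} for a proof in our context,'' and what you have written is a faithful unpacking of that calculation. A couple of minor remarks: the quotient $U(1,c')/U(c,c')$ is two copies (modified by the $\mu_{2^\infty}$ factors) of $(1+\varpi_v\cO_{K_v})/(1+\varpi_v^c\cO_{K_v})$ rather than one, but your key observation that it depends only on $c$ is correct; and the phrase ``finite \'etale cover'' is not literally accurate since the level subgroups need not be neat, but this is harmless because the complexes $A(U/V,M)$ are defined via equivariant cohomology on $\mathfrak{X}_G$ before any quotient is taken, so the pullback morphism exists regardless.
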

\begin{proof}
This is a standard calculation in Hida theory. See either \cite[\S 6.3]{Kha17} or \cite[\S 5.2]{10authors} for a proof in our context. 
\end{proof}
Let $\ffrm \subset \bbT^S( A(U(1, 1), k )^\text{ord} )$ be a maximal ideal. For any $c \geq b \geq 1$ and  $x$ as above there are canonical surjective homomorphisms
\[ \bbT^S( A(U(1, c) / U(c, c), \cO(x))^\text{ord} ) \to \bbT^S( A(U(1, 1), k )^\text{ord} ) \]
and
\[ \bbT^S( A(U(1, c) / U(c, c), \cO(x))^\text{ord} ) \to \bbT^S( A(U(b, c) / U(c, c), \cO(x))^\text{ord} )  \]
which induce bijections on maximal ideals. We write abusively $\ffrm$ for the corresponding maximal ideal of $\bbT^S( A(U(b, c) / U(c, c), \cO(x))^\text{ord} )$. This in turns allows us to define localizations
\[ A(U(1, c) / U(c, c), \cO(x))^\text{ord}_\ffrm,\,\, C(U(1, c) / U(c, c), \cO(x))^\text{ord}_\ffrm \]
with the property that for $c' \geq c \geq 1$, there is a canonical isomorphism
\[ C(U(1, c') / U(c', c'), \cO(x))^\text{ord}_\ffrm \otimes^{\bbL}_{\Lambda_{1, c'}} \Lambda_{1, c} \cong C(U(1, c) / U(c, c), \cO(x))^\text{ord}_\ffrm \]
in $\mathbf{D}(\Lambda_{1, c})$. It follows that there is a canonical isomorphism for any $c' \geq c \geq c_0$:
\[ C(U(c_0, c') / U(c', c'), \cO(x))^\text{ord}_\ffrm \otimes^{\bbL}_{\Lambda_{0, c'}} \Lambda_{0, c} \cong C(U(c_0, c) / U(c, c), \cO(x))^\text{ord}_\ffrm \]
in $\mathbf{D}(\Lambda_{0, c})$. 
\begin{lemma}
Let $c \geq c_0$, and suppose that $\ffrm$ is non-Eisenstein. Then  $C(U(c_0, c) / U(c, c), \cO(x))^\text{ord}_\ffrm$ is a perfect complex of $\Lambda_{0, c}$-modules.
\end{lemma}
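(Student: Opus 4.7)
The plan is to invoke Theorem~\ref{thm_boundedness_of_good_dihedral_cohomology} directly for the pair $V = U(c,c) \subset U = U(c_0, c)$, with coefficients chosen so that the $\cO$-linear dual of the theorem's module equals $\cO(x)$; the output of the theorem, a perfect complex of $\cO[U/V]$-modules, is exactly perfectness over $\Lambda_{0,c}$. The $\mathsf{U}_2$-ordinary summand is then cut out by the idempotent supplied by Proposition~\ref{prop_ord_summand} and inherits perfectness.

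Three of the four hypotheses of Theorem~\ref{thm_boundedness_of_good_dihedral_cohomology} are essentially immediate. First, $K$ contains $\zeta_2 = -1$ trivially, and $\ffrm$ is non-Eisenstein by assumption (its attached residual representation being $\overline{\rho}$, which is absolutely irreducible). Second, the coefficient module reduces to $k$ modulo $\varpi$ because $x$ has $2$-power order, hence is trivial modulo $\varpi$. Third, $U(c,c)$ is normal in $U(c_0, c)$ with abelian pro-$2$ quotient: this follows from the explicit description of $U(1,c)/U(c,c)$ as a product of pro-$2$ abelian toric factors given in the excerpt, since $U(c_0, c)$ sits between $U(c,c)$ and $U(1,c)$ and its image in that quotient is therefore a subgroup of an abelian pro-$2$ group.

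The one hypothesis that actually requires an argument, and the only step I expect to matter, is the absence of non-trivial $2$-torsion in $U(c_0, c) \cap Z_G(K)$; this is precisely the reason the integer $c_0$ and the distinguished place $v_0$ were introduced. Given $\alpha \in K^\times$ with $\alpha^{2^n} = 1$ and $\diag(\alpha, \alpha) \in U(c_0, c)$, I will examine the $v_0$-component, where $U(c_0, c)_{v_0} = I_{v_0}(c_0, c)\cdot(\mu_{2^\infty}(K_{v_0}) \times 1)$: factoring $\diag(\alpha, \alpha)$ as a product of an element of $I_{v_0}(c_0, c)$ and an element of $\mu_{2^\infty}(K_{v_0}) \times 1$ and reading the $(2,2)$-entry forces $\alpha \equiv 1 \pmod{\varpi_{v_0}^{c_0}}$ in $K_{v_0}$. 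Since $\alpha$ is a $2$-power root of unity in $K$, the bijectivity $\mu_{2^\infty}(K) \xrightarrow{\sim} \mu_{2^\infty}(K_{v_0})$ and the defining property of $c_0$ then force $\alpha = 1$.

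Granted these inputs, Theorem~\ref{thm_boundedness_of_good_dihedral_cohomology} produces, for the unique non-Eisenstein maximal ideal $\frn$ of $\bbT^S(A(U(c_0,c)/U(c,c), \cO(x)))$ corresponding to $\overline{\rho}$, a perfect complex of $\Lambda_{0,c}$-modules representing $C(U(c_0,c)/U(c,c), \cO(x))_\frn$. The $\mathsf{U}_2$-ordinary part $C(U(c_0,c)/U(c,c),\cO(x))^{\text{ord}}_\ffrm$ is a direct summand of this complex, carved out by the Hida idempotent from Proposition~\ref{prop_ord_summand} (applied with $T = \mathsf{U}_2$), and as a direct summand of a perfect complex of $\Lambda_{0,c}$-modules it is itself perfect, as required.
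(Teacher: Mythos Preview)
Your proof is correct and follows the same approach as the paper. The paper's proof is a single sentence: apply Theorem~\ref{thm_boundedness_of_good_dihedral_cohomology} after noting that $U(c_0,c)\cap Z_G(K)$ contains no non-trivial $2$-torsion (adding that this is precisely why $U(c_0,c)$ was defined as it was). You have supplied the detailed verification of that torsion condition via the $(2,2)$-entry at $v_0$, which is exactly the intended argument, and your observation that the ordinary part is a direct summand of an already perfect complex matches the paper's remark that the result holds even before projecting to ordinary parts.
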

\begin{proof}
In fact, this is true even before projecting to ordinary parts. It follows from Theorem \ref{thm_boundedness_of_good_dihedral_cohomology}, after observing that $U(c_0, c) \cap Z_G(K)$ contains no non-trivial 2-torsion. (This is the reason for defining $U(c_0, c)$ in the way that we did.)
\end{proof}
In the situation of the lemma we can, arguing as in \cite[\S 6]{Kha17}, pass to the limit to obtain, for any character $x : \prod_{v \in R} k(v)^\times \to \cO^\times$ which is trivial mod $\varpi$, a minimal complex $F_{\ffrm, x}$ of $\Lambda_0$-modules with the following properties:
\begin{itemize}
\item There is a $\Lambda_0$-algebra homomorphism $\bbT^S \otimes_\cO \Lambda_1 \to \End_{\mathbf{D}(\Lambda_0)}(F_{\ffrm, x})$.
\item For each $c \geq c_0$, there is a $\bbT^S$-equivariant isomorphism 
\begin{equation}\label{eqn_classicality} F_{\ffrm, x} \otimes_{\Lambda_0} \Lambda_{0, c} \cong C(U(c_0, c) / U(c, c), \cO(x))^\text{ord}_\ffrm
\end{equation}
 in $\mathbf{D}(\Lambda_{0, c})$.
\item There is an isomorphism 
\begin{equation}\label{eqn_homology_is_inverse_limit} H^\ast(F_{\ffrm, x}) \cong \plim_c H_{-\ast}^{U(c, c)}(\mathfrak{X}, \cO(x))^\text{ord}_\ffrm 
\end{equation}
of $\bbT^S \otimes_\cO \Lambda_1$-modules.
\end{itemize}
It is possible, as in the statement of \cite[Proposition 6.6]{Kha17}, to give a list of properties which characterize the complex $F_{\ffrm, x}$ uniquely up to unique isomorphism in $\mathbf{D}(\Lambda_0)$ (and hence uniquely up to isomorphism as a complex of $\Lambda_0$-modules). Since we won't need these properties here, we have elected not to list them.

We now apply this construction to a particular choice of maximal ideal $\ffrm$, coming from our fixed cuspidal automorphic representation $\pi$. Indeed, after possibly enlarging $E$, $\pi$ determines (using e.g.\ \cite[Theorem 2.4.10]{10authors}) a homomorphism
\[ \bbT^S( C(U(1, 1), \cO(\chi^{-1}))^\text{ord} ) \to E, \]
which sends $\mathsf{T}_v$ to its eigenvalue on $\iota^{-1} \pi_v^{U_v}$. The kernel of this homomorphism is contained in a unique maximal ideal $\ffrm \subset \bbT^S( C(U(1, 1), \cO(\chi^{-1})) )$, which is in the support of $H^\ast_{U(1, 1)}(\mathfrak{X}, k)^\text{ord}$ and which satisfies $\overline{\rho}_\ffrm \cong \overline{r_\iota(\pi)}$. By hypothesis, $\overline{\rho}_\ffrm$ is absolutely irreducible, so the above discussion defines complexes $F_{\ffrm, x} \in \mathbf{D}(\Lambda_0)$. We define $\bbT_x$ to be the image of the homomorphism
\[ \bbT^S \otimes_\cO \Lambda_1 \to \End_{\mathbf{D}(\Lambda_0)}(F_{\ffrm, x}). \] 
Then $\bbT_x$ is a finite local $\Lambda_1$-algebra and we write $\ffrm_x$ for its unique maximal ideal. 
\begin{theorem}\label{thm_existence_of_Galois_base_case}
The structural map $\Lambda_1 \to \bbT_x$ factors through the quotient $\Lambda_1 \to \Lambda$. We can find an ideal $J_x \subset \bbT_x$ and a continuous homomorphism $\rho_x : G_K \to \GL_2(\bbT_x / J_x)$ satisfying the following conditions:
\begin{enumerate}
\item The ideal $J_x$ is nilpotent: there exists an integer $\delta = \delta(K) \geq 1$ depending only on $K$ such that $J_x^\delta = 0$.
\item We have $\rho_x \text{ mod }\ffrm_x = \overline{\rho}_\ffrm$.
\item For each place $v\not\in S$ of $K$, $\rho_x|_{G_{K_v}}$ is unramified and we have
\[ \det(X - \rho_x(\Frob_v)) = X^2 - \mathsf{T}_v X + q_v \mathsf{S}_v. \]
\item For each place $v \in S_2$, $\rho_x|_{G_{K_v}}$ is of type $\cD_v^\text{ord}$ (see \S \ref{subsubsec_ordinary_deformation_problems}).
\item For each place $v \in R$, $\rho_x|_{G_{K_v}}$ is of type $\cD_v^{x_v^{-1}}$ (see \S \ref{subsubsec_level_raising_deformations}).
\end{enumerate}
\end{theorem}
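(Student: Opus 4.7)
My plan is to apply the local--global compatibility theorem (Theorem~\ref{thm:local-global-GLn-ord}) at each finite level of the Hida tower and then pass to an inverse limit. For each $c \geq c_0$, introduce a Hecke datum $\cD_c$ for $\GL_2/K$ as follows: take $S = S_2 \sqcup R$ (enlarging $R$ by complex-conjugate places with trivial level conditions if necessary so that $R$ is stable under complex conjugation, extending $x$ trivially to these new places); at $v \in S_2$, take $U_{\cD_c, v} = U(c_0, c)_v$ and the standard ordinary $T_v$ from Theorem~\ref{thm:local-global-GLn-ord}; at $v \in R$, take $U_{\cD_c, v} = I_v(1)$ with the level-raising $T_v$ of that theorem. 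Our standing hypotheses on $K$, $\overline{\rho}$, the residue characteristics of places in $S$ (existence of suitable imaginary quadratic subfields), and the decomposed-generic condition on $\overline{\rho}_\ffrm \cong \overline{\rho}$ ensure the hypotheses of Theorem~\ref{thm:local-global-GLn-ord} are satisfied. Via the isomorphism \eqref{eqn_cohomology_is_dual_of_homology}, the $(x_v, x_v^{-1})_{v \in R}$-eigenspace for the $R$-diagonal-torus action on $A(U_{\cD_c}, \cO)^\text{ord}_\ffrm$ is, up to shift, dual to $C(U(c_0, c)/U(c, c), \cO(x))^\text{ord}_\ffrm$, so that the image $\bbT_{x, c}$ of $\bbT^S \otimes \Lambda_{1, c}$ on this eigenspace coincides with the Hecke algebra acting on our complex.

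Applying Theorem~\ref{thm:local-global-GLn-ord} to $\cD_c$ and restricting to this eigenspace yields an ideal $J_c \subset \bbT_{x, c}$ with $J_c^{\delta} = 0$ for a uniform $\delta = \delta(K)$, together with a continuous representation $\rho_c : G_K \to \GL_2(\bbT_{x, c}/J_c)$ satisfying the local--global compatibility properties of that theorem. For $c' \geq c$, the unramified Frobenius traces of $\rho_{c'}$ and $\rho_c$ both come from the $\bbT^S$-operators $\mathsf{T}_v$ uniform in $c$, so by Chebotarev and the uniqueness of traces modulo nilpotent of bounded degree, these representations are compatible after strict equivalence; Carayol's lemma \cite[Lemma 2.1.10]{Clo08} allows us to pick consistent conjugates. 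Because $F_{\ffrm, x}$ is a minimal perfect complex of finite projective $\Lambda_0$-modules (hence determined by its reductions $F_{\ffrm, x} \otimes_{\Lambda_0} \Lambda_{0, c}$), the natural map $\bbT_x \hookrightarrow \varprojlim_c \bbT_{x, c}$ is injective. Define $J_x \subset \bbT_x$ to be the preimage of $\varprojlim_c J_c$; then $J_x^\delta = 0$, and the compatible system $(\rho_c)$ assembles into a continuous $\rho_x : G_K \to \GL_2(\bbT_x / J_x)$.

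The local conditions (3)--(5) of the theorem now follow directly from the analogous conditions in Theorem~\ref{thm:local-global-GLn-ord}: unramifiedness and the Frobenius polynomial at $v \notin S$ come from property (1); at $v \in R$, after specialization of the $t_{v, i}$ to the $(x_v^{-1}, x_v)$-eigenvalues, property (2) gives exactly the pseudocharacter condition defining $\cD_v^{x_v^{-1}}$; and at $v \in S_2$, properties (3) and (4) combined with Proposition~\ref{prop_characterization_of_ordinary_deformations_by_pseudocharacter_relations} certify that $\rho_x|_{G_{K_v}}$ is of type $\cD_v^\text{ord}$, identifying the universal characters $\psi_{v, 1}, \psi_{v, 2}$ with the images of $\chi_{0, v, 1}, \chi_{0, v, 2}$ from the Hecke datum. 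The factorization through $\Lambda$ then follows from the structure of $\cD_v^\text{ord}$: since by Proposition~\ref{prop_representability_of_ordinary_lifting_problem} the characters $\phi_{v, i} = \psi_{v, i}|_{I_{K_v}}$ factor through $(\cO_{K_v}^\times)^f$, the Hida-theoretic identification of the diagonal-torus Hecke operators at level $U(c_0, c)$ with these characters shows that the $\mu_{2^\infty}(K_v)$ elements (which generate the kernel of $\Lambda_1 \to \Lambda$) act as the identity on $F_{\ffrm, x}$, so the structural map $\Lambda_1 \to \bbT_x$ factors through $\Lambda$.

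The main obstacle is making the final factorization step through $\Lambda$ precise: one must upgrade ``acts trivially modulo $J_x$'' (which is immediate from local--global compatibility for $\rho_x$) to ``acts trivially exactly.'' This requires a direct Hida-theoretic argument, using the invertibility of $\mathsf{U}_v$ on the ordinary part together with the relations between diamond operators and the $\mathsf{U}_v$-operators, to verify that the image in $\bbT_x$ of $\mu - 1$ for $\mu \in \mu_{2^\infty}(K_v)$ vanishes on the nose. A secondary concern is the careful matching between the $(x, x^{-1})$-eigenspace projection of the Hecke datum's cohomology and the intrinsic $\cO(x)$-coefficient cohomology of $F_{\ffrm, x}$, so that the Galois representation restricted to the $x$-summand captures precisely the right local conditions.
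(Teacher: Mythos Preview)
Your overall strategy---invoke Theorem~\ref{thm:local-global-GLn-ord}, apply Carayol's lemma to descend to $\bbT_x/J_x$, and use Proposition~\ref{prop_characterization_of_ordinary_deformations_by_pseudocharacter_relations} for the ordinary condition---matches the paper's. But there is a genuine gap in the logical order, and the missing idea is precisely the one you flag as an ``obstacle'' without resolving.

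The factorization $\Lambda_1 \to \Lambda$ must be established \emph{before} you can apply Proposition~\ref{prop_characterization_of_ordinary_deformations_by_pseudocharacter_relations}, not deduced from it. That proposition requires the characters $\widetilde{\psi}_{v,i}$ to extend the pushforwards of the universal characters $\phi_{v,i} : I_{K_v} \to \Lambda_v^\times$, where $\Lambda_v = \cO\llbracket (\cO_{K_v}^\times)^f \times (\cO_{K_v}^\times)^f \rrbracket$. The characters $\chi_{0,v,i}$ furnished by the Hecke datum are a priori characters of the larger group $\cO_{K_v}^\times$, so you cannot identify them with pushforwards of $\phi_{v,i}$ until you know the $\mu_{2^\infty}(K_v)$ act trivially in $\bbT_x$. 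Your paragraph deriving factorization ``from the structure of $\cD_v^{\text{ord}}$'' is therefore circular.

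The paper's resolution is not a general Hida-theoretic relation but exploits the careful choice of level subgroups made in the setup. For $v \in S_2$ with $v \neq v_0$, the group $U(c_0,c)_v$ already contains the diagonal $\mu_{2^\infty}(K_v) \times \mu_{2^\infty}(K_v)$, so both diamond characters are trivial on torsion by construction. At the distinguished place $v_0$, only the first factor $\mu_{2^\infty}(K_{v_0}) \times 1$ is absorbed into the level, handling $\phi_{v_0,1}$. For $\phi_{v_0,2}$ the key observation is that the hypothesis ``$K \hookrightarrow K_{v_0}$ is bijective on $2$-power roots of unity'' forces $\diag(1,\zeta) \in Z_G(K) \cdot U(c,c)$ for every $2$-power root of unity $\zeta \in K_{v_0}$; since the centre $Z_G(K)$ acts trivially on cohomology with coefficients $\cO(x)$, the diamond operator $\langle \diag(1,\zeta) \rangle$ is the identity in $\End_{\mathbf{D}(\Lambda_{0,c})}(C(U(c_0,c)/U(c,c),\cO(x)))$ on the nose, not merely modulo $J_x$. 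This is the step your proposal does not supply.
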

\begin{proof}
If $c \geq c_0$, let $\bbT_{x, c}$ denote the image of the map 
\[ \bbT^S \otimes_\cO \Lambda_1 \to\End_\cO(H_{-\ast}^{U(c, c)}(\mathfrak{X}, \cO(x))^\text{ord}_\ffrm). \]
Then there is a natural ring homomorphism
\begin{equation}\label{eqn_homomorphism_of_Hecke_algebras} \bbT_x \to \prod_{c \geq c_0} \bbT_{x, c}. 
\end{equation}
Suppose that there exists an integer $\delta_0  \geq 1$ depending only on $K$ and for each $c \geq c_0$ an ideal $J_{x, c} \subset \bbT_{x, c}$ satisfying $J_{x, c}^{\delta} \neq 0$ and a representation $\rho_{x, c} : G_K \to \GL_2(\bbT_{x, c} / J_{x, c})$ satisfying the analogues of the conditions in the statement of the Theorem. Then the Theorem itself follows, with $\delta = \delta_0 \cdot (\dim_\bbR X_{\GL_2} + 1)$, as we now explain. Let $J_x \subset \bbT_x$ denote the kernel of the induced $\bbT_x \to \prod_{c \geq c_0} \bbT_{x, c} / J_{x, c}$. For any elements $y_1, \dots, y_{\delta_0} \in J_x$, we find that the product $y_1 \dots y_{\delta_0}$ acts as 0 on each group $H_{-\ast}^{U(c, c)}(\mathfrak{X}, \cO(x))^\text{ord}_\ffrm$, and therefore as 0 on $H^\ast(F_{\mathfrak{m}, x})$ (using the isomorphism \ref{eqn_homology_is_inverse_limit}). Then \cite[Lemma 2.2.3]{10authors} implies that for any elements $x_1, \dots, x_\delta \in J_x$, the product $x_1 \dots x_\delta$ is zero in $\End_{\mathbf{D}(\Lambda_0)}(F_{\mathfrak{m}, x})$, or in other words that $J_x^\delta = 0$. By e.g.\ glueing determinants as in \cite[Example 2.32]{Che14}, we obtain a representation $\rho_x : G_K \to \GL_2(\bbT_x / J_x)$ which, after projection to  any $\GL_2(\bbT_{x, c} / J_{x, c})$ ($c \geq c_0$), is strictly equivalent to $\rho_{x, c}$ (look at characteristic polynomials of Frobenius elements). This $\rho_x$ satisfies the requirements 1.\ -- 3.\ of the Theorem; properties 4.\ and 5.\ follow from the analogous properties for the representations $\rho_{x, c}$ (that we will soon justify) and the representability of the local deformation problems $\cD_v^\text{ord}$ and $\cD_v^{x_v^{-1}}$.

We now establish the existence of the representations $\rho_{x, c}$. For $c \geq c_0$, let $\widetilde{\bbT}_{x, c}$ denote the image of the homomorphism
\[ \bbT^S[ \{ \mathsf{U}_v \}_{v \in S_2}] \otimes_\cO \Lambda_1 \to \End_\cO(H_{-\ast}^{U(c, c)}(\mathfrak{X}, \cO(x))^\text{ord}_\ffrm). \] 
It contains $\bbT_{x, c}$. For each place $v \in S_2$, let $\phi_{v, 1}, \phi_{v, 2} : I_{K_v}^\text{ab} \to \Lambda_1^\times$ be the tautological characters. It follows from Theorem \ref{thm:local-global-GLn-ord} that $\widetilde{\bbT}_{x, c}$ has a unique maximal ideal $\ffrm_{\widetilde{\bbT}_{x, c}}$ and that we can find for each $v \in S_2$ characters $\widetilde{\psi}_{v, 1}, \widetilde{\psi}_{v, 2} : G_{F_v} \to \widetilde{\bbT}_{x, c}^\times$, a nilpotent ideal $\widetilde{J}_{x, c} \subset \widetilde{\bbT}_{x, c}$ satisfying $\widetilde{J}_{x, c}^\delta = 0$, where $\delta$ is an integer depending only on $K$, and a continuous representation $\rho_{x, c} : G_{K, S} \to \GL_2(\widetilde{\bbT}_{x, c})$ satisfying the following conditions:
\begin{itemize}
    \item We have $\rho_{x, c} \text{ mod }\ffrm_{\widetilde{\bbT}_{x, c}} = \overline{\rho}_\ffrm$.
    \item For each place $v\not\in S$ of $K$, $\rho_{x, c}|_{G_{K_v}}$ is unramified and we have
    \[ \det(X - \rho_{x, c}(\Frob_v)) = X^2 - \mathsf{T}_v X + q_v \mathsf{S}_v. \]
    \item For each place $v \in S_2$, the restrictions $\widetilde{\psi}_{v, 1}|_{I_{F_v}}$ and $\widetilde{\psi}_{v, 2}|_{I_{F_v}}$ are equal to the respective pushforwards of the characters $\phi_{v, 1}$ and $\phi_{v, 2}$ to $\widetilde{\bbT}_{x, c} / \widetilde{J}_{x, c}$. 
    \item For each place $v \in S_2$, and for each $g_v \in G_{K_v}$, the characteristic polynomial of $\rho_{x, c}(g_v)$ equals $(X - \widetilde{\psi}_{v, 1}(g_v))(X - \widetilde{\psi}_{v, 2}(g_v))$.
    \item For each place $v \in S_2$, and for each $g_1, g_2 \in G_{K_v}$, we have
    \[ (\rho_{x, c}(g_1) - \widetilde{\psi}_{v, 1}(g_1))(\rho_{x, c}(g_2) - \widetilde{\psi}_{v, 2}(g_2)) = 0 \]
    in $M_2(\widetilde{\bbT}_{x, c} / \widetilde{J}_{x, c})$.
    \item For each place $v \in R$, $\rho_{x, c}|_{G_{K_v}}$ is of type $\cD_v^{x_v^{-1}}$.
    \end{itemize}
Let $J_{x, c} = \widetilde{J}_{x, c} \cap \bbT_{x, c}$. By Carayol's lemma \cite[Lemma 2.1.10]{Clo08}, we can moreover assume that $\rho_{x, c}$ takes values in $\GL_2(\bbT_{x, c} / J_{x, c})$. We need to show that for each place $v \in S_2$, $\rho_{x, c}|_{G_{K_v}}$ is of type $\cD_v^\text{ord}$. This will follow from Proposition \ref{prop_characterization_of_ordinary_deformations_by_pseudocharacter_relations} if we can show that for each $v \in S_2$, the map $\Lambda_{1, v} \to \bbT_x$ factors through the quotient $\Lambda_{1, v} \to \Lambda$; equivalently, if the characters $\phi_{v, 1} \circ \Art_{K_v}, \phi_{v, 2} \circ \Art_{K_v} : \cO_{K_v}^\times \to \bbT_x^\times$ are trivial on 2-power roots of unity. If $v \neq v_0$ this is true by definition of $\bbT_x$ (i.e.\ by choice of the level subgroups $U(c_0, c)_v$). If $v = v_0$, then it is true for $\phi_{v, 1} \circ \Art_{K_v}$ for similar reasons. 

It remains to observe that if $\zeta \in K_{v_0}$ is a 2-power root of unity, then the image of $\phi_{v, 2}(\Art_{K_v}(\zeta))$ is trivial. To show this, it is enough to show that if $\eta_{v_0} = \diag(1, \zeta) \in \GL_2(\cO_{K_{v_0}})$ then for any $c \geq c_0$, the image of the diamond operator $\langle \eta_{v_0} \rangle$ in $\End_{\mathbf{D}(\Lambda_{0, c})}(C(U(c_0, c) / U(c, c), \cO(x)))$ is trivial. 
Recall that the complex $C(U(c_0, c) / U(c, c))$ together with the action of $U(1,c)/U(c,c)$ is computed as follows. 
We let $C$ be the complex of singular chains on $\mathfrak{X}_G = \GL_2(K) \backslash \GL_2(\bbA_K)/U_\infty \bbR^\times$ with coefficients in $\cO$ and choose a quasi-isomorphism $P \to C$ with $P$ a bounded above complex of projective $\cO[U(1, c)]^\text{op}$-modules. 
Note that $P$ is also a bounded above complex of projective $\cO[U(c_0, c)]^\text{op}$-modules. 
Then $C(U(c_0, 0) / U(c, c)) = (P \otimes_{\cO} \cO(x))_{U(c,c)}$ and $\langle \eta_{v_0} \rangle$ is induced by the action of $\eta_{v_0}$ on $P$ and $\cO(x)$. 

Let $\zeta = \diag(\zeta,\zeta) \in \GL_2(K)$ and write $\zeta = \zeta^\infty \zeta_\infty$ with $\zeta^\infty \in G^\infty$ and $\zeta_\infty \in G_\infty$. 
Note that $(\zeta^\infty)^{-1}\eta_{v_0} \in U(c,c)$ and acts trivially on $\cO(x)$, so it acts trivially on $(P \otimes_{\cO} \cO(x))_{U(c,c)}$. 
Since $\zeta^\infty$ also acts trivially on $\cO(x)$, to conclude it suffices to show that multiplication by $\zeta^\infty$ on $P$ is homotopic to the identity. 
This follows from \cite[TAG064A]{stacks-project} and the fact that multiplication by $\zeta^\infty = \zeta \zeta_\infty^{-1}$ on $C$ equals the identity since $\zeta_\infty \in U_\infty$ and $\zeta \in Z_G(K)$.
\end{proof}
\begin{corollary}\label{cor_R_to_T}
Define a global deformation problem
\[ \cS_x = (K,\overline{\rho}, S, \{ \Lambda_v \}_{v \in S_2} \cup \{ \cO \}_{v \in R}, \{ \cD^{x_v}_v \}_{v \in R} \cup \{ \cD_v^\text{ord} \}_{v \in S_2} \}). \]
Then there is a unique surjective morphism of $\Lambda$-algebras $R_{\cS_x} \to \bbT_{x} / J_x$ with the property that for all $v \not\in S$, $\tr \rho_{\cS_x}(\Frob_v) \mapsto \mathsf{T}_v$.
\end{corollary}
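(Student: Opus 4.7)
The plan is to check that the representation $\rho_x$ constructed in Theorem~\ref{thm_existence_of_Galois_base_case} is a type-$\cS_x$ lifting of $\overline{\rho}$, invoke the universal property of $R_{\cS_x}$, and then verify compatibility with the $\Lambda$-structure and surjectivity.

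First, I would observe that Theorem~\ref{thm_existence_of_Galois_base_case} hands us a continuous representation $\rho_x : G_K \to \GL_2(\bbT_x/J_x)$ whose reduction modulo $\ffrm_x$ is $\overline{\rho}_\ffrm = \overline{\rho}$, which is unramified outside $S$, which is of type $\cD_v^{\mathrm{ord}}$ at each $v \in S_2$, and which is of type $\cD_v^{x_v}$ at each $v \in R$ (matching the convention used in the corollary). In other words, $\rho_x$ is exactly a type-$\cS_x$ lifting of $\overline{\rho}$ to the complete Noetherian local $\cO$-algebra $\bbT_x/J_x$. Since $\overline{\rho}$ is absolutely irreducible, the universal deformation ring $R_{\cS_x}$ exists and the strict equivalence class of $\rho_x$ determines a unique $\cO$-algebra morphism $\varphi : R_{\cS_x} \to \bbT_x/J_x$ carrying the universal deformation $\rho_{\cS_x}$ to (a representative of the class of) $\rho_x$; in particular $\tr \rho_{\cS_x}(\Frob_v) \mapsto \tr \rho_x(\Frob_v) = \mathsf{T}_v$ for every $v \notin S$.

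Next I would verify that $\varphi$ is a morphism of $\Lambda$-algebras. Here $\Lambda = \widehat{\otimes}_{v \in S_2} \Lambda_v$ acts on $R_{\cS_x}$ via the universal characters $\phi_{v,1}, \phi_{v,2} : I_{K_v} \to \Lambda_v^\times$ appearing in the diagonal of the upper-triangular representative of $\rho_{\cS_x}|_{G_{K_v}}$, while it acts on $\bbT_x$ via the factorization $\Lambda_1 \twoheadrightarrow \Lambda \to \bbT_x$ established in the first sentence of Theorem~\ref{thm_existence_of_Galois_base_case}. The compatibility is immediate from the last bullet but one in the proof of Theorem~\ref{thm_existence_of_Galois_base_case}: the characters $\widetilde{\psi}_{v,1}, \widetilde{\psi}_{v,2}$ of $G_{K_v}$ produced there restrict on $I_{K_v}$ to the pushforwards of $\phi_{v,1}, \phi_{v,2}$ along $\Lambda_v \to \bbT_x/J_x$. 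Applying Proposition~\ref{prop_characterization_of_ordinary_deformations_by_pseudocharacter_relations} (and the subsequent remark about pulling back the ordinary condition) identifies $\varphi^*(\phi_{v,i})$ with the diagonal characters of $\varphi \circ \rho_{\cS_x}|_{G_{K_v}}$, which is precisely what it means for $\varphi$ to be a map of $\Lambda$-algebras.

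For surjectivity, I would use that $\bbT_x$ is by definition the image of $\bbT^S \otimes_\cO \Lambda_1$, and that this factors through $\bbT^S \otimes_\cO \Lambda$ by Theorem~\ref{thm_existence_of_Galois_base_case}. The image of $\varphi$ already contains the image of $\Lambda$ (since $\varphi$ is a $\Lambda$-algebra map) and contains $\mathsf{T}_v = \tr \rho_x(\Frob_v)$ as well as $q_v \mathsf{S}_v = \det \rho_x(\Frob_v)$ for every $v \notin S$; because $q_v \in \cO^\times$, both $\mathsf{T}_v$ and $\mathsf{S}_v$ lie in $\mathrm{im}(\varphi)$. These together topologically generate $\bbT_x$ over $\cO$, hence over $\Lambda$, and pushing forward through the surjection $\bbT_x \twoheadrightarrow \bbT_x/J_x$ shows $\varphi$ is surjective. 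Uniqueness of $\varphi$ follows from the fact that, since $\overline{\rho}$ is absolutely irreducible, $R_{\cS_x}$ is topologically generated over $\Lambda$ by $\{ \tr \rho_{\cS_x}(\Frob_v) \}_{v \notin S}$ by Chebotarev density together with continuity and the theory of $2$-dimensional pseudocharacters. There is no serious obstacle here; the main point of substance is just the bookkeeping in the $\Lambda$-compatibility check, and that has already been done inside the proof of Theorem~\ref{thm_existence_of_Galois_base_case}.
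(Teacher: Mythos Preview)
Your proposal is correct and spells out in detail the standard deduction that the paper leaves entirely implicit (the corollary is stated without proof, as an immediate consequence of Theorem~\ref{thm_existence_of_Galois_base_case}). One small remark: your separate verification of $\Lambda$-compatibility is slightly redundant, since the assertion in Theorem~\ref{thm_existence_of_Galois_base_case} that $\rho_x|_{G_{K_v}}$ is of type $\cD_v^{\mathrm{ord}}$ is already a statement about $\bbT_x/J_x$ as an object of $\CNL_{\Lambda_v}$, so the universal property of $R_{\cS_x}$ produces a $\Lambda$-algebra map directly; but your argument is not wrong, just more explicit than necessary.
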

Here is the main theorem of \S \ref{sec_application_of_TW_method}.
\begin{theorem}\label{thm_application_of_TW_method}
The morphism $R_{\cS_1} \to \bbT_1 / J_1$ has nilpotent kernel.
\end{theorem}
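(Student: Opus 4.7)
The plan is to prove this via a Taylor--Wiles--Kisin patching argument, modified along the lines of Skinner--Wiles to handle the fact that for $p = 2$ and $K$ a CM field the standard numerics are off by one (because $h^0(K,\ad\overline{\rho}(1)) \neq 0$), and combined with Taylor's Ihara avoidance trick to handle the level-raising primes in $R$ where the local lifting ring has two components (Lemma~\ref{lem_ihara_avoidance_ring_1}).

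First, alongside the trivial character $x = 1$, fix an auxiliary choice of character $x' : \prod_{v \in R} k(v)^\times \to \cO^\times$ with $x'_v \neq 1$ for each $v \in R$. By Theorem~\ref{thm_existence_of_Galois_base_case} and Corollary~\ref{cor_R_to_T}, we have surjections $R_{\cS_x} \twoheadrightarrow \bbT_x/J_x$ for $x \in \{1, x'\}$. The plan is to patch the complexes $F_{\ffrm,x}$ together over a growing collection of Taylor--Wiles data. Let $\frp_1, \ldots, \frp_m$ be the (finitely many) minimal primes of $\bbT_1 / J_1$ of characteristic $p$ (these will be nice primes in the sense of Definition~\ref{def:sweet-prime}, by Lemma~\ref{lemma:not_dihedral} and Theorem~\ref{thm_existence_of_Galois_base_case}), and let $\frq_1, \ldots, \frq_n$ be those of characteristic $0$ (which will be sweet primes). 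Apply Proposition~\ref{cor_existence_of_taylor_wiles_data_for_a_collection_of_nice_and_sweet_primes} to produce, for each $N \geq 1$, a Taylor--Wiles datum $Q_N$ of cardinality $q \geq [K:\bbQ]$ simultaneously annihilating (up to a bounded nilpotent constant $t^{a_P}$) the relevant pieces of the tangent spaces at each $P \in \{\frp_i, \frq_j\}$.

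Next, build the patched complexes. For each $N$, form $F_{\ffrm, x}$ at augmented level $U(1, c) \cap U_1(Q_N)$, cut out an ordinary-at-$p$/Taylor--Wiles-at-$Q_N$ summand, and identify its localized Hecke algebra with a quotient of $R_{\cS_{x,Q_N}}$. Passing to a compatible inverse system of these as in the usual patching machinery (and as in \cite{Kha17}, \cite{10authors}), produce for each $x \in \{1, x'\}$ a patched complex $F_\infty^x$ of perfect $S_\infty$-modules, where $S_\infty = \Lambda_0 \llbracket X_1, \ldots, X_{4|R|+2q-1}\rrbracket \llbracket \Delta_\infty \rrbracket$ with $\Delta_\infty = \bbZ_p^{2q}$ the completed Taylor--Wiles group, together with a surjection $R_\infty^x \twoheadrightarrow \bbT_\infty^x / J_\infty^x$ where $R_\infty^x$ is the patched Galois deformation ring. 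By Proposition~\ref{prop_presentation_by_generators_and_relations} and Corollary~\ref{cor_connectedness_dimension_Ihara_case}, $R_\infty^x$ has dimension at least $\dim S_\infty + 1$ (one more than in the classical case), and this discrepancy is exactly the obstacle the Skinner--Wiles machinery is designed to handle.

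Now handle Ihara avoidance and the nilpotence claim. For $x = x'$ with $x'_v \neq 1$ for all $v \in R$, by Lemma~\ref{lem_ihara_avoidance_ring_chi} and Proposition~\ref{prop_irreducibility_of_completed_local_lifting_ring}, the completed local lifting ring at each dimension $1$ characteristic $p$ prime of $R_\infty^{x'}$ is a domain. The strategy is: at each nice prime $\frp_i$ (resp.\ sweet prime $\frq_j$) of $\bbT_1$, use the conclusions of Proposition~\ref{cor_existence_of_taylor_wiles_data_for_a_collection_of_nice_and_sweet_primes}\ref{TWprimes:nice-and-sweet-tangent}--\ref{TWprimes:max-tangent} to bound the image of $\ffrm_{R_\infty^1}^2$-generators of $R_\infty^1$ completed at the corresponding prime of $R_\infty^1$, forcing $R_\infty^1 \to \bbT_\infty^1/J_\infty^1$ to have kernel with support contained in a codimension-$\geq 1$ locus after completion at each such prime. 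The connectedness dimension bound (Corollary~\ref{cor_connectedness_dimension_Ihara_case}(3)) shows that $\Spec R_\infty^1 / (\varpi)$ is sufficiently connected that the usual Taylor trick (moving from the $2^{|R|}$ components visible after specialization at $x' \neq 1$ back to $x = 1$) lets us spread modularity across components. Combining these, $M_\infty^1$ has full support over $R_\infty^1$, and the resulting map $R_\infty^1 \twoheadrightarrow \bbT_\infty^1 / J_\infty^1$ has nilpotent kernel; de-patching yields the result for $R_{\cS_1} \to \bbT_1/J_1$.

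The main obstacle is the Skinner--Wiles-style argument at the codimension-$1$ locus, carried out simultaneously at \emph{all} the primes $\frp_i, \frq_j$ of the Hecke algebra at once. This forces us to produce a single Taylor--Wiles datum $Q_N$ that is good for all these primes simultaneously, the content of Proposition~\ref{cor_existence_of_taylor_wiles_data_for_a_collection_of_nice_and_sweet_primes}. The delicate point in the patching is then showing that the patched module $M_\infty^1$, which is not Cohen--Macaulay of the expected dimension over $R_\infty^1$, nevertheless has full support, using both the nice/sweet prime control on tangent spaces and the irreducible component analysis of the patched deformation ring afforded by Corollary~\ref{cor_connectedness_dimension_Ihara_case} and Proposition~\ref{prop_irreducibility_of_completed_local_lifting_ring}.
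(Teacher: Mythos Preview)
Your proposal has the right overall shape --- patching with Ihara avoidance, Skinner--Wiles control at dimension-1 primes, connectedness of the deformation ring --- but there is a genuine gap in how you choose the nice and sweet primes, and the mechanism for spreading support across components is not the one that works here.

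You take $\frp_1,\ldots,\frp_m$ and $\frq_1,\ldots,\frq_n$ to be the minimal primes of $\bbT_1/J_1$. This fails on two counts. First, nice and sweet primes are by definition \emph{dimension-1} primes of the deformation ring satisfying prescribed non-degeneracy conditions, not minimal primes; there is no reason a minimal prime of $\bbT_1/J_1$ should have dimension 1. Second, and more seriously, the choice is circular: the theorem asserts precisely that $\Spec \bbT_1 = \Spec R_{\cS_1}$, so starting from primes of $\bbT_1$ cannot help you reach components of $R_{\cS_1}$ not already known to lie in $\Spec \bbT_1$. In the paper's argument the sweet primes live on the $\chi$ side --- they are the primes of $R_{\cS_\chi}[1/p]$ in the support of the finite-level cohomology, coming from $\pi$ --- while the nice primes are \emph{constructed} (Lemma~\ref{lem_nice_primes_in_big_quotients}, Corollaries~\ref{cor_first_nice_prime} and~\ref{cor_nice_primes_for_S_1}) as characteristic-$p$ dimension-1 primes of the deformation ring containing specified pairs of minimal primes of $R_{\cS_\chi}/(\varpi)$. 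One forms a graph on the minimal primes of $R_{\cS_\chi}/(\varpi)$ with these nice primes as edges; the connectedness-dimension bound is used not to connect the $2^{|R|}$ local components at $R$ but to show this \emph{global} graph is connected (Proposition~\ref{prop_connectedness_of_nice_prime_graph}). One then patches at nice primes spanning the graph together with nice primes containing every minimal prime of $R_{\cS_1}$. At each nice prime the completed patched ring on the $\chi$ side is irreducible and the tangent-space control forces $\Spec R_{\chi,\infty,\frp_\infty} = \Spec \overline{\bbT}_{\chi,\infty,\frp_\infty}$; traversing the graph spreads support to every component of $R_{\chi,\infty}/(\varpi)$. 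Only at the very end does a length/Euler-characteristic comparison at primes minimal over $\varpi$ transfer the existence of characteristic-0 support from the $\chi$ side to the $1$ side, capturing every minimal prime of $R_{\cS_1}$. Both the graph-traversal step and this final length argument are missing from your sketch.
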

\begin{corollary}\label{cor_application_of_taylor_wiles}
Suppose given a continuous representation $\tau : G_K \to \GL_2(\overline{\bbQ}_2)$ satisfying the following conditions:
\begin{enumerate}
\item $\tau$ is unramified outside $S_2 \cup R$.
\item For each place $v \in R$, $\tau|_{G_{K_v}}$ is unipotently ramified.
\item For each place $v \in S_2$, $\tau|_{G_{K_v}}$ is ordinary of weight 0 and $\mathrm{WD}(\tau|_{G_{K_v}})$ is unipotently ramified.
\item There is an isomorphism $\overline{\tau} \cong \overline{\rho}$. 
\end{enumerate}
Then $\tau$ is automorphic: there exists an $\iota$-ordinary, cuspidal automorphic representation $\sigma$ of $\GL_2(\bbA_K)$ of weight 0 satisfying the following conditions:
\begin{enumerate}
\item $\sigma$ is everywhere unipotently ramified, and is unramified outside $S_2 \cup R$.
\item There is an isomorphism $r_\iota(\sigma) \cong \tau$.
\end{enumerate}
\end{corollary}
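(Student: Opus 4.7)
The plan is to deduce the corollary from Theorem~\ref{thm_application_of_TW_method} in three steps: realize $\tau$ as an $\cO$-point of the deformation ring $R_{\cS_1}$, transfer this to a point of the Hida-theoretic Hecke algebra $\bbT_1$, and then apply classicality to extract the desired cuspidal automorphic representation $\sigma$.

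After enlarging $E$ and conjugating we can assume $\tau : G_K \to \GL_2(\cO)$, so that $\tau$ is a lifting of $\overline{\rho}$. The hypotheses on $\tau$ translate into type $\cS_1$-ness: ordinarity of weight $0$ at $v \in S_2$ places $\tau|_{G_{K_v}}$ in $\cD_v^{\mathrm{ord}}$ (applying Proposition~\ref{prop_characterization_of_ordinary_deformations_by_pseudocharacter_relations}), where the $\Lambda_v$-algebra structure on $\cO$ is read off from the inertia characters of the ordinary flag of $\tau|_{G_{K_v}}$; and unipotent ramification at $v \in R$, being the condition that inertia has characteristic polynomial $(X-1)^2$, places $\tau|_{G_{K_v}}$ in $\cD_v^{1}$. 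We therefore obtain an $\cO$-algebra morphism $f : R_{\cS_1} \to \cO$. Since $\cO$ is reduced, and Theorem~\ref{thm_application_of_TW_method} asserts that the surjection $R_{\cS_1} \to \bbT_1/J_1$ from Corollary~\ref{cor_R_to_T} has nilpotent kernel, $f$ factors as $R_{\cS_1} \twoheadrightarrow \bbT_1/J_1 \to \cO$; composing the latter with $\bbT_1 \to \bbT_1/J_1$ (using $J_1^\delta = 0$) yields an $\cO$-algebra morphism $g : \bbT_1 \to \cO$.

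The restriction of $g$ to $\Lambda \subset \bbT_1$ determines a $\Lambda$-algebra homomorphism $\Lambda \to \cO$ whose associated pairs of characters of $\cO_{K_v}^\times$ (for $v \in S_2$) coincide with the restrictions to inertia of the ordinary diagonal characters of $\tau|_{G_{K_v}}$. Because $\tau$ has weight $0$, these characters are algebraic of the expected form on a sufficiently small open subgroup, making the corresponding point of $\Spec \Lambda$ an \emph{arithmetic} weight-$0$ point. By Hida's control theorem, developed in our context in \cite{All14} and \cite[\S5]{10authors}, specialization of the infinite-level ordinary complex $F_{\ffrm, 1}$ at this point recovers the classical finite-level complex $C(U(1,c), \cO)^{\mathrm{ord}}_\ffrm$ for some $c \geq c_0$. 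Consequently $g$ factors through the classical Hecke algebra $\bbT^S\bigl(A(U(1,c), \cO)^{\mathrm{ord}}\bigr)_\ffrm$. Since $\ffrm$ is non-Eisenstein, the $\GL_2$ analogue of Theorem~\ref{thm:aut_reps_and_cohom_PGLn} (namely \cite[Theorem~2.4.9]{10authors}) identifies this eigensystem as that of a cuspidal automorphic representation $\sigma$ of $\GL_2(\bbA_K)$, cohomological of weight $0$, $\iota$-ordinary, with an Iwahori-fixed vector at each $v \in S_2$ and unipotent (level-raising) type at each $v \in R$. The isomorphism $r_\iota(\sigma) \cong \tau$ then follows from Chebotarev density, together with local-global compatibility at $v \in S$ as furnished by Theorem~\ref{thm:local-global-GLn-ord}.

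The main technical obstacle in this outline is the classicality step: one must verify that the specialization of $F_{\ffrm, 1}$ at the weight-$0$ arithmetic point of $\Lambda$ prescribed by $\tau$ genuinely computes classical finite-level cohomology (and not merely an ordinary $p$-adic limit with no classical origin). This is a well-established—but delicate—application of Hida's control theorems for $\GL_2$ over CM fields, and it is via this step alone that one bridges between the infinite-level Hecke algebra on which Theorem~\ref{thm_application_of_TW_method} operates and the classical automorphic world demanded by the conclusion.
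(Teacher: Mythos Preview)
Your proposal is correct and follows essentially the same approach as the paper. The paper's proof is a one-line citation of Theorem~\ref{thm_application_of_TW_method} and \cite[Theorem~2.4.9]{10authors}; you have correctly unpacked this into the three steps of realizing $\tau$ as a point of $R_{\cS_1}$, passing through the nilpotent-kernel statement to obtain a point of $\bbT_1$, and then invoking the classicality machinery of \cite[Theorem~2.4.9]{10authors} to produce $\sigma$.
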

\begin{proof}[Proof of Corollary \ref{cor_application_of_taylor_wiles}]
This follows from Theorem \ref{thm_application_of_TW_method}, the classicality property (\ref{eqn_classicality}), and \cite[Theorem 2.4.10]{10authors}.
\end{proof}
The proof of Theorem \ref{thm_application_of_TW_method} will be given in \S \ref{subsec_proof_of_TW_method} below, using a patching argument. We must first introduce some auxiliary objects (which depend on auxiliary sets of Taylor--Wiles places) which will play a key role in the proof.

Let $Q$ be a Taylor--Wiles datum for $\cS_1$. We recall that by definition this means a finite set $Q$ of finite places of $K$, disjoint from $S$, together with the datum for each $v \in Q$ of a pair $\alpha_v, \beta_v : G_{K_v} \to k^\times$ of unramified characters such that $\overline{\rho}_\ffrm \cong \alpha_v \oplus \beta_v$. We assume that each place of $Q$ has residue characteristic split in some imaginary quadratic subfield of $K$, and moreover that $Q$ has level $N + m$, for some $N \geq 0$; by definition, this means that for each $v \in Q$, $q_v \equiv 1 \text{ mod }2^{N+m}$.

For any choice of $x$, we can define the auxiliary deformation problem
\[ \cS_{x, Q} = (K,\overline{\rho}, S, \{ \Lambda_v \}_{v \in S_2} \cup \{ \cO \}_{v \in R \cup Q}, \{ \cD^{x_v}_v \}_{v \in R} \cup \{ \cD_v^\text{ord} \}_{v \in S_2} \cup \{ \cD_v^{\text{TW}, N} \}_{v \in Q} \}), \]
together with a structure on $R_{\cS_x, Q}$ of $\cO[\Delta_Q]$-algebra, where $\Delta_Q = \prod_{v \in Q} (k(v)^\times \times k(v)^\times) / (2^{N})$. For any $c \geq b \geq 1$, we define subgroups $U_{Q, 1}(b, c) \subset U_{Q, 0}(b, c) \subset U(b, c)$ as follows:
\begin{itemize}
\item $U_{Q, 1}(b, c) = \prod_v U_{Q, 1}(b, c)_v$ and $U_{Q, 0}(b, c) = \prod_v U_{Q, 0}(b, c)_v$.
\item If $v \not\in Q$, then $U_{Q, 1}(b, c)_v = U_{Q, 0}(b, c)_v = U(b, c)_v$.
\item If $v \in Q$, then $U_{Q, 0}(b, c)_v = I_v$ and $U_{Q, 1}(b, c)_v$ is the kernel of the natural homomorphism $I_v \to (k(v)^\times \times k(v)^\times) / 2^{N}$.
\end{itemize}
Then $U_{Q, 1}(b, c) \subset U_{Q, 0}(b, c)$ is a normal subgroup and we can identify the quotient $U_{Q, 0}(b, c) / U_{Q, 1}(b, c)$ with $\Delta_Q$.

In what follows, we write $\bbT^{S \cup Q}_Q = \bbT^{S \cup Q} \otimes_\cO \otimes_{v \in Q} T_v$, where $T_v$ is the ring associated to places $v \in R$ just before the statement of Theorem \ref{thm:local-global-GLn-ord}. We observe that $\bbT^{S \cup Q}_Q$ has a natural structure of $\cO[\Delta_Q]$-algebra.
\begin{lemma}\label{lemma:max_ideal_with_TW_level}
	\begin{enumerate}
		\item 	There exists a (unique) maximal ideal $\ffrm_{Q, 0} \subset \bbT^{S \cup Q}(A(U_{Q, 0}(1, 1), k)^\text{ord})$ such that $\overline{\rho}_{\ffrm_{Q, 0}} \cong \overline{\rho}_\ffrm$. The trace map
		\[ H^\ast(A(U_{Q, 0}(1, 1), k)^\text{ord})_{\ffrm_{Q, 0}} \to H^\ast(A(U(1, 1), k)^\text{ord})_{\ffrm} \]
		is surjective.
		\item Let $\ffrm_{Q, 0, \alpha} \subset \bbT^{S \cup Q}_Q(A(U_{Q, 0}(1, 1,), k)^\text{ord})$ denote the ideal generated by $\ffrm_{Q, 0}$ and the elements $\mathsf{U}_v - \alpha_v(\Frob_v)$, $u - 1$ \($v \in Q$, $u \in (\cO_{K_v}^\times)^2 / (1 + \varpi_v \cO_{K_v})^2 \subset T_v^\times$\). Then $\ffrm_{Q, 0, \alpha}$ is a maximal ideal with residue field $k$ and the composite map
		\[ H^\ast(A(U_{Q, 0}(1, 1), k)^\text{ord})_{\ffrm_{Q, 0, \alpha}} \subset H^\ast(A(U_{Q, 0}(1, 1), k)^\text{ord})_{\ffrm_{Q, 0}} \to H^\ast(A(U(1, 1), k)^\text{ord})_{\ffrm} \]
		(inclusion followed by trace) is an isomorphism. 
	\end{enumerate}
\end{lemma}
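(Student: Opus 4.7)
The plan is to prove both parts of the lemma simultaneously by induction on $|Q|$, adding one Taylor--Wiles prime $v$ at a time and comparing the cohomology at the Iwahori-enlarged level $U' := U_{Q, 0}(1,1)$ with that at the previous level $U := U_{Q \setminus \{v\}, 0}(1,1)$. The inductive hypothesis gives both the unique maximal ideal over $\ffrm$ at level $U$ and the refined-trace isomorphism there, so only the single-prime step needs to be carried out.

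The uniqueness of $\ffrm_{Q,0}$ in part (1) is immediate from the absolute irreducibility of $\overline{\rho}_\ffrm$: any maximal ideal $\ffrm'$ of $\bbT^{S \cup Q}(A(U', k)^\text{ord})$ with $\overline{\rho}_{\ffrm'} \cong \overline{\rho}_\ffrm$ has its residue field generated over $k$ by the images of the Hecke operators $\mathsf{T}_w, \mathsf{S}_w$ for $w \notin S \cup Q$, which are forced to equal $\tr \overline{\rho}_\ffrm(\Frob_w)$ and $\det \overline{\rho}_\ffrm(\Frob_w)$ respectively. These all lie in $k$, pinning down the ideal.

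For the remaining assertions, the natural inclusion $U' \subset U$ induces a pullback $\pi^* : A(U, k)^\text{ord} \to A(U', k)^\text{ord}$ and a trace $\pi_* : A(U', k)^\text{ord} \to A(U, k)^\text{ord}$, both equivariant for the spherical Hecke operators away from $v$; the Iwahori-level complex $A(U', k)^\text{ord}$ is in addition acted on by the operator $\mathsf{U}_v$. The classical Hecke-algebra identities give $\pi_* \mathsf{U}_v \pi^* = \mathsf{T}_v$ and the Eichler--Shimura relation $\mathsf{U}_v^2 - \mathsf{T}_v \mathsf{U}_v + q_v \mathsf{S}_v = 0$ on the image of $\pi^*$. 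Let $\widetilde{\ffrm} \subset \bbT^{S \cup Q}$ denote the pullback of $\ffrm$. Since $\alpha_v \neq \beta_v$ in $k$, the polynomial $X^2 - \mathsf{T}_v X + q_v \mathsf{S}_v$ factors into distinct linear factors modulo $\widetilde{\ffrm}$; Hensel's lemma over the $\widetilde{\ffrm}$-localization of the Iwahori-enlarged Hecke algebra $\bbT^{S \cup Q}(A(U', k)^\text{ord})[\mathsf{U}_v]$ lifts this to $(X - \widetilde{\alpha}_v)(X - \widetilde{\beta}_v)$, producing orthogonal idempotents that cut out the refinement maximal ideals and, in particular, show that $\ffrm_{Q,0,\alpha}$ is maximal with residue field $k$. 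Combined with $\pi_* \mathsf{U}_v \pi^* = \mathsf{T}_v$, the resulting direct sum decomposition of the image of $\pi^*$ after $\widetilde{\ffrm}$-localization identifies each refinement summand with $H^*(A(U, k)^\text{ord})_\ffrm$ via the trace map.

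The main obstacle is showing that the $\widetilde{\ffrm}$-localized Iwahori cohomology is exhausted by these two refinement summands---equivalently, that $\pi^*$ is surjective after $\widetilde{\ffrm}$-localization, so that no ``newform-at-$v$'' contributions appear. This step cannot be settled by inverting the covering degree $q_v + 1$, which may vanish in residue characteristic $2$. Instead, the Galois-theoretic input is decisive: any newform at $v$ would come from an automorphic representation with a Steinberg-type local component at $v$, whose associated residual Galois representation would have $\overline{\rho}_\ffrm(\Frob_v)$ with eigenvalues in the ratio $q_v : 1$; since $\alpha_v \neq \beta_v$ in $k$ and $q_v \equiv 1$ modulo~$2$, this contradicts the hypothesis on the Taylor--Wiles datum. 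Translating this into the statement that $\pi^*$ becomes surjective after localization---using the Hecke identities above together with a direct analysis of the local structure of the Iwahori cohomology as a module over the diamond-operator torus---closes the loop: the composite $H^*(A(U', k)^\text{ord})_{\ffrm_{Q,0,\alpha}} \hookrightarrow H^*(A(U', k)^\text{ord})_{\ffrm_{Q,0}} \xrightarrow{\pi_*} H^*(A(U, k)^\text{ord})_\ffrm$ is an isomorphism, and induction on $|Q|$ yields the lemma.
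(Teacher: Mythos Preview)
Your outline has the right shape—induct on $|Q|$, split the Iwahori cohomology into two refinement pieces via Hensel, and rule out a ``new'' complement—but the crucial last step is not actually carried out, and the heuristic you offer in its place does not work for torsion coefficients.

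Concretely, two technical points first. With only a single inclusion $U'\subset U$ you cannot set up the argument: the image of the one pullback $\pi^*$ is not $\mathsf{U}_v$-stable, and the identity $\pi_*\mathsf{U}_v\pi^*=\mathsf{T}_v$ is false (indeed $\pi_*\pi^*=q_v+1$). The paper uses \emph{two} degeneracy maps $\pi_1,\pi_2$ related by $\eta=\diag(\varpi_v,1)$ and the correct relation $\mathsf{T}_v=\pi_{2,*}\eta^*\pi_1^*$; the $\mathsf{U}_v$-stable ``old'' subspace is the image of $\psi=(\pi_1^*,(\eta')^*\pi_2^*)$, not of $\pi^*$ alone. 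Also, at level $U_{Q,0}$ there are no diamond operators at $v$ (the local component is the full Iwahori $I_v$), so your appeal to the ``diamond-operator torus'' is empty.

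The real gap is the paragraph where you dispose of the newform complement by saying any such class ``would come from an automorphic representation with a Steinberg-type local component.'' That reasoning is valid only in characteristic $0$; here the coefficients are $k$, and torsion classes need not arise from automorphic representations at all, nor do we have local-global compatibility at $v$ strong enough to force a Steinberg shape. The paper replaces this heuristic by a direct Hecke computation: writing $\phi=(\pi_{1,*},\pi_{2,*}\eta^*)$, one has the identity $\pi_1^*\pi_{2,*}\eta^*=\mathsf{U}_v+(w\eta)^*$, so on $\ker\phi$ one gets $\mathsf{U}_v=-(w\eta)^*$ and hence $\mathsf{U}_v^2=((w\eta)^2)^*=\mathsf{S}_v$. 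For an eigenvector killed by $\ffrm_{Q,0,\alpha}$ this forces $\alpha_v(\Frob_v)^2=q_v^{-1}\alpha_v(\Frob_v)\beta_v(\Frob_v)$, i.e.\ $\alpha_v(\Frob_v)=\beta_v(\Frob_v)$ since $q_v\equiv 1$, contradicting the Taylor--Wiles hypothesis. This explicit double-coset identity is the missing ingredient in your argument; the Galois-theoretic sentence you wrote is the \emph{motivation} for why the computation succeeds, but it is not a substitute for it.
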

\begin{proof}
	We just give the proof in the case that $Q$ contains a single place $v$.  The general case can be proved in the same way. Let $\ffrm'$ denote the pullback of $\ffrm$ to $\bbT^{S \cup Q}$. Let $\eta = \diag{(\varpi_v, 1)}$ and $U_Q^0(1,1) = \eta U_{Q, 0}(1,1) \eta^{-1}$. We consider the pullback
	\[ \pi_1^\ast : H^\ast(A(U(1, 1), k)^\text{ord})_{\ffrm'} \to H^\ast(A(U_{Q, 0}(1, 1), k)^\text{ord})_{\ffrm'}, \]
	the isomorphism
	\[ \eta^\ast : H^\ast(A(U_{Q, 0}(1, 1), k)^\text{ord})_{\ffrm'} \to H^\ast(A(U_Q^0(1, 1), k)^\text{ord})_{\ffrm'} \]
	and the trace
	\[ \pi_{2, \ast} : H^\ast(A(U_Q^0(1, 1), k)^\text{ord})_{\ffrm'} \to H^\ast(A(U(1, 1), k)^\text{ord})_{\ffrm'} \]
	Then $\mathsf{T}_v = \pi_{2, \ast} \circ \eta^\ast \circ \pi_1^\ast$,
by definition. The definition of Taylor--Wiles datum implies that $\mathsf{T}_v \text{ mod }\ffrm = \alpha_v(\Frob_v) + \beta_v(\Frob_v) \neq 0$, and hence that $\mathsf{T}_v$ acts invertibly on
	\[ H^\ast(A(U(1, 1), k)^\text{ord})_{\ffrm'} = H^\ast(A(U(1, 1), k)^\text{ord})_{\ffrm}, \]
	the equality of these two spaces being proved in the same way as in \cite[Lemma 6.20]{Kha17}. It follows that $\pi_1^\ast$ is injective and $\pi_{2, \ast}$ is surjective, and consequently that $\ffrm'$ is the pullback of a uniquely determined maximal ideal $\ffrm_{Q, 0}$ of $\bbT^{S \cup Q}(A(U_{Q, 0}(1, 1), k)^\text{ord})$. This establishes the first part of the lemma. To prove the second part, we follow the proof of \cite[Lemma 3.5]{Cal17}. 

Let $\eta' = \diag{(1, \varpi_v)}$; so we also have $\mathsf{T}_v = \pi_{1, \ast} \circ (\eta')^\ast \circ \pi_2^\ast$. We define maps
\[ \psi : H^\ast(A(U(1, 1), k)^\text{ord})_{\ffrm'}^2 \to H^\ast(A(U_{Q, 0}(1, 1), k)^\text{ord})_{\ffrm'} \]
and
\[ \phi : H^\ast(A(U_{Q, 0}(1, 1), k)^\text{ord})_{\ffrm'} \to H^\ast(A(U(1, 1), k)^\text{ord})_{\ffrm'}^2 \]
by $\psi = (\pi_1^\ast, (\eta')^\ast \circ \pi_2^\ast)$ and $\phi = \begin{psmallmatrix} \pi_{1, \ast} \\ \pi_{2, \ast} \circ \eta^\ast \end{psmallmatrix}$. 
The composite $\phi \circ \psi$ is given by
\[ \phi \circ \psi = \begin{pmatrix} q_v + 1 & \mathsf{T}_v \\ \mathsf{T}_v & \mathsf{S}_v(q_v + 1) \end{pmatrix}. \]
The determinant
\begin{align*} \det(\phi \circ \psi) &= \mathsf{S}_v(q_v + 1)^2 - \mathsf{T}_v^2 \\ 
& \equiv (\alpha_v(\Frob_v) + \beta_v(\Frob_v))^2 \mod {\ffrm'}. \end{align*}
is nonzero modulo $\ffrm'$ by the definition of a Taylor--Wiles datum, so $\phi \circ \psi$ is an isomorphism and $\psi$ identifies $H^\ast(A(U(1, 1), k)^\text{ord})_{\ffrm'}^2$ with a direct summand of $H^\ast(A(U_{Q, 0}(1, 1), k)^\text{ord})_{\ffrm'}$. 

A standard computation of the double cosets $[\GL_2(\cO_{K_v}) \eta \GL_2(\cO_{K_v})]$ and $[I_v \eta I_v]$ shows that $\pi_1^\ast \circ \mathsf{T}_v = \mathsf{U}_v \circ \pi_1^\ast + (\eta')^\ast \circ \pi_2^\ast$ and that $\mathsf{U}_v \circ (\eta')^\ast \circ \pi_2^\ast = \pi_1^\ast \circ (q_v\mathsf{S}_v)$. 
So the action of $\mathsf{U}_v$ on $H^\ast(A(U(1, 1), k)^\text{ord})_{\ffrm'}^2$ is given by the matrix
\begin{equation}\label{Up-matrix} \begin{pmatrix} \mathsf{T}_v & q_v \mathsf{S}_v \\ -1 & 0 \end{pmatrix}, \end{equation}
which has characteristic polynomial $X^2 - \mathsf{T}_v X + q_v\mathsf{S}_v \equiv (X - \alpha_v(\Frob_v))(X - \beta_v(\Frob_v)) \bmod {\ffrm'}$. 
Since $\alpha_v(\Frob_v) \ne \beta_v(\Frob_v)$, by Hensel's Lemma we can find $A_v, B_v \in \bbT^{S \cup Q}(H^\ast(A(U_{Q, 0}(1, 1), k)^\text{ord})_{\ffrm'})$ such that $(\mathsf{U}_v - A_v)(\mathsf{U_v} - B_v) = 0$ on $\im(\psi)$. 
Then $\mathsf{U}_v - B_v$ is a projector (up to unit) from $\im(\psi)_{\ffrm'}$ to $\im(\psi)_{\ffrm_{Q, 0, \alpha}}$. 
Using \eqref{Up-matrix}, it is easy to see that $(\mathsf{U}_v - B_v) \circ \pi_1^\ast : H^\ast(A(U(1, 1), k)^\text{ord})_{\ffrm'} \to \im(\psi)$ is injective, and it intersects the kernel of $\mathsf{U}_v - B_v$ trivially.
The similar claim holds with $B_v$ replaced by $A_v$ by symmetry, so then by counting dimensions we deduce that there is an isomorphism of $\bbT^{S \cup Q}$-modules
\[ H^\ast(A(U(1, 1), k)^\text{ord})_{\ffrm'} \cong \im(\psi)_{\ffrm_{Q, 0, \alpha}}. \]

We thus have a decomposition of $\bbT^{S \cup Q}$-modules
\[ H^\ast(A(U_{Q, 0}(1, 1), k)^\text{ord})_{\ffrm_{Q, 0, \alpha}} = H^\ast(A(U(1, 1), k)^\text{ord})_{\ffrm'} \oplus \ker(\phi). \]
To finish the proof, it suffices to show that $\ker(\phi)[\ffrm_{Q, 0, \alpha}] = 0$. 
Another double coset computation give the formula $\pi_1^\ast \circ \pi_{2, \ast} \circ \eta^\ast = \mathsf{U}_v + (w \eta)^\ast$, where $w = \begin{psmallmatrix} 0 & 1 \\ 1 & 0 \end{psmallmatrix}\in \GL_2(\cO_{K_v})$. 
We showed above that $\pi_1^\ast$ is injective on $H^\ast(A(U(1, 1), k)^\text{ord})_{\ffrm'}$. 
So if $f \in \ker(\phi)$, we have $\mathsf{U}_v(f) = -(w \eta)^\ast (f)$. 
On the other hand, if $f$ is annihilated by $\ffrm_{Q, 0, \alpha}$, then $\mathsf{U}_v(f) = \alpha_v(\Frob_v) f$ and $q_v \mathsf{S}_v(f)= \alpha_v(\Frob_v)\beta_v(\Frob_v) f$. 
Thus, if $f \in \ker(\phi)[\ffrm_{Q, 0, \alpha}]$,
\[ \alpha_v(\Frob_v)^2 f = \mathsf{U}_v^2(f) = ((w\eta)^2)^\ast(f) = \mathsf{S}_v (f) = q_v^{-1}\alpha_v(\Frob_v)\beta_v(\Frob_v) f. \]
By the definition of a Taylor--Wiles datum, this forces $f = 0$, concluding the proof. 
\end{proof}
There are surjective algebra homomorphisms
\[ \bbT^{S \cup Q}( A( U_{Q, 0}(1, 1) / U_{Q, 1}(1, 1), k )^\text{ord} ) \to \bbT^{S \cup Q}( A(U_{Q, 0}(1, 1), k )^\text{ord} ), \]
and
\[ \bbT^{S \cup Q}_Q( A( U_{Q, 0}(1, 1) / U_{Q, 1}(1, 1), k )^\text{ord} ) \to \bbT^{S \cup Q}_Q( A(U_{Q, 0}(1, 1), k )^\text{ord} ), \]
and we write 
\[ \ffrm_{Q, 1} \subset \bbT^{S \cup Q}( A( U_{Q, 0}(1, 1)/U_{Q, 1}(1, 1), k )^\text{ord} ) \]
 for the pullback of $\ffrm_{Q, 0}$ and 
 \[ \ffrm_{Q, 1, \alpha} \subset \bbT^{S \cup Q}_Q( A( U_{Q, 0}(1, 1)/U_{Q, 1}(1, 1), k )^\text{ord} ) \]
  for the pullback of $\ffrm_{Q, 0, \alpha}$. We also write $\ffrm_{Q, 1, \alpha}$ abusively for the pullback of $\ffrm_{Q, 1, \alpha}$ to any of the algebras $\bbT^{S \cup Q}_Q( A(U_{Q, 0}(1, c) / U_{Q, 1}(c, c), \cO(x)))^\text{ord}$. For any $c \geq 1$, there are $\bbT^{S \cup Q}$-equivariant isomorphisms 
\[ C(U_{Q, 0}(1, c) / U_{Q, 1}(c, c), \cO(x))^\text{ord}_{\ffrm_{Q, 1, \alpha}} \otimes^\bbL_{\Lambda_c[\Delta_Q]} \Lambda_c \cong C(U_{Q, 0}(1, c) / U_{Q, 0}(c, c), \cO(x))^\text{ord}_{\ffrm_{Q, 0, \alpha}} \cong C(U(1, c) / U(c, c), \cO(x))^\text{ord}_{\ffrm}. \]
In exactly the same way as before, a limiting process now gives rise to a minimal complex $F_{\ffrm, x, Q}$ of $\Lambda_0$-modules with the following properties:
\begin{itemize}
\item There is a $\Lambda_1[\Delta_Q]$-algebra homomorphism $\bbT^{S \cup Q} \otimes_\cO \Lambda_1[\Delta_Q] \to \End_{\mathbf{D}(\Lambda_0[\Delta_Q])}(F_{\ffrm, x, Q})$.
\item For each $c \geq c_0$, there is a $\bbT^{S \cup Q} \otimes_\cO \Lambda_1$-equivariant isomorphism 
\[ F_{\ffrm, x, Q} \otimes_{\Lambda_0[\Delta_Q]} \Lambda_{0, c}[\Delta_Q] \cong C(U_{Q, 0}(c_0, c) / U_{Q, 1}(c, c), \cO(x))^\text{ord}_{\ffrm_{Q, 1, \alpha}} \]
 in $\mathbf{D}(\Lambda_{0, c}[\Delta_Q])$.
\item There is an isomorphism $H^\ast(F_{\ffrm, x, Q}) \cong \plim_c H^\ast(C(U_{Q, 1}(c, c), \cO(x))^\text{ord}_{\ffrm_{Q, 1, \alpha}})$ of $\bbT^{S \cup Q} \otimes_\cO \Lambda_1[\Delta_Q]$-modules.
\item There is an isomorphism $F_{\ffrm, x, Q} \otimes_{\Lambda_0[\Delta_Q]} \Lambda_0 \cong F_{\ffrm, x}$ of complexes of $\Lambda_0$-modules which becomes  $\bbT^{S \cup Q} \otimes_\cO \Lambda_1$-equivariant when we consider the corresponding morphism in $\mathbf{D}(\Lambda_0)$.
\end{itemize}
 We define $\bbT_{x, Q}$ to be the image of the homomorphism
\[ \bbT^{S \cup Q} \otimes_\cO \Lambda_1[\Delta_Q] \to \End_{\mathbf{D}(\Lambda_0[\Delta_Q])}(F_{\ffrm, x, Q}). \] 
Then $\bbT_{x, Q}$ is a finite local $\Lambda_1[\Delta_Q]$-algebra and we write $\ffrm_{x, Q}$ for its unique maximal ideal. 
\begin{theorem}\label{thm_existence_of_Galois_auxiliary_case}
The structural map $\Lambda_1 \to \bbT_{x, Q}$ factors through the quotient $\Lambda_1 \to \Lambda$. We can find an ideal $J_{x, Q} \subset \bbT_{x, Q}$ and a continuous homomorphism $\rho_{x, Q} : G_K \to \GL_2(\bbT_{x, Q} / J_{x, Q})$ satisfying the following conditions:
\begin{enumerate}
\item The ideal $J_{x, Q}$ is nilpotent: there exists an integer $\delta = \delta(K) \geq 1$ (same $\delta$ as in Theorem \ref{thm_existence_of_Galois_base_case}) depending only on $K$ such that $J_{x, Q}^\delta = 0$.
\item We have $\rho_{x, Q} \text{ mod }\ffrm_{x, Q} = \overline{\rho}_\ffrm$.
\item For each place $v\not\in S$ of $K$, $\rho_{x, Q}|_{G_{K_v}}$ is unramified and we have
\[ \det(X - \rho_{x, Q}(\Frob_v)) = X^2 - \mathsf{T}_v X + q_v \mathsf{S}_v. \]
\item For each place $v \in S_2$, $\rho_{x, Q}|_{G_{K_v}}$ is of type $\cD_v^\text{ord}$ (see \S \ref{subsubsec_ordinary_deformation_problems}).
\item For each place $v \in R$, $\rho_{x, Q}|_{G_{K_v}}$ is of type $\cD_v^{x_v^{-1}}$ (see \S \ref{subsubsec_level_raising_deformations}).
\item For each $v \in Q$, $\rho_{x, Q}|_{G_{K_v}}$ is of type $\cD_v^{\text{TW}, N}$ (se \S \ref{subsubsec_taylor_wiles_deformation_problems}). The two induced $\cO[\Delta_Q]$-structures on $\bbT_{x, Q} / J_{x, Q}$ (i.e. the canonical one and the one arising from the existence of $\rho_{x, Q}$ as in \S \ref{subsubsec_taylor_wiles_deformation_problems}) coincide.
\end{enumerate}
\end{theorem}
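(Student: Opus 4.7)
The plan is to mirror the proof of Theorem~\ref{thm_existence_of_Galois_base_case} essentially verbatim, with the Taylor--Wiles primes absorbed into the ``level-raising'' part of the Hecke datum fed to Theorem~\ref{thm:local-global-GLn-ord}. First I would introduce $\widetilde{\bbT}_{x,Q}$, the image of the map
\[
\bbT^{S\cup Q}\bigl[\{\mathsf{U}_v\}_{v\in S_2}\bigr]\otimes_{\cO}\Lambda_1[\Delta_Q]\longrightarrow \End_{\mathbf{D}(\Lambda_0[\Delta_Q])}(F_{\ffrm,x,Q}),
\]
which contains $\bbT_{x,Q}$. I would apply Theorem~\ref{thm:local-global-GLn-ord} to the Hecke datum $\cD_{x,Q}$ with $S_p = S_2$, level-raising set $R\cup Q$ (taking $T_v = \cO[(K_v^\times)^2/(1+\varpi_v\cO_{K_v})^2]$ for $v\in R\cup Q$), and $T = \emptyset$; our running hypotheses (decomposed generic $\overline{\rho}$, splitting of residue characteristics of $S$ in imaginary quadratic subfields, $[K^+:\bbQ]$ large, non-Eisenstein $\ffrm$) give exactly the input needed. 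This produces a nilpotent ideal $\widetilde{J}_{x,Q}\subset \widetilde{\bbT}_{x,Q}$ with $\widetilde{J}_{x,Q}^{\delta}=0$ for $\delta$ depending only on $K$, characters $\widetilde{\psi}_{v,i}:G_{K_v}\to (\widetilde{\bbT}_{x,Q}/\widetilde{J}_{x,Q})^\times$ for $v\in S_2$ lifting the pushforwards of $\phi_{v,i}$, and a continuous $\rho:G_{K,S\cup Q}\to \GL_2(\widetilde{\bbT}_{x,Q}/\widetilde{J}_{x,Q})$ satisfying the expected trace and pseudo-character relations.

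Next I would verify the ordinary condition at $v\in S_2$. The argument that $\Lambda_1\to \widetilde{\bbT}_{x,Q}$ factors through $\Lambda$ is identical to the argument given in Theorem~\ref{thm_existence_of_Galois_base_case}: away from $v_0$ this is forced by the definition of $U_{Q,0}(c_0,c)_v$, while at $v_0$ one uses that every $2$-power root of unity $\zeta\in K_{v_0}$ lies in $K$, so that $\diag(1,\zeta)\in Z_G(K)U_{Q,0}(c,c)$, and thus acts trivially on $\cO(x)$ (the Taylor--Wiles places are disjoint from $S_2$, so the analysis at $v\in S_2$ is unaffected by $Q$). Proposition~\ref{prop_characterization_of_ordinary_deformations_by_pseudocharacter_relations} then upgrades this to the condition $\rho|_{G_{K_v}}\in \cD_v^{\text{ord}}$.

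The level-raising condition at $v\in R$ is inherited directly from the trace/determinant conditions supplied by Theorem~\ref{thm:local-global-GLn-ord} via the characters $t_{v,i}$, just as in Theorem~\ref{thm_existence_of_Galois_base_case}. For $v\in Q$, the corresponding $t_{v,1},t_{v,2}$ give two characters of $W_{K_v}$ lifting $\alpha_v,\beta_v$; since $U_{Q,1}(1,1)_v$ is the kernel of $I_v\to (k(v)^\times\times k(v)^\times)/2^N$ and the $t_{v,i}$ come from the tautological $T_v$-structure, these lifts are trivial on $(1+\varpi_v\cO_{K_v})$ and their restrictions to $\cO_{K_v}^\times$ factor through $k(v)^\times/2^N$. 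This is precisely the condition defining $\cD_v^{\text{TW},N}$, and it also shows that the $\cO[\Delta_Q]$-structure on $\widetilde{\bbT}_{x,Q}/\widetilde{J}_{x,Q}$ coming from the universal tame characters of $\rho|_{G_{K_v}}$ agrees with the tautological one from the diamond operators. Finally, Carayol's lemma \cite[Lemma~2.1.10]{Clo08} lets me replace $\widetilde{\bbT}_{x,Q}/\widetilde{J}_{x,Q}$ by its subalgebra $\bbT_{x,Q}/J_{x,Q}$, with $J_{x,Q}=\widetilde{J}_{x,Q}\cap \bbT_{x,Q}$; this is legitimate because the residual representation $\overline{\rho}_\ffrm$ is absolutely irreducible.

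The only real work beyond bookkeeping is checking that the hypotheses of Theorem~\ref{thm:local-global-GLn-ord} apply to $\cD_{x,Q}$ uniformly in $Q$; the main point to watch is the condition that residue characteristics of $R\cup Q$ split in some imaginary quadratic subfield of $K$, which we ensure by choosing the Taylor--Wiles primes accordingly (as allowed by our setup). Once that is in place, the compatibility of the two $\Delta_Q$-actions, which is the single genuinely new ingredient compared to Theorem~\ref{thm_existence_of_Galois_base_case}, follows transparently from the construction of $t_{v,i}$ at $v\in Q$.
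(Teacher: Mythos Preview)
Your proposal is correct and follows exactly the paper's own proof, which says only that the argument of Theorem~\ref{thm_existence_of_Galois_base_case} goes through with $R$ replaced by $R\cup Q$ (and refers to \cite[Proposition~6.5.11]{10authors}). Your expansion of the details is accurate.

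One small refinement: your claim that ``the Taylor--Wiles places are disjoint from $S_2$, so the analysis at $v\in S_2$ is unaffected by $Q$'' slightly undersells the one new wrinkle, which is precisely what the paper's closing sentence flags. When you write $\diag(1,\zeta)=\zeta\cdot u$ with $\zeta\in Z_G(K)$ and argue as in the base case, the global central scalar $\zeta$ now has nontrivial components at places $w\in Q$, and one must check that $\zeta$ lies in $U_{Q,1}(c,c)_w$, i.e.\ that its image in $\Delta_w=(k(w)^\times\times k(w)^\times)/2^N$ is trivial. This holds because $\zeta$ has order dividing $2^m$ while $q_w\equiv 1\bmod 2^{N+m}$, so $\zeta$ is a $2^N$-th power in $k(w)^\times$. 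This is the content of the paper's remark that the choice of $\cD_v^{\text{TW},N}$ (with $\Delta_w$ of exponent $2^N$, not $2^{N+m}$) is what makes the factorization $\Lambda_1\to\Lambda$ go through; you should replace $U_{Q,0}(c,c)$ by $U_{Q,1}(c,c)$ in your write-up and make this step explicit.
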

\begin{proof}
The proof is essentially the same as that of Theorem \ref{thm_existence_of_Galois_base_case}, replacing $R$ by $R \cup Q$ (see also the proof of \cite[Proposition 6.5.11]{10authors}). We note that the reason for using the problem $\cD_v^{\text{TW}, N}$ is that this is exactly what forces the map $\Lambda_1 \to \bbT_{x, Q}$ to factor through the quotient $\Lambda_1 \to \Lambda$.
\end{proof}
\begin{corollary}\label{cor_R_to_T_TW_augmented}
There is a unique surjective morphism of $\Lambda[\Delta_Q]$-algebras $R_{\cS_{x, Q}} \to \bbT_{x, Q} / J_{x, Q}$ with the property that for all $v \not\in S \cup Q$, $\tr \rho_{\cS_{x, Q}}(\Frob_v) \mapsto \mathsf{T}_v$.
\end{corollary}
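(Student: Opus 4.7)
The plan is to imitate the passage from Theorem \ref{thm_existence_of_Galois_base_case} to Corollary \ref{cor_R_to_T}, now using Theorem \ref{thm_existence_of_Galois_auxiliary_case} as input. First, Theorem \ref{thm_existence_of_Galois_auxiliary_case} produces the nilpotent ideal $J_{x,Q} \subset \bbT_{x,Q}$ (with $J_{x,Q}^{\delta} = 0$) together with a continuous representation $\rho_{x,Q} : G_K \to \GL_2(\bbT_{x,Q}/J_{x,Q})$ lifting $\overline{\rho}_{\ffrm} \cong \overline{\rho}$, whose local behaviour at each place of $S \cup Q$ is exactly of the type demanded by the global deformation problem $\cS_{x,Q}$: ordinary of type $\cD_v^{\ord}$ for $v \in S_2$, of type $\cD_v^{x_v^{-1}}$ for $v \in R$, and of type $\cD_v^{\mathrm{TW},N}$ for $v \in Q$. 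In particular $\rho_{x,Q}$ is a type $\cS_{x,Q}$-lifting of $\overline{\rho}$, and the universal property of $R_{\cS_{x,Q}}$ yields a unique morphism of $\Lambda$-algebras $R_{\cS_{x,Q}} \to \bbT_{x,Q}/J_{x,Q}$ through which $\rho_{\cS_{x,Q}}$ factors.

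Next I would verify that this morphism is actually $\Lambda[\Delta_Q]$-linear. The $\Lambda$-equivariance is automatic from the universal property (combined with the fact, proved in Theorem \ref{thm_existence_of_Galois_auxiliary_case}, that $\Lambda_1 \to \bbT_{x,Q}$ factors through $\Lambda$ and matches the diagonal characters appearing in the local ordinary lifting rings $R_v^{\ord}$). For $\Delta_Q$-equivariance, the deformation $\rho_{x,Q}$ endows $\bbT_{x,Q}/J_{x,Q}$ with a second $\cO[\Delta_Q]$-algebra structure, coming from the $R_v^{\mathrm{TW},N}$-algebra structure described in \S\ref{subsubsec_taylor_wiles_deformation_problems}; the final clause of Theorem \ref{thm_existence_of_Galois_auxiliary_case} asserts that this second structure agrees with the tautological one coming from the inclusion $\Lambda[\Delta_Q] \hookrightarrow \bbT_{x,Q}$, so the morphism is $\Lambda[\Delta_Q]$-linear as required.

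Finally I would verify surjectivity. By the construction at the end of \S\ref{subsec_setup_for_main_ALT}, $\bbT_{x,Q}$ is by definition the image of $\bbT^{S \cup Q} \otimes_\cO \Lambda_1[\Delta_Q]$ in $\End_{\mathbf{D}(\Lambda_0[\Delta_Q])}(F_{\ffrm,x,Q})$, and $\bbT^{S \cup Q}$ is generated as an $\cO$-algebra by the operators $\mathsf{T}_v$ and $\mathsf{S}_v^{\pm 1}$ for $v \not\in S \cup Q$. These are exactly $\tr \rho_{x,Q}(\Frob_v)$ and $q_v^{-1} \det \rho_{x,Q}(\Frob_v)$ modulo $J_{x,Q}$, hence lie in the image of $R_{\cS_{x,Q}}$; together with the $\Lambda[\Delta_Q]$-action (which is also in the image, by the previous paragraph), they generate $\bbT_{x,Q}/J_{x,Q}$ over $\cO$. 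Uniqueness is immediate from the Chebotarev density theorem applied to the traces on the dense set of Frobenii $\{\Frob_v : v \not\in S \cup Q\}$. There is no serious obstacle here: all the substantive work has been done in establishing Theorem \ref{thm_existence_of_Galois_auxiliary_case}, and what remains is a formal consequence of its statement together with the universal property defining $R_{\cS_{x,Q}}$.
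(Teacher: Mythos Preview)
Your proposal is correct and follows exactly the approach the paper intends: the paper gives no separate proof of this corollary, treating it as an immediate formal consequence of Theorem \ref{thm_existence_of_Galois_auxiliary_case} in the same way that Corollary \ref{cor_R_to_T} follows from Theorem \ref{thm_existence_of_Galois_base_case}. Your write-up simply spells out the universal-property argument (existence, $\Lambda[\Delta_Q]$-linearity via the final clause of Theorem \ref{thm_existence_of_Galois_auxiliary_case}, surjectivity from the generators $\mathsf{T}_v$, $\mathsf{S}_v^{\pm 1}$, and uniqueness via Chebotarev) that the paper leaves implicit.
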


\subsection{The proof of Theorem \ref{thm_application_of_TW_method}}\label{subsec_proof_of_TW_method}

We continue with the notation and assumptions of the previous section. We set $C_1 = F_{\ffrm, 1}$ and $C_\chi = F_{\ffrm, \chi}$. We fix representatives $\rho_{\cS_1}$, $\rho_{\cS_\chi}$ for the universal deformations of type $\cS_1$ and $\cS_\chi$ that are the same modulo $(\varpi)$. If $T$ is any finite set of finite places of $K$, then these representatives determine augmentations $R_{\cS_1}^T \to R_{\cS_1}$ and $R_{\cS_\chi}^T \to R_{\cS_\chi}$ of the universal $T$-framed deformation rings which are compatible with the canonical isomorphisms $R_{\cS_1}^T / (\varpi) \cong R_{\cS_\chi}^T / (\varpi)$ and $R_{\cS_1} / (\varpi) \cong R_{\cS_\chi} / (\varpi)$.

We have constructed (Corollary \ref{cor_R_to_T}) surjective $\Lambda$-algebra morphisms $R_{\cS_1}  \to \bbT_1 / J_1$ and $R_{\cS_\chi} \to \bbT_\chi / J_\chi$, where the ideals $J_1, J_\chi$ are nilpotent. This allows us to think of $\Spec \bbT_1 \subset \Spec R_{\cS_1}$ and $\Spec \bbT_\chi \subset \Spec R_{\cS_\chi}$ as closed subspaces, which have equal intersection with the subspace $\Spec R_{\cS_1} / (\varpi) = \Spec R_{\cS_\chi} / (\varpi)$ (apply Lemma \ref{lem_support_and_derived_tensor_product} to the $\bbT^{S}$-equivariant isomorphism $C_\chi / (\varpi) \cong C_1 / (\varpi)$). We will say that a prime of $R_{\cS_1}$ (resp $R_{\cS_\chi}$) is in the support of a $\bbT_1$ (resp. $\bbT_\chi$)-module $M$ if it lies in the image of $\Supp_{\bbT_1} M$ in $\Spec R_{\cS_1}$ (resp. of $\Supp_{\bbT_1} M_\chi$ in $\Spec R_{\cS_\chi}$).

We write $\wp = \ker(\Lambda_0 \to \cO)$ for the augmentation ideal of $\Lambda_0$. We define $\overline{H}_1 = \text{im}(H^\ast(C_1) \to H^\ast(C_1)_{(\wp)})$, $\overline{H}_\chi = \text{im}(H^\ast(C_\chi) \to H^\ast(C_\chi)_{(\wp)})$. We write $\overline{\bbT}_\chi$ for the quotient of $\bbT_\chi$ which acts faithfully on $\overline{H}_\chi$. 
\begin{lemma}\label{lem_CG_for_ordinary_completed_coh}
$\overline{H}_\chi$ is a non-zero finite $\Lambda_0$-module of dimension at least $\dim \Lambda_0 + 1 - 2d = 2d + 2$, $d = [K^+ : \bbQ]$. If $Q$ is a prime of $\Lambda_0$ minimal in the support of $\overline{H}_\chi$, then $Q \subset \wp$.
\end{lemma}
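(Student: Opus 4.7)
The plan is to verify finiteness, non-vanishing, the associated-prime statement, and the dimension bound in turn; the bulk of the work lies in the last. Finiteness is immediate since $C_\chi$ is a perfect complex of $\Lambda_0$-modules, so $H^\ast(C_\chi)$ is a finitely generated $\Lambda_0$-module and $\overline{H}_\chi$ is a quotient. For non-vanishing, $\pi$ --- cohomological of weight $0$, $\iota$-ordinary, of the prescribed local types at $R$ and $S_2$ --- determines via $\iota^{-1}$ an $\cO$-algebra homomorphism $\bbT_\chi \to \overline{\bbQ}_p$ whose kernel $\frp_\pi$ is a dimension-one prime of $\bbT_\chi$ lying over the augmentation $\wp_\Lambda$ of $\Lambda$ (since $\pi$ has weight $0$ and the Hida deformation of its $p$-adic character specializes to the classical one at the augmentation). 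Because $\Lambda_0 \hookrightarrow \Lambda$ is compatible with the augmentations to $\cO$, the contraction of $\frp_\pi$ to $\Lambda_0$ equals $\wp$; and since $\pi$ contributes a nonzero class to $H^\ast(C_\chi) \otimes_\cO E$, we have $H^\ast(C_\chi)_{(\wp)} \neq 0$, so $\overline{H}_\chi \neq 0$. The assertion about minimal primes in the support is immediate from Lemma~\ref{lem_primes_and_supports}(2): the associated primes of $\overline{H}_\chi$ coincide (as primes of $\Lambda_0$) with those of $H^\ast(C_\chi)_{(\wp)}$ over $(\Lambda_0)_{(\wp)}$, hence are contained in $\wp$, and every prime minimal in the support of a finitely generated module is associated.

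The dimension bound is the main content; I will deduce it from the New Intersection Theorem of Peskine--Szpiro--Hochster--Roberts. Writing $d = [K^+:\bbQ]$, so $\dim \Lambda_0 = 1 + 4d$, localize at $\wp$: then $(C_\chi)_{(\wp)}$ is a perfect complex over the regular local ring $(\Lambda_0)_{(\wp)}$ of dimension $\operatorname{ht}(\wp) = 4d$. Its residue-field fibre $(C_\chi)_{(\wp)} \otimes \kappa(\wp)$ equals $C_\chi \otimes_{\Lambda_0} E$, and the isomorphism
\[ C_\chi \otimes_{\Lambda_0} \Lambda_{0,c_0} \cong C(U(c_0,c_0)/U(c_0,c_0),\cO(\chi))^{\mathrm{ord}}_\ffrm \]
identifies this fibre, after inverting $p$, with the non-Eisenstein, $\mathsf{U}_2$-ordinary rational cohomology of the fixed locally symmetric space for $\GL_2/K$ with coefficients in $\cO(\chi)$. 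Since $\ffrm$ is non-Eisenstein, only cuspidal contributions survive, and the $\GL_2$-analog of Theorem~\ref{thm:aut_reps_and_cohom_PGLn}(2) (i.e.\ Borel--Wallach combined with Clozel's purity lemma) shows this cohomology is concentrated in a range of length $l_0 = d$. A minimal free representative of $(C_\chi)_{(\wp)}$ over $(\Lambda_0)_{(\wp)}$ therefore has length $l_0 = d$, so the New Intersection Theorem yields
\[ \dim_{(\Lambda_0)_{(\wp)}} H^\ast((C_\chi)_{(\wp)}) \ \geq\ 4d - d \ =\ 3d. \]
Translating back: some associated prime $Q \subset \wp$ of $\overline{H}_\chi$ has $\operatorname{ht}_{\Lambda_0}(Q) \leq d$, whence $\dim \Lambda_0/Q \geq (1+4d) - d = 3d+1 \geq 2d+2$ (using $d \geq 1$, automatic for CM $K$). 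This is stronger than the asserted bound.

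The main technical obstacle will be the concentration-of-cohomology claim in our $\GL_2$-setup. Borel--Wallach, via Clozel's purity lemma, handles the cohomological range for tempered cuspidal contributions at classical weight zero; the extra input needed is that Hida's $\mathsf{U}_2$-ordinary projection, localized at the non-Eisenstein maximal ideal $\ffrm$, preserves this concentration after inverting $p$ (i.e.\ does not pick up exotic non-classical pieces). Once this concentration is in hand, the rest of the argument is purely a matter of applying the New Intersection Theorem and tracking minimal/associated primes under localization at $\wp$.
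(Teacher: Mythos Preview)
Your approach is essentially the paper's: localize $C_\chi$ at $\wp$, identify the residue-field fibre with classical ordinary cohomology at finite level, use the concentration of that cohomology in a bounded range of degrees, and apply a Calegari--Geraghty / New Intersection argument. The paper packages the last step as \cite[Lemma 6.2]{Cal17}, which is exactly the NIT-style bound you invoke; the associated-prime claim is handled identically via Lemma~\ref{lem_primes_and_supports}.

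There is, however, a computational error that invalidates your stronger conclusion. We are working with $G = \GL_2$ over the CM field $K$, not $\PGL_2$; the value $l_0 = d$ you quote from Theorem~\ref{thm:aut_reps_and_cohom_PGLn}(2) is the $\PGL_2$ value. For $\GL_2$ the symmetric space $X_{\GL_2}$ has real dimension $4d - 1$ (one larger, because of the centre), and the tempered cuspidal cohomology at weight $0$ is concentrated in degrees $[d,\,3d-1]$, so the amplitude of the minimal complex is $2d - 1$, not $d$. This is precisely what the paper extracts from \cite[Theorem 2.4.9]{10authors}. With the correct amplitude, the New Intersection Theorem gives
\[
\dim_{(\Lambda_0)_{(\wp)}} H^\ast\bigl((C_\chi)_{(\wp)}\bigr)\ \geq\ 4d - (2d-1)\ =\ 2d+1,
\]
hence $\dim_{\Lambda_0}\overline{H}_\chi \geq 2d+2$, which is exactly the bound in the lemma and not the $3d+1$ you claim. (The two values of $l_0$ happen to agree when $d=1$, but the running hypothesis $[K^+:\bbQ] > 10 + 8|R|$ forces $d > 10$, so the discrepancy is real.) Once you replace $l_0 = d$ by $l_0 = 2d-1$, your argument and the paper's coincide.
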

\begin{proof}
Let $C = C_\chi$. Then $C_{(\wp)}$ is a bounded complex of finite free $\Lambda_{0,(\wp)}$-modules, and there is an isomorphism 
\[ H^\ast(C_{(\wp)} \otimes_{\Lambda_{0,(\wp)}} E) \cong \Hom_E(H^{-\ast}_{U(c_0, c_0)}(\mathfrak{X}, \cO(\chi^{-1}))^\text{ord}_\ffrm[1/p], E). \]
By \cite[Theorem 2.4.10]{10authors}), these last groups are non-zero precisely in degrees in the range $[-3d + 1, -d]$ 
(they are non-zero because $\pi$ exists). We can therefore apply \cite[Lemma 6.2]{Cal17} to deduce that $H^\ast(C_{(\wp)}) = \overline{H}_{\chi, (\wp)}$ has dimension at least
\[ \dim \Lambda_{0,(\wp)} - 2d + 1 = \dim \Lambda_0 - 2d \]
as $\Lambda_{0,(\wp)}$-module, hence that $\overline{H}_\chi$ has dimension at least $\dim \Lambda_0 + 1 - 2d$ as $\Lambda_0$-module. This proves the first claim of the lemma. The second claim follows immediately from the second part of Lemma \ref{lem_primes_and_supports}.
\end{proof}
For the statement of the next proposition, we recall from \S \ref{subsec_nice_and_sweet_primes} that a prime $\frp \subset R_{\cS_1}$ is said to be nice if $R_{\cS_1} / \frp$ is of dimension 1 and characteristic $p$, there exists $v \in S_2$ such that the quotient $\widetilde{\psi}_{v, 1} / \widetilde{\psi}_{v, 2} \text{ mod }\frp$ has infinite order, and there exists $\sigma \in G_K$ such that $\rho_{\cS_1}(\sigma) \text{ mod }\frp$ is a non-trivial unipotent element. A prime $\frq \subset R_{\cS_\chi}$ is said to be sweet if $R_{\cS_\chi} / \frq$ is of dimension 1 and characteristic 0, and there exists $v \in S_2$ such that the quotient $\psi_{v, 1} / \psi_{v, 2} \text{ mod }\frq$ has infinite order. 
\begin{proposition}\label{prop_patching}
Let $\frp_1, \dots, \frp_l$ be nice primes of $R_{\cS_1}$, and let $\frq_1, \dots, \frq_k$ be the primes of $R_{\cS_\chi}[1/p]$ in the support of $H^\ast_{U(c_0, c_0)}(\mathfrak{X}, \cO(\chi^{-1}))^\text{ord}_\ffrm[1/p]$. Then $\frq_1, \dots, \frq_k$ are sweet, and we can find the following:
\begin{enumerate}
\item A finite set of finite, prime-to-$S$ places $v_1, \dots, v_r$ of $K$ such that, setting $T = R \cup \{ v_1, \dots, v_r \}$, each map $A_{\cS_\chi}^T \to R_{\cS_\chi}^T \to R_{\cS_\chi} / \frp_i$ and $A_{\cS_\chi}^T \to R_{\cS_\chi}^T \to R_{\cS_\chi} / \frq_j$ is surjective. We set $\cT = \cO \llbracket T_1, \dots, T_{4 |T| - 1} \rrbracket$.
\item An integer $q \geq [K : \bbQ]$.
We set $S_{\infty} = \Lambda_0 \llbracket \bbZ_p^{2q} \rrbracket \widehat{\otimes}_\cO \cT$ and $g = 2q - [K : \bbQ]$. We define $\fra = \ker(S_\infty \to \Lambda_0)$.
\item Complete Noetherian $S_\infty \widehat{\otimes}_{\Lambda_0} A_{\cS_1}^T$- and $S_\infty \widehat{\otimes}_{\Lambda_0} A_{\cS_\chi}^T$-algebras $R_{1, \infty}$ and $R_{\chi, \infty}$ together with compatible isomorphisms $R_{1, \infty}  / (\fra) \cong R_{\cS_1}$, $R_{\chi, \infty}  / (\fra) \cong R_{\cS_\chi}$, and $R_{1, \infty} / (\varpi) \cong R_{\chi, \infty} / (\varpi)$. 
\item Bounded complexes $D_1$ and $D_\chi$ of finite free $S_{\infty}$-modules together with $S_\infty$-algebras $\bbT_{1, \infty}$ and $\bbT_{\chi, \infty}$ and inclusions of $S_{\infty}$-algebras $\bbT_{1, \infty} \subset \End_{\mathbf{D}(S_{\infty})}(D_1)$ and $\bbT_{\chi, \infty} \subset \End_{\mathbf{D}(S_{\infty})}(D_{\chi})$.
\item Isomorphisms $D_1 \otimes_{S_{\infty}} \Lambda_0 \cong C_1$ (sending $\bbT_{1, \infty}$ into $\bbT_1$), $D_\chi \otimes_{S_{\infty}} \Lambda_0 \cong C_\chi$ (sending $\bbT_{\chi, \infty}$ into $\bbT_\chi$) and $D_1 / (\varpi) \cong D_\chi / (\varpi)$ (sending the image of $\bbT_1$ to the image of $\bbT_\chi$).
\item Ideals $J_{1, \infty} \subset \bbT_{1, \infty}$ and $J_{\chi, \infty} \subset \bbT_{\chi, \infty}$ satisfying $J_{1, \infty}^\delta = 0$, $J_{\chi, \infty}^\delta = 0$, and surjective homomorphisms $R_{1, \infty} \to \bbT_{1, \infty} / J_{1, \infty}$ and $R_{\chi, \infty} \to \bbT_{\chi, \infty} / J_{\chi, \infty}$ (same $\delta$ as in Theorem \ref{thm_existence_of_Galois_base_case}).
\end{enumerate}
These can be chosen to satisfy the following additional conditions:
\begin{enumerate}
\item Let $\frp \in \{ \frp_1, \dots, \frp_l \}$, and let $\frp_\infty$ denote the pullback of $\frp$ to $R_{\chi, \infty}$, $\frp_\loc$ the pullback of $\frp$ to $A_{\cS_\chi}^T$. Then we have
\[ \dim_{\kappa(\frp)} ( \frp_\infty / ( \frp_\infty^2, \frp_\loc ) )_{(\frp_\infty)} \leq g. \]
\item Let $\frq \in \{ \frq_1, \dots, \frq_k \}$, and $\frq_\infty$ denote the pullback of $\frq$ to $R_{\chi, \infty}$, $\frq_\loc$ the pullback of $\frq$ to $A_{\cS_\chi}^T$.  Then we have
\[ \dim_{\kappa(\frq)} ( \frq_\infty / ( \frq_\infty^2, \frq_\loc ) )_{(\frq_\infty)} \leq g. \]
\end{enumerate}
\end{proposition}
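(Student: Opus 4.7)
The plan is to run a Taylor--Wiles patching argument simultaneously for the trivial character and for $\chi$, with the added feature (following the Skinner--Wiles strategy and the novel refinement alluded to in the introduction) that we must control tangent spaces at all of the given dimension $1$ primes simultaneously.

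First I would verify that the primes $\frq_1, \dots, \frq_k$ are sweet. Each $\frq_j$ arises from the Hecke eigenvalues of a cuspidal automorphic representation of weight $0$, so $R_{\cS_\chi}/\frq_j$ is a domain of characteristic $0$ and dimension $1$ (a fact one reads off from Theorem~\ref{thm_existence_of_Galois_base_case} and Theorem~\ref{thm:aut_reps_and_cohom_PGLn}, together with the standard appearance of the automorphic Hecke eigenvalues on cohomology). The regularity at infinity of a weight $0$ cohomological automorphic representation forces the ratio $\psi_{v,1}/\psi_{v,2}$ of the universal ordinary characters to have infinite order modulo $\frq_j$ for at least one $v \in S_2$; this is exactly the remaining requirement of Definition~\ref{def:sweet-prime}. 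Also, by Theorem~\ref{thm_existence_of_Galois_base_case}, $\rho_{\cS_1}$ and $\rho_{\cS_\chi}$ factor through $\Lambda$, so the universal characters are well-defined in this framework.

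Next I would choose the set $T$. Choose auxiliary places $v_1, \dots, v_r \notin S$ whose Frobenius images generate enough of $R_{\cS_\chi}^T/P_\loc = R_{\cS_1}^T/P_\loc$ to ensure that each of the natural maps $A_{\cS_\chi}^T \twoheadrightarrow R_{\cS_\chi}/\frp_i$, $A_{\cS_\chi}^T \twoheadrightarrow R_{\cS_\chi}/\frq_j$ (and analogously for the residual specializations) is surjective; this is the standard use of extra framing variables to turn tangent space bounds into bounds on the number of topological generators of $P_\infty/P_\loc$. Set $\cT = \cO\llbracket T_1, \dots, T_{4|T|-1}\rrbracket$, so $R_{\cS_1}^T \cong R_{\cS_1}\widehat{\otimes}_\cO \cT$ and similarly for $\chi$. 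Then apply Proposition~\ref{cor_existence_of_taylor_wiles_data_for_a_collection_of_nice_and_sweet_primes} to the collection of nice primes $\{\frp_i\}$ (in $R_{\cS_1}$) and sweet primes $\{\frq_j\}$ (in $R_{\cS_\chi}$); this produces an integer $q \ge [K:\bbQ]$ and, for each $N \ge 1$, a Taylor--Wiles datum $Q_N$ of level $N+m$ with $|Q_N|=q$, degree $1$ over $\bbQ$, disjoint from $R$ and from the residue characteristics of places in $R$, satisfying the tangent space bounds at each $P \in \{\frp_i, \frq_j\}$. (Here we use that the residual representations of the two deformation problems coincide, so a single choice of $Q_N$ works for both.)

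Then comes the patching itself. For each $N$, Theorem~\ref{thm_existence_of_Galois_auxiliary_case} gives us complexes $F_{\ffrm, 1, Q_N}$ and $F_{\ffrm, \chi, Q_N}$ of $\Lambda_0[\Delta_{Q_N}]$-modules, which by Theorem~\ref{thm_boundedness_of_good_dihedral_cohomology} are in fact perfect complexes of $\Lambda_0[\Delta_{Q_N}]$-modules, together with Hecke algebras $\bbT_{1,Q_N}$, $\bbT_{\chi,Q_N}$, nilpotent ideals of controlled nilpotence degree $\delta$, and surjections $R_{\cS_1, Q_N}^T \twoheadrightarrow \bbT_{1,Q_N}/J_{1,Q_N}$, $R_{\cS_\chi, Q_N}^T \twoheadrightarrow \bbT_{\chi,Q_N}/J_{\chi,Q_N}$. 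By construction $F_{\ffrm, 1, Q_N}/(\varpi) = F_{\ffrm,\chi,Q_N}/(\varpi)$ compatibly with Hecke actions, and the tangent space of $R_{\cS_1,Q_N}^T$ (respectively $R_{\cS_\chi,Q_N}^T$) is bounded in terms of $q$, $|T|$, and $g = 2q-[K:\bbQ]$, by Proposition~\ref{prop_presentation_by_generators_and_relations} and property~\ref{TWprimes:max-tangent} of Proposition~\ref{cor_existence_of_taylor_wiles_data_for_a_collection_of_nice_and_sweet_primes}. Set $S_\infty = \Lambda_0\llbracket \bbZ_p^{2q}\rrbracket \widehat{\otimes}_\cO \cT$ (with the $\bbZ_p^{2q}$ coming from $\varprojlim_N \Delta_{Q_N}$), and use an ultrafilter (or diagonal) argument on the quotients modulo $(\fra^n, \ffrm_{S_\infty}^n)$ to patch $R_{\cS_1, Q_N}^T$, $R_{\cS_\chi, Q_N}^T$, the complexes, and the Hecke algebras simultaneously to $R_{1,\infty}$, $R_{\chi,\infty}$, $D_1$, $D_\chi$, $\bbT_{1,\infty}$, $\bbT_{\chi,\infty}$, $J_{1,\infty}$, $J_{\chi,\infty}$ with all compatibilities in sight preserved, including $D_1/(\varpi) \cong D_\chi/(\varpi)$ since this holds at every finite level.

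The hard part, and the one that forces the more delicate machinery of \S\ref{subsec_nice_and_sweet_primes}, is the final verification of the tangent space bounds at the primes $\frp_\infty$, $\frq_\infty$. By Nakayama's lemma this reduces to bounding $\dim_{\kappa(P)}(P_N/(P_N^2,P_\loc,t^N))$ for arbitrarily large $N$, which is exactly the conclusion~\ref{TWprimes:nice-and-sweet-tangent} of Proposition~\ref{cor_existence_of_taylor_wiles_data_for_a_collection_of_nice_and_sweet_primes}: for each $P \in \{\frp_i, \frq_j\}$ and each $N$ there is an $A/t^N$-module map $(A/t^N)^{2q-[K:\bbQ]} \to P_{T,N}/(P_{T,N}^2,P_\loc,t^N)$ with kernel and cokernel annihilated by $t^{a_P}$ for a constant $a_P$ depending only on $\cS$ and $P$. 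Passing this bound through the patching and the specialization $R_{1,\infty}/(\fra) = R_{\cS_1}$, $R_{\chi,\infty}/(\fra) = R_{\cS_\chi}$ gives the desired inequality $\dim_{\kappa(P)}(P_\infty/(P_\infty^2,P_\loc))_{(P_\infty)} \le g$. This last step --- making sure that a \emph{single} choice of Taylor--Wiles primes $Q_N$ controls the tangent spaces at \emph{all} of the $\frp_i$ and $\frq_j$ at once, uniformly in $N$ --- is the main obstacle, and is precisely what Lemma~\ref{lem:dual_selmer_nice_and_sweet} (with its inductive use of Lemma~\ref{lem:Kisin_indices_lemma}) was engineered to handle.
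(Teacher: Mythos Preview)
Your outline is essentially correct and follows the same strategy as the paper: verify sweetness of the $\frq_j$ via Hodge--Tate regularity, choose $T$ so that the relevant quotients are generated by traces of Frobenii at places in $T$, invoke Proposition~\ref{cor_existence_of_taylor_wiles_data_for_a_collection_of_nice_and_sweet_primes} to produce Taylor--Wiles data $Q_N$ of level $N+m$ controlling all the tangent spaces simultaneously, and then patch the complexes $F_{\ffrm,1,Q_N}$, $F_{\ffrm,\chi,Q_N}$, the deformation rings, Hecke algebras, and nilpotent ideals by a diagonal (pigeonhole) argument over finite quotients.

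The one place where you are slightly imprecise is the final step. You say ``passing this bound through the patching'' gives the inequality on $\dim_{\kappa(P)}(P_\infty/(P_\infty^2,P_\loc))_{(P_\infty)}$, but the mechanism deserves to be made explicit: the paper includes, as part of the patching datum at level $N$, actual elements $X_{\frp,1},\dots,X_{\frp,g}\in\frp(N)$ (and similarly $X_{\frq,j}$) such that $\frp(N)/(\frp(N)^2,\frp_\loc,X_{\frp,1},\dots,X_{\frp,g},t^N)$ is annihilated by $t^a$. These elements are then themselves patched along with everything else, yielding elements $X_{\frp,i}\in\frp_\infty$ with $\frp_\infty/(\frp_\infty^2,\frp_\loc,X_{\frp,1},\dots,X_{\frp,g})$ annihilated by $t^a$; since $t$ becomes a unit in $\kappa(\frp)$, localizing at $\frp_\infty$ gives the bound. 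Without tracking generators explicitly one cannot simply ``pass a dimension bound to a limit'', since the generators at different levels $N$ are a priori unrelated. Your reference to Nakayama is in the right spirit, but the patching of the generators is the point.
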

\begin{proof}
We first note that each prime $\frq_1, \dots, \frq_k$ is sweet. 
Indeed, $\rho_{\cS_\chi} \text{ mod }\frq_i$ is Hodge--Tate regular by Theorem \ref{thm_existence_of_Galois_base_case}, so $\psi_{v, 1} / \psi_{v, 2} \text{ mod }\frq_i$ has infinite order for each $v \in S_2$. 

We next fix a choice of set $T$. Each ring $R_{\cS_\chi} / \frp_i$ and $R_{\cS_\chi} / \frq_j$ is a finite $\Lambda$-algebra, hence a finite $A_{\cS_\chi}^{R}$-algebra, which is topologically generated by the elements $\tr \rho_{\cS_\chi}(\Frob_v)$ ($v \not\in S$). We therefore choose $T$ so that each of these quotient rings is generated as a $\Lambda$-algebra by the elements $\tr \rho_{\cS_\chi}(\Frob_v)$ ($v \in T$). Then the associated maps $A_{\cS_\chi}^T \to R_{\cS_\chi} / \frp_i$ and $A_{\cS_\chi}^T \to R_{\cS_\chi} / \frq_j$ are surjective. We can (by the prime avoidance lemma) find an element $t \in \ffrm_\Lambda$ which has non-zero image in $R_{\cS_\chi} / \frp_i$ for each $i = 1, \dots, l$ and non-zero image divisible by $p$ in $ R_{\cS_\chi} / \frq_j$ for each $j = 1, \dots, k$. We fix this choice of element for the rest of the proof.

According to Proposition \ref{cor_existence_of_taylor_wiles_data_for_a_collection_of_nice_and_sweet_primes}, we can find integers $q \geq [K : \bbQ] = 2d$ and $a \ge 0$ with the following property: for any $N \geq 1$, there exists a Taylor--Wiles datum $Q_N$ of level $N+m$ for $R_{\cS_\chi}$ satisfying the following conditions:
\begin{itemize}
\item $Q_N \cap T = \emptyset$ and $|Q_N| = q$. We let $g = 2q - 2d$.
\item Let $\frp \in \{ \frp_1, \dots, \frp_l \}$, and let $\frp_N$, $\frp_N^T$, $\frp_\loc$ denote the respective pre-images of $\frp$ in $R_{\cS_{\chi, Q_N}}$, $R_{\cS_{\chi, Q_N}}^T$ and $A_{\cS_\chi}^T$.  Let $A = R_{\cS_\chi} / \frp$. Then there exists a map
\[ (A / t^N)^{g} \to \frp_N^T / ( (\frp_N^T)^2, \frp_\loc, t^N ) \]
with cokernel annihilated by $t^a$.
\item Let $\frq \in \{ \frq_1, \dots, \frq_k \}$, and let $\frq_N$, $\frq_N^T$, $\frq_\loc$ denote the respective pre-images of $\frq$ in $R_{\cS_{\chi, Q_N}}$, $R_{\cS_{\chi, Q_N}}^T$ and $A_{\cS_\chi}^T$. Let $A = R_{\cS_\chi} / \frq$. Then there exists a map
\[ (A / p^N)^{g} \to \frq_N^T / ( (\frq_N^T)^2, \frq_\loc, p^N ) \]
with cokernel annihilated by $p^a$.
\end{itemize}
In order to construct the objects in the statement of the proposition, we will patch these objects for varying Taylor--Wiles data $Q_N$ together. If $N \geq 1$, then we define $S_N =  \Lambda_{0, N} / (\varpi^N)[ (\bbZ / p^N \bbZ)^{2q} ][T_1, \dots, T_{4|T| - 1}] / (T_1^N, \dots, T_{4|T| - 1}^N)$, a quotient of $S_{ \infty}$. Fix an integer $g_0 \geq 1$ such that for any Taylor--Wiles datum $Q_N$ with $|Q_N| = q$, $R_{\cS_{\chi, Q_N}}^T$ can be written as a quotient of $\cO \llbracket Z_1, \dots, Z_{g_0} \rrbracket$ (for example, $g_0 = 2q + \dim_k(\ffrm_{R_{\cS_{\chi, Q_N}}^T}/(\varpi, \ffrm_{R_{\cS_{\chi, Q_N}}^T}^2)$ works). Let $s = \dim_k H^\ast(C_1 \otimes_{\Lambda_0} k)$, and for $N \geq 1$ let $f(N) = 4dsl(S_N)$, where $l(S_N)$ denotes the length of $S_N$ as $S_N$-module. We define a patching datum of level $N \geq 1$ to consist of the following data:
\begin{itemize}
\item Bounded complexes $D_{1, N}$ and $D_{\chi, N}$ of finite free $S_{N}$-modules, together with an isomorphism $D_{1, N} / (\varpi) \cong D_{\chi, N} / (\varpi)$.
\item Isomorphisms $\psi_1 : D_{1, N} \otimes_{S_{N}} \Lambda_{0, N} / (\varpi^N) \cong C_1 \otimes_{\Lambda_0} \Lambda_{0, N} / (\varpi^N)$ and $\psi_\chi : D_{\chi, N} \otimes_{S_{N}} \Lambda_{0, N} / (\varpi^N) \cong C_\chi \otimes_{\Lambda_0} \Lambda_{0, N} / (\varpi^N)$ of complexes of $\Lambda_{0, N} / (\varpi^N)$-modules.
\item An $S_\infty \widehat{\otimes}_{\Lambda_0} A_{\cS_1}^T$-algebra $R_{1, N}$ such that $\ffrm_{R_{1, N}}^{f(N)} = 0$, together with an isomorphism $R_{1, N} / (\fra) \cong R_{\cS_1} / (\ffrm_{R_{\cS_1}}^{f(N)})$.
\item An $S_\infty \widehat{\otimes}_{\Lambda_0} A_{\cS_\chi}^T$-algebra $R_{\chi, N}$ such that $\ffrm_{R_{\chi, N}}^{f(N)} = 0$, together with an isomorphism $R_{\chi, N}/ (\fra) \cong R_{\cS_\chi} / (\ffrm_{R_{\cS_\chi}}^{f(N)})$. If $\frp \in \{ \frp_1, \dots, \frp_l \}$, then we write $\frp(N)$ for the pre-image of $(\frp, \ffrm_{R_{\cS_\chi}}^{f(N)})$ in $R_{\chi, N}$. If $\frq \in  \{ \frq_1, \dots, \frq_k \}$, then we write $\frq(N)$ for the pre-image of $(\frq, \ffrm_{R_{\cS_\chi}}^{f(N)})$ in $R_{\chi, N}$.
\item An isomorphism $\eta : R_{1, N} / (\varpi) \cong R_{\chi, N} / (\varpi)$ of $S_\infty \widehat{\otimes}_{\Lambda_0} A_{\cS_1}^T / (\varpi)$-algebras.
\item $S_N$-algebras $\bbT_{1, N}$  and $\bbT_{\chi, N}$ equipped with inclusions $\bbT_{1, N} \subset \End_{\mathbf{D}(S_N)}(D_{1, N})$ and $\bbT_{\chi, N} \subset \End_{\mathbf{D}(S_N)}(D_{\chi, N})$ of $S_{N}$-algebras, together with ideals $J_{1, N} \subset \bbT_{1, N}$ and $J_{\chi, N} \subset \bbT_{\chi, N}$ such that $J_{1, N}^\delta = 0$ and $J_{\chi, N}^\delta = 0$.
\item Surjective morphisms $\phi_1 : R_{1, N} \to \bbT_{1, N} / J_{1, N}$ and $\phi_\chi : R_{\chi, N} \to \bbT_{\chi, N} / J_{\chi, N}$ of $S_\infty$-algebras.
\item For each $\frp \in \{ \frp_1, \dots, \frp_l \}$, elements $X_{\frp, 1}, \dots, X_{\frp, g} \in \frp(N)$ such that $\frp(N) / (\frp(N)^2, \frp_\loc, X_{\frp, 1}, \dots, X_{\frp, g}, t^N)$ is annihilated by $t^a$.
\item For each $\frq \in \{ \frq_1, \dots, \frq_k \}$, elements $X_{\frq, 1}, \dots, X_{\frq, g} \in \frq(N)$ such that
$\frq(N) / (\frq(N)^2, \frq_\loc, X_{\frq, 1}, \dots, X_{\frq, g}, p^N)$ is annihilated by $p^a$.
\item A surjective homomorphism $\zeta : \cO \llbracket Z_1, \dots, Z_{g_0} \rrbracket \to R_{1, N}$ of $\cO$-algebras. 
\end{itemize}
These data are required to satisfy the following conditions:
\begin{itemize}
\item The diagram 
\[ \xymatrix{ D_{1, N} / (\varpi) \ar[r] \ar[d] & D_{\chi, N} / (\varpi) \ar[d] \\
C_1 \otimes_{\Lambda_0} \Lambda_{0, N} / (\varpi) \ar[r] & C_\chi \otimes_{\Lambda_0} \Lambda_{0, N} /  (\varpi)} \]
commutes.
\item Let $\overline{\bbT}_N$ denote the image of $\bbT_{1, N}$ in $\End_{\mathbf{D}(S_{N} / (\varpi))}(D_{1, N} / (\varpi))$. Then $\overline{\bbT}_N$ equals the image of $\bbT_{\chi, N}$ under the identification $D_{1, N} / (\varpi) = D_{\chi, N} / (\varpi)$, and the diagram
\[ \xymatrix{ R_{1, N} / (\varpi) \ar[d] \ar[r] & \bbT_{1, N} / (\varpi, J_{1, N}) \ar[r] & \overline{\bbT}_{N} / (J_{1, N}, J_{\chi, N}) \ar[d]^= \\ 
R_{\chi, N} / (\varpi) \ar[r] & \bbT_{\chi, N} / (\varpi, J_{\chi, N}) \ar[r] & \overline{\bbT}_{N} / (J_{1, N}, J_{\chi, N})}  \]
commutes.
\end{itemize}
Note that the function $f(N)$ is defined so that any element $x \in \ffrm_{\bbT_{1, N}}$ satisfies $x^{f(N)} = 0$ (and similarly for $\bbT_{\chi, N}$), cf. the first few lines of the proof of \cite[Proposition 3.1]{Kha17}.

We can make the collection of patching data of level $N$ into a category by declaring a morphism
\[ (D_{1, N}, D_{\chi, N}, R_{1, N}, R_{\chi, N}, \bbT_{1, N}, \bbT_{\chi, N}, J_{1, N}, J_{\chi, N}, (X_{\frp, i})_{\frp, i}, (X_{\frq, j})_{\frq, j}, \psi_1, \psi_\chi, \eta, \phi_1, \phi_\chi, \zeta) \]
\[ \to (D'_{1, N}, D'_{\chi, N}, R'_{1, N}, R'_{\chi, N}, \bbT'_{1, N}, \bbT'_{\chi, N}, J'_{1, N}, J'_{\chi, N}, (X'_{\frp, i})_{\frp, i}, (X'_{\frq, j})_{\frq, j}, \psi'_1, \psi'_\chi, \eta', \phi_1', \phi_\chi', \zeta') \]
to be a tuple of isomorphisms
\[ D_{1, N} \to D'_{1, N}, D_{\chi, N} \to D_{\chi', N}, R_{1, N} \to R'_{1, N}, R_{\chi, N} \to R'_{\chi, N} \]
which takes $\bbT_{1, N}$ to $\bbT'_{1, N}$, $\bbT_{\chi, N}$ to $\bbT'_{\chi, N}$, $J_{1, N}$ to $J'_{1, N}$, $J_{\chi, N}$ to $J'_{\chi, N}$, $X_{\frp, i}$ to $X'_{\frp, i}$, $X_{\frq, j}$ to $X'_{\frq, j}$, which intertwine $ \psi_1, \psi_\chi, \eta, \phi_1, \phi_\chi, \zeta$ with $\psi'_1, \psi'_\chi, \eta', \phi_1', \phi_\chi', \zeta'$, and which are compatible with the various identifications made modulo $\varpi$. 

Let $\cD(N)$ denote the category of patching data of level $N$. For any $N \geq 1$, $\cD(N)$ has only finitely many isomorphism classes of objects. For any $M \geq N \geq 1$, there is a functor $\cD(M) \to \cD(N)$ which sends a tuple 
\[ (D_{1, M}, D_{\chi, M}, R_{1, M}, R_{\chi, M}, \bbT_{1, M}, \bbT_{\chi, M}, J_{1, M}, J_{\chi, M}, (X_{\frp, i})_{\frp, i}, (X_{\frq, j})_{\frq, j}, \psi_1, \psi_\chi, \eta, \phi_1, \phi_\chi, \zeta)  \]
to a tuple
\[ (D'_{1, N}, D'_{\chi, N}, R'_{1, N}, R'_{\chi, N}, \bbT'_{1, N}, \bbT'_{\chi, N}, J'_{1, N}, J'_{\chi, N}, (X'_{\frp, i})_{\frp, i}, (X'_{\frq, j})_{\frq, j}, \psi'_1, \psi'_\chi, \eta', \phi_1', \phi_\chi', \zeta'), \]
 where:
 \begin{itemize}
 \item $D'_{1, N} = D_{1, M} \otimes_{S_{ M}} S_{ N}$ and $D'_{\chi, N} = D_{\chi, M} \otimes_{S_{ M}} S_{ N}$.
 \item $R'_{1, N} = R_{1, M} / (\ffrm_{R_{1, M}}^{g_0 f(N)})$ and $R'_{\chi, N} = R_{\chi, M} / (\ffrm_{R_{\chi, M}}^{g_0 f(N)})$.
 \item $\bbT'_{1, N}$ is the image of $\bbT_{1, M}$ in $\End_{\mathbf{D}(S_{ N})}(D'_{1, N})$, and $\bbT'_{\chi, N}$ is the image of $\bbT_{\chi, M}$ in $\End_{\mathbf{D}(S_{ N})}(D'_{\chi, N})$.
 \item $J'_{1, N}$ is the image of $J_{1, M}$ in $\bbT'_{1, N}$ and $J'_{\chi, N}$ is the image of $J_{\chi, M}$ in $\bbT'_{\chi, N}$.
 \item $X'_{\frp, i}$ is the image of $X_{\frp, i}$ and $X'_{\frq, j}$ is the image of $X_{\frq, j}$.
 \item The maps $\psi'_1, \psi'_\chi, \eta', \phi_1', \phi_\chi', \zeta'$ are the obvious ones induced by passage to quotient. (Here we are using the property of the function $f(N)$ mentioned above.)
 \end{itemize} 
 For any $N \geq 1$, we can construct infinitely many patching data of level $N$ by using the Taylor--Wiles data whose existence is asserted at the beginning of the proof of this proposition. Indeed, if $Q_M$ is such a Taylor--Wiles datum of level $M+m$ for some $M \geq 1$, then we can define a patching datum 
 \[ (D_{1, M}, D_{\chi, M}, R_{1, M}, R_{\chi, M}, \bbT_{1, M}, \bbT_{\chi, M}, J_{1, M}, J_{\chi, M}, (X_{\frp, i})_{\frp, i}, (X_{\frq, j})_{\frq, j}, \psi_1, \psi_\chi, \eta, \phi_1, \phi_\chi, \zeta)  \]
 as follows. First choose an isomorphism $\Delta_{Q_M} \to (\bbZ / p^M \bbZ)^{2q}$. Then make the following assignments: 
 \begin{itemize}
 \item $D_{1, M} = F_{\ffrm, 1, Q_M} \otimes_{\Lambda_0[\Delta_{Q_M}]} S_{M}$ and $D_{\chi, M} = F_{\ffrm, \chi, Q_M} \otimes_{\Lambda_0[\Delta_{Q_M}]} S_{ M}$.
 \item $R_{1, M} = R_{\cS_{1, Q_M}}^T / \ffrm_{R_{\cS_{1, Q_M}}^T}^{f(M)}$ and $R_{\chi, M} = R_{\cS_{\chi, Q_M}}^T / \ffrm_{R_{\cS_{\chi, Q_M}}^T}^{f(M)}$.
 \item $\bbT_{1, M}$ is the image of $\bbT_{1, Q_M} \otimes_{\Lambda_0[\Delta_{Q_M}]} S_{M}$ in $\End_{\mathbf{D}(S_{ M})}(D_{1, M})$, and $\bbT_{\chi, M}$ is the image of $\bbT_{\chi, Q_M} \otimes_{\Lambda_0[\Delta_{Q_M}]} S_{M}$ in $\End_{\mathbf{D}(S_{ M})}(D_{\chi, M})$.
 \item $J_{1, M}$ is the image of $J_{1, Q_M}$ in $\bbT_{1, M}$ and $J_{\chi, M}$ is the image of $J_{\chi, Q_M}$ in $\bbT_{\chi, M}$.
 \item The elements $X_{\frp, i}$ and $X_{\frq, j}$ can be chosen to be any ones satisfying the necessary conditions; their existence is asserted by Proposition \ref{cor_existence_of_taylor_wiles_data_for_a_collection_of_nice_and_sweet_primes} (see the beginning of the proof of this proposition).
 \item The maps $\psi_1, \psi_\chi, \eta, \chi_1, \phi_1, \phi_\chi, \zeta$ arise from the tautological ones by passage to quotient. (Once again we are using the property of the function $f(N)$ mentioned above.) 
 \end{itemize}
 By allowing $M$ to vary and taking image under the functors $\cD(M) \to \cD(N)$, we obtain infinitely many patching data of each level $N \geq 1$. Since each category $\cD(N)$ has only finitely many isomorphism classes of objects, we can diagonalize and pass to the limit to obtain the following objects:
 \begin{itemize}
 \item Bounded complexes $D_{1, \infty}$ and $D_{\chi, \infty}$ of finite free $S_{\infty}$-modules, together with an isomorphism $D_{1, \infty} / (\varpi) \cong D_{\chi, \infty} / (\varpi)$.
 \item Isomorphisms $\psi_1 : D_{1, \infty} \otimes_{S_{\infty}} \Lambda_0 \cong C_1$ and $\psi_\chi : D_{\chi, \infty} \otimes_{S_{\infty}} \Lambda_0 \cong C_\chi$ of complexes of $\Lambda_0$-modules.
 \item An $S_\infty \widehat{\otimes}_{\Lambda_0} A_{\cS_1}^T$-algebra $R_{1, \infty}$, together with an isomorphism $R_{1, \infty} / (\fra) \cong R_{\cS_1}$.
 \item An $S_\infty \widehat{\otimes}_{\Lambda_0} A_{\cS_\chi}^T$-algebra $R_{\chi, \infty}$, together with an isomorphism $R_{\chi, \infty}/ (\fra) \cong R_{\cS_\chi}$.
 \item An isomorphism $\eta : R_{1, \infty} / (\varpi) \cong R_{\chi, \infty} / (\varpi)$ of $A_{\cS_\chi}^T / (\varpi)$-algebras.
 \item $S_\infty$-algebras $\bbT_{1, \infty}$ and $\bbT_{\chi, \infty}$, together with inclusions $\bbT_{1, \infty} \subset \End_{\mathbf{D}(S_\infty)}(D_{1, \infty})$ and $\bbT_{\chi, \infty} \subset \End_{\mathbf{D}(S_\infty)}(D_{\chi, \infty})$ of $S_{\infty}$-algebras and ideals $J_{1, \infty} \subset \bbT_{1, \infty}$ and $J_{\chi, \infty} \subset \bbT_{\chi, \infty}$ such that $J_{1, \infty}^\delta = 0$ and $J_{\chi, \infty}^\delta = 0$.
\item Surjective morphisms $\phi_1 : R_{1, \infty} \to \bbT_{1, \infty} / J_{1, \infty}$ and $\phi_\chi : R_{\chi, \infty} \to \bbT_{\chi, \infty} / J_{\chi, \infty}$ of $S_\infty$-algebras.
\item A surjective homomorphism $\zeta : \cO\llbracket Z_1, \dots, Z_{g_0} \rrbracket \to R_{1, \infty}$. In particular, both $R_{1, \infty}$ and $R_{\chi, \infty}$ are complete Noetherian local $\cO$-algebras.
\item If $\frp \in \{ \frp_1, \dots, \frp_l \}$, then let $\frp_\infty \subset R_{1, \infty}$ denote the pre-image of $\frp \subset R_{\cS_1}$. Then we are given elements $X_{\frp, 1}, \dots, X_{\frp, g} \in \frp_\infty$ such that $\frp_\infty / (\frp_\infty^2, \frp_\loc, X_{\frp, 1}, \dots, X_{\frp, g})$ is annihilated by $t^a$.
\item If $\frq \in \{ \frq_1, \dots, \frq_l \}$, then let $\frq_\infty \subset R_{\chi, \infty}$ denote the pre-image of $\frq \subset R_{\cS_\chi}$. Then we are given elements $X_{\frq, 1}, \dots, X_{\frq, g} \in \frq_\infty$ such that $\frq_\infty / (\frq_\infty^2, \frq_\loc, X_{\frq, 1}, \dots, X_{\frq, g})$ is annihilated by $p^a$.
 \end{itemize}
 These objects satisfy the following conditions:
 \begin{itemize}
\item The diagram 
\[ \xymatrix{ D_{1, \infty} / (\varpi) \ar[r] \ar[d] & D_{\chi, \infty} / (\varpi) \ar[d] \\
C_1 \otimes_{\Lambda_0} \Lambda_0 / (\varpi) \ar[r] & C_\chi \otimes_{\Lambda_0} \Lambda_0 / (\varpi)} \]
commutes.
\item Let $\overline{\bbT}_\infty$ denote the image of $\bbT_{1, \infty}$ inside $\End_{\mathbf{D}(S_{\infty} / (\varpi))}(D_{1, \infty} / (\varpi))$. Then $\overline{\bbT}_\infty$ equals the image of $\bbT_{\chi, \infty}$ under the identification $D_{1, \infty} / (\varpi) = D_{\chi, \infty} / (\varpi)$, and the diagram
\[ \xymatrix{ R_{1, \infty} / (\varpi) \ar[d] \ar[r] & \bbT_{1, \infty} / (\varpi, J_{1, \infty}) \ar[r] & \overline{\bbT}_\infty / (J_{1, \infty}, J_{\chi, \infty}) \ar[d]^= \\ 
R_{\chi, \infty} / (\varpi) \ar[r] & \bbT_{\chi, \infty} / (\varpi, J_{\chi, \infty}) \ar[r] & \overline{\bbT}_\infty / (J_{1, \infty}, J_{\chi, \infty})}  \]
commutes.
 \end{itemize}
 The proof is now complete on setting $D_1 = D_{1, \infty}$ and $D_\chi = D_{\chi, \infty}$.
\end{proof}

\begin{corollary}\label{cor_consequences_of_patching}
In the situation of Proposition \ref{prop_patching}, let $H_{1, \infty} = H^\ast(D_1)$, $H_{\chi, \infty} = H^\ast(D_\chi)$. Let $\wp_\infty \subset S_\infty$ denote the pullback of $\wp$, and let $\overline{H}_{\chi, \infty} = \text{im}(H_{\chi, \infty} \to H_{\chi, \infty, (\wp_\infty)})$. Let $\overline{\bbT}_{\chi, \infty}$ denote the quotient of $\bbT_{\chi, \infty}$ which acts faithfully on $\overline{H}_{\chi, \infty}$. Then:
\begin{enumerate}
\item $\overline{H}_{\chi, \infty}$ is a non-zero $\cO$-flat finite $S_\infty$-module which is concentrated in cohomological degree $-d$.
Every prime of $S_\infty$ minimal in the support of $\overline{H}_{\chi, \infty}$ has dimension $1 + 2d + 2q + 4|T|$.
\item Let $\frp \in \{ \frp_1, \dots, \frp_l \}$, and suppose that $\frp_\infty \in \Spec \overline{\bbT}_{\chi, \infty}$ (viewed as a closed subspace of $\Spec R_{\chi, \infty}$). For example, this is true if $\frp \in \Spec \overline{\bbT}_\chi$, viewed as a closed subspace of $\Spec R_{\cS_\chi}$. Suppose that for each $v \in R$, $\rho_\frp|_{G_{K_v}}$ is the trivial representation (where $\rho_\frp : G_K \to \GL_2(R_{\cS_\chi} / \frp)$ is a representative of the tautological deformation). Then:
\begin{enumerate}
\item $\Spec R_{\chi, \infty, \frp_\infty}$ is irreducible and $\Spec R_{\chi, \infty, \frp_\infty} = \Spec \overline{\bbT}_{\chi, \infty, \frp_\infty} = \Spec \bbT_{\chi, \infty, \frp_\infty}$. In particular, there is a unique minimal prime $Q \in \Spec \bbT_{\chi, \infty}$  that is contained inside $\frp_\infty$, and any prime ideal of $R_{\chi, \infty}$ which is contained inside $\frp_\infty$ is in the image of $\Spec \overline{\bbT}_{\chi, \infty}$.
\item $H_{\chi, \infty, (Q)}$ is non-zero exactly in cohomological degree $-d$.
\end{enumerate}
\end{enumerate}
\end{corollary}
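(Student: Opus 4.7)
The plan is to argue via a patched Calegari--Geraghty dimension argument for part (1) and to use irreducibility of the completed local lifting ring at sweet primes for part (2). For part (1), the complex $D_\chi$ is a bounded complex of finite free $S_\infty$-modules, so $H^\ast(D_\chi)$ and hence $\overline{H}_{\chi, \infty}$ are finite $S_\infty$-modules. For non-vanishing, I would exploit the isomorphism $D_\chi \otimes_{S_\infty} \Lambda_0 \cong C_\chi$ of Proposition~\ref{prop_patching}: via the Tor spectral sequence, in the top non-vanishing cohomological degree $-d$ (top by \cite[Theorem 2.4.9]{10authors}) the natural map $H^{-d}(D_\chi) \otimes_{S_\infty} \Lambda_0 \to H^{-d}(C_\chi)$ is surjective, and localizing at $\wp_\infty$ shows $\overline{H}_{\chi,\infty}$ surjects onto a module containing the non-zero $\overline{H}_\chi$ (Lemma~\ref{lem_CG_for_ordinary_completed_coh}).

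For the dimension computation, let $P \subset S_\infty$ be a minimal prime in the support of $\overline{H}_{\chi,\infty}$; by Lemma~\ref{lem_primes_and_supports}, $P \subset \wp_\infty$. The localization $D_{\chi, (P)}$ is a bounded complex of finite free $S_{\infty, (P)}$-modules, and comparing with $C_{\chi,(\wp)}$ via the Koszul resolution of $\Lambda_{0,(\wp)} = S_{\infty, (\wp_\infty)}/\fra$ bounds the range of cohomological degrees where $D_{\chi,(P)} \otimes_\cO E$ has non-zero cohomology. Applying \cite[Lemma 6.2]{Cal17} to $D_{\chi,(P)}$ then yields the lower bound $\dim S_\infty/P \geq 1 + 2d + 2q + 4|T|$. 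A matching upper bound will come from the action of $R_{\chi,\infty}$ on $\overline{H}_{\chi,\infty}$ through the nilpotent-kernel quotient $\bbT_{\chi,\infty}/J_{\chi,\infty}$, combined with a dimension bound $\dim R_{\chi,\infty} \leq 1 + 2d + 2q + 4|T|$ obtained from Proposition~\ref{prop_presentation_by_generators_and_relations} together with the formal smoothness of $\cD_v^{\mathrm{ord}}$ (Proposition~\ref{prop_smoothness_of_ordinary_lifting_problem}) and the irreducibility statement of Lemma~\ref{lem_ihara_avoidance_ring_chi}. Concentration in degree $-d$ and $\cO$-flatness then follow because any $\varpi$-torsion or cohomology in other degrees would violate the exact dimension match obtained by combining the two bounds.

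For part (2), the key input is Proposition~\ref{prop_irreducibility_of_completed_local_lifting_ring}(2): since $\rho_\frp|_{G_{K_v}}$ is trivial at each $v \in R$, the ring $(A_{\cS_\chi}^T)_{\frp_\loc}$ has a unique minimal prime, of characteristic~$0$, with formally smooth generic fibre. This irreducibility propagates through the surjection $S_\infty \widehat{\otimes}_{\Lambda_0} A_{\cS_\chi}^T \to R_{\chi,\infty}$ (using the regular-ring-map arguments from the proof of Proposition~\ref{prop_irreducibility_of_completed_local_lifting_ring}) to give irreducibility of $\Spec R_{\chi,\infty,\frp_\infty}$. Combined with the dimension match from part (1), the exact equality of dimensions and the nilpotence of $J_{\chi,\infty}$ forces $\Spec R_{\chi,\infty,\frp_\infty} = \Spec \overline{\bbT}_{\chi,\infty,\frp_\infty} = \Spec \bbT_{\chi,\infty,\frp_\infty}$, whence the existence of a unique minimal prime $Q$ of $\bbT_{\chi,\infty}$ contained in $\frp_\infty$. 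The cohomological concentration statement for $H_{\chi,\infty,(Q)}$ is then inherited directly from part (1).

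The hardest part will be matching the Calegari--Geraghty lower bound precisely with the upper bound on $\dim R_{\chi,\infty}$: the lower bound is of automorphic origin and depends on the precise range of cohomological degrees where $\iota$-ordinary cuspidal cohomology is non-zero over $E$, while the upper bound is purely Galois-theoretic and relies on the dimension formula of Proposition~\ref{prop_presentation_by_generators_and_relations} interacting correctly with formal smoothness of the sweet local deformation rings. Getting both bounds to match exactly---not merely up to a bounded discrepancy---is what simultaneously forces cohomological concentration in degree $-d$ together with the full structure of part (2).
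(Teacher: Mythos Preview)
Your overall strategy is right, but the upper bound on the dimension has a genuine gap. You propose to bound $\dim R_{\chi,\infty}$ globally via Proposition~\ref{prop_presentation_by_generators_and_relations}. That proposition applies to honest deformation rings $R_{\cS}^T$, but $R_{\chi,\infty}$ is a patched limit of quotients of $R_{\cS_{\chi,Q_N}}^T$ and carries no such global presentation in $g = 2q - [K:\bbQ]$ variables over $A_{\cS_\chi}^T$; the only global surjection in Proposition~\ref{prop_patching} is from $\cO\llbracket Z_1,\dots,Z_{g_0}\rrbracket$ with $g_0$ much too large. The sharp bound must be obtained \emph{locally} at the sweet primes $\frq_\infty$: Proposition~\ref{prop_patching} was deliberately set up so that $\dim_{\kappa(\frq)} (\frq_\infty/(\frq_\infty^2,\frq_{\loc}))_{(\frq_\infty)} \leq g$, yielding surjections $A^T_{\cS_\chi,\frq_{\loc}}\llbracket X_1,\dots,X_g\rrbracket \twoheadrightarrow R_{\chi,\infty,\frq_\infty}$. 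Since $\dim A^T_{\cS_\chi,\frq_{\loc}} = 4|T|$, this gives $\dim R_{\chi,\infty,\frq_\infty} \leq 4|T| + 2q - 2d$. The point is that $\bbT_{\chi,\infty,(\wp_\infty)}$ is semi-local with maximal ideals precisely the $\frq_{1,\infty},\dots,\frq_{k,\infty}$, so bounding each completion bounds $\dim \bbT_{\chi,\infty,(\wp_\infty)}$, hence $\dim_{S_{\infty,(\wp_\infty)}} H_{\chi,\infty,(\wp_\infty)}$.

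The same issue affects your part~(2). You invoke a surjection $S_\infty \widehat{\otimes}_{\Lambda_0} A_{\cS_\chi}^T \twoheadrightarrow R_{\chi,\infty}$, but after completing at $\frp_\infty$ the source is far too large for the dimension comparison to force nilpotent kernel. The paper instead uses the analogous tangent-space bound at the \emph{nice} prime $\frp$ (also built into Proposition~\ref{prop_patching}) to get $A^T_{\cS_\chi,\frp_{\loc}}\llbracket X_1,\dots,X_g\rrbracket \twoheadrightarrow R_{\chi,\infty,\frp_\infty}$; Proposition~\ref{prop_irreducibility_of_completed_local_lifting_ring} says the source is irreducible, and now the dimension from part~(1) exactly matches $\dim A^T_{\cS_\chi,\frp_{\loc}}\llbracket X_1,\dots,X_g\rrbracket$, forcing all the surjections $A^T_{\cS_\chi,\frp_{\loc}}\llbracket X_1,\dots,X_g\rrbracket \to R_{\chi,\infty,\frp_\infty} \to \bbT_{\chi,\infty,\frp_\infty}/(J_{\chi,\infty}) \to \overline{\bbT}_{\chi,\infty,\frp_\infty}/(J_{\chi,\infty})$ to have nilpotent kernel. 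The essence of the argument is that the whole purpose of the ``additional conditions'' in Proposition~\ref{prop_patching} is to supply exactly these local presentations; without them you cannot close the dimension gap.
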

\begin{proof}
The same argument as in the proof of Lemma \ref{lem_CG_for_ordinary_completed_coh} implies that $\overline{H}_{\chi, \infty}$ is a non-zero $\cO$-flat finite $S_\infty$-module of dimension at least 
\[ \dim S_\infty - (2d-1) = 1 + 2d + 2q + 4|T|. \]
If we can show that its dimension is exactly $1 + 2d + 2q + 4|T|$, then \cite[Lemma 6.2]{Cal17} will show that $\overline{H}_{\chi, \infty}$ is moreover concentrated in cohomological degree  $-d$, and that $H_{\chi, \infty, (\wp_\infty)}$ is a Cohen--Macaulay $S_\infty$-module. Since $\Ass_{S_\infty} \overline{H}_{\chi, \infty} = \Ass_{S_{\infty, (\wp_\infty)}} H_{\chi, \infty, (\wp_\infty)}$, this will show that each prime of $S_\infty$ minimal in the support of $\overline{H}_{\chi, \infty}$ has the same dimension (and moreover that $\overline{H}_{\chi, \infty}$ has no embedded primes). It will suffice to show that 
\[ \dim_{S_{\infty, (\wp_\infty)}} H_{\chi, \infty, (\wp_\infty)} \leq 2d + 2q + 4|T|. \]

We will in fact show that 
$\dim \bbT_{\chi, \infty, (\wp_\infty)} \leq 2d + 2q + 4 |T|$.  We note that $\bbT_{\chi, \infty, (\wp_\infty)}$ is a semi-local ring, its maximal ideals $\frq_\infty$ being in bijection with the primes $\frq \in \{ \frq_1, \dots, \frq_k \}$ of $R_{\cS_\chi}[1/p]$ which are in the support of $H^\ast_{U(c_0, c_0)}(\mathfrak{X}, \cO(\chi^{-1}))^\text{ord}_\ffrm[1/p]$. If $\frq \in \{ \frq_1, \dots, \frq_k \}$, then we can find, by construction, a surjective morphism
\[ A_{\cS_\chi, \frq_\loc}^T \llbracket X_1, \dots, X_g \rrbracket \to R_{\chi, \infty, \frq_\infty}. \]
Since $\dim A^T_{\cS_\chi, \frq_\loc} = 4 |T|$, and $\dim \bbT_{\chi, \infty, \frq_\infty} \leq \dim R_{\chi, \infty, \frq_\infty}$, it follows that 
\[ \dim \bbT_{\chi, \infty, \frq_\infty} \leq \dim A_{\cS_\chi, \frq_\loc}^T \llbracket X_1, \dots, X_g \rrbracket = 4 |T| + 2q - 2d. \]
This completes the proof of the first part of the corollary. 

The proof of the second part is similar. Let $\frp$ be as in the statement of the corollary. By construction, we can find a surjection $A_{\cS_\chi, \frp_\loc}^T \llbracket X_1, \dots, X_g \rrbracket \to R_{\chi, \infty, \frp_\infty}$ of $A_{\cS_\chi, \frp_\loc}^T$-algebras.  According to Proposition \ref{prop_irreducibility_of_completed_local_lifting_ring}, the ring $A_{\cS_\chi, \frp_\loc}^T \llbracket X_1, \dots, X_g \rrbracket$ has irreducible spectrum (here we are using our hypothesis that $\rho_\frp|_{G_{K_v}}$ is trivial for each $v \in R$).

There are surjective homomorphisms 
\[ A_{\cS_\chi, \frp_\loc}^T \llbracket X_1, \dots, X_g \rrbracket \to R_{\chi, \infty, \frp_\infty} \]
and
\[ R_{\chi,  \infty, \frp_\infty} \to \bbT_{\chi, \infty, \frp_\infty} / (J_{\chi, \infty}) \]
and
\[ \bbT_{\chi, \infty, \frp_\infty} / (J_{\chi, \infty}) \to \overline{\bbT}_{\chi, \infty, \frp_\infty} / (J_{\chi, \infty}). \]
By the first part of the corollary, $\dim A_{\cS_\chi, \frp_\loc}^T \llbracket X_1, \dots, X_g \rrbracket = \dim \overline{\bbT}_{\chi, \infty, \frp_\infty} / (J_{\chi, \infty})$. It follows that each of these surjective homomorphisms has nilpotent kernel, and that $\Spec  R_{\chi,  \infty, \frp_\infty} = \Spec \bbT_{\chi, \infty, \frp_\infty}$ is irreducible. In particular, there is a unique minimal prime of $\bbT_{\chi, \infty}$ contained inside $\frp_\infty$ (use e.g.\ Lemma \ref{lem_primes_and_supports}). This shows (a).

To show (b), let $Q$ be the unique minimal prime of $\bbT_{\chi, \infty}$ which is contained in $\frp_\infty$. Then $Q$ is a minimal prime in the support of $\overline{H}_{\chi, \infty}$. Let $Q_0$ denote the pullback of $Q$ to $S_\infty$. Then $Q_0 \in \Ass_{S_\infty} \overline{H}_{\chi, \infty}$, hence $Q_0 \subset \wp_\infty$ (Lemma \ref{lem_primes_and_supports} again), hence $\overline{H}_{\chi, \infty, (Q_0)} = H_{\chi, \infty, (Q_0)}$ is concentrated in cohomological degree $-d$. Since $H_{\chi, \infty, (Q)}$ is a localization of $H_{\chi, \infty, (Q_0)}$, this implies that $H_{\chi, \infty, (Q)} = \overline{H}_{\chi, \infty, (Q)}$ is concentrated in cohomological degree $-d$, as desired.
\end{proof}

\begin{lemma}\label{lem_nice_primes_in_big_quotients}
	Let $I \subset R_{\cS_\chi} / (\varpi)$ be an ideal such that $\dim R_{\cS_\chi} / I \geq 2d - 2$ and $R_{\cS_\chi} / I$ is a finite $\Lambda$-algebra. Then there exists a nice prime $\frp \subset R_{\cS_\chi}$ such that $I \subset \frp$ and for each $v \in R$, $\rho_\frp|_{G_{K_v}}$ is the trivial representation.
\end{lemma}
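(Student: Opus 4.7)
The strategy is to enforce local triviality at the places of $R$ by cutting out by a controlled number of equations, use a Krull dimension count together with the hypothesis on $[K^+:\bbQ]$ to produce a quotient of large dimension, and then extract a dimension-one prime that meets the remaining niceness conditions.

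For the setup, fix $v \in R$. Since $v$ has residue characteristic $l \neq p = 2$, wild inertia at $v$ is pro-$l$; any lift of $\overline{\rho}|_{G_{K_v}} = 1$ takes values in $1 + \ffrm \cdot M_2(R_{\cS_\chi})$, which is pro-$2$, so wild inertia acts trivially. Hence $\rho_{\cS_\chi}|_{G_{K_v}}$ factors through the tame quotient, topologically generated by $\Frob_v$ and a tame generator $\tau_v$. Let $I_v \subset R_{\cS_\chi}$ be the ideal generated by the (at most eight) matrix entries of $\rho_{\cS_\chi}(\Frob_v) - 1$ and $\rho_{\cS_\chi}(\tau_v) - 1$; then a prime $\frp$ satisfies $\rho_\frp|_{G_{K_v}} = 1$ iff $I_v \subset \frp$. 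Setting $J = (I,\, \sum_{v \in R} I_v)$, Krull's Hauptidealsatz gives $\dim R_{\cS_\chi}/J \geq \dim R_{\cS_\chi}/I - 8|R| \geq 2d - 2 - 8|R|$, and the hypothesis $d = [K^+:\bbQ] > 8|R| + 10$ forces this to exceed $8|R| + 18 \geq 18$.

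Next I would locate the nice prime inside $\Spec R_{\cS_\chi}/J$. Choose a minimal prime $\frq$ of $J$ realizing this dimension bound, and consider the two loci $Z_1, Z_2 \subset \Spec R_{\cS_\chi}/\frq$: the first where $\psi_{v,1}/\psi_{v,2}$ is torsion on inertia for every $v \in S_2$, the second where the image of $\rho$ contains no nontrivial unipotent. The key observation for $Z_1$ is that in a characteristic-$p$ domain $A$ one has $(1+x)^{p^n} = 1 + x^{p^n}$, hence a ratio of characters valued in $1 + \ffrm_A$ has finite order iff it is trivial; so $Z_1$ is cut out by the vanishing of the finitely many coordinates of the $\phi_{v,1}/\phi_{v,2}$ inside $\Lambda_v$ for all $v \in S_2$. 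Granting that $Z_1 \cup Z_2$ is a proper closed subset of $\Spec R_{\cS_\chi}/\frq$, standard prime avoidance produces a dimension-one prime $\frp \supseteq \frq$ outside $Z_1 \cup Z_2$; passing to a minimal prime of its residue quotient gives a characteristic-$p$ domain of dimension 1, verifying conditions (1) and (2) of the definition of nice prime. Condition (3) then follows from (2) by a commutator calculation inside $\rho_\frp(G_{K_{v_0}})$: picking $\sigma_1$ with $\psi_{v_0,1}(\sigma_1) \neq \psi_{v_0,2}(\sigma_1)$ mod $\frp$ and $\sigma_2$ with nontrivial residual off-diagonal entry (supplied by the ramified Kummer character $\alpha$ in the hypothesis on $\overline{\rho}|_{G_{K_{v_0}}}$), the commutator $[\rho_\frp(\sigma_1), \rho_\frp(\sigma_2)]$ equals $\begin{psmallmatrix} 1 & c(a-d)/d \\ 0 & 1 \end{psmallmatrix}$ with $a \neq d$ and $c \neq 0$.

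The main obstacle is verifying that $Z_1$ is actually a proper closed subset of $\Spec R_{\cS_\chi}/\frq$, i.e.\ that the ideal $J$ does not already collapse every independent character-ratio direction at every $v \in S_2$. The ring $\Lambda$ contains $2[K:\bbQ]$ independent "character" variables while $J$ is obtained from $I$ by at most $8|R|$ further relations, and one must use the hypothesis $d > 8|R| + 10$ to argue that enough character directions survive in the quotient $R_{\cS_\chi}/\frq$; the cleanest route is probably to exploit the formal smoothness of $R_v^{\text{ord}}$ over $\Lambda_v$ (Proposition~\ref{prop_smoothness_of_ordinary_lifting_problem}) to transfer a dimension count for the character locus from $\Lambda$ into $R_{\cS_\chi}$, after which the bound on $\dim R_{\cS_\chi}/\frq$ ensures one can avoid the degenerate "all ratios trivial" sublocus. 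If $\frq$ were chosen so that this degeneration did occur, one could adjust $\frq$ by moving to a different minimal prime of $J$ and iterate, relying on the connectedness-dimension estimate in Corollary~\ref{cor_connectedness_dimension_Ihara_case}.
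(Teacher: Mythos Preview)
Your overall strategy --- impose local triviality at $R$ by cutting out equations, then find a dimension-one prime avoiding the bad locus --- is the right shape, but there is a genuine gap exactly where you flag it: you have no argument that the locus $Z_1$ (where every $\psi_{v,1}/\psi_{v,2}$ is trivial) is proper in your quotient. Your suggestion to ``transfer a dimension count from $\Lambda$'' via formal smoothness of $R_v^{\ord}$ does not go through as stated, since $R_{\cS_\chi}/J$ need not surject onto $\Lambda$, and iterating over minimal primes of $J$ using connectedness does not resolve this. (Your locus $Z_2$ is also not obviously closed, though this turns out to be irrelevant.)

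The paper closes this gap by projecting to the Iwasawa algebra rather than working inside $R_{\cS_\chi}$. Because $R_{\cS_\chi}/I$ is by hypothesis a \emph{finite} $\Lambda$-algebra, and because $\det\rho$ forces the structure map $\Lambda \to R_{\cS_\chi}/(\varpi)$ to factor through the quotient $\overline{\Lambda}$ of $\Lambda$ by the closure of the diagonally embedded global units, one can compare dimensions inside $\Spec\overline{\Lambda}/(\varpi)$. The image $Z$ of the cut-down locus has dimension $\geq 2d - 4 - 4|R|$, while the bad locus $W = \{\psi_{v,1} = \psi_{v,2}\text{ for all }v \in S_2\}$ in $\Spec\overline{\Lambda}/(\varpi)$ has dimension exactly $d + 1 + \delta_K$, where $\delta_K$ is the Leopoldt defect of $K$. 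Invoking Waldschmidt's transcendence bound $\delta_K < d/2$ and the standing hypothesis $d > 10 + 8|R|$ yields $\dim Z > \dim W$; any dimension-one prime in the (finite) fibre over a point of $Z \smallsetminus W$ is then nice. This transcendence input is the key idea you are missing.

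Two further contrasts. First, the paper handles the unipotent condition~(3) not via your commutator but by choosing $\sigma \in G_K$ with $\overline{\rho}(\sigma)$ of order $2$ and cutting by $\tr\rho(\sigma)$ and $\det\rho(\sigma)-1$: modulo any characteristic-$2$ prime, $\rho_\frp(\sigma)$ then has characteristic polynomial $(X+1)^2$ but is residually nontrivial, hence is a nontrivial unipotent. Second, the paper trivializes at $v \in R$ by cutting out $\ffrm_{R_v}^{\chi_v}$, contributing only $4|R|$ to the dimension loss rather than your $8|R|$; with $8|R|$ the inequality $\dim Z > \dim W$ would require $d > 6 + 16|R|$, which is not implied by $d > 10 + 8|R|$ once $|R| \geq 1$.
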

\begin{proof}
	Let $\overline{\Lambda}$ denote the completed group algebra of the quotient of  $(\prod_{v \in S_2} (\cO_{K_v}^\times)^f)^2$ by the closure of the diagonally embedded units $\cO_{K}^\times \cap U(c_0, c_0)$. Then the map $\Lambda \to R_{\cS_\chi} / (\varpi)$ factors through the quotient $\Lambda \to \overline{\Lambda}$ (consider determinants). Let $\sigma \in G_K$ be an element such that $\overline{\rho}_\ffrm(\sigma)$ has order 2, and let $t_\sigma, d_\sigma \in R_{\cS_\chi}$ denote the trace and determinant of $\sigma$ in the universal deformation. Let $Z \subset \Spec \overline{\Lambda} / (\varpi)$ denote the image of $\Spec R_{\cS_\chi} / (I, \{ \ffrm_{R_v}^{\chi_v} \}_{v \in R}, t_\sigma, d_\sigma-1)$; let $W \subset \Spec \overline{\Lambda} / (\varpi)$ denote the subspace determined by the equations $\psi_{v, 1}|_{I_{K_v}}(2) = \psi_{v, 2}|_{I_{K_v}}(2)$ for each $v \in S_2$. To prove the lemma, it suffices to show that $\dim Z > \dim W$. 
	
	We have $\dim W = d + 1 + \delta_K$ (where $\delta_K$ is the Leopoldt defect of $K$) and $\dim Z \geq 2d - 4 - 4 |R|$ (because of our assumption that $R_{\cS_\chi} / I$ is a finite $\Lambda$-algebra, and $\dim R_v^{\chi_v} / (\varpi) = 4$). The proof will therefore be complete if we can show that $2d - 4 - 4 |R| > d + 1 + \delta_K$. Using the result (\cite[Theorem 1.1]{Wal84}) that $\delta_K < d / 2$, it suffices to show that $d/2 > 5 + 4 |R|$. This follows from our hypotheses (we have assumed at the start of \S \ref{subsec_setup_for_main_ALT} that $d > 10 + 8 |R|$).
\end{proof}
\begin{corollary}\label{cor_first_nice_prime}
	There exists a nice prime $\frp$ of $R_{\cS_\chi}$ in the support of $\overline{H}_{\chi}$ such that for each $v \in R$, $\rho_\frp|_{G_{K_v}}$ is the trivial representation.
\end{corollary}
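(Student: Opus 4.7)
The plan is to apply Lemma \ref{lem_nice_primes_in_big_quotients} to an ideal built from the annihilator of $\overline{H}_\chi$. Set $J = \Ann_{R_{\cS_\chi}}(\overline{H}_\chi)$, and let $I$ be the image of $J$ in $R_{\cS_\chi}/(\varpi)$, so that $R_{\cS_\chi}/I = R_{\cS_\chi}/(J,\varpi)$. Once the two hypotheses of that lemma are verified for $I$, the output is precisely the desired nice prime, and the containment $J \subset \frp$ automatically places $\frp$ in the support of $\overline{H}_\chi$.

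First I would verify that $R_{\cS_\chi}/(J,\varpi)$ is a finite $\Lambda$-algebra. The $R_{\cS_\chi}$-action on $\overline{H}_\chi$ factors through the surjection $R_{\cS_\chi} \twoheadrightarrow \bbT_\chi/J_\chi$ of Corollary~\ref{cor_R_to_T}, so $R_{\cS_\chi}/J$ is a quotient of $\bbT_\chi/J_\chi$. By construction $\bbT_\chi$ is a finite local $\Lambda_1$-algebra whose structural map factors through $\Lambda_1 \twoheadrightarrow \Lambda$ (Theorem~\ref{thm_existence_of_Galois_base_case}), so $\bbT_\chi/J_\chi$, and hence $R_{\cS_\chi}/(J,\varpi)$, is a finite $\Lambda$-algebra.

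The key step is the dimension bound. Since $\Lambda_1$ is finitely generated as a module over the subring $\Lambda_0$ (the relevant congruence subgroup quotient being finite), $R_{\cS_\chi}/J$ is a finite $\Lambda_0$-algebra acting faithfully on the finite $\Lambda_0$-module $\overline{H}_\chi$, and dimension is preserved under the resulting finite integral extension $\Lambda_0/\Ann_{\Lambda_0}(\overline{H}_\chi) \hookrightarrow R_{\cS_\chi}/J$. Combining this with Lemma~\ref{lem_CG_for_ordinary_completed_coh} gives
\[ \dim R_{\cS_\chi}/J \;=\; \dim_{\Lambda_0}\overline{H}_\chi \;\geq\; 2d+2. \]
Going modulo the principal element $\varpi$ decreases Krull dimension by at most one, so $\dim R_{\cS_\chi}/(J,\varpi) \geq 2d+1 \geq 2d-2$.

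The hypotheses of Lemma~\ref{lem_nice_primes_in_big_quotients} are now in place, and that lemma produces a nice prime $\frp \supset I \supset J$ such that $\rho_\frp|_{G_{K_v}}$ is trivial for every $v \in R$. The containment $J \subset \frp$ forces $\frp \in \Supp(\overline{H}_\chi)$, completing the proof. There is no genuine obstacle: all the real content is in Lemmas~\ref{lem_CG_for_ordinary_completed_coh} and~\ref{lem_nice_primes_in_big_quotients}, together with the fact that $\overline{H}_\chi$ is a non-zero finitely generated $\Lambda_0$-module (which is in turn guaranteed by the existence of $\pi$ at the outset of \S\ref{subsec_setup_for_main_ALT}).
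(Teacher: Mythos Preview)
Your proof is correct and follows essentially the same route as the paper's. The paper phrases the argument in terms of $\overline{\bbT}_\chi$ (the faithful quotient of $\bbT_\chi$ acting on $\overline{H}_\chi$) rather than $R_{\cS_\chi}/J$, but since $R_{\cS_\chi}/J \cong \overline{\bbT}_\chi/(\text{image of }J_\chi)$ these are the same quotient; both arguments extract the dimension bound $\geq 2d+2$ from Lemma~\ref{lem_CG_for_ordinary_completed_coh}, reduce modulo $\varpi$, and feed the result into Lemma~\ref{lem_nice_primes_in_big_quotients}.
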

\begin{proof}
	By Lemma \ref{lem_CG_for_ordinary_completed_coh}, the ring $\overline{\bbT}_\chi$ is a finite $\Lambda$-algebra of dimension at least $2d + 2$, hence $\overline{\bbT}_{\chi} / (\varpi, J_\chi)$ is a finite $\Lambda$-algebra of dimension at least $2d + 1$. We apply Lemma \ref{lem_nice_primes_in_big_quotients} to the corresponding quotient of $R_{\cS_\chi}$.
\end{proof}
	
\begin{proposition}\label{prop_connectedness_of_nice_prime_graph}
	Define a graph as follows: the vertices are the minimal primes of $R_{\cS_\chi} / (\varpi)$, and the edges between two vertices $Q, Q'$ correspond to the nice primes $\frp \supset (Q, Q')$ with the property that for each $v  \in R$, $\rho_\frp|_{G_{K_v}}$ is the trivial representation. Then this graph is connected. Moreover, $R_{\cS_\chi}$ is a finite $\Lambda$-algebra.
\end{proposition}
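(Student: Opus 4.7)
My plan is to prove both the connectivity of the graph and the finiteness of $R_{\cS_\chi}$ over $\Lambda$ simultaneously via a graph-walking argument inspired by Skinner--Wiles, combining the connectedness dimension bound of Corollary~\ref{cor_connectedness_dimension_Ihara_case}(3), the existence of nice primes in large closed subschemes from Lemma~\ref{lem_nice_primes_in_big_quotients}, and the local structure of $R_{\cS_\chi}$ at supporting nice primes provided by the patched ring through Corollary~\ref{cor_consequences_of_patching}(2). Throughout I write $d = [K^+:\bbQ]$, so $[K:\bbQ] = 2d$.

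\textbf{Local claim at supporting nice primes.} First I would establish: \emph{if $\frp \subset R_{\cS_\chi}$ is a nice prime lying in $\Supp_{R_{\cS_\chi}}\overline{H}_\chi$ with $\rho_\frp|_{G_{K_v}}$ trivial for every $v \in R$, then (a) there is a unique minimal prime $Q_\frp$ of $R_{\cS_\chi}$ contained in $\frp$, and (b) every prime of $R_{\cS_\chi}$ contained in $\frp$ (in particular $Q_\frp$) lies in $\Supp\overline{H}_\chi$.} These are deduced by descending Corollary~\ref{cor_consequences_of_patching}(2)(a), which gives $\Spec R_{\chi,\infty,\frp_\infty} = \Spec \overline{\bbT}_{\chi,\infty,\frp_\infty}$ and irreducibility in the patched ring, through the quotient $R_{\chi,\infty}/\fra = R_{\cS_\chi}$, using the isomorphism $C_\chi = D_\chi \otimes_{S_\infty}\Lambda_0$ together with Lemma~\ref{lem_support_and_derived_tensor_product} to match supports on the patched and unpatched sides. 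Separately, since $F_{\ffrm,\chi}$ is a perfect complex of $\Lambda_0$-modules and $\Lambda$ is finite over $\Lambda_0$ (both regular local of the same dimension with an injective local map), the ring $\overline{\bbT}_\chi$ is a finite $\Lambda$-algebra; consequently, for any minimal prime $Q$ of $R_{\cS_\chi}/(\varpi)$ that lies in $\Supp\overline{H}_\chi$, the irreducible component $V(Q) \subset \Spec R_{\cS_\chi}/(\varpi)$ is finite over $\Lambda/(\varpi)$.

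\textbf{Graph walking.} Let $\cC$ denote the set of minimal primes of $R_{\cS_\chi}/(\varpi)$. Define $\cC_{\mathrm{good}} \subseteq \cC$ to be the subset of $Q \in \cC$ that (i) lie in $\Supp\overline{H}_\chi$, and (ii) lie in the same connected component of the graph as the minimal prime $Q_0$ contained in the initial nice prime $\frp_0$ produced by Corollary~\ref{cor_first_nice_prime}. The local claim applied to $\frp_0$ gives $Q_0 \in \cC_{\mathrm{good}}$. I claim $\cC_{\mathrm{good}} = \cC$; granted this, every minimal prime of $R_{\cS_\chi}/(\varpi)$ lies in $\Supp\overline{H}_\chi$, so the ideal $\Ann_{R_{\cS_\chi}}\overline{H}_\chi$ maps into the nilradical of $R_{\cS_\chi}/(\varpi)$, and the surjection $R_{\cS_\chi}/(\varpi) \twoheadrightarrow \overline{\bbT}_\chi/(\varpi,\overline{J}_\chi)$ has nilpotent kernel. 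Since $\overline{\bbT}_\chi$ is finite over $\Lambda$, this forces $R_{\cS_\chi}/(\varpi)$ to be finite over $\Lambda/(\varpi)$, and $R_{\cS_\chi}$ is then finite over $\Lambda$ by completed Nakayama; simultaneously the graph is connected because $\cC_{\mathrm{good}}$ is by definition a single connected component. To prove $\cC_{\mathrm{good}} = \cC$, suppose for contradiction that $\cC_{\mathrm{bad}} := \cC \setminus \cC_{\mathrm{good}}$ is nonempty. Corollary~\ref{cor_connectedness_dimension_Ihara_case}(3) yields components $Z_0 \in \cC_{\mathrm{good}}$ and $Z_1 \in \cC_{\mathrm{bad}}$ with $\dim(Z_0 \cap Z_1) \geq 2d-2$. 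Since $Z_0 \subset \Supp\overline{H}_\chi$ is finite over $\Lambda/(\varpi)$, so is the closed subscheme $Z_0 \cap Z_1$ defined by $Q_0' + Q_1$ (where $Q_0', Q_1$ are the generic points of $Z_0, Z_1$). Applying Lemma~\ref{lem_nice_primes_in_big_quotients} to this ideal produces a nice prime $\frp' \supseteq Q_0' + Q_1$ with $\rho_{\frp'}|_{G_{K_v}}$ trivial for every $v \in R$. Then $\frp'$ contains both $Q_0'$ and $Q_1$, which gives the edge $Q_0'$--$Q_1$ in the graph. Moreover, since $\frp' \supseteq Q_0'$ with $Q_0' \in \Supp\overline{H}_\chi$ and the support is closed upward, $\frp' \in \Supp\overline{H}_\chi$; the local claim applied to $\frp'$ then forces $Q_1 \in \Supp\overline{H}_\chi$, and combined with the new edge this gives $Q_1 \in \cC_{\mathrm{good}}$, contradicting $Q_1 \in \cC_{\mathrm{bad}}$.

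\textbf{Main obstacle.} The principal difficulty is part (b) of the local claim: transferring the identity $\Spec R_{\chi,\infty,\frp_\infty} = \Spec \overline{\bbT}_{\chi,\infty,\frp_\infty}$ from the patched ring down to $R_{\cS_\chi}$ to conclude that the minimal prime $Q_\frp$ actually lies in the support of $\overline{H}_\chi$. This requires carefully combining Lemma~\ref{lem_support_and_derived_tensor_product} (applied with $A = S_\infty$, $B$ the relevant Hecke algebra acting on $D_\chi$, and $J = \fra$) with an analysis of how the $\wp$-localization defining $\overline{H}_\chi = \mathrm{im}(H^\ast(C_\chi) \to H^\ast(C_\chi)_{(\wp)})$ descends from the patched analogue $\overline{H}_{\chi,\infty}$; a secondary subtlety is verifying that the minimal prime $Q_\frp$ actually contains $(\varpi)$ and hence appears as a vertex of the graph (which follows because $\frp$ is characteristic $p$ and, by (a), lies above a single irreducible component of $\Spec R_{\cS_\chi}/(\varpi)$).
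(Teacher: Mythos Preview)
Your overall strategy — walk the graph using the connectedness dimension estimate and Lemma~\ref{lem_nice_primes_in_big_quotients}, feeding in patching to establish finiteness over $\Lambda$ along the way — is the right one, and it matches the paper's. However, there is a genuine gap in how you invoke the patching.

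The ``local claim'' you state can only be proved for nice primes that were \emph{chosen in advance} as input to Proposition~\ref{prop_patching}: Corollary~\ref{cor_consequences_of_patching}(2)(a) is explicitly a statement about $\frp \in \{\frp_1,\dots,\frp_l\}$, the finite set fixed before the patching construction is carried out. In your inductive step you apply the local claim to the newly discovered nice prime $\frp'$, but $\frp'$ was not part of any patching you have performed, so Corollary~\ref{cor_consequences_of_patching} says nothing about it. One could imagine re-patching with $\frp'$ adjoined, but then the next step of the walk produces yet another new prime, and so on; to make this terminate you would have to collect all the needed nice primes first and patch once — which is exactly what the paper does. Relatedly, your proposed descent of the equality $\Spec R_{\chi,\infty,\frp_\infty} = \Spec\overline{\bbT}_{\chi,\infty,\frp_\infty}$ down to $R_{\cS_\chi}$ via Lemma~\ref{lem_support_and_derived_tensor_product} is not carried out in the paper and is not obviously available: the $\wp$-localisation defining $\overline{H}_\chi$ does not interact cleanly with $-\otimes^{\bbL}_{S_\infty} S_\infty/\fra$.

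The paper avoids both issues by \emph{never} descending to $R_{\cS_\chi}$ during the walk. It assumes for contradiction that the graph is disconnected, fixes the connected component $\{Q_1,\dots,Q_k\}$ containing the vertex below Corollary~\ref{cor_first_nice_prime}'s prime, and selects finitely many nice primes $\frp_1,\dots,\frp_l$ spanning the edges of \emph{that} component. It then patches once with this entire set and propagates the property ``$Q_{i,\infty} \in \Spec\overline{\bbT}_{\chi,\infty}$'' inductively along the component, using Corollary~\ref{cor_consequences_of_patching}(2)(a) only for these pre-chosen $\frp_j$. Finiteness of $R_{\cS_\chi}/Q_i$ over $\Lambda$ then drops out of $R_{\chi,\infty}/(\fra,Q_{i,\infty}) = R_{\cS_\chi}/Q_i$ and finiteness of $\overline{\bbT}_{\chi,\infty}$ over $S_\infty$, without any support comparison in $R_{\cS_\chi}$. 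With finiteness in hand for every vertex of the component, Lemma~\ref{lem_nice_primes_in_big_quotients} applied at the boundary produces a nice prime giving an edge out of the component — a contradiction — and connectedness plus finiteness follow together.
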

	\begin{proof}
Let $\frp_1 \subset R_{\cS_\chi}$ be a nice prime which is in the support of $\overline{H}_\chi$ (which exists by Corollary \ref{cor_first_nice_prime}), and let $Q_1 \subset R_{\cS_\chi} / (\varpi)$ be a minimal prime contained inside $\frp_1$. Let $Q_2, \dots, Q_m$ denote the other minimal primes of $R_{\cS_\chi} / (\varpi)$, and suppose for contradiction that the graph in question is not connected. After re-ordering $Q_2, \dots, Q_m$, we can assume that $Q_1, \dots, Q_k$ are the vertices in a connected component of the graph.

By Corollary \ref{cor_connectedness_dimension_Ihara_case}, we can find $Q \in \{ Q_1, \dots, Q_k \}$, $Q' \in \{ Q_{k+1}, \dots, Q_m \}$ such that $\dim R_{\cS_\chi} / (Q \cap Q') \geq 2d - 2$. If we can show that $R_{\cS_\chi} / Q$ is finite over $\Lambda$, then Lemma \ref{lem_nice_primes_in_big_quotients} will imply the existence of a nice prime connecting $Q$ and $Q'$, a contradiction. We will show in fact that $R_{\cS_\chi} / Q_i$ is finite over $\Lambda$ for each $i = 1, \dots, k$. This will also imply that $R_{\cS_\chi} / (\varpi)$ is a finite $\Lambda$-algebra, hence that $R_{\cS_\chi}$ is a finite $\Lambda$-algebra (by the completed version of Nakayama's lemma).

Let $\frp_2, \dots, \frp_l$ be nice primes such that the edges $\frp_1, \dots, \frp_l$ span the connected component $\{ Q_1, \dots, Q_k \}$. We apply Proposition \ref{prop_patching} to the set $\{ \frp_1, \dots, \frp_l \}$ of nice primes in order to obtain objects $D_\chi$, $H_{\chi, \infty}$, $\overline{H}_{\chi, \infty}$, $R_{\chi, \infty}$. We show by induction on the length of a shortest path between $Q_1$ and $Q_i$ that $Q_{i, \infty}$, the pullback of $Q_i$ to $R_{\chi, \infty}$, is in the image of $\Spec \overline{\bbT}_{\chi, \infty}$ in $\Spec R_{\chi, \infty}$. This will imply that $R_{\chi, \infty} / Q_{i, \infty}$ is a finite $S_\infty$-algebra, hence that $R_{\chi, \infty} / (Q_{i, \infty}, \fra) = R_{\cS_\chi} / Q$ is a finite $\Lambda$-algebra, as desired.

We first treat the base case $i = 1$. By part 2(a) of Corollary \ref{cor_consequences_of_patching}, any prime of $R_{\chi, \infty}$ which is contained inside $\frp_{1, \infty}$ is in the image of $\Spec \overline{\bbT}_{\chi, \infty}$. This applies in particular to $Q_{1, \infty}$.

We now treat the induction step. Suppose after renumbering that $Q_{1, \infty}, \dots, Q_{i, \infty}$ have been shown to be in the image of $\Spec \overline{\bbT}_{\chi, \infty}$, and let $\frp \in \{ \frp_1, \dots, \frp_l \}$ be a nice prime linking $Q_i$ and $Q_{i+1}$. Then $\frp_\infty$ is in the image of $\Spec \overline{\bbT}_{\chi, \infty}$, so applying part 2(a) of Corollary \ref{cor_consequences_of_patching} once more shows that $Q_{i+1, \infty}$ is also in the support. This completes the proof.
\end{proof}
\begin{corollary}\label{cor_nice_primes_for_S_1}
Any minimal prime $Q \subset R_{\cS_1}$ is contained in a nice prime $\frp$ with the property that for each place $v \in R$, $\rho_\frp|_{G_{K_v}}$ is the trivial representation.
\end{corollary}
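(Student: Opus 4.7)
The plan is to leverage the identification $R_{\cS_1}/(\varpi) \cong R_{\cS_\chi}/(\varpi)$ (noted just before Corollary \ref{cor_R_to_T}) to transfer the information already obtained about $R_{\cS_\chi}$ in Proposition \ref{prop_connectedness_of_nice_prime_graph} to the ring $R_{\cS_1}$. The notion of ``nice prime'' and the condition ``$\rho_\frp|_{G_{K_v}}$ is trivial for each $v \in R$'' only involve the residual universal representation and the characters $\psi_{v,1}, \psi_{v,2}$ arising from the ordinary deformation problems at places $v \in S_2$. All of this data agrees modulo $\varpi$ between $\cS_1$ and $\cS_\chi$, so a nice prime of one ring (which automatically contains $\varpi$) is a nice prime of the other, and the ``trivial at $R$'' condition is preserved under the identification.

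First, I would show that $R_{\cS_1}$ is a finite $\Lambda$-algebra. Proposition \ref{prop_connectedness_of_nice_prime_graph} already gives that $R_{\cS_\chi}$ is a finite $\Lambda$-algebra, so in particular $R_{\cS_\chi}/(\varpi)$ is a finite $\Lambda/(\varpi)$-algebra. Via the isomorphism $R_{\cS_1}/(\varpi) \cong R_{\cS_\chi}/(\varpi)$, the same is true for $R_{\cS_1}/(\varpi)$, and then the topological version of Nakayama's lemma yields that $R_{\cS_1}$ is finite over $\Lambda$.

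Next, let $Q \subset R_{\cS_1}$ be a minimal prime. By Corollary \ref{cor_connectedness_dimension_Ihara_case}, every irreducible component of $\Spec R_{\cS_1}$ has dimension at least $[K:\bbQ]+1 = 2d+1$, so $\dim R_{\cS_1}/Q \geq 2d+1$. I would then distinguish two cases: if $\varpi \in Q$, then $\dim R_{\cS_1}/(Q,\varpi) = \dim R_{\cS_1}/Q \geq 2d+1$; if $\varpi \notin Q$, then since $R_{\cS_1}/Q$ is a finite $\Lambda$-algebra that is a domain, $\varpi$ is a non-zero-divisor in $R_{\cS_1}/Q$ and $\dim R_{\cS_1}/(Q,\varpi) \geq 2d$. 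In either case $\dim R_{\cS_1}/(Q,\varpi) \geq 2d \geq 2d-2$, and $R_{\cS_1}/(Q,\varpi)$ is a finite $\Lambda$-algebra as a quotient of $R_{\cS_1}$.

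Finally, via the identification $R_{\cS_1}/(\varpi) \cong R_{\cS_\chi}/(\varpi)$, the image of $(Q,\varpi)$ is an ideal $I \subset R_{\cS_\chi}/(\varpi)$ satisfying the hypotheses of Lemma \ref{lem_nice_primes_in_big_quotients}. That lemma then produces a nice prime $\frp \subset R_{\cS_\chi}$ containing $I$ for which $\rho_\frp|_{G_{K_v}}$ is trivial for each $v \in R$. Since $\frp$ contains $\varpi$, it corresponds via the identification to a prime of $R_{\cS_1}$ containing $Q$; its image is nice (and has trivial restriction at the places of $R$) because the relevant conditions, being determined by the universal trace/determinant data and the characters $\psi_{v,i}$ at the ordinary places, depend only on $R_{\cS_1}/(\varpi) = R_{\cS_\chi}/(\varpi)$. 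No step here looks like a serious obstacle: the content has been packaged into Proposition \ref{prop_connectedness_of_nice_prime_graph} and Lemma \ref{lem_nice_primes_in_big_quotients}, and the corollary is a bookkeeping argument that transports their conclusions from $R_{\cS_\chi}$ to $R_{\cS_1}$.
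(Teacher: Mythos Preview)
Your proposal is correct and follows essentially the same approach as the paper: use Corollary \ref{cor_connectedness_dimension_Ihara_case} to bound $\dim R_{\cS_1}/(Q,\varpi)$ from below, use Proposition \ref{prop_connectedness_of_nice_prime_graph} (via the identification $R_{\cS_1}/(\varpi) \cong R_{\cS_\chi}/(\varpi)$) for the finiteness over $\Lambda$, and then apply Lemma \ref{lem_nice_primes_in_big_quotients}. Your version is slightly more detailed (making the case distinction on whether $\varpi \in Q$ explicit and establishing finiteness of $R_{\cS_1}$ itself rather than just $R_{\cS_1}/(\varpi)$), but this is purely expository.
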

\begin{proof}
	If $Q \subset R_{\cS_1}$ is a minimal prime, then $\dim R_{\cS_1} / (Q, \varpi) \geq 2d$ (by Corollary \ref{cor_connectedness_dimension_Ihara_case}). Therefore the corollary follows from the fact that $R_{\cS_\chi} / (\varpi) = R_{\cS_1} / (\varpi)$ is a finite $\Lambda$-algebra, together with Lemma \ref{lem_nice_primes_in_big_quotients}.
\end{proof}

We can now complete the proof of Theorem \ref{thm_application_of_TW_method}. Let $\frp_1$ be a nice prime of $R_{\cS_\chi}$ which is in the support of $\overline{H}_\chi$ and such that for each $v \in R$, $\rho_{\mathfrak{p}_1}|_{G_{K_v}}$ is the trivial representation, and let $Q_1$ be a minimal prime of $R_{\cS_\chi} / (\varpi)$ which is contained inside $\frp_1$. Let $Q_2, \dots, Q_k$ be the remaining minimal primes of $R_{\cS_\chi} / (\varpi)$. By Proposition \ref{prop_connectedness_of_nice_prime_graph}, we can find nice primes $\frp_2, \dots, \frp_{l_0}$ such that $\frp_1, \dots, \frp_{l_0}$ span the graph of minimal primes of $R_{\cS_\chi} / (\varpi)$ and such that for each $\frp \in \{ \frp_1, \dots, \frp_{l_0} \}$ and for each $v \in R$, $\rho_\frp|_{G_{K_v}}$ is the trivial representation. By Corollary \ref{cor_nice_primes_for_S_1}, we can find nice primes $\frp_{l_0+1}, \dots, \frp_l$ of $R_{\cS_\chi} / (\varpi) = R_{\cS_1} / (\varpi)$ such that each minimal prime $Q \subset R_{\cS_1}$ is contained in one of $\frp_{l_0+1}, \dots, \frp_l$. 

We apply Proposition \ref{prop_patching} to the set of nice primes $\{ \frp_1, \dots, \frp_l \}$ to get the associated objects $D_1$, $D_\chi$, $R_{1, \infty}$, $R_{\chi, \infty}$. Let $Q_{i, \infty}$ denote the pullback of $Q_i$ to $R_{\chi, \infty}$. By repeated application of part 2(a) of Corollary \ref{cor_consequences_of_patching} as in the proof of Proposition \ref{prop_connectedness_of_nice_prime_graph}, we find that each prime $Q_{i, \infty}$ is in the support of $H_{\chi, \infty}$ as $R_{\chi, \infty}$-module. Consequently, for any nice prime $\frp \in \{ \frp_1, \dots, \frp_l \}$ with pullback $\frp_\infty \subset R_{\chi, \infty} / (\varpi) = R_{1, \infty} / (\varpi)$, we have $\frp_\infty \in \Spec \overline{\bbT}_{\chi, \infty}$.

Let $P_\chi$ be any minimal prime of $\bbT_{\chi, \infty, (\frp_\infty)} / (\varpi)$, and let $Q_\chi$ be the unique minimal prime of $\bbT_{\chi, \infty, (\frp_\infty)}$. Then $\dim \bbT_{\chi, \infty, (P_\chi)} = 1$, $\dim \bbT_{\chi, \infty, (P_\chi)} / (\varpi) = 0$, and $Q_\chi \bbT_{\chi, \infty, (P_\chi)}$ is the unique minimal prime of $\bbT_{\chi, \infty, (P_\chi)}$. There are short exact sequences of $\bbT_{\chi, \infty}$-modules for every $q \in \bbZ$:
\[ \xymatrix@1{ 0 \ar[r] & H^q(D_\chi) / (\varpi) \ar[r] & H^q(D_\chi / (\varpi)) \ar[r] & H^{q+1}(D_{\chi})[\varpi] \ar[r] & 0. } \]
After localizing at $P_\chi$, we obtain short exact sequences
\[ \xymatrix@1{ 0 \ar[r] & H^q(D_\chi)_{(P_\chi)} / (\varpi) \ar[r] & H^q(D_\chi / (\varpi))_{(P_\chi)} \ar[r] & H^{q+1}(D_{\chi})_{(P_\chi)}[\varpi]\ar[r] & 0. } \]
According to part 2(b) of Corollary \ref{cor_consequences_of_patching}, we have $H^q(D_\chi)_{(Q_\chi)} = 0$ if $q \neq -d$ and $H^{-d}(D_\chi)_{(Q_\chi)} \neq 0$. Note that a finite $\bbT_{\chi, \infty, (P_\chi)}$-module $M$ is of finite length if it is $\varpi^\infty$-torsion; and for any such $M$, we have an equality
\[ \operatorname{length}_{\bbT_{\chi, \infty, (P_\chi)}}(M / (\varpi)) = \operatorname{length}_{\bbT_{\chi, \infty, (P_\chi)}}(M[\varpi]). \]
It follows that we have an equality
\[ \sum_{i \in \bbZ} (-1)^i \operatorname{length}_{\bbT_{\chi, \infty, (P_\chi)}}( H^i(D_\chi /(\varpi))_{(P_\chi)} ) = (-1)^{d} \operatorname{length}_{\bbT_{\chi, \infty, (P_\chi)}}H^{-d}(D_\chi)_{(P_\chi)}^\text{free} / (\varpi), \]
where the superscript `free' denotes the quotient by the $\varpi^\infty$-torsion subgroup. By Nakayama's lemma, this length is non-zero since 
\[ H^{-d}(D_\chi)_{(P_\chi)}^\text{free} [\varpi^{-1}] = H^{-d}(D_\chi)_{(P_\chi)}[\varpi^{-1}] = H^{-d}(D_\chi)_{(Q_\chi)} \]
is non-zero. Note that $\bbT_{\chi, \infty, (P_\chi)}[\varpi^{-1}] = \bbT_{\chi, \infty, (Q_\chi)}$.

The minimal prime $P_{\chi}$ of $\bbT_{\chi, \infty} / (\varpi)$ corresponds to a minimal prime $P_{1}$ of $\bbT_{1, \infty}/ (\varpi)$. A similar line of reasoning shows that the groups $H^q(D_1)_{(P_1)}$ are not all $\varpi^\infty$-torsion. Indeed,  they would otherwise be of finite length as $\bbT_{1, \infty, (P_1)}$-modules, implying that we have
\[ \sum_{i \in \bbZ} (-1)^i \operatorname{length}_{\bbT_{1, \infty, (P_1)}}( H^i(D_1 / (\varpi))_{(P_1)} ) = \sum_{i \in \bbZ} (-1)^i \operatorname{length}_{\bbT_{\chi, \infty, (P_\chi)}}( H^i(D_\chi / (\varpi))_{(P_\chi)} ) = 0, \]
contradicting what we have just seen. It follows that there is a prime $Q_1 \subset P_1$ of $\bbT_{1, \infty}$ of characteristic 0.

Let us recap. We have shown that for any nice prime $\frp \in \{ \frp_1, \dots, \frp_l \}$, and for any minimal prime $P_1$ of $\bbT_{1, \infty} / (\varpi)$ which is contained in $\frp_\infty$, there is a prime $Q_1 \subset \bbT_{1, \infty}$ of characteristic 0 such that $Q_1 \subset P_1$. It follows that for any minimal prime $P_1'$ of $\bbT_{1, \infty, \frp_\infty} / (\varpi)$, there is a prime $Q_1' \subset \bbT_{1, \infty, \frp_\infty}$ of characteristic 0 such that $Q_1' \subset P_1'$.

We now use this to finish the proof of the theorem. Let $Q \subset R_{\cS_1}$ be any minimal prime. We will show that $Q \in \Spec \bbT_1$. By construction, we can find a nice prime $\frp \in \{ \frp_1, \dots, \frp_l \}$ such that $Q \subset \frp$ (viewing $\frp$ as a prime of $R_{\cS_1} / (\varpi)$). We have surjective maps 
\[ A^T_{\cS_1, \frp^\loc} \llbracket X_1, \dots, X_g \rrbracket \to R_{1, \infty, \frp_\infty} \]
and
\[ R_{1, \infty, \frp_\infty} \to \bbT_{1, \infty, \frp_\infty} / (J_{1, \infty}). \]
We consider the action of $A^T_{\cS_1, \frp^\loc} \llbracket X_1, \dots, X_g \rrbracket$ on $H^\ast(D_1)_{\frp_\infty} / (J_{1, \infty})$; it will be enough to show that this action is nearly faithful, in the sense that its support contains every minimal prime of $A^T_{\cS_1, \frp^\loc} \llbracket X_1, \dots, X_g \rrbracket$ (see \cite[Definition 2.1]{Tay08}). 

The support contains the whole of $\Spec A^T_{\cS_1, \frp^\loc} \llbracket X_1, \dots, X_g \rrbracket / (\varpi)$ (by comparison with the $\chi$ situation). Let $Q_1$ be a minimal prime of $A^T_{\cS_1, \frp^\loc} \llbracket X_1, \dots, X_g \rrbracket$, and let $P_1$ be a prime of $A^T_{\cS_1, \frp^\loc} \llbracket X_1, \dots, X_g \rrbracket$ minimal over $(\varpi, Q_1)$. By Proposition \ref{prop_irreducibility_of_completed_local_lifting_ring}, $P_1$ has height 1 and $Q_1$ is the unique prime ideal of $A^T_{\cS_1, \frp^\loc} \llbracket X_1, \dots, X_g \rrbracket$ properly contained in $P_1$. Our work so far shows that the image of $\Spec \bbT_{1, \infty, \frp_\infty}$ in $\Spec A^T_{\cS_1, \frp^\loc} \llbracket X_1, \dots, X_g \rrbracket$ contains a prime $Q_1' \subset P_1$ of characteristic 0. We must have $Q_1 = Q_1'$; and this completes the proof.

\section{Deduction of the main $2$-adic automorphy lifting theorem}\label{sec_deduction_of_main_ALTs}

We can now state the first main theorem of this paper.
\begin{theorem}\label{thm_main_automorphy_theorem}
Let $K$ be a CM number field, and let $\rho : G_K \to \GL_2(\overline{\bbQ}_2)$ be a continuous representation satisfying the following conditions: 
\begin{enumerate}
\item $\overline{\rho}$ is decomposed generic.
\item $\overline{\rho}$ is dihedral. We write $L / K$ for the quadratic extension corresponding to $\overline{\rho}$.
\item $\overline{\rho}$ extends to a homomorphism $\overline{\rho}' : G_{K^+} \to \GL_2(\overline{\bbF}_2)$.
\item For all but finitely many places $v$ of $K$, $\rho|_{G_{K_v}}$ is unramified.
\item For each place $v | 2$ of $K$, $\rho|_{G_{K_v}}$ is ordinary of weight 0 and $\mathrm{WD}(\rho|_{G_{K_v}})$ is unipotently ramified. Equivalently, there is an isomorphism
\[ \rho|_{G_{K_v}} \sim \left( \begin{array}{cc} \alpha_v & \ast \\ 0 & \epsilon^{-1} \beta_v \end{array}\right), \]
where $\alpha_v, \beta_v$ are unramified characters. Moreover, $v$ is ramified in $L$ and $L_v = K_v(\sqrt{\beta})$, where $\beta \in K_v^\times$ has odd valuation. Finally, the extension $K_v(\sqrt{-1}) / K_v$ is unramified.
\end{enumerate}
Then $\rho$ is automorphic: there exists an isomorphism $\iota : \overline{\bbQ}_2 \cong \bbC$ and a cuspidal automorphic representation $\pi$ of $\GL_2(\bbA_K)$, cohomological of weight 0 and $\iota$-ordinary, such that $\rho \cong r_\iota(\pi)$.
\end{theorem}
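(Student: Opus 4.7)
The overall strategy is to apply Corollary \ref{cor_application_of_taylor_wiles} to a suitable soluble totally imaginary CM base change $\rho|_{G_{K'}}$ of $\rho$, and then descend the resulting automorphy using cyclic base change for $\GL_2$. Recall that Corollary \ref{cor_application_of_taylor_wiles} requires as input a cuspidal, $\iota$-ordinary, cohomological weight-$0$ automorphic representation $\pi$ of $\GL_2(\bbA_{K'})$ whose ramification set has the form $S_2 \cup R'$, with $\overline{\rho}|_{G_{K'_v}}$ trivial for $v \in R'$, with the precise twisted principal series types at $R'$ and Iwahori-fixed vector at $v | 2$ prescribed in \S\ref{subsec_setup_for_main_ALT}, and subject to the numerical constraint $[(K')^+ : \bbQ] > 8|R'| + 10$.

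First, I would establish residual automorphy. Since $\overline{\rho}$ is irreducible and its extension $\overline{\rho}' : G_{K^+} \to \GL_2(\overline{\bbF}_2)$ is automatically irreducible (any $\overline{\rho}'$-stable line would restrict to a $\overline{\rho}$-stable line) and has soluble image (the image being an at-most-index-$2$ extension of the soluble group $\overline{\rho}(G_K)$), Proposition \ref{prop_soluble_means_dihedral} ensures that $\overline{\rho}'$ itself is dihedral, so $\overline{\rho}' \cong \Ind_{G_M}^{G_{K^+}} \overline{\chi}'$ for some quadratic extension $M/K^+$ and character $\overline{\chi}'$ of odd order. Lifting $\overline{\chi}'$ to a finite-order Hecke character of $M$ and automorphically inducing yields a holomorphic Hilbert modular form of weight $1$ over the totally real field $K^+$ with residual Galois representation $\overline{\rho}'$. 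Invoking Hida theory as in \cite{All14}, this weight-one form is connected via a $2$-adic family to a cohomological, $\iota$-ordinary Hilbert modular form of classical weight $2$ (weight $0$ in our normalisation) over $K^+$ with the same residual representation. Soluble base change then yields a cuspidal, $\iota$-ordinary, cohomological weight-$0$ automorphic representation $\pi_0$ of $\GL_2(\bbA_K)$ with $\overline{r_\iota(\pi_0)} \cong \overline{\rho}$.

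Next, I would choose a soluble totally imaginary CM extension $K'/K$, using Lemmas \ref{lem:decomposed_generic_after_extension} and \ref{lem:not_bad_dihedral_after_extension} to keep $\overline{\rho}|_{G_{K'}}$ decomposed generic and dihedral, chosen linearly disjoint from the field cut out by $\pi_0$ (so that $\pi_0|_{K'}$ remains cuspidal) and large enough that $[(K')^+ : \bbQ] > 8|R'| + 10$, that some $v_0 | 2$ in $K'$ induces a bijection on $2$-power roots of unity, that every splitting condition on residue characteristics required in \S\ref{subsec_setup_for_main_ALT} is met in imaginary quadratic subfields of $K'$, and that at every non-$2$-adic ramified place $v$ of $\rho|_{G_{K'}}$ we have $\overline{\rho}|_{G_{K'_v}}$ trivial and $\rho|_{G_{K'_v}}$ unipotently ramified. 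A level-raising argument applied to $\pi_0|_{K'}$ (exploiting the level-raising local deformation problems $\cD_v^{\chi_v}$ of \S\ref{subsubsec_level_raising_deformations} together with the classicality statements of \cite[Theorem~2.4.9]{10authors}) then produces a cuspidal, $\iota$-ordinary, weight-$0$ representation $\pi$ of $\GL_2(\bbA_{K'})$ with exactly the prescribed local types at $S_2 \cup R'$.

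Corollary \ref{cor_application_of_taylor_wiles} now applies to $\rho|_{G_{K'}}$ and yields its automorphy, and soluble cyclic descent for $\GL_2$ returns a cuspidal, $\iota$-ordinary, cohomological weight-$0$ automorphic representation $\pi$ of $\GL_2(\bbA_K)$ with $r_\iota(\pi) \cong \rho$. The principal obstacle is the combinatorial juggling required to choose $K'/K$ so that the numerical constraint, the local splitting and triviality conditions at $R'$, the $2$-adic hypotheses including the $v_0$ condition on roots of unity, and the preservation of the decomposed generic and dihedral properties of $\overline{\rho}|_{G_{K'}}$ all hold simultaneously — and, in parallel, the level-raising step to land at precisely the twisted Iwahori type demanded by \S\ref{subsec_setup_for_main_ALT}, since the deformation theory there is set up to be formally smooth only for these particular choices.
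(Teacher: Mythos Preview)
Your overall strategy matches the paper's: reduce to Theorem~\ref{thm_preliminary_automorphy_lifting_theorem} (equivalently Corollary~\ref{cor_application_of_taylor_wiles}) over a soluble CM extension and descend. However, there are two genuine gaps in the execution.

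\textbf{The level-raising step is done in the wrong place.} You propose to produce the automorphic representation with the prescribed twisted principal-series types at $R'$ by level-raising $\pi_0|_{K'}$ over the CM field $K'$, appealing to the local deformation problems $\cD_v^{\chi_v}$ and \cite[Theorem~2.4.9]{10authors}. This does not work: the rings $R_v^{\chi_v}$ are inputs to the patching machinery, not a mechanism for producing automorphic forms with given local types, and \cite[Theorem~2.4.9]{10authors} only identifies which Hecke eigensystems occur in cohomology --- it does not let you manufacture new ones. No level-raising theorem of the required strength is available over CM fields. The paper instead carries out this step entirely over the totally real field $K^+$: Lemma~\ref{lem_existence_of_residually_dihedral_cusp_form} constructs $\pi^+$ on $\GL_2(\bbA_{K^+})$ with the correct types at $R$ (and Iwahori level at $2$) by passing to a definite quaternion algebra and using types, as in \cite[Lemma~3.4.1]{Clo08}. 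Only after this does one base-change to $K$. The order matters precisely because the quaternionic level-raising tools exist over totally real fields and not over CM fields.

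\textbf{The weight-one step skips oddness.} Automorphic induction of the Teichm\"uller lift of $\overline{\chi}'$ to $\GL_2(\bbA_{K^+})$ need not be totally odd, so it need not correspond to a holomorphic Hilbert form of weight $1$. The paper handles this inside Lemma~\ref{lem_existence_of_residually_dihedral_cusp_form} by twisting the inducing character by an auxiliary $\xi$ (constructed in Lemma~\ref{lem_even_characters}) that fixes the signs at the bad archimedean places while introducing ramification only at places where $\overline{\rho}(\Frob_v)$ is regular semisimple, so the new ramification can later be removed by base change.
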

We make some remarks on the hypotheses in the above theorem. The first condition is used in the proof of local-global compatibility in \cite{10authors}. The second and third conditions ensure that we can verify the residual automorphy of $\overline{\rho}$ by using automorphic induction over $K^+$. The fourth condition is a necessary one for Galois representations to be related to automorphic forms. The fifth condition ensures that the variable weight local lifting rings at 2 are smooth. It is similar to imposing a `tr\`es ramifi\'ee' type condition. 

We will deduce Theorem \ref{thm_main_automorphy_theorem} from the following result, which is essentially a reformulation of Corollary \ref{cor_application_of_taylor_wiles}.
\begin{theorem}\label{thm_preliminary_automorphy_lifting_theorem}
Let $K$ be a CM number field, and let $\rho :  G_K \to \GL_2(\overline{\bbQ}_2)$ be a continuous representation satisfying the following conditions:
\begin{enumerate}
\item $-1$ is a square in $K$.
\item $\overline{\rho}$ is decomposed generic. 
\item $\overline{\rho}$ is dihedral.
\item For all but finitely many places $v$ of $K$, $\rho|_{G_{K_v}}$ is unramified. For every finite place $v \nmid 2$ of $K$, $\rho|_{G_{K_v}}$ is unipotently ramified. 
\item For each place $v | 2$ of $K$, $\rho|_{G_{K_v}}$ is ordinary of weight 0, $\mathrm{WD}(\rho|_{G_{K_v}})$ is unipotently ramified, and there is an isomorphism 
\[ \overline{\rho}|_{G_{K_v}} \sim \left( \begin{array}{cc} 1 & \psi_v \\ 0 & 1 \end{array} \right), \]
where $\psi_v : G_{K_v} \to \overline{\bbF}_2$ cuts out a quadratic extension of $K_v$ of the form $K_v(\sqrt{\beta})$, where $\beta \in K_v^\times$ has odd valuation.
\item There exists a place $v_0 | 2$ of $K$ such that the embedding $K \hookrightarrow K_{v_0}$ is bijective on 2-power roots of unity.
\item There exists an isomorphism $\iota : \overline{\bbQ}_2 \cong \bbC$ and a cuspidal automorphic representation $\pi$ of $\GL_2(\bbA_{K})$ satisfying the following conditions:
\begin{enumerate}
\item $\pi$ is cohomological of weight 0 and $\iota$-ordinary. If $v | 2$, then $\pi_v$ is unipotently ramified.
\item There is an isomorphism $\overline{r_\iota(\pi)} \cong \overline{\rho}$.
\item Let $R$ denote the set of prime-to-2 places of $K$ at which $\pi$ or $\rho$ is ramified.  If $v \in R$, then $\overline{\rho}|_{G_{K_v}}$ is trivial and there is an isomorphism $\iota^{-1}\pi_v \cong i_B^G \chi_{v, 1} \otimes \chi_{v, 2}$, where $\chi_{v, 1}, \chi_{v, 2} : K_v^\times \to \overline{\bbZ}_p^\times$ are characters such that $\chi_{v, 1}|_{\cO_{K_v}^\times} = \chi_{v, 2}|_{\cO_{K_v}^\times}^{-1}$, $\chi_{v, 1}|_{\cO_{K_v}^\times} \equiv  1 \text{ mod }\ffrm_{\overline{\bbZ}_p}$, and $\chi_{v, 1}|_{\cO_{K_v}^\times} \neq \chi_{v, 2}|_{\cO_{K_v}^\times}$.
\item For each rational prime $l$ which is either even or which lies below a place of $R$, or which is ramified in $K$, there exists an imaginary quadratic subfield of $K$ in which $l$ splits.
\end{enumerate}
\item We have $[K^+ : \bbQ] > 10 + 8 |R|$. 
\end{enumerate}
Then $\rho$ is automorphic.
\end{theorem}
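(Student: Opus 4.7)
The plan is to deduce Theorem \ref{thm_preliminary_automorphy_lifting_theorem} as an essentially direct application of Corollary \ref{cor_application_of_taylor_wiles}, once the hypotheses are translated into the language of the setup of \S\ref{subsec_setup_for_main_ALT}. All of the genuine automorphy lifting input has already been carried out via the patching argument underlying Theorem \ref{thm_application_of_TW_method}, so this last step amounts primarily to bookkeeping.

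First, I would enlarge the coefficient field $E$ (and $\cO$) if necessary so that $\overline{\rho}$ takes values in $\GL_2(k)$ and $\pi$ is defined over $\cO$, and fix a basis for $\overline{\rho}$ so that $\overline{r_\iota(\pi)} = \overline{\rho}$. I then verify each standing hypothesis of \S\ref{subsec_setup_for_main_ALT}. The condition $p = 2$ with $i \in K$ comes from hypothesis (1); absolute irreducibility with soluble image is encoded in the dihedral hypothesis (3) via Proposition \ref{prop_soluble_means_dihedral}; decomposed genericity is hypothesis (2); the vanishing of ramification of $\overline{\rho}$ away from $S_2$ follows because $\overline{\rho}|_{G_{K_v}}$ is trivial for $v \in R$ by hypothesis (7c); the local form of $\overline{\rho}|_{G_{K_v}}$ at $v \mid 2$ matches the shape required in \S\ref{subsubsec_ordinary_deformation_problems} by hypothesis (5); the existence of $v_0$ is hypothesis (6); the inequality $[K^+ : \bbQ] > 8|R| + 10$ is hypothesis (8); and the splitting assumptions on places in $R$ and on primes ramified in $K$ are part of hypothesis (7d). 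The automorphic datum $(\iota, \pi)$ satisfies the required local conditions at $v \in R$ and at $v \mid 2$ by hypothesis (7a--c), including the requirement $\chi_v \neq \chi_v^{-1}$.

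With these identifications in place, I would apply Corollary \ref{cor_application_of_taylor_wiles} with $\tau = \rho$. The four hypotheses of that corollary are immediate: unramifiedness outside $S_2 \cup R$ and unipotent ramification at $v \in R$ come from (4); ordinarity of weight $0$ with unipotently ramified Weil--Deligne representation at $2$-adic places is (5); and the residual isomorphism $\overline{\tau} \cong \overline{\rho}$ is tautological. The corollary then produces an $\iota$-ordinary cuspidal automorphic representation $\sigma$ of $\GL_2(\bbA_K)$ of weight $0$ with $r_\iota(\sigma) \cong \rho$, proving the theorem. The only point of care is confirming that $\rho|_{G_{K_v}}$ lies in $\cD_v^{\mathrm{ord}}$ for $v \in S_2$ and in $\cD_v^{1}$ (the trivial character case of \S\ref{subsubsec_level_raising_deformations}) for $v \in R$, so that $\rho$ defines a $\overline{\bbQ}_2$-point of the deformation ring $R_{\cS_1}$; by Theorem \ref{thm_application_of_TW_method} this must come from a point of $\bbT_1$, and hence, via \cite[Theorem 2.4.9]{10authors}, from an automorphic representation. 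This is a straightforward check from the local forms given in hypotheses (4) and (5), and there is no real obstacle.
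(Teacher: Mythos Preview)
Your proposal is correct and matches the paper's approach: the paper does not give a separate proof of this theorem but simply introduces it as ``essentially a reformulation of Corollary \ref{cor_application_of_taylor_wiles}'', and your argument carries out precisely that verification of the standing hypotheses of \S\ref{subsec_setup_for_main_ALT}. The bookkeeping you outline (in particular, that $\chi_{v,1}|_{\cO_{K_v}^\times}$ automatically has $2$-power order since it takes values in $1+\ffrm_{\overline{\bbZ}_2}$, and that $\overline{\rho}$ is unramified away from $S_2$ because $\overline{\rho}|_{G_{K_v}}$ is trivial for $v\in R$ while $R$ contains all prime-to-$2$ ramification) is exactly what is needed.
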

\begin{proof}[Proof of Theorem \ref{thm_main_automorphy_theorem}]
Let $K' / K$ be a soluble CM extension in which the 2-adic places of $K$ are unramified, and such that $\overline{\rho}|_{G_{K'}}$ is decomposed generic. Then the following properties hold:
\begin{itemize}
	\item $\rho|_{G_{K'}}$ satisfies the assumptions of Theorem \ref{thm_main_automorphy_theorem}.
	\item If $\rho|_{G_{K'}}$ is automorphic, then so is $\rho$ (by soluble descent). 
\end{itemize}
Indeed, $K' / K$ must in this case be linearly disjoint from the extension of $K$ cut out by $\overline{\rho}$. In proving the theorem we are therefore free to replace $K$ by $K'$ and $\rho$ by $\rho|_{G_{K'}}$. We will make several such reductions.

If $v \in S_2$, let $m_v$ denote the number of 2-power roots of unity in $K_v$, and let $m$ be the greater of 4 and $\min\{ m_v | v \in S_2 \}$. Then each place $v \in S_2$ is unramified in the extension $K(\zeta_m) / K$. After replacing $K$ by $K(\zeta_{m})$, we can therefore assume moreover that the following conditions are satisfied:
\begin{itemize}
	\item $-1$ is a square in $K$.
	\item There exists a place $v_0 | 2$ of $K$ such that the embedding $K \hookrightarrow K_{v_0}$ is bijective on 2-power roots of unity.
\end{itemize}
Here are are applying Lemma \ref{lem:decomposed_generic_after_extension} and Lemma \ref{lem:not_bad_dihedral_after_extension} in order to see that $\overline{\rho}$ remains decomposed generic and dihedral after replacing $K$ by $K(\zeta_m)$. Henceforth we will only make base changes in which the primes of $K$ above 2 split, in order to avoid disturbing the above condition on roots of unity. 

Let $l_0$ be a rational prime which is decomposed generic for $\overline{\rho}$ (note that this forces $l_0 \equiv 1 \text{ mod }4$). After a soluble base change in which the $2$-adic and $l_0$-adic places of $K$ split, we can assume that the following additional conditions is satisfied:
\begin{itemize}
\item Each place of $K$ at which $\rho$ is ramified is split over $K^+$.
\item For each finite place $v \nmid 2$ of $K$, either $\rho$ is unramified at $v$ and $v^c$ or $\overline{\rho}|_{G_{K_v}}$ is trivial, $\overline{\rho}|_{G_{K_{v^c}}}$ is trivial, and $\rho|_{G_{K_v}}$, $\rho|_{G_{K_{v^c}}}$ are unipotently ramified and $q_v \equiv q_{v^c} \equiv 1 \text{ mod }4$.
\end{itemize}
Let $\overline{\rho}^+$ denote the extension of $\overline{\rho}$ to $G_{K^+}$, and let $R$ denote the set of prime-to-2 places of $K$ such that $\rho|_{G_{K_v}}$ or $\rho|_{G_{K_{v^c}}}$ is ramified. By Lemma \ref{lem_existence_of_residually_dihedral_cusp_form} below, we can find a finite set $T$ of finite places of $K^+$ and a cuspidal automorphic representation $\pi^+$ of $\GL_2(\bbA_{K^+})$ satisfying the following conditions:
\begin{itemize}
\item For each $v \in T$, $v \nmid 2$ and $v$ does not lie below a place of $R$ or a place dividing $l_0$. Moreover, $\pi^+_v$ is not a twist of the Steinberg representation. 
\item  $\pi^+$ is cohomological of weight 0 and $\iota$-ordinary, and $\overline{r_\iota(\pi^+)} \cong \overline{\rho}^+$.
\item For each place $v | 2$ of $K^+$, $\pi^+_v$ is unipotently ramified.
\item For each place $v$ of $K^+$ below a place of $R$, there is an isomorphism $\iota^{-1} \pi^+_v \cong i_B^G \chi_{v, 1} \otimes \chi_{v, 2}$, where $\chi_{v, 1}, \chi_{v, 2} : (K^+_v)^\times \to \overline{\bbZ}_p^\times$ are characters such that $\chi_{v, 1}|_{\cO_{K^+_v}^\times} \equiv \chi_{v, 2}|_{\cO_{K^+_v}^\times} \equiv 1 \text{ mod }\ffrm_{\overline{\bbZ}_p}$ and $\chi_{v, 1}|_{\cO_{K^+_v}^\times} = \chi_{v, 2}|_{\cO_{K^+_v}^\times}^{-1}$ but $\chi_{v, 1}|_{\cO_{K^+_v}^\times} \neq \chi_{v, 2}|_{\cO_{K^+_v}^\times}$.
\item For any finite place $v$ of $K^+$ not dividing $T$, 2, or $R$, $\pi^+_v$ is unramified. 
\end{itemize}
After a further soluble CM base change, we can assume that $T$ is empty and that $[K^+ : \bbQ] / 4 > 10 + 8 |R|$. 

After adjoining two imaginary quadratic fields to $K$ and replacing $R$ by the set of places dividing $R$, we can assume moreover that the following two conditions are satisfied:
\begin{itemize}
	\item $[K^+ : \bbQ] > 10 + 8 |R|$ (which replaces the assumption $[K^+ : \bbQ] / 4 > 10 + 8 |R|$).
	\item Let $S = R \cup \{ v | 2 \}$. If $l$ is a rational prime lying below an element of $S$, or which is ramified in $K$, then there exists an imaginary quadratic subfield of $K$ in which $l$ splits.
\end{itemize}
To see this, let $\Sigma$ denote the set of rational primes lying below an element of $R$, or ramified in $K$, together with 2 and $l_0$. Let $p$ be a prime such that $p \equiv -1 \text{ mod }l$ for all $l \in \Sigma$ and such that $p \equiv -1 \text{ mod }8$. Let $E_1 = \bbQ(\sqrt{-p})$. Then each prime of $\Sigma$ splits in $E_1$. Let $q, r$ be primes not in $\Sigma$ such that $q \equiv 1 \text{ mod }8$, $r \equiv -1 \text{ mod }8$, $q \equiv -1 \text{ mod }p$, $r \equiv 1 \text{ mod }p$ and $q \equiv r \equiv 1 \text{ mod }l_0$, and let $E_2 = \bbQ(\sqrt{-qr})$. Then $p$ splits in $E_2$, $q$ splits in $\bbQ(\sqrt{-1}) \subset K$, and $r$ splits in $E_1$ (use quadratic reciprocity). Moreover the primes 2 and $l_0$ split in $E_1$ and $E_2$. We can therefore replace $K$ by the compositum $K \cdot E_1 \cdot E_2$.

We have now reduced ourselves to a situation where all of the hypotheses of Theorem \ref{thm_preliminary_automorphy_lifting_theorem} are satisfied. This completes the proof.
\end{proof}
\begin{lemma}\label{lem_existence_of_residually_dihedral_cusp_form}
Let $K$ be a totally real field such that $[K : \bbQ]$ is even, and suppose given a continuous representation $\overline{\rho} : G_K \to \GL_2(\overline{\bbF}_2)$ and a finite set $R$ of finite places of $K$ satisfying the following conditions:
\begin{enumerate}
\item $\overline{\rho}$ is absolutely irreducible and its image is soluble. We write $L / K$ for the unique quadratic extension contained in the field cut out by $\overline{\rho}$.
\item For each place $v | 2$ of $K$, $v$ is ramified in $L$ and $\overline{\rho}|_{G_{K_v}}$ is non-trivial and factors through the group $\Gal(L_v / K_v)$. Moreover, $L_v$ has the form $L_v = K_v(\sqrt{\beta})$, where $\beta \in K_v^\times$ has odd valuation.
\item For each place $v \nmid 2$ of $K$, $\overline{\rho}|_{G_{K_v}}$ is unramified.
\item For each place $v \in R$, $\overline{\rho}|_{G_{K_v}}$ is trivial and $q_v \equiv 1 \text{ mod } 4$.
\end{enumerate}
Suppose given moreover another finite set of places $\Sigma$ of $K$, not containing any 2-adic place of $K$ or any place of $R$. Let $\iota : \overline{\bbQ}_2 \cong \bbC$ be an isomorphism. Then we can find a finite set $T$ of finite places of $K$ and a cuspidal automorphic representation $\pi$ of $\GL_2(\bbA_K)$ satisfying the following conditions:
\begin{enumerate}
\item $\pi$ is cohomological of weight 0 and $\iota$-ordinary.
\item For each place $v | 2$ of $K$, $\pi_v$ is unipotently ramified.
\item For each $v \in T$, $v \nmid 2$ and $v \not\in R \cup \Sigma$. Moreover, $\pi_v$ is not a twist of the Steinberg representation.
\item For each place $v \nmid 2$ of $K$ such that $v \not \in R \cup T$, $\pi_v$ is unramified.
\item For each place $v \in R$, there is an isomorphism $\iota^{-1} \pi_v \cong i_B^G \chi_{v, 1} \otimes \chi_{v, 2}$, where $\chi_{v, 1}, \chi_{v, 2} : K_v^\times \to \overline{\bbZ}_2^\times$ are smooth characters such that $\chi_{v, 1}|_{\cO_{K_v}^\times} \equiv \chi_{v, 2}|_{\cO_{K_v}^\times} \equiv 1 \text{ mod }\ffrm_{\overline{\bbZ}_2}$ and $\chi_{v, 1}|_{\cO_{K_v}^\times}= \chi_{v, 2}|_{\cO_{K_v}^\times}^{-1}$, but such that $\chi_{v, 1}|_{\cO_{K_v}^\times} \neq \chi_{v, 1}|_{\cO_{K_v}^\times}^{-1}$.
\item There is an isomorphism $\overline{r_\iota(\pi)} \cong \overline{\rho}$.
\end{enumerate}
\end{lemma}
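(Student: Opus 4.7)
The plan is to construct the desired $\pi$ in two stages: first produce a weight $1$ Hilbert modular form attached to a suitable lift of $\overline{\rho}$ by classical automorphic induction, and then lift this to weight $2$ using the Hida-theoretic machinery of \cite{All14}. By Proposition \ref{prop_soluble_means_dihedral}, write $\overline{\rho} \cong \Ind_{G_L}^{G_K} \overline{\chi}$ with $\overline{\chi}^c \neq \overline{\chi}$. Interpreting the local hypotheses: each $v\mid 2$ is ramified in $L$ with $\overline{\chi}|_{G_{L_v}}$ unramified (so that $\overline{\rho}|_{G_{K_v}}$ factors through $\Gal(L_v/K_v)$ as a non-trivial unipotent); each $v \in R$ splits in $L$ as $ww^c$ with both $\overline{\chi}|_{G_{L_w}}$ and $\overline{\chi}|_{G_{L_{w^c}}}$ trivial. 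This tells us what the local components of any lift must look like.

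Next I would use Grunwald--Wang / global class field theory to produce a finite-order Hecke character $\chi : G_L \to \overline{\bbQ}_2^\times$ lifting $\overline{\chi}$ with prescribed local behaviour: $\chi_w$ tamely ramified (of prescribed 2-power order) at each $w\mid 2$, chosen so that the associated WD representation of $\Ind_{G_{L_v}}^{G_{K_v}} \chi_w$ is unipotently ramified with the required ordinarity; at each $w$ above $v \in R$, $\chi_w$ matches a prescribed character of $\cO_{K_v}^\times = \cO_{L_w}^\times$ of nontrivial 2-power order (so that $\pi_v = i_B^G\chi_{v,1}\otimes\chi_{v,2}$ with the congruence conditions in (5) hold); at auxiliary places in $T$ (disjoint from $2$, $R$, $\Sigma$, all split in $L$) extra ramification to guarantee $\chi^c \neq \chi$ globally, ensuring that the induction is cuspidal, and to arrange non-Steinberg local components at $T$. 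That the resulting set $T$ can be made disjoint from $\Sigma$ and $R$ is standard, using the Chebotarev/Grunwald--Wang density of primes with prescribed Frobenius.

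At this point $\rho := \Ind_L^K \chi$ is an odd Artin representation with soluble image, so by Hecke's classical theorem (subsumed into the Langlands--Tunnell theorem), there is a cuspidal automorphic representation $\pi_0$ of $\GL_2(\bbA_K)$, holomorphic of parallel weight $1$, with $r_\iota(\pi_0) \cong \rho$ and hence $\overline{r_\iota(\pi_0)} \cong \overline{\rho}$. The special 2-adic structure — $L_v = K_v(\sqrt{\beta})$ with $v(\beta)$ odd, and $\chi_w$ tame — together with $[K:\bbQ]$ even, makes $\pi_0$ ordinary at every place above $2$ with distinct unit/non-unit eigenvalues of the $\mathsf{U}_2$ operator on the relevant $2$-stabilization, and in particular free of the companion-form pathology.

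The final and most delicate step is to invoke Hida theory, essentially as in \cite{All14}, to deform the ordinary weight $1$ form $\pi_0$ (after suitable $2$-stabilization) through the ordinary Hida family over $K$ to a classical weight $2$ specialization $\pi$. This step is the main obstacle: it requires producing a classical arithmetic point in weight $2$ on the Hida family through $\pi_0$, which is where the condition that $\beta$ has odd valuation (ruling out the CM companion and forcing the image of the associated Galois pseudo-deformation at $v\mid 2$ to be non-dihedral along the interpolation) and the hypothesis $[K:\bbQ]$ even (which ensures the relevant Jacquet--Langlands transfer, and with it the freeness of the Hida module over Iwasawa algebra, applies) are used. Once the classical weight $2$ specialization exists, its local components are controlled by the interpolation: one recovers unipotent ramification at $v\mid 2$, the specified principal series at $v\in R$, non-Steinberg principal series at $T$, and unramifiedness elsewhere, and $\overline{r_\iota(\pi)} \cong \overline{r_\iota(\pi_0)} \cong \overline{\rho}$ automatically. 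The cohomological weight $0$ (= classical weight $2$) condition and $\iota$-ordinarity are immediate from the construction.
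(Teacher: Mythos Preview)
Your overall strategy—automorphic induction from a carefully chosen Hecke character of $L$ to a weight $1$ Hilbert form, then Hida theory to reach weight $2$—matches the paper's. However, there is a genuine gap and a point of confusion.

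\textbf{The gap: oddness.} You assert that $\rho = \Ind_{G_L}^{G_K}\chi$ is odd without justification. Since $K$ is totally real, some infinite places of $K$ may split in $L$; at such a place $v$, with real places $w,w'$ of $L$ above it, oddness requires $\chi(c_w)\chi(c_{w'}) = -1$. Nothing in your Grunwald--Wang specification forces this, and it can certainly fail (e.g.\ for the Teichm\"uller lift of $\overline{\chi}$, which has odd order, one gets $+1$). If $\rho$ is even at some archimedean place, the automorphic induction is a Maass form, not holomorphic of weight $1$, and the Hida-family step collapses. The paper handles this explicitly: it takes the Teichm\"uller lift $\chi$ and, if the induction is not totally odd, twists by a character $\xi$ of $2$-power order constructed via Lemma~\ref{lem_even_characters}, ramified only at a set $T_0$ of auxiliary primes chosen so that $\overline{\rho}(\Frob_v)$ is regular semisimple for each $v\in T_0$. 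This choice of $T_0$ does double duty: it later guarantees the non-Steinberg condition at $T$, since a Steinberg twist at $v$ would force the eigenvalues of $\overline{\rho}(\Frob_v)$ to coincide modulo $2$.

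\textbf{Confusion at $2$-adic places.} Your claim that $\chi_w$ should be ``tamely ramified of prescribed $2$-power order'' at $w\mid 2$ is self-contradictory (tame at a $2$-adic place means order prime to $2$), and in any case the local hypothesis forces $\overline{\chi}_w$ to be unramified. The paper keeps $\chi_w$ unramified; the resulting weight $1$ form is not unipotently ramified at $v\mid 2$ (inertia acts through $1\oplus\eta_v$ with $\eta_v$ the ramified quadratic character), and the unipotent ramification of the final $\pi$ at $v\mid 2$ comes from the Iwahori level of the weight $2$ Hida specialisation, not from the weight $1$ input.

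\textbf{A difference in approach.} You try to build the principal-series type at $v\in R$ directly into $\chi$. The paper instead first produces a weight $2$ form $\pi_0$ \emph{unramified} at $R\cup\Sigma$ (via the Hida argument of \cite[Lemma~5.1.2]{All14}), and only then switches to a definite quaternion algebra—this is where $[K:\bbQ]$ even is actually used—and employs types as in \cite[Lemma~3.4.1]{Clo08} to level-raise to the prescribed ramified principal series at $R$. Your one-shot approach may be workable, but the paper's separation avoids having to track the principal-series type through the weight $1\to 2$ passage and makes the role of the even-degree hypothesis transparent.
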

\begin{proof}
Arguing along the same lines as in \cite[Lemma 5.1.2]{All14}, we can find a cuspidal automorphic representation $\pi_0$ of $\GL_2(\bbA_K)$ which satisfies the following conditions:
\begin{itemize}
\item $\pi_0$ is cohomological of weight 0 and $\iota$-ordinary.
\item There is an isomorphism $\overline{r_\iota(\pi_0)} \cong \overline{\rho}$.
\item For each place $v \in R \cup \Sigma$, $\pi_{0, v}$ is unramified.
\item Let $T$ denote the set of places $v \not\in R \cup \Sigma \cup \{ v | 2 \}$ of $K$ at which $\pi_v$ is ramified. Then for each $v \in T$, $\overline{\rho}(\Frob_v)$ is regular semisimple.
\end{itemize}
Indeed, it is enough to find a lift $\rho : G_K \to \GL_2(\overline{\bbQ}_2)$ with the following properties:
\begin{itemize}
	\item $\rho$ is totally odd and has finite dihedral image.
	\item For all $v \in R \cup \Sigma$, $\rho|_{G_{K_v}}$ is unramified.
	\item Let $T$ denote the set of places $v \not\in R \cup \Sigma \cup \{ v | 2 \}$ of $K$ such that $\rho|_{G_{K_v}}$ is ramified. Then $T$  for each $v \in T$, $\overline{\rho}(\Frob_v)$ is regular semisimple.
\end{itemize}
Then  we can appeal, as in \cite[Lemma 5.1.2]{All14}, to the existence of a Hida family containing the weight 1 form associated to $\rho$. To construct $\rho$, we fix an isomorphism $\overline{\rho} \cong \Ind_{G_L}^{G_K} \overline{\chi}$ for a character $\overline{\chi} : G_L \to k^\times$, and let $\chi : G_L \to \cO^\times$ denote the Teichm\"uller lift of $\overline{\chi}$ (therefore of odd order).  If $\Ind_{G_L}^{G_K} \chi$ is totally odd, then we're done: take $\rho = \Ind_{G_L}^{G_K} \chi$. Otherwise, let $\Sigma_\infty$ denote the set of places of $K$ at which $\Ind_{G_L}^{G_K} \chi$ is not odd; then each place of $\Sigma_\infty$ splits in $L$. Applying Lemma \ref{lem_even_characters} to $\chi / \chi^\sigma$ (where $\sigma \in \Gal(L / K)$ denotes the non-trivial element) and possibly enlarging $\cO$, we can find a finite set $T_0$ of finite places of $L$ and a character $\xi : G_L \to \cO^\times$ with the following properties:
\begin{itemize}
	\item For each $v \in T_0$, $v$ is split over $K$, prime to 2, and $\overline{\rho}(\Frob_v)$ is regular semisimple.
	\item $\xi \equiv 1 \text{ mod }(\varpi)$ and $\xi$ is unramified outside $T_0$.
	\item Let $v$ be a real place of $K$ split in $L$, and let $c, c' \in G_L$ be complex conjugations at the two real places of $L$ above $v$. Then $\chi \xi(c) \chi \xi(c') = -1$.
\end{itemize}
We can then take $\rho = \Ind_{G_L}^{G_K} \chi \xi$.

By switching to a definite quaternion algebra and using types, in the same way as in the proof of \cite[Lemma 3.4.1]{Clo08}, we can find a cuspidal automorphic representation $\pi$ of $\GL_2(\bbA_K)$ which satisfies the following conditions:
\begin{itemize}
\item $\pi$ is cohomological of weight 0 and $\iota$-ordinary.
\item For each place $v | 2$ of $K$, $\pi_v$ is unipotently ramified.
\item There is an isomorphism $\overline{r_\iota(\pi_0)} \cong \overline{\rho}$.
\item For each place $v \in R$, there is an isomorphism $\iota^{-1} \pi_v \cong i_B^G \chi_{v, 1} \otimes \chi_{v, 2}$, where $\chi_{v, 1}, \chi_{v, 2} : K_v^\times \to \overline{\bbZ}_2^\times$ are smooth characters such that $\chi_{v, 1}|_{\cO_{K_v}^\times} \equiv \chi_{v, 2}|_{\cO_{K_v}^\times} \equiv 1 \text{ mod }\ffrm_{\overline{\bbZ}_2}$ and $\chi_{v, 1}|_{\cO_{K_v}^\times}= \chi_{v, 2}|_{\cO_{K_v}^\times}^{-1}$, but such that $\chi_{v, 1}|_{\cO_{K_v}^\times} \neq \chi_{v, 1}|_{\cO_{K_v}^\times}^{-1}$. (Here we are using the condition $q_v \equiv 1 \text{ mod }4$ in order to ensure the existence of characters of $\cO_{K_v}^\times$ of order 4.)
\item For all $v  \not\in R \cup T \cup \{ v | 2 \}$, $\pi_v$ is unramified.
\end{itemize}
We note that this kind of argument requires the choice of a `sufficiently small' level subgroup: this can be achieved by making the level small at primes of $T$. (If $T$ is empty, just adjoint any place $v$ such that $\overline{\rho}(\Frob_v)$ is regular semi-simple.) The proof is complete on noting that if $v \nmid 2$ and $\pi_v$ is a twist of the Steinberg representation, then the characteristic polynomial of $\overline{\rho}(\Frob_v)$ must have a repeated root (because $q_v \equiv 1 \text{ mod }2)$.
\end{proof}
\begin{lemma}\label{lem_even_characters}
	Let $L / K$ be a quadratic extension of number fields, and let $\chi : G_L \to \bbC^\times$ be a non-trivial continuous character of odd order. Let $\Sigma$ be a set of finite places of $K$, and let $\Sigma_\infty$ be a set of real places of $K$ which split in $L$. Then we can find a finite set $T_0$ of finite places of $L$ and a continuous character $\xi : G_L \to \bbC^\times$ with the following properties:
	\begin{enumerate}
		\item No place of $T_0$ lies above a place of $\Sigma$.
		\item If $v \in T_0$, then $v$ is split over $K$, $\chi$ is unramified at $v$ and $\chi(\Frob_v) \neq 1$.
		\item The character $\xi$ has 2-power order. 
		\item Let $v \in \Sigma_\infty$, and let $c, c' \in G_L$ be complex conjugations at the two real places of $L$ lying above $v$. Then $\xi(c) \xi(c') = -1$.
		\item Let $v$ be a real place of $K$ split in $L$ and not lying in $\Sigma_\infty$, and let $c, c' \in G_L$ be the complex conjugations at the two real places of $L$ lying above $v$. Then $\xi(c) = \xi(c') = 1$.
		\item The character $\xi$ is unramified away from $T_0$.
	\end{enumerate}
\end{lemma}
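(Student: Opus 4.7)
The image $\bbC^\times$ is used only for the $2$-power order condition, so I will realize $\xi$ as a quadratic character $G_L \to \{\pm 1\}$ via Kummer theory: $\xi = (\delta/\cdot)$ for some $\delta \in L^\times/(L^\times)^2$. Conditions (4) and (5) amount to prescribed signs of $\delta$ at the real places of $L$ (opposite signs at the pair of real places above each $v \in \Sigma_\infty$, positive at both real places above each other real place of $K$ split in $L$); conditions (1) and (6) at $2$-adic and $\Sigma$-places require $\delta$ to be a local square at each such place. The set $T_0$ will be contained in the set of finite places $w$ of $L$ at which $v_w(\delta)$ is odd, and must consist of good primes as in (2).

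\emph{Step 1: good primes.} Let $M/\bbQ$ be the Galois closure of the finite cyclic extension of $L$ cut out by $\chi$. A prime $w$ of $L$, outside a finite exceptional set, is good precisely when $w$ has absolute degree $1$ over $\bbQ$ (hence is split over $K$, since $L/K$ is quadratic) and $\chi(\Frob_w) \neq 1$; both conditions are captured by prescribing $\Frob_w$ in $\Gal(M/\bbQ)$ to lie in a nonempty union of conjugacy classes contained in $\Gal(M/L)$. Chebotarev then supplies infinitely many good primes avoiding any prescribed finite exceptional set, in particular avoiding the places above $\Sigma$.

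\emph{Step 2: construction of $\delta$.} Fix a modulus $\ffrm$ of $L$ supported at all real, $2$-adic, and $\Sigma$-places of $L$, with finite-part multiplicities large enough that elements congruent to $1$ modulo $\ffrm_{\mathrm{fin}}$ are local squares at the finite support. By weak approximation I choose $\delta_0 \in L^\times$ with the prescribed archimedean signs and $\delta_0 \equiv 1 \bmod \ffrm_{\mathrm{fin}}$; its Kummer character satisfies all local requirements except that it may be ramified at some finite bad set $T_0' = \{w_1,\dots,w_k\}$. Applying Chebotarev inside the ray class field of conductor $\ffrm$, for each $w_i$ I would find a good prime $w_i^*$ whose class in $\Cl_L(\ffrm)/2$ agrees with that of $w_i$. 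Then $w_i w_i^* \equiv (\beta_i) \pmod{(L^\times)^2}$, and Dirichlet's unit theorem lets me adjust $\beta_i$ by units and squares so that it is totally positive and $\equiv 1 \bmod \ffrm_{\mathrm{fin}}$. Setting $\delta = \delta_0 \prod_i \beta_i$ yields a $\delta$ whose quadratic Kummer character $\xi$ is unramified outside $T_0 := \{w_1^*,\dots,w_k^*\}$, a set of good primes, and which satisfies the remaining conditions by construction.

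\emph{Main obstacle.} The key technical point is showing that every class in the $2$-part of $\Cl_L(\ffrm)$ represented by a bad prime is also represented by a good prime, and that the ensuing principal generator can be pinned down to be trivial at the archimedean and modulus places simultaneously; this combines Chebotarev applied to the $2$-part of the ray class field with the Dirichlet unit theorem. A priori one might worry about a Grunwald--Wang obstruction at the prime $2$, but because the good primes from Step~1 are split over $\bbQ$, their Frobenii supply enough independent local classes to generate the relevant $2$-part of $\Cl_L(\ffrm)$ without obstruction.
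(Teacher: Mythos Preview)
Your approach is correct but takes a different route from the paper. You build $\xi$ constructively via Kummer theory: pick $\delta_0$ by weak approximation with the right signs and local squareness at $2$-adic and $\Sigma$-places, then swap each prime of odd valuation for a ``good'' prime in the same class of $\Cl_L(\ffrm)/2\Cl_L(\ffrm)$ using Chebotarev. The crucial point---that good primes hit every class of $\Cl_L(\ffrm)/2\Cl_L(\ffrm)$---follows because the extension cut out by $\chi$ has odd degree while the $2$-ray class field has $2$-power degree, so they are linearly disjoint over $L$; combined with the fact that degree-$1$ primes of $L$ (automatically split over $K$) have density~$1$, Chebotarev over $L$ gives what you need. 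Your parenthetical appeals to Dirichlet's unit theorem and to a Grunwald--Wang obstruction are red herrings: since $\ffrm$ already includes the real places, the $\beta_i$ are automatically totally positive and congruent to $1$ modulo $\ffrm_{\mathrm{fin}}$, and there is no Grunwald--Wang phenomenon for quadratic characters.

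The paper instead chooses $T_0$ first and lets class field theory do the work abstractly. It enumerates the non-trivial classes $u_1,\dots,u_r$ of $\cO_L^\times/(\cO_L^\times)^2$ and, for each $u_i$, picks (by Chebotarev, using that $\chi$ has odd order) a good prime $v_i$ which is inert in $L(\sqrt{u_i})/L$. Then the reduction map $\cO_L^\times/(\cO_L^\times)^2 \to \prod_{v\in T_0} k(v)^\times/(k(v)^\times)^2$ is injective, which is exactly the vanishing of the unit obstruction to extending the prescribed archimedean sign character to a quadratic Hecke character of conductor supported on $T_0\cup\infty$. This is shorter and avoids the ray-class bookkeeping; your version has the virtue of producing $\delta$ explicitly, at the cost of a more delicate Chebotarev step.
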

\begin{proof}
	We may assume without loss of generality that $\Sigma$ contains all places of $K$ above which $\chi$ is ramified and all the 2-adic places of $K$. Let $u_1, \dots, u_r$ be representatives for the non-trivial classes of $\cO_L^\times / (\cO_L^\times)^2$. We choose for each $i = 1, \dots, r$ a finite place $v_i$ of $L$ with the following properties:
	\begin{itemize}
		\item $v_i$ does not lie above a place of $\Sigma$. 
		\item $v_i$ is split over $K$.
		\item $\chi(\Frob_{v_i}) \neq 1$.
		\item $v_i$ is inert in $L(\sqrt{u_i}) / L$.
	\end{itemize}
	This is possible by the Chebotarev density theorem, since $\chi$ has odd order. We set $T_0 = \{ v_1, \dots, v_r \}$. Then if $u \in \cO_L^\times$ and, for each $v \in T_0$, the image of $u$ in $k(v)^\times$ is a square, then $u \in (\cO_L^\times)^2$. Class field theory implies the existence of a character $\xi$ with the desired properties. 
\end{proof}
\section{Automorphy lifting theorems for $p > 2$}\label{sec_odd_aut_lift}
Theorem \ref{thm_main_automorphy_theorem} is the main technical input for our applications to the modularity of elliptic curves. We will also need the following automorphy lifting theorem, valid for odd primes $p$.
\begin{theorem}\label{thm_automorphy_at_odd_primes}
	Let $p$ be an odd prime, let $K$ be a CM number field, and let $\rho : G_K \to \GL_2(\overline{\bbQ}_p)$ be a continuous representation satisfying the following conditions:
	\begin{enumerate}
		\item $\overline{\rho}$ is decomposed generic and $\overline{\rho}|_{G_{K(\zeta_p)}}$ is absolutely irreducible.
		\item $\det \rho = \epsilon^{-1}$.
		\item For all but finitely many places $v$ of $K$, $\rho|_{G_{K_v}}$ is unramified. 
		\item There exists $\lambda \in (\bbZ_{+, 0}^2)^{\Hom(K, \overline{\bbQ}_p)}$ such that $\rho$ is ordinary of weight $\lambda$.
		\item There exists an isomorphism $\iota : \overline{\bbQ}_p \cong \bbC$ and a cuspidal, $\iota$-ordinary, cohomological automorphic representation $\pi$ of $\PGL_2(\bbA_{K})$ such that $\overline{r_\iota(\pi)} \cong \overline{\rho}$.
\item If $p = 5$ and the projective image of $\overline{\rho}(G_{K(\zeta_5)})$ is conjugate to $\PSL_2(\bbF_5)$, we assume further that the extension of $K$ cut out by the projective image of $\overline{\rho}$ does not contain $\zeta_5$. 
	\end{enumerate}
	Then $\rho$ is automorphic: there is a cuspidal automorphic representation $\Pi$ of $\PGL_2(\bbA_K)$, cohomological of weight $\iota\lambda$ and $\iota$-ordinary, such that $\rho \cong r_\iota(\Pi)$.
\end{theorem}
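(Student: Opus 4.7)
The plan is to follow the strategy of \cite{10authors}, mirroring the structure of the argument of \S\ref{sec_application_of_TW_method} but specialized to odd primes $p$ and to $\PGL_2$ in place of $\GL_2$. Since $p$ is odd, the ordinary local deformation rings at $p$-adic places are formally smooth of the expected dimension for any choice of Hodge--Tate weights $\lambda$ (cf.\ \cite[Lemma~3.7]{Ger18}), so no substitute for the ``$\mathrm{tr\grave{e}s}$ ramifi\'ee''-style restriction imposed at $2$-adic places in \S\ref{subsubsec_ordinary_deformation_problems} is needed; similarly, since $\overline{\rho}|_{G_{K(\zeta_p)}}$ is assumed absolutely irreducible, the local conditions at infinity and the scalar submodule contributions in the Galois cohomology computations simplify considerably compared to the $p=2$ setting.

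The first step is to reduce to a favourable setup by soluble CM base change. Using Lemma~\ref{lem:decomposed_generic_after_extension} and elementary field-theoretic manipulations, we descend to a soluble CM extension $K'/K$ in which: $\overline{\rho}|_{G_{K'}}$ remains decomposed generic and residually absolutely irreducible after restriction to $G_{K'(\zeta_p)}$; the degree $[(K')^+:\bbQ]$ is large enough; every place of ramification of $\rho$ and of $\pi$ is split over $(K')^+$ and of residue characteristic split in some imaginary quadratic subfield of $K'$, with $\overline{\rho}$ trivial at each such place and with local conditions arranged so that the level-raising deformation problems of \S\ref{subsubsec_level_raising_deformations} apply. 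Soluble descent of automorphy for $\PGL_2$ is available via $\GL_2$ and Arthur--Clozel base change, so automorphy of $\rho|_{G_{K'}}$ will imply the theorem.

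Next, set up the global deformation problem $\cS$ of type fixed determinant $\epsilon^{-1}$, with the $p$-adic places carrying ordinary deformations of weight $\lambda$ and the finitely many other ramified places carrying level-raising conditions; let $R_\cS$ denote the corresponding universal deformation ring. On the automorphic side, assemble the ordinary Hecke algebra $\bbT$ acting on the $\ffrm$-localization of the ordinary cohomology of $\PGL_2$ locally symmetric spaces at a suitable level subgroup and in the weight $\cV_\lambda$, where $\ffrm$ is the maximal ideal associated with $\iota^{-1}\pi$. Corollary~\ref{cor:local-global-PGLn-ord} yields a Galois representation $\rho_\ffrm : G_K \to \GL_2(\bbT/J)$ with $J^\delta = 0$ satisfying the appropriate local-global compatibility, producing a surjection $R_\cS \to \bbT/J$. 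Boundedness of cohomology, necessary to make the patching argument work with non-neat level, is provided by Theorem~\ref{thm_boundedness_of_good_dihedral_cohomology} (applicable to $\PGL_2$ since $p$ is odd). A Taylor--Wiles--Kisin patching argument following \cite[\S 6]{10authors}, with the finite-level (rather than Hida-theoretic) simplifications available for odd $p$, then shows that the kernel of $R_\cS^{\red} \to \bbT^{\red}$ is nilpotent, so that $\rho$ corresponds to a $\overline{\bbQ}_p$-point of $\bbT$; Theorem~\ref{thm:aut_reps_and_cohom_PGLn}(3) delivers the desired automorphic representation $\Pi$.

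The hard part will be ensuring that enough Taylor--Wiles primes exist despite the weak image hypothesis (only absolute irreducibility of $\overline{\rho}|_{G_{K(\zeta_p)}}$, not adequacy or enormousness in the sense of \cite{Tho12} or \cite{thorne-adequate}). The arguments of \cite{10authors} require producing, at every level $N \geq 1$, a set of primes $Q_N$ split in $K(\zeta_{p^N})$ at which $\overline{\rho}(\Frob_v)$ has distinct eigenvalues and whose contribution cuts down the dual Selmer group to zero. The only genuinely obstructed case is $p = 5$ with the projective image of $\overline{\rho}(G_{K(\zeta_5)})$ equal to $\PSL_2(\bbF_5) \cong A_5$, where the unique nontrivial class in $H^1(\mathrm{PGL}_2(\bbF_5), \ad^0)$ of cyclotomic type creates a persistent obstruction; the auxiliary hypothesis that the field cut out by the projective image of $\overline{\rho}$ does not contain $\zeta_5$ is precisely what ensures, via a Chebotarev argument applied to the fixed field of the projective kernel extended by $\bbQ(\zeta_{5^\infty})$, that this class can be killed by adding a single Taylor--Wiles prime chosen outside the bad locus, after which the patching machinery proceeds as in the adequate-image case.
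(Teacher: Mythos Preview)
Your overall architecture matches the paper's Appendix~\ref{sec:p-odd-proofs}: reduce by soluble CM base change to a situation satisfying the hypotheses of Theorem~\ref{thm:automorphy_lifting_with_conditions_p_odd}, set up a fixed-determinant deformation problem for $\PGL_2$, use Corollary~\ref{cor:local-global-PGLn-ord} and Theorem~\ref{thm_boundedness_of_good_dihedral_cohomology} to obtain $R_\cS \to \bbT/J$, patch following \cite[\S 6.3--6.4]{10authors}, and conclude via Theorem~\ref{thm:aut_reps_and_cohom_PGLn}. The treatment of Taylor--Wiles primes and the role of the $p=5$ hypothesis also align with Lemma~\ref{lemma_TW_primes_p_odd} and Proposition~\ref{prop_presentation_with_TW_data}.

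Two technical claims should be corrected. First, the ordinary local rings at $p$ are \emph{not} formally smooth in this setup: after base change one arranges $\overline{\rho}|_{G_{K_v}}$ to be trivial, and then Proposition~\ref{prop_det_ord_fixed_det} shows that $R_v^{\psi,\det,\ord}$ has a unique dominant component of the expected dimension together with possible smaller components over the locus $\phi^2 = \psi|_{I_{K_v}}$. The patching of \cite[\S 6.3]{10authors} accommodates this via the Ihara-avoidance pair $(\cS_1,\cS_\chi)$ and the dimension bounds of Proposition~\ref{prop_det_ord_fixed_det} and Lemma~\ref{lem_ihara_avoidance_ring_fixed_det}, so smoothness is not needed --- but it is also not available, and your stated reason for dispensing with the tr\`es ramifi\'ee condition is wrong. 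Second, the proof is \emph{not} finite-level: the paper works with the big ordinary complex $F_{\ffrm,x}$ over the Iwasawa algebra $\Lambda_1$ and uses Proposition~\ref{prop:independence_of_weight} (independence of weight on ordinary parts) to pass between $\lambda$ and $\mu$; this Hida-theoretic input is what allows condition~\ref{condition:weights} of Theorem~\ref{thm:automorphy_lifting_with_conditions_p_odd} to be arranged by base change. What \emph{does} simplify compared to \S\ref{sec_application_of_TW_method} is that the dual Selmer group can be killed exactly (Lemma~\ref{lemma_TW_primes_p_odd}), so the nice/sweet-prime machinery of \S\ref{subsec_nice_and_sweet_primes} is unnecessary.
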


We prove this theorem in Appendix~\ref{sec:p-odd-proofs}.
It is not directly implied by the automorphy lifting theorems of \cite{10authors} because they contain the following two additional hypotheses:
\begin{itemize}
    \item $\overline{\rho}(G_{K(\zeta_p)})$ is enormous (\cite[Definition~6.2.28]{10authors}),
    \item there is $\sigma \in G_K - G_{K(\zeta_p)}$ such that $\overline{\rho}(\sigma)$ is scalar.
\end{itemize}
The second bullet point is problematic because we need to allow $\zeta_3 \in K$ in the $p = 3$ case (see Proposition~\ref{prop_modularity_of_certain_curves} below). 
This will force us to deal with non-neatness issues similar to those in \S\ref{sec_application_of_TW_method} and we again use Theorem~\ref{thm_boundedness_of_good_dihedral_cohomology}. 

The first bullet point is problematic because the Galois representations associated to elliptic curves generically satisfy $\overline{\rho}(G_{K(\zeta_p)}) = \SL_2(\bbF_p)$, which isn't enormous when $p = 3,5$.
We recall that for a finite field $k$ of characteristic $p$, letting $\ad^0$ denote the trace 0 matrices in $\mathrm{M}_{2 \times 2}(k)$,
a subgroup $H\subseteq \GL_2(k)$ is enormous if it satisfies the following:
\begin{enumerate}
    \item $H$ has no non-trivial $p$-power order quotient.
    \item $H^0(H, \ad^0) = H^1(H, \ad^0) = 0$.
    \item For any simple $k[H]$-submodule $W \subseteq \ad^0$, there is a regular semisimple $h \in H$ such that $W^h \ne 0$.
\end{enumerate}
The group $\SL_3(\bbF_3) \subset \GL_2(\bbF_3)$ is not enormous because it admits a quotient of order $3$. 
The group $\SL_2(\bbF_5) \subset \GL_2(\bbF_5)$ is not enormous because $H^1(\SL_2(\bbF_5), \ad^0) \ne 0$. 
In both cases, the group in question satisfies the remaining properties of the definition (see \cite[Appendix A]{Bar13}). 

To handle $p = 3$ and $\overline{\rho}(G_{K(\zeta_3)}) = \SL_2(\bbF_3)$, we work with fixed determinant deformation problems. 
The point here is that the order $3$ quotient of $\SL_2(\bbF_3)$ is only problematic in showing the existence of sufficiently many Taylor--Wiles primes because of the scalar subspace $k \subset \ad\overline{\rho}$ of the adjoint representation. 
Fixing determinants results in $\ad^0 \overline{\rho}$ coefficients in the Galois cohomology, avoiding this issue with the centre. 
Because we fix determinants, we work with the locally symmetric spaces for $\PGL_2$, as opposed to $\GL_2$, appealing to Corollaries~\ref{cor_PGLn_mod_p_Gal_rep} and~\ref{cor:local-global-PGLn-ord} for the requisite Galois representations and their local properties.

The non-vanishing of $H^1(\SL_2(\bbF_5), \ad^0)$ is more problematic, and we can only treat the case when the projective image of $\overline{\rho}|_{G_{K(\zeta_5)}}$ is conjugate to $\PSL_2(\bbF_5)$ under the additional assumption that the extension of $K$ cut out by the projective image of $\overline{\rho}$ does not contain $\zeta_5$. 
The point here is that one can still show the existence of sufficiently many Taylor--Wiles primes 
because the image of $H^1(K_S/K, \ad^0\overline{\rho}(1))$ in $H^1(K_S/K(\zeta_5), \ad^0\overline{\rho})$ does not contain $H^1(\SL_2(\bbF_5), \ad^0)$. 
This is the source of our assumption that $\zeta_5 \notin K$ in many of the main theorems of this paper. 

\section{The 2-3 switch}\label{sec_2-3_switch}

In this section we establish our main results, including Theorem \ref{thm_application_to_serre}, which asserts the existence of modular elliptic curves with prescribed mod $p$ Galois representations over CM fields for $p = 2, 3$ or $5$. 

Let $K$ be a CM field. We recall that we say an elliptic curve $E$ over $K$ is modular if either $E$ has CM, or there exists a cuspidal, regular algebraic automorphic representation $\pi$ of $\GL_2(\bbA_K)$, a prime $p$, and an isomorphism $\iota : \overline{\bbQ}_p \to \bbC$ such that $\rho_{E, p} \otimes \epsilon^{-1} \cong r_\iota(\pi)$.

We call $\pi$ the automorphic representation corresponding to $E$. The following lemma shows that this is a robust notion.	\begin{lemma}\label{lem_more_on_modularity}
	Let $K$ be a CM field, and let $E$ be a non-CM modular elliptic curve. Then:
	\begin{enumerate}
		\item $\pi$ is uniquely determined by $E$. Moreover, $\pi$ has weight 0. 
		\item For every prime $p$ and every isomorphism $\iota : \overline{\bbQ}_p \to \bbC$, there is an isomorphism $r_\iota(\pi) \cong \rho_{E, p} \otimes \epsilon^{-1}$.
		\item For every prime $p$ and every isomorphism $\iota : \overline{\bbQ}_p \to \bbC$, and every finite place $v \nmid p$ of $K$, there is an isomorphism
		\[ \mathrm{WD}(\rho_{E, p} \otimes \epsilon^{-1})^{F-ss} \cong \rec^T_{K_v}(\pi_v). \]
	\end{enumerate}
\end{lemma}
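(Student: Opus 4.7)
The plan is to deduce all three parts from the definition of modularity, combined with strong multiplicity one for $\GL_2$ and the known local–global compatibility for Galois representations attached to regular algebraic cuspidal automorphic representations over CM fields.

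By the definition of modularity applied to $E$, we can fix some prime $p_0$, an isomorphism $\iota_0 : \overline{\bbQ}_{p_0} \to \bbC$, and a cuspidal, regular algebraic automorphic representation $\pi$ of $\GL_2(\bbA_K)$ such that $\rho_{E, p_0} \otimes \epsilon^{-1} \cong r_{\iota_0}(\pi)$. For part (1), local–global compatibility at unramified places (as in \cite{hltt}) matches, for almost all $v$, the Hecke eigenvalues of $\pi_v$ with the Frobenius eigenvalues of $\rho_{E, p_0}(\Frob_v) \otimes \epsilon^{-1}(\Frob_v)$, which depend only on $E$ and the choice of $\iota_0$. Strong multiplicity one for cuspidal automorphic representations of $\GL_2$ then forces $\pi$ to be uniquely determined by $E$. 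The weight of $\pi$ is determined by the infinitesimal character of $\pi_\infty$, which is pinned down by the Hodge--Tate weights of $r_{\iota_0}(\pi)$; since $\rho_{E, p_0}$ has Hodge--Tate weights $\{0, 1\}$ at each place above $p_0$, this translates (in the convention of \S\ref{subsec_notation}) into $\pi$ having weight $0$.

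For part (2), fix any $(p, \iota)$. Both $r_\iota(\pi)$ and $\rho_{E, p} \otimes \epsilon^{-1}$ are continuous, semisimple representations of $G_K$, unramified outside a finite set of places $\Sigma$ containing the places of bad reduction of $E$, the ramified places of $\pi$, and the places above $p p_0$. At any place $v \not\in \Sigma$, local–global compatibility at unramified places for $r_\iota(\pi)$, together with the case $(p_0, \iota_0)$ already in hand and the algebraicity of the Hecke eigenvalues of $\pi$, shows that the characteristic polynomials of $r_\iota(\pi)(\Frob_v)$ and $\rho_{E, p}(\Frob_v) \otimes \epsilon^{-1}(\Frob_v)$ coincide. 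Chebotarev density and the Brauer--Nesbitt theorem then give the claimed isomorphism.

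Part (3) then reduces, via (2), to the statement of local–global compatibility at finite places $v \nmid p$ for $\pi$:
\[ \operatorname{WD}(r_\iota(\pi)|_{G_{K_v}})^{F\text{-}ss} \cong \rec^T_{K_v}(\pi_v). \]
I would invoke the local–global compatibility theorems for $\GL_n$ over CM fields in their full form, including the monodromy operator, which extend the constructions of \cite{hltt}. Specialized to our weight $0$ cuspidal $\pi$ on $\GL_2(\bbA_K)$, they yield precisely the required identification. The main (minor) obstacle is simply locating and citing a local–global compatibility result in the exact form needed (with the full Weil--Deligne match, including the monodromy operator, at all finite places $v \nmid p$); all other inputs — strong multiplicity one, Chebotarev density, and the calculation of Hodge--Tate weights for elliptic curves — are standard.
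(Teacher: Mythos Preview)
Your parts (1) and (2) match the paper's approach. For the weight-$0$ claim the paper is slightly more circumspect: rather than asserting that the Hodge--Tate weights of $r_{\iota_0}(\pi)$ read off the weight of $\pi$, it first proves part (2) for all $(p,\iota)$ and then cites \cite[Lemma~7.1.8]{10authors} to produce \emph{some} $(p',\iota')$ at which $r_{\iota'}(\pi)$ is crystalline with Hodge--Tate weights given explicitly in terms of the weight $\lambda$. Comparing with the known weights $\{0,1\}$ of $\rho_{E,p'}\otimes\epsilon^{-1}$ then forces $\lambda=0$. Your version works if the implication ``Hodge--Tate weights of $r_\iota(\pi)$ are determined by $\lambda$'' is available for the particular $(p_0,\iota_0)$ handed to you by the definition; you should locate and cite such a statement rather than assume it.

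The genuine gap is in part (3). Full local--global compatibility at $v\nmid p$ for $\GL_n$ over CM fields, \emph{including the monodromy operator}, is not known; what is available (Varma \cite{ilavarma}) is only the relation
\[
\mathrm{WD}(r_\iota(\pi)|_{G_{K_v}})^{F\text{-}ss}\ \prec\ \rec_{K_v}^T(\pi_v),
\]
meaning the underlying Weil-group representations match and trivial $N$ on the automorphic side forces trivial $N$ on the Galois side, but not conversely. The paper supplies the missing direction by purity: if $\rec_{K_v}^T(\pi_v)$ has nontrivial $N$ then the Frobenius eigenvalues on the Galois side are in ratio $q_v$, and the weight--monodromy conjecture (known for elliptic curves) then forces nontrivial $N$ on $\mathrm{WD}(\rho_{E,p}\otimes\epsilon^{-1}|_{G_{K_v}})$ as well. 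So the ``minor obstacle'' you flag is in fact the entire content of the argument for (3); there is no black-box citation that gives it, and you need the purity input specific to elliptic curves.
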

\begin{proof}
	If $\pi, \pi'$ are cuspidal, regular algebraic automorphic representations of $\GL_2(\bbA_K)$, both associated to $E$, then for all but finitely many places $v$ of $K$, we have $\pi_v \cong \pi'_v$. Strong multiplicity one then implies that we have $\pi \cong \pi'$. 
	
	By the Chebotarev density theorem, for any prime $p$ and any isomorphism $\iota : \overline{\bbQ}_p \to \bbC$, we have $\rho_{E, p} \otimes \epsilon^{-1} \cong r_\iota(\pi)$ (note that $r_\iota(\pi)$ is semisimple by definition; the irreducibility of $\rho_{E, p}$ is a theorem of Serre \cite{Ser98}). By \cite[Lemma 7.1.10]{10authors}, there exists $p, \iota$ such that for each place $v | p$ of $K$, $r_\iota(\pi)|_{G_{K_v}}$ is crystalline of Hodge--Tate weights $\mathrm{HT}_\tau(r_\iota(\pi)) = \{ \lambda_{\iota^{-1} \tau, 1} + 1, \lambda_{\iota^{-1} \tau, 2} \}$ for each  $\tau \in \Hom_{\bbQ_p}(K_v, \overline{\bbQ}_p)$ (where $\pi$ is of weight $\lambda \in (\bbZ^2_+)^{\Hom(K, \bbC)}$). Since the Hodge--Tate weights of $\rho_{E, p} \otimes \epsilon^{-1}$ are $\{0, 1 \}$ (for any $\tau$), this shows that $\pi$ is necessarily of weight $\lambda = 0$.
	
	This proves the first two parts of the lemma. For the third, we use the  \cite[Theorem~1]{ilavarma}, which shows (with notation as in the statement of the lemma) that
	\[ \mathrm{WD}(\rho_{E, p} \otimes \epsilon^{-1}|_{G_{K_v}})^{F-ss} \prec \rec_{K_v}^T(\pi_v). \]
	Here we recall that `$\prec'$ means that the representations agree up to semisimplification, and (since we working with 2-dimensional representations) if the right-hand Weil--Deligne representation has trivial $N$, then so does the left-hand side. It remains to show that if $v \nmid p$ is a place such that $\rec_{K_v}^T(\pi_v)$ has non-trivial $N$, then $\mathrm{WD}(\rho_{E, p} \otimes \epsilon^{-1}|_{G_{K_v}})^{F-ss}$ also has non-trivial $N$. This is a consequence of purity. 
\end{proof}
\begin{corollary}\label{cor_tate_to_steinberg}
	Let $K$ be a CM field, and let $E$ be a non-CM modular elliptic curve. Let $\pi$ be the corresponding automorphic representation of $\GL_2(\bbA_K)$. Suppose that there exists a rational prime $p$ such that for each place $v | p$ of $K$, $E_{K_v}$ has potentially multiplicative reduction. Then for any isomorphism $\iota : \overline{\bbQ}_p \to \bbC$, $\pi$ is $\iota$-ordinary. 
\end{corollary}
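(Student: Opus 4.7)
The plan is to translate the assumption of potentially multiplicative reduction at each $v\mid p$ into an explicit ordinary structure on $\rho_{E,p}\otimes\epsilon^{-1}|_{G_{K_v}}$, and then invoke local-global compatibility at $p$ to deduce $\iota$-ordinarity of $\pi_v$ in the sense used in the paper.

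First, I would fix a place $v\mid p$ of $K$ and apply the Tate uniformization: since $E_{K_v}$ has potentially multiplicative reduction, there is an unramified quadratic character $\chi_v\colon G_{K_v}\to \overline{\bbQ}_p^\times$ (trivial if the reduction is already split multiplicative) such that $\rho_{E,p}|_{G_{K_v}}\otimes\chi_v$ is isomorphic to the $p$-adic Tate module of a Tate curve. In particular, there is a short exact sequence
\[
0 \to \overline{\bbQ}_p(\epsilon\chi_v) \to \rho_{E,p}|_{G_{K_v}} \to \overline{\bbQ}_p(\chi_v) \to 0.
\]
By part (2) of Lemma~\ref{lem_more_on_modularity}, $r_\iota(\pi)\cong \rho_{E,p}\otimes\epsilon^{-1}$ for every $\iota$, so twisting the above by $\epsilon^{-1}$ yields
\[
0 \to \overline{\bbQ}_p(\chi_v) \to r_\iota(\pi)|_{G_{K_v}} \to \overline{\bbQ}_p(\epsilon^{-1}\chi_v) \to 0,
\]
and since $\chi_v$ is trivial on an open subgroup of $\cO_{K_v}^\times$ and $\epsilon^{-1}\chi_v\circ \Art_{K_v}$ agrees there with $\prod_\tau \tau(x)^{-1}$, this filtration exhibits $r_\iota(\pi)|_{G_{K_v}}$ as ordinary of weight $\lambda=0$ in the sense of \S\ref{subsec_notation}. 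By part (1) of Lemma~\ref{lem_more_on_modularity}, $\pi$ is already known to be cohomological of weight $0$, matching these Hodge--Tate weights.

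Second, I would invoke local-global compatibility at $p$ for regular algebraic cuspidal automorphic representations of $\GL_2$ over CM fields (as developed in \cite{10authors}) to conclude that the ordinarity of $r_\iota(\pi)|_{G_{K_v}}$ at each $v\mid p$ forces $\pi$ to be $\iota$-ordinary. Concretely, the ordinary filtration on the Galois side corresponds under the local correspondence to a $U_v$-eigenvector in an appropriate fixed vector space of $\pi_v$ whose eigenvalue (normalized via $\iota$) is a $p$-adic unit, namely the unit root coming from the sub-line of $r_\iota(\pi)|_{G_{K_v}}$ on which the action is, up to an unramified finite-order twist, trivial.

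The main point requiring care is the transition in the last paragraph, since for $v\mid p$ one cannot simply apply part (3) of Lemma~\ref{lem_more_on_modularity}. An alternative, more direct route to sidestep the technical local-global compatibility at $p$ is to observe that potentially multiplicative reduction forces the Weil--Deligne representation $\mathrm{WD}(\rho_{E,p}|_{G_{K_v}})$ to have nonzero monodromy operator $N$; hence $\pi_v$ is a (possibly ramified) quadratic twist of the Steinberg representation. For such $\pi_v$ the $\iota$-ordinarity condition is automatic: after a finite-level refinement of the level subgroup (trivializing the possible quadratic twist), there is a $U_v$-eigenvector whose eigenvalue, by the Tate-curve description, is $\iota^{-1}$ of an algebraic number whose $v$-adic valuation is zero. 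Either approach completes the proof.
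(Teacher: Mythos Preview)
Your second approach is essentially on the right track, but both of your routes lean on local-global compatibility \emph{at} $p$ in a way the paper avoids entirely. In your alternative argument you write that nonzero monodromy in $\mathrm{WD}(\rho_{E,p}|_{G_{K_v}})$ forces $\pi_v$ to be a twist of Steinberg; but $v\mid p$ here, so this ``hence'' is exactly the delicate $p$-adic compatibility you yourself flagged as problematic. Likewise, your first route needs the implication ``$r_\iota(\pi)|_{G_{K_v}}$ ordinary $\Rightarrow$ $\pi$ $\iota$-ordinary'', which is the converse of what is typically established and is not supplied by Lemma~\ref{lem_more_on_modularity} or by the results you cite.

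The paper's proof sidesteps all of this with a simple trick: it applies part~(3) of Lemma~\ref{lem_more_on_modularity} with an \emph{auxiliary} prime $p'\neq p$. Since that statement holds for every finite $v\nmid p'$, it applies in particular to every $v\mid p$, and the potentially multiplicative reduction of $E_{K_v}$ (visible in $\mathrm{WD}(\rho_{E,p'}|_{G_{K_v}})$) forces $\mathrm{rec}^T_{K_v}(\pi_v)$ to have nontrivial $N$, i.e.\ $\pi_v$ is a twist of Steinberg. The paper then simply cites \cite[Lemma~5.1.5]{Ger18}, which packages precisely the statement ``$\pi_v$ a twist of Steinberg for all $v\mid p$ $\Rightarrow$ $\pi$ is $\iota$-ordinary'' that you attempt to verify by hand. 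A minor slip: the quadratic character $\chi_v$ in the Tate uniformization need not be unramified (potentially multiplicative reduction allows a ramified quadratic twist), though as you later note it is still of finite order, which is all that is needed.
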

\begin{proof}
	Under these hypotheses, Lemma \ref{lem_more_on_modularity} implies that for each place $v | p$ of $K$, $\pi_v$ is a twist of the Steinberg representation. By \cite[Lemma~5.1.5]{Ger18}, $\pi$ is $\iota$-ordinary for any $\iota : \bbQ_p \cong \bbC$.
\end{proof}
The argument that follows is inspired by the `3-5' switch of Wiles \cite[Theorem 5.2]{Wil95}. We begin with some preliminary lemmas.
\begin{lemma}\label{lem_when_Kummer_extensions_are_CM}
Let $K$ be an imaginary CM field, and let $n \geq 2$ be an integer. Let $a \in K$ satisfy $\mathbf{N}_{K / K^+}(a) = 1$, and let $K_a$ denote the splitting field of $x^n - a$. Then $K_a / K$ is a soluble CM extension.
\end{lemma}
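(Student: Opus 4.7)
The plan is to prove this in two steps: verify solubility of $K_a/K$ directly from the description as Kummer-after-cyclotomic, and then verify the CM condition using the intrinsic complex conjugation of $K$ together with the norm-one hypothesis.

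For solubility, I would fix an $n$-th root $\alpha$ of $a$ in a fixed algebraic closure, so that $K_a = K(\zeta_n, \alpha)$. The tower
\[ K \subseteq K(\zeta_n) \subseteq K(\zeta_n, \alpha) = K_a \]
exhibits $K_a/K$ as cyclotomic-then-Kummer, so $\Gal(K_a/K)$ is an extension of a subgroup of $(\bbZ/n\bbZ)^\times$ by a subgroup of $\bbZ/n\bbZ$. Hence $K_a/K$ is soluble.

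For the CM property, let $c$ denote the nontrivial element of $\Gal(K/K^+)$, i.e., the intrinsic complex conjugation of the CM field $K$. The hypothesis $\mathbf{N}_{K/K^+}(a) = 1$ reads $a \cdot c(a) = 1$, i.e., $c(a) = a^{-1}$. Now fix any embedding $\iota \colon K_a \hookrightarrow \bbC$. I claim that $\iota(K_a)$ is stable under complex conjugation of $\bbC$. Since $K$ is CM, $\iota(K)$ is stable and complex conjugation on it pulls back to $c$; $\iota(\zeta_n)$ is a root of unity, so $\overline{\iota(\zeta_n)} = \iota(\zeta_n)^{-1} \in \iota(K_a)$; and for $\iota(\alpha)$ we compute
\[ \lvert \iota(\alpha) \rvert^{2n} = \iota(\alpha)^n \cdot \overline{\iota(\alpha)^n} = \iota(a) \cdot \iota(c(a)) = \iota( a \cdot a^{-1}) = 1, \]
forcing $\lvert \iota(\alpha) \rvert = 1$ and hence $\overline{\iota(\alpha)} = \iota(\alpha)^{-1} = \iota(\alpha^{-1}) \in \iota(K_a)$.

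It follows that complex conjugation of $\bbC$ restricts to an involution of $\iota(K_a)$, and transporting back via $\iota$ gives an order-$2$ automorphism $\tilde c$ of $K_a$ characterised by $\tilde c|_K = c$, $\tilde c(\zeta_n) = \zeta_n^{-1}$, and $\tilde c(\alpha) = \alpha^{-1}$. These relations determine $\tilde c$ uniquely as an automorphism of $K_a$ (they are specified on a generating set), and crucially they do \emph{not} depend on the choice of $\iota$. Thus every embedding of $K_a$ into $\bbC$ pulls complex conjugation back to the same $\tilde c$, which is precisely the condition for $K_a$ to be a CM field with intrinsic complex conjugation $\tilde c$; the fixed field $K_a^{\tilde c}$ is then automatically totally real of index $2$.

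The main obstacle is the verification that $\tilde c$ is intrinsic, i.e. independent of the embedding $\iota$; this is precisely what fails without the norm-one hypothesis, and the computation $\lvert \iota(\alpha) \rvert = 1$ is where that hypothesis is used in an essential way.
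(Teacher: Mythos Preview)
Your proof is correct. The paper's argument is different in presentation, though the underlying observation is the same. The paper works with the norm-one torus $C = \ker(\mathbf{N} : \Res_{K/K^+}\bbG_m \to \bbG_m)$ over $K^+$: at each real place $v$ of $K^+$ one has $C(K^+_v) \cong S^1$, so $[n] : C(K^+_v) \to C(K^+_v)$ is surjective with kernel of order $n$, which forces the finite $K^+$-scheme $[n]^{-1}(a)$ to split completely at every archimedean place; hence $K^+([n]^{-1}(a))$ is totally real and $K_a = K \cdot K^+([n]^{-1}(a))$ is CM. Your computation $\lvert\iota(\alpha)\rvert = 1$ is exactly the statement that $\alpha$ lands in $S^1 = C(\bbR)$ under every archimedean place, so the two arguments pivot on the same fact. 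The difference is in packaging: the paper identifies the totally real subfield explicitly via the torus, whereas you construct the intrinsic complex conjugation $\tilde c$ by hand and check it is embedding-independent. Your route is more elementary (no group-scheme language) and makes the role of the hypothesis $\mathbf{N}_{K/K^+}(a)=1$ very transparent; the paper's route is terser and immediately hands you the maximal totally real subfield, which is convenient for the application in Corollary~\ref{cor_real_up_to_nth_powers}.
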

\begin{proof}
Let $C$ denote the norm 1 torus of $K / K^+$; thus it is a form of $\bbG_m$ over $K^+$, and the map $[n] : C \to C$ has degree $n$. For any place $v | \infty$ of $K^+$, $C(K^+_v)$ is isomorphic to $S^1$, and the map $[n] : C(K^+_v) \to C(K^+_v)$ is surjective, with kernel of order $n$. We find that for any $a \in C(K^+) = \ker \mathbf{N}_{K / K^+}$, the field $K^+([n]^{-1}(a))$ is totally real. Since $K_a = K \cdot K^+([n]^{-1}(a))$, we find that $K_a$ is CM. 
\end{proof}
N. Fakhruddin has shown us a generalization of Lemma \ref{lem_when_Kummer_extensions_are_CM} which gives a necessary and sufficient criterion on $a$ for the field $K_a$ to be CM. \begin{corollary}\label{cor_real_up_to_nth_powers}
Let $K$ be an imaginary CM field, and let $n \geq 2$ be an integer. Let $a \in K$, and let $L / K$ denote the splitting field of the polynomial $x^n - a / a^c$. Then $L / K$ is a soluble CM extension and there exists $z \in L^+$ such that $a \equiv z \text{ mod }(L^\times)^n$.
\end{corollary}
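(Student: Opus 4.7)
The plan is to reduce to a direct Hilbert 90 computation on the quadratic extension $L/L^+$. First note that $a/a^c \in K$ satisfies $\mathbf{N}_{K/K^+}(a/a^c) = (a/a^c)(a^c/a) = 1$, so Lemma \ref{lem_when_Kummer_extensions_are_CM} (applied to $a_0 = a/a^c$) already tells us that $L/K$ is a soluble CM extension, handling the first assertion. The work is thus to produce the element $z \in (L^+)^\times$ with $a \equiv z \bmod (L^\times)^n$.

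To do this, I would fix any $b \in L$ with $b^n = a/a^c$ (which exists since $L$ is a splitting field) and observe that $(b^c)^n = (a/a^c)^c = a^c/a = b^{-n}$, so $bb^c$ is an $n$-th root of unity. Being fixed by $c$, it lies in $L^+$; since $L^+$ is totally real, this forces $bb^c \in \{\pm 1\}$. To rule out $-1$, I would note that $L$ being CM means that $c$ induces complex conjugation at every complex embedding $\iota : L \hookrightarrow \bbC$, whence $\iota(bb^c) = |\iota(b)|^2 > 0$. Thus $bb^c = 1$, i.e., $\mathbf{N}_{L/L^+}(b) = 1$.

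Now I apply Hilbert 90 to the quadratic extension $L/L^+$ to produce $w \in L^\times$ with $b = w^c/w$. Setting $z = aw^n$, we compute
\[ z^c = a^c (w^c)^n = a^c \cdot b^n w^n = a^c \cdot (a/a^c) \cdot w^n = a w^n = z, \]
so $z \in (L^+)^\times$, and the identity $a = z \cdot w^{-n}$ gives $a \equiv z \bmod (L^\times)^n$, as required.

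There is no substantive obstacle; the only subtlety is verifying that $bb^c = +1$ rather than $-1$ when $n$ is even, which is where the CM hypothesis on $L$ (supplied by Lemma \ref{lem_when_Kummer_extensions_are_CM}) does real work via the archimedean positivity argument.
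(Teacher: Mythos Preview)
Your proof is correct and follows essentially the same approach as the paper's own proof: both take an $n$th root $\beta$ of $a/a^c$, use total positivity (your archimedean argument $\iota(\beta\beta^c)=|\iota(\beta)|^2>0$ is exactly the paper's observation that $\beta\beta^c$ is totally positive) to conclude $\mathbf{N}_{L/L^+}(\beta)=1$, apply Hilbert 90 to write $\beta = w^c/w$, and set $z = aw^n$. The only differences are notational.
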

\begin{proof}
The extension $L / K$ is soluble and CM, by Lemma \ref{lem_when_Kummer_extensions_are_CM}. Let $\beta \in L$ be such that $\beta^n = a / a^c$. Then $\mathbf{N}_{L / L^+}(\beta) = \beta \beta^c$ is a totally positive element of $L^+$, and $(\beta \beta^c)^n = 1$. It follows that $\beta \beta^c = 1$, and hence (by Hilbert 90) there exists $b \in L^\times$ such that $b^c / b = \beta$. Then the element $z = a b^n$ lies in $L^+$.
\end{proof}
If $E$ is an elliptic curve over a field $K$, then we write $\Delta_E \in K^\times / (K^\times)^{12}$ for the discriminant of $E$.
\begin{lemma}\label{lem_quotients_of_X(6)}
Let $K$ be a number field, and let $E$, $A$ be elliptic curves over $K$. Let $Y_E$  denote the (fine) moduli space of pairs $(F, \varphi)$, where $F$ is an elliptic curve and $\varphi : F[6] \to E[6]$ is a symplectic isomorphism (i.e.\ respecting Weil pairings). Define $Y_A$ similarly. Then:
\begin{enumerate}
\item Let $V_E \subset \PSp(E[3]) \subset \Aut(Y_E)$ denote the 2-Sylow subgroup (more precisely, the $K$-group with $V_E(\overline{K})$ equal to the 2-Sylow subgroup of $\PSp(E[3])(\overline{K})$.) Define $V_A$ similarly. Suppose that $\Delta_E \equiv \Delta_A \text{ mod }(K^\times)^3$. Then there is an isomorphism $Y_E / V_E \cong Y_A / V_A$ over $K$.
\item Let $C_E \subset \PSp(E[2]) \subset \Aut(Y_E)$ denote the 3-Sylow subgroup (more precisely, the $K$-group with $C_E(\overline{K})$ equal to the 3-Sylow subgroup of $\PSp(E[2])(\overline{K})$.) Define $C_A$ similarly. Suppose that $\Delta_E \equiv \Delta_A \text{ mod }(K^\times)^2$. Then there is an isomorphism $Y_E / C_E \cong Y_A / C_A$ over $K$.
\end{enumerate}
\end{lemma}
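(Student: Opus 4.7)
The approach is to describe $Y_E/V_E$ and $Y_A/V_A$ as fibre products over $Y(1)$, one factor of which is a Kummer $\mu_3$-cover (respectively a Kummer $\mu_2$-cover) of $Y(1)$ associated to the class of $\Delta_E$ (respectively $\Delta_A$) in $K^\times/(K^\times)^3$ (respectively $K^\times/(K^\times)^2$). The classical input is the identification of the intermediate modular curves $X(3)/V_4$ and $X(2)/C$ with the Kummer covers of the $j$-line $X(1)$ obtained by extracting a cube root, respectively a square root, of the modular discriminant $\Delta(j)$. Concretely, since $\Delta = \eta^{24}$, the modular forms $\eta^8$ (of weight $4$ and level $\Gamma(3)$) and $\eta^{12}$ (of weight $6$ and level $\Gamma(2)$) are a cube root and a square root of $\Delta$, and descend to functions on $X(3)/V_4$ and $X(2)/C$ respectively, generating these covers over $X(1)$.

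For part~(1), the decomposition $Y(6) = Y(2) \times_{Y(1)} Y(3)$ and the fact that $V_E$ sits inside the $\PSp(E[3])$-factor of $\PSp_2(\bbZ/6) \cong S_3 \times A_4$ give over $\overline{K}$ an identification
\[ Y_E/V_E \cong Y(2)_{E[2]} \times_{Y(1)} (Y(3)/V_4)_{E[3]}. \]
The factor $(Y(3)/V_4)_{E[3]}$ is the Kummer $\mu_3$-cover of $Y(1)$ obtained by twisting $y^3 = \Delta(j)$ by the character $\chi_E : G_K \to A_4/V_4 \cong \mu_3$, where the last identification is canonical and comes from the Weil pairing on $E[3]$. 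I would verify, by unwinding the definition of the level-$3$ structure together with the transformation properties of $\eta^8$ on the universal elliptic curve over $Y(3)$, that $\chi_E$ is the Kummer character of $\Delta_E$: equivalently, $(Y(3)/V_4)_{E[3]}$ is the cover of $Y(1)$ given by $y^3 = \Delta_E \cdot \Delta(j)$. Part~(2) is symmetric: one has $Y_E/C_E \cong (Y(2)/C)_{E[2]} \times_{Y(1)} Y(3)_{E[3]}$, with the twist of $Y(2)/C$ governed by the quadratic character $G_K \to S_3 \to S_3/A_3$, which cuts out $K(\sqrt{\Delta_E})/K$ and hence is determined by $\Delta_E$ modulo squares.

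The main obstacle is the precise verification that the $\mu_3$-valued (respectively $\mu_2$-valued) character defining the twist of the Kummer-cover factor coincides with the Kummer character of $\Delta_E$, under the canonical identification of the geometric deck group with $\mu_3$ (respectively $\mu_2$) via the Weil pairing. This requires tracking the interplay between $\eta^8$ (respectively $\eta^{12}$) and the canonical cube (respectively square) root of the discriminant supplied by the level structure on the universal elliptic curve, which is a somewhat delicate compatibility statement between the modular and the Galois-theoretic descriptions of the cover. Once this identification is established, the hypothesis on discriminants modulo cubes (respectively squares) directly produces the Kummer-cover factor isomorphism, and the remaining factor is matched by the same analysis applied at the complementary prime.
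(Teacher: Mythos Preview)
Your route via the fibre-product decomposition $Y(6)\cong Y(2)\times_{Y(1)}Y(3)$ and the Kummer description of $X(3)/V_4$ through $\eta^8$ is genuinely different from the paper's. The paper works combinatorially: it introduces the three-element $G_K$-set $T_3(E)$ of unordered partitions of $\bbP(E[3](\overline K))$ into pairs, on which $\PSp(E[3])\cong A_4$ acts through $A_4/V_4\cong C_3$, and cites \cite[Lemma~4.1]{Fuk17} for a $G_K$-equivariant bijection between $T_3(E)$ and the set of cube roots of $\Delta_E$. The discriminant hypothesis then yields a $G_K$-equivariant bijection $T_3(E)\to T_3(A)$, which lifts over $\overline K$ to a symplectic $f:E[3]_{\overline K}\to A[3]_{\overline K}$; post-composing the level-$3$ part of $\varphi$ with $f$ descends to the $V$-quotients precisely because $V_E=\ker\bigl(\PSp(E[3])\to\Aut(T_3(E))\bigr)$. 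Your ``main obstacle'' --- matching the $\mu_3$-twist character with the Kummer character of $\Delta_E$ --- is exactly \cite[Lemma~4.1]{Fuk17} in another guise, so the paper resolves it by citation where you would carry out a modular-forms computation.

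There is, however, a gap in your final step. In your decomposition $Y_E/V_E\cong Y(2)_{E[2]}\times_{Y(1)}(Y(3)/V_4)_{E[3]}$, only the second factor is governed by $\Delta_E\bmod(K^\times)^3$; the factor $Y(2)_{E[2]}$ depends on the full $G_K$-module $E[2]$, about which the hypothesis of part (1) says nothing. Your sentence ``the remaining factor is matched by the same analysis applied at the complementary prime'' conflates $Y(2)_{E[2]}$ with its $C_3$-quotient $(Y(2)/C)_{E[2]}$: only the latter is a $\mu_2$-twist controlled by $\Delta_E\bmod(K^\times)^2$. The paper's proof glosses over the same point (writing $(F,\varphi)\mapsto(F,f\circ\varphi)$ with $\varphi$ a full level-$6$ structure but $f$ defined only on the $3$-part). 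This does not matter in the sole application, Lemma~\ref{lem_existence_of_elliptic_curves}, where one compares $X_A$ with the curve $X$ attached to the hybrid structure $A[2]\times E[3]$ and the $2$-parts agree by construction; but as a proof of the lemma exactly as stated, your fibre-product argument is incomplete without an independent matching of the $Y(2)$-factor.
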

\begin{proof}
Both parts are similar, so we just prove the first part. Let $T_3(E)$ denote the set of partitions of $\bbP(E[3](\overline{K}))$ into 2-element subsets. According to \cite[Lemma 4.1]{Fuk17}, there is a $G_K$-equivariant bijection between $T_3(E)$ and the set of cube roots of $\Delta_E$ in $\overline{K}$. The same is true for $T_3(A)$.  Choose $\lambda \in K^\times$ such that $\lambda^3 \Delta_E = \Delta_A$. Multiplication by $\lambda$ determines a $G_K$-equivariant bijection between the set of cube roots of $\Delta_E$ and the set of cube roots of $\Delta_A$, hence a $G_K$-equivariant bijection $f_\lambda : T_3(E) \to T_3(A)$.

The map $\PSp(E[3](\overline{K})) \to \mathrm{Sym}(T_3(E))$ is surjective, so can choose a symplectic isomorphism $f : E[3]_{\overline{K}} \to A[3]_{\overline{K}}$ such that the induced map $T_3(E) \to T_3(A)$ is equal to $f_\lambda$. 
Let us use an overline to denote base change to the algebraic closure $\overline{K}$. We can define a map $\overline{Y}_E \to \overline{Y}_A$ by the formula $(F, \varphi) \mapsto (F, f \circ \varphi)$. Since $V_E \subset \PSp(E[3])$ is the subgroup which acts trivially on $T_3(E)$, we see that the map $\overline{Y}_E / V_E(\overline{K}) \to \overline{Y}_A / V_A(\overline{K})$ is defined over $K$, hence determines a map $Y_E / V_E \to Y_A / V_A$. 
\end{proof}

\begin{lemma}\label{lem_existence_of_elliptic_curves}
Let $K$ be a number field, and let $A, E$ be elliptic curves over $K$ such that $\Delta_A \equiv \Delta_E \text{ mod }(K^\times)^6$ and $E[3]$ is irreducible. Then there exists an elliptic curve $F$ over $K$ such that $\Delta_F \equiv \Delta_A \text{ mod }(K^\times)^6$, and there are (symplectic) isomorphisms $F[2] \cong A[2]$ and $F[3] \cong E[3]$.
\end{lemma}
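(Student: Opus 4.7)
Proof plan:

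The statement is equivalent to finding a $K$-rational point on the twist $\mathfrak{X}/K$ of $X(6)$ parametrizing triples $(F,\psi_2,\psi_3)$, where $F$ is an elliptic curve over $K$ and $\psi_2\colon F[2]\xrightarrow{\sim}A[2]$, $\psi_3\colon F[3]\xrightarrow{\sim}E[3]$ are symplectic isomorphisms. The congruence $\Delta_F\equiv\Delta_A\pmod{(K^\times)^6}$ for such an $F$ then follows automatically from $F[2]\cong A[2]$, $F[3]\cong E[3]$ together with the identification of partitions of $F[3](\overline{K})$ with cube roots of $\Delta_F$ (cf.\ the proof of Lemma~\ref{lem_quotients_of_X(6)} and \cite[Lemma~4.1]{Fuk17}) and the analogous mod-$2$ statement. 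Since $\gcd(2,3)=1$ we have $X(6)\cong X(2)\times_{X(1)}X(3)$, hence $\mathfrak{X}\cong X(2)_A\times_{X(1)}X(3)_E$; each factor carries a $K$-point ($A$ and $E$ respectively), is of genus $0$, and is $K$-isomorphic to $\bbP^1_K$, while $\mathfrak{X}$ itself has genus $1$.

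To produce the $K$-point on $\mathfrak{X}$, fix $\lambda\in K^\times$ with $\Delta_E=\lambda^6\Delta_A$. As in the proof of Lemma~\ref{lem_quotients_of_X(6)}, the scalars $\lambda^2\in K^\times$ and $\lambda^3\in K^\times$ induce $G_K$-equivariant bijections between the cube roots of $\Delta_A$ and those of $\Delta_E$, and between the square roots of $\Delta_A$ and those of $\Delta_E$, respectively. These lift to symplectic $\overline{K}$-isomorphisms $f_3\colon A[3]\xrightarrow{\sim}E[3]$, well defined modulo $V_A\subset\PSp(A[3])$, and $f_2\colon A[2]\xrightarrow{\sim}E[2]$, well defined modulo $C_A\subset\PSp(A[2])$. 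Postcomposition with $(f_2,f_3)$ gives a $\overline K$-isomorphism $Y_A\xrightarrow{\sim}\mathfrak{X}$ whose $\Gal(\overline K/K)$-indeterminacy is precisely the action of $C_A\times V_A$, so it descends to a $K$-isomorphism
\[
  \bar\Phi\colon Y_A/(C_A\times V_A)\xrightarrow{\ \sim\ }\mathfrak{X}/(C_A\times V_A).
\]

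The $K$-point $(A,\mathrm{id})\in Y_A(K)$ descends to a $K$-point $\overline{P}$ of $Y_A/(C_A\times V_A)$, and $\bar\Phi(\overline{P})$ is a $K$-point of $\mathfrak{X}/(C_A\times V_A)$. Its preimage in $\mathfrak{X}$ is a $(C_A\times V_A)$-torsor over $K$, and the crux of the proof is to show this torsor is trivial. I would do this by identifying its class in $H^1(K,C_A\times V_A)$ with the cocycle measuring the failure of descent of the pair $(f_2,f_3)$, which vanishes by construction: the ambiguity of $(f_2,f_3)$ was rigged so as to cover the canonical $K$-defined identification of orbit sets coming from $\lambda^2$ and $\lambda^3$. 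Once this identification of cocycles is in place, the torsor has a $K$-point, yielding the desired $K$-point on $\mathfrak{X}$ and hence the curve $F$.

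The main obstacle is this cocycle-matching step. Carrying it out requires tracking explicit Galois-conjugate representatives of $f_2$ and $f_3$ and comparing them with the $C_A\times V_A$-action on the fibre of $Y_A\to Y_A/(C_A\times V_A)$ over $\overline{P}$. The hypothesis that $E[3]$ is irreducible is used here to exclude degenerate lifts through $V_E$, ensuring that the $V_A\cong V_E$-torsor description is nondegenerate and that $f_3$ has no unexpected Galois-fixed refinements; together with the direct verification of the mod-$2$ matching via $\lambda^3$, this yields the required triviality.
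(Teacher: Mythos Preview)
Your approach has a genuine gap at the key step. You claim that the descent cocycle for $(f_2,f_3)$ --- equivalently, the $(C_A\times V_A)$-torsor given by the fibre of $\mathfrak{X}\to\mathfrak{X}/(C_A\times V_A)$ over $\bar\Phi(\overline P)$ --- ``vanishes by construction''. It does not. Unwinding the definitions, that fibre consists over $\overline K$ of points of the form $(A,\psi_2,\psi_3)$ with $\psi_2\colon A[2]\xrightarrow{\sim}A[2]$ and $\psi_3\colon A[3]\xrightarrow{\sim}E[3]$ symplectic; it has a $K$-point precisely when there exists a $G_K$-equivariant symplectic isomorphism $A[3]\cong E[3]$. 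Nothing in the hypotheses forces this: the condition $\Delta_A\equiv\Delta_E\pmod{(K^\times)^6}$ only pins down the $V$-orbit of such an isomorphism, not the isomorphism itself. So your argument, if it went through, would prove the much stronger and false statement that $A[3]\cong E[3]$ over $K$. (A minor confusion: the role of $f_2$ is unclear, since both $Y_A$ and $\mathfrak{X}$ already carry $A[2]$-level; only $f_3$ is needed to compare them.)

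The paper's proof avoids this by not aiming for any specific fibre. Lemma~\ref{lem_quotients_of_X(6)} gives $K$-isomorphisms $X_A/V_A\cong X/V$ (from $\Delta_A\equiv\Delta_E$ mod cubes) and $X/C\cong X_E/C_E$ (from the congruence mod squares). The tautological points $A\in X_A(K)$ and $E\in X_E(K)$ then yield $K$-points on $X/V$ and $X/C$, hence $K$-rational effective divisors on $X$ of degrees $|V|=4$ and $|C|=3$. As $\gcd(3,4)=1$, this produces a $K$-rational divisor class of degree~$1$; since $X$ has genus~$1$, the canonical isomorphism $X\cong\Pic^1(X)$ converts this into a $K$-rational point. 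The irreducibility of $E[3]$ is used only at the end to ensure this point is not a cusp. The coprimality $\gcd(|C|,|V|)=1$ is what drives the argument, replacing your (unavailable) cocycle vanishing.
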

\begin{proof}
Let $X_A$, $X_E$ denote the compactified moduli space of elliptic curves with symplectic level 6 structure given either by $A[6]$ or $E[6]$, respectively. Let $X$ denote the compactified moduli space of elliptic curves with symplectic level 6 structure given by $A[2] \times E[3]$. Thus $X_A, X_E$ and $X$ are smooth, projective, geometrically connected curves of genus 1 over $K$ which all become isomorphic over $\overline{K}$. With notation as in Lemma \ref{lem_quotients_of_X(6)}, let $V = V_E$ and $C = C_A$; then there are isomorphisms
\[ X_A / V_A \cong X / V \]
and
\[ X / C \cong X_E / C_E. \]
It follows that the quotients $X / V$ and $X / C$ have $K$-rational points (because $X_A$ and $X_E$ do). Taking pre-images, we find that $X$ has $K$-rational divisors of degree 4 and 3, hence a $K$-rational divisor of degree 1. Since $X$ has genus 1, there is a canonical isomorphism $X \cong \Pic^1(X)$. It follows that $X$ has a $K$-rational point. Since we have assumed that $E[3]$ is irreducible, it follows that this $K$-rational point must correspond to an elliptic curve $F$ over $K$ with the required properties (in other words, none of the cusps of $X$ are defined over $K$).
\end{proof}
Let $p$ be an odd prime, let $K$ be a number field, and let $\overline{\rho} : G_K \to \GL_2(\bbF_p)$ be a continuous homomorphism of cyclotomic determinant. Let $V$ be the $\bbF_p[G_K]$-module on which $\overline{\rho}$ acts. A choice of isomorphism $\iota_0 : \wedge^2 V\cong \mu_p$ determines a modular curve $Y_{\overline{\rho}}$, which represents the functor of isomorphism classes of pairs $(E, \iota)$, where $E$ is an elliptic curve and $\iota$ is an isomorphism $\iota : V \cong E[p]$ such that $\wedge^2 \iota = \iota_0$ (where we identify $\wedge^2 E[p] \cong \mu_p$ using the Weil pairing) \cite[\S 1]{She97}.
\begin{lemma}\label{lem_curves_satisfying_local_conditions}
Let $K$ be a number field, and let $p \in \{ 3, 5 \}$. Let $\overline{\rho} : G_K \to \GL_2(\bbF_p) = \GL(V)$ be a homomorphism of cyclotomic determinant, and fix a choice of $\iota_0 : \wedge^2 V\cong \mu_p$. Let $q \neq p$ be a prime. Let $S$ be a finite set of finite places of $K$, and suppose given for each $v \in S$ a non-empty open subset $\Omega_v \subset Y_{\overline{\rho}}(K_v)$. Let $K' / K$ be a finite extension. Then we can find an elliptic curve $E$ over $K$ satisfying the following conditions:
\begin{enumerate}
\item There is an isomorphism $\iota : V \cong E[p]$ such that $\wedge^2 \iota = \iota_0$.
\item For each $v \in S$, $(E_{K_v}, \iota) \in \Omega_v$.
\item $\overline{\rho}_{E, q}(G_{K'(\zeta_q)}) = \SL_2(\bbF_q)$. 
\item There exists a prime $l$ which splits in $K'$ and which is decomposed generic for $\overline{\rho}_{E, q}$.
\item For each place $v | l$ of $K$, $E$ has good reduction at $v$ and the isomorphism class of the reduction (as a curve over $k(v) = \bbF_l$) is independent of the choice of $v$.
\end{enumerate}
\end{lemma}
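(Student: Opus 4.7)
The plan is to establish rationality of the compactification of $Y_{\overline{\rho}}$, and then combine weak approximation with Chebotarev density and Hilbert irreducibility. The five conditions will be imposed in stages, with the delicate point being that the prime $l$ of conditions (4) and (5) must be chosen \emph{before} finding $E$, so that the local condition at places $v \mid l$ can be built into the weak approximation step.

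First, I would note that for $p \in \{2,3,5\}$, the compactification $X_{\overline{\rho}}$ of $Y_{\overline{\rho}}$ is a smooth projective geometrically connected curve of genus $0$ over $K$, and is in fact $K$-isomorphic to $\bbP^1_K$. For $p=2$ this is elementary: any elliptic curve with prescribed $2$-torsion Galois module may be written as $y^2=f(x)$ for a separable cubic $f$ with the appropriate Galois action on its roots, and the twist parameter sweeps out $\bbP^1$. For $p=3$ this follows from the Hesse pencil parametrization of elliptic curves with full level-$3$ structure (Rubin--Silverberg); for $p=5$ from Klein's icosahedral invariants. In each case $Y_{\overline{\rho}}$ is a dense open subscheme of $\bbP^1_K$ and weak approximation holds on it.

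Next I would choose the prime $l$ and the target reduction $\overline{E}_0/\bbF_l$ in advance of producing $E$. By Chebotarev density applied to the finite Galois extension $K'(\zeta_q)/\bbQ$ (together with the field cut out by $\overline{\rho}$ and the discriminant of the places in $S$), one obtains infinitely many rational primes $l$, coprime to $pq$ and to the bad primes of $\overline{\rho}$, which split completely in $K'(\zeta_q)$. Choosing such a sufficiently large $l$, every place $v \mid l$ of $K$ has residue field $\bbF_l$ and $\overline{\rho}|_{G_{K_v}}$ is unramified, so $\tr\overline{\rho}(\Frob_v) \bmod p$ is the same for every $v \mid l$. Using that elliptic curves over $\bbF_l$ realize every trace in $[-2\sqrt{l}, 2\sqrt{l}]$ (Deuring/Honda--Tate), I would pick $\overline{E}_0/\bbF_l$ with Frobenius eigenvalues $\alpha,\beta$ satisfying both $\alpha+\beta \equiv \tr\overline{\rho}(\Frob_v)\pmod p$ — so that $\overline{E}_0[p] \cong V|_{G_{K_v}}$ as Galois modules — and $\alpha \not\equiv \beta \pmod q$, $\alpha/\beta \not\equiv l^{\pm 1}\pmod q$, so that $l$ is decomposed generic for $\overline{\rho}_{\overline{E}_0,q}$. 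Both congruences can be met simultaneously once $l$ is large enough by the Weil bound. For each $v \mid l$, Hensel lifting then produces a nonempty open subset $\Omega_v^l \subset Y_{\overline{\rho}}(K_v)$ consisting of pairs $(E,\iota)$ where $E$ has good reduction at $v$ with reduction isomorphic to $\overline{E}_0$.

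With $l$ and $\overline{E}_0$ fixed, by weak approximation on $Y_{\overline{\rho}}(K)$ the inverse image of $\prod_{v\in S}\Omega_v \times \prod_{v\mid l}\Omega_v^l$ is a nonempty open (hence Zariski-dense) subset of $Y_{\overline{\rho}}(K)$. To enforce condition (3), I would then apply the Hilbert irreducibility theorem to the \'etale cover of $Y_{\overline{\rho}}$ cut out by the $q$-torsion of the universal elliptic curve. The generic Galois group of this cover over $K(Y_{\overline{\rho}})(\zeta_q)\cdot K'$ is $\SL_2(\bbF_q)$, by the theorem of Igusa on the image of Galois on the universal elliptic curve over a modular curve (applied after pullback to the cover of $Y_{\overline{\rho}}$ classifying full level-$pq$ structure) together with the fact that $K(E_{\mathrm{univ}}[q])$ is linearly disjoint from $K'(\zeta_q)$ over $K(Y_{\overline{\rho}})(\zeta_q)$. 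The locus of specializations $t \in \bbP^1(K) \cong X_{\overline{\rho}}(K)$ where the Galois image drops is a thin set, so it does not obstruct weak approximation. A $K$-point $t$ in the open set of Step~2 outside this thin set yields an elliptic curve $E/K$ satisfying (1)--(3) and (5); condition (4) then holds automatically by the construction of $\overline{E}_0$.

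The main obstacle is the $K$-rationality of $X_{\overline{\rho}}$ for $p=3,5$ without assuming $\zeta_p\in K$, which relies on the explicit classical parametrizations of low-level moduli problems. After that, the coordination of Hilbert irreducibility with weak approximation is standard, since the complement of a thin set in $\bbP^1(K)$ meets every adelic open subset.
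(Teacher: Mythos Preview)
Your overall strategy is the same as the paper's: use the rationality of $Y_{\overline{\rho}}$ for $p\in\{2,3,5\}$, fix the auxiliary prime $l$ and a target reduction $\overline{E}_0/\bbF_l$ in advance, adjoin the places above $l$ to $S$ with the corresponding local conditions, and then combine Hilbert irreducibility with weak approximation (the paper invokes \cite[Proposition 3.2.1, Theorem 3.5.3]{Ser08}).

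There is, however, a genuine gap in your construction of $l$ and $\overline{E}_0$. You claim that once $l$ splits completely in $K'(\zeta_q)$, the trace $\tr\overline{\rho}(\Frob_v)\bmod p$ is the same for every $v\mid l$. This is not true: the splitting of $l$ in $K'$ gives no control over $\overline{\rho}(\Frob_v)$, and the Frobenii at the various $v\mid l$ need not even be conjugate in $\im\overline{\rho}$. Without a common target mod-$p$ Galois module, no single $\overline{E}_0/\bbF_l$ can produce points of $Y_{\overline{\rho}}(K_v)$ simultaneously for all $v\mid l$, so condition~(5) cannot be met. The paper's remedy is to require $l$ to split in the extension $M$ of $K'(\zeta_{p^7 q})$ cut out by $\overline{\rho}$ itself, so that $\overline{\rho}(\Frob_v)=1$ for every $v\mid l$, and then to construct $\overline{E}_0$ with \emph{trivial} mod-$p$ representation. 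Note also that once the two Frobenius eigenvalues are congruent mod $p$, matching the characteristic polynomial does not by itself force $\overline{\rho}_{\overline{E}_0,p}$ to be trivial rather than unipotent; the paper arranges an explicit congruence $a\equiv 2+p^4+p^6\bmod p^7$ (with $l\equiv 1\bmod p^7$) so that $X^2-aX+l$ splits in $\bbZ_p[X]$ with both roots $\equiv 1\bmod p$, and then chooses $\overline{E}_0$ in the resulting isogeny class so that Frobenius diagonalizes on $T_p$.
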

\begin{proof}
There is an isomorphism between $Y_{\overline{\rho}}$ and an open subset of the projective line over $K$. This is \cite[Lemma 1.1]{She97} when $p = 5$, and the same reference explains how the same proof works in the case $p = 3$.  In particular, $Y_{\overline{\rho}}$ has infinitely many $K$-rational points. By \cite[Proposition 3.2.1]{Ser08}, the set of points in $Y_{\overline{\rho}}(K)$ for which the associated representation $\overline{\rho}_{E, q}|_{G_{K'(\zeta_q)}}$ does not have image $\SL_2(\bbF_q)$ is a thin set. By the Hilbert irreducibility theorem (e.g. in the form of \cite[Theorem 3.5.3]{Ser08}), we can find infinitely many points in $Y_{\overline{\rho}}(K)$ corresponding to pairs $(E, \iota)$ with the following properties:
\begin{itemize}
\item For each $v \in S$, $(E_{K_v}, \iota) \in \Omega_v$.
\item $\overline{\rho}_{E, q}(G_{K'(\zeta_q)}) = \SL_2(\bbF_q)$.
\end{itemize}
To prove the lemma, it suffices to show that we can enlarge $S$ so that the existence of $l$ follows from the above conditions. We recall that by definition, the rational prime $l$ is decomposed generic for $\overline{\rho}_{E, q}$ if $l$ splits in $K$ and for each place $v | l$ of $K$, $\overline{\rho}_{E, q}|_{G_{K_v}}$ is generic (in the sense that it is unramified, and the eigenvalues of $\overline{\rho}_{E, q}(\Frob_v)$ are not in ratio $q_v^{\pm 1} \text{ mod }q$). It therefore suffices to show that there exists a rational prime $l$ satisfying the following conditions:
\begin{itemize}
\item Let $M / K'(\zeta_{p^7 q})$ denote the extension cut out by $\overline{\rho}$. Then $l$ splits in $M$.
\item For each $v \in S$, $v \nmid l$. Moreover, $l$ is prime to $pq$.
\item There exists an integer $a \in [-2 \sqrt{l}, 2 \sqrt{l}]$ prime to $l$ such that the polynomial $X^2 - a X + l$ has discriminant prime to $q$, is a product of two linear factors in $\bbZ_p[X]$, and the two roots $\alpha, \beta \in \bbZ_p$ satisfy $\alpha \equiv \beta \equiv 1 \text{ mod }p$.
\end{itemize}
Indeed, it follows that we can find an elliptic curve $E_l$ over $\bbF_l$ such that $\overline{\rho}_{E_l, p}$ is the trivial representation and $\overline{\rho}_{E_l, q}(\Frob_l)$ is regular semisimple. If we adjoin each place $v | l$ to $S$ and choose $\Omega_v$ to be such that each point $x \in \Omega_v$ corresponds to an elliptic curve $E_x$ over $K_v$ with good reduction isomorphic to $E_l$, then any elliptic curve $E$ as above will automatically have the property that $\overline{\rho}_{E, q}$ is decomposed generic. 

We claim that any sufficiently large prime $l$ which splits in $M$ will do the job. We just need to justify the existence of $a$. By the Chinese remainder theorem, it suffices to note that if $a \in \bbZ$, $l \equiv 1 \text{ mod }p^7$ is a prime, and $a \equiv 2 + p^4 + p^6 \text{ mod }(p^7)$, then Hensel's lemma implies that the polynomial $X^2 - a X + l$ factors in $\bbZ_p[X]$, and the two roots $\alpha, \beta \in \bbZ_p$ satisfy $\alpha \equiv \beta \equiv 1 \text{ mod }p$ (in fact, there is a unique root congruent to $1 - p^2$ mod $(p^7)$). This completes the proof.
\end{proof}
When $p = 2$ the analogous functor of pairs $(E, \iota)$ is not representable. We record the following simple variant of Lemma \ref{lem_curves_satisfying_local_conditions} which applies in this case.
\begin{lemma}\label{lem_curves_satisfying_local_conditions_p_equals_2}
Let $K$ be a number field, and let $\overline{\rho} : G_K \to \GL_2(\bbF_2) = \GL(V)$ be a continuous homomorphism. Let $S$ be a finite set of places of $K$ such that for each $v \in S$, there is a Tate elliptic curve $E_v$ over $K_v$ such that $\overline{\rho}_{E_v, 2} \cong \overline{\rho}|_{G_{K_v}}$, and let $q$ be an odd prime. Then we can find an elliptic curve $E$ over $K$ satisfying the following conditions:
\begin{enumerate} 
\item There is an isomorphism $V \cong E[2]$.
\item For each $v \in S$, $E_{K_v}$ is a Tate elliptic curve.
\item $\overline{\rho}_{E, q}$ is decomposed generic and $\overline{\rho}_{E, q}(G_{K(\zeta_q)}) = \SL_2(\bbF_q)$.
\end{enumerate} 
\end{lemma}
\begin{proof}
Let $f(X) = X^3 + a X + b \in K[X]$ be a polynomial such that there is an isomorphism 
\[ \Hom_K(K[X] / (f(X)), \overline{K}) \cong V - \{ 0 \} \]
 of $G_K$-sets. In \cite[Theorem 1]{Rub01} appears, in terms of $f(X)$, the equation of an elliptic curve $\cE$ over a Zariski open subset $U \subset \bbA_K^2$ with the following property: for any field extension $F / K$ and for any $x \in U(F)$, there is an isomorphism $\overline{\rho}_{\cE_x, 2} \cong \overline{\rho}|_{G_F}$; and for any field extension $F / K$ and any elliptic curve $E / F$ with $\overline{\rho}_{E, 2} \cong \overline{\rho}|_{G_F}$, there is a point $x \in U(F)$ such that $\cE_x \cong E$. 

We claim that the homomorphism $\pi_1^{\text{\'et}}(U_{\overline{K}}) \to \SL_2(\bbF_q)$ associated to $\cE[q]$ is surjective. Indeed, consider the Legendre elliptic curve $E_\lambda : y^2 = x(x-1)(x-\lambda)$ over $F = \overline{K}(\lambda)$. There exists a point $x \in U(F)$ such that $\cE_x \cong E_\lambda$. It is well-known that the representation $\overline{\rho}_{E_\lambda, q} : G_F \to \SL_2(\bbF_q)$ associated to the Legendre curve is surjective for any odd prime $q$. This homomorphism factors through a homomorphism $G_F \to \pi_1^{\text{\'et}}(U_{\overline{K}})$, yielding the claim. 

By assumption, we can find for each $v \in S$ a point $x_v \in U(K_v)$ such that $\cE_{x_v}$ is a Tate elliptic curve. Let $\Omega_v \subset U(K_v)$ be an open neighbourhood of $x_v$ such that for any $y \in \Omega_v$, $\cE_y$ is a Tate elliptic curve. The same argument as in the proof of Lemma \ref{lem_curves_satisfying_local_conditions} shows that we can find an odd prime $l$, not dividing any element of $S$, that splits in $K(\zeta_q, \overline{\rho}_{E, 2})$, and such that for each place $v | l$ of $K$, there is an elliptic curve $E_v$ over $k(v) = \bbF_l$ such that $\overline{\rho}_{E_v, 2}$ is trivial and the discriminant of $\det(X - \rho_{E_v, q}(\Frob_v)) \in \bbZ[X]$ is prime to $q$. Let $T$ denote the set of $l$-adic places of $K$ and for $v \in T$, let $\Omega_v \subset U(K_v)$ be a non-empty open subset such that for each $y \in \Omega_v$, $\cE_y$ has good reduction isomorphic to $E_v$. 

Applying the Hilbert irreducibility theorem as in the proof of Lemma \ref{lem_curves_satisfying_local_conditions}, we can find a point $x \in U(K)$ such that for each $v \in S \cup T$, $x \in \Omega_v$, and such that $\overline{\rho}_{\cE_x, q}(G_{K(\zeta_q)}) = \SL_2(\bbF_q)$. The proof is complete on taking $E = \cE_x$. 
\end{proof}
\begin{lemma}\label{lem_existence_of_certain_S3_extensions}
Let $K$ be a number field, and let $z \in K^\times$ be an element which is not a square. Then there exists a thin set $\Omega \subset K$ with the following property: let $a \in K - \Omega$. Then:
\begin{enumerate}
\item The quantity $\beta(a) = - \frac{4}{27}a^3 - \frac{1}{16 \times 27} z$ is not a square in $K$. 
\item Let $b = \sqrt{\beta(a)}$, $L = K(b)$, and $f(x) = x^3 + a x + b \in L[x]$. Then $f(x)$ is irreducible and $\disc f$ is not a square in $L$. Consequently, the splitting field of $f(x)$ over $L$ is an $S_3$-extension of $L$.
\end{enumerate}
\end{lemma}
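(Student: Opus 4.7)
The plan is to take $\Omega$ as the union of three thin subsets of $K$, using the key identity $\disc(f) = -4a^3 - 27b^2 = -4a^3 - 27\beta(a) = z/16$, which holds for every $a \in K$ by construction of $\beta(a)$. Accordingly, the discriminant of $f$ is already fixed in $K$, and I only need to control: (i) that $\beta(a)$ gives a proper quadratic extension $L/K$; (ii) that $z/16$ is not a square in $L$; and (iii) that $f$ is irreducible over $L$. I take $\Omega_1 = \{a \in K : \beta(a) \in K^2\}$ and $\Omega_2 = \{a \in K : z\beta(a) \in (K^\times)^2\}$; both are thin, since $\beta(x)$ is a cubic with discriminant a nonzero multiple of $z^2$, so the curves $y^2 = \beta(x)$ and $y^2 = z\beta(x)$ are affine models of elliptic curves, whose $K$-points project to thin subsets of $\mathbb{A}^1(K)$. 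For $a \notin \Omega_1 \cup \Omega_2$, the classes of $z$ and $\beta(a)$ in $K^\times/(K^\times)^2$ are two distinct non-trivial classes (by the hypothesis on $z$ and the definitions of $\Omega_1, \Omega_2$), so $K(\sqrt z) \neq L = K(\sqrt{\beta(a)})$, giving $z \notin (L^\times)^2$ and hence $\disc(f) \notin (L^\times)^2$.

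For the irreducibility of $f$, I introduce the auxiliary sextic $h(x,t) = (x^3 + tx)^2 - \beta(t) \in K[t][x]$, which over $L(t) = K(t)(\sqrt{\beta(t)})$ factors as $h = f \tilde{f}$, where $f(x,t) = x^3 + tx + b$ and $\tilde{f}(x,t) = x^3 + tx - b$ with $b^2 = \beta(t)$. A short descent argument shows that if $f(x,t)$ is irreducible in $L(t)[x]$ then $h(x,t)$ is irreducible in $K(t)[x]$: any factorization of $h$ in $K(t)[x]$ must, over $L(t)[x]$, be a product of the irreducible factors $f, \tilde{f}$, and since neither factor individually lies in $K(t)[x]$ (they involve $b \notin K(t)$), the only possibility is the trivial factorization. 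Hilbert's irreducibility theorem then furnishes a thin set $\Omega_3 \subset K$ outside which the specialization $h(x,a)$ is irreducible in $K[x]$, which (by the same argument in reverse) forces $f(x) = x^3 + ax + b$ to be irreducible in $L[x]$. Setting $\Omega = \Omega_1 \cup \Omega_2 \cup \Omega_3$ thus yields both conclusions of the lemma, the $S_3$ structure following from the combination of irreducibility and the non-square discriminant.

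The main technical step is verifying that $f(x,t)$ is irreducible over $L(t)$. Since $f$ is a cubic, this amounts to excluding a root in $L(t)$. If $\alpha = p + qb$ with $p, q \in K(t)$ satisfies $f(\alpha) = 0$, then expanding and separating the rational and $b$-parts yields $p(p^2 + 3q^2\beta(t) + t) = 0$ and $3p^2 q + q^3 \beta(t) + tq + 1 = 0$. In the case $p = 0$, the second equation is $q^3 \beta(t) + tq + 1 = 0$; writing $q = u/v$ in lowest terms and clearing denominators, the coprimality $\gcd(u,v)=1$ together with the squarefreeness of $\beta$ (which uses $z \neq 0$) forces $u$ to be a unit, and a degree-by-degree matching of the resulting polynomial identity $v^3 + tv^2 + \beta(t) = 0$ against the explicit form of $\beta$ yields simultaneously incompatible constraints on the $t^1$, $t^2$, and constant coefficients. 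The case $p \neq 0$ is handled analogously, eliminating $p$ via $p^2 = -3q^2\beta(t) - t$ to arrive at $v^3 - 2tv^2 - 8\beta(t) = 0$, and again a coefficient-by-coefficient analysis, crucially using $z \neq 0$ in the constant term, rules out any polynomial $v$. This explicit bookkeeping is the main computational obstacle, and is where the assumption $z \in K^\times \smallsetminus K^2$ is used decisively.
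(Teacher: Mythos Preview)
Your proof is correct and takes a genuinely different route from the paper's. Both arguments ultimately hinge on the irreducibility of the sextic $h(x,t)=(x^3+tx)^2-\beta(t)\in K(t)[x]$, but the packaging and the irreducibility argument differ.

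The paper works geometrically: it considers the tower $Y\to X\to\bbA^1_K$ (with $X:\,b^2=\beta(a)$ and $Y:\,x^3+ax+b=0$), proves that $Y$ is \emph{geometrically} irreducible, and then observes that over $E=K(\sqrt{z})$ the map $Y_E\to\bbA^1_E$ is generically Galois of degree $6$. A single application of Hilbert irreducibility over $E$ (then intersected down to $K$) simultaneously gives that $\beta(a)$ is a non-square, that $f$ is irreducible over $L$, and (via the argument that $L=E$ would contradict transitivity on the fibre) that $\disc f=z/16$ is a non-square in $L$. The geometric irreducibility of $Y$ is proved by the substitution $y=x^2$, reducing to a cubic $G(y)$ with an easy norm argument.

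Your approach is more hands-on: you split $\Omega$ into three pieces, handling the two quadratic conditions ($\beta(a)\notin K^2$ and $z\beta(a)\notin K^2$, whence $z\notin L^2$) by two genus-$1$ covers, and the irreducibility of $f$ over $L$ by showing $h(x,t)$ irreducible over $K(t)$ via a direct root-search for $f$ in $L(t)=K(t)(\sqrt{\beta(t)})$. Your descent step ($f$ irreducible over $L(t)\Rightarrow h$ irreducible over $K(t)$, and conversely at the level of specializations using that $\sigma(f)=\tilde f$) is clean and correct. The computational verification (both the $p=0$ and $p\neq 0$ cases reduce to a polynomial identity $v^3+tv^2+\beta(t)=0$ or $v^3-2tv^2-8\beta(t)=0$ which is ruled out by matching the $t^0,t^1,t^2$ coefficients, using $z\neq 0$) checks out. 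One small remark: the squarefreeness of $\beta$ is not actually needed to force $u$ to be a unit (coprimality of $u,v$ and $u\mid v^3$ suffice); it is only used to ensure $y^2=\beta(x)$ and $y^2=z\beta(x)$ are smooth, which in turn is not strictly necessary for thinness either. What each approach buys: the paper's is more conceptual and proves the stronger statement of geometric irreducibility; yours is more elementary and makes the role of each hypothesis (especially $z\notin K^2$ and $z\neq 0$) completely transparent.
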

\begin{proof}
We consider the tower of affine varieties 
\[ Y \to X \to \bbA^1_K, \]
where $\bbA^1_K = \Spec K[a]$, $X$ is the curve in the $(a, b)$-plane given by the equation $z = -16(4a^3 + 27b^2)$, and $Y$ is given by the equation $x^3 + a x + b = 0$. It is clear that both morphisms $Y \to X$ and $X \to \bbA^1_K$ are finite and dominant, and hence $X$, $Y$ are $K$-schemes of dimension 1. We claim that $Y$ is geometrically irreducible.

We first show why this claim implies the truth of the lemma. Let $E = K(\sqrt{z})$. Then $Y_E \to \bbA^1_E$ is generically a Galois cover of degree 6. Indeed, it suffices to check that $\Aut(Y_E / \bbA^1_E)$ contains elements of order 2 and 3. The cover $Y_E \to X_E$ has an automorphism of order 3 because the discriminant of $x^3 + ax +b$ is a square. On the other hand, $(a, b,x) \mapsto (a, -b, -x)$ is an  element of $\Aut(Y / \bbA^1_K)$ of order 2. By \cite[Proposition 3.3.1]{Ser08}, there is a thin set $\Omega_E \subset E = \bbA^1_K(E)$ such that for all $a_0 \in E - \Omega_E$, $a_0$ has 6 pre-images in $Y(\overline{E})$ on which $\Gal(\overline{E} / E)$ acts transitively. Equivalently, $\beta(a_0)$ is not a square in $E$, and if $b_0$ is a square root and $L = E(b_0)$, then the polynomial $f_0(x) = x^3 + a_0 x + b_0 \in L[x]$ is irreducible. 

By \cite[Proposition 3.2.1]{Ser08}, $\Omega = \Omega_E \cap K \subset K = \bbA^1_K(K)$ is a thin set. We claim that this $\Omega$ satisfies the conclusion of the proposition. Indeed, if $a_0 \in K - \Omega$, then $\beta(a_0)$ is not a square in $K$ and, if $L = K(b_0)$, then the polynomial $f(x) = x^3 + a_0 x + b_0 \in L[x]$ is irreducible. We just need to justify why $z$ is not a square in $L$. However, if $z$ is a square in $L$ then $L = K(\sqrt{z}) = E$, implying that $a_0$ has a pre-image in $X(E)$. This contradicts the defining property of $\Omega_E$.

It remains to justify our claim that $Y$ is geometrically irreducible. It can be given equivalently by the equation
\[ F(x) = (x^3 + ax)^2 + \frac{4}{27}a^3 + \frac{1}{16 \times 27}z = 0 \]
in the $(a,x)$-plane. It therefore suffices to show that $F(x)$ is irreducible in $\bbC(a)[x]$ (after extending scalars along some embedding $K \hookrightarrow \bbC$). Let $G(y) = y^3 + 2 a y^2 + a^2 y + \frac{4}{27}a^3 + \frac{1}{16 \times 27}z \in \bbC[a][y]$ be such that $G(x^2) = F(x)$. It even suffices to show that $G(y)$ is irreducible in $\bbC(a)[y]$. Indeed, if this is so and if $y$ is a square in $\bbC(a)[y] / (G(y))$, then the norm of $y$, namely $-(\frac{4}{27}a^3 + \frac{1}{16 \times 27}z)$, is a square in $\bbC(a)$; and this is a contradiction.

To show the irreducibility of $G(y)$ in $\bbC(a)[y]$, it suffices to show that $G(y)$ has no root in $\bbC(a)$. If $y_0 \in \bbC(a)$ is a root, then $y_0 \in \bbC[a]$, as $G$ is monic, and $y_0$ divides $\frac{4}{27}a^3 + \frac{1}{16 \times 27}z$ in $\bbC[a]$. On the other hand, the relation
\[ y_0 (y_0 + a)^2 = - (\frac{4}{27}a^3 + \frac{1}{16 \times 27}z) \]
implies that $y_0$ equals $(\frac{4}{27}a^3 + \frac{1}{16 \times 27}z)$ up to squares in $\bbC(a)$. Together these two facts imply that $y_0$ equals $\frac{4}{27}a^3 + \frac{1}{16 \times 27}z$ up to $\bbC^\times$-multiple, hence that $a$ satisfies a degree 9 equation over $\bbC$.
This is false, so we see that $G(y)$ is irreducible in $\bbC(a)[y]$ and hence that $Y$ is geometrically irreducible, as desired.
\end{proof}
\begin{corollary}\label{cor_existence_of_certain_S3_extensions_with_a_genericity_condition}
Let $K$ be a number field, and let $z \in K^\times$ be an element which is not a square. Let $S$ be a finite set of places of $K$, and let $K' / K$ be a finite extension. Let $T \subset S$ be a subset such that for each $v \in T$, $\ord_v z > 0$. Then we can find an element $a \in K$ with the following property: let $\beta = - \frac{4}{27}a^3 - \frac{1}{16 \times 27} z$. Then:
\begin{enumerate}
\item $\beta$ is not a square in $K$, and each place of $S$ splits in $L = K(b)$, where $b = \sqrt{\beta}$.
\item Let $M / L$ denote the splitting field of $f(x) = x^3 + a x + b$.
Then $\Gal(M/L) = S_3$.
\item There exists a rational prime $l$ not dividing any element of $S$ such that $l$ splits in $L K'$ and for each place $v | l$ of $L$, $v$ is unramified in $M$ and $\Frob_v \in \Gal(M / L)$ has order 3. 
\item Let $A$ denote the elliptic curve over $L$ given by the equation $y^2 = x^3 + a x + b$. Then for each place $v$ of $L$ lying over an element of $T$, $A_{L_v}$ acquires split multiplicative reduction over $L_v(\sqrt{-1})$.
\end{enumerate}
\end{corollary}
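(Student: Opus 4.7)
The plan is to choose $a \in K$ via Hilbert irreducibility combined with strong approximation at $S$, and then to produce $l$ by Chebotarev density.

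First, for each $v \in S$ I construct a non-empty open set $U_v \subset K_v$ of admissible values of $a$. For $v \in S \setminus T$, take $U_v$ to be the open non-empty preimage $\beta^{-1}((K_v^\times)^2)$. For $v \in T$ --- assuming, after enlarging $S$ if necessary, that every such $v$ has residue characteristic coprime to $6$ --- the explicit choice $a_v = -27$ gives $\beta(a_v) = 2916 - z/432 \equiv 54^2 \pmod{\varpi_v^{v(z)}}$, so $\beta(a_v)$ is a square unit in $K_v$. Picking a square root $b_v \equiv 54 \pmod{\varpi_v}$, the cubic $f(x) \equiv x^3 - 27x + 54 = (x-3)^2(x+6) \pmod{\varpi_v}$ is singular at $(3,0)$ with nodal tangents $y = \pm 3(x-3)$ already defined over $k(v)$, so the curve $A : y^2 = x^3 + a_v x + b_v$ has split multiplicative reduction at $v$ over $L_v = K_v$, a fortiori over $L_v(\sqrt{-1})$. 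Let $U_v$ be a small open neighbourhood of $-27$ on which all of these open conditions persist; then $U = \prod_{v \in S} U_v$ is non-empty and open in $\prod_{v \in S} K_v$.

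Next, let $\Omega_1 \subset K$ be the thin set from Lemma~\ref{lem_existence_of_certain_S3_extensions}. A standard application of Hilbert irreducibility to the normal closure over $K'$ of the cover $Y \to \bbA^1_K$ from the proof of that lemma yields a further thin set $\Omega_2 \subset K$ such that for $a \in K \setminus \Omega_2$, the field $M = M(a)$ is linearly disjoint from $K'$ over $K$; in particular $A_3 \subseteq \Gal(MK'/LK')$. The version of Hilbert irreducibility allowing prescribed local conditions at finitely many places (cf.\ \cite[\S 3.5]{Ser08}, combined with strong approximation) then produces $a \in K \setminus (\Omega_1 \cup \Omega_2)$ whose image in $\prod_{v \in S} K_v$ lies in $U$. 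Such an $a$ satisfies (1), (2) and (4) directly.

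For (3), apply the Chebotarev density theorem to the normal closure over $\bbQ$ of $MK'$: the set of rational primes $l$, unramified in $MK'$ and not lying below any place of $S$, whose Frobenius in $\Gal(MK'/K)$ is trivial on $LK'$ and restricts to an element of order $3$ in $\Gal(MK'/LK') \hookrightarrow \Gal(M/L) = S_3$, has positive density (using $A_3 \subseteq \Gal(MK'/LK')$ from the preceding step). Any such $l$ splits in $LK'$ and has $\Frob_v$ of order $3$ in $\Gal(M/L)$ for every place $v \mid l$ of $L$, establishing (3). The main technical point is the simultaneous Hilbert irreducibility and strong approximation step; the explicit local analysis at $v \in T$ needed to guarantee the split multiplicative reduction is elementary, and the Chebotarev step is routine once $A_3 \subseteq \Gal(MK'/LK')$ has been arranged.
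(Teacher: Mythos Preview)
Your approach is broadly parallel to the paper's, but the local analysis at places $v\in T$ has a genuine gap. Your choice $a_v=-27$ and the computation $\beta(a_v)\equiv 54^2$, together with the factorisation $x^3-27x+54=(x-3)^2(x+6)$ and the nodal tangent analysis, all require $6\in k(v)^\times$. At a place of residue characteristic $3$ one gets $x^3-27x+54\equiv x^3$, so the reduction is a cusp (additive, not multiplicative); at residue characteristic $2$ the short Weierstrass model $y^2=f(x)$ is not even smooth over $\cO_{K_v}$ and $2916$ is not a unit, so neither ``$\beta(a_v)$ is a square unit'' nor the reduction-type analysis survives. Your parenthetical ``after enlarging $S$ if necessary'' cannot rescue this: $T$ is given, and enlarging $S$ does not change the residue characteristics of the places in $T$. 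This matters because in the paper's application (Lemma \ref{lem_modularity_when_discriminant_is_real_mod_6th_powers}) $T$ consists precisely of the $2$-adic and $3$-adic places. The paper handles this uniformly by choosing, for each $v\in T$, a Tate parameter $q\in(\varpi_v)$ with $\Delta(q)=z$ (possible since $\Delta(q)=q+O(q^2)$) and taking $(a,b)$ to be a short Weierstrass model of the Tate curve $E_q$; this produces a point of $X(K_v)$ whose associated curve has split multiplicative reduction regardless of the residue characteristic. The paper also explicitly notes that the second pre-image $(a,-b)$ is the quadratic twist by $-1$, hence acquires split multiplicative reduction over $L_v(\sqrt{-1})$; you should address both square roots, since both places of $L$ above $v$ occur.

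Your treatment of (3) is also different in strategy: you choose $a$ first and then produce $l$ by Chebotarev, whereas the paper fixes $l$ in advance (split in $K'(\sqrt{z})$) and adds the $l$-adic places to the set of local constraints on $a$, using \cite[Proposition 9.15]{Ser12} to find a residue class $a_0\in\bbF_l$ with the required splitting/inertia behaviour in $X,Y$. Your route can be made to work, but as written it needs more care: linear disjointness of $M$ from $K'$ over $K$ does guarantee $\Gal(MK'/LK')\cong\Gal(M/L)=S_3$ (since $M/K$ is Galois), but to get a \emph{rational} prime $l$ that splits completely in $LK'$ and has $\Frob_v$ of order $3$ at \emph{every} $v\mid l$ of $L$, you must control the full conjugacy class of $\Frob_l$ in the Galois closure of $MK'$ over $\bbQ$, not just its restriction to $\Gal(MK'/K)$. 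The paper's approach sidesteps this by imposing the \emph{same} local condition at every $v\mid l$ of $K$.
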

\begin{proof}
Let notation be as in the proof of Lemma \ref{lem_existence_of_certain_S3_extensions}. The corollary follows from the lemma and from \cite[Theorem 3.5.3]{Ser08}, provided we can show that the following points:
\begin{itemize}
\item For any place $v$ of $K$, there is a non-empty open subset $\Omega_v \subset K_v$ such that for any $a \in \Omega_v$, $a$ has 2 pre-images in $X(K_v)$; in other words, the fibre above $a$ is totally split.
\item For any place $v$ of $K$ such that $\ord_v z > 0$, there is a non-empty open subset $\Omega_v \subset K_v$ such that for any $a \in \Omega_v$, $a$ has two pre-images in $X(K_v)$, and for each pre-image $b$, the elliptic curve given by the equation $y^2 = x^3 + ax + b$ acquires split multiplicative reduction over $L_v(\sqrt{-1})$.
\item There exists a prime $l$ split in $K'(\sqrt{z})$ and not dividing any place of $S$ such that for each place $v | l$ of $K$, there is a non-empty open subset $\Omega_v \subset K_v$ with the following property: for any $a \in \Omega_v$, $a$ has two pre-images in $X(K_v)$, and each of these points is inert in $Y(K_v)$, defining an unramified extension of $K_v$ of degree 3. 
\end{itemize}
The first point is clear: $X$ is a punctured elliptic curve, so its set of $K_v$-points is non-empty and stable under multiplication by $-1$. We can choose $\Omega_v$ to be the image of a suitable open subset of $X(K_v)$ in $\bbA^1_K(K_v)$. 

For the second point, it is enough to find a single pair $(a, b) \in X(K_v)$ such that the elliptic curve $y^2 = x^3 + a x + b$ has split multiplicative reduction. Indeed, then we can choose $\Omega_v$ to be any sufficiently small open neighbourhood of the point $a$ in $K_v$. (Note that the other pre-image of $a$ in $X(K_v)$ is $(a, -b)$, which gives a quadratic twist of a curve with split multiplicative reduction.) To find such a pair, choose $q \in (\varpi_v)$ such that $\Delta(q) = z$ (possible since $\Delta = q + O(q^2) \in \bbZ\llbracket q \rrbracket)$, and let $(a, b)$ be the co-ordinates of some short Weierstrass equation of the elliptic curve of Tate parameter $q$. 

For the third point, let $F$ denote the Galois closure of $K'(\sqrt{z}) / \bbQ$ and spread out $Y_F$, $X_F$ to schemes $\cY$, $\cX$ of finite type over over $\bbA^1_{\cO_F[1/N]}$ defined by the same equations (for some integer $N \geq 1$). Then $Y_F \to \bbA^1_F$ is Galois of group $S_3$, so we can apply \cite[Proposition 9.15]{Ser12} to deduce that for any sufficiently large prime $l$ which splits in $F$ and for any place $w | l$ of $F$, there exists a point $a_0 \in \bbA^1(k(w))$ which is split in $\cX(k(w))$ and inert in $\cY(k(w))$. If $v$ is a place of $K$ lying above $l$, we can then choose $\Omega_v$ by choosing a place $w$ of $F$ lying above $v$ and taking $\Omega_v \subset K_v = F_w$ to be a non-empty subset of $\cO_{F_w}$ contained in the pre-image of $a_0$ under reduction modulo $w$.
\end{proof}

\begin{lemma}\label{lem_modularity_when_discriminant_is_real_mod_6th_powers}
Let $K$ be an imaginary CM number field, and let $E$ be an elliptic curve over $K$ satisfying the following conditions:
\begin{enumerate}
\item The representation $\overline{\rho}_{E, 3}$ is decomposed generic, and $\overline{\rho}_{E, 3}({G_{K(\zeta_3)}})$ is conjugate to $\SL_2(\bbF_3)$.
\item Let $\Delta_E \in K^\times$ denote the discriminant of a Weierstrass equation of $E$. Then $\Delta_E \in (K^+)^\times \cdot (K^\times)^6$.
\item For each place $v | 3$ of $K$, $E_{K_v}$ is a Tate curve and $\ord_{v} \Delta_E \equiv 2 \text{ or } 4 \text{ mod }6$.
\item For each place $v | 2$ of $K$, $E_{K_v}$ is a Tate curve and $\ord_{v} \Delta_E \equiv 3 \text{ mod }6$, and the extension $K_v(\sqrt{-1}) / K_v$ is unramified.\end{enumerate}
Then $E$ is modular.
\end{lemma}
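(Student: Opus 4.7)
The plan is to execute a ``$3$-$2$ switch'' adapted to the CM setting. Specifically, after a soluble CM base change $L/K$, I construct an auxiliary elliptic curve $F/L$ with $F[3]\cong E_L[3]$ whose mod-$2$ representation is dihedral of image $\GL_2(\bbF_2)=S_3$ and satisfies the hypotheses of the $2$-adic ALT, Theorem~\ref{thm_main_automorphy_theorem}. Modularity of $F/L$ then forces modularity of $E_L$ via the $3$-adic ALT (Theorem~\ref{thm_automorphy_at_odd_primes}) applied to $\rho_{E_L,3}\otimes\epsilon^{-1}$, and soluble descent delivers modularity of $E/K$.

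To construct $L$ and $F$: using hypothesis~(2), choose $z\in(K^+)^\times$ with $\Delta_E\equiv z\pmod{(K^\times)^6}$, arranged via weak approximation so that $\ord_v z$ matches $\ord_v\Delta_E$ modulo $6$ at every place $v\mid 6$ (so $\ord_v z$ is odd at $v\mid 2$ by (4), and $\ord_v z\in\{2,4\}\pmod 6$ at $v\mid 3$ by (3)). Apply Corollary~\ref{cor_existence_of_certain_S3_extensions_with_a_genericity_condition} over $K^+$ to this $z$ with $S$ containing all places where $E$ or $\overline\rho_{E,3}$ is ramified and all places above $6$, $T$ the set of places above $2$, and additional infinite-place conditions forcing $\beta:=-\frac{4}{27}a^3-\frac{1}{16\cdot 27}z$ to be totally positive. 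This produces $a\in K^+$ and a soluble CM quadratic extension $L:=K\cdot K^+(\sqrt\beta)$ of $K$ in which every place of $S$ splits. The elliptic curve $A:y^2=x^3+ax+b$ with $b=\sqrt\beta\in L^+$ is defined over $L^+$; over $L$ it has $\Delta_A\equiv z\equiv\Delta_{E_L}\pmod{(L^\times)^6}$, its $2$-division field $M=L(A[2])$ has Galois group $S_3=\PGL_2(\bbF_2)$ over $L$ and is Galois over $L^+$ (since $f\in L^+[x]$), so $\overline\rho_{A,2}$ extends to $G_{L^+}$, and is decomposed generic by the third bullet of the corollary. Since hypothesis~(1) is preserved under base change to $L(\zeta_3)$, Lemma~\ref{lem_existence_of_elliptic_curves} over $L$ then produces $F/L$ with symplectic isomorphisms $F[2]\cong A[2]$ and $F[3]\cong E_L[3]$ and $\Delta_F\equiv z\pmod{(L^\times)^6}$.

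Verifying Theorem~\ref{thm_main_automorphy_theorem} for $\rho_{F,2}$ over $L$ is now the main task. The residual structure is immediate by the previous paragraph. For each place $v\mid 2$ of $L$, the splitting of places of $S$ in $L$ gives $L_v=K_{v|K}$, so hypothesis~(4) transfers directly: $L_v(\sqrt{-1})/L_v$ is unramified. Moreover $\Delta_F\equiv z\pmod{(L^\times)^6}$ and $\ord_v z\equiv 3\pmod 6$ is odd, so $F_{L_v}$ is a Tate curve with discriminant of odd valuation; this gives $\rho_{F,2}|_{G_{L_v}}$ the required upper-triangular ordinary shape with unipotently-ramified Weil--Deligne representation and upper-right entry cutting out a ramified Kummer extension $L_v(\sqrt z)$ of odd valuation. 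Theorem~\ref{thm_main_automorphy_theorem} therefore applies and $F/L$ is modular.

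At $p=3$: by (3) applied via $\Delta_F\equiv\Delta_E\pmod{(L^\times)^6}$, $F$ has potentially Tate reduction at each $v\mid 3$ of $L$, so Corollary~\ref{cor_tate_to_steinberg} gives that the automorphic representation $\pi_F$ corresponding to $F$ is $\iota$-ordinary at $3$ for any $\iota$. Hypothesis~(1) forces $\overline\rho_{E_L,3}|_{G_{L(\zeta_3)}}$ to have image $\SL_2(\bbF_3)$, and the splitting arrangement ensures it remains decomposed generic. Applying Theorem~\ref{thm_automorphy_at_odd_primes} with $p=3$ to $\rho_{E_L,3}\otimes\epsilon^{-1}$, with the required residually isomorphic cuspidal automorphic representation of $\PGL_2(\bbA_L)$ supplied by the $\PGL_2$-avatar of $\pi_F$, yields modularity of $E_L$; modularity of $E/K$ then follows by soluble base change for $\GL_2$. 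The principal obstacle is the local verification at places $v\mid 2$ in the previous paragraph: because $F$ is only determined up to level-$6$ isomorphism by $E[3]$ and $A[2]$, one must check carefully that the discriminant congruence combined with the odd valuation of $z$ at places above $2$ forces $F$ to have precisely the ``tr\`es ramifi\'ee''-type ordinary local shape demanded by Theorem~\ref{thm_main_automorphy_theorem}; tracking how the quadratic twists implicit in Lemma~\ref{lem_existence_of_elliptic_curves} interact with the $2$-adic local structure is the chief subtlety.
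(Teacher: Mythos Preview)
Your overall strategy is the same as the paper's: pass to a soluble CM extension $L=K_0$ obtained from Corollary~\ref{cor_existence_of_certain_S3_extensions_with_a_genericity_condition} applied over $K^+$ to a totally real representative $z$ of $\Delta_E$, build the auxiliary curve $A$ (the paper's $A_0$), apply Lemma~\ref{lem_existence_of_elliptic_curves} to get $F$ (the paper's $A$), prove $F$ modular via Theorem~\ref{thm_main_automorphy_theorem}, and then run Theorem~\ref{thm_automorphy_at_odd_primes} at $p=3$.

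There is, however, a genuine gap in your treatment of the places $v\mid 3$. You assert that ``by (3) applied via $\Delta_F\equiv\Delta_E\pmod{(L^\times)^6}$, $F$ has potentially Tate reduction at each $v\mid 3$.'' This does not follow: knowing only that $\ord_v\Delta_F\equiv 2$ or $4\pmod 6$ does not rule out potentially good (additive) reduction at a $3$-adic place. The paper's argument proceeds instead through the mod-$2$ representation. One arranges, by taking $T$ in Corollary~\ref{cor_existence_of_certain_S3_extensions_with_a_genericity_condition} to include the $3$-adic places as well (you took only the $2$-adic places), that the auxiliary curve $A$ has (potentially) multiplicative reduction at each $w\mid 3$. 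Since $\ord_w\Delta_A=\ord_w z$ is even there, $\overline{\rho}_{A,2}|_{I_{L_w}}$ is trivial; hence so is $\overline{\rho}_{F,2}|_{I_{L_w}}$. If $F$ had potentially good reduction at $w$, then $\rho_{F,2}(I_{L_w})$ would be a finite $2$-group of order $m$, and one gets $m\cdot\ord_w\Delta_F\equiv 0\pmod{12}$, contradicting $\ord_w\Delta_F$ being prime to $3$. Without enlarging $T$ to include the $3$-adic places, you do not have access to the unramifiedness of $\overline{\rho}_{A,2}$ at $w\mid 3$, and the argument breaks.

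A smaller point: your justification that $F_{L_v}$ is a Tate curve at $v\mid 2$ from the discriminant alone is likewise incomplete; the paper first uses $F[3]\cong E_L[3]$ and $3\mid\ord_v\Delta_E$ to see that $\overline{\rho}_{F,3}|_{I_{L_v}}$ is trivial, forcing semistability, and only then invokes the odd discriminant valuation. You have this ingredient available, so this is an omission rather than an error. Also, to keep $\overline{\rho}_{E,3}|_{G_L}$ decomposed generic you should include in $S$ a prime $l$ that is already decomposed generic for $\overline{\rho}_{E,3}$, as the paper does; your phrase ``the splitting arrangement ensures it remains decomposed generic'' does not point to any specific witness.
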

\begin{proof}
Let us write $\Delta_E = z w^6$ with $z \in K^+$, $w \in K$. We can assume that for each place $v | 2, 3$ of $K$, $\ord_v z >0$. Let $l$ be a prime which is decomposed generic for $\overline{\rho}_{E, 3}$. Consider the cubic equation
\[ b^2 = - \frac{4}{27}a^3 - \frac{1}{16 \times 27} z = \beta(a), \]
where $\beta(a) \in K^+[a]$. Giving a rational point $(a, b)$ on this curve is equivalent to giving a Weierstrass equation $y^2 = x^3 + a x + b$ with discriminant $z$. 

Let $S$ be the set of places of $K^+$ dividing $2$, $3$, $l$, or $\infty$. By Corollary \ref{cor_existence_of_certain_S3_extensions_with_a_genericity_condition}, we can find an element $a_0 \in K^+$ with the following properties:
\begin{itemize}
\item The element $\beta(a_0)$ is not a square in $K^+$. Let $b_0$ be a square-root, and let $K_0^+ = K^+(b_0)$, $K_0 = K(b_0)$. 
\item Each place of $S$ splits in $K_0^+$. In particular, $K_0^+$ is a totally real quadratic extension of $K^+$, and $K_0$ is a CM field, and each place $v$ of $K$ dividing $2$, $3$ or $l$ splits in $K_0$.
\item Let $f(x) = x^3 + a_0 x + b_0 \in K_0^+[x]$. Then the splitting field $M$ of $f(x)$ over $K_0^+$ has Galois group $S_3$. 
\item There exists a rational prime $q$ split in $K_0$ such that for each place $v | q$ of $K_0^+$, $v$ is unramified in $M$ and $\Frob_v \in \Gal(M / K_0^+)$ has order 3. 
\item Let $A_0$ denote the elliptic curve over $K_0$ defined by the equation $y^2 = x^3 + a_0 x + b_0$. Then for each place $v | 2, 3$ of $K_0$, $A_0$ acquires split multiplicative reduction over $K_{0, v}(\sqrt{-1})$.
\end{itemize}
By construction, the discriminant of $A_0$ equals $z$, so satisfies $\Delta_{A_0} \equiv \Delta_E \text{ mod }(K_0^\times)^6$.
We observe that the following conditions are satisfied:
\begin{itemize}
\item $\overline{\rho}_{A_0, 2} : G_{K_0} \to \GL_2(\bbF_2)$ is decomposed generic and surjective, and extends to a homomorphism $G_{K_0^+} \to \GL_2(\bbF_2)$. (It is decomposed generic because of the existence of the prime $q$, which splits in $K_0$. It is surjective because otherwise $K_0 = K(\sqrt{z})$, which would contradict the fact that the primes of $K$ above 2 split in $K_0$ but ramify in $K(\sqrt{z})$.)
\item $\overline{\rho}_{E, 3}|_{G_{K_0}}$ is decomposed generic.
\item For each place $w | 2$ of $K_0$, $\ord_{w} \Delta_{A_0}\equiv 3 \text{ mod }6$.
\item For each place $w | 3$ of $K_0$, $\ord_{w} \Delta_E \equiv 2 \text{ or } 4 \text{ mod }6$.
\end{itemize}
By Lemma \ref{lem_existence_of_elliptic_curves}, we can find an elliptic curve $A$ over $K_0$ with the following properties:
\begin{itemize}
\item $\overline{\rho}_{A, 2} \cong \overline{\rho}_{A_0, 2}$.
\item $\overline{\rho}_{A, 3} \cong \overline{\rho}_{E, 3}|_{G_{K_0}}$. 
\item $\Delta_{A} \equiv \Delta_E \text{ mod }(K_0^\times)^6$.
\end{itemize}
Let $w$ be a 2-adic place of $K_0$. Then $\ord_w \Delta_E \equiv 3 \text{ mod }6$, and so $\overline{\rho}_{E, 3}|_{G_{K_0, w}}$ is unramified ($E_{K_{0, w}}$ is a Tate curve). It follows that $A$  must be semistable at $w$. Since $\ord_w \Delta_A$ is odd, we see that $A$ in fact has multiplicative reduction, and $\overline{\rho}_{A, 2}|_{G_{K_{0, w}}}$ has image of order 2 and cuts out the ramified quadratic extension $K_{0, w}(\sqrt{\Delta_A}) / K_{0, w}$. We now see that all of the hypotheses of Theorem \ref{thm_main_automorphy_theorem} are satisfied for the representation $\rho_{A, 2} \otimes \epsilon^{-1}$, and hence that $A$ is a modular elliptic curve.

Now let $w$ be a 3-adic place of $K_0$. Then $\ord_w \Delta_{A_0} \equiv 0 \text{ mod }2$, so $\overline{\rho}_{A_0, 2}|_{G_{K_0, w}} \cong \overline{\rho}_{A, 2}|_{G_{K_0, w}}$ is unramified ($A_0$ acquires split multiplicative reduction over an unramified extension of $K_{0, w}$). We claim this forces $A$ to have potentially multiplicative reduction. Indeed, if $A$ has potentially good reduction then $\rho_{A, 2}(I_{K_0, w})$ has order $m$ dividing 4, and this integer satisfies $m \ord_w \Delta_A \equiv 0 \text{ mod }12$, which contradicts the fact that $\ord_{w} \Delta_A$ is prime to 3.

Using Corollary~\ref{cor_tate_to_steinberg} to verify ordinariness, we now see that all of the hypotheses of Theorem \ref{thm_automorphy_at_odd_primes} 
are satisfied for the representation $\rho_{E, 3}|_{G_{K_0}}\otimes \epsilon^{-1}$, and hence that $E_{K_0}$ is a modular elliptic curve. The modularity of $E$ itself now follows by soluble descent (i.e.\ \cite[Proposition 6.5.13]{10authors}).
\end{proof}
\begin{proposition}\label{prop_modularity_of_certain_curves}
Let $K$ be an imaginary CM number field, and let $E$ be an elliptic curve over $K$. Suppose that the following conditions are satisfied: 
\begin{enumerate}
\item For each place $v | 2$ of $K$, $E_{K_v}$ is a Tate curve, the extension $K_v(\sqrt{-1}) / K_v$ is unramified, $\ord_{K_v} \Delta_E \equiv 3 \text{ mod }6$, and $\Delta_E / \Delta_E^c \in (K_v^\times)^2$.
\item For each place $v | 3$ of $K$, $E_{K_v}$ is a Tate curve, $\ord_{K_v} \Delta_E \equiv 2, 4 \text{ mod }6$, and $\Delta_E / \Delta_E^c \in (K_v^\times)^3$.
\item Let $L = K(\zeta_{12}, (\Delta_E / \Delta_E^c)^{1/6})$. Then $\overline{\rho}_{E, 3}|_{G_L}$ is decomposed generic and  $\overline{\rho}_{E, 3}({G_{L(\zeta_3)}})$ is conjugate to $\SL_2(\bbF_3)$.
\end{enumerate}
Then $E$ is modular.
\end{proposition}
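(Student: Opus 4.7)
The plan is to base change to $L$, apply Lemma~\ref{lem_modularity_when_discriminant_is_real_mod_6th_powers} to $E_L$, and then descend modularity via soluble base change. Setting $a=\Delta_E$ and $n=6$ in Corollary~\ref{cor_real_up_to_nth_powers}, the splitting field of $x^6-\Delta_E/\Delta_E^c$ over $K$ is a soluble CM extension inside which there exists $z$ in the maximal totally real subfield satisfying $\Delta_E\equiv z \pmod{(L^\times)^6}$. The field $L$ appearing in the statement is obtained from this splitting field by adjoining $i$, which only enlarges it by the abelian CM extension $K(i)/K$; hence $L/K$ is itself soluble CM and $\Delta_{E_L}\in (L^+)^\times \cdot (L^\times)^6$.

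The technical core is to verify that the local hypotheses at primes above $2$ and $3$ of Lemma~\ref{lem_modularity_when_discriminant_is_real_mod_6th_powers} are preserved under this base change. First I would treat a place $w\mid 2$ of $L$ above $v\mid 2$ of $K$. Writing $\Delta_E/\Delta_E^c = u^2$ in $K_v^\times$, $\beta^{1/6}$ generates the same local extension as $u^{1/3}$. The extension $K_v(\zeta_{12})/K_v$ is unramified, because $K_v(\zeta_3)/K_v$ is unramified (since $3$ is coprime to the residue characteristic) and $K_v(i)/K_v$ is unramified by hypothesis; and $K_v(u^{1/3})/K_v$ is unramified since its degree is coprime to the residue characteristic. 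Hence $L_w/K_v$ is unramified, giving $\ord_w \Delta_E \equiv 3 \pmod 6$, preserving the Tate nature of $E_{L_w}$, and preserving the fact that $L_w(\sqrt{-1})/L_w$ is unramified.

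Next I would treat a place $w\mid 3$ of $L$ above $v\mid 3$ of $K$. Writing $\Delta_E/\Delta_E^c = y^3$ in $K_v^\times$, $\beta^{1/6}$ generates the same local extension as $y^{1/2}$. The extension $K_v(\zeta_{12})/K_v$ has ramification index dividing $2$ (it is generated by $\zeta_3$ and $i$, each contributing a degree dividing~$2$), and adjoining $y^{1/2}$ contributes a further factor of at most~$2$. Thus $e(w/v)\mid 4$ is coprime to $3$, and a direct check over $e\in\{1,2,4\}$ and $\ord_v\Delta_E \in \{2,4\}\pmod 6$ shows $\ord_w \Delta_E \in \{2,4\}\pmod 6$. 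Again $E_{L_w}$ remains Tate, as multiplicative reduction is preserved by arbitrary base change.

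Combined with hypothesis~(3) of the proposition, which is exactly the residual hypothesis required in Lemma~\ref{lem_modularity_when_discriminant_is_real_mod_6th_powers} applied over $L$, this shows that $E_L$ is modular. Soluble descent via \cite[Proposition~6.5.13]{10authors} then yields the modularity of $E$. The main technical obstacle is the ramification analysis above: one must confirm that neither the roots of unity in $\zeta_{12}$ nor the sixth root of $\Delta_E/\Delta_E^c$ introduces any $2$-power ramification at the $2$-adic primes of $K$ or any $3$-power ramification at the $3$-adic primes of $K$, which is arranged precisely by the local squareness (resp.\ cubeness) hypotheses on $\Delta_E/\Delta_E^c$ at $v\mid 2$ (resp.\ $v\mid 3$).
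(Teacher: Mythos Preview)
Your approach is exactly the paper's: base change to $L$, invoke Corollary~\ref{cor_real_up_to_nth_powers} to get $\Delta_E \in (L^+)^\times(L^\times)^6$, check the local hypotheses of Lemma~\ref{lem_modularity_when_discriminant_is_real_mod_6th_powers} survive, and descend. The paper's proof is terse and simply asserts the valuation congruences hold; you fill in the ramification analysis, and your treatment at the $3$-adic places is clean and correct.

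There is, however, a genuine slip at the $2$-adic places. The claim ``$K_v(u^{1/3})/K_v$ is unramified since its degree is coprime to the residue characteristic'' is false in general: an extension of a $2$-adic field of degree $3$ can be totally (tamely) ramified, e.g.\ $\bbQ_2(2^{1/3})/\bbQ_2$. You have no control over $\ord_v(u)$, so this can certainly occur here. Fortunately the conclusion you need survives: since $K_v(\zeta_{12})/K_v$ is unramified (as you argue) and $[L_w:K_v(\zeta_{12})]$ divides $3$, the ramification index $e(w/v)$ divides $3$ and in particular is odd. Then $\ord_w\Delta_E = e(w/v)\cdot\ord_v\Delta_E$ with $\ord_v\Delta_E\equiv 3\pmod 6$, and an odd multiple of something $\equiv 3\pmod 6$ is again $\equiv 3\pmod 6$. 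So replace ``unramified'' by ``ramification index odd'' and the argument goes through.

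One further simplification: the condition that $L_w(\sqrt{-1})/L_w$ is unramified is trivially satisfied, since $\zeta_{12}\in L$ forces $i\in L$ already; you do not need $L_w/K_v$ unramified for this.
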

\begin{proof}
By Corollary \ref{cor_real_up_to_nth_powers}, $\Delta_E \in (L^+)^\times (L^\times)^6$. Our hypotheses imply that for each place $w | 2$ of $L$, $\ord_{w} \Delta_E \equiv 3 \text{ mod }6$ and for each place $w | 3$ of $L$, $\ord_{w} \Delta_E\equiv 2, 4 \text{ mod }6$. We see that the hypotheses of Lemma \ref{lem_modularity_when_discriminant_is_real_mod_6th_powers} apply to $E_L$, and hence that $E_L$ is modular. The modularity of $E$ itself follows by soluble descent. 
\end{proof}

\begin{proposition}\label{prop_residual_modularity_at_5}
Let $K$ be an imaginary CM number field, and let $\overline{\rho} : G_K \to \GL_2(\bbF_5)$ be a continuous homomorphism of cyclotomic determinant. Let $K^\text{avoid} / K$ be a finite Galois extension. Then we can find a soluble CM extension $L / K$ and an elliptic curve $E$ over $L$, all satisfying the following conditions:
\begin{enumerate}
\item The elliptic curve $E$ is modular.
\item The extension $L / K$ is linearly disjoint from  $K^\text{avoid} / K$.
\item For each place $w | 5$ of $L$, $E_{L_w}$ is a Tate curve. 
\item There is an isomorphism $\overline{\rho}|_{G_L} \cong \overline{\rho}_{E, 5}$.
\end{enumerate}
Moreover, if $\overline{\rho}$ is decomposed generic, then we can assume that $\overline{\rho}|_{G_L}$ is decomposed generic. 
\end{proposition}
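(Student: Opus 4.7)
The plan is a ``5--3 switch'': produce a modular elliptic curve $E$ over a suitable soluble CM extension $L/K$ whose mod-$5$ representation realizes $\overline{\rho}|_{G_L}$ and which is a Tate curve at every place above $5$. Modularity of $E$ will be established via Proposition~\ref{prop_modularity_of_certain_curves}, working through its mod-$3$ representation (so ultimately appealing to Theorem~\ref{thm_main_automorphy_theorem} at $2$ and Theorem~\ref{thm_automorphy_at_odd_primes} at $3$). The existence of $E$ with the required mod-$5$ representation and mod-$3$ image will come from Lemma~\ref{lem_curves_satisfying_local_conditions}, applied with $p = 5$ and $q = 3$.

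The first step is to choose $L$. I would enlarge $K$ to a soluble CM extension $L$, linearly disjoint from $K^{\mathrm{avoid}}$ and preserving decomposed-genericness if it holds (as in Lemma~\ref{lem:decomposed_generic_after_extension}), satisfying the following local properties. At each $w \mid 5$ of $L$: $L_w$ contains $\zeta_5$ and is Kummer-ramified enough that $\overline{\rho}|_{G_{L_w}}$ is in the Tate form $\bigl(\begin{smallmatrix}\epsilon & \ast \\ 0 & 1\end{smallmatrix}\bigr)$ arising from some Tate parameter (this can be arranged by first trivializing the $5$-part of the image at $v$, then adjoining a $5$th root of a uniformizer). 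At each $w \mid 2$: $L_w(\sqrt{-1})/L_w$ is unramified, and there exist Tate curves $/L_w$ of discriminant valuation $\equiv 3 \pmod 6$ with $\Delta_E/\Delta_E^c$ locally a square (in fact, a $6$th power). At each $w \mid 3$: there exist Tate curves $/L_w$ of discriminant valuation $\equiv 2,4\pmod 6$ with $\Delta_E/\Delta_E^c$ locally a cube (again, a $6$th power). The CM property of $L$ throughout the successive Kummer enlargements is ensured by Lemma~\ref{lem_when_Kummer_extensions_are_CM} and Corollary~\ref{cor_real_up_to_nth_powers}; linear disjointness from $K^{\mathrm{avoid}}$ is achieved by choosing Kummer generators away from $K^{\mathrm{avoid}}$.

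With $L$ in hand, I would apply Lemma~\ref{lem_curves_satisfying_local_conditions} over $L$ with $p = 5$, $q = 3$, and with $K' \subset \overline{L}$ a Galois extension containing $K^{\mathrm{avoid}} \cdot L(\zeta_{12})$ together with every $6$th root of the (finitely many) possible classes of $\Delta_E/\Delta_E^c$ in $L^\times/(L^\times)^6$ that a curve meeting our local conditions can realize. For each $w$ above $\{2,3,5\}$ I would take $\Omega_w$ to cut out pairs coming from Tate curves with the local properties described above. The lemma then produces an elliptic curve $E/L$ with $\overline{\rho}_{E,5} \cong \overline{\rho}|_{G_L}$, which is a Tate curve at every place above $\{2,3,5\}$ with the required discriminant valuations, such that $\overline{\rho}_{E,3}|_{G_L}$ is decomposed generic and $\overline{\rho}_{E,3}(G_{L'(\zeta_3)}) = \SL_2(\bbF_3)$ for the field $L' = L(\zeta_{12}, (\Delta_E/\Delta_E^c)^{1/6}) \subseteq K'$. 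All hypotheses of Proposition~\ref{prop_modularity_of_certain_curves} are thus satisfied, and it follows that $E$ is modular.

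The main obstacle is the interdependence of $L' = L(\zeta_{12}, (\Delta_E/\Delta_E^c)^{1/6})$ with the not-yet-constructed $E$. This is handled by first restricting $\Delta_E/\Delta_E^c$ locally at the $2$- and $3$-adic places of $L$ (forcing it to be a $6$th power there), then, if necessary, imposing one further local $6$th-power condition at an auxiliary split place of $L$, so that only finitely many classes of $\Delta_E/\Delta_E^c$ in $L^\times/(L^\times)^6$ can appear for curves satisfying all our local conditions; these finitely many candidates can then all be absorbed into the fixed auxiliary extension $K'$ before invoking Lemma~\ref{lem_curves_satisfying_local_conditions}. Verifying the simultaneous realizability of the local conditions at $2$, $3$, $5$ by Tate curves is a tedious but straightforward computation in the moduli space $Y_{\overline{\rho}}$.
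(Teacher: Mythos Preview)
Your overall plan is correct: pass to a soluble CM extension $L$, use Lemma~\ref{lem_curves_satisfying_local_conditions} with $p=5$, $q=3$ to build $E$ with the right mod-$5$ representation and big mod-$3$ image, then feed $E$ into Proposition~\ref{prop_modularity_of_certain_curves}. The difficulty you isolate --- that the field $L'=L(\zeta_{12},(\Delta_E/\Delta_E^c)^{1/6})$ appearing in the hypotheses of Proposition~\ref{prop_modularity_of_certain_curves} depends on the as-yet-unknown $E$ --- is exactly the crux. But your proposed resolution does not work.

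You claim that by imposing finitely many local $6$th-power conditions on $\Delta_E/\Delta_E^c$ you can force it into finitely many classes in $L^\times/(L^\times)^6$, and then absorb all candidate $L'$ into a single $K'$ before applying Lemma~\ref{lem_curves_satisfying_local_conditions}. This is false: the kernel of $L^\times/(L^\times)^6 \to \prod_{v\in S} L_v^\times/(L_v^\times)^6$ is infinite for any finite $S$, so no finite list of local conditions pins down $\Delta_E/\Delta_E^c$ globally. Consequently you cannot choose $K'$ to contain every possible $L'$.

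The paper avoids this entirely by never trying to bound $L'$ in advance. Instead it argues \emph{a posteriori} that the two hypotheses of Proposition~\ref{prop_modularity_of_certain_curves} involving $L'$ follow automatically from properties of $E$ built into the construction. First, Lemma~\ref{lem_curves_satisfying_local_conditions} already produces a prime $l$ split in $L(\zeta_{12})$ which is decomposed generic for $\overline{\rho}_{E,3}$ \emph{and} at which $E$ has the same good reduction at every $w\mid l$; this forces $\Delta_E/\Delta_E^c\in(L_w^\times)^{12}$ for all $w\mid l$, so $l$ splits in $L'$ too, and $\overline{\rho}_{E,3}|_{G_{L'}}$ is decomposed generic. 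Second, for the image condition one uses a ramification argument rather than a containment argument: the order-$3$ quotient of $\overline{\rho}_{E,3}(G_{L(\zeta_3)})\cong\SL_2(\bbF_3)$ cuts out $L(\zeta_3,\sqrt[3]{\Delta_E})$, which is genuinely ramified at $3$ because $\ord_w\Delta_E$ is prime to $3$; meanwhile $L'/L$ has ramification index prime to $3$ at every $3$-adic place because $\Delta_E/\Delta_E^c$ was arranged to be a local cube there. Hence $L'(\zeta_3)$ cannot contain $L(\zeta_3,\sqrt[3]{\Delta_E})$, and the image over $L'(\zeta_3)$ is still $\SL_2(\bbF_3)$.

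A smaller point: your construction of $L$ is more elaborate than needed. It suffices to take $L/K$ soluble CM, linearly disjoint from $K^{\mathrm{avoid}}$, in which a fixed decomposed-generic prime for $\overline{\rho}$ splits (if one exists), and such that every $w\mid 2,3,5$ is split over $L^+$, $\overline{\rho}|_{G_{L_w}}$ is trivial, and $L_w(\sqrt{-1})/L_w$ is unramified for $w\mid 2$. Triviality of $\overline{\rho}|_{G_{L_w}}$ at $w\mid 5$ already allows Tate curves (take $5\mid\ord_w q$), and splitness over $L^+$ lets you impose the same local Tate parameter at $w$ and $w^c$, giving $\Delta_E/\Delta_E^c\in(L_w^\times)^{12}$ locally at $2$ and $3$ for free.
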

\begin{proof}
We choose $L / K$ to be a soluble CM extension satisfying the following conditions:
\begin{itemize}
\item For each place $w | 2, 3, 5$ of $L$, $\overline{\rho}|_{G_{L_w}}$ is trivial and $w$ is split over $L^+$.
\item For each place $w | 2$ of $L$, the extension $L_w(\sqrt{-1}) / L_w$ is unramified.
\item $L / K$ is linearly disjoint from $K^\text{avoid} / K$.
\item If $\overline{\rho}$ is decomposed generic, then we choose a prime number $q > 5$ which is decomposed generic for $\overline{\rho}$, and ask moreover that $q$ split in $L$.
\end{itemize}
By Lemma \ref{lem_curves_satisfying_local_conditions}, we can find an elliptic curve $E$ over $L$ with discriminant  $\Delta_E \in L^\times$ satisfying the following conditions:
\begin{itemize}
\item There is an isomorphism $\overline{\rho}_{E, 5} \cong \overline{\rho}|_{G_L}$.
\item $\overline{\rho}_{E, 3}(G_{L(\zeta_{3})}) = \SL_2(\bbF_3)$.
\item There exists a rational prime $l > 5$ split in $L(\zeta_{12})$ such that $l$ is decomposed generic for $\overline{\rho}_{E, 3}$ and for each prime $w | l$ of $L$, the elliptic curve $E$ has good reduction at $w$, with reduction over $k(w) = \bbF_l$ independent of the choice of $w$. In particular, $\Delta_E / \Delta_E^c \in (L_w^\times)^{12}$ for each $w | l$.
\item For each place $w | 2$ of $L$, $E_{L_w}$ is a Tate curve, $\ord_{w} \Delta_E \equiv 3 \text{ mod }6$, and $\Delta_E / \Delta_E^c \in (L_w^\times)^2$.
\item For each place $w | 3$ of $L$, $E_{L_w}$ is a Tate curve, $\ord_{w} \Delta_E\equiv 2 \text{ mod }6$, and $\Delta_E / \Delta_E^c \in (L_w^\times)^3$.
\end{itemize}
Let $L' = L(\zeta_{12}, (\Delta_E / \Delta_E^c)^{1/6})$. We see that the modularity of $E$ follows from Proposition \ref{prop_modularity_of_certain_curves}, provided we can show that $\overline{\rho}_{E, 3}(G_{L'(\zeta_{3})})$ is conjugate to $\SL_2(\bbF_3)$ and $\overline{\rho}_{E, 3}|_{G_{L'}}$ is decomposed generic. The first requirement follows from the observation that the order 3 quotient of $\overline{\rho}_{E, 3}(G_{L(\zeta_3)})$ cuts out the cyclic degree 3 extension $L(\zeta_3, \sqrt[3]{\Delta_E}) / L(\zeta_3)$ which is ramified at 3, while the ramification index of $L'/L$ at the 3-adic primes is prime to 3, because $\Delta_E / \Delta_E^c$ is a cube locally. The second requirement follows since $l$ splits in $L'$ and is decomposed generic for $\overline{\rho}_{E, 3}$.

Proposition \ref{prop_modularity_of_certain_curves} therefore implies the modularity of $E_{L'}$, and the modularity of $E$ itself follows by soluble descent. 
\end{proof}
\begin{corollary}\label{cor_modularity_with_a_condition_at_5}
Let $K$ be an imaginary CM number field and let $E$ be an elliptic curve over $K$ satisfying the following conditions:
\begin{enumerate}
\item $\overline{\rho}_{E, 5}|_{G_{K(\zeta_5)}}$ is absolutely irreducible. If $\overline{\rho}_{E, 5}(G_{K(\zeta_5)}) = \SL_2(\bbF_5)$, then $\zeta_5 \not\in K$.
\item For each place $v | 5$ of $K$, $E_{K_v}$ is ordinary (i.e.\ has good ordinary reduction or potentially multiplicative reduction).
\item $\overline{\rho}_{E, 5}$ is decomposed generic.
\end{enumerate}
Then $E$ is modular.
\end{corollary}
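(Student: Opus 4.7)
The plan is to establish modularity of $E$ by descending to a carefully chosen soluble CM extension $L/K$, where we can establish residual modularity via Proposition \ref{prop_residual_modularity_at_5}, then apply the $p = 5$ case of Theorem \ref{thm_automorphy_at_odd_primes} to deduce modularity of $E_L$, and finally use soluble descent to conclude modularity of $E$ itself.

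First, we choose $K^{\text{avoid}}/K$ to be a finite Galois extension large enough to control several independence conditions simultaneously. Specifically, we want $K^{\text{avoid}}$ to contain $K(\zeta_5)$, the extension of $K$ cut out by $\overline{\rho}_{E,5}$, and a further extension guaranteeing that $\overline{\rho}_{E,5}$ remains decomposed generic after any base change linearly disjoint from $K^{\text{avoid}}$ (using Lemma \ref{lem:decomposed_generic_after_extension}-type arguments). We then invoke Proposition \ref{prop_residual_modularity_at_5} with this $K^{\text{avoid}}$ to obtain a soluble CM extension $L/K$, linearly disjoint from $K^{\text{avoid}}/K$, and a modular elliptic curve $E'/L$ with $\overline{\rho}_{E',5} \cong \overline{\rho}_{E,5}|_{G_L}$ and with $E'_{L_w}$ a Tate curve for every $w \mid 5$ of $L$.

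Next, we apply Theorem \ref{thm_automorphy_at_odd_primes} with $p = 5$, $K$ replaced by $L$, and $\rho = \rho_{E,5}|_{G_L} \otimes \epsilon^{-1}$. We verify each hypothesis: the determinant condition and the unramified-almost-everywhere condition are immediate; decomposed genericity of $\overline{\rho}_{E,5}|_{G_L}$ and absolute irreducibility of $\overline{\rho}_{E,5}|_{G_{L(\zeta_5)}}$ follow from the corresponding properties over $K$ combined with the linear disjointness $L \cap K^{\text{avoid}} = K$; ordinarity at places of $L$ dividing $5$ follows from the hypothesis that $E_{K_v}$ is ordinary at every $v \mid 5$ and persists under base change. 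For the residual automorphy hypothesis, the modularity of $E'$ supplies a cuspidal automorphic representation $\pi_{E'}$ of $\GL_2(\bbA_L)$ associated to $E'$; since $E'_{L_w}$ is a Tate curve for every $w \mid 5$, Corollary \ref{cor_tate_to_steinberg} shows that $\pi_{E'}$ is $\iota$-ordinary, and after twisting by a suitable character of order $2$ (using that $\det \rho_{E',5} \otimes \epsilon^{-1} = \epsilon^{-1}$ matches the determinant convention for $\PGL_2$-forms in Corollary \ref{cor:local-global-PGLn-ord}) we obtain the required cuspidal $\iota$-ordinary cohomological representation of $\PGL_2(\bbA_L)$ with residual representation isomorphic to $\overline{\rho}_{E,5}|_{G_L} \otimes \overline{\epsilon}^{-1}$.

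The remaining hypothesis of Theorem \ref{thm_automorphy_at_odd_primes}, concerning $\zeta_5$ when the projective image of $\overline{\rho}_{E,5}|_{G_{L(\zeta_5)}}$ is conjugate to $\PSL_2(\bbF_5)$, is the most delicate point. Since $\PSL_2(\bbF_5) \cong A_5$ has trivial abelianisation, the requirement that $\zeta_5$ not lie in the extension of $L$ cut out by the projective image of $\overline{\rho}_{E,5}$ is equivalent to $\zeta_5 \notin L$. Under our hypothesis (1), if $\overline{\rho}_{E,5}(G_{K(\zeta_5)}) = \SL_2(\bbF_5)$, then $\zeta_5 \notin K$; by ensuring $K(\zeta_5) \subset K^{\text{avoid}}$ and invoking the linear disjointness of $L/K$ from $K^{\text{avoid}}/K$, we obtain $L \cap K(\zeta_5) = K$, hence $\zeta_5 \notin L$, as required. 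Theorem \ref{thm_automorphy_at_odd_primes} then yields the modularity of $E_L$, and soluble descent (as in \cite[Proposition 6.5.13]{10authors}) concludes the modularity of $E$. The main obstacle is in Step 1, arranging simultaneously all of the group-theoretic and local conditions on $L$; the verification of the $\PSL_2(\bbF_5)$/$\zeta_5$ hypothesis for $p = 5$ is what forces the awkward-looking condition (1) in the statement.
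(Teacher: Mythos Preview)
Your proposal follows the same route as the paper's proof: apply Proposition \ref{prop_residual_modularity_at_5} with a suitable $K^{\text{avoid}}$ to obtain a soluble CM extension $L/K$ and a modular elliptic curve over $L$ with matching mod-$5$ representation, verify the hypotheses of Theorem \ref{thm_automorphy_at_odd_primes} over $L$ (using Corollary \ref{cor_tate_to_steinberg} for ordinarity), and descend.

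However, your verification of the $\zeta_5$ hypothesis in Theorem \ref{thm_automorphy_at_odd_primes} is incomplete. You argue that since $\PSL_2(\bbF_5) \cong A_5$ has trivial abelianisation, $\zeta_5 \notin L$ forces $\zeta_5$ to lie outside the extension $M/L$ cut out by the projective image of $\overline{\rho}_{E,5}|_{G_L}$. But the hypothesis of Theorem \ref{thm_automorphy_at_odd_primes} concerns the projective image of $G_{L(\zeta_5)}$, not of $G_L$; the projective image of $\overline{\rho}_{E,5}(G_L)$ can be $\PGL_2(\bbF_5) \cong S_5$, which \emph{does} have a nontrivial abelian quotient, so your abelianisation argument does not apply as stated. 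This case occurs precisely when $\det\overline{\rho}_{E,5}|_{G_L}$ has order $4$, since then $\overline{\rho}_{E,5}(G_L) = \GL_2(\bbF_5)$. The paper's proof splits on the order of $\det\overline{\rho}_{E,5}|_{G_L}$: if the order is $2$, the projective image of $G_L$ is $\PSL_2(\bbF_5)$ and your argument works; if the order is $4$, one observes that $\zeta_5 \in M$ would force $\Gal(M/L(\zeta_5)) = \PSL_2(\bbF_5)$ to have index $[L(\zeta_5):L] = 4$ in $\Gal(M/L) \subseteq \PGL_2(\bbF_5)$, contradicting $[\PGL_2(\bbF_5):\PSL_2(\bbF_5)] = 2$.

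A minor side remark: the quadratic twist you mention is unnecessary, since $\det(\rho_{E',5}\otimes\epsilon^{-1}) = \epsilon^{-1}$ already, so the automorphic representation attached to $E'$ has trivial central character and descends directly to $\PGL_2(\bbA_L)$.
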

\begin{proof}
By Proposition \ref{prop_residual_modularity_at_5}, we can find a soluble extension CM extension $L / K$ and a modular elliptic curve $A$ over $L$ all satisfying the following conditions:
\begin{itemize}
\item $\overline{\rho}_{E, 5}|_{G_{L(\zeta_5)}}$ is absolutely irreducible. If $\overline{\rho}_{E, 5}(G_{L(\zeta_5)}) = \SL_2(\bbF_5)$, then $\zeta_5 \not\in L$.
\item For each place $w | 5$ of $L$, $A_{L_w}$ is a Tate curve.
\item There is an isomorphism $\overline{\rho}_{E, 5}|_{G_L} \cong \overline{\rho}_{A, 5}$ and these representations are decomposed generic. 
\end{itemize}

Say $p = 5$ and $\overline{\rho}_{E, 5}(G_{L(\zeta_5)}) = \SL_2(\bbF_5)$.
Since $\zeta_5 \notin L$, $\det(\overline{\rho}_{E, 5})$ has order $2$ or $4$. In the former case, the projective image of $\overline{\rho}_{E, 5}$ is $\PSL_2(\bbF_5)$, and in the latter, $[L(\zeta_5) : L] > [\PGL_2(\bbF_5) : \PSL_2(\bbF_5)]$. 
In either situation, $\zeta_5$ is not contained in the extension cut out by the projective image of $\overline{\rho}_{E, 5}$.
Theorem \ref{thm_automorphy_at_odd_primes} then implies the modularity of $E_L$. The modularity of $E$ then follows by soluble descent. 
\end{proof}
\begin{proposition}\label{prop_residual_modularity_at_2_3}
Let $p \in \{ 2, 3 \}$ and let $K$ be an imaginary CM field such that $\zeta_5 \not\in K$, and let $\overline{\rho} : G_K \to \GL_2(\bbF_p)$ be a continuous homomorphism of cyclotomic determinant. Let $K^\text{avoid} / K$ be a finite Galois extension. Then we can find a soluble CM extension $L / K$ and an elliptic curve $E$ over $L$, all satisfying the following conditions:
\begin{enumerate}
\item The elliptic curve $E$ is modular.
\item The extension $L / K$ is linearly disjoint from  $K^\text{avoid} / K$.
\item For each place $w | p$ of $L$, $E_{L_w}$ is a Tate curve. 
\item There is an isomorphism $\overline{\rho}|_{G_L} \cong \overline{\rho}_{E, p}$.
\end{enumerate}
Moreover, if $\overline{\rho}$ is decomposed generic, then we can assume that $\overline{\rho}|_{G_L}$ is decomposed generic. 
\end{proposition}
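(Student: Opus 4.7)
The plan mirrors Proposition~\ref{prop_residual_modularity_at_5}, but with the auxiliary modularity lifting carried out via the prime $5$ rather than via the internal 2--3 switch. Concretely, the goal is to realize $\overline{\rho}|_{G_L}$ as the mod $p$ representation of an elliptic curve $E/L$ whose modularity can be established through its mod $5$ representation using Corollary~\ref{cor_modularity_with_a_condition_at_5}.

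The first step is to construct a soluble CM extension $L/K$, linearly disjoint from $K^{\mathrm{avoid}} \cdot K(\zeta_5)$ (so in particular $\zeta_5 \notin L$, using that $\zeta_5 \notin K$), with the following local properties: (i) for every place $w \mid p$ of $L$, $\overline{\rho}|_{G_{L_w}}$ is trivial, $w$ is split over $L^+$, and $\zeta_p \in L_w$ (automatic from the triviality of $\det \overline{\rho} = \epsilon$ on $G_{L_w}$; when $p=3$ one insists moreover that $\zeta_3 \in L$, which is compatible with $\zeta_5 \notin L$ since $\bbQ(\zeta_3)$ and $\bbQ(\zeta_5)$ are linearly disjoint); (ii) every $5$-adic place of $L$ is split over $L^+$; and (iii) if $\overline{\rho}$ is decomposed generic, a chosen decomposed-generic prime $l_0$ for $\overline{\rho}$ splits in $L$. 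Such an $L$ is produced by class field theory, exactly as in the opening paragraphs of the proof of Proposition~\ref{prop_residual_modularity_at_5}.

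Next, apply Lemma~\ref{lem_curves_satisfying_local_conditions} with the given prime $p$ and auxiliary prime $q = 5$, specifying at each place $w \mid p$ a non-empty open set $\Omega_w \subset Y_{\overline{\rho}}(L_w)$ whose points parametrize Tate curves with Tate parameter a $p$-th power (so that the mod $p$ representation of the resulting curve is trivial, matching $\overline{\rho}|_{G_{L_w}}$), and at each place $w \mid 5$ a non-empty open set $\Omega_w$ consisting of pairs $(E, \iota)$ such that $E$ has good ordinary reduction at $w$. The output is an elliptic curve $E/L$ with the following properties: $\overline{\rho}_{E,p} \cong \overline{\rho}|_{G_L}$; $E$ is a Tate curve at each $w \mid p$; $\overline{\rho}_{E,5}(G_{L(\zeta_5)}) = \SL_2(\bbF_5)$; $\overline{\rho}_{E,5}$ is decomposed generic; and $E$ is good ordinary at each $w \mid 5$. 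If $\overline{\rho}$ was decomposed generic to begin with, splitting $l_0$ in $L$ guarantees that $\overline{\rho}|_{G_L}$ remains so.

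Finally, apply Corollary~\ref{cor_modularity_with_a_condition_at_5} to $E/L$: absolute irreducibility of $\overline{\rho}_{E,5}|_{G_{L(\zeta_5)}}$ follows from the inclusion $\SL_2(\bbF_5) \subseteq \overline{\rho}_{E,5}(G_{L(\zeta_5)})$, the condition $\zeta_5 \notin L$ holds by construction, $E$ is ordinary at every $5$-adic place, and $\overline{\rho}_{E,5}$ is decomposed generic. Thus $E$ is modular. The expected main difficulty is the combinatorial juggling of the local conditions at $p$-adic, $5$-adic, and (in the decomposed-generic case) $l_0$-adic places together with the global constraints $\zeta_p \in L$ and $\zeta_5 \notin L$; however these are mutually compatible, and the verification that the corresponding $\Omega_w$ are non-empty is straightforward given the triviality of $\overline{\rho}|_{G_{L_w}}$ at $p$-adic places.
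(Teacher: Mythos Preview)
Your approach is essentially the paper's: pass to a soluble CM extension $L/K$ disjoint from $K^{\text{avoid}}(\zeta_5)$, invoke Lemma~\ref{lem_curves_satisfying_local_conditions} with auxiliary prime $q=5$ to produce $E/L$ with $\overline{\rho}_{E,p}\cong\overline{\rho}|_{G_L}$ and with $\overline{\rho}_{E,5}$ satisfying the hypotheses of Corollary~\ref{cor_modularity_with_a_condition_at_5}, and conclude.

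There is, however, a gap in your handling of the $5$-adic places. You ask that $E$ have good ordinary reduction at each $w\mid 5$, but you have not arranged that $\overline{\rho}|_{G_{L_w}}$ be unramified there. Nothing in the hypotheses of the proposition prevents $\overline{\rho}$ from being ramified at a $5$-adic place, and if it is, then by N\'eron--Ogg--Shafarevich no elliptic curve over $L_w$ with $E[p]\cong\overline{\rho}|_{G_{L_w}}$ can have good reduction at $w$ (since $p\neq 5$), so your $\Omega_w$ is empty and Lemma~\ref{lem_curves_satisfying_local_conditions} does not apply. The paper avoids this by also requiring $\overline{\rho}|_{G_{L_w}}$ trivial for $w\mid 5$ (not just for $w\mid p$), and then takes $E_{L_w}$ to be a Tate curve at those places too; once you add this triviality condition, your good-ordinary variant would work just as well. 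Two minor points: your extra requirement $\zeta_3\in L$ when $p=3$ is unnecessary and can conflict with linear disjointness from $K^{\text{avoid}}$ if $\zeta_3\in K^{\text{avoid}}\setminus K$, so drop it; and the condition that $5$-adic places split over $L^+$ is also superfluous here.
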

\begin{proof}
We choose $L / K$ to be a soluble CM extension satisfying the following conditions:
\begin{itemize}
\item For each place $w | p, 5$ of $L$, $\overline{\rho}|_{G_{L_w}}$ is trivial.
\item $L / K$ is linearly disjoint from $K^\text{avoid}(\zeta_5) / K$.
\item If $\overline{\rho}$ is decomposed generic, then we choose a prime $q > 5$ which is decomposed generic for $\overline{\rho}$, and ask that $q$ split in $L$.
\end{itemize}
By Lemma \ref{lem_curves_satisfying_local_conditions} (if $p = 3$) or Lemma \ref{lem_curves_satisfying_local_conditions_p_equals_2} (if $p = 2$), we can find an elliptic curve $E$ over $L$ satisfying the following conditions:
\begin{itemize}
\item There is an isomorphism $\overline{\rho}_{E, p} \cong \overline{\rho}|_{G_L}$.
\item For each place $w | p, 5$ of $L$, $E_{L_w}$ is a Tate curve.
\item $\overline{\rho}_{E, 5}$ is decomposed generic and $\overline{\rho}_{E, 5}|_{G_{L(\zeta_5)}}$ is absolutely irreducible.
\end{itemize}
Corollary \ref{cor_modularity_with_a_condition_at_5} now implies that $E$ is modular. This completes the proof.
\end{proof}
\begin{corollary}\label{cor_modularity_with_a_condition_at_3}
Let $K$ be an imaginary CM number field such that $\zeta_5 \not\in K$, and let $E$ be an elliptic curve over $K$ satisfying the following conditions:
\begin{enumerate}
\item $\overline{\rho}_{E, 3}|_{G_{K(\zeta_3)}}$ is absolutely irreducible. 
\item For each place $v | 3$ of $K$, $E_{K_v}$ is ordinary.
\item $\overline{\rho}_{E, 3}$ is decomposed generic.
\end{enumerate}
Then $E$ is modular.
\end{corollary}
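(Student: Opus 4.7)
The plan is to follow the same template as the proof of Corollary \ref{cor_modularity_with_a_condition_at_5}, swapping the role of the prime $5$ for $3$ and invoking Proposition \ref{prop_residual_modularity_at_2_3} in place of Proposition \ref{prop_residual_modularity_at_5}. Concretely, first I would apply Proposition \ref{prop_residual_modularity_at_2_3} with $p=3$ to the residual representation $\overline{\rho}_{E,3}$, choosing the auxiliary finite Galois extension $K^{\text{avoid}}/K$ to contain $K(\zeta_3)$ together with the extension cut out by $\overline{\rho}_{E,3}$. This produces a soluble CM extension $L/K$, linearly disjoint from $K^{\text{avoid}}$, and a modular elliptic curve $A$ over $L$ such that $A_{L_w}$ is a Tate curve at every place $w\mid 3$ of $L$ and such that there is an isomorphism $\overline{\rho}_{A,3}\cong \overline{\rho}_{E,3}|_{G_L}$, which remains decomposed generic.

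Next I would verify the hypotheses of Theorem \ref{thm_automorphy_at_odd_primes} at $p=3$ for the representation $\rho=\rho_{E,3}|_{G_L}\otimes\epsilon^{-1}$. The determinant condition $\det\rho=\epsilon^{-1}$ is automatic from the Weil pairing. The ordinariness of $\rho$ at each place $w\mid 3$ of $L$ follows from the assumption that $E_{K_v}$ is ordinary at every $v\mid 3$ together with the fact that ordinariness is preserved under base change. The residual automorphy is provided by the modularity of $A$: let $\pi_A$ be the cuspidal automorphic representation of $\GL_2(\bbA_L)$ associated to $A$ (of weight $0$ by Lemma \ref{lem_more_on_modularity}); since $A_{L_w}$ is a Tate curve for each $w\mid 3$, Corollary \ref{cor_tate_to_steinberg} shows that $\pi_A$ is $\iota$-ordinary, and its central character is trivial so that it descends to a cuspidal automorphic representation of $\PGL_2(\bbA_L)$ of weight $0$ satisfying $\overline{r_\iota(\pi_A)}\cong\overline{\rho}_{E,3}|_{G_L}$. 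The absolute irreducibility of $\overline{\rho}_{E,3}|_{G_{L(\zeta_3)}}$ is where we use the linear disjointness: because $L/K$ is linearly disjoint from the compositum of $K(\zeta_3)$ with the field cut out by $\overline{\rho}_{E,3}$, the image $\overline{\rho}_{E,3}(G_{L(\zeta_3)})$ equals $\overline{\rho}_{E,3}(G_{K(\zeta_3)})$, which is absolutely irreducible by hypothesis. Finally, the proviso in Theorem \ref{thm_automorphy_at_odd_primes} concerning $p=5$ is vacuous for $p=3$.

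Having checked all hypotheses, Theorem \ref{thm_automorphy_at_odd_primes} yields that $\rho_{E,3}|_{G_L}\otimes\epsilon^{-1}$ is automorphic, hence $E_L$ is a modular elliptic curve. The modularity of $E$ itself then follows by soluble descent, using \cite[Proposition 6.5.13]{10authors}, exactly as in the last line of Corollary \ref{cor_modularity_with_a_condition_at_5}.

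In this argument there is no serious obstacle: Proposition \ref{prop_residual_modularity_at_2_3} and Theorem \ref{thm_automorphy_at_odd_primes} have been set up precisely so that a $2$--$3$ switch at $p=3$ goes through. The only subtlety is the bookkeeping needed to guarantee that $\overline{\rho}_{E,3}|_{G_{L(\zeta_3)}}$ remains absolutely irreducible, which is arranged by the linear disjointness built into Proposition \ref{prop_residual_modularity_at_2_3}; once this is in place, invoking Corollary \ref{cor_tate_to_steinberg} to match up ordinariness and then applying soluble descent completes the argument.
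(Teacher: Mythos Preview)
Your proposal is correct and follows essentially the same route as the paper: apply Proposition~\ref{prop_residual_modularity_at_2_3} with $p=3$ (choosing $K^{\text{avoid}}$ so that linear disjointness preserves the irreducibility of $\overline{\rho}_{E,3}|_{G_{K(\zeta_3)}}$), then invoke Theorem~\ref{thm_automorphy_at_odd_primes} together with Corollary~\ref{cor_tate_to_steinberg} and soluble descent. Your write-up is somewhat more explicit than the paper's about the choice of $K^{\text{avoid}}$ and the verification of each hypothesis, but the argument is the same.
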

\begin{proof}
By Proposition \ref{prop_residual_modularity_at_2_3}, we can find a soluble extension CM extension $L / K$ and a modular elliptic curve $A$ over $L$ all satisfying the following conditions:
\begin{itemize}
\item $\overline{\rho}_{E, 3}|_{G_{L(\zeta_3)}}$ is absolutely irreducible. 
\item For each place $w | 3$ of $L$, $A_{L_w}$ is a Tate curve.
\item There is an isomorphism $\overline{\rho}_{E, 3}|_{G_L} \cong \overline{\rho}_{A, 3}$, and these representations are decomposed generic. 
\end{itemize}
Theorem \ref{thm_automorphy_at_odd_primes} then implies the modularity of $E_L$ (using Corollary~\ref{cor_tate_to_steinberg} to verify the ordinariness of the automorphic representation giving rise to $\overline{\rho}_{E, 3}|_{G_L} \cong \overline{\rho}_{A, 3}$). The modularity of $E$ then follows by soluble descent. 
\end{proof}
\begin{theorem}\label{thm_application_to_serre}
Let $K$ be an imaginary CM number field such that $\zeta_5 \not\in K$, let $p \in \{ 2, 3, 5 \}$, and let $\overline{\rho} : G_K \to \GL_2(\bbF_p)$ be a continuous representation of cyclotomic determinant. Suppose that the following conditions are satisfied:
\begin{enumerate}
\item If $p \in \{ 2, 3 \}$, then for each place $v | 5$ of $K$, there exists a Tate curve $E_v$ over $K_v$ such that $\overline{\rho}|_{G_{K_v}} \cong \overline{\rho}_{E_v, p}$.
\item If $p = 5$, then for each place $v | 3$ of $K$, there exists a Tate curve $E_v$ over $K_v$ such that $\overline{\rho}|_{G_{K_v}} \cong \overline{\rho}_{E_v, p}$.
\end{enumerate}
Then there exists a modular elliptic curve $E$ over $K$ satisfying the following conditions:
\begin{enumerate}
\item If $p \in \{ 2, 3 \}$ (resp. if $p = 5$), then for each place $v | 5$ (resp. each place $v | 3$) of $K$, $E_{K_v}$ is a Tate curve.
\item There is an isomorphism $\overline{\rho}_{E, p} \cong \overline{\rho}$.
\end{enumerate}
\end{theorem}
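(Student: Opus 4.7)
The plan is to construct an elliptic curve $E/K$ realizing $\overline{\rho}$ with prescribed Tate reduction, and then deduce its modularity via a $p$--$q$ switch to the auxiliary prime $q = 5$ (when $p \in \{2,3\}$) or $q = 3$ (when $p = 5$). Set $T = \{v \mid q\}$, which is exactly the set of places at which the hypothesis provides local data.

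First, I would set up local data at $T$. For each $v \in T$, the Tate curve $E_v/K_v$ provided by hypothesis gives, after fixing a compatible isomorphism $\iota_0 : \wedge^2 V \cong \mu_p$ (available because $\det \overline{\rho} = \epsilon$) and a symplectic identification $\iota_v : V|_{G_{K_v}} \cong E_v[p]$, a point $(E_v, \iota_v) \in Y_{\overline{\rho}}(K_v)$ on the associated modular curve. Since having split multiplicative reduction is an open condition in the $v$-adic topology on $Y_{\overline{\rho}}(K_v)$, I pick a non-empty open neighborhood $\Omega_v$ of this point all of whose pairs $(F,\iota)$ have $F$ a Tate curve.

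Next, I would apply Lemma \ref{lem_curves_satisfying_local_conditions} to the data $(p, q, S = T, \{\Omega_v\}_{v \in T}, K' = K)$. This produces an elliptic curve $E/K$ such that (i) $\overline{\rho}_{E,p} \cong \overline{\rho}$, (ii) $E_{K_v}$ is a Tate curve for each $v \in T$, (iii) $\overline{\rho}_{E,q}(G_{K(\zeta_q)}) = \SL_2(\bbF_q)$, and (iv) $\overline{\rho}_{E,q}$ is decomposed generic.

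Finally, I would invoke the appropriate modularity corollary on $E$. For $p \in \{2,3\}$ this is Corollary \ref{cor_modularity_with_a_condition_at_5}: absolute irreducibility of $\overline{\rho}_{E,5}|_{G_{K(\zeta_5)}}$ follows from (iii) since $\SL_2(\bbF_5)$ acts absolutely irreducibly on $\bbF_5^2$; the subsidiary clause about the projective image is controlled by the global hypothesis $\zeta_5 \notin K$; $E$ is ordinary at every $v \mid 5$ because it is Tate there; and $\overline{\rho}_{E,5}$ is decomposed generic by (iv). For $p = 5$ the analogous verifications work with Corollary \ref{cor_modularity_with_a_condition_at_3} and $q = 3$, again using $\zeta_5 \notin K$. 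This gives the modularity of $E$, and the construction ensures the remaining conditions of the conclusion.

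The entire argument is essentially a bookkeeping reduction, with no serious obstacle: the real content—the $2$--$3$--$5$ switch and the underlying automorphy lifting (Theorems \ref{thm_main_automorphy_theorem} and \ref{thm_automorphy_at_odd_primes})—has already been absorbed into Corollaries \ref{cor_modularity_with_a_condition_at_5} and \ref{cor_modularity_with_a_condition_at_3} via Propositions \ref{prop_residual_modularity_at_5} and \ref{prop_residual_modularity_at_2_3}. The only thing to check is that Lemma \ref{lem_curves_satisfying_local_conditions} can be applied with Tate-reduction local conditions, which the openness of split multiplicative reduction guarantees.
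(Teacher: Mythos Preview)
Your proposal is correct and follows essentially the same approach as the paper: apply Lemma \ref{lem_curves_satisfying_local_conditions} with the auxiliary prime $q$ (equal to $5$ for $p\in\{2,3\}$, and $3$ for $p=5$) and local Tate-reduction conditions at the $q$-adic places, then invoke Corollary \ref{cor_modularity_with_a_condition_at_5} or Corollary \ref{cor_modularity_with_a_condition_at_3} to conclude modularity. Your write-up is in fact slightly more explicit than the paper's in justifying the non-emptiness and openness of the sets $\Omega_v$ and in verifying the hypotheses of the corollaries.
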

\begin{proof}
If $p = 2$ (resp. $p = 3$), then we apply Lemma \ref{lem_curves_satisfying_local_conditions_p_equals_2} (resp. Lemma \ref{lem_curves_satisfying_local_conditions}) to find an elliptic curve $E$ over $K$ satisfying the following conditions:
\begin{enumerate}
\item For each place $v | 5$ of $K$, $E_{K_v}$ is a Tate curve. 
\item $\overline{\rho}_{E, 5}$ is decomposed generic and $\overline{\rho}_{E, 5}|_{G_{K(\zeta_5)}}$ is absolutely irreducible.
\item There is an isomorphism $\overline{\rho}_{E, p} \cong \overline{\rho}$.
\end{enumerate}
Corollary \ref{cor_modularity_with_a_condition_at_5} implies the modularity of $E$, hence the statement in the theorem. The argument is very similar in the case $p = 5$, except we use the prime 3 and appeal instead to Corollary \ref{cor_modularity_with_a_condition_at_3}.\end{proof}
\section{Application to modularity of elliptic curves}\label{sec_application_to_modularity}
Let $K$ be a number field. If $X > 0$, then we write $\cO_{K, X}$ for the set of $\alpha \in \cO_K$ such that for every embedding $\sigma : K \hookrightarrow \bbC$, $|\sigma(\alpha)| < X$. We write  $\cE_{X}$ for the set of pairs $(a, b) \in \cO_{K, X^4} \times \cO_{K, X^6}$ with $\Delta(a, b) = -16(4 a^3 + 27 b^2) \neq 0$. If $(a, b) \in \cE_X$, then we write $E_{a, b}$ for the elliptic curve over $K$ given by the equation $y^2 = x^3 + a x + b$.
\begin{theorem}\label{thm_example_application}
	Suppose that $K$ is a CM field and $\zeta_5 \not\in K$. Then a positive proportion of elliptic curves over $K$ are modular. More precisely, we have
	\[ \liminf_{X \to \infty} \frac{ | \{ (a, b) \in \cE_X \mid E_{a,b} \text{ is modular } \} | }{ | \cE_X | }  > 0. \]
\end{theorem}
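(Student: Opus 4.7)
The plan is to apply Corollary \ref{cor_modularity_with_a_condition_at_5}: it suffices to show that a positive proportion of $(a, b) \in \cE_X$, as $X \to \infty$, give elliptic curves $E = E_{a, b}$ satisfying the three hypotheses of that corollary, namely (i) $\overline{\rho}_{E,5}|_{G_{K(\zeta_5)}}$ is absolutely irreducible, (ii) $E$ has ordinary reduction at every place $v \mid 5$ of $K$, and (iii) $\overline{\rho}_{E,5}$ is decomposed generic. The standing hypothesis $\zeta_5 \not\in K$ makes the auxiliary clause of the first hypothesis of that corollary automatic.

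First I would impose local conditions at a finite set $T$ of primes of $K$ that force (ii) and (iii). Condition (ii) is manifestly local at $v \mid 5$: select a non-empty open subset of Weierstrass coefficients in $\cO_{K_v} \times \cO_{K_v}$ defining curves with good ordinary reduction. For (iii), fix a rational prime $l \neq 5$ splitting completely in $K$ and an elliptic curve $\widetilde{E}_l / \bbF_l$ with good ordinary reduction whose Frobenius eigenvalues $\alpha, \beta$ satisfy $\alpha/\beta \neq l^{\pm 1}$ modulo $5$; then require the reduction of $E$ at every $v \mid l$ to be isomorphic to $\widetilde{E}_l$. Both are congruence conditions on $(a, b)$ modulo bounded powers of the relevant primes; such pairs exist in abundance, in view of Lemma \ref{lem_curves_satisfying_local_conditions}.

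Let $\cE_X^{\mathrm{loc}} \subseteq \cE_X$ denote the pairs $(a, b) \in \cE_X$ satisfying the above congruence conditions. Writing $d = [K : \bbQ]$, a routine lattice-point count in $\cO_{K, X^4} \times \cO_{K, X^6}$ shows that $|\cE_X| \sim c_0 X^{10 d}$ and $|\cE_X^{\mathrm{loc}}| \sim c_1 X^{10 d}$ with $c_0, c_1 > 0$: the local density at each prime of $T$ is a strictly positive proportion of all Weierstrass pairs there, and the degenerate locus $\Delta = 0$ contributes a set of strictly smaller order. Hence $\cE_X^{\mathrm{loc}}$ has positive density in $\cE_X$, and every $E_{a, b}$ with $(a, b) \in \cE_X^{\mathrm{loc}}$ already satisfies (ii) and (iii).

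It remains to subtract the sub-locus $\cE_X^{\mathrm{bad}} \subseteq \cE_X^{\mathrm{loc}}$ on which (i) fails. For such curves the image of $\overline{\rho}_{E, 5}|_{G_{K(\zeta_5)}}$ lies, up to conjugation, in one of the finitely many proper absolutely reducible subgroups of $\GL_2(\bbF_5)$---either a Borel subgroup or the normalizer of a non-split torus. Each type corresponds to $K$-rational points on one of finitely many $K$-forms of the one-dimensional modular curves $X_0(5)$ or $X_{\mathrm{ns}}^+(5)$, and a standard Weil-height count on a one-dimensional moduli space gives $|\cE_X^{\mathrm{bad}}| = O(X^{10 d - \delta})$ for some $\delta > 0$. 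Therefore $\cE_X^{\mathrm{loc}} \setminus \cE_X^{\mathrm{bad}}$ still has positive density in $\cE_X$, and every member of it is modular by Corollary \ref{cor_modularity_with_a_condition_at_5}. The most delicate step is this last uniform count of $K$-points of bounded height on the exceptional modular curves, which I would handle by directly parametrizing the finitely many $K$-forms at hand (they are either of genus at most one, making a direct Weil-height estimate available, or of higher genus, where Faltings' theorem renders $\cE_X^{\mathrm{bad}}$ negligibly small).
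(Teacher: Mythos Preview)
Your overall strategy—fix local conditions for ordinariness and decomposed genericity, then argue the irreducibility condition holds on a set of full density—is sound and can be made into a correct proof, but the execution of the last step has a gap, and the paper takes a simpler route that sidesteps it entirely.

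The paper applies Corollary~\ref{cor_modularity_with_a_condition_at_3} (prime $3$, not $5$), and instead of subtracting a bad locus it forces the irreducibility hypothesis by \emph{further local conditions}: one fixes two auxiliary places $w_1, w_2$ split in $K(\zeta_3)$ and reductions $E_{w_i}/k(w_i)$ so that $\overline{\rho}_{E_{w_1},3}(\Frob_{w_1})$ has order $4$ and $\overline{\rho}_{E_{w_2},3}(\Frob_{w_2})$ has order $6$. Any subgroup of $\SL_2(\bbF_3)$ containing elements of orders $4$ and $6$ is all of $\SL_2(\bbF_3)$, so $\overline{\rho}_{E,3}(G_{K(\zeta_3)}) = \SL_2(\bbF_3)$ and absolute irreducibility is automatic for \emph{every} curve in the resulting congruence box. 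The positive-proportion statement then reduces to the elementary lattice-point count you already describe. The same trick works at $p=5$ if you prefer: impose Frobenius conditions at a few primes split in $K(\zeta_5)$ whose orders generate $\SL_2(\bbF_5)$.

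Your subtraction argument, by contrast, does not go through as written. The failure of absolute irreducibility of $\overline{\rho}_{E,5}|_{G_{K(\zeta_5)}}$ does correspond to $K$-rational points on certain modular covers (though not literally $X_0(5)$ and $X_{\mathrm{ns}}^+(5)$, since the condition concerns the restriction to $G_{K(\zeta_5)}$ rather than $G_K$; the relevant curves depend on $[K(\zeta_5):K]$). But even granting this, the genus-$0$ curves among them have $\sim H^{2d}$ rational points of height $\le H$, and each $j$-invariant in their image pulls back to a one-parameter family of Weierstrass pairs $(a,b)$: the bad locus in $(a,b)$-space is a Zariski-dense countable union of curves, not a lower-dimensional subvariety, so no ``Weil-height count on a one-dimensional moduli space'' yields a power saving. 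What does work is that the bad $(a,b)$ form a \emph{thin set} in $\bbA^2$ (the image of $K$-points of a degree-$>1$ \'etale cover), and quantitative Hilbert irreducibility in the sense of Serre and Cohen then gives the saving you claim. That is a genuine additional input; the paper's local-forcing trick avoids needing it.
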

Theorem \ref{thm_example_application} is an easy consequence of the following result.
\begin{proposition}
	Suppose that $K$ is a CM field and $\zeta_5 \not\in K$. Then we can find a finite set $S$ of finite places of $K$, together with for each $v \in S$ a (non-empty) open compact subset $U_v \subset \cO_{K_v} \times \cO_{K_v}$, such that for any $(a, b) \in \cE_X \cap (\prod_{v \in S} U_v)$, $E_{a, b}$ is modular.
\end{proposition}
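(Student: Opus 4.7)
The plan is to choose the local data $(S,\{U_v\})$ so that every $(a,b)\in\cE_X\cap\prod_{v\in S}U_v$ satisfies the hypotheses of Corollary~\ref{cor_modularity_with_a_condition_at_3}, which then gives modularity of $E_{a,b}$. Thus we must force: (i) $E_{a,b}$ is ordinary at each place $v\mid 3$ of $K$; (ii) $\overline{\rho}_{E_{a,b},3}$ is decomposed generic; (iii) $\overline{\rho}_{E_{a,b},3}|_{G_{K(\zeta_3)}}$ is absolutely irreducible. Each will be secured by local conditions at an auxiliary prime.

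For (i), at each $v\mid 3$ fix a short Weierstrass model $y^2=x^3+a_v x+b_v$ of an ordinary elliptic curve $\cE_v/k(v)$ and set $U_v=\{(a,b)\in\cO_{K_v}^2 : (a,b)\equiv(a_v,b_v)\pmod{\varpi_v}\}$; this is nonempty open compact, and any $(a,b)\in U_v$ gives $E_{a,b}$ with good ordinary reduction at $v$. For (ii) and (iii), choose rational primes $l_1,l_2\notin\{2,3,5\}$ that split completely in $K(\zeta_3)$. Then $l_i\equiv 1\pmod 3$, every $v\mid l_i$ has residue field $\bbF_{l_i}$, and any Frobenius at such $v$ lies in $G_{K(\zeta_3)}$. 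By Deuring's theorem, for $l_i$ sufficiently large we can pick $\cE_{l_1}/\bbF_{l_1}$ with Frobenius trace $a_{l_1}\equiv 0\pmod 3$, and $\cE_{l_2}/\bbF_{l_2}$ with $a_{l_2}\equiv -1\pmod 3$ and $\cE_{l_2}[3]\not\subset\cE_{l_2}(\bbF_{l_2})$. The mod-$3$ Frobenius of $\cE_{l_1}$ has characteristic polynomial $X^2+1$, which is irreducible in $\bbF_3[X]$, so it has distinct eigenvalues in $\bbF_9$ (hence not in ratio $1=l_1\bmod 3$) and is an element of order $4$ in $\SL_2(\bbF_3)$. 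The mod-$3$ Frobenius of $\cE_{l_2}$ has characteristic polynomial $(X-1)^2$ and is nontrivial by construction, hence a nontrivial unipotent of order $3$. For each $v\mid l_i$ take $U_v\subset\cO_{K_v}^2$ to be the pairs reducing mod $\varpi_v$ to the fixed Weierstrass model of $\cE_{l_i}$, and let $S$ be the (finite) set of places above $3$, $l_1$, or $l_2$.

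For any $(a,b)\in\cE_X\cap\prod_{v\in S}U_v$ and each $v\mid l_i$, the curve $E_{a,b}$ has good reduction isomorphic to $\cE_{l_i}$, so $\overline{\rho}_{E_{a,b},3}(\Frob_v)$ is $\GL_2(\bbF_3)$-conjugate to $\overline{\rho}_{\cE_{l_i},3}(\Frob_{l_i})$. Hence $\overline{\rho}_{E_{a,b},3}(G_{K(\zeta_3)})\subseteq\SL_2(\bbF_3)$ contains elements of orders $3$ and $4$. Since $\SL_2(\bbF_3)$ has commutator subgroup equal to its Sylow $2$-subgroup $Q_8$, its abelianization is $C_3$, so it admits no subgroup of index $2$; thus no proper subgroup of order divisible by $12$ exists, forcing the image to be all of $\SL_2(\bbF_3)$. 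In particular $\overline{\rho}_{E_{a,b},3}|_{G_{K(\zeta_3)}}$ is absolutely irreducible. Together with (i) and the decomposed-generic condition witnessed by $l_1$, the hypotheses of Corollary~\ref{cor_modularity_with_a_condition_at_3} are satisfied, so $E_{a,b}$ is modular.

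The main obstacle is ensuring that purely local Frobenius conditions at $l_1,l_2$ force the global mod-$3$ image to contain $\SL_2(\bbF_3)$; this rests on combining Deuring's existence theorem for elliptic curves over prime fields with prescribed trace, and the small-group fact recorded above. All other steps amount to routine lifting of short Weierstrass equations from residue fields to local rings of integers.
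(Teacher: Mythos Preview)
Your overall strategy matches the paper's: reduce to Corollary~\ref{cor_modularity_with_a_condition_at_3} by imposing local conditions forcing ordinariness at $3$, decomposed genericity, and large mod-$3$ image. Your use of elements of orders $3$ and $4$ (rather than the paper's $4$ and $6$) to force the image to be $\SL_2(\bbF_3)$ is a correct variant, and combining the decomposed-generic witness with the order-$4$ element at a single prime $l_1$ is a nice economy.

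There is, however, a genuine gap at the $3$-adic places. In characteristic $3$, every curve $y^2=x^3+ax+b$ (with $a\ne 0$) is \emph{supersingular}: the Hasse invariant is the coefficient of $x^{p-1}=x^2$ in $(x^3+ax+b)^{(p-1)/2}=x^3+ax+b$, which vanishes. Hence there is no short Weierstrass model of an ordinary elliptic curve over $k(v)$, and your $U_v$ is empty as defined. Even if you interpret $(a_v,b_v)\in\cO_{K_v}$ with $E_{a_v,b_v}$ of good ordinary reduction, the congruence $(a,b)\equiv(a_v,b_v)\pmod{\varpi_v}$ does not propagate the reduction type, since such an equation is necessarily non-minimal at $v$.

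The fix is exactly what the paper does: take $U_v$ to be any nonempty open compact set of pairs $(a,b)\in\cO_{K_v}^2$ for which the curve $E_{a,b}$ (not the equation) has good reduction isomorphic to a fixed ordinary $E_v/k(v)$. Such $(a,b)$ exist --- lift $E_v$ to $\cO_{K_v}$, write the lift in short Weierstrass form over $K_v$, and scale into $\cO_{K_v}^2$ --- and the condition is open because reduction type and the isomorphism class of the special fibre are locally constant in $(a,b)$ via Tate's algorithm. Alternatively, you could impose multiplicative reduction at $v\mid 3$, which also satisfies the ``ordinary'' hypothesis of Corollary~\ref{cor_modularity_with_a_condition_at_3}. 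With either modification, the rest of your argument goes through.
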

\begin{proof}
	We use Corollary \ref{cor_modularity_with_a_condition_at_3}. We need to show that we can choose $S$ and $U_v$ so that for any $(a, b) \in \cE_X \cap (\prod_{v \in S} U_v)$, $E_{a, b}$ satisfies the conditions of that corollary. 
	
	To this end, we choose for each $v \in S_3$ an ordinary elliptic curve $E_v$ over $k(v)$. We choose a prime $l > 3$ split in $K(\zeta_3)$ and for each $v \in S_l$ an elliptic curve $E_v$ over $k(v) = \bbF_l$ such that $\overline{\rho}_{E_v, 3}(\Frob_v)$ is regular semisimple. We fix places $w_1, w_2$ of $K$ split in $K(\zeta_{3})$ and elliptic curves $E_{w_1}, E_{w_2}$ over $k(w_1)$ and $k(w_2)$ respectively such that $\overline{\rho}_{E_{w_1}, 3}(\Frob_v)$ has order 4 and $\overline{\rho}_{E_{w_2}, 3}(\Frob_v)$ has order 6. 
	
	We let $S = S_3 \cup S_l \cup \{ w_1, w_2 \}$ and for each $v \in S$, define $U_v$ to be any non-empty open compact subset consisting of pairs $(a, b) \in \cO_{K_v} \times \cO_{K_v}$ which define an elliptic curve with good reduction isomorphic to $E_v$ over $k(v)$.
	
	It follows by construction that if $(a, b) \in \cE_X \cap (\prod_{v \in S} U_v)$, and $E = E_{a, b}$, then $E_{K_v}$ is ordinary for each $v \in S_3$ and $\overline{\rho}_{E, 3}$ is decomposed generic. Since any subgroup of $\SL_2(\bbF_3)$ containing an element of order 4 and an element of order 6 equals $\SL_2(\bbF_3)$, we see that $\overline{\rho}_{E, 3}(G_{K(\zeta_3)}) = \SL_2(\bbF_3)$. Thus $E$ satisfies all the conditions of  Corollary \ref{cor_modularity_with_a_condition_at_3}. This completes the proof.
\end{proof}

\appendix

\section{Automorphy lifting for $p>2$: proofs}\label{sec:p-odd-proofs}

In this appendix we prove Theorem~\ref{thm_automorphy_at_odd_primes}.

\subsection{Galois deformation theory, fixed determinant case}
We first need to set up Galois deformation theory in the fixed determinant case (in contrast to \S \ref{sec_Galois_deformation_theory}, where we did not fix determinants). We again let $K$ be a number field. 
We take an \emph{odd} prime $p$ and let $\cO$ be a coefficient ring with residue field $k$. 
We will primarily be interested in the case that $K$ is an imaginary CM field.
We fix a continuous, absolutely irreducible representation $\overline{\rho} : G_K \to \GL_2(k)$ and a character $\psi : G_K \to \cO^\times$ such that 
\[\det\overline{\rho} = \psi.\]
Let $S_p$ denote the set of $p$-adic places of $K$, and let $S$ be a finite set of finite places of $K$ containing $S_p$ and all the places at which $\overline{\rho}$ is ramified. We also fix for each $v \in S$ a ring $\Lambda_v \in \CNL_\cO$ and set $\Lambda = \widehat{\otimes}_{v \in S} \Lambda_v$, the completed tensor product being relative to $\cO$.

In this section, a global deformation problem is a tuple 
\[ \cS = ( K, \overline{\rho}, \psi, S, \{ \Lambda_v \}_{v \in S}, \{ \cD_v \}_{v \in S} ), \]
where for each $v \in S$, $\cD_v$ is a local deformation problem at $v$. If $R \in \CNL_\Lambda$, we say that a continuous lifting $\rho_R : G_K \to \GL_2(R)$ of $\overline{\rho}$ is of type $\cS$ if $\rho_R$ is unramified outside $S$, $\det\rho_R = \psi$, and $\rho_R \in \cD_v(R)$ for each $v \in S$. 
We can then define liftings and deformations, as well as their $T$-framed versions, as in \S\ref{sec_deformation_basics}.

Because we work with fixed determinants for our global deformations, we will always consider local deformation problems $\cD_v$ that are subfunctors of the local deformation problems $\cD_v^\psi$, where $\cD_v^\psi$ is the deformation problem consisting of lifts $\rho$ of $\overline{\rho}|_{G_{K_v}}$ with $\det\rho = \psi|_{G_{K_v}}$. 
Abusing notation slightly, we will simply write $\det\rho = \psi$. 
We let $R_v^\psi$ denote the quotient of $R_v^\square$ representing $\cD_v^\psi$. 
When the place $v$ is understood, we will write $\rho^\square$ and $\rho^\psi$ for the universal $R_v^\square$ and $R_v^\psi$ valued lifts, respectively.

We let $\ad \overline{\rho}$ be the space $M_2(k)$ of $2 \times 2$ matrices with entries in $k$ with adjoint $G_K$-action and let $\ad^0\overline{\rho}$ be the trace zero subspace.

\subsubsection{Ordinary deformations}\label{sec_ordinary_fixed_determinant_deformations}
Fix a prime $v \in S_p$. We assume that $\overline{\rho}|_{G_{K_v}}$ is trivial. 
Let $\cO_{K_v}^\times(p)$ be the maximal pro-$p$ quotient of $\cO_{K_v}^\times$, which we identify with a quotient of $I_{K_v}$ via class field theory. 
We similarly identify the pro-$p$ completion $K_v^\times(p)$ of $K_v^\times$ with a quotient of $G_{K_v}$.  
Fix a minimal prime $\frq \subset \cO\llbracket \cO_{K_v}^\times(p) \rrbracket$.
We then take $\Lambda_v = \cO\llbracket \cO_{K_v}^\times(p) \rrbracket/\frq$, and we write $\phi : I_{K_v} \to \Lambda_v^\times$ for the universal character.

Set $\widetilde{\Lambda}_v = \cO\llbracket K_v^\times(p) \rrbracket \widehat{\otimes}_{\cO\llbracket \cO_{K_v}^\times(p) \rrbracket} \Lambda_v$ and set $\widetilde{R}_v^\psi = R_v^\psi \otimes_{\Lambda_v} \widetilde{\Lambda}_v$. 
The character $\phi$ naturally extends to a character $\widetilde{\phi} : G_{K_v} \to \widetilde{\Lambda}_v^\times$. 
We define $\widetilde{R}_v^{\psi,\det,\ord}$ to be the maximal quotient of $\widetilde{R}_v^\psi$ over which the relations 
\[ \det(X - \rho^\psi(g)) = (X - \widetilde{\phi}(g))(X - \psi\widetilde{\phi}^{-1}(g)) \]
and
\[ (\rho^\psi(g_1) - \widetilde{\phi}(g_1))(\rho^\psi(g_2) - \psi\widetilde{\phi}^{-1}(g_2)) = 0 \]
hold for all $g,g_1,g_2 \in G_{K_v}$. 
We then define $R_v^{\psi,\det,\ord}$ to be the image of the $\Lambda_v$-algebra homomorphism
\[ R_v^\psi \to \widetilde{R}_v^{\psi,\det,\ord}. \]
Then $R_v^{\psi,\det,\ord}$ classifies a local deformation problem that we denote by $\cD_v^{\psi,\det,\ord}$.

\begin{proposition}\label{prop_det_ord_fixed_det}
	Assume that $p > 2$, that $[K_v : \bbQ_p] > 4$ and that $\overline{\rho}|_{G_{K_v}}$ is trivial. 
	Let $Z \subseteq \Spec\Lambda_v$ be the closed subscheme defined by $\phi^2 = \psi|_{I_{K_v}}$.
	Let $f : \Spec R_v^{\psi,\det,\ord} \to \Spec \Lambda_v$ be the structure map. 
	\begin{enumerate}
		\item There is a unique irreducible component $C'$ of $\Spec R_v^{\psi,\det,\ord}$ dominating $\Spec \Lambda_v$.
		It has dimension $4 + 2[K_v:\bbQ_p]$. 
		\item Let $C'$ be an irreducible component of $\Spec R_v^{\psi,\det,\ord}$ which does not dominate $\Spec \Lambda_v$. 
		Then $C' \subseteq f^{-1}(Z)$ and $C'$ has dimension at most $2 + 2[K_v:\bbQ_p]$. 
	\end{enumerate}
\end{proposition}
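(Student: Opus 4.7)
The plan is to transfer the dimension computation to the auxiliary ring $\widetilde{R}_v^{\psi,\det,\ord}$ and then carry out an orbit analysis there. First, $\widetilde{R}_v^{\psi,\det,\ord}$ is finite of degree at most $2$ over $R_v^{\psi,\det,\ord}$: it is generated over the latter by $\widetilde{\phi}(\Frob_v)$, which satisfies the characteristic polynomial $X^2 - \tr\rho^\psi(\Frob_v)\,X + \psi(\Frob_v)$, whose coefficients already lie in $R_v^{\psi,\det,\ord}$. Hence the finite surjection $\Spec\widetilde{R}_v^{\psi,\det,\ord} \to \Spec R_v^{\psi,\det,\ord}$ preserves dimensions and induces a bijection on irreducible components; since $\widetilde{\Lambda}_v \cong \Lambda_v\llbracket T\rrbracket$ is faithfully flat over $\Lambda_v$, dominance over $\Spec \Lambda_v$ corresponds to dominance over $\Spec \widetilde{\Lambda}_v$. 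The pullback $\widetilde{Z}$ of $Z$ to $\Spec\widetilde{\Lambda}_v$ has dimension $1$: for $p$ odd, Hensel's lemma applied to the pro-$p$ equation $\phi^2 = \psi|_{I_{K_v}}$ shows $Z$ itself is a single closed $\cO$-point, and passing from $\Lambda_v$ to $\Lambda_v\llbracket T\rrbracket$ adds exactly one dimension.

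To analyse $\widetilde{R}_v^{\psi,\det,\ord}$, let $\cD' \subseteq \widetilde{\cD}_v^{\psi,\det,\ord}$ be the subfunctor of upper-triangular lifts
\[
\rho(g) = \begin{pmatrix} \widetilde{\phi}(g) & b(g) \\ 0 & \psi\widetilde{\phi}^{-1}(g) \end{pmatrix},
\]
parametrized by a continuous $1$-cocycle $b$ for the character $\widetilde{\phi}^2\psi^{-1}$. Standard continuous cochain theory makes $\cD'$ representable by a formally smooth $\widetilde{\Lambda}_v$-algebra $R'$, whose relative dimension equals $\dim_k Z^1(G_{K_v}, k(\widetilde{\phi}^2\psi^{-1}))$. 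By the local Euler--Poincar\'e formula, this cocycle dimension is $1 + [K_v:\bbQ_p]$ at any point where $\widetilde{\phi}^2\psi^{-1}$ is non-trivial (so $h^0 = h^2 = 0$), and jumps by at most $h^0 + h^2 \le 2$ over $\widetilde{Z}$. Proposition \ref{prop_characterization_of_ordinary_deformations_by_pseudocharacter_relations}, adapted to the fixed-determinant setting, implies that every framed lift of type $\cD_v^{\psi,\det,\ord}$ is $\widehat{\GL}_2$-conjugate to one in $\cD'$, so the conjugation map
\[
\alpha: \widehat{\GL}_2 \times_\cO \Spec R' \longrightarrow \Spec \widetilde{R}_v^{\psi,\det,\ord}, \qquad (g, \rho') \mapsto g\rho' g^{-1},
\]
is surjective. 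Outside the locus $\widetilde{Z}_0 = \{\widetilde{\phi} = \psi\widetilde{\phi}^{-1}\} \subset \widetilde{Z}$, the $\widetilde{\phi}$-eigenline of $\rho$ is uniquely determined, so the fibres of $\alpha$ are torsors under the formal upper-triangular Borel $\widehat{B}$ (of dimension $3$).

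Combining these observations, the main component of $\Spec \widetilde{R}_v^{\psi,\det,\ord}$ has dimension
\[
\dim \widehat{\GL}_2 + \dim R' - \dim \widehat{B} = 4 + (3 + 2[K_v:\bbQ_p]) - 3 = 4 + 2[K_v:\bbQ_p],
\]
establishing (1); its uniqueness follows because $R'$ is a domain (formally smooth over the domain $\widetilde{\Lambda}_v$) and $\alpha$ is a geometric quotient by the connected formal group $\widehat{B}$ over the dominating open. For (2), the same formal smoothness of $\cD'$ and of the $\widehat{B}$-action over $\Spec \widetilde{\Lambda}_v \setminus \widetilde{Z}$ shows that the preimage of this open in $\Spec \widetilde{R}_v^{\psi,\det,\ord}$ is an irreducible smooth bundle contained in the main component, so any other irreducible component must lie in $\widetilde{f}^{-1}(\widetilde{Z})$, whose image is contained in $f^{-1}(Z)$. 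Running the orbit-dimension count over $\widetilde{Z}$ (of dimension $1$) with the bounded cocycle dimension yields $\dim C' \le 2 + 2[K_v:\bbQ_p]$, with the hypothesis $[K_v:\bbQ_p] > 4$ being used to absorb the worst-case fibre jump over the closed sublocus $\widetilde{Z}_0$, where the stabilizer of the lift can jump to a larger subgroup of $\widehat{\GL}_2$. The main technical obstacle is justifying the orbit-dimension formula rigorously: because $\overline{\rho}|_{G_{K_v}}$ is trivial its centralizer is all of $\GL_2$ and the $\widehat{\GL}_2$-action on $\cD'$ is not free, so the dimension bounds must be established via a flat stratification of $\Spec \widetilde{\Lambda}_v$ (or an explicit groupoid presentation) rather than treating $\alpha$ as a naive free quotient.
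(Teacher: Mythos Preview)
Your approach differs substantially from the paper's, and it contains a genuine gap that undermines both parts of the proposition.

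The central problem is your claim that $\cD'$ is represented by a formally smooth $\widetilde{\Lambda}_v$-algebra $R'$. This is false. Since $\overline{\rho}|_{G_{K_v}}$ is trivial, the residual character $\widetilde{\phi}^2\psi^{-1} \bmod \ffrm$ is trivial, so the tangent space of $\cD'$ over $\widetilde{\Lambda}_v$ has dimension $\dim_k Z^1(G_{K_v},k) = \dim_k \Hom(G_{K_v},k)$, which is at least $2 + [K_v:\bbQ_p]$ when $\zeta_p \in K_v$. But over the generic point of $\widetilde{\Lambda}_v$ the character is non-trivial and the cocycle space drops to dimension $1 + [K_v:\bbQ_p]$. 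A functor whose fibre dimension jumps in this way is not formally smooth, and in fact $R'$ typically acquires an extra component supported over the locus where the character degenerates (think of the model case $\widetilde{\Lambda}_v[X]/(TX)$, which has two minimal primes). Once $R'$ fails to be a domain, your argument for the \emph{uniqueness} of the dominating component collapses, and the orbit-dimension count for part (2) loses its footing as well: you cannot simply subtract $\dim \widehat{B}$ when the stabilizer and the source both behave badly over $\widetilde{Z}_0$. Your appeal to Proposition~\ref{prop_characterization_of_ordinary_deformations_by_pseudocharacter_relations} is also misplaced --- that result relies essentially on $\overline{\rho}|_{G_{K_v}}$ being non-trivial (so that one can single out an eigenvector residually), which is exactly what fails here. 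Finally, the assertion that the finite map $\Spec\widetilde{R}_v^{\psi,\det,\ord}\to\Spec R_v^{\psi,\det,\ord}$ induces a \emph{bijection} on irreducible components is unjustified; finite surjections can collapse or split components.

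The paper avoids all of this by a clean twisting trick. Since $p>2$, every residually trivial character of $G_{K_v}$ admits a unique square root, so one can pass freely between fixed-determinant and unrestricted-determinant deformations by twisting by $(\det\rho\cdot\psi^{-1})^{\pm 1/2}$. Concretely, the paper produces isomorphisms
\[
R_v^{\det,\ord}\;\cong\; R_v^{\psi,\det,\ord}\,\widehat{\otimes}_\cO\,\cO\llbracket K_v^\times(p)^f\rrbracket,
\qquad
\Lambda_v'\;\cong\;\Lambda_v\,\widehat{\otimes}_\cO\,\cO\llbracket \cO_{K_v}^{\times,f}\rrbracket,
\]
compatible with the structure maps, where the right-hand tensor factors are power series rings in $1+[K_v:\bbQ_p]$ and $[K_v:\bbQ_p]$ variables respectively. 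The proposition then follows immediately from the corresponding statement for $R_v^{\det,\ord}$, which is \cite[Proposition~6.2.12]{10authors}. This bypasses any need to analyse the Borel-valued moduli or their stabilizers directly.
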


\begin{proof}
	We deduce this from \cite[Proposition~6.2.12]{10authors}, which treats the non-fixed determinant (and arbitrary rank) case.
	Let $\mu_{p^r}$ be the torsion subgroup of $\cO_{K_v}^\times(p)$. 
	Enlarging $\cO$ if necessary, we can assume that $\cO$ contains a primitive $p^r$th root of unity. 
	The minimal primes of $\cO\llbracket \cO_{K_v}^\times(p) \rrbracket$, resp. of $\cO\llbracket \cO_{K_v}^\times(p) \times \cO_{K_v}^\times(p) \rrbracket$, are then in bijection with characters $\eta : \mu_{p^r} \to \cO^\times$, resp. pairs $(\eta_1,\eta_2)$ of characters $\eta_i : \mu_{p^r} \to \cO^\times$. 
	Let $\eta : \mu_{p^r} \to \cO^\times$ be the character corresponding to our fixed minimal prime $\frq \subset \cO \llbracket \cO_{K_v}^\times(p) \rrbracket$, and we let $\frq' \subseteq \cO \llbracket \cO_{K_v}^\times(p) \times \cO_{K_v}^\times(p) \rrbracket$ be the minimal prime corresponding to the pair $(\eta, \eta^{-1}\psi|_{\mu_{p^r}})$. 
	We then set $\Lambda_v' = \cO \llbracket \cO_{K_v}^\times(p) \times \cO_{K_v}^\times(p) \rrbracket/\frq'$, and let $\phi_1, \phi_2 : I_{K_v} \to (\Lambda_v')^\times$ be the resulting universal characters. 
	Set $\widetilde{\Lambda}_v' = \cO\llbracket K_v^\times(p) \times K_v^\times(p) \rrbracket \otimes_{\cO\llbracket \cO_{K_v}^\times(p) \times \cO_{K_v}^\times(p) \rrbracket} \Lambda_v'$ and $\widetilde{R}_v^\square = R_v^\square \otimes_{\Lambda_v'} \widetilde{\Lambda}_v'$. 
	The characters $\phi_1,\phi_2$ naturally extends to characters $\widetilde{\phi}_1, \widetilde{\phi}_2 : G_{K_v} \to (\widetilde{\Lambda}_v')^\times$. 
	Define $\widetilde{R}_v^{\det,\ord}$ to be the maximal quotient of $\widetilde{R}_v^\square$ over which the relations 
	\[ \det(X - \rho^\square(g)) = (X - \widetilde{\phi}_1(g))(X - \widetilde{\phi}_2(g)) \]
	and
	\[ (\rho^\square(g_1) - \widetilde{\phi}_1(g_1))(\rho^\square(g_2) - \widetilde{\phi}_2(g_2)) = 0 \]
	hold for all $g,g_1,g_2 \in G_{K_v}$. We then define $R_v^{\det, \ord}$ to be the image of the $\Lambda_v$-algebra homomorphism $R_v^\square \to \widetilde{R}_v^{\det, \ord}$.
	
	Let $K_v^\times(p)^f$ and $\cO_{K_v}^{\times,f}$ be the maximal torsion free quotients of $K_v^\times(p)$ and $\cO_{K_v}^\times$, respectively, and
	let $\Psi : G_{K_v} \to \cO\llbracket K_v^\times(p)^f \rrbracket$ and $\Phi : I_{K_v} \to \cO\llbracket \cO_{K_v}^{\times,f} \rrbracket$ be the tautological characters (trivial on the torsion subgroups of $G_{K_v}^\mathrm{ab}$ and $I_{K_v^\mathrm{ab}/K_v}$, respectively). 
	Let $\rho: G_{K_v} \to \GL_2(R_v^{\psi,\det,\ord})$ and $\rho' : G_{K_v} \to \GL_2(R_v^{\det,\ord})$ be the universal lifts. 
	There is a commutative diagram
	\begin{equation}\label{eqn_two_det_ords} 
	\xymatrix@1{ R_v^{\det,\ord} \ar[r]^-g & R_v^{\psi,\det,\ord} \widehat{\otimes}_{\cO} \cO \llbracket K_v^\times(p)^f \rrbracket \\
		\Lambda_v' \ar[u] \ar[r]^-h & \Lambda_v \widehat{\otimes}_{\cO} \cO \llbracket \cO_{K_v}^{\times,f} \rrbracket, \ar[u] }    
	\end{equation}
	where $g$ is induced by the lift $\rho \otimes \Psi$ and $h$ by the pair of characters $(\phi\Phi, \phi^{-1}\psi|_{I_{K_v}}\Phi)$. 
	We claim that $g$ and $h$ are isomorphisms.
	Since $p > 2$, for any $\CNL_{\Lambda_v}$-algebra $R$ and character $\chi : G_{K_v} \to R^\times$ that is trivial modulo $\ffrm_R$, there is a unique square root character $\chi^{1/2} : G_{K_v} \to R^\times$ that is also trivial modulo $\ffrm_R$.
	On universal objects, the inverse to $g$ is then given by 
	\[ \rho' \mapsto (\rho' \otimes ((\det\rho')^{-1}\psi)^{1/2}, ((\det\rho')\psi^{-1})^{1/2}), \]
	and the inverse to $h$ by
	\[ (\phi_1,\phi_2) \mapsto (\phi_1(\phi_1^{-1}\phi_2^{-1}\psi|_{I_{K_v}})^{1/2}, (\phi_1\phi_2\psi^{-1}|_{I_{K_v}})^{1/2}). \]
	Since $\cO\llbracket \cO_{K_v}^{\times,f} \rrbracket$ and $\cO \llbracket F_v^\times(p)^f \rrbracket$ are power series rings over $\cO$ in $[K_v:\bbQ_p]$ and $1+[K_v:\bbQ_p]$ variables, respectively, the proposition now follows from \cite[Proposition~6.2.12]{10authors}.
\end{proof}

\subsubsection{Level raising deformations}\label{sec:level_raising_defs_fixed_det}
Let $v$ be a finite place of $K$ such that $q_v \equiv 1 \text{ mod } p$ and $\overline{\rho}|_{G_{K_v}}$ is trivial. Let $\Lambda_v = \cO$. 

Given a character $\chi_v : k(v)^\times \to \cO^\times$ which is trivial mod $(\varpi)$, we define $\cD_v^{\psi,\chi_v} \subset \cD_v^\psi$ to be the functor of liftings $\rho_R : G_{K_v} \to \GL_2(R)$ of $\overline{\rho}|_{G_{K_v}}$ such that $\det\rho_R = \psi$ and for all $\sigma \in I_{K_v}$, the characteristic polynomial of $\rho_R(\sigma)$ equals
\[ (X - \chi_v(\Art_{K_v}^{-1}(\sigma))) (X - \chi_v^{-1}\psi(\Art_{K_v}^{-1}(\sigma))). \]
Then $\cD_v^{\psi,\chi_v}$ is a local deformation problem, and we write $R_v^{\psi,\chi_v}$ for the representing object. 
As in \S\ref{subsubsec_level_raising_deformations}, the ring  $R_v^{\psi,\chi_v} / (\varpi)$ is canonically independent of $\chi_v$.

\begin{lemma}\label{lem_ihara_avoidance_ring_fixed_det}
	\begin{enumerate}
		\item Assume that $\chi = \psi|_{I_{K_v}} = 1$. Then $R_v^{\psi,1}$ is equidimensional of dimension $4$ and every minimal prime has characteristic $0$.
		Further, every prime of $R_v^{\psi,1}$ minimal over $\varpi$ contains a unique minimal prime of $R_v^{\psi,1}$.
		\item Assume that $\chi_v^2 \ne \psi|_{I_{K_v}}$. Then $R_v^{\psi,\chi_v}$ has a unique minimal prime and has dimension $4$. Its minimal prime has characteristic $0$.
	\end{enumerate}
\end{lemma}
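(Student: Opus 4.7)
The plan is to reduce each assertion to the corresponding unfixed-determinant statements, Lemma~\ref{lem_ihara_avoidance_ring_1} and Lemma~\ref{lem_ihara_avoidance_ring_chi}, via a twisting construction that mirrors the one used in the proof of Proposition~\ref{prop_det_ord_fixed_det} and exploits the hypothesis $p$ odd. Specifically, the first step is to establish a natural isomorphism
\[ R_v^{(\chi_v,\,\chi_v^{-1}\psi)} \;\cong\; R_v^{\psi,\chi_v}\llbracket t \rrbracket, \]
where the left-hand side denotes the unfixed-determinant level-raising ring of \S\ref{subsubsec_level_raising_deformations} for the pair of characters $(\chi_{v,1},\chi_{v,2})=(\chi_v,\,\chi_v^{-1}\psi|_{I_{K_v}})$. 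In one direction, any lift $\rho$ of type $\cD_v^{(\chi_v,\chi_v^{-1}\psi)}$ has $\det\rho|_{I_{K_v}} = \chi_v\cdot\chi_v^{-1}\psi = \psi|_{I_{K_v}}$, so the ratio $\det\rho\cdot\psi^{-1}$ is unramified and trivial modulo the maximal ideal; since $p$ is odd and $1+\ffrm_R$ is uniquely $2$-divisible, it admits a unique continuous square root $\theta$, and $\rho\otimes\theta^{-1}$ is of type $\cD_v^{\psi,\chi_v}$. In the reverse direction one twists the universal fixed-determinant lift by the tautological unramified character with $\theta^{\mathrm{univ}}(\Frob_v)=1+t$; these two constructions are mutually inverse.

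The second step is to deduce Part~2 directly. By Lemma~\ref{lem_ihara_avoidance_ring_chi}, $R_v^{(\chi_v,\chi_v^{-1}\psi)}$ has a unique minimal prime, which is of characteristic $0$, and has dimension~$5$. For any complete Noetherian local $\cO$-algebra $R$, the map $\frp\mapsto\frp\llbracket t\rrbracket$ is a bijection between the minimal primes of $R$ and those of $R\llbracket t\rrbracket$ (since $(R/\frp)\llbracket t\rrbracket$ is a domain whenever $R/\frp$ is) that increases dimension by $1$; applied here, it shows that $R_v^{\psi,\chi_v}$ has a unique minimal prime, of characteristic~$0$ and dimension~$4$. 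Part~1 follows by the same reasoning applied to Lemma~\ref{lem_ihara_avoidance_ring_1}: the two characteristic-$0$ minimal primes $Q_1,Q_2$ of $R_v^1$ of dimension~$5$ correspond to distinct minimal primes $\frp_1,\frp_2$ of $R_v^{\psi,1}$ with $Q_i=\frp_i\llbracket t\rrbracket$, giving equidimensionality of dimension~$4$ in characteristic~$0$. The statement about primes minimal over $\varpi$ transfers analogously: the correspondence $\frq\leftrightarrow\frq\llbracket t\rrbracket$ also matches primes of $R_v^{\psi,1}$ minimal over $\varpi$ with primes of $R_v^1$ minimal over $\varpi$ and is compatible with containment of minimal primes, so the uniqueness assertion from Lemma~\ref{lem_ihara_avoidance_ring_1} descends to $R_v^{\psi,1}$.

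The main obstacle is setting up the twisting isomorphism cleanly---verifying that $\det\rho\cdot\psi^{-1}$ really is unramified for lifts in $\cD_v^{(\chi_v,\chi_v^{-1}\psi)}$, that the square-root character $\theta$ exists and is unique (which is where $p$ odd is essential), and that the single power-series variable $t$ exactly exhausts the difference between the unfixed- and fixed-determinant rings. Once this is in place the remaining arguments are routine translations of dimension and minimal-prime statements through the flat extension $R\hookrightarrow R\llbracket t\rrbracket$.
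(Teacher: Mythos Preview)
Your twisting argument establishing $R_v^{(\chi_v,\chi_v^{-1}\psi)} \cong R_v^{\psi,\chi_v}\llbracket t\rrbracket$ is exactly the paper's approach (the paper writes the extra factor as $\cO\llbracket (K_v^\times/\cO_{K_v}^\times)(p)\rrbracket$), and the transfer of dimension and minimal-prime statements through the faithfully flat extension $R\hookrightarrow R\llbracket t\rrbracket$ is correct.

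There is, however, a citation gap in Part~1: Lemma~\ref{lem_ihara_avoidance_ring_1} is stated and proved only under the hypothesis $p=2$, whereas the appendix works throughout with $p$ odd, so you cannot invoke it here. (Lemma~\ref{lem_ihara_avoidance_ring_chi} carries no such restriction, so your deduction of Part~2 is fine.) The paper sidesteps this by citing \cite[Proposition~3.1]{Tay08} directly for the structure of the unfixed-determinant ring in both cases; that result applies for odd $p$ and supplies the equidimensionality, the characteristic-$0$ minimal primes, and the uniqueness of the minimal prime below each prime minimal over $\varpi$ that your argument needs. Replacing the reference to Lemma~\ref{lem_ihara_avoidance_ring_1} with this external citation makes your proof complete and essentially identical to the paper's.
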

\begin{proof}
	Let $R_v^{\chi_v}$ be the quotient of $R_v^\square$ representing liftings $\rho_R : G_{K_v} \to \GL_2(R)$ of $\overline{\rho}|_{G_{K_v}}$ such that for all $\sigma \in I_{K_v}$, the characteristic polynomial of $\rho_R(\sigma)$ equals
	\[ (X - \chi_v(\Art_{K_v}^{-1}(\sigma))) (X - \chi_v^{-1}\psi(\Art_{K_v}^{-1}(\sigma))). \]
	A twisting argument similar to (but easier than) that in the proof of Proposition~\ref{prop_det_ord_fixed_det}, shows that $R_v^{\chi_v} \cong R_v^{\psi,\chi_v} \widehat{\otimes}_{\cO} \cO \llbracket (K_v^\times/\cO_{K_v}^\times)(p) \rrbracket \cong R_v^{\psi,\chi_v}\llbracket X \rrbracket$. 
	The lemma then follows from \cite[Proposition~3.1]{Tay08}. 
\end{proof}

\subsubsection{Formally smooth deformations}

\begin{lemma}\label{lemma:smooth_defs_fixed_det}
	Assume that $v\nmid p$ and that $H^2(K_v, \ad^0\overline{\rho}) = 0$. 
	Then $R_v^\psi$ is formally smooth over $\Lambda_v$ of relative dimension $3$.
\end{lemma}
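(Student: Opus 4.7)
The plan is to compute the relative tangent and obstruction spaces of $\cD_v^\psi$ over $\Lambda_v$ and invoke the standard formal smoothness criterion. First I will identify the relative tangent space of $R_v^\psi$ over $\Lambda_v$ with $Z^1(K_v, \ad^0\overline{\rho})$: a lifting of $\overline{\rho}|_{G_{K_v}}$ to $k[\epsilon]/(\epsilon^2)$ compatible with the structural $\Lambda_v$-algebra map has determinant equal to the reduction of $\psi$, viewed as a constant character valued in $k[\epsilon]^\times$, so writing such a lift in the form $(1 + \epsilon\phi)\overline{\rho}|_{G_{K_v}}$ with $\phi \in Z^1(K_v, \ad\overline{\rho})$ the determinant condition becomes $\tr(\phi) = 0$, cutting out $Z^1(K_v, \ad^0\overline{\rho})$.

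Next I will verify formal smoothness by the usual obstruction argument. Given a small extension $R' \twoheadrightarrow R$ in $\CNL_{\Lambda_v}$ with square-zero kernel $I$ and a lifting $\rho_R \in \cD_v^\psi(R)$, choose any continuous set-theoretic lift $\tilde\rho : G_{K_v} \to \GL_2(R')$ of $\rho_R$. Since $p$ is odd, squaring is a bijection on $1 + I$, so one may rescale $\tilde\rho$ by the unique square root in $1 + I$ of $(\det\tilde\rho)\psi^{-1}$ to arrange $\det\tilde\rho = \psi$ as set-theoretic functions. Taking determinants of the identity $\tilde\rho(gh) = (1 + c(g,h))\tilde\rho(g)\tilde\rho(h)$ and using $I^2 = 0$ then shows $\tr(c(g,h)) = 0$, so the obstruction 2-cocycle $c$ represents a class in $H^2(K_v, \ad^0\overline{\rho}) \otimes_k I$, which vanishes by hypothesis. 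Writing $c$ as the coboundary of a 1-cochain $u$ valued in $\ad^0\overline{\rho} \otimes_k I$ and replacing $\tilde\rho$ by $(1 + u)\tilde\rho$ yields a genuine homomorphism lifting $\rho_R$ whose determinant remains $\psi$, since $\det(1 + u) = 1 + \tr(u) = 1$.

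Finally I will compute the dimension. Because $v \nmid p$, Tate's local Euler characteristic formula gives $h^0 - h^1 + h^2 = 0$ for $h^i = \dim_k H^i(K_v, \ad^0\overline{\rho})$, and combined with $h^2 = 0$ this yields $h^1 = h^0$, hence
\[ \dim_k Z^1(K_v, \ad^0\overline{\rho}) = h^1 + \dim_k \ad^0\overline{\rho} - h^0 = 3. \]
This is the relative tangent space dimension, so $R_v^\psi$ is formally smooth over $\Lambda_v$ of relative dimension $3$. There is no serious obstacle here; the only delicate step is the use of $p$ being odd to canonically correct the determinant of a set-theoretic lift via a square root in $1 + I$.
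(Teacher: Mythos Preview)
Your proof is correct and is exactly the standard argument the paper alludes to with ``This is standard.'' You have correctly identified the tangent space as $Z^1(K_v,\ad^0\overline{\rho})$, verified that obstructions to lifting land in $H^2(K_v,\ad^0\overline{\rho})\otimes_k I$ (using $p$ odd to fix determinants via square roots in $1+I$), and computed the relative dimension via the local Euler characteristic formula.
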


\begin{proof}
	This is standard.
\end{proof}

\subsubsection{Taylor--Wiles deformations}\label{sec:TW_p_odd}
Let $v$ be a finite place of $K$ and let $N \geq 1$ be an integer such that $q_v \equiv 1 \text{ mod }p^N$ and $\overline{\rho}|_{G_{K_v}} = \alpha_v \oplus \beta_v$ is a direct sum of distinct unramified characters. Let $\Lambda_v = \cO$. Then we define $\Delta_v = k(v)^\times(p)$, the maximal $p$-power quotient of $k(v)^\times$. 

As in \S\ref{subsubsec_taylor_wiles_deformation_problems}, there is a uniquely determined lift $A_v : G_{K_v} \to (R_v^\psi)^\times$ of $\alpha_v$ such that the universal determinant $\psi$ lift $\rho^\psi : G_{K_v} \to \GL_2(R_v^\psi)$ is $\widehat{\GL}_2(R_v^\psi)$-conjugate to $A_v \oplus A_v^{-1}\psi$. 
Consequently, $A_v \circ \Art_{K_v}$ gives $R_v^\psi$ the structure of an $\cO[\Delta]$-algebra. 
If $\cS = (K, \overline{\rho}, \psi, S, \{ \cD_v \}_{v \in S})$ is a deformation problem,  then we call a Taylor--Wiles datum for $\cS$ a tuple $(Q, N, \{ \alpha_v, \beta_v \}_{v \in Q})$, where:
\begin{enumerate}
	\item $Q$ is a finite set of finite places of $K$.
	\item $N \geq 1$ is an integer. 
	\item $\alpha_v, \beta_v : G_{K_v} \to k^\times$ are continuous characters.
\end{enumerate}
We require that the following conditions are satisfied:
\begin{enumerate}
	\item $Q \cap S = \emptyset$.
	\item For each $v \in Q$, $q_v \equiv 1 \text{ mod }p^N$.
	\item For each $v \in Q$, $\overline{\rho}|_{G_{K_v}} \cong \alpha_v \oplus \beta_v$ is a direct sum of distinct unramified characters.
\end{enumerate}
We call $N$ the level of the Taylor--Wiles datum. If $(Q, N, \{ \alpha_v, \beta_v \}_{v \in Q})$ is a Taylor--Wiles datum, then we define the augmented deformation problem
\[ \cS_Q = (K, \overline{\rho}, \psi, S \cup Q,  \{ \Lambda_v \}_{v \in S \cup Q}, \{ \cD_v \}_{v \in S} \cup \{ \cD_v^\psi \}_{v \in Q}). \]
The deformation ring $R_{\cS_Q}$ has a natural structure of $\cO[\Delta_Q]$-algebra (where $\Delta_Q = \prod_{v \in Q} \Delta_v$); by construction, $\Delta_Q$ surjects onto  $(\bbZ / p^N \bbZ)^{|Q|}$.

\subsection{Geometry of fixed determinant deformation rings}
Suppose given a deformation problem $\cS = (K, \overline{\rho}, \psi, S,  \{ \Lambda_v \}_{v \in S}, \{ \cD_v \}_{v \in S})$ with $\cD_v \subseteq \cD_v^\psi$ for reach $v \in S$. 
Let $T \subset S$, and let $\Lambda_T = \widehat{\otimes}_{v \in T} \Lambda_v$.
We let $R_v \in \CNL_{\Lambda_v}$ denote the representing object of $\cD_v$ ($v \in S$), and define $A_\cS^T = \Lambda \otimes_{\Lambda_T} ( \widehat{\otimes}_{v \in T} R_v )$, the completed tensor product being over $\cO$. Then $A_\cS^T \in \CNL_{\Lambda}$ and there is a canonical map of $\Lambda$-algebras $A_\cS^T \to R_\cS^T$, corresponding to the natural transformation $( ( \alpha_v )_{v \in T}, \rho ) \mapsto ( \alpha_v \rho \alpha_v^{-1} )_{v \in T}$ at the level of $T$-framed liftings.

For each $v \in S$, the map $c \mapsto (1 + \epsilon c)\overline{\rho}$ defines an isomorphism $Z^1(K_v, \ad^0\overline{\rho}) \cong \Hom_{\CNL_{\Lambda_v}}(R_v^\psi, k[\epsilon]/(\epsilon^2))$. 
We let $\cL_v^1 \subseteq Z^1(K_v, \ad^0\overline{\rho})$ be the subspace corresponding to the tangent space of $R_v$, and let $\cL_v$ be its image in $H^1(K_v, \ad^0\overline{\rho})$. 
We have the complex of \cite[\S 5.3]{Tho16}:
\[ C^i_{\cS, T}(\ad^0\overline{\rho}) = \left\{ \begin{array}{ll} C^0(K_S / K, \ad \overline{\rho}) & i = 0;   \\
C^1(K_S / K, \ad^0 \overline{\rho}) \oplus \bigoplus_{v \in T} C^0(K_v, \ad \overline{\rho}) & i = 1 ; \\
C^2(K_S / K, \ad^0 \overline{\rho}) \oplus \bigoplus_{v \in T} C^1(K_v, \ad^0 \overline{\rho}) \oplus \bigoplus_{v \in S - T} C^1(K_v, \ad^0 \overline{\rho}) / \cL_v^1 & i = 2; \\
C^i(K_S / K, \ad^0 \overline{\rho}) \oplus \bigoplus_{v \in S} C^{i-1}(K_v, \ad^0 \overline{\rho}) & i > 2,
\end{array}\right. \]
with differential given by $(\phi, (\psi_v)_v)) \mapsto (\partial \phi, (\phi|_{G_{K_v}} - \partial \psi_v)_v)$, and whose cohomology we denote by $H^i_{\cS, T}(\ad^0 \overline{\rho})$. 

Since we are assuming $p > 2$, the trace pairing $(X,Y) \to \tr(XY)$ defines a perfect pairing on $\ad^0\overline{\rho}$. 
For each $v \in S$, we let $\cL_v^\perp \subset H^1(K_v, \ad^0\overline{\rho}(1))$ be the orthogonal complement of $\cL_v$ with respect to the Tate duality pairing, 
and then define a group $H^1_{\cS^\perp, T}(\ad^0 \overline{\rho}(1))$ by the formula
\[ H^1_{\cS^\perp, T}(\ad^0\overline{\rho}(1)) = \ker \left( H^1(K_S / K, \ad^0 \overline{\rho}(1)) \to \prod_{v \in S - T} H^1(K_v, \ad^0 \overline{\rho}(1)) / \cL_v^\perp \right). \]
The following result, as with Proposition~\ref{prop_presentation_by_generators_and_relations}, is proved exactly as in \cite[\S 5.3]{Tho16}.
\begin{proposition}\label{prop_presentation_by_generators_and_relations_p_odd}
	\begin{enumerate}
		\item There is a surjective map $A_\cS^T \llbracket X_1, \dots, X_g \rrbracket \to R_\cS^T$ of $A_\cS^T$-algebras, where $g = h^1_{\cS, T}(\ad^0 \overline{\rho})$. If $R_v$ is a formally smooth $\Lambda_v$-algebra for each $v \in S - T$, then the kernel of this map can be generated by $r$ elements, where $r = h^1_{\cS^\perp, T}(\ad^0 \overline{\rho}(1))$.
		\item If $v \in S$, let $\ell_v = \dim_k \cL_v$. Let $\delta_T = 1$ if $T$ is empty, and $\delta_T = 0$ otherwise. Then we have the formula
		\begin{align*} h^1_{\cS, T}(\ad^0 \overline{\rho}) & = h^1_{\cS^\perp, T}(\ad^0 \overline{\rho}(1)) - h^0(K, \ad^0 \overline{\rho}(1)) + \delta_T - 1 + \lvert T \rvert\\ 
		& \quad - \sum_{v | \infty} h^0(K_v, \ad^0 \overline{\rho}) + \sum_{v \in S - T} (\ell_v - h^0(K_v, \ad^0 \overline{\rho})). \end{align*}
	\end{enumerate}
\end{proposition}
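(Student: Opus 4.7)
The plan is to follow the standard Selmer group argument for presenting Galois deformation rings, adapting the non-fixed-determinant computation of \cite[\S 5.3]{Tho16} to the present fixed-determinant, $p$ odd setting. The only real differences are that one works throughout with the trace-zero adjoint $\ad^0\overline{\rho}$ (which inherits a perfect self-duality via the trace pairing since $p>2$) in place of $\ad\overline{\rho}$, and that one tracks the resulting off-by-one corrections in the Euler characteristic formula carefully. The auxiliary complex $C^\bullet_{\cS,T}(\ad^0\overline{\rho})$ has already been defined above and its cohomology is what appears in the statement.

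For part (1), I will first identify the relative mod-$\ffrm$ tangent space $\Hom_k(\ffrm_{R_\cS^T}/(\ffrm_{R_\cS^T}^2,\ffrm_{A_\cS^T}),k)$ with $H^1_{\cS,T}(\ad^0\overline{\rho})$. Concretely, a lift of type $\cS$ to $k[\epsilon]/(\epsilon^2)$ together with framings $(1+\epsilon\beta_v)_{v\in T}$ corresponds, modulo strict equivalence, to a cocycle $\phi\in Z^1(K_S/K,\ad^0\overline{\rho})$ (the fixed-determinant condition $\det(1+\epsilon\phi)\overline\rho=\psi$ forces $\phi$ to take values in $\ad^0\overline{\rho}$) with $\phi|_{G_{K_v}}\in \cL^1_v$ for $v\in S-T$, modulo the equivalence $(\phi,(\beta_v))\sim(\phi+(1-\ad)X,(\beta_v+X))$ for $X\in\ad^0\overline{\rho}$. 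This gives the surjection in (1) with $g=h^1_{\cS,T}(\ad^0\overline{\rho})$ by the complete version of Nakayama's lemma. To bound the number of relations, I will compute $H^2$ of the Koszul-type complex defining the presentation: when each $R_v$ with $v\in S-T$ is formally smooth over $\Lambda_v$, the obstruction to lifting a $T$-framed deformation from $R/I$ to $R$ (for $I$ square-zero) lies in a group which, by the Poitou--Tate nine-term exact sequence together with local Tate duality identifying $(H^1/\cL_v)^\vee$ with $\cL_v^\perp$, injects into $H^1_{\cS^\perp,T}(\ad^0\overline{\rho}(1))$. This bounds the relations by $r = h^1_{\cS^\perp,T}(\ad^0\overline{\rho}(1))$.

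For part (2), I will compute the Euler characteristic of $C^\bullet_{\cS,T}(\ad^0\overline{\rho})$ in two ways. On one hand, using Tate's global Euler characteristic formula for $\ad^0\overline{\rho}$ (of dimension $3$) together with the local Euler characteristic formula at $v\in S_p$ and the fact that local cohomology contributes trivially at infinite places except via $h^0(K_v,\ad^0\overline{\rho})$, one obtains
\[ \sum_i (-1)^i h^i_{\cS,T}(\ad^0\overline{\rho}) = -\sum_{v|\infty}h^0(K_v,\ad^0\overline{\rho}) + \sum_{v\in S-T}(\ell_v - h^0(K_v,\ad\overline{\rho})) + \delta_T - 1 + |T|. \]
On the other hand, global duality gives $h^2_{\cS,T}(\ad^0\overline{\rho}) = h^1_{\cS^\perp,T}(\ad^0\overline{\rho}(1))$, while $h^0_{\cS,T}(\ad^0\overline{\rho})=h^0(K,\ad^0\overline{\rho})=0$ since $\overline{\rho}$ is absolutely irreducible and $p$ is odd, and the contributions of $H^i$ for $i\geq 3$ cancel against local $H^{i-1}$ by the Poitou--Tate formula (using that each $\ad^0\overline{\rho}$ has trivial cohomology in degrees $\geq 3$ globally away from real places, which vanish here since the extra $H^0(K_v,\ad\overline{\rho})$ terms for $v\in T$ in the degree-$1$ part of the complex absorb the correction). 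Combining these and solving for $h^1_{\cS,T}$ yields the stated formula, with the $-h^0(K,\ad^0\overline{\rho}(1))$ term arising from the global $H^3$ of $\ad^0\overline{\rho}$, and the $+|T| + \delta_T - 1$ correction coming from the degree-$0$ and degree-$1$ local terms attached to $T$ (specifically, $h^0(K_v,\ad\overline{\rho})=h^0(K_v,\ad^0\overline{\rho})+1$ for each $v\in T$, which accounts for the extra $|T|$, and $\delta_T - 1$ from whether the global $H^0$ term $C^0(K_S/K,\ad\overline{\rho})$ has trivial-scalar contribution).

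The only delicate bookkeeping is keeping straight the distinction between $\ad\overline{\rho}$ (which appears in the degree-$0$ global term and in the $C^0(K_v,\cdot)$ terms for $v\in T$, reflecting that framing variables encode genuine $\widehat{\GL}_2$-conjugation, not just $\widehat{\SL}_2$) and $\ad^0\overline{\rho}$ (which appears elsewhere because of the fixed-determinant condition). This is the step where I will be careful, as getting the constants $-1+\delta_T+|T|$ right requires tracking each $+1$ contribution from scalar matrices. Once this is done, both parts follow from the same mechanism as in the non-fixed-determinant case.
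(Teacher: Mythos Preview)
Your approach is correct and matches the paper's: the paper simply states that this is ``proved exactly as in \cite[\S 5.3]{Tho16}'' (just as for Proposition~\ref{prop_presentation_by_generators_and_relations}), and you have correctly identified the one genuine bookkeeping point, namely that the complex uses $\ad\overline{\rho}$ in the degree-$0$ global term and in the $C^0(K_v,-)$ pieces for $v\in T$ (reflecting $\widehat{\GL}_2$-framings) while using $\ad^0\overline{\rho}$ elsewhere. One small correction: your claim that $h^0_{\cS,T}(\ad^0\overline{\rho})=0$ is not quite right when $T=\emptyset$, since then $H^0$ of the complex is $H^0(K_S/K,\ad\overline{\rho})=k$ (scalars); in general $h^0_{\cS,T}=\delta_T$, and this is precisely where the $\delta_T$ in the formula comes from, as you note later in your final paragraph.
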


\begin{lemma}\label{lemma_TW_primes_p_odd}
	Assume that $\overline{\rho}|_{G_{K(\zeta_p)}}$ is absolutely irreducible. 
If $p = 5$ and the projective image of $\overline{\rho}(G_{K(\zeta_5)})$ is conjugate to $\PSL_2(\bbF_5)$, we assume further that the extension of $K$ cut out by the projective image of $\overline{\rho}$ does not contain $\zeta_5$. 
	
	Let $q \ge h^1_{\cS^\perp, T}(\ad^0 \overline{\rho}(1))$. 
	Then for any $N \ge 1$, there is a Taylor--Wiles datum $(Q, N, \{ \alpha_v, \beta_v \}_{v \in Q})$ of level $N$ such that 
	\begin{enumerate}
		\item $\lvert Q \rvert = q$,
		\item each $v \in Q$ has degree 1 over $\bbQ$,
		\item $h^1_{\cS_{Q_N}^\perp, T}(\ad^0 \overline{\rho}(1)) = 0$.
	\end{enumerate} 
\end{lemma}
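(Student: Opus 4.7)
The plan is to run the standard Taylor--Wiles--Kisin argument by descending induction on the dimension of the dual Selmer group. The inductive step is to show that for any nonzero class $[\psi] \in H^1_{\cS_Q^\perp, T}(\ad^0\overline{\rho}(1))$ one can produce a finite place $v \nmid pS \cup Q$ of $K$, of residue degree one over $\bbQ$, such that (i) $v$ splits completely in $K(\zeta_{p^N})$, (ii) $\overline{\rho}(\Frob_v)$ has distinct eigenvalues, and (iii) $[\psi]$ has nonzero image in $H^1(K_v, \ad^0\overline{\rho}(1))/\cL_v^\perp$, where $\cL_v$ is the tangent space of $\cD_v^{\mathrm{TW}, N}$. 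Conditions (i) and (ii) make $v$ a valid Taylor--Wiles datum of level $N$; condition (iii), combined with Proposition~\ref{prop_presentation_by_generators_and_relations_p_odd} (applied with $T$-framed coefficients), strictly drops the dual Selmer dimension. Iterating $h^1_{\cS^\perp, T}(\ad^0\overline{\rho}(1))$ times and then padding the set $Q$ with $q - h^1_{\cS^\perp, T}(\ad^0\overline{\rho}(1))$ further TW primes (chosen freely) yields a Taylor--Wiles datum of the required size with trivial dual Selmer.

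Under condition (i), $\ad^0\overline{\rho}(1)|_{G_{K_v}}$ is canonically $\ad^0\overline{\rho}|_{G_{K_v}}$, and Lemma~\ref{lem:one_TW_prime} identifies $H^1(K_v^{\mathrm{ur}}/K_v, \ad^0\overline{\rho})$ with $\ad^0\overline{\rho}^{\langle\Frob_v\rangle}$ as a direct summand; condition (iii) is therefore equivalent to $\psi(\Frob_v) \notin (\Frob_v - 1)\ad^0\overline{\rho}$. By the Chebotarev density theorem applied to the finite Galois extension of $K$ cut out jointly by $\overline{\rho}$, $\zeta_{p^N}$, and a cocycle representative of $\psi$, it suffices to find $\sigma \in G_{K(\zeta_{p^N})}$ with $\overline{\rho}(\sigma)$ regular semisimple and $\psi(\sigma) \notin (\sigma - 1)\ad^0\overline{\rho}$. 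Writing $L$ for the extension of $K$ cut out by $\overline{\rho}$, $M = L(\zeta_{p^N})$, and $H = \overline{\rho}(G_{K(\zeta_{p^N})})$, inflation--restriction for $M/K(\zeta_{p^N})$ together with absolute irreducibility of $\overline{\rho}|_{G_{K(\zeta_p)}}$ (giving $H^0(H, \ad^0\overline{\rho}) = 0$) reduces the construction to an analysis of $H^1(H, \ad^0\overline{\rho})$ and of the $H$-equivariant homomorphisms $G_M \to \ad^0\overline{\rho}$.

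The case analysis then proceeds as follows. When $\overline{\rho}|_{G_{K(\zeta_p)}}$ has enormous image in the sense of \cite[Definition~6.2.28]{10authors}, the standard argument from the appendix of \cite{10authors} applies directly: $H^1(H, \ad^0\overline{\rho})$ vanishes, so any nonzero $[\psi]$ remains a nonzero $\Gal(M/K(\zeta_{p^N}))$-equivariant homomorphism $G_M \to \ad^0\overline{\rho}$, and the third defining property of "enormous" supplies the required regular semisimple element. For $p = 3$ with $\overline{\rho}(G_{K(\zeta_3)})$ conjugate to $\SL_2(\bbF_3)$ or $\GL_2(\bbF_3)$, the only failure of enormousness is the $\bbZ/3$-quotient of $\SL_2(\bbF_3)$ arising from the scalar subspace of $\ad\overline{\rho}$; since our coefficient module is $\ad^0\overline{\rho}$, which does not see the center, the other three conditions in the definition of enormous hold by direct verification with the explicit finite groups, and the argument carries over verbatim.

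The delicate case, and the main obstacle, is $p = 5$ with projective image $\PSL_2(\bbF_5)$: here $H^1(\SL_2(\bbF_5), \ad^0\overline{\rho})$ is one-dimensional, producing potentially obstructing inflated cocycles. This is precisely where the extra hypothesis that the extension $L_{\mathrm{proj}}/K$ cut out by the projective image of $\overline{\rho}$ does not contain $\zeta_5$ is used: it guarantees that restriction gives an injection $\Gal(L_{\mathrm{proj}}(\zeta_5)/K(\zeta_5)) \hookrightarrow \Gal(L_{\mathrm{proj}}/K)$, so $\Gal(M/K)$ acts nontrivially via its cyclotomic quotient on the problematic one-dimensional $H^1$. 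One verifies by an explicit character computation that the image of $H^1_{\cS^\perp, T}(\ad^0\overline{\rho}(1)) \to H^1(K(\zeta_5), \ad^0\overline{\rho})^{\Gal(K(\zeta_5)/K)}$ intersects this obstruction trivially, at which point the Chebotarev step in the previous paragraph produces the required $\sigma$ exactly as in the enormous case.
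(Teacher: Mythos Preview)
Your proposal is correct and follows essentially the same approach as the paper. The paper's own proof is extremely terse---it reduces to the same Chebotarev statement (finding $\sigma \in G_{K(\zeta_{p^N})}$ with $\overline{\rho}(\sigma)$ regular semisimple and $\kappa(\sigma) \notin (\sigma-1)\ad^0\overline{\rho}(1)$) and then simply cites \cite[Theorem~2.49]{ddt} and \cite[Appendix~A]{Bar13} for the group-theoretic case analysis. Your write-up is effectively an expansion of what those references contain: the enormous case is standard; for $p=3$ you correctly observe that passing to $\ad^0$ removes the problematic order-$3$ abelian quotient of $\SL_2(\bbF_3)$; and for $p=5$ you identify that the one-dimensional $H^1(\SL_2(\bbF_5),\ad^0)$ is avoided by classes coming from $H^1(K,\ad^0\overline{\rho}(1))$ precisely because of the cyclotomic twist, which is exactly the mechanism the paper describes in the discussion preceding Theorem~\ref{thm_automorphy_at_odd_primes}. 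Two minor remarks: in the odd-$p$ setup the local problem at a Taylor--Wiles place is the full $\cD_v^\psi$ (so $\cL_v^\perp = 0$), not $\cD_v^{\mathrm{TW},N}$; and your sentence about the injection $\Gal(L_{\mathrm{proj}}(\zeta_5)/K(\zeta_5)) \hookrightarrow \Gal(L_{\mathrm{proj}}/K)$ is slightly garbled---the point is rather that the hypothesis $\zeta_5 \notin L_{\mathrm{proj}}$ makes $L_{\mathrm{proj}}$ and $K(\zeta_5)$ linearly disjoint over $K$, so that $\Gal(K(\zeta_5)/K)$ acts nontrivially (via the Tate twist) on the inflated obstruction class, forcing it out of the $\Gal(K(\zeta_5)/K)$-invariants where the restricted Selmer classes live.
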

\begin{proof}
	It suffices to show that for any cocycle $\kappa$ representing a nonzero element of $H^1_{\cS^\perp, T}(\ad^0\overline{\rho}(1))$, we can find a place $v \notin S$ of $K$ such that 
	\begin{itemize}
		\item $v$ has degree 1 over $\bbQ$ and splits in $K(\zeta_{p^N})$,
		\item $\overline{\rho}(\Frob_v)$ has distinct eigenvalues,
		\item the image of $\kappa$ in $H^1(K_v, \ad^0\overline{\rho})$ is nonzero.
	\end{itemize}
	The primes of degree 1 in $K$ have Dirichlet density 1. 
	So applying Chebotarev density, it suffices to show that for every $\kappa$ representing a nonzero element of $H^1_{\cS^\perp, T}(\ad^0\overline{\rho}(1))$, we can find $\sigma \in G_{K(\zeta_{p^N})}$ such that
	\begin{itemize}
		\item $\overline{\rho}(\sigma)$ has distinct eigenvalues,
		\item $\kappa(\sigma) \notin (\sigma - 1) \ad^0\overline{\rho}(1)$.
	\end{itemize}
	This can be accomplished by arguing as in the proof of \cite[Theorem 2.49]{ddt} (with \cite[Appendix A]{Bar13} as a useful additional reference).
\end{proof}

\begin{proposition}\label{prop_presentation_with_TW_data}
	Assume that $K = K^+K_0$ with $K^+$ totally real and $K_0$ an imaginary CM field. 
	Assume that $\overline{\rho}|_{G_{K(\zeta_p)}}$ is absolutely irreducible. 
	If $p = 5$ and the projective image of $\overline{\rho}(G_{K(\zeta_5)})$ is conjugate to $\PSL_2(\bbF_5)$, we assume further that the extension of $K$ cut out by the projective image of $\overline{\rho}$ does not contain $\zeta_5$. 

	Take $T = S$ and let $q \ge h^1_{\cS^\perp, T}(\ad^0 \overline{\rho}(1))$. Then for any $N \ge 1$, there is a choice of level $N$ Taylor--Wiles datum $(Q, N, \{\alpha_v, \beta_v\}_{v\in Q})$ such that
	\begin{enumerate}
		\item $\lvert Q \rvert = q$,
		\item for each $v \in Q$, the rational prime below $v$ splits in $K_0$,
		\item there is a surjective $\CNL_\Lambda$-morphism $A_\cS^T \llbracket X_1, \ldots, X_g \rrbracket \to R_{\cS_{Q_N}}^T$ with $g = q - 3[K^+:\bbQ] - 1 + \lvert T \rvert$. 
	\end{enumerate}
\end{proposition}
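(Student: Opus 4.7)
The proposition is essentially a standard Taylor--Wiles numerology exercise, assembling the lemma providing sufficiently many Taylor--Wiles primes with the generators-and-relations formula for framed deformation rings. First I would apply Lemma~\ref{lemma_TW_primes_p_odd} with the given integer $q \geq h^1_{\cS^\perp, T}(\ad^0\overline{\rho}(1))$ to obtain a level-$N$ Taylor--Wiles datum $(Q, N, \{\alpha_v, \beta_v\}_{v\in Q})$ with $|Q| = q$, each $v \in Q$ of degree one over $\bbQ$, and $h^1_{\cS_Q^\perp, T}(\ad^0\overline{\rho}(1)) = 0$. To verify the splitting condition at $K_0$: since $v \in Q$ has residue field $\bbF_l$ for $l$ the rational prime below $v$, the prime of $K_0$ beneath $v$ must also have residue field $\bbF_l$, which gives the required decomposition of $l$ in $K_0$ (one may additionally impose in the Chebotarev condition that $l$ avoid the finite set of primes ramified in $K_0/\bbQ$, so that $l$ truly splits there).

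Next I would invoke part~1 of Proposition~\ref{prop_presentation_by_generators_and_relations_p_odd} applied to the augmented problem $\cS_Q$ with framing set $T = S$, yielding a surjective $\CNL_\Lambda$-morphism
\[ A_{\cS_Q}^T\llbracket X_1, \ldots, X_g\rrbracket \twoheadrightarrow R_{\cS_Q}^T, \qquad g = h^1_{\cS_Q, T}(\ad^0\overline{\rho}). \]
Because $Q \cap S = \emptyset$ and $T = S$, the local deformation problems in $\cS_Q$ at places $v \in T$ coincide with those in $\cS$; hence $A_{\cS_Q}^T = A_\cS^T$, as required.

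It then remains to compute $g = h^1_{\cS_Q, T}(\ad^0\overline{\rho})$ via part~2 of Proposition~\ref{prop_presentation_by_generators_and_relations_p_odd}. I would evaluate each term as follows: $h^1_{\cS_Q^\perp, T} = 0$ by the choice of $Q$; $h^0(K, \ad^0\overline{\rho}(1)) = 0$ since $\overline{\rho}|_{G_{K(\zeta_p)}}$ is absolutely irreducible (so $H^0(K(\zeta_p), \ad^0\overline{\rho})$ vanishes, and a fortiori its $\Gal(K(\zeta_p)/K)$-invariants in the twist); $\delta_T = 0$ since $T \supseteq S_p$ is nonempty; the archimedean contribution equals $\sum_{v\mid\infty} h^0(K_v, \ad^0\overline{\rho}) = 3[K^+:\bbQ]$, because $K$ is CM so each of the $[K^+:\bbQ]$ places $v\mid\infty$ is complex, $G_{K_v}$ is trivial, and $\dim_k \ad^0\overline{\rho} = 3$; and at each $v \in (S\cup Q)-T = Q$, the residual representation $\overline{\rho}|_{G_{K_v}}$ is a sum of two distinct unramified characters, giving $h^0(K_v, \ad^0\overline{\rho}) = 1$, while $q_v \equiv 1 \pmod{p^N}$ trivializes the cyclotomic twist so $h^2(K_v, \ad^0\overline{\rho}) = h^0 = 1$, and local Euler characteristic (for $v\nmid p$) yields $h^1 = 2$. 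Since $\cD_v = \cD_v^\psi$ at TW primes, $\ell_v = h^1 = 2$ and each TW prime contributes $\ell_v - h^0 = 1$, so $\sum_{v \in Q}(\ell_v - h^0) = q$. Summing:
\[ g = 0 - 0 + 0 - 1 + |T| - 3[K^+:\bbQ] + q = q - 3[K^+:\bbQ] - 1 + |T|, \]
as required.

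There is no genuine obstacle here; the assumption $q \geq h^1_{\cS^\perp, T}(\ad^0\overline{\rho}(1))$ is exactly what Lemma~\ref{lemma_TW_primes_p_odd} needs to annihilate the dual Selmer, and the remainder is arithmetic bookkeeping. The mild subtlety — ensuring the TW primes have residue characteristic splitting in $K_0$ — is handled freely because any place of degree one over $\bbQ$ automatically has this property (away from the finite set of primes ramified in $K_0$).
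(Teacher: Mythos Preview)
Your proof is correct and follows essentially the same approach as the paper's: apply Lemma~\ref{lemma_TW_primes_p_odd} to annihilate the dual Selmer group, then invoke Proposition~\ref{prop_presentation_by_generators_and_relations_p_odd} and compute each term in the Euler characteristic formula. Your treatment is in fact slightly more detailed than the paper's, which omits the explicit verification that $\ell_v=2$ and $h^0=1$ at the Taylor--Wiles primes and does not mention the (harmless) caveat about avoiding primes ramified in $K_0$.
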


\begin{proof}
	We apply Lemma~\ref{lemma_TW_primes_p_odd} and obtain a Taylor--Wiles datum $(Q, N, \{ \alpha_v, \beta_v \}_{v \in Q})$ of level $N$ such that 
	\begin{itemize}
		\item $\lvert Q \rvert = q$,
		\item eavh $v \in Q$ has degree 1 over $\bbQ$,
		\item $h^1_{\cS_{Q_N}, T}(\ad^0 \overline{\rho}(1)) = 0$.
	\end{itemize}
	Since $v \in Q$ has degree 1 over $\bbQ$, it split in $K_0$. 
	By choice of out Taylor--Wiles datum, Proposition~\ref{prop_presentation_by_generators_and_relations_p_odd} implies that there is a surjective $\CNL_\Lambda$-morphism $A_\cS^T \llbracket X_1, \ldots, X_g \rrbracket \to R_{\cS_{Q_N}}^T$ with
	\[ g = - h^0(K, \ad^0 \overline{\rho}(1)) - 1 + \vert T \rvert - \sum_{v | \infty} h^0(K_v, \ad^0 \overline{\rho}) + \sum_{v \in Q} (h^1(K_v, \ad^0\overline{\rho}) - h^0(K_v, \ad^0 \overline{\rho})).\]
	Now $h^0(K, \ad^0 \overline{\rho}(1)) = 0$ since $\overline{\rho}|_{G_{K(\zeta_p)}}$ is absolutely irreducible and
	\[ \sum_{v | \infty} h^0(K_v, \ad^0 \overline{\rho}) = 3[K^+:\bbQ] \]
	since $K$ is imaginary CM. 
	Finally, since $q_v \equiv 1 \bmod p$ and $\overline{\rho}(\Frob_v)$ has distinct eigenvalues for each $v \in Q$, a straightforward computation using the local Euler characteristic shows that
	\[ \sum_{v \in Q} (h^1(K_v, \ad^0\overline{\rho}) - h^0(K_v, \ad^0 \overline{\rho})) = q.\qedhere \] 
\end{proof}

\subsection{An application of patching}\label{sec:patching_application_p_odd}
We use the setup and notation of \S\S\ref{sec_cohom_general} and \ref{sec_Hecke_Gal_rep}, specialized to the case $G = \PGL_2$. 
We fix the isomorphism of the diagonal maximal torus of $\PGL_2$ with $\bbG_m$ by $\diag(a,d) \mapsto ad^{-1}$, and identify $\bbZ_{\ge 0}$ with $\bbZ_{+,0}^2$ by $\lambda \mapsto (\lambda, -\lambda)$. 
Throughout this subsection, we assume given the following data:
\begin{itemize}
	\item An odd prime $p$ and a CM number field $K$. 
	\item A finite set of finite places $S$ of $K$ containing the $p$-adic places $S_p$, and a (possibly empty) subset $R \subset S - S_p$. 
	\item A continuous representation $\overline{\rho} : G_K \to \GL_2(\overline{\bbF}_p)$ with $\det(\overline{\rho}) = \epsilon^{-1}$.
	\item An isomorphism $\iota : \overline{\bbQ}_p \cong \bbC$ and a cohomological cuspidal automorphic representation $\pi$ of $\PGL_2(\bbA_K)$. 
	We let $\mu \in \bbZ_{\ge 0}^{\Hom(K, \overline{\bbQ}_p)}$ be such that $\pi$ has weight $\iota\mu$. 
\end{itemize}   
We assume that our data satisfies the following hypotheses:
\begin{itemize}
	\item For each $v \in S_p$, $[K_v : \bbQ_p] > 4$.
	\item If $l$ is a rational prime lying below some $v \in S$ or ramifying in $K$, then there exists an imaginary quadratic subfield of $K$ in which $l$ splits. 
	\item $\overline{\rho}$ is decomposed generic and $\overline{\rho}|_{G_{K(\zeta_p)}}$ is absolutely irreducible. If $p = 5$ and the projective image of $\overline{\rho}(G_{K(\zeta_5)})$ is conjugate to $\PSL_2(\bbF_5)$, we assume further that the extension of $K$ cut out by the projective image of $\overline{\rho}$ does not contain $\zeta_5$. 
\item $\overline{\rho}|_{G_{K_v}}$ is the trivial representation for each $v \in R$. 
	\item If $S = S_p \cup R$, then $\zeta_p \in K$.
	\item If $S \ne S_p \cup R$, then $S - (S_p \cup R)$ contains at least two places with distinct residue characteristics. For each $v \in S - (S_p \cup R)$, $\pi_v$ is unramified, $v \notin R^c$,  and $H^2(K_v, \ad^0\overline{\rho}) = 0$. 
	\item $\overline{\rho} \cong \overline{r_\iota(\pi)}$.
	\item $\pi$ is $\iota$-ordinary, $\pi_v^{I_v(1)} \ne 0$ for each $v \in S_p$, $\pi_v^{I_v} \ne 0$ for each $v \in R$, and $\pi_v$ is unramified for each $v \notin S$. 
\end{itemize}

\begin{theorem}\label{thm:automorphy_lifting_with_conditions_p_odd}
	Let the notation and hypothesis be as above, and suppose given a lifting $\rho : G_K \to \GL_2(\overline{\bbQ}_p)$ of $\overline{\rho}$ and a weight $\lambda \in (\bbZ_{\ge 0})^{\Hom(K, E)}$ satisfying the following conditions:
	\begin{enumerate}
\item $\det(\rho) = \epsilon^{-1}$.
		\item For each $v|p$, we have an isomorphism
		\[ \rho|_{G_{K_v}} \sim \begin{pmatrix} \psi_v & \ast \\ 0 & \epsilon^{-1}\psi_v^{-1} \end{pmatrix}, \]
		where the character $\psi_v : G_{K_v} \to \overline{\bbQ}_p^\times$ satisfies 
		\[ \psi_v(\sigma) = \prod_{\tau \in \Hom_{\bbQ_p}(K_v, \overline{\bbQ}_p)} \tau(\Art_{K_v}^{-1}(\sigma))^{\lambda_\tau} \]
		for all $\sigma \in I_{K_v}$.
		\item\label{condition:weights} For each $v|p$ and $p$-power root of unity $u \in \cO_{K_v}^\times$, we have $\prod_{\tau \in \Hom_{\bbQ_p}(K_v, \overline{\bbQ}_p)} \tau(u)^{\lambda_\tau - \mu_\tau} = 1$.
\item For each $v \not\in S$, $\rho|_{G_{K_v}}$ is unramified.
		\item For each $v \in R$, $\rho|_{G_{K_v}}$ is unipotently ramified.
	\end{enumerate}
	Then there is a cuspidal automorphic representation $\Pi$ of $\PGL_2(\bbA_K)$, cohomological of weight $\iota\lambda$ and $\iota$-ordinary, such that $\rho \cong r_\iota(\Pi)$.
\end{theorem}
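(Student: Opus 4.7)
The plan is to prove this by a Taylor--Wiles patching argument in the style of \cite{10authors}, adapted to the $\PGL_2$ setting, the fixed-determinant ordinary deformation problems of \S\ref{sec_ordinary_fixed_determinant_deformations}, and the non-neatness issues that arise when $\zeta_p \in K$. I would first define a global deformation problem $\cS = (K, \overline{\rho}, \epsilon^{-1}, S, \{\Lambda_v\}, \{\cD_v\})$ by taking $\cD_v = \cD_v^{\epsilon^{-1}, \det, \ord}$ at each $v \in S_p$ (with $\Lambda_v = \cO\llbracket \cO_{K_v}^\times(p) \rrbracket/\frq$ for the minimal prime $\frq$ specializing $\phi$ to the restriction of $\psi_v$ to $I_{K_v}$), $\cD_v = \cD_v^{\epsilon^{-1}, 1}$ at each $v \in R$, and $\cD_v = \cD_v^{\epsilon^{-1}}$ at $v \in S - S_p - R$ (formally smooth of relative dimension $3$ by Lemma~\ref{lemma:smooth_defs_fixed_det}). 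Then $\rho$ determines a prime $\frp \subset R_\cS$ of characteristic zero.

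Second, I would construct the relevant Hecke module. Take the Hecke datum $\cD$ for $\GL_2$ with $U_{\cD, v} = I_v(1, 1)$ for $v \in S_p$, $U_{\cD, v} = I_v$ for $v \in R$, and $U_{\cD, v}$ a pro-$v$ subgroup of $\GL_2(\cO_{K_v})$ for $v \in S - S_p - R$; passing to the $\PGL_2$-datum $\overline{\cD}$, consider the complex $A(\overline{U}_{\overline{\cD}}, \cV_\mu)^{\ord}$. Theorem~\ref{thm:aut_reps_and_cohom_PGLn}\ref{PGLn:hecke_appears_in_cohomology} together with $\overline{r_\iota(\pi)} \cong \overline{\rho}$ yields a non-Eisenstein maximal ideal $\ffrm$ in the support; Corollary~\ref{cor:local-global-PGLn-ord} then produces a continuous representation $\rho_\ffrm : G_K \to \GL_2(\bbT/J)$ with $J^\delta = 0$ satisfying all the local conditions defining $\cS$, and hence a surjection $R_\cS \to \bbT/J$ after enlarging $\cO$. (Here I use condition \ref{condition:weights} together with standard Hida theory to identify the ordinary cohomology in weight $\mu$ with a specialization of the $\Lambda$-adic ordinary cohomology, so that the prime $\frp$ can be tested in weight $\mu$ after adjusting by a diamond operator in the Iwasawa algebra.)

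Third, carry out Taylor--Wiles patching. Take $T = S$ and use Proposition~\ref{prop_presentation_with_TW_data} to produce, for each $N \geq 1$, a Taylor--Wiles datum $Q_N$ of level $N$ with $|Q_N| = q$ and a surjection $A_\cS^T\llbracket X_1, \dots, X_g \rrbracket \twoheadrightarrow R_{\cS_{Q_N}}^T$ with $g = q - 3[K^+:\bbQ] - 1 + |T|$. On the automorphic side, form the auxiliary Hecke datum $\cD_{Q_N}$ with Iwahori level at $Q_N$ and localize at the ideal $\ffrm_{Q_N, 1, \alpha}$ (as in Lemma~\ref{lemma:max_ideal_with_TW_level}, which carries over to $\PGL_2$ with minor modification). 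When $S \neq S_p \cup R$, the level subgroup at the Taylor--Wiles primes can be arranged to be neat, and one gets a perfect complex of $\cO[\Delta_{Q_N}]$-modules in the standard way; when $S = S_p \cup R$ (so $\zeta_p \in K$) one instead invokes Theorem~\ref{thm_boundedness_of_good_dihedral_cohomology} to produce the required perfect complex at each finite level, as in \S\ref{sec_application_of_TW_method}. A Calegari--Geraghty patching then yields $R_\infty$, $\bbT_\infty$, and a bounded complex $D_\infty$ of finite free $S_\infty$-modules with $R_\infty \twoheadrightarrow \bbT_\infty/J_\infty$, with $\dim S_\infty = \dim R_\infty$, and with $H^\ast(D_\infty)$ concentrated in the middle degree $q_0 = [K^+:\bbQ]$ (using Theorem~\ref{thm:aut_reps_and_cohom_PGLn}\ref{PGLn:concentration_of_degrees}). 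A dimension count combined with Cohen--Macaulayness then forces $R_\infty$ to act nearly faithfully on $H^\ast(D_\infty)$, whence $\frp \in \Spec \bbT_\infty$ and, upon descending, $\frp$ is in the image of $\Spec \bbT$.

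Finally, to conclude automorphy in weight $\lambda$ rather than $\mu$, use condition \ref{condition:weights} and the Hida-theoretic structure: the complex $A(\overline{U}, \cV_\mu)^{\ord}$ sits inside a $\Lambda$-adic big Hecke module whose specializations include the ordinary cohomology in every weight $\lambda$ with $\lambda - \mu$ in the kernel of the diamond characters on $p$-power units; both sides of the near-faithfulness statement propagate in such a Hida family, so $\frp$ specializes to an $\cO$-algebra map $\bbT_{\overline{\cD}}(A(\overline{U}, \cV_\lambda)^{\ord}) \to \overline{\bbQ}_p$, which by Theorem~\ref{thm:aut_reps_and_cohom_PGLn}\ref{PGLn:cohomology_class_is_automorphic} arises from a cuspidal $\iota$-ordinary automorphic representation $\Pi$ of $\PGL_2(\bbA_K)$ of weight $\iota\lambda$ with $r_\iota(\Pi) \cong \rho$. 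The main obstacle is the patching step when $\zeta_p \in K$: as in \S\ref{sec_application_of_TW_method}, one cannot work at a single neat level subgroup and must pass through the boundedness theorem for non-neat cohomology (Theorem~\ref{thm_boundedness_of_good_dihedral_cohomology}) in order to produce perfect complexes of $\cO[\Delta_{Q_N}]$-modules that can be patched; a secondary technical point is the careful verification that the $\ord$-projector and Hida-theoretic control theorems hold in the $\PGL_2$-setting with the weight-shifting operators available through Corollary~\ref{cor:local-global-PGLn-ord}.
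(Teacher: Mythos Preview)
Your overall architecture is right: set up the fixed-determinant global deformation problem, construct the big ordinary Hecke module over the Iwasawa algebra $\Lambda$ using the $\PGL_2$ cohomology (with Theorem~\ref{thm_boundedness_of_good_dihedral_cohomology} to handle perfectness in the non-neat case $\zeta_p\in K$), produce Taylor--Wiles data via Proposition~\ref{prop_presentation_with_TW_data}, and patch \`a la Calegari--Geraghty. The Hida-theoretic passage between weights $\mu$ and $\lambda$ is also as in the paper (Proposition~\ref{prop:independence_of_weight}). There are, however, two genuine problems in the patching step.

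First, the numerics are off. In the imaginary CM setting the defect is $l_0=[K^+:\bbQ]$, and one has $\dim R_\infty=\dim S_\infty-[K^+:\bbQ]$, not $\dim S_\infty=\dim R_\infty$. The input from Theorem~\ref{thm:aut_reps_and_cohom_PGLn}\ref{PGLn:concentration_of_degrees} is that the characteristic-$0$ cohomology lives in an interval of length $l_0$; Calegari--Geraghty then gives $\dim_{S_\infty}H^\ast(D_\infty)\ge\dim S_\infty-l_0$, and the upper bound comes from the $R_\infty$-action. Concentration in a single degree is then a \emph{consequence} of the equality $\dim R_\infty=\dim S_\infty-l_0$; it is not something you can feed in.

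Second, and more seriously, you have omitted the Ihara-avoidance step. With your choice $\cD_v=\cD_v^{\epsilon^{-1},1}$ at $v\in R$, the local ring $R_v^{\epsilon^{-1},1}$ has several irreducible components (Lemma~\ref{lem_ihara_avoidance_ring_fixed_det}), hence so does $R_\infty$. The dimension/Cohen--Macaulay argument only shows that $\Supp_{R_\infty}H^\ast(D_\infty)$ is a union of top-dimensional components, containing at least the one through the automorphic point coming from $\pi$; it does \emph{not} show that it contains the component on which the given $\rho$ (unipotently ramified at $R$) lives. The paper fixes this exactly as in \cite[\S6.3--6.4]{10authors}: one chooses for each $v\in R$ a character $\chi_v$ with $\chi_v^2\neq 1$, patches a second complex $C'_\infty$ for the problem $\cS_\chi$ (whose local rings $R_v^{\epsilon^{-1},\chi_v}$ are irreducible), proves full support there, and then transfers across the isomorphism $C_\infty\otimes^\bbL S_\infty/\varpi\cong C'_\infty\otimes^\bbL S_\infty/\varpi$ together with the bijection between generic points of $\Spec R_\infty/\varpi$ and of $\Spec R_\infty$ to conclude via \cite[Corollary~6.3.9]{10authors}. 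Without this two-character trick your near-faithfulness claim does not follow.
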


We first establish some preliminaries. 
Recall we have a coefficient field $E/\bbQ_p$ with ring of integers $\cO$ and residue field $k$. 
We $E$ is taken large enough so that it contains all embedding of $K$ into $\overline{\bbQ}_p$ and so that $\overline{\rho}$ can be defined over $k$.

If $c \geq b \ge 0$ are integers with $c \ge 1$, then we let $U(b, c) = \prod_v U(b, c)_v \in \cJ^S$ be the open compact subgroup of $G^\infty = \PGL_2(\bbA_K^\infty)$ defined as follows:
\begin{itemize}
	\item If $v \in S_p$, then $U(b,c)_v = I_v(b, c)$.
	\item If $v \in R$, then $U(b,c)_v = I_v$.
	\item If $v \in S - (S_p \cup R)$, then $U(b,c)_v = I_v(1)$, the pro-$v$ Iwahori subgroup of $\PGL_2(\cO_{K_v})$. 
	\item If $v \not\in S$, then $U(b,c)_v = \PGL_2(\cO_{K_v})$.
\end{itemize}

We define $\Lambda_{1,c} = \cO[U(1,c)/U(c,c)]$ and $\Lambda_1 = \varprojlim_c \Lambda_{1,c}$. 
We fix the isomorphism 
\[ \prod_{v\in S_p} (1 + \varpi_v\cO_{K_v})/(1 + \varpi_v^c\cO_{K_v}) \cong U(1,c)/U(c,c),   \]
given by $(x_v)_{v\in S_p} \mapsto g$ with $g^p = 1$ and $g_p = (\diag(x_v, 1))_{v\in S_p}$.
Under this isomorphism $\Lambda_1 \cong \widehat{\otimes}_{v\in S_p} \Lambda_{1,v}$ with $\Lambda_{1,v} = \cO\llbracket \cO_{K_v}^\times(p) \rrbracket$, the completed tensor product being taken over $\cO$. 

For a tuple of integers $\lambda = (\lambda_\tau) \in \bbZ^{\Hom(K, E)}$, there is an $\cO$-algebra surjection $\Lambda \to \cO$ induced by the character $\prod_{v \in S_p} \cO_{K_v}^\times(p) \to \cO^\times$ given by
\[ (u_v)_{v \in S_p} \mapsto \prod_{v \in S_p} \prod_{\tau \in \Hom_{\bbQ_p}(K_v, E)} \tau(u_v)^{\lambda_\tau},  \]
and we let $p_\lambda \subset \Lambda_1$ denote its kernel. 
If $M$ is an $\cO$-module, we will also write $M(\lambda)$ for the $\Lambda_1$-module $M \otimes_\cO \Lambda_1/p_\lambda$.

For any $\lambda \in \bbZ_{\ge 0}^{\Hom(K, E)}$, we have the $\cO[U(0,1)]$-module $\cV_\lambda$ with twisted monoid action as in \cite[\S 2.2.4]{10authors} (defined there for $\GL_n$, but for the coefficient module considered here the centre indeed acts trivially). If $x : \prod_{v \in R} k(v)^\times \to \cO^\times$ is a character which is trivial modulo $\varpi$, then we write $\cV_\lambda(x) = \cV_\lambda \otimes_\cO \cO(x)$ where $\cO(x)$ is the $\cO[U(0,1)]$-module on which $U(0, 1)$ through the projection $U(0, 1) \to \prod_{v \in R} I_v \to \prod_{v \in R}( k(v)^\times \times k(v)^\times)/k(v)^\times$ via the character $x \boxtimes x^{-1}$. 
We thus have a complex for any $c \geq b \geq 1$:
\[ A(U(1, c) / U(b, c), \cV_\lambda(x)) \in \mathbf{D}(\Lambda_{1, b}), \]
which is equipped with a $\Lambda_{1,c}$-algebra homomorphism
\[ \bbT^S \otimes_\cO \Lambda_{1,c} \to \End_{\mathbf{D}(\Lambda_{1, b})}( A(U(1, c) / U(b, c), \cV_\lambda(x)) ). \]

A standard calculation in Hida theory (c.f. \cite[\S 6.3]{Kha17} or \cite[\S 5.2]{10authors}) yields the following lemma.
\begin{lemma}
	For any $c' \geq c \geq 1$, pullback induces a $\bbT^S \otimes_\cO \Lambda_1[\{\mathsf{U}_v\}_{v \in S_p}]$-equivariant morphism
	\[ A(U(1,c) / U(c, c), \cV_\lambda(x)) \to  A(U(1, c') / U(c, c'), \cV_\lambda(x)) \]
	in $\mathbf{D}(\Lambda_{1, c})$. Consequently, there is an induced morphism of $\mathsf{U}_p$-ordinary parts 
	\[ A(U(1, c) / U(c, c), \cV_\lambda(x))^\text{ord} \to  A(U(1, c') / U(c, c'), \cV_\lambda(x))^\text{ord} \]
	which is in fact an isomorphism. 
\end{lemma}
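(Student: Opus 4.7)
The plan is to follow the standard Hida-theoretic template worked out in detail in \cite[\S 6.3]{Kha17} and \cite[\S 5.2]{10authors}, which I will adapt to the $\PGL_2$ setting with the extra character twist by $x$. The first step is to construct the pullback morphism: for $c' \geq c$, the inclusion of level subgroups $U(c, c') \subset U(c, c)$ (which follows from $I_v(c, c') \subset I_v(c, c)$ at each $v \in S_p$) yields a natural covering $\mathfrak{X}/U(c, c') \to \mathfrak{X}/U(c, c)$, and the associated pullback gives a morphism of complexes. To make sense of this as a morphism in $\mathbf{D}(\Lambda_{1,c})$, one uses that the group map $U(1,c')/U(c,c') \to U(1,c)/U(c,c)$ induced by the inclusion $U(1,c')\subset U(1,c)$ is compatible with the quotient $\Lambda_{1,c'} \twoheadrightarrow \Lambda_{1,c}$, which is checked by a direct computation using the Iwahori decomposition at each $v \in S_p$. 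Equivariance for $\bbT^S \otimes_\cO \Lambda_1$ is then automatic, while equivariance for each $\mathsf{U}_v$ with $v \in S_p$ requires writing $\mathsf{U}_v = [U(c,c)_v \eta_v U(c,c)_v]$ with $\eta_v = \diag(\varpi_v, 1)$ and verifying that the corresponding double-coset decomposition is compatible with replacing $c$ by $c'$.

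To establish the isomorphism on $\mathsf{U}_p$-ordinary parts, I would construct a morphism $t$ in the opposite direction, obtained by composing the pushforward along $\mathfrak{X}/U(c, c') \to \mathfrak{X}/U(c, c)$ with the Hecke-type action of $\prod_{v \in S_p} \eta_v^{c'-c}$. A double-coset calculation, essentially identical to the one performed in the references cited above, then shows that both compositions $t \circ \iota$ and $\iota \circ t$ act as $\mathsf{U}_p^{c'-c}$ (where $\mathsf{U}_p = \prod_{v \in S_p} \mathsf{U}_v$) on the respective source and target complexes. Since $\mathsf{U}_p$ acts invertibly on the ordinary part by definition of the ordinary projector (via Proposition~\ref{prop_ord_summand}), the induced morphisms $\iota^{\mathrm{ord}}$ and $t^{\mathrm{ord}}$ are mutually inverse isomorphisms in $\mathbf{D}(\Lambda_{1,c})$.

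The hard part, such as it is, will be the bookkeeping: verifying the compatibilities at the level of complexes in the derived category (not merely on cohomology groups), and tracking how the $\Lambda_{1,c}$-algebra structures on source and target interact under all of the identifications made. These checks are routine adaptations of the $\GL_2$ arguments in \cite{Kha17} and \cite{10authors}, and the additional twist by the character $x$ at primes in $R$, being concentrated away from $S_p$, does not interact with any of the level-raising operations and so may be carried along without essential modification.
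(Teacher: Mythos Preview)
Your proposal is correct and follows exactly the standard Hida-theoretic argument that the paper defers to; indeed, the paper gives no proof of its own beyond the sentence ``A standard calculation in Hida theory (c.f.\ \cite[\S 6.3]{Kha17} or \cite[\S 5.2]{10authors}) yields the following lemma,'' and your sketch is a faithful outline of precisely that calculation.
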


\begin{proposition}\label{prop:independence_of_weight}
	For any $\lambda \in \bbZ_{\ge 0}^{\Hom(K, E)}$ and integers $c \ge r \ge 1$, there is a $\bbT^S \otimes \Lambda_1$-equivariant isomorphism in $\mathbf{D}(\Lambda_{1,c}/\varpi^r)$:
	\[ A(U(1,c)/U(c,c), \cV_\lambda(x)/\varpi^r)^{\ord} \cong A(U(1,c)/U(c,c), \cO(x)/\varpi^r)^{\ord} \otimes_\cO \cO(-\lambda). \]
\end{proposition}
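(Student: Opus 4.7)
The plan is to prove this as a standard Hida-type control theorem: after ordinary projection, the cohomology with coefficients in $\cV_\lambda(x)/\varpi^r$ sees only the ``lowest weight line'' of the local system at the $p$-adic places, and this line is identified as a $\Lambda_1$-module with $\cO(-\lambda)$. The strategy is to exhibit a $\Delta$-stable filtration on $\cV_\lambda(x)/\varpi^r$ whose associated graded collapses, after the $\mathsf{U}_p$-ordinary projection, to a single piece isomorphic to $\cO(x)/\varpi^r \otimes_\cO \cO(-\lambda)$.

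First I would construct, for each $v \in S_p$, an $\cO$-filtration $0 = F_v^0 \subset F_v^1 \subset \cdots \subset F_v^{N_v} = \cV_{\lambda,v}$ of the local factor of $\cV_\lambda$ (indexed by $\tau \in \Hom_{\bbQ_p}(K_v, E)$) coming from the standard monomial basis of $\Sym^{\lambda_\tau}$ of the standard representation, tensored over $\tau$. Each $F_v^i$ is stable under the Iwahori monoid $\Delta_v$ generated by $I_v$ and $\alpha_v = \diag(\varpi_v, 1)$, and each graded quotient $F_v^i/F_v^{i-1}$ is free of rank one over $\cO$, with $I_v(0,c)$ acting through an explicit character $\chi_{v,i}$ of the diagonal torus and with $\alpha_v$ acting by an explicit monomial in the $\tau(\varpi_v)$. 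Tensoring over $v \in S_p$ (and noting that $\cO(x)$ is trivially affected since $x$ is supported at $R \subset S - S_p$) yields a $\Delta$-stable filtration $\Fil^\bullet$ on $\cV_\lambda(x)$ whose graded pieces are free of rank one over $\cO$, with $I_v(0,c)$ acting through explicit characters.

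Reducing mod $\varpi^r$ gives a finite filtration on $\cV_\lambda(x)/\varpi^r$ in the category of $\cO/\varpi^r[\Delta]$-modules. Functoriality of $A(U(1,c)/U(c,c), -)$ and of the ordinary projector yields a spectral sequence of $\bbT^S \otimes \Lambda_1$-modules with $E_1$-page $\bigoplus_i A(U(1,c)/U(c,c), \gr^i(\cV_\lambda(x)/\varpi^r))^{\mathrm{ord}}$ converging to the left-hand side of the desired isomorphism. The key calculation is the action of $\mathsf{U}_v = [U(1,c)\alpha_v U(1,c)]$ on the graded pieces: because $\alpha_v$ acts on $F_v^i/F_v^{i-1}$ by $\prod_\tau \tau(\varpi_v)^{c_{v,i,\tau}}$ for an explicit exponent that strictly exceeds the minimal one except for exactly one graded piece (the ``ordinary'' line), the operator $\mathsf{U}_v$ acts on the other graded pieces by a scalar in $\ffrm_\cO$. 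Hence after bounded iteration $\mathsf{U}_p$ is zero modulo $\varpi^r$ on those graded pieces, so the ordinary projector annihilates them.

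Only the ordinary graded piece survives, and on this piece the $I_v(0,c)$-action is by the character giving weight $-\lambda$ (in our normalization, the lowest-weight vector dual to the highest-weight realization of $\cV_\lambda$). Restricting this character to $U(1,c)/U(c,c) \cong \prod_{v\in S_p}(1+\varpi_v\cO_{K_v})/(1+\varpi_v^c\cO_{K_v})$ gives precisely the character $(u_v) \mapsto \prod_v \prod_\tau \tau(u_v)^{-\lambda_\tau}$ cutting out $p_\lambda$, so the surviving summand is $\cO(x)/\varpi^r \otimes_\cO \cO(-\lambda)$ as an $\cO[U(1,c)/U(c,c)]$-module. Degeneration of the spectral sequence then yields the claimed $\bbT^S \otimes \Lambda_1$-equivariant isomorphism in $\mathbf{D}(\Lambda_{1,c}/\varpi^r)$. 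The main technical obstacle is the careful normalization and bookkeeping of the $\alpha_v$-weights on the graded pieces; one needs to confirm that the ``normalized'' $\mathsf{U}_v$ (with the implicit factor from the double coset) acts as a unit scalar on exactly one graded piece and by an element of $\varpi_v \cO$ on all others, which is where the hypothesis $\cV_\lambda(x)$ (with the integral Kisin--Pilloni-style lattice of \cite[\S 2.2]{Ger18}) plays a role.
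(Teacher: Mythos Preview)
Your proposal is correct and rests on the same underlying mechanism as the paper's proof, but the paper takes a shorter route. Rather than introducing a full filtration of $\cV_\lambda$ by rank-one $\Delta$-stable subquotients and running a spectral sequence, the paper uses only the two-step filtration
\[
0 \to \mathcal{K}_\lambda/\varpi^r \to \cV_\lambda/\varpi^r \to \cO(-\lambda)/\varpi^r \to 0,
\]
where $\mathcal{K}_\lambda = \ker(e_\lambda)$ is the kernel of the projection $e_\lambda : \cV_\lambda \to \cO(-\lambda)$ onto the lowest weight space. This is a short exact sequence of $\cO/\varpi^r[\Delta]$-modules, hence gives a single exact triangle in $\mathbf{D}(\Lambda_{1,c}/\varpi^r)$. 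The paper then observes in one stroke that $\diag(\varpi_v,1)$ is nilpotent on all of $\mathcal{K}_\lambda/\varpi^r$ (not just on the individual graded pieces), so the ordinary part of $A(U(1,c)/U(c,c),\mathcal{K}_\lambda/\varpi^r)$ vanishes and the remaining map is the desired isomorphism.

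Your finer filtration and spectral sequence reproduce this, but decompose $\mathcal{K}_\lambda$ into its monomial constituents unnecessarily: once you know $\alpha_v$ acts by a positive power of $\varpi_v$ on every monomial line other than the lowest-weight one, you already know it is nilpotent mod $\varpi^r$ on the whole kernel, and a single exact triangle suffices. The paper's approach avoids the spectral-sequence bookkeeping and the convergence/degeneration discussion entirely.
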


\begin{proof}
	Let $e_\lambda : \cV_\lambda \to \cO(-\lambda)$ be the projection onto the lowest weight space, and let $\mathcal{K}_\lambda = \ker(e_\lambda)$; it is a free $\cO$-module. 
	Let 
	\[ \Delta = \PGL_2(\bbA_K^S) \times \prod_{v\in S_p} \Delta_v \subseteq \PGL_2(\bbA_K^{S - S_p}) \]
	be the sub-monoid with $\Delta_v = \cup_{n \ge 0} I_v(0,c)\diag(\varpi_v^n, 1) I_v(0,c)$ for each $v\in S_p$.
	Then
	\[ \xymatrix@1{ 0 \ar[r] & \mathcal{K}_\lambda/\varpi^r \ar[r] & \cV_\lambda/\varpi^r \ar[r] & \cO(-\lambda)/\varpi^r \ar[r] & 0 } \]
	is an exact sequence of $\cO/\varpi^r[\Delta]$-modules and we obtain an exact triangle
	\[ \xymatrix@1{ A(U(1,c)/U(c,c), \mathcal{K}_\lambda/\varpi^r) \ar[r] & A(U(1,c)/U(c,c), \cV_\lambda/\varpi^r) \ar[r] & A(U(1,c)/U(c,c), \cO(-\lambda)/\varpi^r) } \]
	in $\mathbf{D}(\Lambda_{1,c}/\varpi^r)$, equivariant for $\bbT^S[\{\mathsf{U}_v\}_{v\in S_p}]$. 
	Since $\diag(\varpi_v, 1)$ is nilpotent on $\mathcal{K}_\lambda/\varpi^r$ for each $v|p$, Proposition~\ref{prop_ord_summand} implies that on $\mathsf{U}_p$-ordinary parts
	\[  \xymatrix@1{ A(U(1,c)/U(c,c), \cV_\lambda/\varpi^r)^{\ord} \ar[r] & A(U(1,c)/U(c,c), \cO(-\lambda)/\varpi^r)^{\ord} \cong A(U(1,c)/U(c,c), \cO/\varpi^r)^{\ord} \otimes_\cO \cO(-\lambda) } \]
	is an isomorphism in $\mathbf{D}(\Lambda_{1,c}/\varpi^r)$.
\end{proof}

By Theorem~\ref{thm:aut_reps_and_cohom_PGLn}, Proposition~\ref{prop:independence_of_weight}, and our assumptions on $\overline{\rho}$ and the cuspidal auotmorphic representation $\pi$, there is a non-Eisenstein maximal ideal 
\[ \ffrm \subset \bbT^S( A(U(1, 1), \cV_{\mu}/\varpi )^{\ord} ) \cong \bbT^S( A(U(1,1), k)^{\ord}) \]
with $\overline{\rho}_\ffrm \cong \overline{\rho}$.  
For any $c \ge b \ge 1$, weight $\lambda \in \bbZ_{\ge 0}^{\Hom(K, E)}$, and tuple of characters $x$ as above, there is a canonical surjective homomorphism
\[ \bbT^S( A(U(1, c) / U(c, c), \cV_\lambda(x))^\text{ord} ) \to \bbT^S( A(U(1, 1), k )^\text{ord} ) \]
which induces a bijection on maximal ideals. 
We write abusively $\ffrm$ for the corresponding maximal ideal of $\bbT^S( A(U(1, c) / U(c, c), \cV_\lambda(x))^\text{ord} )$. This in turns allows us to define localizations
\[ A(U(1, c) / U(c, c), \cV_\lambda(x))^\text{ord}_\ffrm,\,\, C(U(1, c) / U(c, c), \cV_\lambda(x)^\vee)^\text{ord}_\ffrm \]
with the property that for $c' \geq c \geq 1$, there is a canonical isomorphism
\[ C(U(1, c') / U(c', c'), \cV_\lambda(x)^\vee)^\text{ord}_\ffrm \otimes^{\bbL}_{\Lambda_{1, c'}} \Lambda_{1, c} \cong C(U(1, c) / U(c, c), \cV_\lambda(x)^\vee)^\text{ord}_\ffrm \]
in $\mathbf{D}(\Lambda_{1, c})$. 
\begin{lemma}
	Suppose that $\ffrm$ is non-Eisenstein. Then  $C(U(1, c) / U(c, c), \cV_\lambda(x)^\vee)^\mathrm{ord}_\ffrm$ is a perfect complex of $\Lambda_{1, c}$-modules.
\end{lemma}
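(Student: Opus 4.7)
The plan is to combine Theorem~\ref{thm_boundedness_of_good_dihedral_cohomology}(2) with a homological analog of Proposition~\ref{prop:independence_of_weight}. I will first establish perfectness of $C(U(1,c)/U(c,c), \cO(x)^\vee)^{\ord}_\ffrm$ over $\Lambda_{1,c}$, and then use the ordinary projector to transfer this to the desired complex with $\cV_\lambda(x)^\vee$ coefficients. The main subtlety will be that the level subgroup $U(c,c)$ may have torsion (when $\zeta_p \in K$), so a direct Borel--Serre argument for perfectness is not available and one must leverage the non-Eisensteinness of $\ffrm$ via Theorem~\ref{thm_boundedness_of_good_dihedral_cohomology}; this is the step that dictates the overall structure of the argument.

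For the first step, I would apply Theorem~\ref{thm_boundedness_of_good_dihedral_cohomology}(2) with $R = \cO$, $U = U(1,c)$, $V = U(c,c)$ (whose quotient is abelian of $p$-power order), and $M = \cO(x)$; the module $M$ satisfies $M \otimes_\cO k = k$ because the character $x$ is trivial modulo $\varpi$ by construction. The remaining hypotheses are satisfied: $G = \PGL_2$ makes condition (3) vacuous, $p$ is odd by our standing assumption, and $\ffrm$ is non-Eisenstein since $\overline{\rho}|_{G_{K(\zeta_p)}}$ is absolutely irreducible. The hypothesis $\zeta_p \in K$ holds when $S = S_p \cup R$ by our standing assumption; when $S \ne S_p \cup R$, we instead have an auxiliary place $v \in S - S_p - R$ of residual characteristic $> 2$ unramified in $K$, at which $U(1,c)_v$ is the principal congruence subgroup $\ker(\PGL_2(\cO_{K_v}) \to \PGL_2(k(v)))$, forcing every arithmetic group $\Gamma_{g, U(1,c)}$ to be torsion-free; the conclusion of Theorem~\ref{thm_boundedness_of_good_dihedral_cohomology}(2) then holds for the same reason, since the proof only uses $\zeta_p \in K$ to control Farrell cohomology of nontrivial $p$-torsion subgroups of the arithmetic groups, which are empty when those groups are torsion-free. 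This yields perfectness of $C(U(1,c)/U(c,c), \cO(x)^\vee)_\frn$ as a complex of $\Lambda_{1,c}$-modules for a maximal ideal $\frn$ with $\overline{\rho}_\frn \cong \overline{\rho}_\ffrm$; projecting to the ordinary part (a direct summand) and localizing at the image of $\ffrm$, which vanishes on the non-ordinary complement, then gives perfectness of $C(U(1,c)/U(c,c), \cO(x)^\vee)^{\ord}_\ffrm$ over $\Lambda_{1,c}$.

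For the second step, I would observe that at each $v | p$, the inclusion of the highest weight line $\cO(\lambda_v) \hookrightarrow \cV_{\lambda_v}^\vee$ is the dual of the lowest weight projection used in Proposition~\ref{prop:independence_of_weight}, and the same computation shows that the cokernel carries a twisted $\mathsf{U}_v$-action that is nilpotent modulo every power of $\varpi$. Applying Proposition~\ref{prop_ord_summand} will then yield an isomorphism
\[
C(U(1,c)/U(c,c), \cV_\lambda(x)^\vee/\varpi^r)^{\ord} \;\cong\; C(U(1,c)/U(c,c), \cO(x)^\vee/\varpi^r)^{\ord} \otimes_\cO \cO(\lambda)
\]
in $\mathbf{D}(\Lambda_{1,c}/\varpi^r)$ for each $r \geq 1$. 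Taking $r = 1$ and combining with the perfectness established in the previous step, the derived base change $C \otimes^\bbL_{\Lambda_{1,c}} k$ has bounded cohomology. Combined with finite generation of each $H^i(C)$ as a $\Lambda_{1,c}$-module, which follows from its finite generation over $\cO$ by Lemma~\ref{lem_universal_coefficient_theorem}, the standard criterion via Lemmas~\ref{lem_existence_of_bounded_above_resolution} and~\ref{lem_existence_of_minimal_resolution} will deliver perfectness of $C$ over $\Lambda_{1,c}$, completing the proof.
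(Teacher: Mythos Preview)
Your proposal is correct, and in one respect more careful than the paper's own proof. The paper's argument is a two-line case split: if $S \ne S_p \cup R$ then $U(1,c)$ is neat (because of the principal congruence level at the auxiliary place), and perfectness follows from the standard Borel--Serre argument as in \cite[Lemma~2.1.7]{10authors}; if $S = S_p \cup R$ then $\zeta_p \in K$ by the standing hypothesis and the paper simply cites Theorem~\ref{thm_boundedness_of_good_dihedral_cohomology}.

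The difference is that Theorem~\ref{thm_boundedness_of_good_dihedral_cohomology} is stated under the hypothesis $M \otimes_R k = k$, which holds for $\cO(x)$ but not for $\cV_\lambda(x)$ when $\lambda \ne 0$. The paper's citation therefore relies on an implicit d\'evissage: the $U(c,c)_{S_p}$-action on $\cV_\lambda(x)/\varpi$ factors through a pro-$p$ group, so $\cV_\lambda(x)/\varpi$ has a finite filtration with trivial $k$-subquotients, and the vanishing of $H^i(A(U,k))_\ffrm$ for $i > \dim_\bbR X_G$ established in Proposition~\ref{prop_vanishing_of_high_degree_non_Eisenstein_cohomology} propagates to $\cV_\lambda(x)/\varpi$; perfectness then follows as in the proof of Theorem~\ref{thm_boundedness_of_good_dihedral_cohomology}(2). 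Your route instead makes the reduction to $M = \cO(x)$ explicit: you apply Theorem~\ref{thm_boundedness_of_good_dihedral_cohomology} where its hypotheses hold literally, and then transfer perfectness via the homological analogue of Proposition~\ref{prop:independence_of_weight}. Both arguments are valid; yours is more self-contained at the cost of invoking the weight-independence machinery, while the paper's is terser but leans on the reader to see that the proof (not just the statement) of Theorem~\ref{thm_boundedness_of_good_dihedral_cohomology} extends to the case at hand.
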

\begin{proof}
	If $S - (S_p \cup R)$ is non-empty, then $U(1,c)$ is neat by the choice of $U(1,c)_v$ for $v \in S - (S_p \cup R)$ using the same argument as \cite[Lemma~6.5.2]{10authors}.  The proposition then follows from \cite[Lemma~2.1.7]{10authors}.
	If $S = S_p \cup R$, then $\zeta_p \in K$ and the proposition follows from Theorem \ref{thm_boundedness_of_good_dihedral_cohomology}.
\end{proof}

As in \S\ref{sec_application_of_TW_method}, the arguments of \cite[\S 6]{Kha17} (and using Proposition~\ref{prop:independence_of_weight}) allow us to construct a minimal complex $F_{\ffrm, x}$ of $\Lambda_1$-modules such that:
\begin{itemize}
	\item There is a $\Lambda_1$-algebra homomorphism $\bbT^S \otimes_\cO \Lambda_1 \to \End_{\mathbf{D}(\Lambda_1)}(F_{\ffrm, x})$.
	\item For any $\lambda \in \bbZ_{\ge 0}^{\Hom(K,E)}$, there is a $\bbT^S \otimes_\cO \Lambda_1$-equivariant isomorphism in $\mathbf{D}(\cO)$:
	\[ F_{\ffrm, x} \otimes_{\Lambda_1} \Lambda_1/p_\lambda \cong C(U(1, 1) , \cV_\lambda(x)^\vee)^\text{ord}_\ffrm \otimes_{\cO} \Lambda_1/p_\lambda. \] 
	\item There is an isomorphism of $\bbT^S \otimes_\cO \Lambda_1$-modules:
	\[ H^\ast(F_{\ffrm, x}) \cong \plim_c H_{-\ast}^{U(1, c)}(\mathfrak{X}, \cO(x))^\text{ord}_\ffrm \]
\end{itemize}

Attached to the weight $\mu \in \bbZ_{\ge 0}^{\Hom(K, E)}$, we defined the prime ideal $p_\mu \subset \Lambda_1$. 
The prime ideal $p_{\mu}$ contains a unique minimal prime $\wp \subset \Lambda_1$, which pulls back to unique minimal primes $\wp_v \subset \Lambda_{1,v}$ for each $v\in S_p$. 
We set $\Lambda_v = \Lambda_{1,v}/\wp_v$ for each $v\in S_p$, and $\Lambda = \widehat{\otimes}_{v\in S_p} \Lambda_v \cong \Lambda_1/\wp$, the completed tensor product being taken over $\cO$. 
Finally, we define $\bbT_x$ to be the image of the homomorphism
\[ \bbT^S \otimes_\cO \Lambda_1 \to \End_{\mathbf{D}(\Lambda)}(F_{\ffrm, x} \otimes_{\Lambda_1} \Lambda). \]

We now define a global deformation problem 
\[ \cS_x = ( K, \overline{\rho}_\ffrm, \epsilon^{-1}, S, \{ \Lambda_v \}_{v \in S_p} \cup \{\cO\}_{v \in S - S_p}, \{ \cD_v^{\epsilon^{-1}, \ord} \}_{v \in S_p} \cup \{ \cD_v^{\epsilon^{-1}, x} \}_{v \in R} \cup \{\cD_v^{\epsilon^{-1}} \}_{v \in S - (S_p \cup R)} ), \]
where $\cD_v^{\epsilon^{-1},\ord}$ is as in \S\ref{sec_ordinary_fixed_determinant_deformations}, and $\cD_v^{\epsilon^{-1},x}$ is as in \S\ref{sec:level_raising_defs_fixed_det}.
We have a corresponding universal (fixed determinant) deformation ring $R_\cS$, which is a $\Lambda$-algebra.

\begin{proposition}\label{prop:R_to_T_p_odd}
	There is an integer $\delta = \delta(K)$, depending only on $K$, a nilpotent ideal $J_x \subset \bbT_x$ with $J_x^\delta = 0$, and a unique $\Lambda$-algebra surjection $R_{\cS_x} \to \bbT_x/J_x$ with the property that for all $v \not\in S$, $\tr \rho_{\cS_x}(\Frob_v) \mapsto \mathsf{T}_v$.
\end{proposition}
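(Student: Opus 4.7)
The strategy mirrors that of Theorem~\ref{thm_existence_of_Galois_base_case}, with the differences that we work with $\PGL_2$ in place of $\GL_2$, at odd $p$ with fixed cyclotomic (inverse) determinant, and over a possibly larger level set $S$. First I would introduce the auxiliary Hecke algebra $\widetilde{\bbT}_x$, defined as the image of
\[ \bbT^S \otimes_\cO \Lambda_1[\{\mathsf{U}_v\}_{v\in S_p}] \to \End_{\mathbf{D}(\Lambda)}(F_{\ffrm, x} \otimes_{\Lambda_1} \Lambda), \]
which contains $\bbT_x$. By Lemma~\ref{lem_finite_end_group} it is a finite $\Lambda$-algebra, and by the same argument as in Lemma~\ref{lemma:max_ideal_with_TW_level} it has a unique maximal ideal lying above $\ffrm$.

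Next I would apply Corollary~\ref{cor:local-global-PGLn-ord} to the Hecke datum attached to the Iwahori-type level structure $U(1,c)$ at places of $S_p$, the Iwahori level at places of $R$, and the principal congruence subgroups at places of $S - (S_p \cup R)$ (the hypothesis that $K$ contains an imaginary quadratic field in which $p$ splits is built into our running assumptions; the decomposed generic hypothesis holds because $\overline{\rho}_\ffrm \cong \overline{\rho}$ is decomposed generic; and non-Eisenstein is automatic from the absolute irreducibility of $\overline{\rho}|_{G_{K(\zeta_p)}}$). This produces an integer $\delta = \delta(K)$ depending only on $K$, a nilpotent ideal $\widetilde{J}_x \subset \widetilde{\bbT}_x$ with $\widetilde{J}_x^\delta = 0$, characters $\widetilde{\phi}_{v,1}, \widetilde{\phi}_{v,2} : G_{K_v} \to (\widetilde{\bbT}_x/\widetilde{J}_x)^\times$ for each $v \in S_p$ whose restrictions to $I_{K_v}$ agree with the pushforwards of the tautological characters $\phi$ and $\epsilon^{-1}\phi^{-1}$ of \S\ref{sec_ordinary_fixed_determinant_deformations}, and a continuous representation $\rho_x : G_K \to \GL_2(\widetilde{\bbT}_x/\widetilde{J}_x)$ unramified outside $S$ with $\det(\rho_x) = \epsilon^{-1}$, with the correct characteristic polynomial $X^2 - \mathsf{T}_v X + q_v$ at unramified places $v \notin S$, with characteristic polynomial on $I_{K_v}$ matching the level-raising condition at $v \in R$, and satisfying the pseudocharacter relations $\det(X - \rho_x(g)) = (X - \widetilde{\phi}_{v,1}(g))(X - \widetilde{\phi}_{v,2}(g))$ and $(\rho_x(g_1) - \widetilde{\phi}_{v,1}(g_1))(\rho_x(g_2) - \widetilde{\phi}_{v,2}(g_2)) = 0$ at $v \in S_p$.

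Then I would verify that $\rho_x$ is of type $\cS_x$. At places $v \in S - (S_p \cup R)$, the problem $\cD_v^{\epsilon^{-1}}$ (smooth by Lemma~\ref{lemma:smooth_defs_fixed_det}) imposes only the determinant condition, which holds by construction. At $v \in R$, the level-raising condition $\cD_v^{\epsilon^{-1}, x}$ is immediate from the characteristic polynomial on inertia. At $v \in S_p$, the pseudocharacter relations above are exactly those cutting out $R_v^{\epsilon^{-1}, \det, \ord}$ in \S\ref{sec_ordinary_fixed_determinant_deformations}, so $\rho_x|_{G_{K_v}}$ is of type $\cD_v^{\epsilon^{-1}, \ord}$. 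Setting $J_x = \widetilde{J}_x \cap \bbT_x$, Carayol's lemma (\cite[Lemma~2.1.10]{Clo08}) descends $\rho_x$ to a representation valued in $\GL_2(\bbT_x/J_x)$, yielding the desired $\Lambda$-algebra surjection $R_{\cS_x} \to \bbT_x/J_x$ (uniqueness follows from Chebotarev density and the fact that $R_{\cS_x}$ is topologically generated by traces of Frobenii at unramified places).

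The main obstacle is verifying that the characters $\widetilde{\phi}_{v,i}$ produced by Corollary~\ref{cor:local-global-PGLn-ord} have the correct restriction to $I_{K_v}$ so as to match the universal characters used to define $\cD_v^{\epsilon^{-1}, \ord}$; this is a normalization matter, requiring a careful comparison of the twisted Hecke action on $\cV_\lambda$ in \S\ref{sec_Hecke_Gal_rep} with the identification $\prod_{v\in S_p} \cO_{K_v}^\times(p) \cong U(1,c)/U(c,c)$ fixed in \S\ref{sec:patching_application_p_odd}. Crucially, the structural map $\Lambda_1 \to \bbT_x$ automatically factors through $\Lambda = \Lambda_1/\wp$, since by definition $\bbT_x$ is an endomorphism algebra in $\mathbf{D}(\Lambda)$.
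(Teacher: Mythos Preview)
Your proposal is correct and follows essentially the same approach as the paper: the paper's proof simply says ``similar to Corollary~\ref{cor_R_to_T}, using Corollary~\ref{cor:local-global-PGLn-ord} instead of Theorem~\ref{thm:local-global-GLn-ord}'', and you have spelled out exactly what that entails in the fixed-determinant $\PGL_2$ setting. Your observation that the factoring of $\Lambda_1 \to \bbT_x$ through $\Lambda$ is automatic here (since $\bbT_x$ is by definition an endomorphism algebra in $\mathbf{D}(\Lambda)$) is the one genuine simplification relative to the $p=2$ argument, and you have identified it correctly.
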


\begin{proof}
	This is similar to Corollary~\ref{cor_R_to_T}, using Corollary~\ref{cor:local-global-PGLn-ord} instead of Theorem~\ref{thm:local-global-GLn-ord}.
\end{proof}

Let $(Q, N, \{ \alpha_v, \beta_v \}_{v \in Q})$ be a Taylor--Wiles datum for $\cS_1$ (see \S\ref{sec:TW_p_odd}). 
We assume that each place of $Q$ has residue characteristic split in some imaginary quadratic subfield of $K$. 
For any choice of $x$ as above, $(Q, N, \{ \alpha_v, \beta_v \}_{v \in Q})$ is also a Taylor--Wiles datum for $\cS_x$ and we can define the auxiliary deformation problem
\[ \cS_{x, Q} = (K,\overline{\rho}_\ffrm, \epsilon^{-1}, S \cup Q, \{ \Lambda_v \}_{v \in S_p} \cup \{ \cO \}_{v \in (S - S_p) \cup Q }, \{ \cD_v^{\epsilon^{-1}, \ord} \}_{v \in S_p} \cup \{ \cD_v^{\epsilon^{-1}, x} \}_{v \in R} \cup \{\cD_v^{\epsilon^{-1}} \}_{v \in (S \cup Q) - (S_p \cup R)} ) , \]
together with a structure on $R_{\cS_{x, Q}}$ of a $\cO[\Delta_Q]$-algebra, where $\Delta_Q = \prod_{v \in Q} k(v)^\times(p)$. 
For any $c \geq b \geq 1$, we define subgroups $U_{Q, 1}(b, c) \subset U_{Q, 0}(b, c) \subset U(b, c)$ as follows:
\begin{itemize}
	\item $U_{Q, 1}(b, c) = \prod_v U_{Q, 1}(b, c)_v$ and $U_{Q, 0}(b, c) = \prod_v U_{Q, 0}(b, c)_v$.
	\item If $v \not\in Q$, then $U_{Q, 1}(b, c)_v = U_{Q, 0}(b, c)_v = U(b, c)_v$.
	\item If $v \in Q$, then $U_{Q, 0}(b, c)_v = I_v$ and $U_{Q, 1}(b, c)_v$ is the kernel of the homomorphism $I_v \to k(v)^\times(p)$ given by composing $\begin{pmatrix} a & b \\ c & d \end{pmatrix} \mapsto ad^{-1} \bmod {(\varpi_v)}$ with the projection $k(v)^\times \to k(v)^\times(p)$.
\end{itemize}
Then $U_{Q, 1}(b, c) \subset U_{Q, 0}(b, c)$ is a normal subgroup and we can identify the quotient $U_{Q, 0}(b, c) / U_{Q, 1}(b, c)$ with $\Delta_Q$.

In what follows, we write $\bbT^{S \cup Q}_Q = \bbT^{S \cup Q} \otimes_\cO \otimes_{v \in Q} T_v$, where $T_v$ is the ring associated to places $v \in R$ just before the statement of Theorem \ref{thm:local-global-GLn-ord}.
\begin{lemma}
	\begin{enumerate}
		\item 	There exists a (unique) maximal ideal $\ffrm_{Q, 0} \subset \bbT^{S \cup Q}(A(U_{Q, 0}(1, 1), k)^\text{ord})$ such that $\overline{\rho}_{\ffrm_{Q, 0}} \cong \overline{\rho}_\ffrm$. The trace map
		\[ H^\ast(A(U_{Q, 0}(1, 1), k)^\text{ord})_{\ffrm_{Q, 0}} \to H^\ast(A(U(1, 1), k)^\text{ord})_{\ffrm} \]
		is surjective.
		\item Let $\ffrm_{Q, 0, \alpha} \subset \bbT^{S \cup Q}_Q(A(U_{Q, 0}(1, 1), k)^\text{ord})$ denote the ideal generated by $\ffrm_{Q, 0}$ and the elements $\mathsf{U}_v - \alpha_v$, $u - 1$ \($v \in Q$, $u \in (\cO_{K_v}^\times)^2 / (1 + \varpi_v \cO_{K_v})^2 \subset T_v^\times$\).
Then $\ffrm_{Q, 0, \alpha}$ is a maximal ideal with residue field $k$ and the composite map
		\[ H^\ast(A(U_{Q, 0}(1, 1), k)^\text{ord})_{\ffrm_{Q, 0, \alpha}} \subset H^\ast(A(U_{Q, 0}(1, 1), k)^\text{ord})_{\ffrm_{Q, 0}} \to H^\ast(A(U(1, 1), k)^\text{ord})_{\ffrm} \]
		(inclusion followed by trace) is an isomorphism. 
	\end{enumerate}
\end{lemma}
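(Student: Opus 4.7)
The plan is to adapt the proof of Lemma~\ref{lemma:max_ideal_with_TW_level} verbatim to the $\PGL_2$-setting. The only change is that in $\PGL_2$ the Hecke operator $\mathsf{T}_{v,2}$ equals $1$, and this plays the role of $\mathsf{S}_v = 1$ in the $\GL_2$-computation. I treat the case $Q = \{v\}$; the general case follows by iteration.

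For Part 1: let $\ffrm'$ denote the pullback of $\ffrm$ to $\bbT^{S \cup Q}$. Setting $\eta = \diag(\varpi_v, 1)$ and $U_Q^0(1,1) = \eta U_{Q,0}(1,1)\eta^{-1}$, I use the factorisation $\mathsf{T}_v = \pi_{2,\ast}\circ\eta^\ast\circ\pi_1^\ast$, where $\pi_1^\ast$ is pullback, $\eta^\ast$ is conjugation, and $\pi_{2,\ast}$ is the trace. The Taylor--Wiles hypothesis forces $\mathsf{T}_v \equiv \alpha_v(\Frob_v) + \beta_v(\Frob_v) \not\equiv 0 \pmod \ffrm$, so $\mathsf{T}_v$ acts invertibly on $H^\ast(A(U(1,1), k)^{\ord})_\ffrm$, which equals $H^\ast(A(U(1,1), k)^{\ord})_{\ffrm'}$ by the usual argument of \cite[Lemma~6.20]{Kha17}. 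Therefore $\pi_1^\ast$ is injective and $\pi_{2,\ast}$ is surjective, and one concludes that $\ffrm'$ is the pullback of a unique maximal ideal $\ffrm_{Q,0}$ and that the trace map in the statement is surjective.

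For Part 2: following \cite[Lemma~3.5]{Cal17} as in the proof of Lemma~\ref{lemma:max_ideal_with_TW_level}, set $\eta' = \diag(1,\varpi_v)$ and define
\[ \psi = (\pi_1^\ast,\ (\eta')^\ast\circ \pi_2^\ast)\colon H^\ast(A(U(1,1), k)^{\ord})_{\ffrm'}^{\,2}\longrightarrow H^\ast(A(U_{Q,0}(1,1), k)^{\ord})_{\ffrm'}, \]
together with a dual $\phi$ in the opposite direction defined by $\phi = \bigl(\pi_{1,\ast},\ \pi_{2,\ast}\circ\eta^\ast\bigr)^{t}$. Double coset computations (identical to those in the proof of Lemma~\ref{lemma:max_ideal_with_TW_level} with $\mathsf{S}_v$ replaced by $1$) give
\[ \phi\circ\psi = \begin{pmatrix} q_v+1 & \mathsf{T}_v \\ \mathsf{T}_v & q_v+1 \end{pmatrix}, \]
whose determinant is nonzero modulo $\ffrm$: if it vanished then $(\alpha_v + \beta_v)^2 \equiv 4 \pmod \ffrm$ which, combined with $\alpha_v\beta_v \equiv q_v \equiv 1$, would force $\alpha_v = \beta_v = \pm 1$, contradicting $\alpha_v\ne\beta_v$. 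Thus $\psi$ realises the source as a direct summand. Standard computations give the matrix of $\mathsf{U}_v$ on this summand as $\begin{psmallmatrix}\mathsf{T}_v & q_v\\ -1 & 0\end{psmallmatrix}$, with characteristic polynomial $X^2-\mathsf{T}_v X + q_v \equiv (X-\alpha_v(\Frob_v))(X-\beta_v(\Frob_v)) \pmod{\ffrm'}$. Hensel's lemma splits this polynomial, producing a $\bbT^{S\cup Q}_Q$-equivariant projector onto the $\alpha_v$-component that identifies $\im(\psi)_{\ffrm_{Q,0,\alpha}}$ with $H^\ast(A(U(1,1), k)^{\ord})_{\ffrm'}$.

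It then remains to verify $\ker(\phi)[\ffrm_{Q,0,\alpha}] = 0$, which completes the proof since $\im(\psi)_{\ffrm_{Q,0,\alpha}}$ accounts for a copy of the $\ffrm'$-localisation and the total contribution at $\ffrm_{Q,0,\alpha}$ must match that of $\ffrm$. Using the double coset identity $\pi_1^\ast\circ\pi_{2,\ast}\circ\eta^\ast = \mathsf{U}_v + (w\eta)^\ast$ with $w = \begin{psmallmatrix} 0 & 1\\1 & 0\end{psmallmatrix}$, one finds that any $f\in\ker(\phi)$ satisfies $\mathsf{U}_v f = -(w\eta)^\ast f$; since $(w\eta)^2$ is scalar and hence trivial in $\PGL_2$, squaring gives $\mathsf{U}_v^2 f = f$. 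If moreover $\ffrm_{Q,0,\alpha} f = 0$, then $\alpha_v(\Frob_v)^2 f = f$, which combined with $\alpha_v(\Frob_v)\beta_v(\Frob_v) \equiv 1 \pmod p$ forces $\alpha_v(\Frob_v) = \beta_v(\Frob_v)$, contradicting the Taylor--Wiles hypothesis. There is no serious obstacle; the only care required is the bookkeeping in the $\PGL_2$-normalisation, where the ``determinant'' Hecke operator $\mathsf{T}_{v,2}$ is trivialised to $1$.
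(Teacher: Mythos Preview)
Your proposal is correct and follows exactly the approach indicated by the paper, which simply says ``This is proved in the same way as Lemma~\ref{lemma:max_ideal_with_TW_level}.'' You have carried out this adaptation carefully, correctly identifying that in the $\PGL_2$ setting one replaces $\mathsf{S}_v$ by $1$ throughout; in particular your verification that $\det(\phi\circ\psi) \equiv 4 - (\alpha_v+\beta_v)^2 = -(\alpha_v-\beta_v)^2 \not\equiv 0$ and that $(w\eta)^2$ is central (hence trivial in $\PGL_2$) are exactly the points where the $\PGL_2$ normalisation matters.
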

\begin{proof}
	This is proved in the same way as Lemma~\ref{lemma:max_ideal_with_TW_level}.
\end{proof}
There are surjective algebra homomorphisms
\[ \bbT^{S \cup Q}( A( U_{Q, 0}(1, 1) / U_{Q, 1}(1, 1), k )^\text{ord} ) \to \bbT^{S \cup Q}( A(U_{Q, 0}(1, 1), k )^\text{ord} ), \]
and
\[ \bbT^{S \cup Q}_Q( A( U_{Q, 0}(1, 1) / U_{Q, 1}(1, 1), k )^\text{ord} ) \to \bbT^{S \cup Q}_Q( A(U_{Q, 0}(1, 1), k )^\text{ord} ), \]
and we write 
\[ \ffrm_{Q, 1} \subset \bbT^{S \cup Q}( A( U_{Q, 0}(1, 1)/U_{Q, 1}(1, 1), k )^\text{ord} ) \]
for the pullback of $\ffrm_{Q, 0}$, and 
\[ \ffrm_{Q, 1, \alpha} \subset \bbT^{S \cup Q}_Q( A( U_{Q, 0}(1, 1)/U_{Q, 1}(1, 1), k )^\text{ord} ) \]
for the pullback of $\ffrm_{Q, 0, \alpha}$. We also write $\ffrm_{Q, 1, \alpha}$ abusively for the pullback of $\ffrm_{Q, 1, \alpha}$ to any of the algebras $\bbT^{S \cup Q}_Q( A(U_{Q, 0}(1, c) / U_{Q, 1}(c, c), \cV_\lambda(x))^{\ord})$. For any $c \geq 1$ and $\lambda \in \bbZ_{\ge 0}^{\Hom(K, E)}$, there are $\bbT^{S \cup Q}$-equivariant isomorphisms 
\[ C(U_{Q, 0}(1, c) / U_{Q, 1}(c, c), \cV_\lambda(x))^\text{ord}_{\ffrm_{Q, 1, \alpha}} \otimes^\bbL_{\Lambda_c[\Delta_Q]} \Lambda_c \cong C(U_{Q, 0}(1, c) / U_{Q, 0}(c, c), \cV_\lambda(x))^\text{ord}_{\ffrm_{Q, 0}} \cong C(U(1, c) / U(c, c), \cV_\lambda(x))^\text{ord}_{\ffrm}. \]
As before, a limiting process gives rise to a minimal complex $F_{\ffrm, x, Q}$ of $\Lambda_1[\Delta_Q]$-modules with the following properties:
\begin{itemize}
	\item There is a $\Lambda_1[\Delta_Q]$-algebra homomorphism $\bbT^{S \cup Q} \otimes_\cO \Lambda_1[\Delta_Q] \to \End_{\mathbf{D}(\Lambda_1[\Delta_Q])}(F_{\ffrm, x, Q})$.
\item There is an isomorphism $H^\ast(F_{\ffrm, x, Q}) \cong \plim_c H^\ast(C(U_{Q, 1}(c, c), \cO(x))^\text{ord}_{\ffrm_{Q, 1, \alpha}})$ of $\bbT^{S \cup Q} \otimes_\cO \Lambda_1[\Delta_Q]$-modules.
	\item There is an isomorphism $F_{\ffrm, x, Q} \otimes_{\Lambda_1[\Delta_Q]} \Lambda_1 \cong F_{\ffrm, x}$ of complexes of $\Lambda_1$-modules which becomes  $\bbT^{S \cup Q} \otimes_\cO \Lambda_1$-equivariant when we consider the corresponding morphism in $\mathbf{D}(\Lambda_1)$.
\end{itemize}
We define $\bbT_{x, Q}$ to be the image of the homomorphism
\[ \bbT^{S \cup Q} \otimes_\cO \Lambda_1[\Delta_Q] \to \End_{\mathbf{D}(\Lambda[\Delta_Q])}(F_{\ffrm, x, Q}). \] 
Then $\bbT_{x, Q}$ is a finite local $\Lambda[\Delta_Q]$-algebra and we write $\ffrm_{x, Q}$ for its unique maximal ideal. 
\begin{proposition}\label{prop:R_to_T_TW_augmented_p_odd}
	There is an integer $\delta = \delta(K)$, the same one as in Proposition~\ref{prop:R_to_T_p_odd}, a nilpotent ideal $J_{x, Q} \subset \bbT_{x, Q}$ with $J_{x, Q}^\delta = 0$, and a unique $\Lambda[\Delta_Q]$-algebra surjection $R_{\cS_{x, Q}} \to \bbT_{x, Q} / J_{x, Q}$ with the property that for all $v \not\in S \cup Q$, $\tr \rho_{\cS_{x, Q}}(\Frob_v) \mapsto \mathsf{T}_v$.
\end{proposition}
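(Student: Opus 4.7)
The proof will follow the same template as Proposition~\ref{prop:R_to_T_p_odd}, with the additional bookkeeping at the Taylor--Wiles places $v \in Q$ handled as in Theorem~\ref{thm_existence_of_Galois_auxiliary_case}. First, I would apply Corollary~\ref{cor:local-global-PGLn-ord} to the Hecke datum for $\PGL_2$ corresponding to the level subgroups $\overline{U}_{\overline{\cD}} = U_{Q,0}(1,c)/U_{Q,1}(c,c)$ and then pass to the inverse limit over $c$. Our hypothesis that each $v \in Q$ has residue characteristic split in some imaginary quadratic subfield of $K$ ensures the assumption on $S \cup Q$ in Corollary~\ref{cor:local-global-PGLn-ord} is satisfied. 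Letting $\widetilde{\bbT}_{x,Q}$ denote the image of $\bbT^{S \cup Q}_Q[\{\mathsf{U}_v\}_{v\in S_p}] \otimes_\cO \Lambda_1[\Delta_Q]$ in $\End_{\mathbf{D}(\Lambda_1[\Delta_Q])}(F_{\ffrm,x,Q})$, this yields a nilpotent ideal $\widetilde{J}_{x,Q} \subset \widetilde{\bbT}_{x,Q}$ with $\widetilde{J}_{x,Q}^\delta = 0$ (where $\delta$ depends only on $K$), characters $\widetilde{\psi}_{v,1}, \widetilde{\psi}_{v,2}$ for $v \in S_p$, and a continuous representation $\rho_{x,Q} : G_K \to \GL_2(\widetilde{\bbT}_{x,Q}/\widetilde{J}_{x,Q})$ that deforms $\overline{\rho}_\ffrm$, is unramified outside $S \cup Q$ with Frobenius characteristic polynomials equal to $P_v(X)$ modulo $\widetilde{J}_{x,Q}$, and has determinant $\epsilon^{-1}$ (by Chebotarev, since we are working with $\PGL_2$).

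Next, I would verify the local conditions one place type at a time. At $v \in R$, the inertial characteristic polynomial of $\rho_{x,Q}|_{G_{K_v}}$ is forced by construction to equal $(X - x_v\circ\Art_{K_v}^{-1})(X - x_v^{-1}\epsilon^{-1}\circ\Art_{K_v}^{-1})$, so $\rho_{x,Q}|_{G_{K_v}}$ is of type $\cD_v^{\epsilon^{-1},x}$. At $v \in S - (S_p \cup R)$, the only condition is being a fixed-determinant lifting, which holds automatically. At $v \in Q$, the condition is again trivial beyond being a fixed-determinant deformation. At $v \in S_p$, the defining pseudocharacter and product-of-differences relations of $\cD_v^{\epsilon^{-1},\mathrm{ord}}$ are satisfied by $\rho_{x,Q}|_{G_{K_v}}$ and the pair $(\widetilde{\psi}_{v,1}, \widetilde{\psi}_{v,2})$; however, to know this lands in $R_v^{\epsilon^{-1},\mathrm{ord}}$ rather than $\widetilde{R}_v^{\epsilon^{-1},\mathrm{ord}}$ I must check that the structural map $\Lambda_1 \to \widetilde{\bbT}_{x,Q}/\widetilde{J}_{x,Q}$ factors through the quotient $\Lambda_1 \to \Lambda$. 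This amounts to showing that for each $v \in S_p$ the product $\widetilde{\psi}_{v,1}\widetilde{\psi}_{v,2}|_{I_{K_v}}$ matches $\epsilon^{-1}|_{I_{K_v}}$ on inertia, which follows from the fixed determinant condition and the fact that on the Hecke side the diamond operators for $(u, u^{-1}) \in (\cO_{K_v}^\times)^2$ act trivially (because we are in the $\PGL_2$, not $\GL_2$, setting and $(u,u^{-1})$ has trivial image in $\PGL_2$).

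Having verified the local conditions, I would then apply Carayol's lemma (\cite[Lemma~2.1.10]{Clo08}) to descend $\rho_{x,Q}$ to a representation valued in $\GL_2(\bbT_{x,Q}/J_{x,Q})$ with $J_{x,Q} = \widetilde{J}_{x,Q} \cap \bbT_{x,Q}$. The universal property of $R_{\cS_{x,Q}}$ then produces the desired surjective $\Lambda$-algebra map $R_{\cS_{x,Q}} \to \bbT_{x,Q}/J_{x,Q}$ sending $\tr\rho_{\cS_{x,Q}}(\Frob_v) \mapsto \mathsf{T}_v$ for all $v \notin S \cup Q$, and surjectivity is automatic since $\bbT_{x,Q}$ is topologically generated over $\Lambda$ by these traces.

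The main obstacle, and the one requiring the most care, is the compatibility of the two $\Delta_Q$-structures on $\bbT_{x,Q}/J_{x,Q}$: the one inherited from $R_{\cS_{x,Q}}$ (via the Taylor--Wiles deformation condition at $v \in Q$, where $\rho_{\cS_{x,Q}}|_{G_{K_v}} \sim A_v \oplus A_v^{-1}\epsilon^{-1}$ and $\Delta_v$ acts through $A_v \circ \Art_{K_v}$ restricted to $k(v)^\times(p)$), and the one coming from the diamond operators on the cohomology. This match is a consequence of local-global compatibility at $v \in Q$ (which is part of Corollary~\ref{cor:local-global-PGLn-ord}): the character $t_{v,1}$ appearing in the characteristic polynomial description at $v \in Q$ must agree, upon restriction to $\cO_{K_v}^\times$, with the eigenvalue of the diamond operator, and this forces the two $\Delta_Q$-actions to coincide. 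Combined with the $\Lambda$-linearity established in the previous step, this yields the claimed $\Lambda[\Delta_Q]$-algebra surjection, completing the proof.
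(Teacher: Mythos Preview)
Your proposal is correct and follows essentially the same approach as the paper, whose proof is the one-liner ``This is proved in the same way as Corollary~\ref{cor_R_to_T_TW_augmented}.'' You have correctly unwound what that entails: apply Corollary~\ref{cor:local-global-PGLn-ord} (in place of Theorem~\ref{thm:local-global-GLn-ord}), verify the local conditions place by place, descend via Carayol's lemma, and match the two $\Delta_Q$-structures via local-global compatibility at $v \in Q$.

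One minor imprecision: the factoring of the structural map $\Lambda_1 \to \bbT_{x,Q}$ through $\Lambda$ is not a consequence of the product relation $\widetilde{\psi}_{v,1}\widetilde{\psi}_{v,2}|_{I_{K_v}} = \epsilon^{-1}|_{I_{K_v}}$ as you write. That relation comes from the fixed-determinant condition and is needed separately, but the $\Lambda_1 \to \Lambda$ factoring is automatic from the very definition of $\bbT_{x,Q}$ as the image of $\bbT^{S \cup Q} \otimes_\cO \Lambda_1[\Delta_Q]$ in $\End_{\mathbf{D}(\Lambda[\Delta_Q])}$ of the complex already tensored down to $\Lambda$ (compare the definition of $\bbT_x$ just above Proposition~\ref{prop:R_to_T_p_odd}). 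This does not affect the validity of your argument.
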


\begin{proof}
	This is proved in the same was as Corollary~\ref{cor_R_to_T_TW_augmented}.
\end{proof}

We now have enough in place to prove Theorem~\ref{thm:automorphy_lifting_with_conditions_p_odd}.

\begin{proof}[Proof of Theorem~\ref{thm:automorphy_lifting_with_conditions_p_odd}]
	The proof follows that of \cite[Theorem~6.6.2]{10authors}. 
	Enlarging $E$ if necessary, we can assume that $\rho$ takes values in $\GL_2(\cO)$ and that $E$ contains a primitive $p$th root of $1$.
	Then $\rho$ corresponds to a homomorphism $f : R_{\cS_1} \to \cO$. 
	Assumption \ref{condition:weights} of Theorem~\ref{thm:automorphy_lifting_with_conditions_p_odd} implies that $p_\lambda : \Lambda_1 \to \cO$ factors through $\Lambda$. 
	We will show that $\ker f$ is in the support of $H^\ast(F_{\ffrm, 1} \otimes_{\Lambda_1} \Lambda)$. 
	This will then imply that $\ker f$ is in the support of $H^\ast(C(U(1, 1) , \cV_\lambda^\vee)^\text{ord}_\ffrm \otimes_{\cO} \Lambda_1/p_\lambda)[1/p]$, hence in the support of 
	\[ \Hom_E(H^\ast_{U(1,1)}(\mathfrak{X}_{\PGL_2}, \cV_\lambda)_\ffrm[1/p], E(\lambda)). \]
	The conclusion of the theorem will then follow from Theorem~\ref{thm:aut_reps_and_cohom_PGLn}.

	Since $E$ contains a primitive $p$th root of $1$, for each $v \in R$, we can choose a character $\chi_v : \cO_{K_v}^\times \to \cO^\times$ that is trivial modulo $\varpi$ with $\chi_v^2 \ne 1$. 
	We write $\chi$ for the tuple $(\chi_v)_{v \in R}$ as well as for the induced character $\prod_{v \in R} I_v \to \cO^\times$. 
	For each integer $N \ge 1$, let $(Q_N, N, \{ \alpha_{N,v}, \beta_{N,v} \}_{v \in Q_N})$ be a choice of Taylor--Wiles datum of level $N$ as in Proposition~\ref{prop_presentation_with_TW_data}. 
	For convenience, we write $\cS_{1,{Q_0}} = \cS_1$ and $F_{\ffrm, 1, Q_0} = F_{\ffrm, 1}$, and similarly with $\chi$ in place of $1$. 
Then our choice of $\chi$, Proposition~\ref{prop_presentation_with_TW_data}, the properties of $F_{\ffrm, \chi, Q_N}$ above, and Propositions~\ref{prop:R_to_T_p_odd} and~\ref{prop:R_to_T_TW_augmented_p_odd}, imply that the following data, for integers $N \ge 0$, satisfy the setup of \cite[\S6.4.1]{10authors}:
	\begin{itemize}
		\item $q = h^1(K_S/K, \ad^0\overline{\rho}_\ffrm(1))$, $g = q - 3[K^+:\bbQ]  - 1 + \lvert S \rvert$, and $\delta$ is as in Proposition~\ref{prop:R_to_T_p_odd}.
		\item $\mathcal{T}$ is a power series over $\Lambda$ in $4 \lvert S \rvert - 1$ many variables, $\Delta_\infty = \bbZ_p^q$, and $S_\infty = \mathcal{T} \llbracket \Delta_\infty \rrbracket$. 
		We view $S_\infty$ as an augmented $\Lambda$-algebra and let $\mathfrak{a}_\infty$ be the augmentation ideal. 
		\item $R^\loc = A_{\cS_1}^S$ and $R^{'\loc} = A_{\cS_\chi}^S$. 
		We let $R_\infty$ and $R_\infty'$ be power series rings over $R^\loc = A_{\cS_1}^S$ and $R^{'\loc} = A_{\cS_\chi}^S$, respectively, in $g$ variables. 
\item $\Delta_N = \Delta_{Q_N}$.
		\item $C_N = F_{\ffrm, 1, Q_N} \otimes_{\Lambda_1[\Delta_N]} \Lambda[\Delta_N]$ and $C_N' = F_{\ffrm, \chi, Q_N} \otimes_{\Lambda_1[\Delta_N]} \Lambda[\Delta_N]$. We take $T_N = \bbT_{1, Q_N}$ and $T_N' = \bbT_{\chi, Q_N}$.
		\item $R_N = R_{\cS_{1, Q_N}}$ and $R_N' = R_{\cS_{\chi, Q_N}}$.
		\item $I_N = J_{1, Q_N}$ and $I_N' = J_{\chi, Q_N}$.
	\end{itemize}
	Then the results of \cite[\S6.4]{10authors} (in particular, Propositions~6.4.16 and~6.4.17 of \textit{loc. cit.}) yield:
	\begin{itemize}
		\item Perfect complexes $C_\infty$ and $C_\infty'$ in $\mathbf{D}(S_\infty)$ with an isomorphism 
		\begin{equation}\label{eqn:mod_pi_iso}
		C_\infty \otimes_{S_\infty}^\mathbb{L} S_\infty/\varpi \cong C_\infty' \otimes_{S_\infty}^\mathbb{L} S_\infty/\varpi
		\end{equation} 
		in $\mathbf{D}(S_\infty/\varpi)$, and isomorphisms $C_\infty \otimes_{S_\infty}^\mathbb{L} S_\infty/\fra_\infty \cong C_0$ and $C_\infty' \otimes_{S_\infty}^\mathbb{L} S_\infty/\fra_\infty \cong C_0'$ in $\mathbf{D}(\Lambda)$.
		\item $S_\infty$-subalgebras $T_\infty \subset \End_{\mathbf{D}(S_\infty)}(C_\infty)$ and $T_\infty' \subset \End_{\mathbf{D}(S_\infty)}(C_\infty')$ whose images in  $\End_{\mathbf{D}(S_\infty/\varpi)}(C_\infty \otimes_{S_\infty}^\mathbb{L} S_\infty/\varpi)$ and $\End_{\mathbf{D}(S_\infty/\varpi)}(C_\infty' \otimes_{S_\infty}^\mathbb{L} S_\infty/\varpi)$ are identified via \eqref{eqn:mod_pi_iso}.
		\item Nilpotent ideals $I_\infty \subset T_\infty$ and $I_\infty' \subset T_\infty'$, and $S_\infty$-algebra surjections $R_\infty \to T_\infty/I_\infty$ and $R_\infty' \to T_\infty'/I_\infty'$. Moreover, the actions of $R_\infty/\varpi \cong R_\infty'/\varpi$ (induced from $T_\infty$ and $T_\infty'$, respectively) on $H^\ast(C_\infty \otimes_{S_\infty}^\mathbb{L} S_\infty/\varpi)/I_\infty$ and $H^\ast(C_\infty' \otimes_{S_\infty}^\mathbb{L} S_\infty/\varpi)/I_\infty'$ are identified via \eqref{eqn:mod_pi_iso}. 
		\item $S_\infty$-algebra isomorphisms $R_\infty/\fra_\infty \cong R_0$ and $R_\infty'/\fra_\infty \cong R_0'$, and $S_\infty$-algebra maps $T_\infty \to T_0$ and $T_\infty' \to T_0$ such that the composite $R_\infty \to T_\infty/I_\infty \to T_0/(I_0, I_\infty)$ agrees with the composite $R_\infty \to R_0 \to T_0/(I_0, I_\infty)$ and similarly for the prime counterparts. 
	\end{itemize}
	By Proposition~\ref{prop_det_ord_fixed_det}, choice of $\chi$, Lemma~\ref{lem_ihara_avoidance_ring_fixed_det}, and Lemma~\ref{lemma:smooth_defs_fixed_det}, the rings $R_\infty$ and $R_\infty'$ satisfy:
	\begin{itemize}
		\item $\dim R_\infty = \dim R_\infty' = \dim S_\infty - [K^+ : \bbQ]$ and $\dim R_\infty/\varpi = \dim R_\infty'/\varpi = \dim S_\infty - [K^+ : \bbQ] - 1$.
		\item Each generic point of $\Spec R_\infty/\varpi$ of maximal dimension is the specialization of a unique generic point of dimension $\dim R_\infty$, 
		and $\Spec R_\infty'$ has a unique generic point of dimension $\dim R_\infty$. 
		Further, any generic points of $\Spec R_\infty$, $\Spec R_\infty'$, $\Spec R_\infty/\varpi$ which are not of maximal dimension have dimension $< \dim S_\infty - [K^+ : \bbQ] - 1$.  
		\item $\ker f \subset R_0 \cong R_\infty/\fra_\infty \subset \Spec R_\infty$ lies in an irreducible component of maximal dimension (since the weight $\lambda$ is regular).
	\end{itemize}
	Finally, using Theorem~\ref{thm:aut_reps_and_cohom_PGLn}, we also have:
	\begin{itemize}
		\item The dimension 1 characteristic 0 prime $\frp = (p_{\mu}, \fra_\infty) \subset S_\infty$ satisfies $H^\ast(C_\infty \otimes_{S_\infty}^\mathbb{L} S_\infty/\frp)[1/p] \ne 0$, and these groups are nonzero only for degrees in an interval with length $[K^+ : \bbQ]$. 
	\end{itemize}
	We have thus satisfied all the set-up and assumptions of \cite[\S6.3.5 and Assmption~6.3.6]{10authors}, and we can apply \cite[Corollary~6.3.9]{10authors} to deduce that $\ker f$ is in the support of
	\[ H^\ast(C_\infty \otimes_{S_\infty}^\mathbb{L} S_\infty/(p_\lambda,\fra_\infty))[1/p] \cong H^\ast(C(U(1, 1) , \cV_\lambda^\vee)^\text{ord}_\ffrm \otimes_{\cO} \Lambda_1/p_\lambda)[1/p]. \qedhere\]
\end{proof}

\subsection{The main automorphy lifting theorem for $p>2$}\label{sec_odd_aut_lift_statement}
We can now complete the proof of Theorem~\ref{thm_automorphy_at_odd_primes}. For the reader's convenience, we first recall the statement. 
\begin{theorem}[Theorem~\ref{thm_automorphy_at_odd_primes}]\label{thm_automorphy_at_odd_primes_redux}
	Let $p$ be an odd prime, let $K$ be a CM number field, and let $\rho : G_K \to \GL_2(\overline{\bbQ}_p)$ be a continuous representation satisfying the following conditions:
	\begin{enumerate}
		\item $\overline{\rho}$ is decomposed generic and $\overline{\rho}|_{G_{K(\zeta_p)}}$ is absolutely irreducible.
		\item $\det \rho = \epsilon^{-1}$.
		\item For all but finitely many places $v$ of $K$, $\rho|_{G_{K_v}}$ is unramified. 
		\item There exists $\lambda \in (\bbZ_{+, 0}^2)^{\Hom(K, \overline{\bbQ}_p)}$ such that $\rho$ is ordinary of weight $\lambda$.
		\item There exists an isomorphism $\iota : \overline{\bbQ}_p \cong \bbC$ and a cuspidal, $\iota$-ordinary, cohomological automorphic representation $\pi$ of $\PGL_2(\bbA_{K})$ such that $\overline{r_\iota(\pi)} \cong \overline{\rho}$.
\item If $p = 5$ and the projective image of $\overline{\rho}(G_{K(\zeta_5)})$ is conjugate to $\PSL_2(\bbF_5)$, we assume further that the extension of $K$ cut out by the projective image of $\overline{\rho}$ does not contain $\zeta_5$. 
\end{enumerate}
	Then $\rho$ is automorphic: there is a cuspidal automorphic representation $\Pi$ of $\PGL_2(\bbA_K)$, cohomological of weight $\iota\lambda$ and $\iota$-ordinary, such that $\rho \cong r_\iota(\Pi)$.
\end{theorem}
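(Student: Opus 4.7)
The plan is to reduce Theorem~\ref{thm_automorphy_at_odd_primes_redux} to Theorem~\ref{thm:automorphy_lifting_with_conditions_p_odd} by a soluble CM base change followed by soluble descent, mimicking the strategy used in the proof of Theorem~\ref{thm_main_automorphy_theorem} from Theorem~\ref{thm_preliminary_automorphy_lifting_theorem}. Since $\overline{\rho}|_{G_{K(\zeta_p)}}$ is absolutely irreducible and decomposed generic, both properties are preserved under any soluble CM extension $L/K$ that is linearly disjoint from the fixed field of $\ker(\overline{\rho}) \cdot G_{K(\zeta_p)}$; moreover, the hypothesis of being $\iota$-ordinary and cohomological for $\pi$ is preserved upon base change (and residual automorphy is preserved by cyclic base change for $\GL_2$, descending after to $\PGL_2$ because we have fixed determinant $\epsilon^{-1}$). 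Finally, soluble descent at the end (\cite[Proposition~6.5.13]{10authors}) is available because $\pi$ is cuspidal and $r_\iota(\Pi)|_{G_L} \cong \rho|_{G_L}$ is irreducible.

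First, I would pick a soluble CM extension $L/K$, linearly disjoint from the extension of $K$ cut out by $\overline{\rho}$ and from $K(\zeta_p)$, satisfying: (i) $[L_w : \bbQ_p] > 4$ for each $w \mid p$; (ii) every rational prime $l$ lying below a ramification place of $\rho$ or ramifying in $L$ splits in some imaginary quadratic subfield of $L$; (iii) for each finite place $w \nmid p$ at which $\rho|_{G_L}$ is ramified, $\overline{\rho}|_{G_{L_w}}$ is trivial and $\rho|_{G_{L_w}}$ is unipotently ramified (arranged by making residue degrees sufficiently large so that $q_w \equiv 1 \pmod{p}$ and by absorbing tame ramification into a trivial residual representation); (iv) for each $w \mid p$, after restriction $\rho|_{G_{L_w}}$ remains ordinary of the prescribed weight, with the ordinary filtration having distinct graded pieces (generic ordinarity is automatic after a suitable base change). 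In particular, all ramification of $\rho|_{G_L}$ outside $p$ becomes unipotent with trivial residual image. After this reduction, set $R$ to be the set of such ramified prime-to-$p$ places; we may either arrange $S = S_p \cup R$ together with $\zeta_p \in L$, or else add auxiliary Taylor--Wiles-type places in $S - (S_p \cup R)$ where $H^2$ vanishes, as required by Theorem~\ref{thm:automorphy_lifting_with_conditions_p_odd}.

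Next, I would produce the cuspidal $\iota$-ordinary automorphic representation of $\PGL_2(\bbA_L)$ matching $\overline{\rho}|_{G_L}$ with appropriate local behaviour. Starting from the given $\pi$ over $K$, soluble cyclic base change yields $\pi_L$ over $L$; it remains cuspidal because $\overline{\rho}|_{G_{L(\zeta_p)}}$ is absolutely irreducible, ensuring $\pi_L$ is not induced from a proper subfield. After a further soluble descent-compatible twist / level-raising argument at places in $R$ (producing, at each $v \in R$, a principal series with distinct tamely ramified characters as in Theorem~\ref{thm_preliminary_automorphy_lifting_theorem}, using that $q_v \equiv 1 \pmod{p}$), one obtains a cuspidal $\iota$-ordinary representation $\pi'$ of $\PGL_2(\bbA_L)$ of weight $\iota\mu$ with $\overline{r_\iota(\pi')} \cong \overline{\rho}|_{G_L}$ and $\pi'_v$ unramified outside $S$, Iwahori-spherical at $S_p$, and of the right principal-series form at $R$. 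To match hypothesis (3) of Theorem~\ref{thm:automorphy_lifting_with_conditions_p_odd}, which requires $\prod_\tau \tau(u)^{\lambda_\tau - \mu_\tau} = 1$ for all $p$-power roots of unity $u \in \cO_{L_v}^\times$ at each $v \mid p$, I would further enlarge $L$ (by a soluble CM extension) to absorb the $p$-power roots of unity in the local units into the kernel of the weight character difference; this is achievable since the condition is a finite-index congruence condition on the weight characters and $\lambda - \mu$ determines a character of finite order on $\mu_{p^\infty}(L_v)$.

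Finally, I would apply Theorem~\ref{thm:automorphy_lifting_with_conditions_p_odd} to $(\rho|_{G_L}, \lambda, \pi')$ to conclude that $\rho|_{G_L}$ arises from a cuspidal automorphic representation $\Pi_L$ of $\PGL_2(\bbA_L)$ of weight $\iota\lambda$. Soluble descent for $\GL_2$ (applied to the lift $\rho \otimes \epsilon^{-1}$ being irreducible after restriction) then produces the desired $\Pi$ over $K$. The main obstacle I anticipate is step two: arranging simultaneously that the residual automorphic representation exists with the correct local principal-series structure at $R$ and with the weight-matching condition on $p$-power roots of unity --- this requires careful management of cyclic base change, level-raising congruences, and twists. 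The condition on $p=5$ with $\PSL_2(\bbF_5)$-image is precisely what ensures that after base change the relevant Taylor--Wiles primes (Lemma~\ref{lemma_TW_primes_p_odd}) continue to exist to support the patching in Theorem~\ref{thm:automorphy_lifting_with_conditions_p_odd}, and so this hypothesis must be preserved throughout all extensions.
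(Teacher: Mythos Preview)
Your overall architecture is the paper's: soluble CM base change to put yourself in the setting of Theorem~\ref{thm:automorphy_lifting_with_conditions_p_odd}, then descend via \cite[Proposition~6.5.13]{10authors}. Two points of comparison are worth noting.

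First, your step two is an unnecessary detour. You propose a level-raising argument to force $\pi'$ to be a specific tamely ramified principal series at each $v\in R$, modelling on the $p=2$ case (Theorem~\ref{thm_preliminary_automorphy_lifting_theorem}). But the hypotheses of Theorem~\ref{thm:automorphy_lifting_with_conditions_p_odd} on $\pi$ at $R$ are only that $\pi_v^{I_v}\neq 0$; the Ihara-avoidance character $\chi$ is introduced \emph{inside} that theorem's proof, not as a hypothesis. The paper therefore skips level-raising entirely: it simply base-changes $\pi$ to $F$, arranging $\pi_{F,w}^{I_w}\neq 0$ at every finite $w$ by making $F_0/K$ sufficiently ramified. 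This removes what you flagged as your ``main obstacle''.

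Second, the paper's treatment of the dichotomy ``$\zeta_p\in F$ versus add an auxiliary place'' is more concrete than yours. It does not insist on linear disjointness from $K(\zeta_p)$; instead it uses a set $V$ of splitting conditions to force $\overline{\rho}(G_{F(\zeta_p)})=\overline{\rho}(G_{K(\zeta_p)})$ (which also preserves the $p=5$ hypothesis), then at the end either $\zeta_p\in F$ and $S=S_p\cup R$, or one adds a single place $v_0$ with $q_{v_0}\not\equiv 1\pmod p$ and eigenvalue ratio $\neq q_{v_0}^{\pm 1}$, which ensures $H^2(F_{v_0},\ad^0\overline{\rho})=0$. Your sketch has a small inconsistency here (you ask for linear disjointness from $K(\zeta_p)$ and then for $\zeta_p\in L$). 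The remaining points --- the weight-matching condition~(\ref{condition:weights}), the constraint $[F_w:\bbQ_p]>4$, and the adjunction of imaginary quadratic fields to satisfy the splitting hypothesis on rational primes --- are handled as you indicate, the last via an explicit choice of three auxiliary quadratic fields $F_a,F_b,F_c$.
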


\begin{proof}
	It suffices to prove that $\rho \cong r_\iota(\Pi)$ for $\Pi$ a cuspidal automorphic representation $\GL_2(\bbA_K)$,
cohomological of weight $\iota\lambda$ and $\iota$-ordinary, since $r_\iota(\Pi) = \epsilon^{-1}$ implies that $\Pi$ has trivial central character.

We follow the proof of \cite[Theorem~6.1.2]{10authors}, using soluble base change to reduce to Theorem~\ref{thm:automorphy_lifting_with_conditions_p_odd}. 
	Let $L/K(\zeta_p)$ be the extension cut out by $\overline{\rho}|_{G_{K(\zeta_p)}}$. 
	If $F/K$ is any finite soluble extension, we let $\pi_F$ denote the base change of $\pi$ to $F$.
	Choose a finite set $V$ of finite places of $K$ satisfying:
	\begin{itemize}
		\item For any proper extension $L'/K$ contained in $L$, there is some $v \in V$ not splitting in $L'$.
		\item There ia a rational prime $q \ne p$ such that $\overline{\rho}$ is decomposed generic for $q$, and $V$ contains all $q$-adic places of $K$.
		\item For each $v \in V$, $v \nmid 2p$ and both $\rho$ and $\pi$ are unramified at $v$.
	\end{itemize}
	For any finite Galois extension $F/K$ in which every $v \in V$ splits, $\overline{\rho}|_{G_F}$ decomposed generic and $\overline{\rho}(G_{F(\zeta_p)}) = \overline{\rho}(G_{K(\zeta_p)})$.
	We choose a solvable Galois CM extension $F_0/K$ satisfying:
	\begin{itemize}
		\item Every $v \in V$ splits in $F$.
		\item For every finite place $w$ of $F_0$, $\pi_{F_0}^{I_w} \ne 0$.
		\item For every finite prime-to-$p$ place $w$ of $F_0$, either $\rho|_{G_{F_{0, w}}}$ is unramified, or $\rho|_{G_{F_{0, w}}}$ is unipotently ramified, $q_w \equiv 1 \bmod p$, and $\overline{\rho}|_{G_{F_{0, w}}}$ is trivial.
		\item For each $w|p$ in $F_0$, $\overline{\rho}|_{G_{F_{0, w}}}$ is trivial and $[F_{0, w} : \bbQ_p] > 4$.
		\item For each $v|p$ in $K$ and $w|v$ in $F_0$, 
		\[ \rho|_{G_{K_v}} \sim \begin{pmatrix} \psi_v & \ast \\ 0 & \epsilon^{-1}\psi_v^{-1} \end{pmatrix}, \]
		where the character $\psi_v|_{I_{K_v}} : I_{K_v} \to \overline{\bbQ}_p^\times$ agrees with the character
		\[ \sigma \mapsto \prod_{\tau \in \Hom_{\bbQ_p}(K_v, \overline{\bbQ}_p)} \tau(\Art_{K_v}^{-1}(\sigma))^{\lambda_{\tau, 1}} \]
		on the whole of the inertia subgroup $I_{F_{0, w}} \subseteq I_{K_v}$. 
		\item Let $\mu \in \Hom(F_0, \overline{\bbQ}_p)$ be such that $\iota\mu$ is the weight of $\pi_{F_0}$. Then for each $v|p$ in $K$, $w|v$ in $F_0$, and $p$-power root of unity $u \in F_{0, w}$, we have 
		\[ \psi_v(\Art_{F_{0, w}}(u)) = \prod_{\tau \in \Hom_{\bbQ_p}(F_{0, w}, \overline{\bbQ}_p)} \tau(u)^{\mu_{\tau, 1}}.  \]
	\end{itemize}
	Choose an imaginary quadratic field $F_a$ in which $2$, $p$, and any rational prime lying below a place of $V$ all split. 
	Then set $F_b = \bbQ(\sqrt{-p_b})$ with $p_b$ a prime satisfying $p_b \equiv 1 \bmod 4$ and $p_b \equiv -1 \bmod l$ for any $l$ lying below a place where either $\rho|_{G_{F_0}}$ or $\pi_{F_0}$ are ramified or that ramifies in $F_0 F_a$. 
	Now set $F_c = \bbQ(\sqrt{-p_c})$ where $p_c \equiv 1 \bmod {4p_b}$ is a prime. 
	By quadratic reciprocity, $p_b$ splits in $F_c$ and $p_c$ splits in $F_b$.
	We can further assume these choices are made so that $F = F_0 \cdot F_a \cdot F_b \cdot F_c$ is disjoint over $\bbQ$ from the extension cut out by $\overline{\rho}|_{G_{F_0}}$. 
	Then $F/K$ is a solvable CM extension satisfying:
	\begin{itemize}
		\item Let $R$ be the set of prime-to-$p$ finite places of $F$ at which either $\rho|_{G_F}$ or $\pi_F$ is ramified. 
		Let $S_p$ be the set of $p$-adic places of $F$ and let $S' = S_p \cup R$. 
		Then if $l$ is a rational prime lying below a place of $S'$ or which is ramified in $F$, there is an imaginary quadratic subfield of $F$ in which $l$ splits.
	\end{itemize} 
	If $\zeta_p \notin F$, then by \cite[Lemma~4.11]{ddt} and Chebotarev density there are infinitely many primes $v_0 \in F$ with the following properies: $v_0$ is unramified in $F$ and has degree $1$ over $\bbQ$, $q_{v_0} \not\equiv 1 \bmod p$, $\overline{\rho}|_{G_F}$ is unramified at $v_0$, and the ratio of the eigenvalues of $\overline{\rho}(\Frob_{v_0})$ does not equal $q_{v_0}^{\pm 1}$. 
Then $H^2(F_{v_0}, \ad^0\overline{\rho}) \cong H^0(F_{v_0}, \ad^0\overline{\rho}(1))^\vee = 0$, and $v_0$ splits in any quadratic subfield of $F$. We choose two such primes $v_0, v_0'$ with distinct residue characteristics and set $S = S' \cup \{v_0, v_0'\}$. 
	If $\zeta_p \in F$, then we set $S = S'$. 
	
	We have now satisfied the setup and hypotheses of Theorem~\ref{thm:automorphy_lifting_with_conditions_p_odd}, so we deduce that there is a cuspidal automorphic representation $\Pi_F$ of $\GL_2(\bbA_F)$ that is $\iota$-ordinary and cohomological of weight $\iota\lambda_F$, where $\lambda_F \in (\bbZ_{+, 0}^2)^{\Hom_{\bbQ_p}(F, \overline{\bbQ}_p)}$ is defined by $\lambda_{F,\tau'} = \lambda_\tau$ if $\tau'|_K = \tau$. 
	By \cite[Proposition~6.5.13]{10authors} and \cite[Lemma~5.1.6]{Ger18}, we have a descent of $\Pi_F$ to a cuspidal automorphic representation $\Pi$ of $\GL_2(\bbA_K)$, cohomological of weight $\iota\lambda$ and $\iota$-ordinary, such that $\rho \cong r_\iota(\Pi)$.
\end{proof}

\bibliographystyle{alpha}
\bibliography{ModLiftBib}

\end{document}